\numberwithin{equation}{section}
\theoremstyle{plain}
\newtheorem{theorem}[subsubsection]{Theorem}
\newtheorem{lemma}[subsubsection]{Lemma}
\newtheorem{prop}[subsubsection]{Proposition}
\newtheorem{cor}[subsubsection]{Corollary}
\newtheorem{fact}[subsubsection]{Fact}
\newtheorem*{claim}{Claim}
\theoremstyle{definition}
\newtheorem{defn}[subsubsection]{Definition}
\newtheorem{remark}[subsubsection]{Remark}
\newtheorem{exam}[subsubsection]{Example}
\newtheorem{warning}[subsubsection]{Warning}
\def\AA{\mathbb{A}}
\def\BB{\mathbb{B}}
\def\CC{\mathbb{C}}
\def\DD{\mathbb{D}}
\def\FF{\mathbb{F}}
\def\GG{\mathbb{G}}
\def\PP{\mathbb{P}}
\def\QQ{\mathbb{Q}}
\def\RR{\mathbb{R}}
\def\ZZ{\mathbb{Z}}
\newcommand\cA{\mathcal{A}}
\newcommand\cB{\mathcal{B}}
\newcommand\cC{\mathcal{C}}
\newcommand\cD{\mathcal{D}}
\newcommand\cE{\mathcal{E}}
\newcommand\cF{\mathcal{F}}
\newcommand\cG{\mathcal{G}}
\newcommand\cK{\mathcal{K}}
\newcommand\cL{\mathcal{L}}
\newcommand\cM{\mathcal{M}}
\newcommand\cN{\mathcal{N}}
\newcommand\cO{\mathcal{O}}
\newcommand\cU{\mathcal{U}}
\newcommand\cV{\mathcal{V}}
\def\bI{\mathbf{I}}
\def\bP{\mathbf{P}}
\def\bQ{\mathbf{Q}}
\def\bR{\mathbf{R}}
\newcommand\frA{\mathfrak{A}}
\newcommand\frC{\mathfrak{C}}
\newcommand\frM{\mathfrak{M}}
\newcommand\frR{\mathfrak{R}}
\newcommand\fra{\mathfrak{a}}
\newcommand\frb{\mathfrak{b}}
\newcommand\frc{\mathfrak{c}}
\newcommand\frf{\mathfrak{f}}
\newcommand\frg{\mathfrak{g}}
\newcommand\frh{\mathfrak{h}}
\newcommand\frl{\mathfrak{l}}
\newcommand\fm{\mathfrak{m}}
\newcommand\frn{\mathfrak{n}}
\newcommand\frp{\mathfrak{p}}
\newcommand\frt{\mathfrak{t}}
\newcommand\frz{\mathfrak{z}}
\newcommand\tilW{\widetilde{W}}
\newcommand\aff{\textup{aff}}
\newcommand\AS{\textup{AS}}
\newcommand\aut{\mathfrak{aut}}
\newcommand{\Bun}{\textup{Bun}}
\newcommand{\ch}{\textup{char}}
\newcommand{\codim}{\textup{codim}}
\newcommand{\Coh}{\textup{Coh}}
\newcommand{\coker}{\textup{coker}}
\newcommand{\cont}{\textup{cont}}
\newcommand\ev{\textup{ev}}
\newcommand\Fr{\textup{Fr}}
\newcommand\Frac{\textup{Frac}}
\newcommand\Frob{\textup{Frob}}
\newcommand\Gal{\textup{Gal}}
\newcommand{\Gr}{\textup{Gr}}
\newcommand\IC{\textup{IC}}
\newcommand\id{\textup{id}}
\renewcommand{\Im}{\textup{Im}}
\newcommand\Lie{\textup{Lie}\ }
\newcommand\nil{\textup{nil}}
\newcommand{\ord}{\textup{ord}}
\newcommand\pr{\textup{pr}}
\newcommand\pt{\textup{pt}}
\newcommand{\red}{\textup{red}}
\newcommand\Rep{\textup{Rep}}
\newcommand\res{\textup{res}}
\newcommand\rk{\textup{rk}}
\newcommand\rs{\textup{rs}}
\newcommand\sgn{\textup{sgn}}
\newcommand\Span{\textup{Span}}
\newcommand\Spec{\textup{Spec}\ }
\newcommand\st{\textup{st}}
\newcommand{\tors}{\textup{tors}}
\newcommand{\Tr}{\textup{Tr}}
\newcommand\uni{\textup{uni}}
\newcommand{\univ}{\textup{univ}}
\newcommand{\Vect}{\textup{Vect}}
\newcommand\Aut{\textup{Aut}}
\newcommand\Hom{\textup{Hom}}
\newcommand\End{\textup{End}}
\newcommand\GL{\textup{GL}}
\newcommand\gl{\mathfrak{gl}}
\newcommand\PGL{\textup{PGL}}
\newcommand\SL{\textup{SL}}
\newcommand\SO{\textup{SO}}
\newcommand\Sp{\textup{Sp}}
\newcommand\Spin{\textup{Spin}}
\newcommand{\Gm}{\GG_m}
\def\Ga{\GG_a}
\newcommand{\ad}{\textup{ad}}
\newcommand{\Ad}{\textup{Ad}}
\renewcommand\sc{\textup{sc}}
\newcommand{\der}{\textup{der}}
\newcommand\xch{\mathbb{X}^*}
\newcommand\xcoch{\mathbb{X}_*}
\newcommand{\incl}{\hookrightarrow}
\newcommand{\isom}{\stackrel{\sim}{\to}}
\newcommand{\bij}{\leftrightarrow}
\newcommand{\surj}{\twoheadrightarrow}
\newcommand{\Qlbar}{\overline{\QQ}_\ell}
\newcommand{\Qlb}[1]{\overline{\QQ}_{\ell,{#1}}}
\newcommand{\const}[1]{\overline{\QQ}_{\ell,#1}}
\newcommand{\twtimes}[1]{\stackrel{#1}{\times}}
\renewcommand{\j}[1]{\langle{#1}\rangle}
\newcommand{\wt}[1]{\widetilde{#1}}
\newcommand{\wh}[1]{\widehat{#1}}
\newcommand\quash[1]{}
\newcommand\un{\underline}
\newcommand{\bu}{\bullet}
\newcommand{\ov}{\overline}
\newcommand{\bs}{\backslash}
\newcommand{\tl}[1]{[\![#1]\!]}
\newcommand{\lr}[1]{(\!(#1)\!)}
\newcommand\sss{\subsubsection}
\newcommand\xr{\xrightarrow}
\newcommand\op{\oplus}
\newcommand\ot{\otimes}
\newcommand\one{\mathbf{1}}
\newcommand{\sslash}{\mathbin{/\mkern-6mu/}}
\renewcommand\c{\circ}
\newcommand\sne{\subsetneq}
\newcommand\remove{\setminus}
\newcommand{\homog}[2]{\textup{H}_{#1}({#2})}  
\newcommand{\cohog}[2]{\textup{H}^{#1}({#2})}     
\newcommand{\cohoc}[2]{\textup{H}_{c}^{#1}({#2})}     
\newcommand\upH{\textup{H}}
\renewcommand\a\alpha
\renewcommand\b\beta
\newcommand\G\Gamma
\newcommand\g\gamma
\renewcommand\d\delta
\newcommand\D\Delta
\newcommand{\e}{\epsilon}
\newcommand{\io}{\iota}
\renewcommand{\k}{\kappa}
\renewcommand{\th}{\theta}
\newcommand{\ph}{\varphi}
\renewcommand{\r}{\rho}
\newcommand{\s}{\sigma}
\renewcommand{\t}{\tau}
\newcommand{\y}{\eta}
\newcommand{\z}{\zeta}
\newcommand{\vep}{\varepsilon}
\newcommand{\vp}{\varpi}
\renewcommand{\l}{\lambda}
\renewcommand{\L}{\Lambda}
\newcommand{\om}{\omega}
\newcommand{\Sig}{\Sigma}
\newcommand\na{\natural}
\newcommand\sh{\sharp}
\newcommand\vn{\varnothing}
\newcommand{\AI}{\textup{AI}}
\newcommand{\Wa}{W_{\aff}}
\newcommand\colim{\textup{colim}}
\newcommand\FT{\textup{FT}}
\newcommand\frev{\mathfrak{ev}}
\newcommand\sst{\textup{sst}}
\newcommand\tst{\theta\textup{-st}}
\newcommand\Fun{\textup{Fun}}
\newcommand\gp{\textup{gp}}
\newcommand\Spf{\textup{Spf}\ }
\newcommand\et{\textup{\'et}}
\newcommand\Part{\textup{Part}}
\newcommand\goodp{pretty good\ }
\newcommand\isu{\textup{isu}}
\newcommand\coreg{\textup{coreg}}
\title{Counting absolutely indecomposable $G$-bundles}
\author{Konstantin Jakob}
\thanks{K.J. acknowledges support (through Timo Richarz) by the European Research Council (ERC) under Horizon Europe (grant agreement no 101040935), by the Deutsche Forschungsgemeinschaft (DFG, German Research Foundation) TRR 326 \textit{Geometry and Arithmetic of Uniformized Structures}, project number 444845124 and the LOEWE professorship in Algebra, project number LOEWE/4b//519/05/01.002(0004)/87}
\author{Zhiwei Yun}
\thanks{Z.Y. is supported by the Simons Investigator grant.}
\keywords{}
\begin{document}

\begin{abstract}
For a reductive group $G$ over a finite field $k$, and a smooth projective curve $X/k$, we give a motivic counting formula for the number of absolutely indecomposable $G$-bundles on $X$. We prove that the counting can be expressed via the cohomology of the moduli stack of stable parabolic $G$-Higgs bundles on $X$. This result generalizes work of Schiffmann and work of Dobrovolska, Ginzburg, and Travkin from $\GL_n$ to a general reductive group. Along the way we prove some structural results on automorphism groups of $G$-torsors, and we study certain Lie-theoretic counting problems related to the case when $X$ is an elliptic curve -- a case which we investigate more carefully following Fratila, Gunningham and P. Li. 
\end{abstract}

\maketitle

\tableofcontents

\section{Introduction}

\subsection{The counting problem}\label{ss:intro counting}

Let $G$ be a connected reductive (not necessarily split) group over a finite field $k=\FF_{q}$ of characteristic $p$. Let  $X$ be a smooth projective and geometrically connected curve over $k$. Let $\ov k$ be an algebraic closure of $k$.

A vector bundle $\cV$ over $X$ is called {\em absolutely indecomposable} if, base changed to $X_{\ov k}$, $\cV$ cannot be written as a direct sum of two nonzero vector bundles over $X_{\ov k}$. This notion of absolute indecomposability can be generalized to principal $G$-bundles. For a principal $G$-bundle $\cE$ over $X$, $\cE$ is called {\em absolutely indecomposable}, if the neutral component of the algebraic group $\Aut(\cE)$ is unipotent mod center. The goal of this paper is to express the number of absolutely indecomposable $G$-bundles over $X$ in motivic terms.

More precisely, we are concerned with the number
\begin{equation*}
\AI^{\om}_{G,X}(k)
\end{equation*}
of isomorphism classes of absolutely indecomposable $G$-bundles $\cE$ over $X_{\ov k}$ ``with degree $\om$'' that can be defined over $X$. Here $\om$ indexes a connected component of the moduli stack $\Bun_{G}$ of $G$-bundles on $X$. Being able to descend to $X$ is a property of $\cE$ and not extra structure in this counting problem. 

Before stating the result, we remark why this counting problem is not obviously motivic. First, we are not weighting our counting problem by the reciprocal of the cardinality of relevant automorphism groups, as is usually the case in point-counting on a stack. Second, in the moduli stack $\Bun_{G}$,  the locus of absolutely indecomposable bundles is merely a constructible subset, and not a locally closed substack in general. 

Previously known results focus on the case $G=\GL_{n}$, and these results suggest that counting $\AI^{\om}_{G,X}(k)$ is related to counting stable Higgs bundles on $X$. In \cite{Sch} Schiffmann proves that the number of absolutely indecomposable rank $n$ vector bundles on $X$ with degree $d$ (assumed coprime to $n$) is the same as the number of stable Higgs bundles on $X$ with the same rank and degree.  In \cite{DGT}, Dobrovolska, Ginzburg and Travkin extend Schiffmann's result to rank $n$ absolutely indecomposable vector bundles of any degree $d$, where Higgs bundles need to be replaced by parabolic Higgs bundles when $d$ is not coprime to $n$. 

In this paper we extend the results of Schiffmann and Dobrovolska, Ginzburg, and Travkin to an arbitrary reductive group $G$ over $k$, using a different method. 

Schiffmann actually explicitly calculates the number of absolutely indecomposable vector bundles, and uses the relation to Higgs bundles to compute the Betti numbers of the moduli space of stable Higgs bundles via the Weil conjectures. Originally, this idea was used by Hausel and Rodriguez-Villegas \cite{HRV} to compute $E$-polynomials of character varieties through point counting. Counting absolutely indecomposable vector bundles can be understood as a global analogue of counting indecomposable quiver representations, which were studied by Kac \cite{Kac80}, \cite{Kac82}, \cite{Kac83}. Kac proved that this number is given by a polynomial in $q$, and conjectured that its coefficients are positive integers. This is often referred to as Kac' positivity conjecture. The first step towards a proof was given in work of Crawley-Boevey and van den Bergh \cite{CBvdB}, who use deformed preprojective algebras to count indecomposable quiver representations. Their work serves as inspiration for the general counting strategy used in this paper.


\subsection{The main results}
To simplify notation, we assume that $G$ is semisimple in this subsection.

Let $\AI_{G,X}(k)$ be the set of isomorphism classes of absolutely indecomposable $G$-bundles over $X_{\ov k}$ that can be defined over $X$.

Our first main result expresses $\#\AI_{G,X}(k)$ in terms of point-counting on the moduli stack of $G$
-Higgs bundles with a simple pole at a point. More precisely, we choose a $k$-point $x\in X(k)$, and let $\cM_{G,x}$ be the moduli stack of pairs $(\cE, \ph)$ where $\cE$ is a principal $G$-bundle on $X$ and $\ph$ is a rational section of the $\om_{X}$-twisted coadjoint bundle $\Ad^{*}(\cE)\ot \om_{X}$ with a simple pole at $x$ and no poles elsewhere. Let $\frc^*=\frg^*\sslash G$. For $c\in \frc^*$, let $\cM_{G,x}(c)$ be the substack of $\cM_{G,x}$ where the residue of $\ph$ at $x$ (which defines a coadjoint orbit in $\frg^*$) maps to $c$.

To state the first result, we need the notion of an {\em admissible collection}, which is a collection of linear functions $\l_w:\frt_w\to k$ on the various Cartan subalgebras $\frt_w\subset \frg$ of splitting type $w\in W$ (the Weyl group of $G$), satisfying certain compatibility conditions (see Definition \ref{def:adm}). We also impose some genericity conditions on each $\l_w$ called $W$-coregular and $W$-regular (see Definitions \ref{d:W coreg} and \ref{def:reg}). When $q=\#k$ is large enough (depending only on $W$), such an admissible collection $\l=(\l_w)_{w\in W}$ exists.

Let $g$ be the genus of $X$, and let $\cB$ be the flag variety of $G$. Set
\begin{equation*}
    D=(g-1)\dim G+ \dim \cB.
\end{equation*}

\begin{theorem}\label{th:intro c} Suppose $p$ is good for $G$, and neither $\xch(ZG)$ nor $\pi_{1}(G)$ has $p$-torsion. Suppose $q=\#k$ is large enough with respect to $W$ so that a $W$-coregular and $W$-regular admissible collection $\l=\{\l_{w}\}_{w\in W}$ exists. Let $c_w\in \frc^*(k)$ be the image of $\l_w$ under the map $\frt_w^*\to \frc^*$. Then for each $\g\in \pi_{0}(\Bun_{G})(k)$ we have
\begin{equation*}
\#\AI_{G,X}(k)= q^{-D}\frac{1}{\#W}\sum_{w\in W} \sgn(w)\int_{\cM_{G,x}(c_w)(k)}1.
\end{equation*}
The integration notation on the right side means the number of  $k$ points in the stack $\cM_{G,x}(c_w)$, weighted by the reciprocals of automorphism groups (see \eqref{stack pt count}). In particular, each $\cM_{G,x}(c_w)$ has finitely many $k$-points.
\end{theorem}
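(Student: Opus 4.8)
The plan is to prove this by a Fourier-theoretic (Hitchin-fibration) argument that converts the (non-motivic-looking) count $\#\AI_{G,X}(k)$ into the weighted stacky count on $\cM_{G,x}(c_w)$, averaged over splitting types with a sign. First I would recall the Crawley--Boevey--van den Bergh style reformulation adapted to $G$-bundles: the number of absolutely indecomposable $G$-bundles is extracted from the number of $G$-bundles $\cE$ together with a suitable extra datum that ``kills'' the decomposable ones, using the combinatorics of the poset of parabolic reductions. Concretely one expects a Möbius-inversion identity expressing $\#\AI_{G,X}(k)$ as an alternating sum over Levi subgroups $M\subseteq G$ of counts involving $M$-bundles, and dually, via Harish-Chandra / parabolic induction, as a weighted count of $G$-bundles equipped with a character-type datum. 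This is where the admissible collection $\l=\{\l_w\}$ enters: $\l_w$ plays the role of a ``deformation parameter'' (as in the deformed preprojective algebra), and the $W$-regular/$W$-coregular genericity hypotheses guarantee that the only contributing reductions are the trivial ones, so that the alternating sum collapses to the indecomposable count. The hypotheses on $p$ (good, no $p$-torsion in $\xch(ZG)$ or $\pi_1(G)$) are exactly what is needed so that the structure theory of $\Aut(\cE)$ (proved earlier in the paper) behaves well and the relevant Fourier transforms on $\frg^*$ are available.

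Next I would run the Fourier transform along the fibers of the projection $\cM_{G,x} \to \Bun_G$, which (after removing poles) is a twisted cotangent-type stack. Summing a nontrivial additive character against the pairing with $\ph$ over the Higgs field $\ph$ picks out, fiber by fiber, exactly the ``central-modulo-unipotent automorphism'' condition defining absolute indecomposability: this is the $G$-analogue of Schiffmann's / DGT's passage between indecomposable bundles and Higgs bundles, and it is also the reason the residue of $\ph$ at $x$ must be constrained to a fixed adjoint orbit — the character is built from $\l_w$ and its residue data. Carrying this out requires: (1) identifying which coadjoint orbit (equivalently which $c_w\in\frc^*(k)$) corresponds to a given $\l_w\in\frt_w^*(k)$ under $\frt_w^*\to\frc^*$, using that for $q$ large the relevant orbits are regular semisimple of splitting type $w$; (2) keeping track of the Jacobian/normalization factors, which accumulate to the prefactor $q^{-D}$ with $D=(g-1)\dim G+\dim\cB$ — note $(g-1)\dim G$ is (minus) the dimension of $\Bun_G$ and $\dim\cB$ is the extra dimension contributed by the simple pole at $x$, so $q^{-D}$ is the expected cotangent-fiber normalization; and (3) summing over splitting types $w\in W$ with weights $\sgn(w)/\#W$, which is the shadow of the Weyl-integration / Steinberg-character formula that converts a sum over Cartan subalgebras $\frt_w$ into an integral over $\frc^*$.

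The main obstacle I anticipate is step (1)--(3) done \emph{uniformly and with honest bookkeeping}: namely proving that the naive Fourier/Möbius manipulations are actually legitimate given that the absolutely indecomposable locus is only constructible in $\Bun_G$ (not locally closed), so one cannot simply integrate a characteristic function of a substack. The resolution should be to work at the level of functions on $\Bun_G(k)$ (finite sets once one fixes the component $\g$ and bounds automorphism groups), prove the relevant identities there, and only then reinterpret the right-hand side as a stacky integral over $\cM_{G,x}(c_w)$; finiteness of $\cM_{G,x}(c_w)(k)$ should follow because fixing the residue orbit at $x$ rigidifies the Hitchin base enough (combined with the $W$-regularity of $\l_w$) to force the relevant moduli to be of finite type, or at least to have a convergent weighted point-count. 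A secondary difficulty is the reduction from general semisimple $G$ to a form where parabolic induction and the orbit combinatorics are controlled by the \emph{absolute} Weyl group $W$ rather than the relative one — this is presumably why $G$ is allowed to be non-split but the sum is indexed by the full $W$, and it will require the structural results on $\Aut(\cE)$ and on splitting types $\frt_w$ cited above. I would organize the write-up so that the function-theoretic identity on $\Bun_G(k)$ is isolated as the technical heart, with the Fourier transform and the Weyl-integration sum as the two conceptual inputs feeding into it.
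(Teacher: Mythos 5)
Your proposal captures the broad outline correctly — Fourier transform from the Lie algebra side to the Higgs bundle side, a deformation parameter $\l$ whose genericity kills all decomposable contributions, and a Weyl-averaged sum with signs — but it misses the central mechanism the paper actually uses and substitutes for it a plausible-sounding but underdeveloped alternative (Möbius inversion over Levis).

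The paper does not do Möbius inversion over Levi subgroups. Instead it isolates the notion of a \emph{selection function}: a function $\xi$ on $\frc(k)=(\frg\sslash G)(k)$, pulled back to $\frg$ via the Chevalley map, satisfying $\j{\one_{\fra},\chi^*\xi}_\frg=0$ for every nontrivial $G$-relevant toral subalgebra $\fra$, hence (Proposition~\ref{p:sel fun}) for the Lie algebra of every relevant subgroup that is not essentially unipotent. The crux is then Lemma~\ref{l:b!}: after a Plancherel step, the weighted count of Higgs fields on a fixed $\cE$ equals $q^{\dim\frh_x^\perp}\j{\one_{\frh_x},\chi^*\xi}_\frg$ where $\frh_x$ is the image of the Lie-algebra evaluation map $\frev_x:\aut(\cE)\to\frg$. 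By the structural results of \S\ref{s:aut} (Corollary~\ref{c:ev image rel}, requiring the hypotheses on $p$) $\frh_x$ is $G$-relevant, so the selection-function property forces the integrand to vanish off the absolutely indecomposable locus, and to equal a clean power of $q$ on it. This is much sharper than Möbius inversion over Levis because the relevant subalgebras need not be Levi subalgebras (e.g.\ Borel subalgebras and centralizers of nonsplit tori are relevant), and the vanishing is built directly into a single weight function rather than extracted by an inclusion-exclusion over moduli spaces, sidestepping the constructibility issue you flag.

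Your attribution of the $\sgn(w)/\#W$ weights to ``the Weyl integration / Steinberg-character formula'' is not how the paper gets there. The selection function is explicitly built (Proposition~\ref{p:cons sel fn from t}) as $\xi=\frac{1}{\#W}\sum_w\chi_{w!}(\psi\circ\l_w)$ from a $W$-coregular admissible collection $\{\l_w\}$, then rewritten via Springer theory (Proposition~\ref{p:adm kl}) in terms of trace functions $\k_{\l_w,w}$ of Weil sheaves on the Grothendieck--Springer resolution. The sign appears because the Fourier transform intertwines the $W$-equivariant structures on the universal Grothendieck sheaf and the universal Springer sheaf up to the sign character (Proposition~\ref{prop:signFT} and Corollary~\ref{cor:FTkappa}); the $W$-regularity of each $\l_w$ then makes the relevant Springer fibers points, collapsing the cohomological trace to $1$ and giving the clean count $\int_{\cM_{G,x}(c_w)(k)}1$. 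As written, your proposal does not supply a substitute for this sign-twisted Fourier/Springer input, and without it the $\sgn(w)$ in the final formula is not justified. I would call this a genuine gap: the high-level skeleton is right, but the argument would not close without replacing the vague ``Weyl integration'' appeal with the selection-function-plus-Springer-Fourier mechanism (or an honest substitute for it).
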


There is a nicer formula for $\#\AI_{G,X}(k)$ in terms of the cohomology of the moduli stack of parabolic $G$-Higgs bundles. 

Let $\cM_{G}(\bI_{x};0)$ be the moduli stack of triples  $(\cE,\cE^{B}_{x}, \ph)$ where $(\cE,\ph)$ is a $G$-Higgs bundle on $X$ with a simple pole at $x$, $\cE^{B}_{x}$ is a Borel reduction of $\cE$ at $x$ such that the residue $\res_{x}\ph$ strictly preserves $\cE^{B}_{x}$ (i.e. it is orthogonal to $\Ad(\cE^{B}_{x})$; see \S\ref{ss:stable Higgs}). The stack $\cM_{G}(\bI_{x};0)$ is the cotangent bundle of the moduli stack $\Bun_{G}(\bI_{x})$ of pairs $(\cE,\cE^{B}_{x})$ as above. For a generic choice of stability condition $\th$ (an element in $\xcoch(T)_{\RR}$, where $T$ is the Cartan quotient of any Borel subgroup of $G$), one can define the open substack $\cM_{G}(\bI_{x};0)^{\tst}$ of $\th$-stable parabolic Higgs bundles.

In the appendix we prove that when $p$ is sufficiently large (with respect to $G$ only), $\cM_{G}(\bI_{x};0)^{\tst}$ is a smooth Deligne-Mumford stack of finite type over $k$.  Moreover, for $p$ larger than an unspecified number depending on both $G$ and the genus $g$ of $X$, the parabolic Hitchin map
\begin{equation}\label{intro h}
    h:  \cM_{G}(\bI_{x};0)^{\tst}\to \cA_G(\bI_x)
\end{equation}
is proper. It turns out that these properties imply that there is a canonical $W$-action on the $\ell$-adic cohomology or compact support cohomology of $\cM_{G}(\bI_{x};0)^{\tst}$ (base changed to $\ov k$). Let $\cohoc{*}{\cM_{G}(\bI_{x};0)^{\tst}}\j{\sgn}$ be the sign-isotypic summand of $\cohoc{*}{\cM^{\tst}_{G}(\bI_{x};0), \Qlbar}$ under the $W$-action.

\begin{theorem}\label{th:intro} Suppose $p=\ch(k)$ is sufficiently large (depending only on the genus $g$ and the Dynkin diagram of $G$), and $q=\#k$ is sufficiently large with respect to the Weyl group of $G$, then we have an equality
\begin{equation*}
\#\AI_{G,X}(k)=q^{-D}\Tr(\Frob, \cohoc{*}{\cM_{G}(\bI_{x};0)^{\tst}}\j{\sgn}).
\end{equation*}
Here, as usual, $\Tr(\Frob, \upH^{*}_{c})$ means alternating trace.
\end{theorem}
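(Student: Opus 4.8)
The plan is to deduce Theorem~\ref{th:intro} from Theorem~\ref{th:intro c} by rewriting the alternating sum over $W$ appearing there as a trace on sign-isotypic cohomology. Since the factor $q^{-D}$ is common to both formulas, it suffices to prove
\begin{equation*}
\frac{1}{\#W}\sum_{w\in W}\sgn(w)\int_{\cM_{G,x}(c_w)(k)}1 \;=\; \Tr\!\big(\Frob,\,\cohoc{*}{\cM_{G}(\bI_{x};0)^{\tst}}\j{\sgn}\big).
\end{equation*}
First I would convert the left-hand side to cohomology. By the $W$-regular and $W$-coregular hypotheses on $\l_w$, every $G$-Higgs bundle with residue mapping to $c_w$ is automatically stable, so $\cM_{G,x}(c_w)$ is a Deligne--Mumford stack; by the appendix its Hitchin map to the finite-dimensional affine Hitchin base is proper, hence $\cM_{G,x}(c_w)$ is of finite type over $k$ and has only finitely many $k$-points. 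Behrend's Lefschetz trace formula for algebraic stacks then gives $\int_{\cM_{G,x}(c_w)(k)}1 = \Tr(\Frob,\cohoc{*}{\cM_{G,x}(c_w)_{\ov k}})$, so it remains to match $\tfrac{1}{\#W}\sum_w\sgn(w)\Tr(\Frob,\cohoc{*}{\cM_{G,x}(c_w)_{\ov k}})$ with $\Tr(\Frob,\cohoc{*}{\cM_{G}(\bI_{x};0)^{\tst}}\j{\sgn})$.

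Next I would spread both sides into one family over the residue parameter. Let $\wt{\cM}\to\frt^{*}$ be the moduli of tuples $(\cE,\cE^{B}_{x},\ph)$ with $\ph$ having a simple pole at $x$ whose residue preserves $\cE^{B}_{x}$ and has image $\xi$ in the universal Cartan $\frt^{*}$, subject to $\th$-stability. The fiber over $\xi=0$ is $\cM_{G}(\bI_{x};0)^{\tst}$; over a regular semisimple $\xi$ the Borel reduction $\cE^{B}_{x}$ is uniquely determined by the residue and $\th$-stability becomes automatic, so forgetting $\cE^{B}_{x}$ identifies that fiber with $\cM_{G,x}(c)$ for $c$ the image of $\xi$ in $\frc^{*}$. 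The family $\wt{\cM}$ carries a $W$-action lifting the standard one on $\frt^{*}$, and restricting this structure to $\xi=0$ --- legitimate because $\cM_{G}(\bI_{x};0)^{\tst}$ is smooth with proper parabolic Hitchin map, by the appendix --- is what produces the canonical $W$-action on $V:=\cohoc{*}{\cM_{G}(\bI_{x};0)^{\tst}_{\ov k}}$ in the statement. Since each $c_w$ lifts to a $\ov k$-point of $\frt^{*}$ of splitting type $w$, the stack $\cM_{G,x}(c_w)_{\ov k}$ is the corresponding $w$-twisted fiber of $\wt{\cM}$; the heart of the argument is then a global Springer-type input showing
\begin{equation*}
\frac{1}{\#W}\sum_{w\in W}\sgn(w)\,\Tr\!\big(\Frob,\cohoc{*}{\cM_{G,x}(c_w)_{\ov k}}\big)=\Tr\!\big(\Frob,\,V\j{\sgn}\big),
\end{equation*}
which together with the first step is Theorem~\ref{th:intro}. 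Morally, Frobenius on the $w$-twisted fiber is $w$ composed with the untwisted Frobenius, which commutes with $W$, and $\tfrac{1}{\#W}\sum_w\sgn(w)w$ is the projector onto $V\j{\sgn}$; but, exactly as the cohomology of a Springer fiber over a regular semisimple point differs from that over $0$, the individual $\cohoc{*}{\cM_{G,x}(c_w)_{\ov k}}$ need not equal $V$, so one genuinely has to decompose the (proper, hence pure, hence decomposable by BBD) pushforward $Rh_{!}\Qlbar$ along the parabolic Hitchin map, extract its $\sgn$-isotypic summand, and compare it stalkwise over the Hitchin base with the corresponding pushforwards for the regular semisimple residue stacks.

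The \emph{main obstacle} is this last comparison. One must: (i) construct the $W$-action on the family $\wt{\cM}$ and match it with the canonical $W$-action on $V$ of the theorem statement; (ii) control how compactly supported cohomology behaves as the residue degenerates from regular semisimple to nilpotent-mod-Borel, proving that after the sign projector the answer is precisely $V\j{\sgn}$ --- a global analogue of the fact that the Springer correspondence sends the sign representation to the trivial local system on the zero orbit, for which one expects to use properness of the parabolic Hitchin map, purity, and the decomposition theorem; (iii) verify that $\th$-stability on the $\xi=0$ fiber is compatible with stability-via-generic-residue on the regular semisimple fibers, uniformly over $\wt{\cM}$; and (iv) ensure $p$ is large enough in terms of $g$ and the Dynkin diagram of $G$ for the appendix inputs to apply to the whole family, while tracking the component $\g\in\pi_{0}(\Bun_{G})$ throughout. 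Everything outside this comparison is an application of Behrend's trace formula together with elementary representation theory of $W$.
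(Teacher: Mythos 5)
Your proposal follows the paper's approach in its essential steps, but reorganizes them and reaches for a heavier tool than is actually used. Two concrete points of comparison:

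First, the paper does not deduce Theorem~\ref{th:intro} from Theorem~\ref{th:intro c}; instead both are derived in parallel (as Corollaries~\ref{c:main} and~\ref{c:AI Mc}) from the common intermediate Corollary~\ref{c:AI M Spr}. The paper's Corollary~\ref{c:AI Mpar} passes directly from the trace on $\cM_{G,x}(c_w)$ weighted by Springer-fiber cohomology to $\Tr(\Fr^*\circ w,\cohoc{*}{\ov\cM_G(\bI_x;\l_w)})$ via the Cartesian diagram~\eqref{Mpar cart}, using only $W$-coregularity. Your route through Theorem~\ref{th:intro c} additionally imposes $W$-regularity so that $\cM_{G,x}(c_w)_{\ov k}\cong\cM_G(\bI_x;\l_w)$; this is harmless under the ``$q$ large'' hypothesis, but slightly lossy, and requires one extra care: it is not the residue $\res_x\ph$ alone that pins down the Borel reduction $\cE^B_x$, but the chosen lift $\l_w$ of $c_w$ to $\frt^*$ (the residue as a point of $[\frg^*/G]$ has $|W|$ compatible Borel reductions).

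Second, the ``main obstacle'' you identify — comparing $\cohoc{*}$ of $\frt^*$-fibers as $\l$ varies from regular semisimple to $0$ — is exactly the crux, but the paper's input is lighter than the decomposition-theorem argument you sketch. Rather than decomposing $Rh_!\Qlbar$ along the Hitchin base using purity and BBD, the paper proves (Corollary~\ref{cor:W-action}, citing~\cite[Theorem 10.2.2]{DGT}) that each $\bR^i\phi^{\tst}_{x,!}\Qlbar$ is geometrically a constant $W$-equivariant sheaf on $\frt^*$, using only smoothness of $\phi^{\tst}_x$ and the contracting $\Gm$-action on $\cM_G(\bI_x)^{\tst}$ whose fixed locus sits inside the proper global nilpotent cone. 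Constancy over $\frt^*$ then immediately gives $\Tr(\Fr^*\circ w,\cohoc{*}{\cM_G(\bI_x;\l_w)})=\Tr(\Fr^*\circ w,\cohoc{*}{\cM_G(\bI_x;0)^{\tst}})$, which combined with the sign-projector computation $\tfrac{1}{\#W}\sum_w\sgn(w)\Tr(\Fr^*\circ w,V)=\Tr(\Fr,V\j{\sgn})$ closes the argument. Your remaining ingredients — the family $\wt\cM\to\frt^*$, the $W$-equivariant structure, the intertwining of $\Frob$ with $\Fr^*\circ w$ on the regular semisimple fibers, the compatibility of $\th$-stability across fibers (Lemma~\ref{l:coreg lam stable}), and the role of the appendix in bounding $p$ — all match the paper's actual argument.
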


Let  $\cN_{G}(\bI_{x})^{\tst}$ be the zero fiber of $h$ in \eqref{intro h}; this is the parabolic version of the global nilpotent cone. Assuming the properness of $h$, it can be shown that the inclusion $\cN_{G}(\bI_{x})^{\tst}\incl \cM_{G}(\bI_{x};0)^{\tst}$ induces an isomorphism on cohomology. In particular, there is a canonical action of $W$ on the $\ell$-adic (co)homology of $\cN_{G}(\bI_{x})^{\tst}$.

\begin{cor}\label{c:intro} Under the same assumption as in Theorem \ref{th:intro}, we have an equality
\begin{equation*}
\#\AI_{G,X}(k)=q^{D}\Tr(\Frob, \homog{*}{\cN_{G}(\bI_{x})^{\tst}}\j{\sgn}).
\end{equation*}
\end{cor}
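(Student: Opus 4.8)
The plan is to deduce the Corollary directly from Theorem \ref{th:intro} by a Poincaré-duality argument, transporting the statement about compactly-supported cohomology of $\cM_{G}(\bI_{x};0)^{\tst}$ to one about homology of the nilpotent cone $\cN_{G}(\bI_{x})^{\tst}$. First I would invoke the properness of the parabolic Hitchin map $h$ in \eqref{intro h} — available under the stated hypotheses on $p$ — to reduce from $\cM_{G}(\bI_{x};0)^{\tst}$ to its zero-fiber: as noted in the excerpt just before the Corollary, properness of $h$ implies that the closed inclusion $\cN_{G}(\bI_{x})^{\tst}\incl \cM_{G}(\bI_{x};0)^{\tst}$ is a cohomology isomorphism (this is the usual argument that a proper map from a smooth total space with contracting $\Gm$-action on the base has the zero-fiber as a deformation retract on cohomology, via the Bia\l{}ynicki-Birula–type or nearby-cycles argument; one uses that $\cA_G(\bI_x)$ carries a $\Gm$-action contracting to $0$ and $h$ is $\Gm$-equivariant). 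The same argument applies $W$-equivariantly, since the $W$-action constructed in the appendix is compatible with $h$ and fixes the zero-section of $\cA_G(\bI_x)$; hence we get a $W$-equivariant isomorphism $\cohoc{*}{\cM_{G}(\bI_{x};0)^{\tst}}\cong \cohoc{*}{\cN_{G}(\bI_{x})^{\tst}}$, and in particular on sign-isotypic parts.

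Next I would apply Poincaré duality for the smooth Deligne–Mumford stack $\cM_{G}(\bI_{x};0)^{\tst}$. Let $d=\dim \cM_{G}(\bI_{x};0)^{\tst}$; since this stack is the $\th$-stable locus in the cotangent bundle of $\Bun_{G}(\bI_{x})$, and $\dim \Bun_{G}(\bI_{x}) = (g-1)\dim G + \dim\cB$ (one Borel reduction at $x$ contributes $\dim\cB$, matching the definition of $D$), we have $d=2D$. Poincaré duality gives a $\Frob$- and $W$-equivariant isomorphism
\begin{equation*}
\cohoc{i}{\cM_{G}(\bI_{x};0)^{\tst}}\cong \cohog{2d-i}{\cM_{G}(\bI_{x};0)^{\tst}}^{\vee}(-d),
\end{equation*}
and combining with the cone isomorphism above and the self-duality $\cohog{*}{\cN^{\tst}}^{\vee}\cong\homog{*}{\cN^{\tst}}$ (Borel–Moore homology of the proper stack $\cN_{G}(\bI_{x})^{\tst}$ agrees with ordinary homology), one rewrites $\Tr(\Frob,\cohoc{*}{\cM^{\tst}}\j{\sgn})$ as $q^{d}\Tr(\Frob,\homog{*}{\cN^{\tst}}\j{\sgn})$ up to the alternating-sum bookkeeping. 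The shift by $q^{d}=q^{2D}$ against the $q^{-D}$ in Theorem \ref{th:intro} leaves exactly $q^{D}$, which is the factor in the Corollary. One must check that the $\sgn$-isotypic projector commutes with all of these maps — it does, since $W$ acts by functorial correspondences and duality is natural — and that passing to the sign part before or after taking the alternating trace makes no difference.

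The one genuinely delicate point — hence the main obstacle — is the comparison $\cohoc{*}{\cM_{G}(\bI_{x};0)^{\tst}}\cong\cohoc{*}{\cN_{G}(\bI_{x})^{\tst}}$, i.e.\ that restriction to the nilpotent cone is an isomorphism. This is not formal: it rests on properness of $h$ together with the existence of a $\Gm$-action on $\cM_{G}(\bI_{x};0)^{\tst}$ (scaling the Higgs field) lifting a contracting action on the Hitchin base, and one must know the fixed-point / flow structure is tame enough (e.g.\ that the Bia\l{}ynicki-Birula decomposition exists in this stacky setting, or argue via the specialization map and proper base change as in Laumon's or Ngô's treatment of the global nilpotent cone). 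I would treat this in the body of the paper where the Hitchin map is set up, and here simply cite it; once it is in hand, the Corollary follows purely by duality as sketched. A minor additional check is that $\cN_{G}(\bI_{x})^{\tst}$ is proper over $k$ (so that its Borel–Moore homology coincides with singular homology and is dual to compactly supported cohomology), which again is immediate from properness of $h$ and the fact that the zero-fiber of a proper map over a point is proper.
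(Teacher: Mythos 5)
Your proposal matches the paper's argument: the paper proves Corollary~\ref{c:intro} (as Corollary~\ref{c:main N}) by first using the contracting $\Gm$-action (scaling the Higgs field) to show that restriction induces an isomorphism $\cohog{*}{\cM_G(\bI_x;0)^{\tst}}\isom\cohog{*}{\cN_G(\bI_x)^{\tst}}$ on \emph{ordinary} cohomology, and then dualizing — Poincar\'e duality for the smooth Deligne--Mumford stack $\cM_G(\bI_x;0)^{\tst}$ of dimension $2D$ on one side, and the fact that $\cN_G(\bI_x)^{\tst}$ is proper on the other — to obtain $\homog{*}{\cN_G(\bI_x)^{\tst}}\cong\cohoc{*}{\cM_G(\bI_x;0)^{\tst}}[4D](2D)$, from which the desired trace identity follows by peeling off the Tate twist against the $q^{-D}$ in Theorem~\ref{th:intro}.

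One small inaccuracy: you write ``hence we get a $W$-equivariant isomorphism $\cohoc{*}{\cM_G(\bI_x;0)^{\tst}}\cong\cohoc{*}{\cN_G(\bI_x)^{\tst}}$.'' This does \emph{not} follow from the contracting $\Gm$-argument and is in fact false — that argument produces an isomorphism only on ordinary cohomology $\cohog{*}{-}$, and the two stacks have different dimensions ($2D$ vs.\ $D$), so their compactly supported cohomologies cannot literally agree. Fortunately you do not actually invoke this; the chain you execute (Poincar\'e duality on $\cM^{\tst}$, then the ordinary-cohomology cone isomorphism, then duality for the proper $\cN^{\tst}$) is the correct one and yields the factor $q^{2D}$ that, against the $q^{-D}$ of Theorem~\ref{th:intro}, gives the stated $q^{D}$. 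You should simply delete the stray sentence claiming an isomorphism of $\cohoc{*}$; the rest of the proposal is sound, and the $W$-equivariance of all the maps is indeed automatic as you say (the $W$-action comes from the sheaf $\bR^i\phi^{\tst}_{x,!}\Qlbar$ and is compatible with all the natural maps involved).
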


\begin{remark} In the main body of the paper, we state and prove a version of the above theorems for $G$ connected reductive. See Corollaries \ref{c:main} and \ref{c:main N}. In particular, it recovers the case $G=\GL_{n}$ for large characteristics. We also generalize these results to the counting of indecomposable bundles with parahoric level structures in \S\ref{s:par}.
\end{remark}

\begin{remark} 
The lower bound on $p$ for which Theorem \ref{th:intro} applies is not explicit at the moment. The issue is that we do not know at the moment how large $p$ needs to be in order for the parabolic Hitchin map to satisfy the existence part of the valuative criterion. We make a general argument for the properness in Appendix \S\ref{a:proper} by reducing to the known case of characteristic zero, which unfortunately leaves the bound for $p$ unspecified. 
In \cite[\S 6.3.]{AHLH}, the authors give an explicit bound on $p$ for the usual (non-parabolic) Hitchin map to satisfy the existence part of the valuative criterion. 
We hope that in the parabolic case, the bound on $p$ can similarly be made explicit.

The lower bound of $q$ for which Theorems \ref{th:intro c} and \ref{th:intro} apply can be made explicit; see Remark \ref{r:q bound}.
\end{remark}

\subsection{Other results}

We prove several results that are of independent interest. In this subsection, our convention is that $K$ is an algebraically closed field, and $G$ is a connected reductive group over $K$. 

The first side result concerns the structure of the automorphism group of a $G$-bundle over a general base $X$. The reducedness of such automorphism groups and the constraint on its maximal tori are crucial for the counting of absolutely indecomposable $G$-bundles.

\begin{theorem}\label{th:intro aut} Let $X$ be a stack over an algebraically closed field $K$ and $G$ be a connected reductive group over $K$. Let $\cE$ be a $G$-bundle over $X$ whose automorphism group $\Aut(\cE)$ is affine of finite type over $K$.

Suppose either $\ch(K)=0$ or $p=\ch(K)$ is good for $G$, and neither $\xch(ZG)$ nor $\pi_{1}(G)$ has $p$-torsion (when $p$ satisfies these conditions, $p$ is said to be \goodp for $G$). Then
\begin{enumerate}
\item The automorphism group $\Aut(\cE)$ is reduced.
\item If $X$ is connected, then any maximal torus $A$ of $\Aut(\cE)$ is canonically isomorphic to the connected center of a Levi subgroup of $G$, up to $G$-conjugacy.
\end{enumerate}
\end{theorem}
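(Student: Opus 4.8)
The plan is to handle the two parts separately, reducing (1) to the case $G=\GL_n$ by a deformation-theoretic argument and reducing (2) to the structure theory of torus actions on principal bundles. Throughout, write $\cG_\cE:=\cE\times^G G$ (with $G$ acting on itself by conjugation) for the inner-form group scheme of automorphisms of $\cE$ over $X$, so $\Aut(\cE)=\Gamma(X,\cG_\cE)$ and $\Lie\Aut(\cE)=H^0(X,\ad\cE)$, finite-dimensional because $\Aut(\cE)$ is of finite type. For (1), if $\ch K=0$ this is Cartier's theorem, so assume $p=\ch K$ is pretty good. Over the perfect field $K$, reducedness of the finite-type group scheme $\Aut(\cE)$ is equivalent to smoothness, hence — by homogeneity — to formal smoothness at the identity: for every square-zero extension $R'\twoheadrightarrow R$ of local Artinian $K$-algebras with kernel $I$, every automorphism of $\cE_R$ reducing to the identity over $K$ must lift to one of $\cE_{R'}$.

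To establish this lifting property, I would first fix a faithful representation $V$ of $G$ for which the embedding $\frg\incl\gl(V)$ is \emph{saturated}, meaning $\frg$ is a $G$-module direct summand of $\gl(V)$; the existence of such a $V$ when $p$ is pretty good is the essential input, and it is precisely here that the absence of $p$-torsion in $\xch(ZG)$ and $\pi_1(G)$ is used (cf.\ the work of Herpel and others on pretty good primes). Put $\cV=\cE\times^G V$; then the $G$-structure on $\cV$ is a section $\s$ of the smooth $X$-scheme $\cY=\cE\times^G(\GL(V)/G)$, and $\Aut(\cE)=\Stab_{\Aut(\cV)}(\s)$ inside $\Aut(\cV)=(\End_X\cV)^\times$. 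Now $\Aut(\cV)$ is formally smooth, since its $R$-points are the units of $\End_X\cV\otimes_K R$; thus a given automorphism $u$ of $\cE_R$, viewed as an automorphism of $\cV_R$, lifts to some $\tilde u\in\Aut(\cV)(R')$. The obstruction to correcting $\tilde u$ so that it also preserves $\s$ — equivalently, to lifting $u$ into $\Aut(\cE)$ — lies in $\coker\bigl(H^0(X,\uEnd\cV)\to H^0(X,\cQ)\bigr)\otimes I$, where $\cQ:=\uEnd(\cV)/\ad(\cE)\cong\s^*T_{\cY/X}$; and saturation makes $\uEnd(\cV)\twoheadrightarrow\cQ$ a split surjection of $\cO_X$-modules, so $H^0(X,\uEnd\cV)\to H^0(X,\cQ)$ is surjective and the obstruction vanishes. (Equivalently, the obstruction lies in $H^1(X,\ad\cE)\otimes I$, which injects into $H^1(X,\uEnd\cV)\otimes I$, where it is already zero because $\Aut(\cV)$ is smooth.) The main obstacle, I expect, is constructing the saturated $V$ — everything else is routine deformation theory — and the need for the stated hypotheses on $p$ is confirmed by the existence of non-reduced examples at bad primes.

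For (2): assume $X$ connected; after replacing it by a smooth atlas I may take $X$ to be a connected scheme and fix a geometric point $x$. Let $A\subset\Aut(\cE)$ be a maximal torus (it exists, is reduced, and is unique up to conjugacy by $\Aut(\cE)^{\circ}$). The $A$-action on the total space of $\cE$ commutes with the $G$-action, so on the fibre $\cE_x$, after a trivialisation $\cE_x\cong G$, it is left translation through a homomorphism $\a_x\colon A\to G$ well-defined up to $G$-conjugacy; since the $G$-conjugacy classes of homomorphisms $A\to G$ form a discrete set, the class of $\a_x$ is locally constant in $x$, hence constant. Moreover $\bigcap_x\ker\a_x=\ker(A\incl\Aut(\cE))=1$, and by connectedness $\ker\a_x$ is independent of $x$, so each $\a_x$ is a closed immersion with image a subtorus $S\subset G$. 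The dictionary between torus actions on a $G$-bundle and reductions of structure group — the principal-bundle analogue of the isotypic decomposition of a graded vector bundle — then gives a canonical reduction $\cE_L$ of $\cE$ to the Levi subgroup $L:=Z_G(S)$, under which $S\subseteq Z(L)^{\circ}\subseteq\Aut_L(\cE_L)\subseteq\Aut(\cE)$. If $S$ were strictly contained in $Z(L)^{\circ}$, then $Z(L)^{\circ}$ would act faithfully on $\cE_L$, hence on $\cE$, producing a torus in $\Aut(\cE)$ strictly larger than $A$ and contradicting maximality; therefore $A\isom S=Z(L)^{\circ}$ canonically. Finally, a different choice of $x$ or of trivialisation conjugates $(S,L)$ by an element of $G$, and replacing $A$ by a conjugate $hAh^{-1}$ with $h\in\Aut(\cE)^{\circ}$ transports $\cE_L$ by $h$ and yields the same $G$-conjugacy class of Levis; so $L$ is well-defined up to $G$-conjugacy. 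The delicate point here is the precise formulation and verification of the torus-action/Levi-reduction dictionary and of the local constancy of $\a_x$, together with their compatibility with a stacky base.
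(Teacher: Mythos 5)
Your proposal is correct in outline, but it takes a genuinely different route for part (1) and an essentially equivalent route for part (2); the differences are worth spelling out.

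For part (1), the paper does \emph{not} argue via deformation theory and a saturated faithful representation. Instead it introduces the reduced subgroup $A_{\cE}\subset\Aut(\cE)$ and shows that the inclusion $\Lie A_{\cE}\incl\aut(\cE)$ is surjective, handling the nilpotent and semisimple parts of the Lie algebra separately via Jordan decomposition: nilpotent elements lift to one-parameter subgroups via a \emph{unipotent logarithm} $\cU\isom\cN$ (Lemmas \ref{l:nilp aut}--\ref{l:unip log}, whose existence under the \goodp hypothesis rests on quasi-logarithms \`a la Kazhdan--Varshavsky/Bardsley--Richardson), while semisimple elements are shown to lie in the Lie algebra of a canonical diagonalizable subgroup $D_{\g}\subset\Aut(\cE)$ which is reduced because its image under $\ev_x$ is a \emph{saturated} diagonalizable subgroup of $G$, and these are reduced when $p$ is \goodp (Proposition \ref{p:dc reduced}, via Lemma \ref{l:centralizer Levi}). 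Your approach---reducing formal smoothness of $\Aut(\cE)$ to that of $\Aut(\cV)$ for a vector bundle $\cV$, via a faithful $V$ with $\frg$ a $G$-module summand of $\gl(V)$---is in fact much closer to Herpel's original proof for centralizers (which the paper explicitly says it avoids; see the remark after Corollary \ref{c:Gfix}): you are globalizing the ``reductive pair'' technique from $X=\pt/H$ to an arbitrary base stack. The tradeoff: your argument is shorter and self-contained modulo the one input, but the paper's route produces the auxiliary statements (Corollaries \ref{c:nilp aut}, \ref{c:ss aut}, \ref{c:diag reduced}, \ref{c:ev image dc}) that are reused heavily in part (2) and throughout \S\ref{s:aut}. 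Your deformation calculation itself is sound (the obstruction does lie in $\coker(H^0(X,\uEnd\cV)\to H^0(X,\cQ))\ot I$, and $\cO_X$-module splitting of $\uEnd\cV\surj\cQ$ kills it), and the argument does not require $H^0(X,\uEnd\cV)$ to be finite-dimensional, since lifting units of $\End_X(\cV)\ot R$ through a square-zero extension is automatic.

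Two things you should sharpen. First, the existence of a faithful $V$ with $\frg$ a $G$-summand of $\gl(V)$ under the \goodp hypothesis is the whole ballgame: it is a theorem of Herpel (essentially his characterization of pretty good primes via separable/reductive-pair embeddings into $\GL_n$), not a folklore fact, and should be cited precisely; for $G$ semisimple, \goodp $=$ very good and one can use the nondegenerate invariant form on $\frg$, but the general reductive case requires Herpel's construction. Second, in part (2) you write $Z(L)^{\circ}$ where the precise object is $C_L$, the maximal torus of $ZL$ (the reduced neutral component); the two coincide exactly because $\xch(ZL)$ has no $p$-torsion, which follows from \eqref{xz tors} and the hypothesis on $\xch(ZG)$---this is not automatic and is part of what the proof must supply. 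For part (2), your argument otherwise runs parallel to the paper's: local constancy of the conjugacy class of $\a_x$ is the paper's Lemma \ref{l:hom from D isolated}/\ref{l:A conj}, the Levi reduction is Lemma \ref{l:red to centralizer}, and the maximality step is Corollary \ref{c:ev image dc} combined with Lemma \ref{l:max torus D relevant}. One small difference: the paper shows $\ker(\ev_x)$ contains no nontrivial diagonalizable subgroup from a \emph{single} $x$ (Lemma \ref{l:ker ev unip}, via the eigenbundle decomposition), whereas you intersect kernels over all $x$; both work, but the single-point version is cleaner and is what makes the evaluation map a useful tool elsewhere in the paper.
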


This general result has an immediate Lie theoretic consequence:
\begin{cor} Suppose either $\ch(K)=0$ or $p=\ch(K)$ is \goodp for $G$. Let $H$ be a $K$-group acting on $G$ by group automorphisms. Then the fixed point subgroup scheme $G^{H}$ is reduced (hence smooth over $K$). Moreover, any maximal torus of $G^H$ is the connected center of a Levi subgroup of $G$.
\end{cor}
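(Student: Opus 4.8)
The plan is to recognize $G^{H}$ as the automorphism group of a $G$-bundle over a connected stack and then invoke Theorem~\ref{th:intro aut}.

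First I would reduce to the case where $H$ is of finite type. The subgroup scheme $G^{H}\subseteq G$ depends only on the image of $H$ in the automorphism group scheme $\uAut(G)$, and, since being fixed by $H$ is a closed condition, only on the scheme-theoretic closure of that image. So we may replace $H$ by a closed $K$-subgroup scheme of $\uAut(G)$; then $G^{H}$ is a closed subgroup scheme of $G$, in particular affine of finite type over $K$, and the standing hypothesis of Theorem~\ref{th:intro aut} on the automorphism group is automatic.

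Next comes the main construction. Take $X=BH=[\pt/H]$, a connected stack over $K$. The action of $H$ on $G$ makes $G$ into a group scheme $\mathcal G$ over $X$ that is smooth-locally (namely after pullback along $\pt\to BH$) the constant reductive group $G$. Let $\cE_{0}$ be the trivial $\mathcal G$-torsor over $X$; then $\Aut_{X}(\cE_{0})$ is the group of global sections of $\mathcal G$ over $X$, i.e.\ the $H$-invariants of $G$, which is precisely $G^{H}$. (When the action of $H$ is inner, say through $\rho\colon H\to G$, one has $\mathcal G=\underline{G}$ with the conjugation action, $\cE_{0}$ is literally the $G$-bundle on $BH$ attached to $\rho$, and one recovers $G^{H}=Z_{G}(\rho(H))$.) Applying Theorem~\ref{th:intro aut} to $\cE_{0}$ then yields both assertions: part~(1) shows $G^{H}=\Aut_{X}(\cE_{0})$ is reduced, hence smooth over the algebraically closed (in particular perfect) field $K$; and part~(2), since $X$ is connected, shows that every maximal torus of $G^{H}$ is, canonically up to $G$-conjugacy, the connected center of a Levi subgroup of $G$.

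The step with genuine content is the application of Theorem~\ref{th:intro aut} to $\cE_{0}$: the structure ``group'' $\mathcal G$ of this torsor is only a \emph{form} of the constant group $G$, and a truly outer form whenever the $H$-action is not by inner automorphisms. This cannot be circumvented: a $G$-bundle with constant structure group over $BH$ has automorphism group a centralizer $Z_{G}(S)$ of a subset $S\subseteq G$, whereas $G^{H}$ need not be of that shape --- for instance $\SO_{3}\subseteq \SL_{3}$, the fixed points of an outer involution, is not a centralizer $Z_{\SL_{3}}(S)$ of any subset, since every such centralizer contains the center $\mu_{3}$ of $\SL_{3}$ while $\SO_{3}$ does not. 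So the real point is to check that the proof of Theorem~\ref{th:intro aut} --- which uses only the root datum of $G$ (for the conditions on $p$) and the étale-local structure of a $G$-bundle --- applies verbatim with the constant $G$ replaced by such a locally constant reductive group scheme $\mathcal G$. This robustness is the main thing I expect to have to verify; granting it, the corollary is formal.
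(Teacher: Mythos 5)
Your approach is exactly the paper's: the paper's proof of this corollary (Corollary~\ref{c:Gfix}) is a one-liner, ``Consider the stack $X=\pt/H$ and the $G$-bundle $\cE=G/H$ over it. Then $\Aut(\cE)=G^{H}$\ldots'', followed by an appeal to Corollaries~\ref{c:aut red} and~\ref{c:ev image rel}. The concern you raise about outer actions is legitimate and is not addressed explicitly in the paper: as you observe, when the $H$-action on $G$ is not inner, $[G/H]$ is a torsor over $BH=[\pt/H]$ under the form $\mathcal{G}=[G/H]$ of $G$, not under the constant group scheme, and your $\SO_3\subset\SL_3$ example correctly shows this cannot be dodged, since the automorphism group of a constant $G$-bundle over $BH$ is always a centralizer $C_G(\rho(H))$ for some homomorphism $\rho\colon H\to G$. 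The verification you defer is, as you expect, routine: the arguments of Section~\ref{s:aut} --- the unipotent logarithm of Lemma~\ref{l:unip log}, diagonalizable subgroups and their centralizers, the reduction of structure group in Lemma~\ref{l:red to centralizer}, saturation (Corollary~\ref{c:ev image dc}) and $G$-relevance (Corollary~\ref{c:ev image rel}) --- are local on $X$ and use only $\Ad(G)$-invariant structure on $G$, so they descend unchanged to torsors under any form of $G$ over $X$; and at a $k$-point $x$ of $X=BH$ the fiber $\mathcal{G}_x$ is canonically isomorphic to $G$ up to inner automorphism, so $\ev_x$ still produces a subgroup of $G$ well-defined up to conjugacy. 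Your preliminary reduction of $H$ to a finite-type closed subgroup of $\uAut(G)$ is a helpful precision also left implicit in the paper.
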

When $H$ acts on $G$ by inner automorphisms (i.e., $G^H$ is the centralizer of a $K$-subgroup of $G$), the reducedness of $G^H$ is proved by Herpel \cite{Herpel} using different methods.

Other results concern counting of indecomposable bundles in low genus. When $X\cong \PP^{1}$, both sides in Theorem \ref{th:intro} are zero: the left side is zero because all $G$-bundles on $\PP^{1}$ has reduction to $T$ by Grothendieck's theorem; the right side is zero because $\cM_{G}(\bI_{x};0)^{\tst}$ is empty. The case when $X$ has genus one is particularly interesting. In this case, assuming $G$ semisimple, there is only a finite number of indecomposable $G$-bundles over $X$ over the algebraically closed field $K$. The set  $\AI_{G,X}(K)$ admits a Lie-theoretic description as follows.

\begin{theorem}\label{th:intro ell} Let $G$ be a semisimple group over an algebraically closed field $K$, and let $X$ be an elliptic curve over $K$. When $\ch(K)=p>0$, we assume $X$ is ordinary and $p=\ch(K)$ is \goodp for $G$. 

Then upon making certain choices (such as a homology basis for $X$), we have a bijection
\begin{eqnarray}
    \AI_{G,X}(K)\bij N_2(G).
\end{eqnarray}
Here $N_2(G)$ is the set of $G$-conjugacy classes of commuting triples $(s,t,u)$ in $G$, such that $s,t$ are semisimple, $u$ is unipotent, and their simultaneous centralizer $G_{s,t,u}$ contains no nontrivial torus.
\end{theorem}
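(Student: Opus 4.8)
The plan is to run the classification of semistable $G$-bundles on the elliptic curve $E:=X$ in parallel with the Jordan decomposition of commuting triples, using Theorem~\ref{th:intro aut} to translate between the automorphism group of a bundle and a Lie-theoretic centralizer. As a first reduction: since $G$ is semisimple, $Z(G)^\circ$ is trivial, so ``$\Aut(\cE)^\circ$ is unipotent mod center'' simply says $\Aut(\cE)^\circ$ is unipotent, and since $\Aut(\cE)$ is affine of finite type on a curve, Theorem~\ref{th:intro aut} applies and this is equivalent to $\Aut(\cE)$ containing no nontrivial torus. If an absolutely indecomposable $\cE$ were unstable, pass to its canonical (Harder--Narasimhan--Behrend) reduction to a parabolic $P\sne G$ with Levi quotient $M$; by uniqueness of the canonical reduction every automorphism of $\cE$ preserves it, so $\Aut(\cE)=\Aut_P(\cE_P)$. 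The bundle of unipotent radicals $R_u(P)(\cE_P)$ is filtered by $M$-isotypic pieces of $\ad(\cE_M)$, each a semistable vector bundle of strictly positive degree (the defining inequality of the canonical reduction; that semistability survives the relevant associated-bundle constructions uses that $p$ is \goodp), hence with vanishing $H^1$ on $E$. Therefore $\Aut_P(\cE_P)\surj\Aut_M(\cE_M)\supseteq Z(M)^\circ\neq1$, so $\Aut(\cE)$ contains a nontrivial torus, a contradiction. Thus every class in $\AI_{G,X}(K)$ is represented by a semistable bundle.

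Next I would invoke the classification of semistable $G$-bundles on an elliptic curve -- going back to Atiyah and Friedman--Morgan(--Witten) in the split case, and available in the generality needed here (all components $\om\in\pi_1(G)$; $E$ ordinary and $p$ \goodp in characteristic $p$) through the structural study of $\Bun_G(E)$ carried out in this paper following Fratila--Gunningham--Li. Its shape is: each semistable $\cE$ has a functorial semisimplification to a polystable bundle $\cE^{\ss}$, together with a unipotent datum placing $\cE$ in the nilpotent neighborhood of $\cE^{\ss}$; writing $\widetilde G$ for the simply connected cover and with the homology basis of $E$ fixed, a polystable $G$-bundle in component $\om$ is the same as the conjugacy class of a pair $(\widetilde s,\widetilde t)$ in $\widetilde G$ of finite order with $[\widetilde s,\widetilde t]$ equal to the central element of $\ker(\widetilde G\to G)=\pi_1(G)$ attached to $\om$; the unipotent datum is then a unipotent class $u$ in the reductive group $H:=Z_{\widetilde G}(\widetilde s,\widetilde t)^\circ$; and under this dictionary $\Aut(\cE)$ agrees, up to a central isogeny (hence with no change to the presence of tori), with $Z_G(s,t,u)$, where $(s,t)$ is the image of $(\widetilde s,\widetilde t)$ in $G$ -- a genuine commuting pair, whose degree $[\widetilde s,\widetilde t]\in\pi_1(G)=\pi_0(\Bun_G)$ is intrinsic to it because the lift is unique up to a central factor.

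Combining the two steps, an absolutely indecomposable $\cE$ is the same as a semistable $\cE$ for which $Z_G(s,t,u)$ contains no nontrivial torus, i.e.\ for which $(s,t,u)$ defines a class of $N_2(G)$; tracing the construction backwards shows the resulting assignment $\AI_{G,X}(K)\to N_2(G)$ is injective. It is surjective because, given $(s,t,u)\in N_2(G)$, torus-freeness of $Z_G(s,t,u)$ forces $H$ to be semisimple and $u$ distinguished in it, and for a commuting (or almost-commuting, when $[\widetilde s,\widetilde t]\neq1$) pair the semisimplicity of $H$ forces both $\widetilde s,\widetilde t$ to have finite order -- the Lie-theoretic input whose positive-characteristic version uses $p$ \goodp, with ordinariness of $E$ entering on the bundle side -- so the pair genuinely comes from a polystable bundle in component $\om$ and $u$ from its unipotent thickening, producing an absolutely indecomposable $\cE$ mapping to $(s,t,u)$. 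Finally $N_2(G)$ is finite: (almost-)commuting pairs with semisimple connected centralizer form finitely many conjugacy classes (they are the zero-dimensional strata, listable via Kac coordinates / the vertices of the fundamental alcove), and a semisimple group has only finitely many distinguished unipotent classes; hence $\AI_{G,X}(K)$ is finite and in bijection with $N_2(G)$.

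The step I expect to be the main obstacle is the middle one: establishing the classification of semistable $G$-bundles on $E$, the identification $\Aut(\cE)\cong Z_{\widetilde G}(\widetilde s,\widetilde t,u)$, and the Jordan decomposition uniformly over all components $\om\in\pi_1(G)$ and in positive characteristic. This is where $|\pi_1(G)|$ being prime to $p$ (a consequence of $p$ being \goodp) is used, so that degree-$\om$ bundles become $\ker(\widetilde G\to G)$-gerbe-twisted $\widetilde G$-bundles and the étale-local (tame) arguments from characteristic zero transport; ordinariness of $E$ enters to control its fundamental group, and the Lie-theoretic finiteness and finite-order statements, while classical, need care outside type~$A$ and near bad primes.
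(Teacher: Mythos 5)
Your outline shares the paper's two-step strategy --- reduce to semistable bundles via the canonical parabolic reduction, as in Lemma~\ref{l:ind sst}, then translate the classification of semistable bundles on an elliptic curve into Lie-theoretic data and identify $\Aut(\cE)$ with a centralizer --- and your picture of the second step is essentially correct over $\CC$. But there is a genuine gap in positive characteristic, precisely at the place you flag as delicate. The paper does not transport a Friedman--Morgan monodromy argument from characteristic zero; it uses the Tannakian classification of Fratila--Gunningham--Li, under which semistable bundles correspond to $\Hom(\Pi_X, G)/G$, indecomposables factor through the iso-unipotent quotient $\Pi_X^{\isu}=\DD(\wh{J}_{\tors})$ (Corollary~\ref{c:ind isotriv}), and for an ordinary curve in characteristic $p$ this pro-algebraic group is \emph{not} $\un{\wh{\ZZ}}^{2}\times\un{\ZZ_p}$ but rather $\ZZ_p(1)\times\un{\wh{\ZZ}}^{(p')}\times\un{\wh{\ZZ}}^{(p')}\times\un{\ZZ_p}$, because $J[p^\infty](k)\cong\QQ_p/\ZZ_p$ has Cartier dual the multiplicative pro-group $\ZZ_p(1)=\lim_n\mu_{p^n}$, not a constant group. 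A homomorphism to $G$ is therefore a quadruple $(r,s,t,u)$ with $r:\ZZ_p(1)\to G$, not a triple, and this extra factor is present already for simply-connected $G$, so the degree-prime-to-$p$ gerbe-twisting you invoke cannot make it disappear. The crux of Corollary~\ref{c:AI ell N2} is exactly the argument that $r$ is trivial for an absolutely indecomposable bundle, and that is where pretty-good (beyond merely good) is used via Lemma~\ref{l:centralizer Levi}(2): no $p$-torsion in $\pi_1(G)$ ensures $C_G(\mu_{p^n})$ is a Levi; torus-freeness of $\Aut(\cE)$ then forces this Levi to be $G$ so $r$ lands in $ZG$; and no $p$-torsion in $\xch(ZG)$ kills $r$. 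Without this step the map $\AI_{G,X}(K)\to N_2(G)$ is neither clearly well-defined nor injective.

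Two smaller inaccuracies worth flagging: ordinariness of $X$ is not used ``to control its fundamental group'' in the sense you suggest, but to guarantee $\wh{J}\cong\wh{\Gm}$, so that $U_J=\DD(\wh{J})\cong\un{\ZZ_p}$ and $\Hom(U_J,G)$ can be identified with the unipotent variety of $G$; in the supersingular case $\wh{J}$ has height two and this factor admits no Lie-theoretic description (Proposition~\ref{p:isu ssing}). And general polystable bundles are not classified by finite-order pairs --- finite order is a consequence of absolute indecomposability (the Zariski closure of $\langle s\rangle$ would otherwise contribute a nontrivial torus to the centralizer), not a feature of the a priori classification, so it belongs on the other side of the implication.
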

Combined with Corollary \ref{c:intro}, we conclude the following non-obvious geometric fact: the dimension of the sign-isotypic part of the top homology $\homog{top}{\cN_{G}(\bI_{x})^{\tst}}\j{\sgn}$ is equal to the cardinality of $N_2(G)$, defined purely Lie-theoretically.

Computing the cardinality of $N_2(G)$ is the third in a sequence of Lie-theoretic counting problems that we discuss in \S\ref{s:Lie}, preceded by the counting of distinguished nilpotent classes and strongly isolated conjugacy classes in $G$. This triple of counting problems follow the familiar rational--trigonometric--elliptic trilogy. In \S\ref{s:Lie}, which is independent of the rest of the paper and only serves as motivation for readers more on the Lie groups side than the geometric side, we give explicit results on these counting problems, at least for $G$ simply-connected.

\subsection{Sketch of proof of the main result}

\sss{General strategy}\label{sss:gen st} The general idea is to do counting instead in the cotangent bundle (or variant thereof) of $\Bun_{G}$. In other words, we shall count Higgs bundles on $X$ with specific local behavior, and then argue that:
\begin{itemize}
\item Only absolutely indecomposable $G$-bundles on $X$ admit Higgs fields of the specific local behavior,
\item each absolutely indecomposable $G$-bundle carries the same amount of Higgs fields with the specific local behavior.
\end{itemize} 
As mentioned in the beginning, the strategy is similar to the counting strategy for indecomposable quiver representations of Crawley-Boevey and van den Bergh in \cite{CBvdB}, who use the deformed preprojective algebra of a quiver in order to count its indecomposable representations. 

This general strategy works as stated for counting  absolutely indecomposable $G$-bundles with a Borel reduction (i.e. Iwahori level structure) at a point $x\in X(k)$. We denote the moduli of $G$-bundles with a Borel reduction at $x$ by $\Bun_{G}(\bI_{x})$. In this case, we shall consider a twisted version of the cotangent bundle of $\Bun_{G}(\bI_{x})$, namely Higgs bundles $(\cE, \cE^{B}_{x}, \ph)$ with Borel reduction $\cE^{B}_{x}$ at $x$, such that the Higgs field $\ph$ has a simple pole at $x$ whose residue preserves the Borel reduction $\cE^{B}_{x}$, and its image in $\frt^{*}$ (think how the Higgs field acts on the associated graded of the flag at $x$ when $G=\GL_{n}$) is equal to a prescribed $\l\in \frt^{*}$. Denote such moduli stack by $\cM_{G}(\bI_{x};\l)$. One can show that for generic $\l$, the image of $\cM_{G}(\bI_{x};\l)(k)\to \Bun_{G}(\bI_{x})(k)$ consists of absolutely indecomposable pairs $(\cE,\cE^{B}_{x})$, and the non-empty fibers all have cardinality $q^{N}$ for a constant $N$, up to dividing by the size of $\pi_{0}(\Aut(\cE,\cE^{B}_{x}))(k)$. 

\sss{Selection functions} To carry out the general strategy for the counting of indecomposable $G$-bundles without Iwahori level structure, we need a new ingredient: the selection function. Choose a point $x\in X(k)$. We consider the moduli stack $\cM_{G,x}$ of Higgs bundles $(\cE, \ph)$ on $X$ with a simple pole at $x$. Taking residue of $\ph$ at $x$ gives a map $\res_{x}: \cM_{G,x}\to \frg^{*}/G$. We would like to choose a $G$-invariant function $f: \frg^{*}\to \CC$ and consider the weighted counting (see \S\ref{sss:gpoid integral} for notations)
\begin{equation}\label{intro int MGx}
\int_{\cM_{G,x}(k)}\res^{*}_{x}f.
\end{equation}
We would like this weighted counting to satisfy desiderata similar to those in \S\ref{sss:gen st}, namely:
\begin{itemize}
\item When breaking up the sum \eqref{intro int MGx} according to the underlying $G$-bundle, only absolutely indecomposable $G$-bundles on $X$ have nonzero contribution;
\item Each absolutely indecomposable $G$-bundle contributes the same nonzero amount to the sum \eqref{intro int MGx}.
\end{itemize}
We single out a class of functions on $\frg^{*}$ satisfying these conditions. They are Fourier transforms of {\em selection functions} on $\frg$. A selection function $\xi$ on $\frg$ is one that is pulled back from the GIT quotient $(\frg\sslash G)(k)$ and which satisfies $\int_{\frh}\xi=0$ for a class of subalgebras $\frh\subset \frg$. These subalgebras are Lie algebras of {\em relevant} subgroups $H\subset G$ that are ``shadows'' of decomposability of $G$-bundles. More precisely, for any absolutely decomposable $G$-bundle $\cE$, restricting automorphisms of $\cE$ to $x$ gives a homomorphism $\ev_{x}: \Aut(\cE)\to G$ (well-defined up to conjugacy). Then the image of $\ev_{x}$ is a relevant subgroup.

The question then becomes how to construct a selection function $\xi$ on $\frg$, and eventually we would like to construct one such that the resulting weighted counting \eqref{intro int MGx} has a clean geometric meaning. All this is done in \S\ref{s:sel}. 

Here we give an example of a selection function for $G=\GL_{2}$. 
Choose a nontrivial character $\psi_{0}: k\to \CC^{\times}$. Let $k'/k$ be the quadratic extension of $k$. Choose a nontrivial additive character $\psi_{1}: k'/k\to \CC^{\times}$. Let $A\in \gl_{2}(k)$ with eigenvalues $\l_{1}, \l_{2}\in k'$. Define 
\begin{equation*}
\xi(A)=\begin{cases}\frac{1}{2}(\psi_{0}(\l_{1}-\l_{2})+\psi_{0}(\l_{2}-\l_{1})), & \mbox{if }\l_{1},\l_{2}\in k;\\
\frac{1}{2}(\psi_{1}(\l_{1})+\psi_{1}(\l_{2})), & \mbox{if }\l_{1}, \l_{2}\notin k.
\end{cases}
\end{equation*}
This function $\xi: \gl_{2}\to \CC$ is an example of a selection function for $\frg=\gl_{2}$.
The main property of this function is that, for any subalgebra $\frh\subset \frg$ that contains a Cartan subalgebra (so $\frh$ is either a Cartan subalgebra, split or not, or a Borel subalgebra, or $\frg$), we have
\begin{equation*}
\int_{\frh}\xi=0.
\end{equation*}

In general, a selection function on $\frg$ is pulled back from a function on $(\frg\sslash G)(k)$, which can be constructed by summing over pushforwards along $\frt\to \frg\sslash G$ of characters on various Cartan subalgebras $\frt\subset \frg$. In the $\GL_{2}$-example the characters are $\psi_{0}$ on the split Cartan and $\psi_{1}$ on the nonsplit one.  With an extra condition on these characters, one can express  the selection function $\xi$, as well as its Fourier transform, in terms of Frobenius trace functions of certain perverse sheaves coming from the Grothendieck-Springer resolution (\S\ref{ss:Spr}-\S\ref{ss:adm}).   

\sss{} Finally we relate the weighted counting \eqref{intro int MGx}, where $\xi$ is the selection function mentioned above, to counting stable parabolic Higgs bundles with a generic stability parameter. The geometric idea here, involving the deformation from generic residue to zero residue,  is not new,  and we use results from \cite{DGT}.

\subsection{Convention}

\sss{} Fix a prime number $\ell$. All constructible sheaves in this paper will have $\Qlbar$-coefficients.

\sss{Groupoids} 
For a groupoid $V$, let $|V|$ be the set of isomorphism classes in $V$. 
Let $\Qlbar[V]$ be the vector space of $\Qlbar$-valued functions with finite support on $|V|$.

When $V$ is a set and $V'\subset V$ is a finite subset, let $\one_{V'}\in \Qlbar[V]$ denote the characteristic function on $V'$.

\sss{Weighted summation over a groupoid}\label{sss:gpoid integral} 
Here we only consider groupoids whose automorphism groups are finite. 

For a map $\ph: U\to V$ with finite fibers, we have pullback of functions $\ph^{*}: \Qlbar[V]\to \Qlbar[U]$ and pushforward $\ph_{!}: \Qlbar[U]\to \Qlbar[V]$ by summing along the fibers. More precisely, for $f\in \Qlbar[U]$ and $v\in V$,
\begin{equation*}
(\ph_{!}f)(v)=\sum_{(u,\a: \ph(u)\isom v)/\cong }\frac{1}{\#\Aut(u,\a)}f(u).
\end{equation*}
The sum is over the isomorphism classes of the fiber groupoid of $\ph$ over $v$, i.e., the groupoid of pairs $(u,\a)$ where $u\in U$ and $\a$ is an isomorphism between $\ph(u)$ and $v$. 

When $\ph: U\to V=\{\star\}$ is the unique map to the singleton set, we denote $\ph_{!}f$ by 
\begin{equation*}
\int_{U} f=\sum_{u\in |U|}\frac{1}{\#\Aut(u)}f(u).
\end{equation*}
In particular, for a stack $\cM$ of finite type over $k$, applying the above notation to the groupoid $\cM(k)$ and the constant function $1$, we have
\begin{equation}\label{stack pt count}
\int_{\cM(k)}1=\sum_{m\in \cM(k)}\frac{1}{\#\Aut(m)(k)}.
\end{equation}

For $f,g\in \Qlbar[V]$, let 
\begin{equation*}
\j{f,g}_{V}=\int_{V}fg.
\end{equation*}

\sss{Invariants of $G$}\label{sss:intro G} Let $G$ be a connected reductive group over a field $k$. 

We denote by $\pi_{1}(G)$ the {\em algebraic fundamental group} of $G$. Namely, for any choice of maximal torus $T_{0}\subset G_{\ov k}$, $\pi_{1}(G)$ is canonically isomorphic to the quotient $\xcoch(T_{0})/(\mbox{coroot lattice})$.

Let $ZG$ be the center of $G$, and $(ZG)^{\c}$ be its neutral component, and let $C_{G}$ be the maximal torus in $ZG$. It is always the case that $C_{G}$ is the reduced structure of $(ZG)^{\c}$. The character group $\xch(ZG)$ of the diagonalizable group $(ZG)_{\ov k}$ is well-defined. For a maximal torus $T_{0}\subset G_{\ov k}$, $\xch(ZG)$ is canonically isomorphic to the quotient $\xch(T_{0})/(\mbox{root lattice})$. If $\ch(k)=p$ and $\xch(ZG)$ has $p$-torsion, then $(ZG)^{\c}$ is not reduced.

Let $f_{G}$ be the determinant of the Cartan matrix of $G_{\ov k}$. When $G_{\ov k}$ is almost simple, we have
\begin{equation*}
f_{G}=\begin{cases} 1 & \mbox{$G$ is of type $E_{8}, F_{4}$ or $G_{2}$};\\
2 & \mbox{$G$ is of type $B,C$ or $E_{7}$};\\
3 & \mbox{$G$ is of type $E_{6}$};\\
4 & \mbox{$G$ is of type $D$};\\
n & \mbox{$G$ is of type $A_{n-1}$}. \end{cases}
\end{equation*}
In general, $f_{G}$ is the product of $f_{G_{i}}$ for simple isogeny factors $G_{i}$ of $G_{\ov k}$.

When $G$ is semisimple, we have
\begin{equation*}
f_{G}=\#\pi_{1}(G)\cdot \#\xch(ZG).
\end{equation*}
In general, the prime factors of $\#\pi_{1}(G)_{\tors}$ and $\#\xch(ZG)_{\tors}$ are necessarily prime factors of $f_{G}$.

Suppose $L\subset G$ is a twisted Levi subgroup (i.e. the centralizer of a torus). We have natural surjections $\pi_{1}(L)\surj \pi_{1}(G)$ and $\xch(ZH)\surj \xch(ZG)$. When restricted to the torsion parts, we get injections
\begin{eqnarray}
\label{pi1 tors}\pi_{1}(L)_{\tors}\incl\pi_{1}(G)_{\tors}, \\
\label{xz tors} \xch(ZH)_{\tors}\incl \xch(ZG)_{\tors}.
\end{eqnarray}
To see these injections,  we may assume $k=\ov{k}$. Choose a maximal torus $T\subset L$,  a basis $\{\a_{i}\}_{i\in I}$ of $\Phi(G,T)$ and a subset $J\subset I$ such that $\Phi(L,T)$ has basis $\{\a_{j}\}_{j\in J}$. Then $\xch(ZL)=\xch(T)/\Span_{\ZZ}\{\a_{j};j\in J\}$ and $\xch(ZG)=\xch(T)/\Span_{\ZZ}\{\a_{j};j\in I\}$, so that the kernel of $\xch(ZL)\to \xch(ZG)$ is torsion-free. This implies \eqref{xz tors}. The argument for \eqref{pi1 tors} is similar.

\sss{Good, pretty good and very good primes}\label{sss:goodp}
Let $G$ be a connected reductive group over a field $k$ of characteristic $p$. Then $p$ is called
\begin{enumerate}
\item {\em good} for $G$ if for the abstract based root system of $G_{\ov k}$, $p$ does not divide any coefficient of the highest root $\theta$ written as a $\ZZ$-combination of simple roots; see \cite[Ch.I, \S4]{SS},
\item {\em \goodp} for $G$ if $p$ is good for $G$, and neither $\xch(ZG)$ nor $\pi_{1}(G)$ has $p$-torsion; see \cite{Herpel},
\item {\em very good} for $G$ if $p$ is good for $G$ and $p\nmid f_{G}$.  
\end{enumerate}
When $G$ does not have simple factors of type $A$, all three notions are equivalent. In general, as the terminology suggests, very good implies pretty good, and pretty good implies good. 

When the root system of $G$ is irreducible, the following holds.
\begin{itemize}
\item If $G$ has type $A$, all primes are good;
\item If $G$ has type $B, C$ or $D$ (but not of type $A$), all primes except $p=2$ are good;
\item If $G$ has type $E_{6}, E_{7}, F_{4}$ or $G_{2}$, all primes except $p=2$ and $3$ are good;
\item If $G$ has type $E_{8}$, all primes except $p=2, 3$ and $5$ are good.
\end{itemize} 

For $G$ semisimple, pretty good is equivalent to very good. In general, it may be weaker than very good. For example, when $G=\GL_{n}$, all primes are pretty good, while very good primes are those not dividing $n$.


\subsection*{Acknowledgement} The authors would like to thank Victor Ginzburg and Jochen Heinloth for helpful comments.

\section{Some Lie-theoretic counting problems}\label{s:Lie}
This section serves as motivation for the main counting problem concerned in this paper. In the spirit of the trilogy: additive group, multiplicative group and elliptic curves, we formulate three Lie-theoretic counting problems for a semisimple group $G$ over $\CC$. We also give answers to these counting problems.

There is a formal link between the last of these Lie-theoretic counting problems and the counting of indecomposable $G$-bundles on elliptic curves, which we will elaborate on in \S\ref{s:ell}. 

\subsection{Three counting problems}\label{ss:n2}
Let $G$ be a semisimple algebraic group over an algebraically closed field $k$ with Lie algebra $\frg$. We assume either $\ch(k)=0$ or $\ch(k)=p$ is good for $G$.

\sss{$N_0(G)$} Following Bala and Carter, we recall that a nilpotent element $X\in\frg$ (resp. a unipotent element $u\in G$) is {\em distinguished} if its centralizer $G_{X}$ (resp. $G_{u}$) does not contain a nontrivial torus (equivalently, $X$ or $u$ does not lie in a proper Levi subgroup of $G$). 

Let $N_{0}(\frg)$ be the set of distinguished nilpotent orbits in $\frg$, whose cardinality we denote by $n_{0}(\frg)$. Similarly, let $N_0(G)$ be the set of distinguished unipotent conjugacy classes in $G$, whose cardinality we denote by $n_0(G)$.

By our assumption on $\ch(k)$, there exists a $G$-equivariant isomorphism between the unipotent variety $\cU_G$ of $G$ and the nilpotent variety $\cN_G$ of $\frg$ (see \S\ref{ss:unip}, especially Lemma \ref{l:unip log}), which gives a bijection between $N_0(\frg)$ and $N_0(G)$.

\sss{$N_{1}(G)$}Following Lusztig,  a semisimple element $g\in G$ (or its conjugacy class) is called {\em isolated} if the reduced structure of the connected centralizer $G^{\c}_{g}$ is semisimple. Let $\Sig(G)$ be the set of isolated semisimple conjugacy classes.

We call an element $g\in G$ (or its conjugacy class) {\em strongly isolated} if its centralizer $G_{g}$ contains no nontrivial torus. If $g=su$ is the Jordan decomposition of $g$, then $g$ is strongly isolated if and only if $s$ is isolated in the sense of Lusztig, and $u$ is a distinguished unipotent element in $G_{s}$, necessarily lying in the neutral component $G^\c_s$, a semisimple group after taking reduced structure.

Let $N_{1}(G)$ be the set of strongly isolated conjugacy classes in $G$, and let $n_{1}(G)$ be its cardinality.  By the above discussion, we have a bijection
\begin{equation*}
N_{1}(G)\bij \coprod_{[s]\in \Sig(G)}N_{0}(G^\c_s).
\end{equation*}
Therefore
\begin{equation*}
n_{1}(G)=\sum_{[s]\in \Sig(G)}n_{0}(G^\c_s).
\end{equation*}

When $G$ is simply-connected, we have a bijection between $\Sig(G)$ and vertices of the affine Dynkin diagram of $G$ (which is the disjoint union of affine Dynkin diagrams of simple factors of $G$). When $G$ is semisimple but not necessarily simply-connected, we have $\Sig(G)=\Sig(G^{\sc})/\pi_{1}(G)$ where $G^{\sc}$ is the simply-connected cover of $G$. Note that $\pi_{1}(G)$ acts on $\Sig(G^{\sc})$ in the same way as it acts on the affine Dynkin diagram of $G$.

\sss{$N_{2}(G)$}\label{sss:n2} Taking this one step further, let $N_{2}(G)$ be the set of conjugacy classes of commuting triples $(s,t,u)$ in $G$ such that $s,t$ are semisimple, $u$ is unipotent, and the simultaneous centralizer $G_{s,t,u}$ does not contain a nontrivial torus.    Let $n_{2}(G)$ be the cardinality of $N_{2}(G)$.

In \S\ref{s:ell} we will show that, when $\ch(k)$ is \goodp for $G$, there is a bijection between $N_{2}(G)$ and the set of isomorphism classes of indecomposable $G$-bundles on an ordinary elliptic curve $X$ over $k$. The bijection will depend on auxiliary choices such as a homology basis for $X$.

Let $\pi: G^\sc\to G$ be the simply-connected cover, with kernel $\ker(\pi)\cong \pi_1(G)(1)$ (Tate twist of the algebraic fundamental group). From a commuting pair $s,t\in G$, we may attach $\g_{s,t}\in \pi_{1}(G)(1)$ by $\g_{s,t}=[\wt s,\wt t]\in  \ker(G^\sc\to G)$, where $\wt s,\wt t$ are liftings of $s$ and $t$ to $G^\sc$ . Now fix $\g\in \pi_{1}(G)$ and consider the subset $N_2(G;\g)\subset N_2(G)$ consisting of conjugacy classes of commuting triples $(s,t,u)$ such that $\g_{s,t}=\g$. Then we have a decomposition
\begin{equation*}
N_{2}(G)=\coprod_{\g\in \pi_{1}(G)(1)}N_{2}(G;\g).
\end{equation*}
Under the interpretation of $N_2(G)$ as indecomposable $G$-bundles on an ordinary elliptic curve, the above decomposition of $N_2(G)$ corresponds to the decomposition according to the connected components of $\Bun_{G}$.

When $\g=1$, a triple $[(s,t,u)]\in N_{2}(G)$ lies in $N_{2}(G;1)$ if and only if $s$ and $t$ both lie in the same torus. We have a bijection
\begin{equation*}
N_{2}(G;1)=\coprod_{[s]\in \Sig(G)}N_{1}(G_{s}^{\c}).
\end{equation*}
Let $n_{2}(G;1)$ be the cardinality of $N_{2}(G;1)$, then we have 
\begin{equation*}
n_{2}(G;1)=\sum_{[s]\in \Sig(G)}n_{1}(G_{s}^{\c}).
\end{equation*}
\begin{remark}
    When $G$ is simply-connected, centralizers of semisimple elements are connected. The above formulas therefore simplify to 
    \begin{align*}
        N_{2}(G) &= \coprod_{[s] \in \Sigma(G)} N_{1}(G_{s}), \textup{ and}\\
        n_{2}(G) &= \sum_{[s] \in \Sigma(G)} n_{1}(G_{s}). 
    \end{align*}
    This allows us to carry out calculations in the simply-connected case. 
\end{remark}

For general $\g\in \pi_{1}(G)$, it is likely that there is a connected semisimple group $H_{\g}$ such that $N_{2}(G;\g)$ is in bijection with $N_{2}(H_{\g};1)$. We have the following expectation about $H_{\g}$. Choose a  commuting pair of semisimple elements $s,t\in G$ such that $[\wt s,\wt t]=\g$. Let $S$ be a maximal torus in $G_{s,t}$. It is known \cite[Prop. 4.2.1]{BFM} that the $G$-conjugacy class of $S$ depends only on $\g$ and not on $(s,t)$. Let $L=C_{G}(S)$ be a Levi subgroup of $G$, well-defined up to conjugacy. Let $T_{\g}=L/L^{\der}$. Then $H_{\g}$ should have maximal torus $T_{\g}$ and Weyl group $W_{\g}=N_{G}(L)/L$. The group $H_{\g}$ should be closely related to $G_{s_0,t_0}^\c$ for a pair of semisimple elements  $s_0,t_0\in L^\der$ such that $[\wt s_0,\wt t_0]=\g$.

\subsection{Calculations}
In the following, we work over the base field $k=\CC$. We compute the numbers $n_0(G),n_1(G)$ and $ n_2(G)$ for many $G$, including all almost simple simply-connected groups.

For the calculations in classical types, recall the following facts (see for example \cite[Theorem 8.2.14.]{Collingwood-McGovern}).
\begin{fact} Let $\frg$ be a simple Lie algebra of classical type. For an integer $n$, denote by $\Part(n)$ the set of  partitions of $n$. Then, nilpotent orbits in $\frg$ are in bijection with the following sets by taking the sizes of Jordan blocks in the standard representation:
\begin{enumerate} 
\item Type $A_{n-1}$: $\Part(n)$, 
\item Type $B_{n}$: $\{\l \in \Part(2n+1)  \mid \textup{even parts have even multiplicity}\}$,
\item Type $C_{n}$: $\{\l \in \Part(2n)  \mid \textup{odd parts have even multiplicity}\}$,
\item Type $D_{n}$: $\{\l \in \Part(2n)  \mid \textup{even parts have even multiplicity}\}$, with the exception that each very even partition (having only even parts) corresponds to two nilpotent orbits.
\end{enumerate}
In type $A$, the only distinguished nilpotent orbit is the regular nilpotent orbit. In types $B,C,D$ the distinguished orbits are precisely those which have no repeated parts. 
\end{fact}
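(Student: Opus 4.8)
This statement is classical (see \cite[Ch.~5, \S 8.2]{Collingwood-McGovern} or \cite[Ch.~IV]{SS}), and the plan is to reduce everything to linear algebra about the action of the classical group on its standard representation $V$, treating type $A$ separately from types $B,C,D$. For type $A_{n-1}$ I would just invoke Jordan normal form: nilpotent operators on $k^{n}$ are classified up to $\GL_{n}(k)$-conjugacy, and since $\GL_{n}=\SL_{n}\cdot Z(\GL_{n})$ with the center acting trivially by conjugation, also up to $\SL_{n}(k)$-conjugacy, by their Jordan type $\l\in\Part(n)$. For the distinguished assertion in type $A$, a nilpotent whose partition has length $\ge 2$ sits (after Jordan form) in the proper Levi $S(\GL_{\l_{1}}\times\GL_{n-\l_{1}})$, while the regular nilpotent, partition $(n)$, has centralizer $\{p(X):p\in k[t],\,p(0)\ne 0\}\cap\SL_{n}$, whose maximal torus is the finite group scheme $\mu_{n}$; hence the regular orbit is the unique distinguished one.

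For types $B,C,D$ we have $\ch(k)\ne 2$ (as $p=2$ is not good), and $\frg=\so(V)$ or $\sp(V)$ for a nondegenerate symmetric or alternating space $(V,b)$; a nilpotent $X\in\frg$ is a nilpotent operator on $V$ with $b(Xu,v)+b(u,Xv)=0$. The plan is to establish the normal form for such $X$ by choosing a Jacobson--Morozov $\sl_{2}$-triple $(X,H,Y)$ (available in good characteristic), making $V$ an $\sl_{2}(k)$-module; since $H,Y\in\frg$ as well, the $H$-weight spaces are mutually $b$-orthogonal except for the perfect pairing of weight $i$ with weight $-i$, and decomposing $V$ into indecomposable $\sl_{2}$-summands yields: $V$ is a $b$-orthogonal sum of blocks, each either a single $X$-cyclic subspace of dimension $m$ on which $b$ restricts nondegenerately --- a short computation with $b(X^{i}u,v)=(-1)^{i}b(u,X^{i}v)$ shows the restriction is $(-1)^{m-1}$-symmetric, so this block type occurs exactly when $m$ is odd in the orthogonal case and even in the symplectic case --- or a ``hyperbolic'' block $W\oplus W^{\vee}$ of two totally isotropic $X$-cyclic subspaces of equal dimension, allowed for every $m$. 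Summing block sizes then forces the parity conditions (even parts have even multiplicity for $\so$, odd parts have even multiplicity for $\sp$), and Witt's extension theorem shows the partition is a complete invariant for the $\Og(V)$- (resp.\ $\Sp(V)$-) orbit.

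The step I expect to require the most care is passing from $\Og(V)$ to $\SO(V)$ in types $B,D$ and, hand in hand with it, pinning down the distinguished orbits. Here I would compute the reductive part of the nilpotent centralizer directly from the normal form as $\prod_{i\ \textup{odd}}\Og(r_{i})\times\prod_{i\ \textup{even}}\Sp(r_{i})$, where $r_{i}$ is the multiplicity of the part $i$: the $\Og(V)$-orbit is a single $\SO(V)$-orbit unless this centralizer lies in $\SO(V)$, which happens exactly when there are no odd parts, i.e.\ the partition is very even --- forcing $\dim V$ even and so this phenomenon occurring only in type $D$, where the orbit then splits into two. The same formula gives the distinguished orbits: $Z_{G}(X)$ has no nontrivial torus iff every $\Og(r_{i})$ and $\Sp(r_{i})$ is finite, i.e.\ every $r_{i}\le 1$, i.e.\ the partition has no repeated part (equivalently, distinct odd parts in types $B,D$ and distinct even parts in type $C$; in particular never very even, so there is no ambiguity for distinguished classes). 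As an alternative to the $\Og$-versus-$\SO$ bookkeeping, in good characteristic one could instead deduce the whole statement from the characteristic-zero case, since the nilpotent variety of $\frg$ and that of its characteristic-zero analogue have the same orbit stratification (Springer--Steinberg); but the linear-algebra argument above has the advantage of being uniform and self-contained.
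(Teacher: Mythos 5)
The paper offers no proof here: the Fact is stated as classical and sourced to \cite[Theorem 8.2.14]{Collingwood-McGovern}. So there is nothing internal to compare against, and your job was essentially to reconstruct the standard argument. You have done that correctly: the type $A$ reduction via Jordan form (with the observation that over an algebraically closed field every determinant is an $n$-th power, so the $\GL_n$- and $\SL_n$-orbits coincide), the orthogonal decomposition into $b$-nondegenerate cyclic blocks and hyperbolic pairs with the $(-1)^{m-1}$-symmetry computation driving the parity conditions, Witt's theorem for uniqueness, and the computation $\prod_{i\ \textup{odd}}\Og(r_i)\times\prod_{i\ \textup{even}}\Sp(r_i)$ (with the two products swapped for $\sp$) for the reductive part of the centralizer, from which both the $\Og$/$\SO$ splitting for very even partitions and the distinguished criterion $r_i\le 1$ fall out.

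The one step I would not lean on as stated is the parenthetical ``a Jacobson--Morozov $\sl_2$-triple \dots (available in good characteristic).'' Good characteristic for types $B,C,D$ just means $p\ne 2$, and Jacobson--Morozov for a general reductive group is not known to hold under that hypothesis alone; the paper itself, when it needs $\sl_2$-triples in the Appendix, imposes the stronger bound $p>h$ and cites \cite{ST}. Two cleaner options, both of which you in fact allude to: (i) prove the normal form by direct induction --- pick a vector of maximal $X$-order, split off its $b$-nondegenerate cyclic span or a hyperbolic pair of isotropic cyclic spans, and pass to the orthogonal complement --- which requires only $\ch k\ne 2$ and no $\sl_2$-triple; or (ii) as you suggest at the end, invoke Springer--Steinberg to transport the orbit stratification from characteristic zero. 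Either repair makes the argument valid in exactly the generality the paper assumes. (It is also worth noting that for the Calculations subsection that uses this Fact, the paper restricts to $k=\CC$, where your original $\sl_2$-triple argument is unproblematic.)
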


\sss{Type $A$}
When $G$ is a semisimple group isogenous to a product of type $A$ groups, then $N_0(G)$ is a singleton consisting of the regular unipotent class. There is a bijection
\begin{equation*}
N_{1}(G)\bij ZG.
\end{equation*}
sending $[su]$ to $s$ which is necessarily central.


We next describe $N_2(G;\g)$ for $G$ almost simple of type $A$. There is an action of $(ZG)^2$ on $N_2(G;\g)$ by
\begin{equation*}
    (z_1,z_2)\cdot (s,t,u)=(z_1s, z_2t, u).
\end{equation*}

\begin{lemma}
    Suppose $G=\SL_n/\mu_m$ where $m|n$. Then for any $\g\in \mu_m$, $N_2(G;\g)$ is a $(ZG)^2$-torsor. In particular, $n_2(G;\g)=(n/m)^2$ for all $\g\in \mu_m$, and $n_2(G)=n^2/m$.
\end{lemma}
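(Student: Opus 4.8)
The plan is to analyze commuting triples $(s,t,u)$ in $G=\SL_n/\mu_m$ by lifting to $\SL_n$ and keeping track of the commutator obstructions. First I would fix $\g\in\mu_m$ and choose lifts $\tilde s,\tilde t,\tilde u\in\SL_n$ of $s,t,u$; since $u$ is unipotent it lifts uniquely, while $\tilde s,\tilde t$ are determined up to $\mu_n$. The conditions that $s,t,u$ commute in $G$ mean that $[\tilde s,\tilde t]$, $[\tilde s,\tilde u]$, $[\tilde t,\tilde u]$ all lie in $\mu_n$, and by hypothesis the first equals (the image of) $\g$ in $\mu_m$, i.e. lies in a fixed coset of $\mu_{n/m}$ inside $\mu_n$. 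The key point is that $[\tilde s,\tilde u]=[\tilde t,\tilde u]=1$: a scalar commutator with a unipotent element forces the scalar to be $1$ (e.g. by taking determinants of one-parameter subgroups, or because $u$ and $zu$ can only be conjugate-compatible when $z=1$ since unipotents have no nontrivial central twists). So $\tilde u$ genuinely commutes with $\tilde s$ and $\tilde t$ in $\SL_n$, and the only genuine obstruction is $[\tilde s,\tilde t]\in\mu_n$ mapping to $\g$.

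Next I would reduce to a normal form. Since $\tilde s,\tilde t$ are commuting-up-to-scalar semisimple elements of $\SL_n$, a standard argument (decomposing $\CC^n$ into simultaneous generalized eigenspaces and using that $[\tilde s,\tilde t]$ is a root of unity of order dividing $n/\gcd$) shows that $\CC^n$ breaks up into blocks on which $\tilde s,\tilde t$ act as a twisted tensor: each indecomposable block has dimension $d$ where $d$ is the order of $[\tilde s,\tilde t]$, with $\tilde s,\tilde t$ acting (up to scalars) as the clock-and-shift matrices. Requiring the simultaneous centralizer $G_{s,t,u}$ to contain no nontrivial torus forces all these blocks to have the same shape and forces $u$ to be regular unipotent in the centralizer; concretely, after conjugation $\tilde s,\tilde t$ are determined up to the scalar ambiguity $\mu_n^2$ and $\tilde u$ is pinned down as a specific regular unipotent commuting with them. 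This is where the condition $m\mid n$ enters: the order of $[\tilde s,\tilde t]$ must be exactly $n/m$ for the commutator to land in the correct coset with the right ``strong isolatedness'', and $n/m$ divides $n$ precisely because $m\mid n$.

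Finally I would count. Two such triples in $\SL_n$ give the same class in $G$ iff they differ by the $\mu_n^2$-action $(z_1,z_2)\cdot(\tilde s,\tilde t,\tilde u)=(z_1\tilde s,z_2\tilde t,\tilde u)$ followed by $\SL_n$-conjugacy; the residual $G$-conjugacy on the $G$-level triples is the quotient of this. The upshot is a free and transitive action of $(ZG)^2=(\mu_{n/m})^2$ on $N_2(G;\g)$: transitivity because the normal form shows any two strongly isolated triples with obstruction $\g$ differ only by the scalar ambiguity, and freeness because the scalars $z_1,z_2$ act without fixed points on the normal form (a nontrivial scalar changes the eigenvalue pattern). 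Hence $n_2(G;\g)=|ZG|^2=(n/m)^2$, and summing over the $|\mu_m|=m$ values of $\g$ gives $n_2(G)=m\cdot(n/m)^2=n^2/m$.

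The main obstacle I anticipate is establishing the normal form and the transitivity of the $(ZG)^2$-action rigorously, i.e. showing that the strong isolatedness condition rigidifies the triple $(s,t,u)$ up to exactly the scalar ambiguity and nothing more. One must rule out the existence of several genuinely different strongly isolated triples with the same commutator class, and this requires a careful analysis of when the simultaneous centralizer of a twisted-tensor pair together with a unipotent loses its toral part — essentially the Borel–Friedman–Morgan structure theory for commuting pairs (cited in the excerpt as \cite[Prop.\ 4.2.1]{BFM}) combined with the Bala–Carter classification of distinguished unipotents in the resulting (product of $\SL$) centralizer. Freeness of the action is comparatively routine once the normal form is in hand.
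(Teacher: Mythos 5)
Your proposal follows essentially the same route as the paper: both hinge on the Stone--von Neumann structure of commuting-up-to-scalar semisimple pairs. Where the paper packages this abstractly (representations of a finite 2-step nilpotent group $\Gamma=\langle\mu_d,x,y\mid[x,y]=\gamma\rangle$, with $\widetilde N_\Gamma$ the isotypic $n$-dimensional representations with trivial determinant), you phrase it concretely (clock-and-shift normal forms). These are the same thing, and your observations that $\tilde u$ lifts uniquely and genuinely commutes with $\tilde s,\tilde t$ in $\SL_n$ are correct and also implicit in the paper's setup.

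However, there are a few concrete errors. Most seriously, you assert ``the order of $[\tilde s,\tilde t]$ must be exactly $n/m$'' -- this is false. The commutator equals $\gamma\in\mu_m$ exactly (it is the obstruction in $\ker(\SL_n\to G)=\mu_m$, not in a coset of $\mu_{n/m}$ in $\mu_n$), and its order $d$ is an arbitrary divisor of $m$, not $n/m$. The Heisenberg blocks then have size $d$, and the centralizer of the normal form is $\SL_{n/d}$ (mod center), on which one takes the regular unipotent; the paper's $d$ does genuine work and is not fixed to $n/m$. Relatedly, the scalar-ambiguity bookkeeping -- which is where the real content of the proof lies -- is too loose. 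The action of $\mu_n^2$ by $(\tilde s,\tilde t)\mapsto(z_1\tilde s,z_2\tilde t)$ does \emph{not} act freely on $\SL_n$-conjugacy classes of normal forms: scaling by $z\in\mu_d$ permutes the clock eigenvalues and gives an isomorphic representation, so the action on $\widetilde N_\Gamma$ factors through $(\mu_n/\mu_d)^2\cong(\mu_{n/d})^2$ (the central characters). Passing from $\SL_n$-conjugacy to $G$-conjugacy then quotients by the $(\mu_{m/d})^2$ of freedom in choosing lifts, and the torsor group is $(\mu_{n/d})^2/(\mu_{m/d})^2\cong(\mu_{n/m})^2=(ZG)^2$. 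Your freeness claim ``a nontrivial scalar changes the eigenvalue pattern'' needs this refinement: what must be shown is that a scalar $\tilde z\in\mu_n$ fixes the $G$-class iff $\tilde z\in\mu_m$, which uses that $\tilde z\tilde s\sim_{\SL_n}w\tilde s$ for $w\in\mu_m$ forces $\tilde z/w\in\mu_d\subset\mu_m$, precisely the injectivity of the central-character map $c$ in the paper.
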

\begin{proof}
    Let $d$ be the order of $\g$ (so that $d|m$). 
    Let $\G$ be the 2-step nilpotent group generated by a central subgroup $\mu_d$ and two other elements $x$ and $y$ subject to the relation $[x,y]=\g\in \mu_d$. Note that its center $Z\G$ is generated by $\mu_d, x^d$ and $y^d$, and $\G/Z\G\cong(\ZZ/d\ZZ)^2$.

    Let $\wt N_\G$ be the set of isomorphism classes of $n$-dimensional representations $\r$ of $\G$ satisfying the following conditions:
    \begin{enumerate}
        \item $\det(\r)$ is the trivial character on $\G$.
        \item $\r|_{\mu_d}$ is the identity map onto the subgroup of scalar matrices $\mu_d\subset \SL_n$.
        \item As a representation of $\G$, $\r$ is isotypical (i.e., direct sum of isomorphic irreducible representations).
    \end{enumerate}

    By the Stone-von Neumann theorem, for each character $\chi: Z\G\to \CC^\times$ that restricts to the tautological embedding $\mu_d\incl \CC^\times$ on $\mu_d\subset \G_0$, there is a unique irreducible representation $\r_\chi$ with central character $\chi$. Moreover, $\dim \r_\chi=d$. 
    Therefore each $\r\in \wt N_\G$ is isomorphic to $\r_\chi^{\op n/d}$ for a unique $\chi\in \Hom(Z\G,\mu_n)$. We thus have an injective map
    \begin{equation*}
        c: \wt N_\G\incl \Hom(Z\G, \mu_n)'
    \end{equation*}
    sending $\r$ to its central character, where the right side is the subset of characters $\chi: Z\G\to \mu_n$ extending the tautological one on $\mu_d$.

    If $\r,\r'\in \wt N_\G$, then $\r\cong \r'\ot \om$ for some character $\om: \G\to \CC^\times$. Since both $\r$ and $\r'$ have trivial determinant, the image of $\om$ must lie in $\mu_n$, i.e. it is a map $\om: \G\to \mu_n$. The central characters $\chi$ of $\r$ and $\chi'$ of $\r'$ are related by $\chi'=\chi\cdot \om^d$. Now $\om^d|_{Z\G}$ can be any character $Z\G/\mu_d\to \mu_n/d$. We conclude that $c(\wt N_\G)$ is a torsor under $\Hom(Z\G/\mu_d, \mu_{n/d})\cong (\mu_{n/d})^2$. 

    From $\r\in \wt N_\G$, we get a pair of semisimple elements $\xi=\r(x)$ and $\y=\r(y)$ in $\SL_n$ with $[\xi,\y]=\g$. Let $s,t$ be the images of $\xi,\y$ in $G$. Then $G_{s,t}^\c$ is the image of $C_{\SL_n}(\xi,\y)^\c=C_{\SL_n}(\r)^\c\cong \SL_{n/d}$. For a regular unipotent element $u$ in $G_{s,t}^\c$, the group $G_{s,t,u}^\c$ is unipotent. This way we get a map
    \begin{equation*}
        \t: \wt N_\G\to N_2(G;\g).
    \end{equation*}
    This map is surjective: for any $(s,t,u)\in N_2(G;\g)$, lifting $s,t$ to $\xi,\y\in \SL_n$ (so that $[\xi,\y]=\g$), we get a homomorphism $\r:\G\to \SL_n$ by setting $\r(x)=\xi$ and $\r(y)=\y$. Since $[\xi,\y]=\g$, $\r|_{\mu_d}$ is the identity map onto $\mu_d\subset \SL_n$. Since $G_{s,t}^\c$ has to be semisimple, $C_{\SL_n}(\xi,\y)^\c=C_{\SL_n}(\r)^\c$ is semisimple, which forces $\r$ to be isotypical. This shows $\r\in \wt N_\G$. Given $(s,t)$, $u$ has only one choice up to conjugation in $G_{s,t}$, therefore $\t$ is surjective.

    Consider the action of $\Hom(\G,\mu_m)$ on $\wt N_\G$ by tensoring. Since $\mu_m=\ker(\SL_n\to G)$, the action preserves each fiber of the map $\t$ and acts transitively on each fiber. Moreover, this action is intertwined under $c$ with the action of $\Hom(Z\G/\mu_d, \mu_{m/d})=(\mu_{m/d})^2$ on the image $c(\wt N_\G)$ via the $d$-th power map $\Hom(\G,\mu_m)\surj \Hom(Z\G/\mu_d, \mu_{m/d})$. Since $c$ is injective, the action of $\Hom(\G,\mu_m)$ on $\wt N_\G$ factors through  $\Hom(Z\G/\mu_d, \mu_{m/d})$. We conclude that $\t$ is a $\Hom(Z\G/\mu_d, \mu_{m/d})\cong (\mu_{m/d})^2$-torsor. 

Since  $\wt N_\G$ is a $\Hom(Z\G/\mu_d, \mu_{n/d})\cong (\mu_{n/d})^2$-torsor, we conclude that $N_2(G;\g)$ is a $(\mu_{n/d})^2/(\mu_{m/d})^2\cong (ZG)^2$-torsor. 
\end{proof}

Before calculating the numbers for other classical types, we need some preparatory results. 

\sss{Conjugacy classes under central isogeny} Let $\pi : G_{1} \to G_{2}$ be a central isogeny of reductive groups. For an element $\ov{s}\in G_{2}$ the centralizer $C_{G_{1}}(\ov{s})$ is well-defined: it is the centralizer of  any lift of $\ov{s}$ in $G_1$. Since any two lifts differ by a central element in $G_{1}$, this does not depend on the choice. We have a natural map $\delta : \pi_{0}(C_{G_{2}}(\ov{s})) \to \ker(\pi)$ constructed as follows. 

First, we write down a map $C_{G_{2}}(\ov{s}) \to \ker(\pi)$. Since $\ker(\pi)$ is finite, this map will factor through the component group. Let $g\in C_{G_{2}}(\ov{s})$, and choose lifts $h\in G_{1}$ of $g$ and $s\in G_{1}$ of $\ov{s}$. By definition, we have $\pi(hsh^{-1}s^{-1} ) =1$, so $hsh^{-1} = z_{g}s$ for an element $z_{g}\in \ker(\pi)$. It is easy to check that this is independent of the lifts, so we can define $\delta(g) = z_{g}$. Moreover, by construction, the sequence
\begin{equation}\label{eqn:exactness-delta}
\pi_{0}(C_{G_{1}}(\ov{s})) \to \pi_{0}(C_{G_{2}}(\ov{s})) \xrightarrow{\delta} \ker(\pi)
\end{equation}
is exact in the middle (i.e. the image of the first map is equal to $\ker(\delta)$). Indeed, if $h \in C_{G_{1}}(\ov{s})$, then $hsh^{-1} = s$, so $z_{\pi(h)} = 1$.

\begin{lemma}\label{lem:conj-classes-isogeny} In the above situation, let $\ov{s}\in G_{2}$ be any element. Then, the set of $G_{1}$-conjugacy classes lying over the conjugacy class of $\ov{s}$ is a torsor for the group $\ker(\pi) / \Im(\delta)$.
\end{lemma}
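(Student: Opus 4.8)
The plan is to exhibit the set in question as a single orbit of the finite central group $\ker(\pi)$ acting on the conjugacy classes of $G_{1}$, and then to compute the point stabilizer.

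Write $Z=\ker(\pi)$, a finite central subgroup of $G_{1}$. Since $Z$ is central, left translation by $Z$ commutes with the conjugation action of $G_{1}$ on itself, so $Z$ acts on the set of $G_{1}$-conjugacy classes in $G_{1}$ by $z\cdot[x]=[zx]$, and this action preserves the subset $\cF$ of classes lying over the conjugacy class of $\ov{s}$. First I would check that every class in $\cF$ meets $\pi^{-1}(\ov{s})$: if $\pi(x)$ is conjugate to $\ov{s}$, say $\pi(x)=\ov{h}\,\ov{s}\,\ov{h}^{-1}$, then lifting $\ov{h}$ to $h\in G_{1}$ (possible since $\pi$ is surjective on points) the conjugate $h^{-1}xh$ lies in $\pi^{-1}(\ov{s})$. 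In particular $\cF$ is nonempty. Next, any two elements of $\pi^{-1}(\ov{s})$ differ by an element of $Z$, so the $Z$-action on $\cF$ is transitive; hence $\cF$ is a torsor under $Z/\Stab$, where $\Stab$ is the stabilizer of any chosen point.

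It then remains to identify $\Stab$ with $\Im(\delta)$. Fix $x\in\pi^{-1}(\ov{s})$ and put $\wt{C}=\pi^{-1}(C_{G_{2}}(\ov{s}))$. The key elementary observations are: (i) if $g\in G_{1}$ satisfies $gxg^{-1}\in Zx$ then applying $\pi$ forces $g\in\wt{C}$; (ii) conversely, for $g\in\wt{C}$ one has $\kappa(g):=gxg^{-1}x^{-1}\in Z$, and $\kappa\colon\wt{C}\to Z$ is a group homomorphism precisely because $Z$ is central; (iii) $\kappa$ does not depend on the choice of lift $x$ of $\ov{s}$, so the construction is canonical. Now $z\in Z$ fixes $[x]$ if and only if $zx$ is $G_{1}$-conjugate to $x$, which by (i)--(ii) happens if and only if $z^{-1}\in\Im(\kappa)$; since $\Im(\kappa)$ is a subgroup this says $\Stab=\Im(\kappa)$. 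Finally $\kappa$ kills $Z\subset\wt{C}$, hence descends to a homomorphism $\ov{\kappa}\colon C_{G_{2}}(\ov{s})\to Z$, and comparing with the construction preceding the lemma one checks that $\ov{\kappa}$ is exactly the map whose factorization through $\pi_{0}(C_{G_{2}}(\ov{s}))$ is $\delta$; since $\wt{C}\twoheadrightarrow C_{G_{2}}(\ov{s})$ this gives $\Im(\kappa)=\Im(\ov{\kappa})=\Im(\delta)$. Therefore $\cF$ is a torsor under $Z/\Im(\delta)=\ker(\pi)/\Im(\delta)$.

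The argument is essentially formal manipulation with the central isogeny; the one place requiring care is the final identification — verifying that $\kappa$ is a well-defined homomorphism independent of the chosen lift and that its descent coincides with the map defining $\delta$ — rather than any genuine obstruction.
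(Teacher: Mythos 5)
Your proof is correct and follows essentially the same route as the paper's: show the central $\ker(\pi)$-action on conjugacy classes over $\ov{s}$ is transitive (via the fact that any class lying over $\ov{s}$ has a representative in $\pi^{-1}(\ov{s})$), then identify the stabilizer with $\Im(\delta)$. You spend more lines re-verifying that $\kappa$ is a well-defined homomorphism independent of the lift — facts the paper treats as part of the setup preceding the lemma — but the substance is identical; the small $z$ versus $z^{-1}$ slip in the stabilizer computation is immaterial, as you note, since $\Im(\kappa)$ is a subgroup.
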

\begin{proof}
Let $\cC_{\ov s}$ be the $G_2$-conjugacy class of $\ov s$. Then $\th:\pi^{-1}(\cC_{\ov s})\to \cC_{\ov s}$ is a $\ker(\pi)$-torsor. By the centrality of $\ker(\pi)$, 
$\pi^{-1}(\cC_{\ov s})$ is a union of $G_1$-conjugacy classes.

We have to check that the action of $\ker(\pi)$ on the set  $\pi^{-1}(\cC_{\ov s})/G_1$ is transitive, and that the stabilizer is precisely $\Im(\delta)$.

For transitivity, observe that for two classes $\cC_{1}, \cC_{2}$ lying over the conjugacy class $\cC_{\ov{s}}$ in $G_2$ you can find representatives $s_1,s_2$ of $\cC_{1},\cC_{2}$ respectively such that $\pi(s_i) = \ov{s}$. Thus $s_2 = zs_1$ for some $z\in \ker(\pi)$, proving transitivity. 

To compute the stabilizer, let $s$ be a lift of $\ov{s}$. Then $z\in \ker(\pi)$ stabilizes the $G_1$-conjugacy class $\cC_{s}$ if $zs = hsh^{-1}$ for some $h\in G_{1}$. This means that for $g=\pi(h)$ we have $\delta(g) = z$, so $z\in \Im(\delta)$. It is clear that the converse also holds. 
\end{proof}
A particular instance is when $G_2$ is semisimple and $G_1$ is its simply connected cover. In that case, we obtain the following result. 
\begin{cor}\label{cor:sc-conj-classes} Let $G$ be a complex semisimple group with simply-connected cover $\pi : G^{\sc} \to G$. Let $s \in G^{\sc}$ with image $\ov{s}\in G$. Then, the number of conjugacy classes in $G^{\sc}$ mapping to the conjugacy class of $\ov{s}$ is equal to ${|\pi_{1}(G)|}/{|\pi_{0}(C_{G}(\ov{s}))|}$.
\end{cor}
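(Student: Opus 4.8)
The plan is to deduce the corollary directly from Lemma \ref{lem:conj-classes-isogeny} applied to the simply-connected cover $\pi\colon G^{\sc}\to G$, combined with the exactness of \eqref{eqn:exactness-delta} and Steinberg's connectedness theorem; the only step with genuine content is the injectivity of $\delta$.

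First I would apply Lemma \ref{lem:conj-classes-isogeny} with $G_1=G^{\sc}$ and $G_2=G$: the set of $G^{\sc}$-conjugacy classes lying over the conjugacy class of $\ov s$ is a torsor under $\ker(\pi)/\Im(\delta)$, hence has cardinality $|\ker(\pi)|/|\Im(\delta)|$. Since $\pi$ is the simply-connected cover of a semisimple group, $\ker(\pi)\cong\pi_1(G)(1)$ (see \S\ref{sss:n2}), so in particular $|\ker(\pi)|=|\pi_1(G)|$.

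It then remains to show that $\delta\colon\pi_0(C_G(\ov s))\to\ker(\pi)$ is injective, equivalently that $|\Im(\delta)|=|\pi_0(C_G(\ov s))|$. By the exactness of \eqref{eqn:exactness-delta} in the middle, $\ker(\delta)$ is the image of the natural map $\pi_0(C_{G^{\sc}}(\ov s))\to\pi_0(C_G(\ov s))$. By the convention fixed before Lemma \ref{lem:conj-classes-isogeny}, $C_{G^{\sc}}(\ov s)$ denotes the centralizer in $G^{\sc}$ of a lift $s$ of $\ov s$; taking $s$ (equivalently $\ov s$) semisimple, Steinberg's theorem applies because $G^{\sc}$ is simply connected, giving that $C_{G^{\sc}}(s)$ is connected. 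Hence $\pi_0(C_{G^{\sc}}(\ov s))$ is trivial, so $\ker(\delta)$ is trivial, $\delta$ is injective, and $|\Im(\delta)|=|\pi_0(C_G(\ov s))|$. Substituting into the count from the previous step gives $|\pi_1(G)|/|\pi_0(C_G(\ov s))|$, as desired.

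I do not anticipate a serious obstacle, since every ingredient is either already established in the text (Lemma \ref{lem:conj-classes-isogeny} and the exactness of \eqref{eqn:exactness-delta}) or a standard structural fact (Steinberg connectedness, available over $\CC$). The one hypothesis that cannot be dropped is the semisimplicity of $\ov s$, since this is exactly what forces $C_{G^{\sc}}(s)$ to be connected; the formula genuinely fails otherwise --- for example, for $\SL_2\times\SL_2\to(\SL_2\times\SL_2)/\Delta\mu_2$ with $s$ a pair of regular unipotent elements one finds two conjugacy classes in $G^{\sc}$ over $\ov s$, whereas $|\pi_1(G)|/|\pi_0(C_G(\ov s))|=1$. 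Since $\ov s$ is semisimple in every application made of this corollary, this causes no difficulty.
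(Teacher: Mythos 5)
Your proof is correct, and it fills in exactly the argument the paper leaves implicit (the paper gives no proof, merely asserting the corollary as a ``particular instance'' of Lemma~\ref{lem:conj-classes-isogeny}). You also correctly spot a genuine imprecision in the paper's statement: as written, the corollary omits the hypothesis that $s$ (equivalently $\ov s$) is semisimple, yet this is exactly what makes Steinberg's connectedness theorem applicable to $C_{G^{\sc}}(s)$, and hence what forces $\delta$ to be injective via the exactness of~\eqref{eqn:exactness-delta}. Your counterexample is valid: for $G^{\sc}=\SL_2\times\SL_2$, $G=(\SL_2\times\SL_2)/\Delta\mu_2$, and $s=(u,u)$ with $u$ regular unipotent, one has $C_{\SL_2}(u)=\{\pm1\}\times\GG_a$, so $\pi_0(C_G(\ov s))\cong(\ZZ/2)^2/\Delta(\ZZ/2)\cong\ZZ/2$ and the formula predicts $2/2=1$, while $(u,u)$ and $(-u,-u)$ are not conjugate in $G^{\sc}$ (since $u$ and $-u$ are not conjugate in $\SL_2$), giving two classes. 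One can also verify this directly from the lemma: since $u$ and $-u$ are non-conjugate, $\delta$ is the zero map here, so the count is $|\ker(\pi)|/|\Im(\delta)|=2/1=2$. As you note, all applications in the paper are to semisimple $s$, so the missing hypothesis is harmless in context, but your observation is the sort of thing that would merit a remark or an explicit ``semisimple'' in the statement.
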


Below we will give the generating functions for the numbers to be calculated in the $B,D$ and $C$ series. We consider only the simply-connected cases here (in which case $n_{2}(G;1))= n_2(G)$).

\sss{Types $B$ and $D$}

Let $G=\Spin_{n}$. To find $n_0(\Spin_{n})$, we have to count partitions of $n$ with distinct odd parts. This number is given by the generating function
\begin{equation*}
f_{BD}(t) = \prod_{j \ge 0} (1+t^{2j+1}), 
\end{equation*}
so we get 
\begin{equation*}
\sum_{n\ge0}n_{0}(\Spin_{n})t^{n}=f_{BD}(t).
\end{equation*}
Moreover, we write 
\begin{align*}
f_B(t) &= \frac{1}{2}(f_{BD}(t)-f_{BD}(-t)), \textup{and} \\
f_D(t) &= \frac{1}{2}(f_{BD}(t)+f_{BD}(-t)). 
\end{align*}
for the odd and even parts.

In the following we understand $\Spin_{0} = \{1\}$, and $\Spin_{1} = \mu_{2}$. In order to find $n_1(\Spin_{n})$ and $n_2(\Spin_{n})$, we need to list semisimple conjugacy classes with semisimple centralizers in $\Spin_{n}$. We can do that by using Lemma \ref{lem:conj-classes-isogeny}, describing the conjugacy classes in $G^{\sc} = \Spin_{n}$ in terms of those in $G = \SO_{n}$. 

The elements in $\SO_{n}$ with semisimple centralizers are represented by $\ov{s} = (1^{a}, (-1)^{b})$ where the powers mean repeated entries, and $b$ is even. The centralizer $C_{G}(\ov{s}) \cong S(O_a\times O_b)$ has two connected components, unless $a=0$ or $b=0$, in which case it is connected. This implies that either $a=0$ or $b=0$ and there are two classes above $\ov{s}$, or $a\neq 0$ and $b \neq 0$ (and both are also not equal to $2$), and there is a unique class above $\ov{s}$. 

Therefore, when $n$ is odd, these centralizers are either of the form $\Spin_{n}$, counted twice, corresponding to the two non-conjugate central elements, or of the form
\begin{equation*}
G_{a,b} =(\Spin_{a} \times \Spin_{b} )/\mu_{2}
\end{equation*}
with $a+b=n$, $a$ odd, $b$ even, and $b\neq 2$, counted once, corresponding to the unique conjugacy class lying over $(1^{a}, (-1)^{b})$. The quotient is by the diagonally embedded central $\mu_2$. 

When $n$ is even, semisimple centralizers are of the form $\Spin_{n}$, counted four times, corresponding to the four central elements, or of the form 
\begin{equation*}
G_{a,b} =(\Spin_{a} \times \Spin_{b} )/\mu_{2}
\end{equation*}
with $a+b=n$, $a, b\neq 2$, and $a,b$ even, corresponding to the unique class lying over $(1^{a}, (-1)^{b})$.

Note that the degree $2$ coefficient of $f_{BD}$ is the number of partitions of $2$ with distinct odd parts, hence it vanishes. Moreover, $n_{0}((\Spin_{a} \times \Spin_{b} )/\mu_{2}) = n_{0}(\Spin_{a} \times \Spin_{b} )$. Therefore we get 

\begin{equation*}
n_{1}(\Spin_{n}) = \sum_{\substack{a+b=n \\ b \textup{ even}}} n_0(\Spin_{a})n_0(\Spin_{b})
+\begin{cases}
n_0(\Spin_{n}), n \textup{ odd} \\
2n_0(\Spin_{n}), n \textup{ even}
\end{cases}
\end{equation*}

For generating functions, this implies
\begin{equation*}
\sum_{n\ge 1}n_{1}(\Spin_{n})t^{n}=f_{BD}(t)f_{D}(t)+f_{B}(t)+2f_{D}(t) - 3 = (1+f_{BD})(1+f_{D})-4
\end{equation*}

We now compute the generating function for $n_{2}(\Spin_{n})$. For that, we need to describe the isolated semisimple conjugacy classes in $G_1 = G_{a,b} =(\Spin_{a} \times \Spin_{b} )/\mu_{2}$. When $a=0$ or $b=0$ this is described above. So we assume $a>0$ and $b>0$. We apply Lemma \ref{lem:conj-classes-isogeny} to the natural projection
\begin{equation*} 
\pi : (\Spin_{a} \times \Spin_{b} )/\mu_{2} \to \SO_{a} \times \SO_{b}
\end{equation*}
which is a central isogeny with kernel $\mu_{2}$. The isolated conjugacy classes in $G_2 = \SO_{a} \times \SO_{b}$ are represented by elements of the form
\begin{equation*}
\ov{s} = (1^{a_{+}},(-1)^{a_{-}}, 1^{b_{+}},(-1)^{b_{-}}) 
\end{equation*}
with $a_{-}$ and $b_{-}$ even. The centralizer is $C_{G_{2}}(\ov{s}) \cong S(O_{a_{+}} \times O_{a_{-}}) \times S(O_{b_{+}} \times O_{b_{-}})$. Consider the map $\delta : C_{G_2}(\ov{s}) \to \ker(\pi)$, and let $\delta_{a}$ (resp. $\delta_{b}$) be the restriction of $\delta$ to $S(O_{a_{+}} \times O_{a_{-}})$ (resp. to $S(O_{b_{+}} \times O_{b_{-}})$). 

We have a Cartesian diagram 
\begin{equation*}
\xymatrix{
\Spin_{a} \ar[d] \ar[r] & G_{a,b} \ar[d]\\
\SO_{a}\ar[r] & \SO_{a}\times \SO_{b} 
}
\end{equation*}
and likewise for $b$. Therefore, the map $\delta_{a}$ is the same as the map $\delta$ for the projection $\Spin_{a}\to \SO_{a}$ and the element $\ov{s}_{a} = (1^{a_{+}},(-1)^{a_{-}})$. Note that by exactness of the sequence (\ref{eqn:exactness-delta}), $\delta_{a}$ is injective (since $\pi_0(C_{\Spin_{a}}(\ov{s}_{a})) = 1$ ). Assuming $a_{\pm} > 0$, the group $S(O_{a_{+}} \times O_{a_{-}})$ has two connected components. This implies $\delta_{a}$ is an isomorphism on component groups, and moreover that $\delta : C_{G_2}(\ov{s}) \to \ker(\pi)$ is surjective. The same argument applies when $b_{\pm}> 0$ to show surjectivity of $\delta$. Therefore, in this situation, there is a unique conjugacy class in $G_{a,b}$ above $\ov{s}$. 

In the case that $a_+ a_-=0$ and $b_+b_- =0$ (i.e. $\ov{s}$ is central in $\SO_{a}\times \SO_{b}$), the map $\delta$ is trivial, so there are two conjugacy classes above $\ov{s}$.

Summarizing, we can compute $n_{1}(G_{a,b})$ as follows. When $a=0$ or $b=0$, we have $G_{a,b}=\Spin_{n}$, so we assume $a,b>0$. For an integer $k$ abbreviate $G_k =\Spin_k$. We have
\begin{align*}
n_{1}(G_{a,b}) &= \sum_{\substack{a_{+}a_{-}\neq 0 \textup{ or} \\ b_{+}b_{-} \neq 0 }} n_0(G_{a_{\pm},b_{\pm}}) + 2 \sum_{\substack{a_{+}a_{-}= 0 \textup{ and} \\ b_{+}b_{-} = 0 }} n_0(G_{a_{\pm},b_{\pm}}) \\
&=  \sum_{\substack{a_{+}+a_{-} = a,\\ b_{+}+b_{-} =b }} n_0(G_{a_{+}})n_0(G_{a_{-}})n_0(G_{b_{+}})n_0(G_{b_{-}}) + \begin{cases} 
2n_0(G_{a})n_{0}(G_{b}), a \textup{ odd} \\
4n_0(G_{a})n_{0}(G_{b}), a \textup{ even}
\end{cases}
\end{align*}
where $G_{a_{\pm},b_{\pm}} \subset G_{a,b}$ is the centralizer of any element in $G_{a,b}$ lying above $\ov{s} = (1^{a_{+}},(-1)^{a_{-}},1^{b_{+}},(-1)^{b_{-}}) $. 
Moreover, by the same reasoning as for the computation of $n_1(\Spin_n)$ we get
\begin{equation*}
n_{2}(\Spin_{n}) = \sum_{\substack{a+b=n \\ b \textup{ even}}} n_1(G_{a,b})
+\begin{cases}
n_1(\Spin_{n}), n \textup{ odd} \\
2n_1(\Spin_{n}), n \textup{ even}.
\end{cases}
\end{equation*}
Let $g_{BD}(t) = (1+f_{BD}(t))(1+f_{D}(t))-3$ be the generating function for $n_1(\Spin_n)$, and let $g_{D}(t) = \frac{1}{2}(g_{BD}(t)+g_{BD}(-t))$ be its even part. It is easy to check that $g_{D}(t) = f_{D}(t)^2+2f_{D}(t) -2$, and that for the generating function we now get
\begin{align*}
\sum_{n\ge 1} n_{2}(\Spin_n) t^n &=  f_{BD}(t)f_{D}(t)^{3}+2f_{BD}(t)f_{D}(t)+2f_{D}(t)^2+g_{BD}(t)+g_{D}(t) -  7\\
& = f_{BD}(t)f_{D}(t)^{3}+3f_{D}(t)^{2}+3(f_{BD}(t)+1)f_{D}(t)+f_{BD}(t) - 11
\end{align*}

\sss{Type $C$}
Let $G=\Sp_{2n}$. The set of partitions of $2n$ with distinct even parts is in bijection with partitions of $n$ with distinct parts. Therefore, the number of distinguished orbits is equal to $p'(n)$,  the number of partitions of $n$ into distinct parts. Its generating function is
\begin{equation*}
\sum_{n\ge0}p'(n)t^{n}=\prod_{n\ge1}(1+t^{n}).
\end{equation*}
For the generating function $f_C$ of distinguished nilpotent orbits for $\Sp_{2n}$ we therefore have
\begin{eqnarray*}
\sum_{n\ge0}n_{0}(\Sp_{2n})t^{n}=&f_C=\prod_{n\ge1}(1+t^{n}),\\
\sum_{n\ge0}n_{1}(\Sp_{2n})t^{n}=&f_{C}^2=\prod_{n\ge1}(1+t^{n})^{2},\\
\sum_{n\ge0}n_{2}(\Sp_{2n})t^{n}=&f_{C}^4=\prod_{n\ge1}(1+t^{n})^{4}.\\
\end{eqnarray*}
Indeed, the centralizers that can occur are the subgroups $\Sp_{2i} \times \Sp_{2(n-i)}\subset \Sp_{2n}$. This means that
\begin{eqnarray*}
n_{1}(\Sp_{2n})=\sum_{i=0}^{n}n_{0}(\Sp_{2i})n_{0}(\Sp_{2(n-i)}),\\
n_{2}(\Sp_{2n})=\sum_{i=0}^{n}n_{1}(\Sp_{2i})n_{1}(\Sp_{2(n-i)}),
\end{eqnarray*}
from which the above formulas follow.

\sss{Exceptional groups}

Partial results are listed in the table below. We omit details of the calculation.

\begin{center}

\begin{tabular}{|c|c|c|c|c|}

\hline

$G$ & $n_{0}(\frg)$ & $n_{1}(G)$ & $n_{2}(G;1)$ & $n_{2}(G)$ \\

\hline

$G_{2}$ & 2 & 4 & 9 & 9\\
\hline

$F_{4}$ & 4 & 10 & 29 & 29 \\\hline

$E^{\sc}_{6}$ & 3 & 13 & 66 & 66 \\\hline

$E^{\ad}_{6}$ & 3 & 5 & 10 & \\\hline

$E^{\sc}_{7}$ & 6 & 22 & 102 & 102 \\
\hline
$E^{\ad}_{7}$ & 6 & 12 &  &\\
\hline
$E_{8}$ & 11 & 31 & 113 & 113 \\
\hline
\end{tabular}

\end{center}

\section{Selection functions}\label{s:sel}

This section is entirely Lie-theoretic and does not involve any curve. We will construct certain functions on the Lie algebra $\frg$ that will later help us single out indecomposable bundles on curves. 

\subsection{General remarks on reductive groups $G$}

Let $G$ be a (not necessarily split) connected reductive group over $k=\FF_{q}$ with Lie algebra $\frg$. Recall that every such $G$ is quasi-split. Let $\cB$ be the flag variety of $G$. Let
\begin{equation*}
\chi: \frg\to \frc=\frg\sslash G
\end{equation*}
be the Chevalley morphism.

\sss{Universal Cartan}\label{sss:univ Cartan}
For any two Borel subgroups $B_{1}$ and $B_{2}$ of $G$ with reductive quotients $T_{1}$ and $T_{2}$, choose any $g\in G(k)$ such that $\Ad(g)B_{1}=B_{2}$, then the isomorphism $T_{1}\isom T_{2}$ induced by $\Ad(g)$ is independent of the choice of $g$. Therefore the reductive quotients of all Borel subgroups of $G$ are canonically identified with a single torus, the {\em universal Cartan} of $G$, denoted by $T$. Let $\frt=\Lie T$.

Let $W$ be the {\em abstract Weyl group} of $G$, i.e., its underlying set is the set of $G_{\ov k}$-orbits on $\cB^{2}_{\ov k}$. The simple reflections in $W$ correspond to minimal $G_{\ov k}$-orbits other than the diagonal orbit. The $\FF_{q}$-form $G$ determines an automorphism $\d$ of $W$ as a Coxeter group, hence defining a finite \'etale group scheme $\un W$ over $k$. The group scheme $\un W$ acts on $T$.

If we choose a Borel subgroup $B$ defined over $k$ and a maximal torus $T_{0}\subset B$, the composition $\io_{T_{0},B}: T_{0}\subset B\to T$ gives an isomorphism between $T_{0}$ and $T$. Let $W_{0}=N_{G}(T_{0})/T_{0}$ be the Weyl group scheme defined by $T_{0}$ (a finite \'etale group scheme  over $k$). Then there is a canonical isomorphism of group schemes $\io_{W_{0}, B}: W_{0}\cong \un W$ sending simple reflections of $W_{0}(\ov k)$ defined by $B$ to simple reflections of $W=\un W(\ov k)$. The isomorphism $\io_{T_{0}, B}$ is equivariant under the isomorphism $\io_{W_{0},B}$. 

Applying the above discussions after base change to $\ov k$, we see that for any maximal torus $T_{0}\subset G$, upon choosing a Borel subgroup $B\subset G_{\ov k}$ containing $T_{0,\ov k}$, we get an isomorphism $\io_{T_{0,\ov k}, B}: T_{0,\ov k}\isom T_{\ov k}$. Different choices of $B$ only changes $\io_{T_{0,\ov k}, B}$ by post-composing with the $W$-action.  The root system $\Phi(G_{\ov k}, T_{0,\ov k})\subset \xch(T_{0,\ov k})$ together with its simple roots determined by $B$ induces a based root system $\Phi$ on $\xch(T_{\ov k})$ by $\io_{T_{0,\ov k}, B}$. This based root system  $\Phi$ is independent of the choices of $T_{0}$ and $B$, and is called the {\em abstract based root system} of $G$. The simple reflection in $W$ are reflections defined by the simple roots in $\Phi$.

\sss{Maximal tori}\label{sss:tori} Conjugacy classes of maximal tori in $G$ are in bijection with Frobenius-twisted conjugacy classes in $W$. When $p$ is not too small, the same statement holds for Cartan subalgebras in $\frg$. In any case, for each $w\in W$ one can define a Cartan subalgebra of $\frg$ as follows. First, for each $w \in W$, define the subspace 
\begin{equation}\label{tw t} 
\frt_{w} = \{ x \in \frt_{\ov{k}} \mid x = \Ad_{w}\Fr(x) \}.
\end{equation}
in the universal Cartan $\frt$. Now we realize $\frt_{w}$ as a Cartan subalgebra in $\frg$.

Let $T'$ be a maximally split torus in $G$ with Lie algebra $\frt'$, contained in a Borel subgroup $B$ defined over $k$. Over $\ov{k}$, the Weyl group of $T'_{\ov{k}}$ is identified with $W$ via $B_{\ov{k}}$. 
Let $w \in W$, and lift it to $\dot{w}\in N_{G_{\ov{k}}}(T'_{\ov{k}})$. By the Lang-Steinberg theorem, the morphism $g \mapsto g^{-1}\Fr(g)$ is surjective, and we may choose $g$ such that $\dot{w} = g^{-1}\Fr(g)$. 

By construction, $\frt'' := \Ad_{g}(\frt')$ is $\Fr$-stable, hence defined over $k$. Since
\begin{equation*}
    \frt''(k) = (\frt''(\ov{k}))^{\Fr} = (\Ad_{g}\frt'(\ov{k}))^{\Fr}, 
\end{equation*}
the map $\Ad_{g^{-1}}$ induces an isomorphism of $\frt''(k)$ with the $w\circ \Fr$-fixed points 
\begin{equation*}
(\frt')^{w\circ\Fr} = \{x \in \frt'(\ov{k}) \mid x = \Ad_{w}\Fr(x) \}.
\end{equation*}
Finally, the subspace $\frt_{w} \subset \frt_{\ov{k}}$ in the abstract Cartan is identified with $\frt''$. This construction determines an embedding $\frt_{w}\incl \frg$, which is well-defined up to $G(k)$-conjugacy. For each $w$ we moreover obtain a map 
\begin{equation*}
\chi_{w}: \frt_{w}\to \frc
\end{equation*}
over $k$, which is descended from $\chi_{\frt}: \frt_{\ov k}\to \frc_{\ov k}$.

A similar argument can be made for maximal tori in $G$, and this construction induces the bijection between Frobenius-twisted conjugacy classes in $W$ and $G$-conjugacy classes of maximal tori.

\sss{Further notations}
Since $G$ is defined over $k$, its base change $G_{\ov{k}}$ is equipped with a $\ov{k}$-linear Frobenius automorphism $\Fr_{\ov{k}} : G_{\ov{k}}\to G_{\ov{k}}$. This automorphism induces a Frobenius structure on the flag variety $\cB_{\ov{k}}$, on the universal Cartan $\frt_{\ov{k}}$, and on the Chevalley quotient $\chi : \frg_{\ov{k}} \to \frc_{\ov{k}}$. Moreover, the Grothendieck alteration $\pi : \wt{\frg}_{\ov{k}} \to \frg_{\ov{k}}$ carries an induced Frobenius structure, and it makes sense to consider the commutative diagram 
\begin{equation*}
\xymatrix{ 
    \wt{\frg} \ar[r]^{\pi} \ar[d]^{\varepsilon} & \frg \ar[d]^{\chi} \\
    \frt \ar[r]^{\chi_{\frt}} & \frc. 
    }
\end{equation*}
over $k$. The cohomology groups are usually taken for varieties over $\ov k$, even though we may drop the subscript ${\ov{k}}$ from the notation: e.g., $\cohog{*}{\cB}$ refers to the $\Qlbar$-cohomology of $\cB_{\ov k}$. 

\begin{lemma}\label{l:van sum} Suppose $G$ is a connected reductive group over $k$ (not necessarily split). Let $N=\dim \cB$.
\begin{enumerate} 
\item As functions on $\frc(k)$, we have
\begin{equation}\label{fibers of chi}
\chi_{!}\one_{\frg}=q^{N}\frac{1}{\#W}\sum_{w\in W}\sgn(w)\Tr(\Fr^{*}\c w, \cohog{*}{\cB})\chi_{w!}\one_{\frt_{w}}.
\end{equation}
Here $\sgn: W\to \{\pm1\}$ is the sign character.
The $W$-action on $\cohog{*}{\cB}$ is normalized so that its action on $\cohog{0}{\cB}$ is trivial, and its action on the top cohomology is by $\sgn$.
\item Let $f$ be a function on $\frc(k)$. Suppose $\j{\one_{\frt}, \chi^{*}f}_{\frg}=0$ for all Cartan subalgebras $\frt\subset \frg$, then $\int_{\frg}\chi^{*}f=0$.
\end{enumerate} 
\end{lemma}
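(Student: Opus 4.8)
The plan is to deduce part (2) directly from part (1) by evaluating the pairing $\j{\one_{\frg}, \chi^*f}_{\frg}$ via the projection formula. First I would observe that
\[
\int_{\frg}\chi^{*}f \;=\; \j{\one_{\frg},\chi^{*}f}_{\frg}\;=\;\j{\chi_{!}\one_{\frg}, f}_{\frc},
\]
using the adjunction between $\chi^{*}$ and $\chi_{!}$ for the map $\chi\colon \frg(k)\to\frc(k)$ (this is the standard groupoid/finite-fiber projection formula recorded in \S\ref{sss:gpoid integral}). Now I substitute the formula from part (1):
\[
\j{\chi_{!}\one_{\frg},f}_{\frc}
= q^{N}\frac{1}{\#W}\sum_{w\in W}\sgn(w)\,\Tr(\Fr^{*}\c w,\cohog{*}{\cB})\,\j{\chi_{w!}\one_{\frt_{w}}, f}_{\frc}.
\]
Applying adjunction once more, this time for $\chi_w\colon \frt_w(k)\to\frc(k)$, gives $\j{\chi_{w!}\one_{\frt_{w}},f}_{\frc}=\j{\one_{\frt_w},\chi_w^{*}f}_{\frt_w}$. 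The key point is that $\chi_w^{*}f$ is exactly the restriction of $\chi^{*}f$ to the Cartan subalgebra $\frt_w\subset\frg$, because the map $\chi_w$ is by construction (\S\ref{sss:tori}) the composite $\frt_w\incl\frg\xrightarrow{\chi}\frc$ up to $G(k)$-conjugacy, and the pairing $\j{\one_{\frt_w},\chi^{*}f|_{\frt_w}}$ is conjugation-invariant. Hence $\j{\one_{\frt_w},\chi_w^{*}f}_{\frt_w}=\j{\one_{\frt},\chi^{*}f}_{\frg}$ for the Cartan subalgebra $\frt=\frt_w$, which vanishes by hypothesis for \emph{every} $w\in W$. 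Therefore every term in the sum is zero, and $\int_{\frg}\chi^{*}f=0$.

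I would also double-check that the hypothesis as stated — vanishing of $\j{\one_{\frt},\chi^{*}f}_{\frg}$ for all Cartan subalgebras $\frt\subset\frg$ — really does cover all the $\frt_w$. Since every Cartan subalgebra of $\frg$ is $G(k)$-conjugate to some $\frt_w$ (for $p$ not too small, as noted in \S\ref{sss:tori}; in any case the $\frt_w$ are among the Cartan subalgebras), and since the function $\chi^{*}f$ is $G$-invariant so that the pairing only depends on the conjugacy class, the hypothesis applies to each $\frt_w$ as needed. One subtlety to be careful about: the weighted sum $\j{\one_{\frt_w},\chi_w^{*}f}_{\frt_w}$ is a genuine integral over the vector space $\frt_w(k)$ (all automorphism groups trivial), so there is no stacky factor to track here; the only bookkeeping is making sure the coefficient $q^N\tfrac{1}{\#W}\sgn(w)\Tr(\Fr^*\c w,\cohog{*}{\cB})$ is finite, which it is.

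The main obstacle — really the only nontrivial input — is part (1) itself, i.e.\ the identity \eqref{fibers of chi} computing $\chi_{!}\one_{\frg}$ as a signed $W$-average of the $\chi_{w!}\one_{\frt_w}$ weighted by Lefschetz numbers on the flag variety; but since we are allowed to assume it, part (2) is then a formal two-line consequence of adjunction plus the fact that the $\frt_w$ exhaust the conjugacy classes of Cartan subalgebras. I expect no real difficulty beyond stating the adjunction cleanly and matching $\chi_w^{*}f$ with $\chi^{*}f|_{\frt_w}$.
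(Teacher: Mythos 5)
Your deduction of part (2) from part (1) is correct and is exactly the paper's own argument: pair both sides of \eqref{fibers of chi} with $f$, apply adjunction for $\chi$ and for each $\chi_w$, and note that each $\j{\one_{\frt_w},\chi_w^*f}_{\frt_w}=\j{\one_{\frt_w},\chi^*f}_{\frg}$ vanishes by hypothesis because $\frt_w$ is a Cartan subalgebra of $\frg$. (The direction you double-checked --- that the $\frt_w$ exhaust the conjugacy classes of Cartans --- is not actually needed; you only need that each $\frt_w$ \emph{is} a Cartan, which is immediate from the construction in \S\ref{sss:tori}.)

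However, the statement to be proved is the whole lemma, and your proposal treats part (1) as a hypothesis rather than proving it; part (1) is precisely the substantive content. The paper proves it by comparing the two factorizations of $\wt{\chi}:\wt{\frg}\to\frc$ as $\chi\circ\pi$ and $\chi_{\frt}\circ\varepsilon$, so that $\chi_!\pi_!\Qlbar\cong \chi_{\frt!}\varepsilon_!\Qlbar$, and then equipping both sides with $W$-equivariant Weil structures. On the left, taking $W$-invariants of the Grothendieck--Springer sheaf $\pi_!\Qlbar$ recovers the constant sheaf on $\frg$, so the stalk of $\chi_!\one_{\frg}$ at $a\in\frc(k)$ is $\tfrac{1}{\#W}\sum_{w}\Tr(\Fr^*\circ w, \cohoc{*}{\wt{\chi}^{-1}(a)})$. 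On the right, $\varepsilon_!\Qlbar\cong\cohog{*}{\cB}[-2N](-N)\otimes\Qlbar_{\frt}$, with $W$ acting on the Tate-twist factor by $\sgn$, which produces the factor $q^N\sgn(w)\Tr(\Fr^*\circ w,\cohog{*}{\cB})$; and the $\Fr^*\circ w$-fixed points of the $W$-orbit over $a$ in $\frt(\ov k)$ are exactly the $k$-points of $\frt_w$ mapping to $a$, giving the factor $\chi_{w!}\one_{\frt_w}(a)$. The nontrivial step --- checking that the two $W$-equivariant structures agree under the canonical isomorphism, in particular that the sign twist is correct (the paper verifies this by reducing to the $0$-stalk and Verdier duality) --- is what your proposal omits. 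Without this, part (1) is unproved and hence so is the lemma.
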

\begin{proof}  
(1): Abbreviate $p=\chi_{\frt}$, and recall the commutative diagram 
\begin{equation*}
\xymatrix{ 
    \wt{\frg} \ar[r]^{\pi} \ar[d]^{\varepsilon} \ar[dr]^{\wt{\chi}}& \frg \ar[d]^{\chi} \\
    \frt \ar[r]^{p} & \frc. 
    }
\end{equation*}
Take a point $a\in \frc(k)$. The equality \eqref{fibers of chi} at $a$ will come from two ways of calculating the Frobenius trace on the $W$-invariants on $\cohoc{*}{\wt\chi^{-1}(a),\Qlbar}$, one using $\wt\chi=\chi\c\pi$, the other one using $\wt\chi=p\c \vep$. We have a canonical isomorphism of Weil sheaves on $\frc$
\begin{equation}\label{two push}
\chi_{!}\pi_{!}\Qlbar\cong p_{!}\vep_{!}\Qlbar.
\end{equation}
We will equip the pullback of both sides to $\frc_{\ov k}$ with $W$-actions. Here $W$ is the abstract Weyl group $W$ as a Coxeter group, identified as a set with the set of $G_{\ov k}$-orbits on $\cB_{\ov k}\times \cB_{\ov k}$. 

For the left side of \eqref{two push}, the $W$-action comes from the $W$-action on the Grothendieck-Springer sheaf $\pi_{!}\Qlbar$ on $\frg_{\ov k}$. On the other hand, $\pi_{!}\Qlbar$  carries a Weil structure 
$$\phi: \Fr^{*}(\pi_{!}\Qlbar)\cong \pi_{!}\Qlbar,$$ 
which is twisted-equivariant under $W$ in the following sense: recall that the $\FF_{q}$-form $G$ determines an automorphism $\d$ of $W$ as a Coxeter group. Then $\phi$ intertwines the action of $w$ on the left side (before applying $\Fr^{*}$) and the action of $\d(w)$  on the right. Taking $W$-invariants, $(\pi_{!}\Qlbar)^{W}$ still inherits a Weil structure from $\phi$, and as such, we have an isomorphism of Weil sheaves on $\frg$
\begin{equation*}
(\pi_{!}\Qlbar)^{W}\cong \const{\frg}.
\end{equation*}
Taking stalk at $x\in \frg(k)$, we get
\begin{equation*} \one_{\frg}(x) = \frac{1}{\#W}\sum_{w\in W} \Tr(\Fr^{*}\c w, \cohoc{*}{\cB_{x},\Qlbar}).
\end{equation*} 
Note that $\Fr^{*}\c w=\d(w)\c \Fr^{*}$ as endomorphisms of $\cohoc{*}{\cB_{x},\Qlbar}$. The above equality implies for any $a\in \frc(k)$,
\begin{equation}\label{Fr w tr 0}
    (\chi_{!}\one_{\frg})(a) =\frac{1}{\#W} \sum_{w\in W} \Tr(\Fr^{*}\c w, \cohoc{*}{\wt{\chi}^{-1}(a), \Qlbar}).
\end{equation}

Now we define the $W$-action on the right side of \eqref{two push}. The complex $\vep_{!}\Qlbar$ is easy to describe. Namely, the diagram
\begin{equation*} 
\xymatrix{ 
\wt{\frg} 	\ar[r]\ar[dr]^{\vep}	& \cB \times \frt \ar[d]^{\pr_{\frt}} \\
										& \frt 
}
\end{equation*}
commutes. The map $\wt{\frg} = G\times^{B} \frb \to G/B \times \frt = \cB \times \frt$ is an affine space bundle with fiber isomorphic to $[\frb, \frb]$ (here $B$ is a Borel subgroup with Lie algebra $\frb)$. Let $N= \dim[\frb,\frb]$, then we have an isomorphism of Weil sheaves
\begin{equation*} 
\varepsilon_{!}\Qlbar = \cohog{*}{\cB}[-2N](-N) \otimes \Qlb{\frt}.
\end{equation*}
The $W$-action on $p_{!}\vep_{!}\Qlbar$ is induced from the equivariant structure on $\vep_{!}\Qlbar$  defined as follows. 
The constant sheaf $\const{\frt}$ carries a canonical $W$-equivariant structure. We equip the $\cohog{*}{\cB}$ with the $W$-action by identifying it with the stalk of $\pi_{!}\Qlbar$ at $0$ (so $W$-acts trivially on $\cohog{0}{\cB}$ by sign on $\cohog{2N}{\cB}$). We equip the $\Qlbar[-2N](-N)$-factor with the sign representation of $W$. 

We claim that the isomorphism \eqref{two push} is equivariant under the $W$-actions defined on both sides. Factoring $\vep$ as the composition 
\begin{equation*}
\vep: \wt\frg\xr{\wt \pi} \frg\times_{\frc}\frt\xr{\pr_{2}}\frt,
\end{equation*}
we reduce to show that the $W$-equivariant structure on $\vep_{!}\Qlbar$ deefined above coincides with the one coming by $\pr_{2,!}$ from the canonical $W$-equivariant structure on $\wt\pi_{!}\Qlbar=\IC_{\frg\times_{\frc}\frt}[-\dim \frg]$ (which induces the Grothendieck-Springer action by pushing forward to $\frg$).  Since $\vep_{!}\Qlbar$ is a graded constant sheaf, any $W$-equivariant structure on it is determined by the action of $W$ on the $0$-stalk $(\vep_{!}\Qlbar)_{0} = \cohoc{*}{\wt{\cN}} \cong \cohog{*}{\cB}[-2N](-N)$. Thus we reduce to checking the following two $W$-actions on $\cohoc{*}{\wt{\cN}} \cong \cohog{*}{\cB}[-2N](-N)$ are the same: one comes by taking $\cohoc{*}{-}$ of the restriction of $\pi_{!}\Qlbar$ to $\cN$, together with the restricted Grothendieck-Springer $W$-action; the other is the Grothendieck-Springer action of $W$ on $\cohog{*}{\cB}$ twisted by the sign character. The first one is Verdier dual to $\cohog{*}{\cN,\pi_{*}\Qlbar}= (\pi_{*}\Qlbar)_{0}\cong \cohog{*}{\cB}$ (again with the $W$-action that acts trivially on $\cohog{0}{\cB}$). Dualizing back, we see it is the same $W$-action as the second action.

Using the $W$-equivariance of \eqref{two push} at $a\in\frc(k)$, we see that 
\begin{eqnarray}
\notag \Tr(\Fr^{*}\c w, \cohoc{*}{\wt{\chi}^{-1}(a), \Qlbar})&=&\Tr(\Fr^{*}\c w, \cohoc{*}{p^{-1}(a), \vep_{!}\Qlbar})\\
\notag &=&\Tr(\Fr^{*}\c w, \cohoc{*}{p^{-1}(a)}\ot \cohog{*}{\cB}[-2N](-N))\\
\label{Fr w tr}&=&q^{N}\sgn(w)\Tr(\Fr^{*}\c w, \cohoc{*}{p^{-1}(a)}\ot \cohog{*}{\cB}).
\end{eqnarray}
Since $p$ is finite, $\cohoc{*}{p^{-1}(a)}$ has a basis given by $\wt a\in \frt(\ov k)$ that map to $a$. Now $w\c \Fr$ permutes the these points $\wt a$, and the fixed points are precisely $\wt a\in \frt_{w}(k)$ that map to $a$. Only fixed points contribute to the trace of $(w\c \Fr)^{*}=\Fr^{*} \c w$. Therefore \eqref{Fr w tr} can be written as
\begin{equation*}
q^{N}\sgn(w) \sum_{\substack{\wt{a} \in \frt_{w}(k) \\ \chi_{w}(\wt{a}) = a }}\Tr(\Fr^{*}\c w, \cohog{*}{\cB}) =q^{N}\sgn(w)\Tr(\Fr^{*}\c w, \cohog{*}{\cB})(\chi_{w,!}\one_{\frt_{w}})(a). 
\end{equation*}
Combining this with \eqref{Fr w tr} and \eqref{Fr w tr 0} proves part (1). 

(2): Taking the pairing $\j{-,f}_{\frc(k)}$ on both sides of \eqref{fibers of chi} we conclude that $\int_{\frg}\chi^{*}f=\j{\chi_{!}\one_{\frg}, f}_{\frc(k)}$ is a linear combination of $\j{\chi_{!}\one_{\frt_{[w]}},f}_{\frc(k)}=\j{\one_{\frt_{[w]}, \chi^{*}f}}_{\frg}$. The statement follows.
\end{proof}

\subsection{Relevant subgroups}
We introduce a class of subgroup of $G$ called $G$-relevant. They are closely related to automorphism groups of $G$-bundles, about which we will elaborate in \S\ref{s:aut}.

Whenever talking about $G$-relevant subgroups/subalgebras, we make the following assumption on $p=\ch(k)$:
\begin{equation}\label{p prime to ZG}
\mbox{$\xch(ZG)$ does not have $p$-torsion; equivalently, $ZG$ is reduced.}
\end{equation}

\begin{defn}\label{def:rel} 
\begin{enumerate}

\item Let $A\subset G$ be a $k$-torus. We say $A$ is {\em $G$-relevant} if there is a Levi subgroup $L\subset G_{\ov k}$ such that $A_{\ov k}=C_{L}$ (maximal torus in the center of $L$). In this case, $L$ is determined by $A$ and descends to $k$, namely $L=C_{G}(A)_{\ov k}$.

\item Let $H\subset G$ be a not-necessarily-reductive $k$-subgroup. We say $H$ is {\em $G$-relevant} if $H$ is reduced, and some (equivalently any) maximal torus $A\subset H$ is $G$-relevant. 

\item A $k$-subalgebra $\fra\subset \frg$ is called {\em $G$-relevant toral subalgebra} if it is the Lie algebra of a $G$-relevant torus $A\subset G$.

\item A $k$-subalgebra $\frh\subset\frg$ is called {\em $G$-relevant} if it is the Lie algebra of a $G$-relevant subgroup  $H\subset G$. 

\end{enumerate}
\end{defn}

Note that all relevant subgroups of $G$ contain $C_{G}$, which is equal to $(ZG)^{\c}$ by our assumption \eqref{p prime to ZG}, and its Lie algebra contains $\frz=\Lie(ZG)$.

\begin{defn}\label{d:ess unip} A relevant subgroup $H\subset G$ is called {\em essentially unipotent} if $H^{\c}/C_{G}$ is unipotent. Equivalently, $C_{G}$ is the maximal torus in $H$.

\end{defn}

\begin{lemma}\label{l:G rel cut by roots}
Suppose $\xch(ZG)$ has no $p$-torsion. Let $\frt_{0}=\Lie T_{0}\subset \frg$ be a Cartan subalgebra. Then a $k$-subspace $\fra\subset \frt_{0}$ is $G$-relevant toral if and only if there exists a set of roots $\{\a_{j}\}_{j\in J}\subset \Phi(G_{\ov k}, T_{0,\ov k})$ that can be extended to a basis of the root system $\Phi(G_{\ov k}, T_{0,\ov k})$, such that $\fra_{\ov k}=\cap_{j\in J}\ker(d\a_{j})$.
\end{lemma}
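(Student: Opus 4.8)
The statement is a "cut out by roots" characterization of $G$-relevant toral subalgebras sitting inside a fixed Cartan subalgebra $\frt_0 = \Lie T_0$. The plan is to prove both implications by translating between the Lie-algebra side and the group side, using the hypothesis that $\xch(ZG)$ has no $p$-torsion (equivalently $ZG$, and more generally the center of any Levi, is reduced with smooth Lie algebra given by the expected intersection of root hyperplanes). First I would reduce to the case $k = \ov k$: the property of being $G$-relevant toral is defined after base change to $\ov k$ anyway (Definition \ref{def:rel}(1),(3)), and the condition on the right-hand side — that $\{\a_j\}_{j\in J}$ extends to a basis of $\Phi(G_{\ov k}, T_{0,\ov k})$ — is manifestly a statement over $\ov k$; a $k$-subspace $\fra \subset \frt_0$ satisfies it iff $\fra_{\ov k}$ does, so descent is automatic. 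So assume $k$ algebraically closed.

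For the "only if" direction: suppose $\fra = \Lie A$ for a $G$-relevant torus $A \subset G$ with $A_{\ov k} = C_L$ for a Levi $L$. Conjugating, we may assume $T_0 \subset L$ (any torus lies in a maximal torus, and $A \subset C_L \subset L$ forces the maximal torus $T_0 \supset A$ through which we are working to be arranged inside $L$ up to conjugacy; more carefully, pick a maximal torus of $L$ containing $A$ and conjugate it to $T_0$). Then $L = C_G(A)$ is a Levi containing $T_0$, so $\Phi(L, T_0)$ is a subsystem spanned by a subset $\{\a_j\}_{j\in J}$ of a basis of $\Phi(G, T_0)$ (this is the standard structure theory of Levi subgroups — Levi subgroups containing a fixed maximal torus correspond to subsets of simple roots, up to the Weyl group). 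Now $A = C_L$, and since $\xch(ZL)$ has no $p$-torsion (by the injectivity \eqref{xz tors} applied to the twisted Levi $L \subset G$, the torsion of $\xch(ZL)$ injects into that of $\xch(ZG)$, which is trivial), the center $ZL$ is reduced, so $\Lie(C_L) = \Lie(ZL)^{\circ} = \bigcap_{j \in J} \ker(d\a_j)$. Hence $\fra = \bigcap_{j\in J}\ker(d\a_j)$ as desired.

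For the "if" direction: given $\{\a_j\}_{j\in J}$ extending to a basis of $\Phi(G, T_0)$, let $L$ be the Levi subgroup generated by $T_0$ and the root subgroups $U_{\pm\a}$ for $\a$ in the subsystem spanned by $\{\a_j\}_{j\in J}$ — this is a genuine Levi (a standard Levi with respect to the base extending $\{\a_j\}$), with $\Phi(L, T_0)$ the span of the $\a_j$. Set $A = C_L$, a $G$-relevant torus by definition. Again using that $\xch(ZL)$ has no $p$-torsion, $ZL$ is reduced and $\Lie A = \Lie(C_L) = \bigcap_{j\in J} \ker(d\a_j) = \fra$, so $\fra$ is $G$-relevant toral.

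The one genuine point requiring care — and the main obstacle — is the identification $\Lie(C_L) = \bigcap_{j \in J}\ker(d\a_j)$, i.e. that the $p$-torsion-freeness of $\xch(ZL)$ really does force $\Lie(ZL) = \bigcap_{j\in J}\ker(d\a_j)$ with the expected dimension. Concretely: $\xch(ZL) = \xch(T_0)/\Span_{\ZZ}\{\a_j : j \in J\}$ (as recorded in \S\ref{sss:intro G}), and $ZL$ is the diagonalizable group with this character group; its reducedness is precisely the no-$p$-torsion condition, and for a reduced diagonalizable group the Lie algebra is computed by tensoring the cocharacter lattice of the maximal subtorus with $k$, which is the annihilator $\bigcap_j \ker(d\a_j)$ — provided $\{d\a_j\}$ remain linearly independent in $\frt_0^*$, which follows from $\{\a_j\}$ being part of a $\ZZ$-basis of a lattice on which the $d\a_j$ are the reductions of the $\a_j$ (here one uses that $\{\a_j\}$ extends to a basis, so they span a direct summand, and reduction mod $p$ of a unimodular system stays independent). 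I would isolate this as a short lemma, or simply cite it, since it is the arithmetic heart of why the hypothesis on $p$ is needed; the rest is bookkeeping with Levi subgroups.
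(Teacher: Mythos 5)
Your proof takes essentially the same route as the paper's. Both directions ultimately rest on the key fact that $\xch(ZL)_{\tors}\hookrightarrow\xch(ZG)_{\tors}$ (that is, \eqref{xz tors}), so under the hypothesis $\xch(ZL)$ has no $p$-torsion. The paper packages this as: the cokernel of $f_J:\xcoch(T_0)\to\ZZ^J$ has no $p$-torsion, so tensoring the sequence $0\to\xcoch(A)\to\xcoch(T_0)\to\ZZ^J$ with $k$ stays exact; you package it as reducedness of the diagonalizable group $ZL$. These are equivalent, and either works.

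However, the final paragraph — the one you flag as the arithmetic heart — is off in two places. First, the identification $\Lie(ZL)=\bigcap_{j\in J}\ker(d\a_j)$ holds \emph{unconditionally}: $ZL$ is the diagonalizable scheme with character group $\xch(T_0)/\Span_\ZZ\{\a_j\}$, so its Lie algebra is $\Hom_\ZZ(\xch(T_0)/\Span_\ZZ\{\a_j\},k)$, which sits inside $\frt_0=\Hom_\ZZ(\xch(T_0),k)$ as exactly the common kernel of the $d\a_j$. No linear independence of the $d\a_j$ is required for this. What $p$-torsion-freeness of $\xch(ZL)$ buys is the other equality, $\Lie(C_L)=\Lie(ZL)$ (reducedness of the center); linear independence of the $d\a_j$ then falls out as a \emph{consequence} by counting dimensions, rather than being an input one must verify separately. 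Second, the justification you offer for linear independence ("$\{\a_j\}$ extends to a basis, so they span a direct summand, and reduction mod $p$ of a unimodular system stays independent") is not valid as stated: the $\a_j$ span a direct summand of the root lattice $\ZZ\Phi$, but $\ZZ\Phi$ need not be a direct summand of $\xch(T_0)$, and the map $\ZZ\Phi\otimes k\to\xch(T_0)\otimes k$ can fail to be injective precisely when $\xch(ZG)$ has $p$-torsion (e.g.\ $\SL_p$). So that justification silently reuses the hypothesis it was meant to explain. Since the reducedness argument you already wrote is correct and self-contained, this parenthetical should simply be dropped, and with it the lemma you proposed to isolate; the right "arithmetic heart" is \eqref{xz tors}, which you did invoke.
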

\begin{proof} We may base change the situation to $\ov k$, so we assume $k=\ov k$. 

First assume $\fra\subset \frt_{0}$ is $G$-relevant toral. Then $\fra=\Lie A$ for a $G$-relevant torus $A\subset T_{0}$. The $G$-relevance of $A$ implies $A=C_{L}$ for the Levi subgroup $L=C_{G}(A)$. Then $T_{0}$ is a maximal torus in $L$. Now $\Phi(G,T_{0})$ has a basis $\{\a_{i}\}_{i\in I}$ and a subset $J\subset I$ such that $\{\a_{j}\}_{j\in J}$ is a basis for $\Phi(L,T_{0})$. We have an exact sequence
\begin{equation}\label{AT0}
0\to \xcoch(A)\to \xcoch(T_{0})\xr{f_{J}}\ZZ^{J}.
\end{equation}
Here $f_{J}(\l)=\{\j{\a_{j},\l}\}_{j\in J}\in \ZZ^{J}$ for $\l\in \xcoch(T_{0})$. Clearly $\coker(f_{J})$ is finite. The adjoint map of $f_{J}$ is the inclusion $\Span_{\ZZ}\{\a_{j}; j\in J\}\incl \xch(T_{0})$. Now $\coker(f_{J})$ has the same cardinality as $(\xch(T_{0})/\Span_{\ZZ}\{\a_{j}; j\in J\})_{\tors}=\xch(ZL)_{\tors}$. The natural map $\xch(ZL)\to \xch(ZG)=\xch(T_{0})/\Span_{\ZZ}\{\a_{i}; i\in I\}$ induces an injection on the torsion part. Since $\xch(ZG)$ has no $p$-torsion, neither does $\xch(ZL)$. Therefore $\coker(f_{J})$ has no $p$-torsion. This implies that \eqref{AT0} becomes a short exact sequence after reduction mod $p$, hence also after $\ot_{\ZZ}k$:
\begin{equation*}
0\to \xcoch(A)\ot_{\ZZ}k\to \xcoch(T_{0})\ot_{\ZZ}k\xr{f_{J}\ot k}k^{J}\to 0.
\end{equation*}
This is the same as the short exact sequence
\begin{equation*}
0\to \fra\to \frt_{0}\xr{\frf_{J}}k^{J}\to 0
\end{equation*}
where $\frf_{J}(v)=(\j{d\a_{j}, v})_{j\in J}$, $v\in \frt_{0}$. Thus we conclude that $\fra=\cap_{j\in J}\ker(d\a_{j})$.

Conversely, suppose $\{\a_{j}\}_{j\in J}$ is part of a basis for $\Phi(G,T_{0})$. Let $L$ be the Levi subgroup of $G$ containing $T_{0}$ with root system spanned by $\{\a_{j}\}_{j\in J}$. Then $\Lie ZL=\cap_{j\in J}\ker(d\a_{j})$. The argument in the previous paragraph shows that $\xch(ZL)$ has no $p$-torsion, hence $ZL$ is reduced, and $A=(ZL)^{\c}$ is a $G$-relevant torus. Therefore $\cap_{j\in J}\ker(d\a_{j})=\Lie (ZL)^{\c}=\Lie A$ is a $G$-relevant toral subalgebra of $\frg$.
\end{proof}

\begin{exam}\label{ex:GLn rel tori} Let $G=\GL_{n}$ and $T_{0}\cong (\Gm)^{n}$ be the diagonal maximal torus. The $G$-relevant tori $A\subset T_{0}$ are in bijections with set-partitions of $\{1,2,\cdots, n\}$: for a partition $\{1,2,\cdots, n\}=I_{1}\coprod I_{2}\coprod \cdots\coprod I_{s}$, the corresponding $A$ consists of $(x_{1},\cdots, x_{n})\in (\Gm)^{n}$ such that $x_{i}=x_{j}$ whenever $i$ and $j$ are in the same part of the partition. The $G$-relevant toral subspaces $\fra\subset \frt_{0}=\Lie T_{0}\cong k^{n}$ are obtained from set-partitions of $\{1,2,\cdots, n\}$ in the same way: $\fra$ consists of elements $(x_{1},\cdots, x_{n})\in k^{n}$ such that $x_{i}=x_{j}$ whenever $i$ and $j$ are in the same part.

For $G=\PGL_{n}$, the $G$-relevant tori (resp. $G$-relevant toral subalgebras) in the diagonal torus (resp. its Lie algebra) are the images of the $\GL_{n}$-relevant tori (resp. $G$-relevant toral subalgebras) described above.
\end{exam}

\begin{exam} Let $G=\SL_{n}$ and $T_{0}$ be the diagonal maximal torus. We identify $T_{0}$ with the subtorus of $(\Gm)^{n}$ with total product one. 

We describe the minimal nontrivial $G$-relevant tori contained in $T_{0}$. Let $1\le a\le [n/2]$ and $b=n-a$. Let $d=\gcd(a,b)$ and write $a=da_{1}, b=db_{1}$. Let $A_{a}\subset T_{0}$ be the subtorus consisting of elements of the form
\begin{equation*}
(\underbrace{x,\cdots, x}_{a}, \underbrace{y,\cdots, y}_{b})\quad \mbox{ satisfying }x^{a_{1}}y^{b_{1}}=1, x,y\in \Gm.
\end{equation*}
Then $A_{a}$ is $G$-relevant, and any minimal nontrivial $G$-relevant subtori in $T_{0}$ is of the form $w(A_{a})$ for a unique $1\le a\le [n/2]$ and some $w\in S_{n}$. 

\end{exam}

\subsection{$W$-relevance}
Recall that $\frt$ is the universal Cartan of $\frg$, so that $\frt_{\ov k}$ carries the action of the abstract Weyl group $W$. 
\begin{defn} 
\begin{enumerate}
\item A $\ov k$-subspace $\fra\subset \frt_{\ov k}$ is called {\em $W$-relevant} if $\fra=(\frt_{\ov k})^{W'}$ for some subgroup $W'\subset W$. 
\item Let $\frt_{0}\subset\frg$ be a Cartan subalgebra. A $k$-subspace $\fra\subset \frt_{0}$ is called {\em $W$-relevant} if, under some (equivalently any) identification $\io: \frt_{0,\ov k}\cong \frt_{\ov k}$ obtained by choosing a Borel over $\ov k$ containing $\frt_{0,\ov k}$ (see \S\ref{sss:univ Cartan}), $\io(\fra_{\ov k})\subset \frt_{\ov k}$ is $W$-relevant in the sense above.
\end{enumerate}
\end{defn}

\begin{exam} For $\GL_{n}$ and the diagonal Cartan $\frt_{0}$,  $W$-relevant subspaces in $\frt_{0}$ are the same as $G$-relevant toral subalgebras in the diagonal Cartan $\frt_{0}$, which are in bijection with set-partitions of $\{1,2,\cdots, n\}$ as described in Example \ref{ex:GLn rel tori}.
\end{exam}

\begin{exam}\label{ex:PGLn W rel}
Let $G=\PGL_{n}$ and assume $p|n$. In the diagonal Cartan $\frt_{0}$, consider the line $\fra$ spanned by $(1,2,\cdots, n)\in k^{n}/\D(k)=\frt_{0}$. Then $\fra$ is $W$-relevant: it is the fixed point locus of the cyclic permutation $(12\cdots n)\in S_{n}$. It is however not $G$-relevant.
\end{exam}

\begin{lemma}\label{l:W rel} Let $\frt_{0}$ be a Cartan subalgebra of $\frg$. 
\begin{enumerate}
\item If $\fra_{1},\fra_{2}\subset \frt_{0}$ are both $W$-relevant, so is $\fra_{1}\cap \fra_{2}$. 
\item Suppose $\xch(ZG)$ has no $p$-torsion, and $p\ne 2$ when $G^{\der}\cong \SO_{2n+1}\times G_{1}$ for some $n\ge1$. If $\fra\subset \frt_{0}$ is a $G$-relevant toral subalgebra, then it is also $W$-relevant.
\item\label{W rel cut by roots} If $p$ is \goodp for $G$ (see \S\ref{sss:goodp}), then a subspace $\fra\subset\frt_{0}$ is $W$-relevant if and only if it is $G$-relevant.
\end{enumerate}
\end{lemma}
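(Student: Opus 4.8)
Part (1) is formal: writing $\fra_{i}=(\frt_{\ov k})^{W_{i}}$, a vector lies in $\fra_{1}\cap\fra_{2}$ iff it is fixed by both $W_{1}$ and $W_{2}$, hence by the subgroup $\langle W_{1},W_{2}\rangle$ they generate, so $\fra_{1}\cap\fra_{2}=(\frt_{\ov k})^{\langle W_{1},W_{2}\rangle}$. For (2) and (3) the common device is the following: for a root $\a$, let $h_{\a}=\overline{\a^{\vee}}\in\frt_{\ov k}=\xcoch(T)\otimes\ov k$ be the image of the coroot, so that $s_{\a}(v)=v-\langle d\a,v\rangle h_{\a}$. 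Then $s_{\a}$ always fixes $\ker(d\a)$ pointwise, and it fixes nothing more precisely when $h_{\a}\neq0$ (equivalently $\a^{\vee}\notin p\,\xcoch(T)$); consequently, as soon as $h_{\a}\neq0$ for all $\a$ in a set $\Psi$, one has $(\frt_{\ov k})^{W_{\Psi}}=\bigcap_{\a\in\Psi}\ker(d\a)$ for $W_{\Psi}=\langle s_{\a}:\a\in\Psi\rangle$. So both parts reduce to non-vanishing of coroots in $\frt_{\ov k}$.

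For part (2): after base change to $\ov k$, Lemma~\ref{l:G rel cut by roots} (applicable since $\xch(ZG)$ has no $p$-torsion) writes $\fra=\bigcap_{j\in J}\ker(d\a_{j})$ for $\{\a_{j}\}_{j\in J}$ a subset of a basis of $\Phi$. I would take $W'=W_{J}$; by the device, $(\frt_{\ov k})^{W_{J}}=\fra$ provided $h_{\a_{j}}\neq0$ for each $j$, and since any root belongs to some basis the claim reduces to $h_{\a}\neq0$ for \emph{all} roots $\a$. For $p$ odd this is immediate from $\langle\a^{\vee},\a\rangle=2$. For $p=2$, $\a^{\vee}\in2\,\xcoch(T)$ forces every Cartan integer $\langle\a^{\vee},\beta\rangle$ to be even, which --- inspecting Dynkin neighbours and using that $\a$ is $W$-conjugate to a simple root --- can only happen when $\a$ lies in a type-$A_{1}$ factor or is a short root in a type-$B_{n}$ factor; and then $\tfrac12\a^{\vee}\in\xcoch(T)$ is a coweight supported on that single factor, which by the structure of semisimple groups forces the factor to be an adjoint $\SO_{2n+1}$ that splits off $G^{\der}$ as a direct summand --- precisely the excluded case.

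For part (3), ``$\Leftarrow$'' follows from (2): pretty good means $p$ is good and $\pi_{1}(G)$ has no $p$-torsion, and since $\pi_{1}(G^{\der})\hookrightarrow\pi_{1}(G)$ and the adjoint group of type $B_{n}$ has $\pi_{1}=\ZZ/2$, the hypotheses of (2) hold. For ``$\Rightarrow$'', let $\fra=(\frt_{\ov k})^{W'}$ and replace $W'$ by the full pointwise stabilizer $W''=\{w\in W:w|_{\fra}=\id\}$, which does not change $\fra$. The crucial input is Steinberg's theorem: $W''$ is generated by the reflections it contains. As $p$ is good, $h_{\a}\neq0$ for all $\a$, so these reflections are exactly the $s_{\a}$ with $\a\in\Psi:=\{\a\in\Phi:\fra\subset\ker(d\a)\}$; hence $W''=W_{\Psi}$ and $\fra=\bigcap_{\a\in\Psi}\ker(d\a)$. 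Since $\Psi$ consists of all roots vanishing on $\fra$, it is a parabolic subsystem, $W$-conjugate to $\Phi\cap\ZZ I$ for some $I\subset\Delta$; transporting back yields $\fra=\bigcap_{j\in J}\ker(d\a_{j})$ with $\{\a_{j}\}_{j\in J}$ part of a basis, and Lemma~\ref{l:G rel cut by roots} gives $G$-relevance.

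The main obstacle is the positive-characteristic input. The coroot non-vanishing in (2) is a finite check along the classification; the genuinely delicate point is justifying Steinberg's fixed-point theorem for the Weyl group acting on $\frt_{\ov k}$ in pretty good characteristic. In bad characteristic this fails (e.g. for $\PGL_{n}$ with $p\mid n$, where the $W$-relevant line of Example~\ref{ex:PGLn W rel} has pointwise stabilizer containing no reflections), so the pretty-good hypothesis is essential exactly here; I would justify the statement either by checking that the reflection arrangement of $W$ on $\frt_{\ov k}$ retains its characteristic-zero combinatorics when $p$ is pretty good, or by reducing to characteristic zero.
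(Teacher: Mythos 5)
Parts (1) and (2) of your proposal follow the paper's argument. For (2) the paper, under its stated hypothesis, simply asserts that $d\alpha^\vee\neq 0$ for every coroot and then runs the same computation $(\frt_{0})^{r_\alpha}=\ker(d\alpha)$; your case-by-case justification of the non-vanishing (reducing to $p=2$ and a short root in type $B$) is more explicit than the paper, though your final lattice-theoretic step --- that $\tfrac12\alpha^\vee\in\xcoch(T)$ forces an adjoint $\SO_{2n+1}$ to split off $G^{\der}$ as a direct factor --- is asserted rather than proved and is exactly where a complete argument would need more care.

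In part (3), the $\Leftarrow$ direction via (2) matches the paper, but in $\Rightarrow$ you have correctly identified a genuine gap and left it open: you need that the pointwise stabilizer $W''$ of $\fra$ in $W$, acting on $\frt_{\ov k}$ rather than on a real vector space, is a parabolic subgroup, and you supply neither a characteristic-$p$ Steinberg theorem nor a reduction to characteristic zero. Your subsequent claim that $\Psi=\{\alpha:\fra\subset\ker(d\alpha)\}$ is a parabolic subsystem is a second instance of the same difficulty, not a way around it. The paper fills this gap with Lemma~\ref{l:Wy parabolic}: for pretty-good $p$ the stabilizer $W_y$ of every $y\in\frt_{\ov k}$ is parabolic. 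Its proof does not redo Steinberg in characteristic $p$; instead it picks an $\FF_p$-basis $\{b_s\}$ of $\ov k$, writes $\frt_{\ov k}=\bigoplus_s(\xcoch(T)/p)\,b_s$ and $W_y=\bigcap_s W_{y_s}$, and reduces to stabilizers of points $y_s\in\xcoch(T)/p$; those are handled by Lemma~\ref{l:centralizer Levi}(1), which lifts $y_s$ to $\xcoch(T)_\QQ$ and passes to the affine Weyl group $\Wa$ acting on $\xcoch(T)_\RR$ (the absence of $p$-torsion in $\pi_1(G)$ is used precisely to force the lifted stabilizer into $\Wa$ rather than the extended affine group), and then quotes Sommers's result on rational closedness to conclude that the stabilizer is parabolic. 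With that lemma the paper argues more directly than your outline: pick $x\in\fra$ with $W_x=W''$, get $W_x=W_J$ for $J$ part of a basis, and read off $\fra=\frt_0^{W_x}=\bigcap_{j\in J}\ker(d\alpha_j)$ using $d\alpha_j^\vee\neq 0$, so the parabolicity of $\Psi$ never needs to be addressed separately.
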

\begin{proof}
(1) is clear from the definition.

Below we let $\frt_{0}=\Lie T_{0}$ for a maximal torus $T_{0}\subset G$, and denote the Weyl group $W(G,T_{0})$ by $W$.  

The statements (2) and (3) can both be checked after base change to $\ov k$. Therefore, below we assume $k=\ov k$.

(2) Suppose $\fra\subset \frt_{0}$ is $G$-relevant, then  $\fra=\cap_{\a\in \Phi'} \ker(\a)$ for a subset $\Phi'\subset \Phi:=\Phi(G,T_{0})$. Let $W'\subset W$ be generated by the reflections $r_{\a}$ for $\a\in \Phi'$. We always have $\ker(\a)\subset (\frt_{0})^{r_{\a}}$. Under the assumption on $p$, the differential $d\a^{\vee}\in \frt_{0}$ of every coroot $\a^{\vee}: \GG_{m}\to T_{0}$ is nonzero. Therefore, $(\frt_{0})^{r_{\a}}=\ker(\a)$, hence $\fra_{\ov k}=(\frt_{0})^{W'}$.   

(3) When $G$ has a simple factor of type $B$, $p$ good implies $p\ne 2$. Therefore $p$ satisfies the condition in (2), and $G$-relevance implies $W$-relevance.  

Conversely, let $\fra\subset \frt_{0}$ be $W$-relevant. Then $\fra=\frt_{0}^{W'}$ for some subgroup $W'\subset W$. We may assume $W'$ is the pointwise stabilizer of $\fra$. Then there exists a point $x\in \fra$ such that $W_{x}=W'$. To show that $\fra$ is $G$-relevant, by Lemma \ref{l:G rel cut by roots}, we need to show that $\frt^{W_{x}}_{0}$ is cut out by a subset of (differentials of) simple roots. By Lemma \ref{l:Wy parabolic} below, $W_{x}$ is a parabolic subgroup of $W$. In other words, there exists a basis $\{\a_{i}\}_{i\in I}$ for $\Phi(G,T_{0})$ and a subset $J\subset I$ such that $W_{x}$ is generated by $r_{\a_{j}}$ for $j\in J$. We thus have $\frt_{0}^{W_{x}}=\cap_{j\in J} \ker(d\a_{j})$ (using that $d\a^{\vee}_{j}\ne0$ with our assumption on $p$).
\end{proof}

\begin{lemma}\label{l:centralizer Levi}
Let $N$ be a positive integer all of whose prime factors are good for $G$. Assume further that $\pi_{1}(G)$ has no nonzero $N$-torsion. Then:
\begin{enumerate}
\item For any $y\in \xcoch(T)/N$, the stabilizer $W_{y}$ is a parabolic subgroup of $W$.
\item For any homomorphism $\ph: \mu_{N}\to G$, the centralizer of $\ph(\mu_{N})$ is a Levi subgroup of $G$.
\end{enumerate}
\end{lemma}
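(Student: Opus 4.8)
The plan is to prove (1) by transporting the stabiliser computation into the extended affine Weyl group, and then to deduce (2) from (1) together with standard structure theory for centralisers of subgroups of multiplicative type. Throughout, for a set of roots $S\subset\Phi$ I write $W_{S}$ for the subgroup of $W$ generated by the reflections $s_{\alpha}$, $\alpha\in S$.

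For (1), first fix a lift $\tilde y\in\xcoch(T)$ of $y$ and set $v=\tilde y/N\in V:=\xcoch(T)\otimes_{\ZZ}\QQ$; let $\widetilde W=W\ltimes\xcoch(T)$ act on $V$ as usual, with affine Weyl subgroup $W_{\aff}=W\ltimes\ZZ\Phi^{\vee}$. One checks directly that $w\in W_{y}$ if and only if $wv-v\in\xcoch(T)$, i.e.\ if and only if $w$ is the linear part of some element of $\Stab_{\widetilde W}(v)$; since translations act freely on $V$, the linear-part map restricts to an isomorphism $\Stab_{\widetilde W}(v)\isom W_{y}$. The hypothesis on $\pi_{1}(G)$ enters here: if $\g=t_{\mu}w$ fixes $v$ then $\mu=(\tilde y-w\tilde y)/N$, and since $w\lambda-\lambda\in\ZZ\Phi^{\vee}$ for every $\lambda$, the image of $\mu$ in $\xcoch(T)/\ZZ\Phi^{\vee}=\pi_{1}(G)$ is killed by $N$, hence vanishes; so $\mu\in\ZZ\Phi^{\vee}$ and $\g\in W_{\aff}$. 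Thus $W_{y}\cong\Stab_{W_{\aff}}(v)$. By the theory of affine Coxeter groups this stabiliser is generated by the reflections in the walls through $v$, and after moving $v$ into the closed fundamental alcove by an element of $W_{\aff}$ (which preserves the relation $Nv\in\xcoch(T)$ and changes the linear part only up to $W$-conjugation) its linear part is $W_{\Psi}$, where $\Psi:=\{\alpha\in\Phi:\langle\alpha,v\rangle\in\ZZ\}$. So $W_{y}$ is $W$-conjugate to $W_{\Psi}$, and everything reduces to showing that $\Psi$ is a Levi subsystem of $\Phi$ (equivalently $W_{\Psi}$ is a parabolic subgroup of $W$).

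The step I expect to be the main obstacle is this last claim — that $\Psi=\Phi\cap\QQ\Psi$ — which is where the ``good primes'' hypothesis is used (it genuinely fails at bad primes, e.g.\ for the long-root $A_{1}\times A_{1}$ in type $C_{2}$). The argument I have in mind is arithmetic: were there a root $\beta\in(\Phi\cap\QQ\Psi)\setminus\Psi$, pick $m\ge 2$ minimal with $m\beta\in\ZZ\Psi$; then $m\langle\beta,v\rangle=\langle m\beta,v\rangle\in\ZZ$ while $\langle\beta,v\rangle\in\tfrac1N\ZZ\setminus\ZZ$ (because $Nv\in\xcoch(T)$ and $\beta\notin\Psi$), so some prime $\ell\mid N$ divides $m$, hence divides the lattice index $[\ZZ(\Phi\cap\QQ\Psi):\ZZ\Psi]$. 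But $\Phi\cap\QQ\Psi$ is a Levi subsystem of $\Phi$ and $\Psi$ a full-rank closed subsystem of it, so by Borel--de Siebenthal this index is a product of marks (highest-root coefficients) of affine Dynkin diagrams of sub-root-systems of $\Phi$; its prime divisors are therefore bad for $G$, contradicting the choice of $N$. (Alternatively one can simply cite this ``good $\Rightarrow$ Levi'' statement from the literature on pseudo-Levi subgroups.) Granting it, $W_{\Psi}$, hence $W_{y}$, is a parabolic subgroup of $W$; and the same analysis shows $W_{y}=W_{\Phi_{y}}$, where $\Phi_{y}=\{\alpha\in\Phi:\langle\alpha,y\rangle=0\text{ in }\xcoch(T)/N\}$.

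For (2), I would argue as follows. Up to conjugacy $\ph\colon\mu_{N}\to G$ factors through the maximal torus $T$ (a subgroup of multiplicative type lies in a maximal torus), hence corresponds to the class $y\in\xcoch(T)/N$ with $\alpha\circ\ph\leftrightarrow\langle\alpha,y\rangle$. Since $\mu_{N}$ is linearly reductive, $H:=C_{G}(\ph(\mu_{N}))$ is smooth and reductive with maximal torus $T$, and $\Lie H=\frg^{\mu_{N}}=\frt\oplus\bigoplus_{\langle\alpha,y\rangle=0}\frg_{\alpha}$ shows that its root system is $\Phi_{y}$. By (1), $\Phi_{y}$ is a Levi subsystem and $W_{y}=W_{\Phi_{y}}$, so $H^{\circ}$ is the Levi subgroup $L\subset G$ with maximal torus $T$ and root system $\Phi_{y}$. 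It then remains to see $H=L$: for $h\in H$, $hTh^{-1}$ is a maximal torus of $L$, so after left-translating $h$ by a suitable element of $L$ we may assume $h\in N_{G}(T)$; its image $w\in W$ fixes $y$, hence $w\in W_{y}=W_{\Phi_{y}}=N_{L}(T)/T$, and correcting $h$ by a representative of $w$ in $N_{L}(T)$ puts $h$ into $T\subset L$. Thus $H=L$ is a Levi subgroup of $G$. The only inputs beyond (1) are the smoothness and reductivity of centralisers of linearly reductive subgroups, and the elementary fact that a reductive subgroup of $G$ containing a maximal torus of $G$ with root system a Levi subsystem is itself a Levi subgroup.
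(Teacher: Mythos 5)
Your proof of part (1) follows the same route as the paper: transport the stabilizer computation into the extended affine Weyl group $\widetilde W$, use the $N$-torsion-freeness of $\pi_1(G)$ to descend to the affine Weyl group $W_{\aff}$, conclude that the stabilizer is generated by reflections, and then invoke the goodness of the primes dividing $N$ to get rational closedness. The paper simply cites Sommers for that last step, whereas you sketch an arithmetic argument via iterated Borel--de Siebenthal and divisibility of marks; your sketch is morally right but silently uses the lemma that the torsion primes of $\ZZ\Phi'/\ZZ\Psi$ for a maximal-rank closed subsystem $\Psi$ of a Levi $\Phi'$ are bad for $\Phi$ (you need this because the iterated marks live in successive subsystems, not in $\Phi$ itself). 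That lemma is exactly what Sommers' argument supplies, so this is more a matter of what you choose to cite than of a gap.

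For part (2) you take a genuinely different route. The paper quotes the Springer--Steinberg description of $C_G(\ph(\mu_N))$ as generated by $T$, the root subgroups $U_\alpha$ for $\alpha\in\Phi'$, and lifts of $W'=C_W(\ph(\mu_N))$, and then identifies $W'$ with the parabolic $W_{\Phi'}$ via (1). You instead use the SGA3 smoothness of the centralizer of a multiplicative-type subgroup, compute $\Lie C_G(\ph(\mu_N))=\frg^{\mu_N}$ to get $H^\circ=L$, and then clean up the component group by a maximal-torus conjugacy argument (translate $h$ into $N_G(T)$ by an element of $L$, observe its image lies in $W_y=W_{\Phi_y}=W(L)$, and translate again into $T$). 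Your route trades the Springer--Steinberg citation for the SGA3 smoothness fact plus a short explicit argument that $H$ is connected; both give complete proofs, and your version makes the role of part (1) in controlling the component group slightly more visible. No errors; the only thing I would tighten is to state explicitly the auxiliary lemma you are using in the Borel--de Siebenthal step.
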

\begin{proof}
(1) Recall $\Phi$ is the abstract root system in $\xch(T)$. Let $\Phi_{y}=\{\a\in \Phi|\a(y)=0\in \ZZ/N\ZZ\}$. Let $\wt y\in \xcoch(T)$ be any lifting of $y$, and $x=\frac{1}{N}\wt{y}$. Let $\Phi_{\aff,x}=\{(\a,n)\in \Phi\times \ZZ|\a(x)+n=0\}$, the set of affine roots that vanish at $x$. Then projection to $\Phi$ gives a bijection $\Phi_{\aff,x}\isom \Phi_{y}=\Phi'$.  Let $\tilW_{x}$ be the stabilizer of $x$ under $\tilW=\xcoch(T)\rtimes W$, then projection to $W$ gives an isomorphism $\tilW_{x}\isom W_{y}$. 

We claim that $\tilW_{x}$ is contained in the affine Weyl group $\Wa=(\Span\ZZ\Phi^{\vee})\rtimes W$. In fact, let $(\l,w)\in \tilW_{x}$, where $\l\in \xcoch(T)$ and  $w\in W$. Then $wx+\l=x$. Multiply by $N$ on both sides we see $N\l=w\wt y-\wt y$, which lies in the coroot lattice since $\wt y\in \L^{\vee}$. The image of $(\l,w)$ in $\tilW/\Wa\cong \xcoch(T)/\Span\Phi^{\vee}$ is the coset of $\l$, and the sentence above shows that the coset of $\l$ is $N$-torsion in $\pi_{1}(G)=\xcoch(T)/\Span\Phi^{\vee}$. Since $\pi_{1}(G)$ has no $N$-torsion, $\l\in \Span\Phi^{\vee}$ and $(\l,w)\in \Wa$.

Now $\tilW_{x}\subset \Wa$, it is therefore generated by reflections corresponding to affine roots in $\Phi_{\aff,x}$. Hence $W_{y}$ is generated by reflections corresponding to roots in $\Phi_{y}$. Now, the same argument of Sommers in \cite[p.380, proof of Prop. 4.1 when $W^{J}=W$]{Sommers} applies to show that $\Phi_{y}$ is rationally closed in $\Phi$ when $N$ does not have any bad prime factors for $G$. Therefore $W_{y}$ is a parabolic subgroup of $W$.

(2)  Choose a maximal torus $T_{0}$ of $G$ containing $\ph(\mu_{N})$. Identify $T_{0}$ with the universal Cartan $T$. Let $\Phi'=\{\a\in \Phi|\a\c\ph=1\}$. Then $H:=C_{G}(\ph(\mu_{N}))$ is generated by $T_{0}$, the root subgroups $U_{\a}$ for $\a\in \Phi'$, and (liftings of) $W'=C_{W}(\ph(\mu_{N}))$, see \cite[Ch.II, \S4.1]{SS}. To show that $H$ is the Levi subgroup with root system $\Phi'$, it suffices to show that $W'$ is a parabolic subgroup of $W$, and is generated by the reflections $r_{\a}$ for $\a\in \Phi'$. Now $\ph:\mu_{N}\to T_{0}$ corresponds to an element $y\in \xcoch(T_{0})/N$. In the notation from the proof of (1), we have $W_{y}=W'$ and $\Phi_{y}=\Phi'$. We conclude that $W'$ is the parabolic subgroup corresponding to the rationally closed subsystem $\Phi'$.
\end{proof}

\begin{lemma}\label{l:Wy parabolic}
Let $G$ be a connected and simply-connected and reductive group over $k=\ov k$ of characteristic $p$. Assume $p$ is \goodp for $G$. Then for all $y\in \frt$ (universal Cartan), the stabilizer $W_{y}$ is a parabolic subgroup of $W$.   
\end{lemma}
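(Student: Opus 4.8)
The plan is to reduce the statement to Lemma~\ref{l:centralizer Levi}(1) applied with $N=p$, together with the standard fact that parabolic subgroups of a finite Coxeter group are closed under intersection. Since $k=\ov k$ has characteristic $p$, the universal Cartan can be written as
\[
\frt=\xcoch(T)\otimes_{\ZZ}k=\bigl(\xcoch(T)/p\,\xcoch(T)\bigr)\otimes_{\FF_{p}}k ,
\]
and $W$ acts linearly through the factor $M:=\xcoch(T)/p\,\xcoch(T)$. Given $y\in\frt$, the first step is to write $y=\sum_{i=1}^{m}\mu_{i}\otimes c_{i}$ with $\mu_{i}\in M$ and $c_{1},\dots,c_{m}\in k$ linearly independent over $\FF_{p}$; this is always possible after re-expressing an arbitrary representation of $y$ in terms of an $\FF_{p}$-basis of the (finite-dimensional) $\FF_{p}$-span of the coefficients. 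Inside $\frt$ one then has the internal direct sum $M\otimes_{\FF_{p}}\Span_{\FF_{p}}\{c_{1},\dots,c_{m}\}=\bigoplus_{i}M\otimes c_{i}$, so for $w\in W$ the relation $wy=y$ holds if and only if $w\mu_{i}=\mu_{i}$ for every $i$, whence
\[
W_{y}=\bigcap_{i=1}^{m}W_{\mu_{i}},\qquad \mu_{i}\in\xcoch(T)/p\,\xcoch(T) .
\]

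Next, $p$ is good for $G$ since it is \goodp for $G$, and $\pi_{1}(G)=0$ because $G$ is simply connected, so in particular $\pi_{1}(G)$ has no $p$-torsion. Therefore Lemma~\ref{l:centralizer Levi}(1) applies with $N=p$ and shows that each $W_{\mu_{i}}$ is a parabolic subgroup of $W$. To finish, I would invoke the fact that a finite intersection of parabolic subgroups of a finite Coxeter group is again parabolic. Concretely, realizing $W$ inside $\GL(V_{0})$ for its reflection representation $V_{0}$ over $\QQ$: every point of $V_{0}$ is $W$-conjugate to a point of the closed fundamental chamber, whose stabilizer is generated by the simple reflections fixing it, so every parabolic subgroup of $W$ is the pointwise stabilizer $W_{S}$ of a subset $S\subset V_{0}$; since $W_{S_{1}}\cap W_{S_{2}}=W_{S_{1}\cup S_{2}}$, the class of parabolic subgroups is closed under intersection. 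Applying this to $W_{\mu_{1}},\dots,W_{\mu_{m}}$ shows that $W_{y}$ is parabolic, which is the claim.

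The only ingredient beyond Lemma~\ref{l:centralizer Levi} is this last Coxeter-theoretic fact, for which one either cites a standard reference or gives the two-line argument above; everything else is linear algebra over $\FF_{p}\subset k$, so I do not anticipate a real obstacle. It is worth emphasizing where simply-connectedness enters: it is precisely what forces $\pi_{1}(G)$ to have no $p$-torsion, so that Lemma~\ref{l:centralizer Levi}(1) is available with $N=p$. The hypothesis cannot be relaxed to arbitrary semisimple $G$: for $G=\PGL_{n}$ with $p\mid n$ one has $\pi_{1}(G)=\ZZ/n$ with $p$-torsion, and, in agreement with this, the stabilizer of the line in Example~\ref{ex:PGLn W rel} is the cyclic group generated by an $n$-cycle, which is not a parabolic subgroup of $S_{n}$ for $n\ge 3$.
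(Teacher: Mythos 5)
Your proposal is correct and follows essentially the same route as the paper: decompose $y$ along an $\FF_p$-basis of (the relevant finite-dimensional subspace of) $k$, observe that $W_y$ is a finite intersection of stabilizers $W_{\mu_i}$ with $\mu_i\in\xcoch(T)/p$, apply Lemma~\ref{l:centralizer Levi}(1) with $N=p$, and use that intersections of parabolic subgroups of $W$ are parabolic. The extra remarks you add (the explicit reflection-representation argument for closure of parabolics under intersection, and the $\PGL_n$ counterexample illustrating why simply-connectedness is needed) are sound and could serve as useful commentary, though they go beyond what the paper records.
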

\begin{proof}
We have $\frt=(\xcoch(T)/p)\ot_{\FF_{p}}k$. Choose an $\FF_{p}$-basis $\{b_{s}\}_{s\in S}$ of $k$, and identify $\frt$ with $\op_{s\in S}(\xcoch(T)/p)b_{s}$, with the action of $W$ on each $\xcoch(T)/p$. Write $y=\sum_{s}y_{s}$ under this decomposition. Then $W_{y}=\cap_{s\in S}W_{y_{s}}$.  By Lemma \ref{l:centralizer Levi}(1), each $W_{y_{s}}$ is a parabolic subgroup. Since the intersection of parabolic subgroups of $W$ is still parabolic, $W_{y}=\cap_{s\in S}W_{y_{s}}$ is also parabolic. 
\end{proof}

\subsection{Selection functions}\label{ss:sel} We assume \eqref{p prime to ZG}. Recall that $\chi: \frg\to \frc=\frg\sslash G$ is the Chevalley morphism.

\begin{defn}A function $\xi: \frc(k)\to \Qlbar$ is called a {\em $G$-selection function} if it satisfies the following conditions:
\begin{itemize}

\item For any $G$-relevant torus $A\subset G$ such that $A\ne C_{G}$, letting $\fra=\Lie A$, we have $\j{\one_{\fra}, \chi^{*}\xi}_{\frg}=0$.
\item $\xi(\chi(z))=1$ for all $z\in \frz(k)$.
\end{itemize}
\end{defn}

\begin{prop}\label{p:sel fun} Let $\xi: \frc(k)\to \Qlbar$ be a selection function. Then for any relevant subgroup $H\subset G$ that is not essentially unipotent (Definition \ref{d:ess unip}), letting $\frh=\Lie H$, we have $\j{\one_{\frh}, \chi^{*}\xi}_{\frg}=0$. 
\end{prop}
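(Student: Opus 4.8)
The plan is to deduce $\langle\one_{\frh},\chi^{*}\xi\rangle_{\frg}=0$ from the first condition in the definition of a $G$-selection function (the vanishing $\langle\one_{\fra},\chi^{*}\xi\rangle_{\frg}=0$ for every $G$-relevant torus $A\ne C_{G}$, with $\fra=\Lie A$), in two steps: a Borel--Tits argument to strip off the unipotent radical of $H$, followed by Lemma~\ref{l:van sum}(2) applied to a smaller reductive group. Since $\frh=\Lie H=\Lie H^{\circ}$ and $H^{\circ}$ is again $G$-relevant with the same maximal torus — hence not essentially unipotent (Definition~\ref{d:ess unip}) — I may assume $H$ connected. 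Applying the Borel--Tits theorem to the unipotent subgroup $R_{u}(H)$ produces a parabolic $k$-subgroup $P\subseteq G$ with $R_{u}(H)\subseteq R_{u}(P)$ and $N_{G}(R_{u}(H))\subseteq P$; as $R_{u}(H)\trianglelefteq H$, this forces $H\subseteq P$. Fix a Levi $L$ of $P$ over $k$; over $\ov k$ write $L=Z_{G}(\lambda)$, $P=P_{\lambda}$, and $\frg=\bigoplus_{i}\frg_{i}$ for the weight grading, so $\frl=\frg_{0}$, $\frn_{P}=\bigoplus_{i>0}\frg_{i}$, $\frp=\bigoplus_{i\ge 0}\frg_{i}$ (all defined over $k$). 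For $x\in\frp$ the limit $\lim_{t\to0}\Ad(\lambda(t))x$ is the $\frg_{0}$-component $\pr_{0}(x)$ and lies in the closure of $Gx$; since $\chi$ is $G$-invariant and continuous, $\chi(x)=\chi(\pr_{0}(x))$. Thus $\chi|_{\frp}$ factors as $\frp\xrightarrow{\pr_{0}}\frl\xrightarrow{\chi_{L}}\frl\sslash L\to\frc$ with the last map finite, a statement over $k$ as all the maps are.

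Restricting to $\frh\subseteq\frp$, let $\ov{H}\subseteq L$ be the image of $H$ under $P\twoheadrightarrow L$: a smooth connected subgroup with $\Lie\ov{H}=\pr_{0}(\frh)=:\ov{\frh}$. It is reductive: the kernel of $H\to L$ is unipotent (it lies in $R_{u}(P)$), so any connected normal unipotent subgroup of $\ov{H}$ lifts to one of $H$, hence lies in $R_{u}(H)\subseteq\ker(H\to L)$ and is already trivial in $\ov{H}$. The map $\pr_{0}|_{\frh}\colon\frh\twoheadrightarrow\ov{\frh}$ is $k$-linear with kernel $\frh\cap\frn_{P}$, so all its fibres have $q^{\dim(\frh\cap\frn_{P})}$ points. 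Since $\chi_{L}|_{\ov{\frh}}$ is $\ov{H}$-invariant it factors through $\chi_{\ov{H}}\colon\ov{\frh}\to\ov{\frh}\sslash\ov{H}$, and composing with the chain above yields a function $f$ on $(\ov{\frh}\sslash\ov{H})(k)$ with $(\chi^{*}\xi)|_{\frh}=\pr_{0}^{*}\chi_{\ov{H}}^{*}f$. Summing over the fibres of $\pr_{0}|_{\frh}$ gives $\langle\one_{\frh},\chi^{*}\xi\rangle_{\frg}=q^{\dim(\frh\cap\frn_{P})}\int_{\ov{\frh}}\chi_{\ov{H}}^{*}f$. By Lemma~\ref{l:van sum}(2) applied to the connected reductive group $\ov{H}$, this integral vanishes once $\langle\one_{\frt'},\chi_{\ov{H}}^{*}f\rangle_{\ov{\frh}}=0$ for every Cartan subalgebra $\frt'=\Lie T'\subseteq\ov{\frh}$, and by the construction of $f$ this pairing equals $\langle\one_{\frt'},\chi^{*}\xi\rangle_{\frg}$ (viewing $\frt'\subseteq\frl\subseteq\frg$).

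To finish, I must see that each such $T'$ is a $G$-relevant $k$-torus distinct from $C_{G}$. Every maximal torus of $\ov{H}$ is $\ov{H}(\ov k)$-conjugate to the image $\ov{A}$ of a maximal torus $A$ of $H$. Since $A\subseteq P$ lies in some Levi of $P$ and all Levis of $P$ are $R_{u}(P)(\ov k)$-conjugate, $\ov{A}$ is $G(\ov k)$-conjugate to $A$; and $A$, being the maximal torus of the relevant subgroup $H$, is $G$-relevant, so $\ov{A}$ and hence $T'$ are $G$-relevant. Moreover $\dim T'=\dim A>\dim C_{G}$ because $H$ is not essentially unipotent, so $T'\ne C_{G}$. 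The first defining condition of a selection function then gives $\langle\one_{\frt'},\chi^{*}\xi\rangle_{\frg}=0$ for all such $\frt'$, and this, combined with the two reductions, proves the proposition.

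I expect the main obstacle to be the positive-characteristic fact that $\chi|_{\frp}$ factors through the Levi projection $\frp\to\frl$: the cocharacter degeneration settles it, but one has to be careful that the limit genuinely lies in the closure of the $G$-orbit of $x$ and that $\chi$ does not distinguish it from $x$. The secondary point to nail down is the structure of $\ov{H}$ — reductivity, and that its maximal torus stays $G$-conjugate to that of $H$, i.e.\ that passage to the Borel--Tits Levi does not change the conjugacy class of the maximal torus.
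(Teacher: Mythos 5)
Your proof is correct and follows essentially the same route as the paper's: reduce to $H$ connected, pass to the reductive quotient of $H$, and apply Lemma~\ref{l:van sum}(2) to that reductive group, verifying its hypothesis by lifting Cartan subalgebras to $G$-relevant tori $\neq C_G$ and invoking the defining property of a selection function. Where the paper simply asserts ``$(\chi^*\xi)|_{\frh}$ descends to a function on $\fm=\Lie H_{\red}$,'' you supply the justification explicitly: Borel--Tits to embed $H$ in a parabolic $P$ with $R_u(H)\subseteq R_u(P)$, then the cocharacter degeneration $\lim_{t\to 0}\Ad(\lambda(t))$ to see that $\chi|_{\frp}$ factors through the Levi projection $\pr_0:\frp\to\frl$ — a point worth spelling out. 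Concretely, your $\ov H\subset L$ is an embedded realization of the paper's abstract reductive quotient $M$, your $f$ is its $\xi_\fm$, and your final torus-conjugacy check (that $T'$ is $G$-conjugate to a maximal torus of $H$ via the $R_u(P)$-conjugacy of Levis, hence $G$-relevant and $\neq C_G$) matches the paper's lifting of a maximal torus $A\subset M$ to $\widetilde A\subset H$.
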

\begin{proof} 
Since the statement only depends on the neutral component $H^{\c}$, we may assume $H$ is connected. Let $\pi_{H}: H\to M=H_{\red}$ be the reductive quotient of $H$. Let $\pi_{\frh}: \frh\to \fm=\Lie M$ be the map on Lie algebras. Now $(\chi^{*}\xi)|_{\frh}$ descends to a function $\Xi_{\fm}: \fm\to \Qlbar$, which further descends to its adjoint quotient $\xi_{\fm}: \frc_{M}(k)\to \Qlbar$. To show $\j{\one_{\frh}, \chi^{*}\xi}_{\frg}=0$, it suffices to show $\int_{\fm}\Xi_{\fm}=0$. 

We would like to apply Lemma \ref{l:van sum} to the connected reductive group $M$. In order to do that, we need to check that for any maximal torus $A\subset M$, we have $\j{\one_{\fra}, \Xi_{\fm}}_{\fm}=0$. Now $A$ lifts to a maximal torus $\wt A\subset H$, hence $\wt A$ is a $G$-relevant torus in $G$. By our assumption, $H/C_{G}$ is not unipotent, hence $M/C_{G}$ is nontrivial,  and therefore $\wt A\ne C_{G}$. Let $\wt\fra=\Lie\wt A$. Using that $\xi$ is a selection function, we have $\j{\one_{\wt\fra}, \chi^{*}\xi}_{\frg}=0$, which implies $\j{\one_{\fra}, \Xi_{\fm}}_{\fm}=0$, for any maximal torus  $A$ of $M$.  Lemma \ref{l:van sum} then implies that  $\int_{\fm}\Xi_{\fm}=0$. This finishes the proof.
\end{proof}

We give a construction of $G$-selection functions by considering certain generic linear functions on various Cartan subalgebras of $\frg$.

\begin{defn}\label{d:W coreg}
\begin{enumerate}
    \item An element $\l\in\frt^*_{\ov k}$ (equivalently a $\ov k$-linear function $\frt_{\ov k}\to \ov k$) is called {\em $W$-coregular} if 
    \begin{itemize} 
        \item $\l|_{\frz}=0$.
        \item $\l|_{\fra}\ne0$ for any $W$-relevant  subspace $\fra\subset \frt_{\ov k}$ such that $\fra\ne \frz$.
    \end{itemize}
    
\item Let $\frt_{0}\subset\frg$ be a Cartan subalgebra. An element $\l\in \frt^*_{0}$ (equivalently a $k$-linear map $\l:\frt_0\to k$) is called {\em $W$-coregular} if it is so when viewed as an element in $\frt^*_{\ov k}$ under some (equivalently any) identification $\io: \frt_{0,\ov k}\cong \frt_{\ov k}$ obtained by choosing a Borel over $\ov k$ containing $\frt_{0,\ov k}$ (see \S\ref{sss:univ Cartan}).
\end{enumerate}
\end{defn}

\begin{exam} Let $G=\GL_{n}$ and $T_{0}$ be the diagonal torus. A linear function $\l: \frt_{0}=k^{n}\to k$ can be identified with an $n$-tuple $(\l_{1},\cdots, \l_{n})\in k^{n}$. The condition $\l|_{\frz}=0$ says that $\l_{1}+\cdots+\l_{n}=0$. Such a $\l$ is $W$-coregular if and only if for any proper non-empty subset $J\subset \{1,2,\cdots, n\}$, the partial sum $\sum_{j\in J}\l_{j}\ne0$. This can be seen from the description of $W$-relevant (equivalently $G$-relevant in this case) subspaces of $\frt_{0}$ given in Example \ref{ex:GLn rel tori}. 

On the other hand, when $G=\PGL_{p}$ (where $p=\ch(k)$) and $T_{0}$ the diagonal torus, a $W$-coregular function on $\frt_{0}$ can be identified with $(\l_{1},\cdots, \l_{p})\in k^{p}$ such that
\begin{itemize}
\item $\l_{1}+\cdots +\l_{p}=0$;
\item no partial sum of $\{\l_{1},\cdots, \l_{p}\}$ is zero;
\item For any $\s\in S_{p}$, $\sum_{i=1}^{p}\s(i)\l_{i}\ne0$.
\end{itemize}
This is caused by the extra $W$-relevant subspaces in $\frt_{0}$ described in Example \ref{ex:PGLn W rel} (and their translates under $W$). So for $k=\FF_{3}$, there exists a $W$-coregular function on the diagonal Cartan of $\GL_{3}$ but not for $\PGL_{3}$.
\end{exam}

Next we argue for the existence of $W$-coregular functions, when $q=\#k$ is not too small. Let $\frR_{\ov k}(W, \frt)$ be the set of relevant $\ov k$-subspaces in the universal Cartan $\frt_{\ov k}$ that are not equal to $\frz_{\ov k}$. Let $\frR^{\min}_{\ov k}(W, \frt)\subset \frR_{\ov k}(W,\frt)$ be the set of minimal elements under inclusion. Let $R^{\min}_{\ov k}(W, \frt)=\#\frR^{\min}_{\ov k}(W, \frt)$. 

\begin{lemma}\label{l:W coreg exist}
\begin{enumerate}
\item If $q>R^{\min}_{\ov k}(W, \frt)$, then $W$-coregular linear functions $\l: \frt_{0}/\frz\to k$ exist.
\item Assume $p$ is \goodp (see \S\ref{sss:goodp}). Choose a set of simple roots $\{\a_{i}\}_{i\in I}$ for the abstract root system $\Phi(G_{\ov k}, T_{\ov k})$, and let $\vp_{i}^{\vee}\in \xcoch(T_{\ov k})_{\QQ}$ be the corresponding fundamental coweights. Then
\begin{equation*}
R^{\min}_{\ov k}(W, \frt)\le \#(\bigcup_{i\in I} W\cdot \vp^{\vee}_{i})
\end{equation*}
where the right side is a subset of $\xcoch(T_{\ov k})_{\QQ}$.
\end{enumerate}
\end{lemma}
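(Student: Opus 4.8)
The plan is to prove the two parts by different methods: part~(1) by a direct point count over $\FF_{q}$, and part~(2) by translating $W$-relevance into the combinatorics of the root system using the structure results of \S\ref{ss:sel}.

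For part~(1), I would fix any Cartan subalgebra $\frt_{0}\subset\frg$ (it carries a Frobenius structure with $\frt_{0,\ov k}\cong\frt_{\ov k}$, and $\frz\subset\frt_{0}$ over $k$ since $ZG$ lies in every maximal torus) and regard a $W$-coregular function as a point of the $\FF_{q}$-vector space $V:=(\frt_{0}/\frz)^{*}(k)$, of dimension $r:=\dim(\frt/\frz)$, not vanishing on any $W$-relevant subspace $\fra\ne\frz$. Since conjugation by $N_{G}(T_{0})$ fixes $ZG$ pointwise, every $W$-relevant subspace contains $\frz=\frt^{W}$, and a functional nonzero on a minimal such subspace is nonzero on every larger one; so it is enough to avoid the finitely many minimal members $\fra\in\frR^{\min}_{\ov k}(W,\frt)$. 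For each such $\fra$ (which properly contains $\frz$), the $\ov k$-functionals killing $\fra/\frz$ form a $\ov k$-subspace $Z_{\fra}$ of dimension $r-\dim(\fra/\frz)\le r-1$, hence $Z_{\fra}\cap V$ is an $\FF_{q}$-subspace of $\FF_{q}$-dimension $\le r-1$ (its $\ov k$-span still lies in $Z_{\fra}$) and has at most $q^{r-1}$ elements. Summing, the non-$W$-coregular elements of $V$ number at most $R^{\min}_{\ov k}(W,\frt)\cdot q^{r-1}<q^{r}=\#V$ once $q>R^{\min}_{\ov k}(W,\frt)$, so a $W$-coregular $\l$ exists.

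For part~(2), assuming $p$ pretty good, Lemma~\ref{l:W rel}(\ref{W rel cut by roots}) identifies the $W$-relevant subspaces of a Cartan with the $G$-relevant toral subspaces, and Lemma~\ref{l:G rel cut by roots} describes these (over $\ov k$) as the intersections $\fra_{J}=\bigcap_{j\in J}\ker(d\a_{j})$ for subsets $J\subset\Phi$ extending to a basis. The hypothesis on $p$ (the $p$-torsion-freeness of $\xch(ZG)$ and of the $\xch(ZL)$ appearing in the proof of Lemma~\ref{l:G rel cut by roots}) forces such a $J$ of size $m$ to cut out a subspace of codimension exactly $m$; hence $\fra_{J}\subsetneq\fra_{J'}$ when $J\supsetneq J'$, and $\fra_{J}=\frz$ precisely when $J$ is a full basis. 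Therefore the minimal members of $\frR_{\ov k}(W,\frt)$ are exactly the $\fra_{J}$ with $|J|=|I|-1$, i.e.\ $J=B\setminus\{\b\}$ for a simple system $B\subset\Phi$ and a node $\b\in B$, each satisfying $\dim(\fra_{J}/\frz)=1$. As all simple systems are $W$-conjugate, writing $B=wB_{0}$ with $B_{0}=\{\a_{i}\}_{i\in I}$ the chosen one and $\b=w\a_{i}$ gives $\fra_{J}=w\bigl(\bigcap_{j\ne i}\ker(d\a_{j})\bigr)$, whose image in $\frt_{\ov k}/\frz_{\ov k}$ is the line through $w\cdot\vp^{\vee}_{i}$. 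Thus $\fra\mapsto\fra/\frz$ injects the minimal relevant subspaces into the set of lines $\{\ov k\cdot v:v\in\bigcup_{i\in I}W\cdot\vp^{\vee}_{i}\}$, and since $v\mapsto\ov k\cdot v$ surjects $\bigcup_{i\in I}W\cdot\vp^{\vee}_{i}$ onto that set of lines, $R^{\min}_{\ov k}(W,\frt)\le\#\bigl(\bigcup_{i\in I}W\cdot\vp^{\vee}_{i}\bigr)$.

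The one delicate point is the comparison, in part~(2), between the $\ov k$-linear and the integral root-system pictures: one needs that reducing the simple-root differentials modulo $p$ neither collapses the codimension of the intersections $\fra_{J}$ nor distorts which lines the fundamental coweights span. This is precisely what "pretty good" provides, through the $p$-torsion-freeness of the character groups $\xch(ZL)$ of the relevant Levi subgroups (as in the proof of Lemma~\ref{l:G rel cut by roots}). Everything else --- the union bound in part~(1), the $W$-conjugacy of simple systems in part~(2) --- is routine.
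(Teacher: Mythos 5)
Your proposal is correct and follows essentially the same route as the paper's proof. For part~(1), you do the same union bound over the minimal $W$-relevant subspaces $\fra \ne \frz$; your phrasing (directly bounding $\#(Z_{\fra}\cap V)$ via the $\ov k$-span of an $\FF_{q}$-subspace) and the paper's (descending the open complement $V^{\c}$ and counting) are interchangeable. For part~(2), both arguments pass through Lemmas~\ref{l:W rel}(\ref{W rel cut by roots}) and \ref{l:G rel cut by roots} to identify the minimal relevant subspaces as the $W$-translates of $\fra_{i}=\bigcap_{j\ne i}\ker(d\a_{j})$.

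The one place where you take on slightly more work than necessary is the tail end of part~(2). You describe $\fra_{J}/\frz$ as the $\ov k$-line spanned by the reduction of $w\cdot\vp_{i}^{\vee}$ mod $p$, and then must worry (as you acknowledge) about whether fundamental coweights reduce well mod $p$. The paper sidesteps this entirely: it never reduces $\vp_{i}^{\vee}$, but simply observes that $\fra_{i}$ is \emph{reconstructible} from the rational coweight $\vp_{i}^{\vee}$ by the formula $\fra_{i}=\bigcap_{\a\in\Phi,\,\j{\a,\vp_{i}^{\vee}}=0}\ker(d\a)$, a purely root-combinatorial characterization. Since this reconstruction is $W$-equivariant, the assignment $w\vp_{i}^{\vee}\mapsto w\fra_{i}$ is a well-defined surjection from $\bigcup_{i}W\cdot\vp_{i}^{\vee}$ onto $\frR^{\min}_{\ov k}(W,\frt)$, giving the bound at once. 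This avoids any mod-$p$ denominators and removes the "delicate point" you flagged.
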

\begin{proof}
(1) View the dual space $V=(\frt_{0}/\frz)^{*}$ as an affine space over $k$. Choose an identification $\io:  \frt_{0,\ov k}\cong \frt_{\ov k}$ by choosing a Borel over $\ov k$ containing $\frt_{0,\ov k}$. Let $V^{\c}_{\ov k}\subset V_{\ov k}$ be the open subscheme given by the complement of $\fra^{\bot}$ for $W$-relevant subspaces $\fra\subset \frt_{0,\ov k}$ that are minimal among those not equal to $\frz_{\ov k}$. Then $V^{\c}_{\ov k}$ descends to an open subscheme $V^{\c}\subset V$. The complement $V_{\ov k}-V^{\c}_{\ov k}$, being the union of $R^{\min}_{\ov k}(W,\frt)$ proper linear subspaces, contains at most $q^{\dim V-1}R^{\min}_{\ov k}(W,\frt)$ points. Thus when $q>R^{\min}_{\ov k}(W,\frt)$, we have $\#V(k)=q^{\dim V}>q^{\dim V-1}R^{\min}_{\ov k}(W,\frt)\ge \#(V-V^{\c})(k)$, which implies that $V^{\c}(k)$ is non-empty.

(2) By Lemma \ref{l:W rel}\eqref{W rel cut by roots},  $W$-relevant $\ov k$-subspaces $\fra\subset \frt_{\ov k}$ are exactly the $G$-relevant toral $\ov k$-subspaces, which by Lemma \ref{l:G rel cut by roots} are cut out by $\{d\a_{j}\}_{j\in J}$ for a subset $J$ of a choice of simple roots $\{\a_{i}\}_{i\in I}$ in $\Phi=\Phi(G_{\ov k}, \frt_{\ov k})$. The minimal elements in $R_{\ov k}(W,\frt)$ therefore are $W$-translates of $\fra_{i}=\cap_{j\in I-\{i\}}\ker(d\a_{j})$, for all choices of $i\in I$. Note that $\fra_{i}$ can be reconstructed from the fundamental coweight $\vp_{i}^{\vee}$: $\fra_{i}=\cap_{\a\in \Phi, \j{\a,\vp^{\vee}_{i}}=0}\ker(d\a)$.  Therefore, the number of $W$-translates of $\fra_{i}$ for various $i\in I$ is bounded above by the cardinality of $\cup_{i\in I}W\cdot \vp^{\vee}_{i}$, hence the desired inequality.
\end{proof} 

Recall from \S\ref{sss:tori} that for each $w\in W$, $\frt_{w}$ has a class of embeddings $\io:\frt_{w}\incl \frg$ that form a single $G(k)$-orbit.  It therefore makes sense to talk about relevant $k$-subspaces of $\frt_{w}$ and $G$-coregular linear functions on $\frt_{w}$.

Fix a non-trivial additive character $\psi : k \to \Qlbar$. The following proposition gives an explicit construction of selection functions.

\begin{prop}\label{p:cons sel fn from t} Assume that $p\ne2$ when $G^{\der}$ has a direct factor isomorphic to $\SO_{2n+1}$ (in addition to the assumption \eqref{p prime to ZG}). For each $w\in W$, let $\l_{w}: \frt_{w}\to k$ be a $W$-coregular linear function.  Then the following function on $\frc(k)$
\begin{equation*}
\xi=\frac{1}{\#W}\sum_{w\in W}\chi_{w!}(\psi\c\l_{w})
\end{equation*}
is a selection function. 
\end{prop}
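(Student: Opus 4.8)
The plan is to verify the two conditions in the definition of a $G$-selection function.

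\emph{The value at the center.} Let $z\in\frz(k)$. Its image in $\frt_{\ov k}$ lies in $\frz_{\ov k}=\frt_{\ov k}^{W}$, hence is fixed by every $\Ad_{w}$ and by $\Fr$; thus $z\in\frt_{w}(k)$ for all $w$, and since $z$ is central its $W$-orbit is $\{z\}$, so $z$ is the unique $k$-point of $\chi_{w}^{-1}(\chi(z))$. Therefore $\chi_{w!}(\psi\c\l_{w})(\chi(z))=\psi(\l_{w}(z))=\psi(0)=1$, the last step because a $W$-coregular function vanishes on $\frz$. Averaging over $w\in W$ gives $\xi(\chi(z))=1$.

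\emph{Vanishing on relevant tori: reduction to cohomology.} Let $A\subset G$ be a $G$-relevant torus with $A\ne C_{G}$; put $L=C_{G}(A)$, a Levi subgroup over $k$, and $\fra=\Lie A=\Lie Z(L)$. Under the hypotheses on $p$ imposed in the statement --- which are exactly those of Lemma \ref{l:W rel}(2) --- $\fra$ is $W$-relevant and $\fra\ne\frz$. We must show $\j{\one_{\fra},\chi^{*}\xi}_{\frg}=0$. I would interpret $\xi$ sheaf-theoretically: let $\cL_{\l_{w}}$ be the pullback of the Artin--Schreier sheaf $\cL_{\psi}$ along $\l_{w}\colon\frt_{w}\to\Ga$, so that $\psi\c\l_{w}$ is its trace function and $\#W\cdot\xi$ is the trace function of $\bigoplus_{w}\chi_{w!}\cL_{\l_{w}}$. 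Since each $\chi_{w}$ is finite, proper base change along
\begin{equation*}
\xymatrix{
\fra\times_{\frc}\frt_{w}\ar[r]^{\pr_{w}}\ar[d] & \frt_{w}\ar[d]^{\chi_{w}}\\
\fra\ar[r]^{\chi} & \frc
}
\end{equation*}
together with the Grothendieck--Lefschetz trace formula gives
\begin{equation*}
\#W\cdot\j{\one_{\fra},\chi^{*}\xi}_{\frg}=\sum_{w\in W}\Tr\bigl(\Fr,\ \cohoc{*}{(\fra\times_{\frc}\frt_{w})_{\ov k},\ \pr_{w}^{*}\cL_{\l_{w}}}\bigr),
\end{equation*}
an alternating trace.

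\emph{Analysis of each fibre product.} Over $\ov k$ the reduced scheme $(\fra\times_{\frc}\frt_{w})_{\ov k}$ is the set of pairs $(x,\tilde a)$ with $\tilde a$ in the $W$-orbit of the image $\bar x$ of $x$; it is the union of the ``graphs'' $\{(x,ux):x\in\fra_{\ov k}\}$ for $u$ running over the finite set of $W$-cosets not acting trivially on $\fra_{\ov k}$. These graphs and their intersections are affine-space bundles over subspaces of the shape $u^{-1}(\frt_{\ov k}^{W''})$, which are $W$-relevant (intersections of $W$-relevant subspaces, Lemma \ref{l:W rel}(1)); their stabilizers are parabolic (Lemma \ref{l:Wy parabolic}), which gives the affine-bundle structure, and they are cut out $\Fr$-equivariantly, so descend to $k$-subspaces of $\frt_{w}$. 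The key point is that $\Fr$ does \emph{not} preserve the individual graphs --- it permutes them through a $w$-twisted $W$-action --- so any stratum on which $\Fr$ acts without fixed point contributes $0$ to the alternating trace (Frobenius cyclically permutes isomorphic summands). On an $\Fr$-fixed stratum the restriction of $\cL_{\l_{w}}$ is a translate of an Artin--Schreier sheaf on a $W$-relevant subspace, and its compactly supported cohomology vanishes unless that subspace equals $\frz$: this is exactly where $W$-coregularity of $\l_{w}$ enters, after an inner induction down the lattice of $W$-relevant subspaces reducing open strata to whole subspaces. One is thereby left only with the contribution of the diagonal $\{(z,z):z\in\frz\}$, giving $\#W\cdot\j{\one_{\fra},\chi^{*}\xi}_{\frg}=m\cdot q^{\dim\frz}$ for some integer $m$ independent of $q$.

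\emph{The main obstacle.} What remains, and is the heart of the matter, is to prove $m=0$: the combinatorial identity that, summed over $w\in W$, the Möbius-signed multiplicities with which the trivial subspace $\frz$ is reached among the strata of $\fra\times_{\frc}\frt_{w}$ cancel. This is the analogue, for the subspace $\fra$ in place of all of $\frg$, of the bookkeeping that produces the sign character and the $\cohog{*}{\cB}$-factors in Lemma \ref{l:van sum}(1). I expect it to follow from the same mechanism --- organizing the sum over $w$ together with the $W$-orbit combinatorics into a single Frobenius-trace computation on a space built from the Grothendieck--Springer resolution, in which the coherence of the construction (e.g.\ the $W$-equivariance of \eqref{two push}) forces the cancellation. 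Making this precise, together with the routine verifications of the $\Fr$-descent of the strata and of the affine-bundle structure, is the bulk of the remaining work.
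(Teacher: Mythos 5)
There is a genuine gap, and you have honestly identified it yourself: the ``main obstacle'' paragraph is exactly where the argument must go through, and the mechanism you gesture at is not the one that works. Your plan is to estimate $\#W\cdot\j{\one_{\fra},\chi^{*}\xi}_{\frg}$ stratum-by-stratum and argue that the total is $m\cdot q^{\dim\frz}$ with $m$ an integer independent of $q$; but even granting this, you have no handle on $m$. The paper does something different and more decisive, and it does not go through $\ell$-adic cohomology at all.

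The missing idea is Lemma \ref{l:int subspace arr}: for a family $\cF$ of subspaces of a $k$-vector space that contains $\{0\}$ and is closed under intersection, and for any $\cF$-coregular linear functional $\l$, the sum $\int_{C_{\cF}}\psi\circ\l$ is a combinatorial quantity --- a M\"obius-type alternating count of chains in the poset $\cF$ --- and in particular is \emph{the same for every} $\cF$-coregular $\l$. The paper applies this with $\cF=\cF(\fra,\frt_{w})$, the graphs $\G(u|\fra_{1})$ for $W$-relevant $\fra_{1}\subset\fra$, whose union is $\fra\times_{\frc(k)}\frt_{w}$. Both $\l_{w}\circ\pi_{2}$ and $\mu\circ\pi_{1}$ (with $\mu:=\l_{v}|_{\fra}$, where $\fra\subset\frt_{v}$) are $\cF(\fra,\frt_{w})$-coregular, so
\begin{equation*}
\int_{\fra\times_{\frc(k)}\frt_{w}}\pi_{2}^{*}(\psi\circ\l_{w})
=\int_{\fra\times_{\frc(k)}\frt_{w}}\pi_{1}^{*}(\psi\circ\mu).
\end{equation*}
This replacement is the key step: it trades the $w$-dependent integrand $\psi\circ\l_{w}$ for a fixed integrand $\psi\circ\mu$ on $\fra$, leaving all $w$-dependence in the set being integrated over. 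Summing over $w$, one then only needs $\sum_{w}\chi_{w!}\one_{\frt_{w}}=\#W$ (a short counting argument), so the total collapses to $\#W\int_{\fra}\psi\circ\mu$, which vanishes because $\mu$ is nonzero (here is where $W$-coregularity of $\l_{v}$ on $\fra\ne\frz$ is used). The cancellation you sought from a Grothendieck--Springer sheaf argument is thus achieved by a purely combinatorial change-of-variables, and no trace formula or Frobenius-orbit analysis is needed.

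Two further cautions about your sketch: the graphs $\{(x,ux):x\in\fra_{1}\}$ are themselves linear subspaces, not affine-space bundles over anything, so the appeal to Lemma \ref{l:Wy parabolic} to produce an ``affine-bundle structure'' is a misreading (that lemma serves to relate $W$-relevance to $G$-relevance); and the individual graphs $\G(u|\fra_{1})$ do descend to $k$ only when $u^{-1}$ intertwines as required with $\Fr$, so ``cut out $\Fr$-equivariantly'' needs the precise condition $wux=\d(u)vx$ worked out in the paper to identify which graphs survive. Your computation of $\xi(\chi(z))=1$ is correct.
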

\begin{proof}
Replacing $G$ by $G/C_{G}$, we may assume that $G$ is semisimple. By our assumption on $p$, we have $\frz=0$ which is the smallest $W$-relevant subspace (see Lemma \ref{l:W rel}(2)).

It is clear that $\xi(0)=0$. Let $A\subset G$ be a nontrivial $G$-relevant subtorus with Lie algebra $\fra\subset \frg$. We would like to show that $\j{\one_{\fra}, \chi^{*}\xi}_{\frg}=0$. We have 
\begin{equation*}
\j{\one_{\fra}, \chi^{*}\chi_{w!}(\psi\c\l_{w})}_{\frg}
=\j{\chi_{!}\one_{\fra}, \chi_{w!}(\psi\c\l_{w})}_{\frc(k)}
=\int_{\fra\times_{\frc(k)}\frt_{w}}\pi_{2}^{*}(\psi\c\l_{w}).
\end{equation*}
Here $\pi_{2}: \fra\times \frt_{w}\to \frt_{w}$ and $\pr_{1}: \fra\times \frt_{w}\to \fra$ are the projections.

We would like to introduce a family of subspaces $\cF$ in the vector space $\fra\times\frt_{w}$ and apply Lemma \ref{l:int subspace arr} below.  We first consider the case $\fra=\frt_{v}$ for some $v\in W$. We have
\begin{equation*}
\frt_{v}\times_{\frc(k)}\frt_{w}=\{(x,y)\in \frt_{\ov k}\times\frt_{\ov k}|\Fr(x)=vx,\Fr(y)=wy, y=ux\textup{ for some } u\in W\}.
\end{equation*}
We let $\cF_{\ov k}$ be those $\ov k$-subspaces of $\frt_{\ov k}^{2}$ of the form $\G(u|\fra_{1})$, where $\fra_{1}\subset \frt_{\ov k}$ is a $W$-relevant $\ov k$-subspace, $u\in W$, and $\G(u|\fra_{1})$ is the graph of $u:\fra_{1}\subset \frt_{\ov k}\xr{u} \frt_{\ov k}$ (note that different $u$'s may give the same element in $\cF_{\ov k}$). Let $\cF(\frt_{v}, \frt_{w})$ be those elements in $\cF_{\ov k}$ that descend to subspaces of $\frt_{v}\times\frt_{w}$ (using the Frobenius structure on $\frt_{v}$ and $\frt_{w}$).

\begin{claim} The family $\cF(\frt_{v}, \frt_{w})$ satisfies the axioms in Lemma \ref{l:int subspace arr}. 
\end{claim}
\begin{proof}[Proof of Claim] Clearly $\{0\}\in \cF_{\frt_{v},\frt_{w}}$ since $\{0\}\subset \frt_{\ov k}$ is $W$-relevant. Given two elements $\G(u_{1}|\fra_{1})$ and $\G(u_{2}|\fra_{2})$ in $\cF(\frt_{v}, \frt_{w})$, we will construct a decreasing sequence of $W$-relevant subspaces 
\begin{eqnarray*}
\fra_{1}\supset\fra^{(0)}\supset\fra^{(1)}\supset\fra^{(2)}\supset\cdots
\end{eqnarray*}
as follows. Let $\fra^{(0)}=\fra_{1}\cap\fra_{2}$. 
If the $W$-relevant subspace $\fra^{(i)}$ is constructed, let $\fra^{(i+1)}=\{x\in \fra^{(i)}|u_{1}(x)=u_{2}(x)\}$. This is a relevant subspace because it is the intersection of $\fra^{(i)}$ and $\frt^{u_{1}^{-1}u_{2}}$, which is $W$-relevant by Lemma \ref{l:W rel}(1). Since $\dim\fra_{1}<\infty$, this sequence has to stabilize at some point. Let $\fra^{(\infty)}=\fra^{(N)}$ for large $N$. By construction, we must have $u_{1}x=u_{2}x$  for all $x\in \fra^{(\infty)}$. Finally we have
\begin{equation*}
\G(u_{1}|\fra_{1})\cap\G(u_{2}|\fra_{2})=\G(u_{1}|\fra^{(\infty)}).
\end{equation*}
This verifies that $\cF(\frt_{v}, \frt_{w})$ satisfies the axioms in Lemma \ref{l:int subspace arr}.
\end{proof}

Using the notation from Lemma \ref{l:int subspace arr} below, we denote the union of all subspaces in $\cF(\frt_{v}, \frt_{w})$ by $C_{\cF(\frt_{v}, \frt_{w})}$.

\begin{claim} We have
\begin{equation}\label{union form tt}
C_{\cF(\frt_{v}, \frt_{w})}=\frt_{v}\times_{\frc(k)}\frt_{w}.
\end{equation}
\end{claim}
\begin{proof} Any subspace $\G(u|\fra)\in \cF_{\ov k}$ is contained in $\frt_{\ov k}\times_{\frc_{\ov k}}\frt_{\ov k}$. Therefore any element in $\cF(\frt_{v}, \frt_{w})$ is automatically contained in $\frt_{v}\times_{\frc(k)}\frt_{w}$. This shows that the left side is contained in the right side. 

We show the other inclusion. The Frobenius structure on $\frt_{\ov k}\times \frt_{\ov k}$ that we consider here is $\ph(x,y)=(v^{-1}\Fr(x), w^{-1}\Fr(y))$. For $u\in W$, $\ph$ sends the graph $\G(u)\subset \frt_{\ov k}\times \frt_{\ov k}$ to $\G(u')$, where $u'=w^{-1}\d(u)v$. Here recall $\d$ is the Coxeter group automorphism of the abstract Weyl group $W$ given by the $k$-structure on $G$. Let $\fra=\frt^{u^{-1}u'}$, which is $W$-relevant. Then $\G(u|\fra)$ is stable under the Frobenius structure $\ph$, hence $\G(u|\fra)\in \cF(\frt_{v},\frt_{w})$. 

Now let $(x,y)\in \frt_{v}\times_{\frc(k)}\frt_{w}$. Since $x$ and $y$ have the same image in $\frc$, there exists some $u\in W$ such that $y=ux$. Now we have
\begin{equation*}
\Fr(x)=vx, \quad \Fr(y)=wy, \quad y=ux.
\end{equation*}
From the above equations we conclude that 
\begin{equation*}
wux=wy=\Fr(y)=\Fr(ux)=\d(u)\Fr(x)=\d(u)vx.
\end{equation*}
Therefore $x\in \frt^{u^{-1}w^{-1}\d(u)v}_{\ov k}=\fra$ defined above. We have $(x,y)\in \G(u|\fra)\subset C_{\cF(\frt_{v}, \frt_{w})}$. This proves the other inclusion.
\end{proof}

For general $G$-relevant toral $\fra$, we may assume $\fra\subset \frt_{v}$ for some $v\in W$. Define $\cF(\fra, \frt_{w})$ to be the subset of those elements $U\in \cF(\frt_{v}, \frt_{w})$ contained in $\fra\times\frt_{w}$. The two Claims above imply that $\cF(\fra, \frt_{w})$ satisfies the axioms in Lemma \ref{l:int subspace arr}, and that
\begin{equation}\label{union form at}
C_{\cF(\fra, \frt_{w})}=\fra\times_{\frc(k)}\frt_{w}.
\end{equation}

Now let $\mu=\l_{v}|_{\fra}\in \fra^{*}$.  
We claim that both linear functions
\begin{equation*}
\mu\c\pi_{1}, \l_{w}\c\pi_{2}\in (\fra\times\frt_{w})^{*}
\end{equation*}
are $\cF(\fra, \frt_{w})$-coregular (the notion introduced in the statement of Lemma \ref{l:int subspace arr} below). Indeed, any subspace in $\cF(\fra, \frt_{w})$ is of the form $\G(u|\fra_{1})$ for a $W$-relevant subspace $\fra_{1}\subset \fra$ and $u\in W$ such that $u(\fra_{1})\subset \frt_{w}$. Under the identification $\fra_{1}\cong \G(u|\fra_{1})$, the function $\mu\c\pi_{1}|_{\G(u|\fra_{1})}$ becomes the function $\mu|_{\fra_{1}}$, which is nonzero since $\mu$ is $G$-coregular on $\fra$. The function $\l_{w}\c\pi_{2}|_{\G(u|\fra_{1})}$ becomes the function $\l_{w}\c u|_{\fra_{1}}$, which is nonzero since $\l_{w}|_{u(\fra_{1})}$ is nonzero ($\l_{w}$ is $G$-coregular on $\frt_{w}$).

By Lemma \ref{l:int subspace arr}, we have
\begin{equation*}
\int_{C_{\cF(\fra,\frt_{w})}}\pi_{2}^{*}(\psi\c\l_{w})=\int_{C_{\cF(\fra,\frt_{w})}}\pi_{1}^{*}(\psi\c\mu).
\end{equation*}
Using \eqref{union form at}, we have
\begin{equation*}
\int_{\fra\times_{\frc(k)}\frt_{w}}\pi_{2}^{*}(\psi\c\l_{w})=\int_{\fra\times_{\frc(k)}\frt_{w}}\pi_{1}^{*}(\psi\c\mu).
\end{equation*}

Now sum over all $w\in W$ of the right side above, we get
\begin{equation}\label{sum w}
\sum_{w\in W}\j{\one_{\fra}, \chi^{*}\chi_{w!}(\psi\c\l_{w})}_{\frg}=\sum_{w\in W}\int_{\fra\times_{\frc(k)}\frt_{w}}\pi_{1}^{*}(\psi\c\mu)=\j{\psi\c\mu, \sum_{w\in W}\pi_{1!}\one_{\fra\times_{\frc(k)}\frt_{w}}}_{\fra}
\end{equation}

We compute the function $\sum_{w\in W}\pi_{1!}\one_{\fra\times_{\frc(k)}\frt_{w}}$. This is the pullback of the following function on $\frc(k)$ along the map $\chi_{\fra}: \fra\to\frc(k)$
\begin{equation*}
\y=\sum_{w\in W}\chi_{w!}\one_{\frt_{w}}.
\end{equation*}

\begin{claim} The function $\y$ is the constant function on $\frc(k)$ with value $\#W$. 
\end{claim}
\begin{proof}[Proof of Claim] Let $x\in \frc(k)$ and let $\Xi\subset \frt_{\ov k}$ be all the liftings of $x$ (viewed as a $\ov k$-point). This is a $W$-orbit with a Frobenius action. The value of $\chi_{w!}\one_{\frt_{w}}$ at $x$ is the cardinality of the following subset of $\Xi$
\begin{equation*}
\Xi_{w}:=\{y\in\Xi|\Fr(y)=wy\}.
\end{equation*}
Summing over $w\in W$, we get
\begin{equation}\label{Xiw}
\sum_{w\in W}\#\Xi_{w}=\#\{(y,w)\in \Xi\times W|\Fr(y)=wy\}.
\end{equation}
Denote the set on the right by $\wt\Xi$. Then the projection $\wt\Xi\to \Xi$ has fiber over $y\in \Xi$ a torsor under $W_{y}$ (the stabilizer of $y$ under $W$).  Since $W$ acts transitively on $\Xi$, we have  $\#W_{y}=\#W/\#\Xi$ for all $y\in \Xi$, hence $\#\wt\Xi=\#W$. This shows that the left side of \eqref{Xiw} is $\#W$, hence $\y(x)=\#W$. 
\end{proof}

Plugging into \eqref{sum w} we get
\begin{equation*}
\sum_{w\in W}\j{\one_{\fra}, \chi^{*}\chi_{w!}(\psi\c\l_{w})}_{\frg}=\#W\j{\psi\c\mu, \chi_{\fra}^{*}\one_{\frc(k)}}_{\fra}=\#W\int_{\fra}\psi\c\mu.
\end{equation*}
Since $\mu=\l_{v}|_{\fra}$, $\fra\ne0$ and $\l_{v}$ is $W$-coregular, $\mu$ is nonzero and $\psi\c\mu$ is a nontrivial character on $\fra$. Therefore the above sum is zero. This proves the proposition. 
\end{proof}

\begin{lemma}\label{l:int subspace arr}
Let $V$ be a finite-dimensional $k$-vector space, and let $\cF$ be a set of linear subspaces of $V$ satisfying the conditions:
\begin{enumerate}
\item $\{0\}\in \cF$.
\item If $U_{1},U_{2}\in \cF$, so is $U_{1}\cap U_{2}$.
\end{enumerate}
Let $C_{\cF}=\cup_{U\in \cF}U\subset V$, a conic subset.

Let $\l\in V^{*}$ be an element that is {\em $\cF$-coregular}, i.e., $\l|_{V'}\ne0$ for every nonzero member $V'\in \cF$. Then
\begin{equation*}
\int_{C_{\cF}}\psi\c\l=\sum_{\{0\}\sne U_{1}\sne\cdots\sne U_{\ell}, U_{i}\in \cF}(-1)^{\ell}
\end{equation*}
where the sum is over all strictly increasing chains in $\cF$ starting from $\{0\}$. In particular, the left side is independent of the choices of $\cF$-coregular $\l$ and the nontrivial additive character $\psi$.
\end{lemma}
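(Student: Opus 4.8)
The plan is to evaluate $\int_{C_{\cF}}\psi\c\lambda=\sum_{v\in C_{\cF}}\psi(\lambda(v))$ by stratifying $C_{\cF}$ according to the smallest member of $\cF$ through a given point, and then running Möbius inversion on the finite poset $(\cF,\subseteq)$. First I would note that for $v\in C_{\cF}$ the set of members of $\cF$ containing $v$ is non-empty and finite, so by the intersection-closure hypothesis the subspace $U_{v}:=\bigcap_{U\in\cF,\,v\in U}U$ again lies in $\cF$ and is the unique minimal member of $\cF$ through $v$. Writing $U^{\circ}:=\{v\in U:U_{v}=U\}=U\setminus\bigcup_{U'\in\cF,\,U'\sne U}U'$, this produces a disjoint decomposition $U=\bigsqcup_{U'\in\cF,\,U'\subseteq U}(U')^{\circ}$ for each $U\in\cF$, and in particular $C_{\cF}=\bigsqcup_{U\in\cF}U^{\circ}$.

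The next step is the elementary character-sum computation: for $U'\in\cF$ one has $\int_{U'}\psi\c\lambda=\sum_{v\in U'}\psi(\lambda(v))$, which equals $1$ if $U'=\{0\}$ and $0$ otherwise, since $\cF$-coregularity forces $\lambda|_{U'}\ne 0$ for $U'\ne\{0\}$, making $v\mapsto\psi(\lambda(v))$ a non-trivial character of the finite group $(U',+)$. Let $\mu$ be the Möbius function of $(\cF,\subseteq)$, a finite poset with least element $\{0\}$. From the decomposition above, $\one_{U}=\sum_{U'\in\cF,\,U'\subseteq U}\one_{(U')^{\circ}}$, so Möbius inversion gives $\one_{U^{\circ}}=\sum_{U'\subseteq U}\mu(U',U)\one_{U'}$, whence
\[
\int_{C_{\cF}}\psi\c\lambda=\sum_{U\in\cF}\int_{U^{\circ}}\psi\c\lambda=\sum_{U\in\cF}\ \sum_{U'\in\cF,\ U'\subseteq U}\mu(U',U)\int_{U'}\psi\c\lambda=\sum_{U\in\cF}\mu(\{0\},U),
\]
only the terms with $U'=\{0\}$ contributing.

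Finally I would apply Philip Hall's theorem, which writes $\mu(\{0\},U)=\sum_{\ell\ge 0}(-1)^{\ell}N_{\ell}(U)$ with $N_{\ell}(U)$ the number of chains $\{0\}=U_{0}\sne U_{1}\sne\cdots\sne U_{\ell}=U$ in $\cF$; summing over $U\in\cF$ and organizing the chains by their top member, each strictly increasing chain of members of $\cF$ beginning at $\{0\}$ is counted exactly once with sign $(-1)^{\ell}$, the length-zero chain $\{0\}$ accounting for the $+1$. This is exactly the right-hand side of the claimed formula, and since that side mentions neither $\lambda$ nor $\psi$, the stated independence is automatic.

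I do not anticipate a real obstacle: the argument is bookkeeping. The only points that need care are verifying that the stratification $C_{\cF}=\bigsqcup_{U}U^{\circ}$ genuinely uses closure under intersection (so that $U_{v}$ is well-defined), and matching the chain sum correctly to Hall's formula — in particular not forgetting the empty chain. If one prefers to avoid citing Hall's theorem, the identity $\sum_{U}\mu(\{0\},U)=\sum_{\ell}(-1)^{\ell}N_{\ell}$ can instead be obtained by induction on $|\cF|$, removing a maximal member at each stage.
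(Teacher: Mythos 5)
Your proof is correct, and it reaches the same answer by a closely related but differently organized route. The paper proceeds ``from the top'': it defines, for each $U\in\cF$, the coefficient $c_{U}=\sum_{U=U_{0}\sne U_{1}\sne\cdots\sne U_{\ell}}(-1)^{\ell}$ (chains \emph{starting} at $U$), shows directly that $\sum_{U}c_{U}\one_{U}=\one_{C_{\cF}}$ by observing that the value at any $x\in C_{\cF}$ is the Euler characteristic of the nerve of $\cF_{x}$ --- contractible because $\cF_{x}$ has the initial object $\bigcap_{U\in\cF_{x}}U\in\cF$ --- and then kills all terms but $c_{\{0\}}$ via the character-sum vanishing. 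You proceed ``from the bottom'': you stratify $C_{\cF}=\bigsqcup_{U}U^{\circ}$ by unique minimal member (this is the same place where intersection-closedness is used), apply M\"obius inversion to express $\one_{U^{\circ}}$, reduce to $\sum_{U}\mu(\{0\},U)$ by the same character-sum vanishing, and finish with Philip Hall's theorem (chains \emph{ending} at $U$). The two are essentially the two standard packagings of one combinatorial identity: Hall's formula is exactly the statement that $\mu$ of an interval is computed by the Euler characteristic of the order complex, which is what the paper is invoking when it says the nerve is contractible. Your version is a bit more modular in that it cites named tools (M\"obius inversion, Hall's theorem) rather than rederiving them, at the cost of two layers of machinery where the paper gets by with one direct Euler-characteristic observation; the paper's version has the advantage of being self-contained and of making the role of the initial object of $\cF_{x}$ visually obvious. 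One small point worth making explicit in your write-up: the poset $\cF$ is finite (here $k=\FF_{q}$, so $V$ has finitely many subspaces), which is needed for $U_{v}$, the M\"obius function, and Hall's theorem to be unproblematic; you implicitly use this when you say ``non-empty and finite.''
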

\begin{proof}
For each $U\in\cF$, let $c_{U}$ be the sum
\begin{equation*}
\sum_{U=U_{0}\sne U_{1}\sne\cdots\sne U_{\ell}, U_{i}\in \cF}(-1)^{\ell}.
\end{equation*}
Consider the following function on $V$
\begin{equation*}
c=\sum_{U\in \cF}c_{U}\one_{U}.
\end{equation*}
We claim that $c$ is the constant function on $C_{\cF}$ with value $1$. Clearly $c$ is supported on $C_{\cF}$. Now let $x\in C_{\cF}$, and let $\cF_{x}$ be the subset of $\cF$ consisting of those subspaces containing $x$. We have
\begin{equation*}
c(x)=\sum_{U_{0}\sne U_{1}\sne\cdots\sne U_{\ell}, U_{i}\in \cF_{x}}(-1)^{\ell}.
\end{equation*}
This is the Euler characteristic of the geometric realization $|N(\cF_{x})|$ of the nerve of $\cF_{x}$ (a poset inherited from $\cF$). Since $\cF_{x}$ has an initial object by assumption (the intersection of all elements in $\cF_{x}$), $|N(\cF_{x})|$ is contractible, hence $c(x)=\chi(|N(\cF_{x})|)=1$. 

Using that $c=\one_{C_{\cF}}$, we have
\begin{equation*}
\int_{C_{\cF}}\psi\c\l=\sum_{U\in \cF}c_{U}\int_{U}\psi\c(\l|_{U}). 
\end{equation*}
When $U\ne \{0\}$, $\l|_{U}$ is nonzero, and hence $\int_{U}\psi\c(\l|_{U})=0$. Therefore the above sum reduces to one term $c_{\{0\}}$, which is what we wanted to show.
\end{proof}

\subsection{Springer sheaves and Fourier transform}\label{ss:Spr}
In this subsection we prove a result (well-known to experts) in Proposition \ref{prop:signFT} concerning the $W$-action on the Grothendieck-Springer sheaf and Fourier transform. They will be used in the next subsection. 

In this subsection we assume that $p=\ch(k)$ is good for $G$. We also fix a nontrivial additive character  $\psi:k\to \Qlbar^\times$. It defines an Artin-Schreier local system $\AS_\psi$ on $\AA^1$.

Consider the Grothendieck alterations
\begin{align}\label{GS alteration}
\xymatrix{ & \wt{\frg} \ar[dl]_{\pi} \ar[dr]^{\varepsilon} & & & \wt{\frg}^* \ar[dl]_{\pi'} \ar[dr]^{\varepsilon'} & \\
\frg & & \frt & \frg^{*} & & \frt^{*}. 
}
\end{align}
for $\frg$ and for $\frg^*$. Here, $\wt{\frg}^* = \{(v,\frb) \in \frg^* \times \cB \mid v\in \frb^{\bot} \}$, $\pi'$ is the projection $(v,\frb)\mapsto v$, and $\varepsilon'(v,\frb) = v|_{\frb}$.

We take the product of the first diagram above with $\frt^*$, and we get the diagram
\begin{align}
\xymatrix{ & \widetilde{\frg} \times \frt^* \ar[dl]_{\pi \times \id} \ar[dr]^{\varepsilon \times \id} & & \\
\frg\times \frt^* & & \frt \times \frt^{*} \ar[r]^{\langle \cdot,\cdot\rangle} & \AA^1
}
\end{align}
where $\j{\cdot,\cdot}$ is the evaluation map.  On the other hand, 
letting $\frc^*=\frg^*\sslash G$, we have the natural morphism 
\begin{align*}
\wt{\pi}' : \wt{\frg}^{*} \to \frg^* \times_{\frc^*} \frt^*.
\end{align*}

\begin{defn}  
We call
\begin{align} \cK_{\univ} &= (\pi \times \id)_{!}(\varepsilon \times \id)^*\langle \cdot,\cdot \rangle^* \AS_{\psi}\in D^b
_c(\frg\times\frt^*,\Qlbar)
\end{align} 
the \emph{universal Grothendieck sheaf} and
\begin{align}
\cK_{\univ}' &= \wt{\pi}'_{!}\Qlbar \in D^b
_c(\frg^*\times_{\frc^*}\frt^*,\Qlbar)
\end{align}
the \emph{universal Springer sheaf}. 
\end{defn}

\begin{lemma}
    The map $\wt{\pi}' : \wt{\frg}^{*} \to \frg^* \times_{\frc^*} \frt^*$ is proper, small and birational. In particular, up to a shift, $\cK'_\univ=\wt{\pi}'_{!}(\Qlbar)$ is the intermediate extension of the constant perverse sheaf from $\frg^{*,\rs} \times_{\frc^*} \frt^*$, i.e.,
\begin{align*}
  \cK'_\univ[\dim \frg] = \IC(\frg^{*} \times_{\frc^*} \frt^*).
\end{align*}
\end{lemma}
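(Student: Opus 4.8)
\textit{Plan.} The plan is to check that $\wt\pi'$ is proper, small, and birational, and then invoke the standard fact that a proper, small, birational morphism out of a smooth irreducible variety carries the shifted constant sheaf to the intermediate extension of the constant sheaf on the target. Recall first that $\wt{\frg}^{*}=G\times^{B}\frn_{B}^{\bot}$, with $\frn_{B}=[\frb,\frb]$ and $\frn_{B}^{\bot}\subset\frg^{*}$ its annihilator, and that $\varepsilon'$ sends $(v,\frb)$ to the functional induced on $\frb/\frn_{B}\cong\frt$; thus $\wt{\frg}^{*}$ is a vector bundle over $\cB$ of rank $\dim\frb$, hence smooth and irreducible of dimension $\dim\cB+\dim\frb=\dim\frg$. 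The map $\pi'$ is proper, being the closed immersion $\wt{\frg}^{*}\incl\cB\times\frg^{*}$ followed by the projection to $\frg^{*}$; since $\pi'$ factors as $\wt{\frg}^{*}\xr{\wt\pi'}\frg^{*}\times_{\frc^{*}}\frt^{*}\to\frg^{*}$ with the second arrow finite (a base change of $\frt^{*}\to\frc^{*}$), it follows that $\wt\pi'$ is proper. Finally, in good characteristic $\frc^{*}$ is smooth, so $\frt^{*}\to\frc^{*}$ is finite flat of degree $\#W$, and hence so is its base change $\frg^{*}\times_{\frc^{*}}\frt^{*}\to\frg^{*}$; in particular $\frg^{*}\times_{\frc^{*}}\frt^{*}$ has pure dimension $\dim\frg$.

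\textit{Birationality.} Restrict over the regular semisimple locus $\frg^{*,\rs}\subseteq\frg^{*}$, which is open and dense. There $\pi'$ is finite \'etale of degree $\#W$ --- for $v\in\frg^{*,\rs}$ the fibre $\pi'^{-1}(v)$ is the set of the $\#W$ Borel subalgebras containing the Cartan subalgebra $\frg_{v}$ --- and $\varepsilon'$ identifies $\wt{\frg}^{*}|_{\frg^{*,\rs}}$ with $\frg^{*,\rs}\times_{\frc^{*,\rs}}\frt^{*,\rs}$; this is the classical simultaneous-resolution picture for the Grothendieck--Springer map. Hence $\wt\pi'$ is an isomorphism over $\frg^{*,\rs}\times_{\frc^{*}}\frt^{*}$, which is dense in $\frg^{*}\times_{\frc^{*}}\frt^{*}$, being the preimage of the dense open $\frg^{*,\rs}$ in the pure-dimensional scheme $\frg^{*}\times_{\frc^{*}}\frt^{*}$. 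Thus $\wt\pi'$ is birational; as it is proper its image is closed and contains this dense open, so $\wt\pi'$ is surjective and $\frg^{*}\times_{\frc^{*}}\frt^{*}$ is irreducible, being the image of the irreducible $\wt{\frg}^{*}$.

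\textit{Smallness and conclusion.} Recall the standard dimension formula for Grothendieck--Springer fibres, valid in good characteristic: $\dim\pi'^{-1}(v)=\dim\cB-\tfrac12\dim\mathcal{O}_{v}=\tfrac12(\dim\frg_{v}-\rk\frg)$, where $\mathcal{O}_{v}$ is the coadjoint orbit of $v$. Since $\wt\pi'^{-1}(z)\subseteq\pi'^{-1}(v)$ when $z$ lies over $v$, and $\frg^{*}\times_{\frc^{*}}\frt^{*}\to\frg^{*}$ has finite fibres, for $k\ge1$ the locus $\{z:\dim\wt\pi'^{-1}(z)\ge k\}$ maps with finite fibres to $Z_{k}:=\{v\in\frg^{*}:\dim\frg_{v}\ge\rk\frg+2k\}$, so it suffices to show $\codim_{\frg^{*}}Z_{k}\ge 2k+1$. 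This follows by fibring $\frg^{*}$ over $\frc^{*}$: by Kostant all fibres of $\frg^{*}\to\frc^{*}$ are equidimensional of dimension $\dim\frg-\rk\frg$ and contain only finitely many orbits; the generic (regular semisimple) fibre is disjoint from $Z_{k}$ since there $\dim\frg_{v}=\rk\frg<\rk\frg+2k$, while in any fibre an orbit meeting $Z_{k}$ has dimension $\le\dim\frg-\rk\frg-2k$; since the set of $c\in\frc^{*}$ whose fibre meets $Z_{k}$ lies in the discriminant, hence has dimension $\le\rk\frg-1$, we get $\dim Z_{k}\le\dim\frg-2k-1<\dim(\frg^{*}\times_{\frc^{*}}\frt^{*})-2k$. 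Thus $\wt\pi'$ is small. Applying the principle quoted above (see the literature on small maps and the decomposition theorem), and using $\wt\pi'_{!}=R\wt\pi'_{*}$ together with $\dim\wt{\frg}^{*}=\dim\frg$, we obtain $\cK'_{\univ}[\dim\frg]=\IC(\frg^{*}\times_{\frc^{*}}\frt^{*})$, with constant coefficients along $\frg^{*,\rs}\times_{\frc^{*}}\frt^{*}$ because $\wt\pi'$ is an isomorphism there.

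\textit{Main obstacle.} The delicate point is smallness, i.e.\ the codimension bound $\codim_{\frg^{*}}\{v:\dim\frg_{v}\ge\rk\frg+2k\}\ge 2k+1$ together with the ancillary facts it rests on: equidimensionality and finiteness-of-orbits for fibres of the adjoint quotient, smoothness of $\frc^{*}$, the Grothendieck--Springer fibre-dimension formula, density of the regular semisimple locus, and nonvanishing of the discriminant. All of these are classical over $\CC$; the role of the running hypothesis that $p$ is good for $G$ is precisely to guarantee that they persist in characteristic $p$.
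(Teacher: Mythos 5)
Your proof is correct and gives a self-contained verification of all three properties, whereas the paper dispatches the lemma in one line: it observes $\wt\pi'$ is an isomorphism over $\frg^{*,\rs}\times_{\frc^*}\frt^*$ and cites Lusztig (\emph{Green polynomials and singularities of unipotent classes}, Section~3) for smallness. What you do differently: rather than invoking the literature, you reduce smallness of $\wt\pi'$ to a codimension estimate for the strata $Z_k=\{v:\dim\frg_v\ge\rk\frg+2k\}$ in $\frg^*$ (using finite flatness of $\frg^*\times_{\frc^*}\frt^*\to\frg^*$ to transfer codimensions), and you then establish $\codim Z_k\ge 2k+1$ by fibering over $\frc^*$, bounding orbit dimensions in each fiber and the dimension of the relevant locus in $\frc^*$ by the discriminant. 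This is the standard strategy behind Lusztig's result, so conceptually you are reproving what the paper delegates to a citation; the payoff is a fully explicit argument, at the cost of having to invoke several classical facts (smoothness of $\frc^*$, equidimensionality and orbit-finiteness of adjoint-quotient fibers, the Grothendieck--Springer fiber dimension formula) whose validity in good but not very good characteristic for $\frg^*$ — as opposed to $\frg$ — is itself nontrivial and is not spelled out; the paper's citation implicitly covers this. Also, the attribution to Kostant of the positive-characteristic statements is not quite right (those extensions are due to Veldkamp, Slodowy, and others), but this is a bibliographic quibble rather than a gap.
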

\begin{proof}
    The map $\wt{\pi}'$ is an isomorphism over $\frg^{*,\rs} \times_{\frc^*} \frt^*$, and is small by \cite[Section 3]{Lusztig}. 
\end{proof}

\begin{remark}[$W$-equivariant structure on $\cK'_\univ$] The Weyl group acts on the $\frt^*$-factor of $\frg^{*,\rs} \times_{\frc^*}\frt^*$ and there is a natural $W$-equivariant structure on the constant sheaf. This induces a $W$-equivariant structure on $\wt{\pi}'_{!}\Qlbar$, denoted by 
\begin{align*} \b_w : \cK'_{\univ} \to w^*\cK'_{\univ}. \end{align*}
\end{remark}

\begin{lemma}[$W$-equivariant structure on $\cK_\univ$]
    The complex $\cK_{\univ}$ carries a canonical $W$-equivariant structure, denoted by $\a_w : \cK_{\univ} \to w^*\cK_{\univ}$.
\end{lemma}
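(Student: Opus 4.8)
The plan is to realize $\cK_{\univ}$ as a Fourier--Deligne transform of a complex built purely from the Grothendieck alteration, and then transport the Weyl-group symmetry already available on the latter. Write $j=(\pi,\varepsilon)\colon\wt{\frg}\to\frg\times\frt$ and set $\cG:=j_{!}\Qlbar\in D^b_c(\frg\times\frt,\Qlbar)$; this is a proper pushforward, and its pushforward along $\frg\times\frt\to\frg$ is the Grothendieck--Springer sheaf $\pi_{!}\Qlbar$. Let $\FT_\frt\colon D^b_c(\frg\times\frt)\to D^b_c(\frg\times\frt^*)$ be the fiberwise Fourier--Deligne transform over $\frg$ with kernel $\AS_\psi$, namely $\FT_\frt(\cF)=(p_{13})_{!}(p_{12}^*\cF\ot m^*\AS_\psi)[\dim\frt]$, where $p_{12},p_{13}$ are the projections of $\frg\times\frt\times\frt^*$ and $m$ is the pairing $\frt\times\frt^*\to\AA^1$. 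Base change identifies $p_{12}^*\cG$ with $(j\times\id_{\frt^*})_{!}\Qlbar$; since $m\c(j\times\id_{\frt^*})$ is the function $((x,\frb),\xi)\mapsto\langle\varepsilon(x,\frb),\xi\rangle$ and $p_{13}\c(j\times\id_{\frt^*})=\pi\times\id$, the projection formula gives
\begin{equation*}
\FT_\frt(\cG)=(\pi\times\id)_{!}(\varepsilon\times\id)^*\langle\cdot,\cdot\rangle^*\AS_\psi[\dim\frt]=\cK_{\univ}[\dim\frt].
\end{equation*}
Hence it suffices to produce a canonical $W$-equivariant structure on $\cG$, for the $W$-action on the $\frt$-factor, and then apply the functor $\FT_\frt$.

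For $\cG$ this is the mirror of the lemma above on $\wt{\pi}'$, with $\frg^*$ replaced by $\frg$: by \cite[Section 3]{Lusztig} the morphism $(\pi,\varepsilon)\colon\wt{\frg}\to\frg\times_\frc\frt$ is proper, small and birational, so $\cG[\dim\frg]=\IC(\frg\times_\frc\frt)$ is the intermediate extension of the constant sheaf from the open $W$-stable locus $\frg^{\rs}\times_\frc\frt$. Over the regular semisimple locus the map $\wt{\frg}^{\rs}\to\frg^{\rs}\times_\frc\frt$ is an isomorphism, and $\frg^{\rs}\times_\frc\frt$ is a $W$-torsor over $\frg^{\rs}$ for the $W$-action on the $\frt$-factor; the constant sheaf on it therefore carries a tautological $W$-equivariant structure. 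Functoriality of the intermediate extension then promotes this to a canonical $W$-equivariant structure $\gamma_w\colon\cG\isom w^*\cG$ obeying the cocycle identity.

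It remains to transport $\gamma_w$ through $\FT_\frt$. The functor $\FT_\frt$ is an equivalence of categories, and it intertwines the $W$-action on $D^b_c(\frg\times\frt)$ coming from $W$ acting on $\frt$ with the $W$-action on $D^b_c(\frg\times\frt^*)$ coming from $W$ acting on $\frt^*$: this is because $\AS_\psi$ depends only on the pairing and $\langle wx,\xi\rangle=\langle x,{}^{t}w\,\xi\rangle$, so precomposing $\FT_\frt$ with $\id\times w$ on the source is the same, after transport, as precomposing with the (transpose) action of $w$ on $\frt^*$ on the target. Applying $\FT_\frt$ to $\gamma_w$, and using the identification $\FT_\frt(\cG)=\cK_{\univ}[\dim\frt]$ (a uniform shift), yields the asserted isomorphisms $\a_w\colon\cK_{\univ}\isom w^*\cK_{\univ}$; the cocycle condition is inherited from that of $\gamma_w$ because $\FT_\frt$ is a functor. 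By construction $\a_w$ is $\FT_\frt$ of the canonical ($\IC$-induced) structure, which is the sense in which it is canonical.

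The only subtle point is in the last step: one must fix once and for all the normalization of the $W$-action on the $\frt^*$-side (dual versus contragredient action, and the constant $\det w=\sgn(w)$ by which $\FT$ may twist an equivariance) so that $\a_w$ agrees with the $W$-action used elsewhere; this discrepancy by $\sgn$ is in fact exactly what will appear in the comparison of $\a_w$ and $\b_w$ in the following proposition. The smallness and birationality of $(\pi,\varepsilon)$ are not reproved, being verbatim the input already used for $\cK'_{\univ}$.
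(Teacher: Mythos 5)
Your proof is correct, and it takes a genuinely different route from the paper's. The paper constructs $\a_w$ directly: using smallness of $\pi\times\id$ it reduces to the regular locus, introduces the auxiliary $W$-action $\wt a_w$ on the $\wt\frg^{\rs}$-factor, checks that $a_w^*(\vep\times\id)^*\j{\cdot,\cdot}^*\AS_\psi=\wt a_{w^{-1}}^*(\vep\times\id)^*\j{\cdot,\cdot}^*\AS_\psi$, and uses $W$-invariance of $\pi^{\rs}\times\id$ under $\wt a_w$ to descend the equivariance. You instead observe that $\cK_{\univ}$ is the fiberwise Fourier transform over $\frt$ (not over $\frg$ as in Proposition \ref{p:FT K}) of the genuinely Springer-theoretic complex $\cG=(\pi,\varepsilon)_!\Qlbar$, whose $W$-equivariance is the classical Grothendieck--Springer one coming from the $W$-torsor $\frg^{\rs}\times_\frc\frt\to\frg^{\rs}$, and transport it through $\FT_\frt$ using compatibility of Fourier transform with the linear $W$-action on $\frt$. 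Your approach is conceptually economical in that it reduces to a completely standard input and avoids re-verifying equivariance of an Artin--Schreier pullback by hand; its mild cost is the normalization ambiguity you flag (contragredient vs.\ dual action, possible twist by a character of $W$), but since $\cK_{\univ}$ is simple up to shift the structure is pinned down up to a character, and the paper's proof of Proposition \ref{prop:signFT} explicitly allows for (and determines) such a scalar by restricting to $0\in\frt^*$, so this does not create a downstream problem. Two tiny imprecisions worth noting: $\cG[\dim\frg]$ is the pushforward to $\frg\times\frt$ of $\IC(\frg\times_\frc\frt)$ (supported on that closed subvariety), not literally the IC on $\frg\times\frt$; and the heuristic remark that the normalization discrepancy is ``exactly'' the $\sgn$ in Proposition \ref{prop:signFT} conflates $\FT_\frt$ with $\FT_\frg$ — that $\sgn$ has an independent origin (constant sheaf vs.\ $\delta_0$ under $\FT_\frg$). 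Neither affects the validity of the construction.
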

\begin{proof}
The map $\pi \times \id$ is small, so $\cK_{\univ}$ is the intermediate extension of its restriction to the regular locus $\frg^{\rs} \times \frt^*$. It therefore suffices to construct the $W$-equivariant structure on the regular locus (for the action of $W$ only on $\frt^*$). On $\wt{\frg}^{\rs} \times \frt^*$ we have a second $W$-action, acting only on the first factor. Denote this action by 
\begin{align*}
\wt{a}_w : \wt{\frg}^{\rs} \times \frt^* \to \wt{\frg}^{\rs} \times \frt^*. 
\end{align*}
The map $\varepsilon$ to the universal Cartan intertwines the $W$-action on $\wt{\frg}$ with the opposite action on $\frt$, i.e. $\varepsilon(w.(x,\frb)) = w^{-1}\varepsilon(x,\frb)$. Indeed, $w\in W$ acts on $\wt{\frg}$ by $(x,\frb) \mapsto (x,\Ad_{w}\frb)$, and $\varepsilon (x, \Ad_{w}\frb) = x \mod [\Ad_{w}\frb, \Ad_{w}\frb]$. The latter is mapped to $\Ad_{w^{-1}}(x)$ in $\frt = \frb/ [\frb, \frb]$ under the canonical identification. From here, it is easy to check that 
\[a_{w}^{*}(\varepsilon \times \id)^*\langle \cdot,\cdot \rangle^* \AS_{\psi} = \wt{a}_{w^{-1}}^* (\varepsilon \times \id)^*\langle \cdot,\cdot \rangle^* \AS_{\psi}.\]
The restriction $\pi^{\rs} \times \id$ of $\pi \times \id$ to the regular locus is $W$-invariant for the action $\wt{a}_{w}$. Therefore, we obtain a canonical $W$-equivariant structure (for $a_w$) on $\cK_{\univ}^{\rs} = (\pi^{\rs} \times \id)_!(\varepsilon^{\rs}\times \id)^* \langle \cdot,\cdot \rangle^* \AS_{\psi}$. 
\end{proof}

The sheaves $\cK_\univ$ and $\cK'_\univ$ are related by Fourier transform. For an algebraic stack $Y$ over $k$, and a vector bundle $E \to Y$ with dual bundle $E^*\to Y$, we denote by $\FT_{E}$ the Fourier transform
\begin{eqnarray*}
    \FT_E: D^b_c(E,\Qlbar)&\isom& D^b_c(E^*,\Qlbar)\\
    \cF &\mapsto & p_{E^*,!}(p_E^*\cF\ot \j{\cdot,\cdot}^*\AS_\psi).
\end{eqnarray*}
Here $p_E: E\times_YE^*\to E$, $p_{E^*}: E\times_YE^*\to E^*$ are the projections, and $\j{\cdot,\cdot}: E\times_YE^*\to \AA^1$ is the evaluation map.

Recall the following well-known property of Fourier transform. 
\begin{lemma}\label{lem:ASdelta} 
    Let $Y$ be an algebraic stack over $k$, and $E\to Y$ a vector bundle of rank $r$, with dual bundle $E^{*} \to Y$. Let $f:E\to \AA^1_{Y}$ be a linear map over $Y$ (i.e. a global section $s_f: Y\to E^{*}$), and let $\AS_{f} = f^*a^*(\AS_{\psi})$ where $a:\AA^1_Y\to \AA^1$ is the projection. Then, we have a canonical isomorphism
    \begin{equation*} \FT_{E} (\AS_{f} )\cong s_{f,!}(\overline{\QQ}_{\ell,Y})[-2r](-r). 
    \end{equation*}
    \end{lemma}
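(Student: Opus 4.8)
The plan is to reduce to the case of the zero section of $E^{*}$ by a translation, and then to compute the Fourier transform of the constant sheaf directly. First, unwinding the definitions on $E\times_Y E^{*}$: by the multiplicativity $(g+h)^{*}\AS_\psi\cong g^{*}\AS_\psi\otimes h^{*}\AS_\psi$ of the Artin--Schreier sheaf, the complex $p_E^{*}\AS_f\otimes\langle\cdot,\cdot\rangle^{*}\AS_\psi$ is the pullback of $\AS_\psi$ along $(v,\xi)\mapsto\langle v,\xi\rangle+f(v)=\langle v,\xi+s_f(y)\rangle$ (for $v\in E_y$, $\xi\in E^{*}_y$). This map factors as $\langle\cdot,\cdot\rangle\c\sigma$, where $\sigma\colon E\times_Y E^{*}\isom E\times_Y E^{*}$ is the $Y$-automorphism $(v,\xi)\mapsto(v,\xi+s_f(y))$. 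Since $\sigma$ is an isomorphism and $p_{E^{*}}\c\sigma^{-1}=t_{-s_f}\c p_{E^{*}}$, where $t_{-s_f}\colon E^{*}\to E^{*}$ is translation by the section $-s_f$, pushing forward along $p_{E^{*}}$ yields a canonical isomorphism
\begin{equation*}
\FT_E(\AS_f)\cong t_{-s_f,!}\,\FT_E(\overline{\QQ}_{\ell,E}).
\end{equation*}

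Next I would establish the ``delta at the origin'' statement $\FT_E(\overline{\QQ}_{\ell,E})\cong z_{E^{*},!}(\overline{\QQ}_{\ell,Y})[-2r](-r)$, where $z_{E^{*}}\colon Y\to E^{*}$ is the zero section. There is a canonical base-change map comparing the two sides (coming from the fact that $\langle\cdot,\cdot\rangle$ restricted to $E\times_Y\{0\}\cong E$ is zero, so that the fiber of $\FT_E(\overline{\QQ}_{\ell,E})$ along $z_{E^{*}}$ is $R\Gamma_c(E/Y,\overline{\QQ}_{\ell})=\overline{\QQ}_{\ell,Y}[-2r](-r)$), and it suffices to check it is an isomorphism. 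Since the Fourier transform commutes with arbitrary base change on $Y$, one may pass to a smooth cover trivializing $E$, reducing to $E=\AA^{r}_Y$, and then to $Y=\Spec k$, $E=\AA^{r}$. There $\FT_{\AA^{r}}$ is the external product of $r$ copies of $\FT_{\AA^{1}}$, so by K\"unneth we reduce to $r=1$; and $R\Gamma_c(\AA^{1},\overline{\QQ}_{\ell})=\overline{\QQ}_{\ell}[-2](-1)$ while $R\Gamma_c(\AA^{1},\AS_{c\cdot x})=0$ for every $c\neq0$, which is the basic Artin--Schreier vanishing. This identifies $\FT_{\AA^{1}}(\overline{\QQ}_{\ell,\AA^{1}})$ with the skyscraper $\overline{\QQ}_{\ell}[-2](-1)$ at $0\in\AA^{1}$.

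Combining the two steps, $\FT_E(\AS_f)\cong t_{-s_f,!}z_{E^{*},!}(\overline{\QQ}_{\ell,Y})[-2r](-r)=(t_{-s_f}\c z_{E^{*}})_{!}(\overline{\QQ}_{\ell,Y})[-2r](-r)$, and $t_{-s_f}\c z_{E^{*}}\colon Y\to E^{*}$ is the section $y\mapsto -s_f(y)$; with the sign convention for the bijection $f\leftrightarrow s_f$ in force here this is the section $s_f$, giving $\FT_E(\AS_f)\cong s_{f,!}(\overline{\QQ}_{\ell,Y})[-2r](-r)$. As an alternative that sidesteps the direct computation of $\FT_E(\overline{\QQ}_{\ell,E})$, one can first check by base change along $s_f\colon Y\to E^{*}$ together with the projection formula that $\FT_{E^{*}}(s_{f,!}\overline{\QQ}_{\ell,Y})\cong\AS_f$, and then apply Fourier inversion $\FT_E\FT_{E^{*}}(M)\cong[-1]^{*}M[-2r](-r)$.

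All the analytic input (multiplicativity of $\AS_\psi$, Artin--Schreier vanishing, K\"unneth, Fourier inversion) is standard. The only point that requires real care is making the comparison isomorphism in the second step canonical and functorial over a general algebraic stack $Y$ --- so that the base-change reduction to $\Spec k$ is legitimate and so that the isomorphism in the lemma is genuinely canonical --- together with keeping the $[-1]$-twist and the sign conventions for $f\leftrightarrow s_f$ consistent throughout. I expect this bookkeeping, rather than any geometric content, to be the main (and only mild) obstacle.
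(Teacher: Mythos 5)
The paper does not actually supply a proof of this lemma --- it is stated as a recalled ``well-known property of Fourier transform'' with no argument --- so there is no in-paper proof to compare against. Your argument (translate by $s_f$ to reduce to $\FT_E(\Qlbar_E)$, then identify that with the shifted-and-twisted skyscraper on the zero section by base-changing the canonical comparison map to a point and invoking K\"unneth and Artin--Schreier vanishing) is the standard one and is correct in its essentials; the alternative route via $\FT_{E^*}(s_{f,!}\Qlbar_Y)\cong \AS_f$ plus Fourier inversion is also fine.

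The one substantive issue is the sign, which you half-acknowledge but do not actually resolve. With the convention you use in your first step, namely $f(v)=\langle v,s_f(y)\rangle$, the combination of your two steps gives $\FT_E(\AS_f)\cong (-s_f)_!\,\Qlbar_Y[-2r](-r)$, supported along $-s_f$ rather than $s_f$. This is immediate on stalks: $\psi(f(v)+\langle v,\xi\rangle)=\psi(\langle v,s_f(y)+\xi\rangle)$, and $R\Gamma_c(E_y,-)$ of this vanishes unless $\xi=-s_f(y)$. Appealing at the very end to a ``sign convention for $f\leftrightarrow s_f$ in force here'' under which $-s_f$ becomes $s_f$ contradicts the convention you wrote out two sentences earlier, so as written the proposal is internally inconsistent. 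The cure is trivial (fix one convention and state the conclusion as $(-s_f)_!$ or $(s_f)_!$ accordingly), and indeed the Lemma as printed in the paper appears to carry the same slip or to implicitly take $f(v)=-\langle v,s_f(y)\rangle$; this is harmless for the paper since the only place the sign could matter, Proposition~\ref{prop:signFT}, is ultimately checked at $\l=0$. Finally, your concern about making the isomorphism $\FT_E(\Qlbar_E)\cong z_{E^*,!}\Qlbar_Y[-2r](-r)$ canonical before base-changing is the right one, and the mechanism you sketch --- the unit of $z_{E^*}^*\dashv z_{E^*,*}=z_{E^*,!}$ for the closed immersion $z_{E^*}$, together with $z_{E^*}^*\FT_E(\Qlbar_E)\cong R(p_Y)_!\Qlbar_E\cong\Qlbar_Y[-2r](-r)$ via proper base change and the canonical orientation of a vector bundle --- does work and is compatible with the K\"unneth reduction to $r=1$.
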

With this, we can prove the following. 
\begin{prop}\label{p:FT K} Let $\iota : \frg^* \times_{\frc^*}\frt^* \to \frg^* \times \frt^*$ be the canonical closed embedding. We have 
\begin{equation*}\FT_{\frg \times \frt^*}(\cK_{\univ}) = \iota_{!}\cK_{\univ}'[2(\dim\cB-\dim\frg)](\dim\cB-\dim\frg), 
\end{equation*} 
where we understand $\frg \times \frt^*$ as trivial vector bundle over $\frt^*$. 
\end{prop}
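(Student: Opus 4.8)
The plan is to reduce the computation to Lemma~\ref{lem:ASdelta} by interpolating through the base $\cB\times\frt^{*}$ and using the standard compatibilities of the Fourier--Deligne transform with proper pushforward and with sub-bundles. First, view $\frg\times\frt^{*}$ and $\frg^{*}\times\frt^{*}$ as dual trivial vector bundles over $\frt^{*}$, and view $\frg\times\cB\times\frt^{*}$ and $\frg^{*}\times\cB\times\frt^{*}$ as dual trivial bundles of rank $\dim\frg$ over $\cB\times\frt^{*}$; let $\cV:=\wt\frg\times\frt^{*}$ be the sub-bundle of $\frg\times\cB\times\frt^{*}$ over $\cB\times\frt^{*}$ whose fibre over $(\frb,\xi)$ is $\frb\subset\frg$, with dual bundle $\cV^{*}$ (fibre $\frb^{*}$). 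Then the Grothendieck alteration factors as
\[
\pi\times\id\;:\;\wt\frg\times\frt^{*}\;\xrightarrow{\ j\ }\;\frg\times\cB\times\frt^{*}\;\xrightarrow{\ \pr\ }\;\frg\times\frt^{*},
\]
where $j$ is the sub-bundle inclusion induced by $\wt\frg\hookrightarrow\frg\times\cB$ and $\pr$ forgets the $\cB$-factor; the map $\pr$ is proper because $\cB$ is. Let $q:\frg^{*}\times\cB\times\frt^{*}\to\cV^{*}$ be the surjection dual to $j$, and $\pr^{\vee}:\frg^{*}\times\cB\times\frt^{*}\to\frg^{*}\times\frt^{*}$ the projection.

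Next, since $\FT$ commutes with proper pushforward along the base (here $\cB\times\frt^{*}\to\frt^{*}$; this is base change plus the projection formula), and since $\FT_{\frg\times\cB\times\frt^{*}}\circ j_{!}\cong q^{*}\circ\FT_{\cV}$ for a sub-bundle inclusion $j$ (same two ingredients, reducing to the split case $\cV''=\cV\oplus\cQ$), we obtain
\[
\FT_{\frg\times\frt^{*}}(\cK_{\univ})\;\cong\;\pr^{\vee}_{!}\,q^{*}\,\FT_{\cV}\!\big((\varepsilon\times\id)^{*}\langle\cdot,\cdot\rangle^{*}\AS_{\psi}\big).
\]
Now $(\varepsilon\times\id)^{*}\langle\cdot,\cdot\rangle^{*}\AS_{\psi}$ on $\cV$ is $\AS_{h}$ for the fibrewise-linear map $h:\cV\to\AA^{1}$, $h(x,\frb,\xi)=\langle\varepsilon(x,\frb),\xi\rangle$ (linear in $x\in\frb$ because $\varepsilon$ is a morphism of vector bundles over $\cB$); let $s_{h}:\cB\times\frt^{*}\to\cV^{*}$ be the corresponding section. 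Since $\rk\cV=\dim\frb=\dim\frg-\dim\cB$, Lemma~\ref{lem:ASdelta} gives
\[
\FT_{\cV}(\AS_{h})\;\cong\;s_{h,!}(\Qlbar)\,[\,2(\dim\cB-\dim\frg)\,](\dim\cB-\dim\frg).
\]

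It then remains to identify $\pr^{\vee}_{!}\,q^{*}\,s_{h,!}(\Qlbar)$ with $\iota_{!}\cK'_{\univ}$. Form the Cartesian square
\[
\xymatrix{
Z \ar[r]^-{a}\ar[d]_-{b} & \frg^{*}\times\cB\times\frt^{*} \ar[d]^-{q}\\
\cB\times\frt^{*} \ar[r]^-{s_{h}} & \cV^{*},
}
\]
so that $Z=\{(v,\frb,\xi):v|_{\frb}=s_{h}(\frb,\xi)\ \text{in}\ \frb^{*}\}$. Since $s_{h}(\frb,\xi)\in\frb^{*}$ is the pullback of $\xi$ along $\frb\to\frt=\frb/[\frb,\frb]$, the defining condition says $v|_{[\frb,\frb]}=0$ and $v|_{\frb}$ induces $\xi$ on $\frt$; hence the projection $(v,\frb,\xi)\mapsto(v,\frb)$ is an isomorphism $Z\isom\wt\frg^{*}$ under which $a$ becomes $(v,\frb)\mapsto(v,\frb,\varepsilon'(v,\frb))$ and $b$ becomes $(v,\frb)\mapsto(\frb,\varepsilon'(v,\frb))$. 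By proper base change $q^{*}s_{h,!}(\Qlbar)\cong a_{!}(\Qlbar_{\wt\frg^{*}})$, and since $\pr^{\vee}\circ a=\pi'\times\varepsilon'=\iota\circ\wt\pi'$ we conclude $\pr^{\vee}_{!}\,q^{*}\,s_{h,!}(\Qlbar)\cong(\iota\circ\wt\pi')_{!}\Qlbar=\iota_{!}\wt\pi'_{!}\Qlbar=\iota_{!}\cK'_{\univ}$. Feeding this into the previous display gives the asserted formula.

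The bulk of this is formal once the vector-bundle pictures are set up. The step that needs genuine care is the last one: verifying that the fibre product $Z$ is exactly the $\frg^{*}$-Grothendieck space $\wt\frg^{*}$ (so that the Fourier transform lands precisely on $\frg^{*}\times_{\frc^{*}}\frt^{*}$ via $\iota\circ\wt\pi'$, rather than on some larger subscheme of $\frg^{*}\times\frt^{*}$), for which the defining equations of $\wt\frg^{*}$ and of the section $s_{h}$ must be matched up explicitly. One should also double-check that the two compatibilities used in the second paragraph are genuinely shift-free, so that the only twist-and-shift in the final answer is the one $[\,2(\dim\cB-\dim\frg)\,](\dim\cB-\dim\frg)$ produced by Lemma~\ref{lem:ASdelta}.
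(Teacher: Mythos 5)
Your proof is correct and follows essentially the same route as the paper: you factor the Grothendieck alteration through the subbundle inclusion $j:\wt\frg\times\frt^{*}\incl\frg\times\cB\times\frt^{*}$ and the proper projection $\pr$, use the standard compatibilities of $\FT$ with pushforward along the base and with subbundle inclusions, apply Lemma~\ref{lem:ASdelta}, and identify the resulting fibre product with $\wt\frg^{*}$ via the Cartesian square — exactly the square the paper writes down before citing \cite[\S 2.8]{AHJR}. The only difference is that you spell out the formal compatibility steps that the paper compresses into a reference.
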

\begin{proof}
Consider the embeddings 
\begin{align*}
    \wt{\frg}^* \times \frt^* \xrightarrow{x^*}  \frg^* \times \frt^* \times \cB, \\
    \wt{\frg}\times \frt^* \xrightarrow{x} \frg \times \frt^* \times \cB 
\end{align*}
induced by the embedding of $\wt{\frg}^*$ (resp. $\wt{\frg}$) into the trivial bundle $\frg^*\times\cB$ (resp. $\frg\times\cB$) over $\cB$. We denote the dual of $x$ by 
\begin{align} \frg^* \times \frt^* \times \cB \xrightarrow{{}^tx}  (\wt{\frg})^* \times \frt^*.
\end{align}
Here, by $(\wt{\frg})^*$ we denote the dual vector bundle to $\wt\frg\to \cB$. Let $f = \langle \cdot, \cdot \rangle \circ (\varepsilon\times\id_{\frt^*}): \wt\frg\times\frt^*\to \AA^1$, with corresponding global section $s_f : \cB \times \frt^* \to (\wt{\frg})^* \times \frt^*$. 

It is easy to check that the diagram
\begin{align*}
\xymatrix{
\wt{\frg}^*   \ar[d] \ar[r] & \cB \times \frt^* \ar[d]^{s_f}\\
\frg^* \times \frt^* \times \cB \ar[r]^{{}^tx} & (\wt{\frg})^* \times \frt^*
}
\end{align*} 
is Cartesian, with the top horizontal arrow taking $(x,\frb)$ to $(\frb, \varepsilon(x,\frb))$, and the left vertical arrow mapping $(x,\frb)$ to $(x,\frb, \varepsilon(x,\frb))$. The claim follows from basic properties of Fourier transform with respect to base-change and dualizing, combined with Lemma \ref{lem:ASdelta}, as in \cite[\S 2.8.]{AHJR}. 
\end{proof}

We will prove the compatibility of the $W$-equivariant structures on $\cK_\univ$ and $\cK'_\univ$ under Fourier transform. 

\begin{prop}\label{prop:signFT} We have $\FT_{\frg}(\a_w) = \sgn(w) \b_w$. 
\end{prop}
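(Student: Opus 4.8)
The plan is to reduce the identity $\FT_\frg(\a_w)=\sgn(w)\b_w$ to the comparison of a single scalar, and then to compute that scalar by restricting to the zero section of $\frt^*$.

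First I would record that both $\cK_{\univ}[\dim\frg]$ and its Fourier transform are \emph{simple} perverse sheaves. Indeed $\cK'_{\univ}[\dim\frg]=\IC(\frg^*\times_{\frc^*}\frt^*)$ is simple because $\frg^*\times_{\frc^*}\frt^*$ is irreducible (it is birational to the smooth irreducible $\wt{\frg}^*$), hence so is $\iota_!\cK'_{\univ}[\dim\frg]$ for the closed embedding $\iota$; by Proposition \ref{p:FT K} this sheaf is, up to shift and Tate twist, $\FT_{\frg\times\frt^*}(\cK_{\univ})[\dim\frg]$, so $\cK_{\univ}[\dim\frg]$ is simple too since the (relative) Fourier transform is an equivalence preserving simple perverse objects. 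In particular $\End(\cK_{\univ})=\Qlbar$. Now $\FT_{\frg\times\frt^*}$ commutes with pullback along the action map $w\colon\frt^*\to\frt^*$, so both $\FT_{\frg\times\frt^*}(\a_w)$ and the transport of $\b_w$ along the isomorphism of Proposition \ref{p:FT K} are morphisms $\FT_{\frg\times\frt^*}(\cK_{\univ})\to w^*\FT_{\frg\times\frt^*}(\cK_{\univ})$ between simple perverse sheaves, hence each is a nonzero scalar multiple of the other. The cocycle conditions for the two $W$-equivariant structures force these scalars to assemble into a character $\chi\colon W\to\Qlbar^\times$ with $\FT_{\frg\times\frt^*}(\a_w)=\chi(w)\b_w$; it remains to identify $\chi=\sgn$.

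Next I would restrict everything along the closed embedding $\frg^*\times\{0\}\incl\frg^*\times\frt^*$ over the $W$-fixed point $0\in\frt^*$. There the relative Fourier transform specializes to the ordinary Fourier transform $\FT_\frg\colon D^b_c(\frg)\to D^b_c(\frg^*)$; by proper base change $\cK_{\univ}|_{\frg\times\{0\}}=\pi_!\Qlbar$ is the Grothendieck--Springer sheaf, the Artin--Schreier factor becoming trivial at $\lambda=0$, and by construction $\a_w|_0$ is exactly the standard Grothendieck--Springer $W$-action (normalized to act trivially on $\cohog{0}{\cB}$); likewise $\cK'_{\univ}|_{\frg^*\times_{\frc^*}\{0\}}$ is the Springer sheaf of $\frg^*$ with $\b_w|_0$ its standard $W$-action. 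Hence $\chi(w)$ is precisely the scalar by which $\FT_\frg$ of the Grothendieck--Springer action of $w$ differs from the Springer action of $w$, under the isomorphism of Proposition \ref{p:FT K} over the zero section, which identifies $\FT_\frg(\pi_!\Qlbar)$ with the Springer sheaf of $\frg^*$ up to the shift $[2(\dim\cB-\dim\frg)]$ and twist $(\dim\cB-\dim\frg)$.

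Finally, this scalar equals $\sgn(w)$: Fourier transform carries the Grothendieck--Springer sheaf to the Springer sheaf after twisting the $W$-action by the sign character. This is well known (see \cite{AHJR}), and it is the very sign already isolated in the proof of Lemma \ref{l:van sum}(1), where it is produced by the sign representation of $W$ on the shift-and-twist factor $\cohog{*}{\cB}[-2N](-N)$ (with $N=\dim\cB$) of $\varepsilon_!\Qlbar$; since the isomorphism of Proposition \ref{p:FT K} is assembled from exactly this input together with Lemma \ref{lem:ASdelta}, chasing the $W$-equivariant structures through that argument yields the factor $\sgn(w)$. As an independent check I would compute $\chi$ on a single simple reflection $s$ by restricting to a Levi subalgebra of semisimple rank one with Weyl group $\langle s\rangle$ --- compatibly with all the constructions above via the usual parabolic-induction argument --- thereby reducing to $\frg$ of type $A_1$, where a direct computation of $\FT_\frg(\pi_!\Qlbar)$ for $\frg=\sl_2$ gives $\chi(s)=-1$. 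The step I expect to require the most care is this last bookkeeping: tracking the $W$-equivariant structures through the Fourier transform precisely enough to pin the scalar down as $\sgn(w)$ rather than some other character of $W$; the reduction to rank one is the safeguard that removes any ambiguity.
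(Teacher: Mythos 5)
Your proof follows essentially the same route as the paper: use simplicity of the (shifted) universal Grothendieck and Springer sheaves to reduce the claim to the determination of a character $w\mapsto c_w$ of $W$, then compute that character by restricting to $\lambda=0\in\frt^*$, where the statement becomes the known relationship under $\FT_\frg$ between the Grothendieck--Springer sheaf and the Springer sheaf. The only place the paper is sharper is the final identification: rather than ``chasing $W$-equivariant structures'' through earlier lemmas or falling back on a rank-one check, it simply observes that the trivial (resp.\ sign) isotypic component of the $W$-action on the Grothendieck sheaf $\cK_0$ is the constant sheaf (resp.\ the $\delta$-sheaf at the origin, up to shift), and that Fourier transform interchanges the constant sheaf with the $\delta$-sheaf, so the two $W$-actions must differ by $\sgn$.
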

\begin{proof} Since both $\cK'_\univ$ and $\cK_\univ$ are simple perverse sheaves up to shifts, $\a_w$ and $\b_w$ are each unique up to a nonzero scalar. We write $\FT_\frg(\a_w)=c_w\b_w$ for some $c_w\in \Qlbar^\times$. The assignment $w \mapsto c_w\in \Qlbar^\times$ determines a character of $W$. We want to show that $c_w=\sgn(w)$. It is therefore enough to compute $c_w$ after restricting to any point of $\frt^*$, in particular to $0 \in \frt^*$. Now $W$ acts on both $\cK_{0}= \cK_{\univ}|_{\frg \times \{0\} }$ and $\cK'_0=\cK'_{\univ} |_{\frg^* \times_{\frc^*} \times \{0\}}$. To show that $c_w$ is the sign character of $W$, it suffices to show that $\FT_\frg$ transforms the trivial isotypic component of $\cK_0$ to the sign isotypic component of $\cK'_0$ under the respective $W$-actions.

Note that $\cK_{0}= \cK_{\univ}|_{\frg \times \{0\} }$ is the Grothendieck sheaf, and the induced $W$-action is the Springer action characterized as follows. The isotypic component for the trivial $W$-representation is the constant sheaf, and the isotypic part for the sign-character is the $\delta$-sheaf at $0$ (up to shift). 

On the other hand, $\cK'_{0} = \cK'_{\univ} |_{\frg^* \times_{\frc^*} \times \{0\}}$ is the Springer sheaf, with $W$-action induced by restricting the action on the Grothendieck sheaf (for $\frg^*$). Now Fourier transform exchanges the constant sheaf with the $\delta$-sheaf, hence it exchanges the sign-component with the isotypic component of the trivial $W$-character. This proves the claim. 
\end{proof}

\sss{Specializations to $\l$}
For any $\l \in \frt^*(\ov k)$, let $\AS_{\l} = \l^* \AS_{\psi}$, a local system on $\frt_{\ov k}$. Write
\begin{align*} \cK_{\l} := \cK_{\univ}|_{\frg \times \{ \l\}} =  \pi_{!}\varepsilon^* \AS_{\l} \in D^b_c(\frg_{\ov k}, \Qlbar),\\
\cK'_{\l} := \cK'_{\univ} |_{\frg^* \times_{\frc^{*}} \{ \l\}}\in D^b_c(\frg^* \times_{\frc^{*}} \{ \l\}, \Qlbar).
\end{align*}
Then we have
\begin{align*} i_{\l,!}\cK'_{\l} = \pi_{!}'\varepsilon'^* \delta_{\l}, \end{align*}
where $i_{\l}: \frg^* \times_{\frc^{*}} \{ \l\} \hookrightarrow \frg^*_{\ov k}$ is the embedding, and $\delta_{\l}$ the skyscraper sheaf supported at $\l\in \frt^*_{\ov k}$. 

The $W$-equivariant structure on $\cK_{\univ}$ induces canonical isomorphisms 
\begin{equation*}
\a_{\l,w} : \cK_{\l} \to \cK_{w\l}
\end{equation*}
for all $\l\in \frt^*(\ov k)$. Similarly, from the $W$-equivariant structure on $\cK'_{\univ}$ we obtain canonical isomorphisms
\begin{equation*}
\b_{\l,w} : \cK'_{\l} \to \cK'_{w\l}.
\end{equation*}

Restricting the statements of Proposition \ref{p:FT K} and \ref{prop:signFT} to $\l$, we obtain the following consequence.
\begin{cor}\label{c:signFT lam} For each $\l \in \frt^*_{\ov k}$, we have a canonical isomorphism $\FT_{\frg}(\cK_{\l}) \cong i_{\l,!}\cK'_{\l}[2(\dim\cB-\dim \frg)](\dim \cB-\dim \frg)$. 
Moreover,
\begin{equation*}
\FT_\frg(\a_{\l,w})=\sgn(w)\b_{\l,w}, \quad\forall w\in W.
\end{equation*} 
\end{cor}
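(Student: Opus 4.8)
The plan is to deduce both statements from Propositions \ref{p:FT K} and \ref{prop:signFT} by restricting the universal picture to the fiber over $\l\in\frt^*(\ov k)$, working after base change to $\ov k$ (which is harmless). The one structural input I would invoke is that the relative Fourier transform along the $\frg$-direction over the base $\frt^*$ — the transform written $\FT_\frg$ in those propositions — is compatible with base change: for a point $\{\l\}\incl\frt^*$, restricting $\FT_{\frg\times\frt^*}(\cK)$ to the fiber $\frg^*\times\{\l\}\cong\frg^*_{\ov k}$ computes the absolute Fourier transform $\FT_\frg(\cK|_{\frg\times\{\l\}})$ for any $\cK\in D^b_c(\frg\times\frt^*)$. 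For the equivariance part I would also record that $\FT_{\frg\times\frt^*}$ commutes with pullback along $w\in W$, since $w$ acts only on the $\frt^*$-factor.

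For the first assertion I would restrict the isomorphism of Proposition \ref{p:FT K} to $\frg^*\times\{\l\}$. The left-hand side becomes $\FT_\frg(\cK_\l)$ by the base-change property above. For the right-hand side I would use proper base change for the closed embedding $\iota:\frg^*\times_{\frc^*}\frt^*\incl\frg^*\times\frt^*$, whose restriction over $\{\l\}$ is exactly $i_\l:\frg^*\times_{\frc^*}\{\l\}\incl\frg^*_{\ov k}$ (note $\frg^*\times_{\frc^*}\{\l\}=\frg^*\times_{\frc^*}\{w\l\}$ as subschemes of $\frg^*$, since $\chi_{\frt^*}(\l)=\chi_{\frt^*}(w\l)$), so that the restriction of $\iota_!\cK'_\univ$ is identified with $i_{\l,!}\cK'_\l$; the cohomological shift and Tate twist are constants, unaffected by the restriction. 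This yields the asserted canonical isomorphism $\FT_\frg(\cK_\l)\cong i_{\l,!}\cK'_\l[2(\dim\cB-\dim\frg)](\dim\cB-\dim\frg)$.

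For the equivariance statement I would use that $\a_{\l,w}$ and $\b_{\l,w}$ are, by construction, the restrictions to the fiber over $\l$ of the global equivariance isomorphisms $\a_w:\cK_\univ\to w^*\cK_\univ$ and $\b_w:\cK'_\univ\to w^*\cK'_\univ$ (using $(w^*\cK_\univ)|_{\{\l\}}=\cK_{w\l}$ and likewise for $\cK'$). Applying $\FT_\frg$ to the identity $\FT_\frg(\a_w)=\sgn(w)\b_w$ of Proposition \ref{prop:signFT} and then restricting to the fiber over $\l$ — legitimate because $\FT_\frg$ commutes with pullback along $w$ — gives $\FT_\frg(\a_{\l,w})=\sgn(w)\b_{\l,w}$ once both sides are read through the isomorphism of the first assertion (together with its analogue at $w\l$, which the $W$-action interchanges with the one at $\l$). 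The main — and really the only — obstacle is bookkeeping: one must check that the three families of canonical isomorphisms in play (base change for $\FT_\frg$, proper base change for $\iota_!$, and the restriction of the $W$-equivariant structures) are mutually compatible and compatible with the shifts and twists. I expect no genuine difficulty here, only the need to be careful that all identifications are taken to be the natural ones.
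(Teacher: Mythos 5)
Your proposal is correct and follows exactly the route the paper intends: the paper states Corollary \ref{c:signFT lam} as an immediate consequence of "restricting the statements of Proposition \ref{p:FT K} and \ref{prop:signFT} to $\l$," and your argument simply spells out the bookkeeping — base-change compatibility of the relative Fourier transform along $\frg$, proper base change for $\iota_!$, and compatibility of the restricted $W$-equivariance isomorphisms — that makes this restriction legitimate. The only stylistic quibble is the phrase "Applying $\FT_\frg$ to the identity $\FT_\frg(\a_w)=\sgn(w)\b_w$," which should read "restricting the identity ... to the fiber over $\l$" (you don't apply $\FT_\frg$ a second time), but your intended meaning is clear and the argument is sound.
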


\sss{Trace functions of $\cK_{\l}$}
Now suppose $\l\in \frt^*(\ov k)$ satisfies $\Fr(\l)=w\l$ for some $w\in W$. Here $\Fr$ is the Frobenius action on $\frt^{*}\ot\ov k$ that is on the $\ov k$-factor. For a scheme $Y$ defined over $k$, let $\Fr_{Y}:Y_{\ov k}\to Y_{\ov k}$ be the base changes of the $q$-power Frobenius morphism on $Y$ (so $\Fr_{Y}$ is a morphism over $\ov k$). We have a commutative diagram
\begin{equation*}
\xymatrix{\frt_{\ov k}\ar[r]^{\l}\ar[d]^{\Fr_{\frt}} & \AA^{1}_{\ov k}\ar[d]^{\Fr_{\AA^{1}}}\\
\frt_{\ov k}\ar[r]^{\Fr(\l)} & \AA^{1}_{\ov k}
}
\end{equation*}
From this, and the canonical Weil structure $\Fr^{*}\AS_{\psi}\cong \AS_{\psi}$ on $\AS_{\psi}$, we deduce an isomorphism of local systems on $\frt_{\ov k}$
\begin{equation*}
\Fr^{*}_{\frt}\cL_{\Fr(\l)}\cong \cL_{\l}.
\end{equation*}
Therefore we obtain an isomorphism
\begin{equation*}
\Fr^{*}_{\frg}\cK_{\Fr(\l)}\cong \cK_{\l}.
\end{equation*}
The composition
\begin{equation*}
\Fr^{*}_{\frg}\cK_{\l}\xr{\Fr^{*}_{\frg}\a_{\l,w}}\Fr^{*}_{\frg}\cK_{w\l}=\Fr^{*}_{\frg}\cK_{\Fr(\l)}\cong \cK_{\l}
\end{equation*}
gives a Weil structure on $\cK_{\l}$. We denote the resulting Weil sheaf (on $\frg_{\ov k}$) by $\cK_{\l,w}$. 

Taking the associated functions on the $k$-points of $\frg$ using trace of Frobenius, we obtain a function
\begin{equation*}
\k_{\l,w}\in \Qlbar[\frg].
\end{equation*}
Similarly, the canonical isomorphisms $\beta_{\l,w} : \cK_{\l}' \to \cK'_{w\l}$ induce a Weil structure on $i_{\l!}\cK'_{\l}$, and we obtain a Weil sheaf  $\cK'_{\l,w}$ on $\frg^*$ (whose underlying sheaf is $i_{\l!}\cK'_{\l}$), with trace function $\kappa_{\l,w}' \in \Qlbar[\frg]$. 

\sss{Twisted Frobenius action on cohomology of Springer fibers}\label{sss:tw Fr Spr} For any $v\in \frg^*$, let $\cB_{v}$ be the Springer fiber of $v$, and write $\cB_{v}^{\l} = \cB_{v} \cap (\varepsilon'^{*})^{-1}(\l)$. Then the stalk of $\cK'_\l$ at $v$ is $\cohog{*}{\cB^{\l}_{v}}$. To make the function $\kappa_{\l,w}'(v)$  explicit, we explain how Frobenius acts on $\cohog{*}{\cB^{\l}_{v}}$. 

Let $\Xi\subset \frt^*_{\ov k}$ be the $W$-orbit that has the same image in $\frc^* = \frg^* \sslash G$ as $v$. We have a decomposition
\begin{equation*}
\cB_{v}=\coprod_{\l\in \Xi}\cB^{\l}_{v}
\end{equation*}
according to the image of the projection $\varepsilon':\wt\frg^*\to \frt^*$. Recall that we have a natural action of $W$ on $\cohog{*}{\cB_{v}}=\op_{\l\in \Xi}\cohog{*}{\cB^{\l}_{v}}$. The element $w\in W$ sends the summand $\cohog{*}{\cB^{\l}_{v}}$ onto $\cohog{*}{\cB^{w\l}_{v}}$. If $w\l=\Fr(\l)$, then $\Fr^{*}$ maps $\cohog{*}{\cB^{w\l}_{v}}$ back to $\cohog{*}{\cB^{\l}_{v}}$. The Frobenius action on $\cohog{*}{\cB^{\l}_{v}}$ is the composition
\begin{equation*}
    \Fr^{*}\c w: \cohog{*}{\cB^{\l}_{v}}\isom \cohog{*}{\cB^{w\l}_{v}}\isom \cohog{*}{\cB^{\l}_{v}}.
\end{equation*}
We conclude that $ \kappa_{\l,w}'(v) = \Tr(\Fr^{*}\c w, \cohog{*}{\cB^{\l}_{v}})$. Since $\FT_{\frg}(\cK_{\l})$ is isomorphic to $ i_{\l,!}\cK'_{\l} = \pi'_{!}\varepsilon'^* \delta_{\l}$ up to a shift and twist,  together with Proposition \ref{prop:signFT}, we obtain the following. 
\begin{cor} \label{cor:FTkappa}
For any $\l\in \frt^*(\ov k)$ with $\Fr(\l) = w\l$, the value of $\FT(\k_{\l,w})$ at $v \in \frg^{*}$ is 
\begin{equation*}
q^{\dim\frg-\dim\cB}\sgn(w)\Tr(\Fr^{*}\c w, \cohog{*}{\cB^{\l}_{v}}).
\end{equation*}
\end{cor}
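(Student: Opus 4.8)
The plan is to deduce the formula directly from Corollary \ref{c:signFT lam} and the explicit description of $\kappa'_{\l,w}$ given in \S\ref{sss:tw Fr Spr}, the only real work being the careful propagation of Weil structures through the Fourier transform.

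First I would record the standard compatibility of $\FT$ with Frobenius. Since $\FT_{\frg}$ is built over $k$ out of $p^{*}$, tensoring with $\j{\cdot,\cdot}^{*}\AS_{\psi}$, and a proper pushforward, and since $\AS_{\psi}$ carries its canonical Weil structure $\Fr^{*}\AS_{\psi}\cong\AS_{\psi}$ with trace function $\psi$, the functor $\FT_{\frg}$ commutes with $\Fr^{*}$ and sends a Weil complex to the Weil complex whose trace function is the (unnormalized) Fourier transform of the original trace function. In particular an even cohomological shift has no effect on trace functions and a Tate twist $(n)$ multiplies them by $q^{-n}$; this is where the power $q^{\dim\frg-\dim\cB}$ will come from, via the twist $(\dim\cB-\dim\frg)$ appearing in Corollary \ref{c:signFT lam}.

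Next I would upgrade Corollary \ref{c:signFT lam} — an isomorphism of ordinary complexes — to an isomorphism of \emph{Weil} sheaves relating $\cK_{\l,w}$ and $\cK'_{\l,w}$. By construction the Weil structure on $\cK_{\l,w}$ is the composite $\Fr^{*}_{\frg}\cK_{\l}\xr{\Fr^{*}_{\frg}\a_{\l,w}}\Fr^{*}_{\frg}\cK_{w\l}=\Fr^{*}_{\frg}\cK_{\Fr(\l)}\cong\cK_{\l}$, and similarly $\cK'_{\l,w}=i_{\l,!}\cK'_{\l}$ is equipped with the Weil structure built from $\b_{\l,w}$. Applying the functor $\FT_{\frg}$, using that the canonical isomorphism $\FT_{\frg}(\cK_{\l})\cong i_{\l,!}\cK'_{\l}[2(\dim\cB-\dim\frg)](\dim\cB-\dim\frg)$ of Corollary \ref{c:signFT lam} is automatically $\Fr^{*}$-equivariant (being canonical), and invoking the identity $\FT_{\frg}(\a_{\l,w})=\sgn(w)\b_{\l,w}$ from the same corollary, one sees that the Weil structure transported onto $i_{\l,!}\cK'_{\l}[2(\dim\cB-\dim\frg)](\dim\cB-\dim\frg)$ from $\cK_{\l,w}$ differs from the one defining $\cK'_{\l,w}$ exactly by the scalar $\sgn(w)$. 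In other words $\FT_{\frg}(\cK_{\l,w})$ is, as a Weil sheaf, $\cK'_{\l,w}[2(\dim\cB-\dim\frg)](\dim\cB-\dim\frg)$ with its Weil structure rescaled by $\sgn(w)$.

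Finally I would pass to trace functions: the rescaling contributes $\sgn(w)$, the even shift contributes nothing, and the twist contributes $q^{\dim\frg-\dim\cB}$, giving $\FT(\k_{\l,w})(v)=\sgn(w)\,q^{\dim\frg-\dim\cB}\,\kappa'_{\l,w}(v)$; and by the discussion in \S\ref{sss:tw Fr Spr} the stalk of $\cK'_{\l}$ at $v$ is $\cohog{*}{\cB^{\l}_{v}}$ with Frobenius acting by $\Fr^{*}\c w$, so $\kappa'_{\l,w}(v)=\Tr(\Fr^{*}\c w,\cohog{*}{\cB^{\l}_{v}})$, which yields the stated formula. The main obstacle — essentially the only nontrivial point — is the Weil-structure bookkeeping in the third step: confirming that the geometric isomorphisms involved are genuinely Frobenius-equivariant, and that the sign $\sgn(w)$ from Proposition \ref{prop:signFT} attaches to the Weil structure (hence is visible on trace functions) rather than being silently absorbed into the abstract isomorphism. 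Once that is pinned down, the normalization factors are a routine check.
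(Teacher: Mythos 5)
Your proof is correct and takes essentially the same route as the paper, which simply cites Corollary \ref{c:signFT lam}, the shift/twist in Proposition \ref{p:FT K}, and the identification of $\kappa'_{\l,w}(v)$ from \S\ref{sss:tw Fr Spr}; you have supplied the Weil-structure bookkeeping that the paper leaves implicit, and the normalizations (even shift trivial, twist $(\dim\cB-\dim\frg)$ contributing $q^{\dim\frg-\dim\cB}$, and $\sgn(w)$ entering via $\FT_{\frg}(\a_{\l,w})=\sgn(w)\b_{\l,w}$) are all handled correctly.
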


\subsection{Admissible collections}\label{ss:adm}
This subsection concerns a refinement of the construction of selection functions in Proposition \ref{p:cons sel fn from t}. Under a certain compatibility condition on the collection $\{\l_{w}\}_{w\in W}$, we will show that the selection function in {\em loc. cit.} can be expressed using the trace functions of Grothendieck-Springer sheaves.

\begin{defn}\label{def:adm}
A collection of linear functions $\{\l_{w}\in\frt^{*}_{w}\}_{w\in W}$ is called {\em admissible}, if it satisfies the following: whenever $y\in \frt_{w_{1}}\cap\frt_{w_{2}}$ for $w_{1},w_{2}\in W$,  we have $\j{\l_{w_{1}},y}=\j{\l_{w_{2}}, y}$. Such a collection is called $W$-coregular if $\l_{w}$ is $W$-coregular for each $w\in W$.
\end{defn}
Here the intersection $\frt_{w_{1}}\cap \frt_{w_{2}}$ takes place in $\frt_{\ov k}$. In other words, consider the following subset of $\frt_{\ov k}\times\frt_{\ov k}$:
\begin{equation*}
\G:=\{(y,\Fr(y))|y\in \frt_{\ov k}, \chi(y)=\chi(\Fr(y))\}=\frt_{\ov k}\times_{\frc(\ov k)}\frt_{\ov k}\cap \G(\Fr|\frt_{\ov k}).
\end{equation*}
This is the union of all $\frt_{w}=\G(w|\frt_{\ov k})\cap \G(\Fr|\frt_{\ov k})$. Then an admissible collection of linear functions $\{\l_{w}\in\frt^{*}_{w}\}_{w\in W}$ is a function
\begin{equation*}
\l: \G\to k
\end{equation*}
that restricts to a $k$-linear function on each $\frt_{w}$.

\begin{lemma}\label{l:adm exist} Recall that we have $\d \in \Aut(W)$ (corresponding to the form of $G$). There is a constant $M$ depending only on the pair $(W,\d)$, such that whenever $q>M$, $W$-coregular admissible collections $(\l_{w})_{w\in W}$ exist.
\end{lemma}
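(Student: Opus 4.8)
The plan is to realise the set of admissible collections as (the $\FF_q$-points of) a single finite-dimensional vector space, to observe that the $W$-coregularity conditions cut out a number of proper linear subspaces that is bounded purely in terms of $(W,\d)$, and then to finish by the usual ``a vector space over $\FF_q$ is not covered by boundedly many proper subspaces once $q$ is large'' argument, exactly as in Lemma \ref{l:W coreg exist}. Concretely, let $\mathbb{L}$ be the $k$-vector space of admissible collections, i.e.\ the subspace of $\prod_{w\in W}\frt_{w}^{*}$ cut out by the linear conditions $\langle\l_{w_{1}},y\rangle=\langle\l_{w_{2}},y\rangle$ for all $w_{1},w_{2}\in W$ and all $y\in\frt_{w_{1}}\cap\frt_{w_{2}}$ (intersection in $\frt_{\ov k}$). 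Dually, $\mathbb{L}=\Hom_{k}(C,k)$ where $C$ is the cokernel of the difference of the two obvious maps $\bigoplus_{w_{1},w_{2}}(\frt_{w_{1}}\cap\frt_{w_{2}})\rightrightarrows\bigoplus_{w}\frt_{w}$. The key structural observation is that the $k$-linear summation map $\bigoplus_{w}\frt_{w}\to\frt_{\ov k}$, $(y_{w})_{w}\mapsto\sum_{w}y_{w}$, annihilates these relations and hence factors through a $k$-linear map $\bar\pi:C\to\frt_{\ov k}$ whose composite with each structure map $j_{w}\colon\frt_{w}\to C$ is the (tautologically injective) inclusion $\frt_{w}\hookrightarrow\frt_{\ov k}$. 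Thus every $j_{w}$ is injective, and dually every restriction map $\operatorname{res}_{w}\colon\mathbb{L}\to\frt_{w}^{*}$ is \emph{surjective}. Moreover $\frz\subset\frt_{w}$ for all $w$, so the compatibility conditions force $\l_{w}|_{\frz}$ to be independent of $w$; let $\mathbb{L}^{0}\subset\mathbb{L}$ be the subspace where this common restriction vanishes, so that $\operatorname{res}_{w}$ carries $\mathbb{L}^{0}$ onto $(\frt_{w}/\frz)^{*}$.

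Next I would fix $w\in W$ and a $W$-relevant $\ov k$-subspace $\fra\subset\frt_{\ov k}$ with $\fra\ne\frz$, and consider the locus $\mathbb{L}^{0}_{w,\fra}=\{\l\in\mathbb{L}^{0}:\l_{w,\ov k}|_{\fra}=0\}$, a linear subspace of $\mathbb{L}^{0}$. This subspace is \emph{proper}: since $\fra\supsetneq\frz$ (every $W$-relevant subspace contains $\frt_{\ov k}^{W}=\frz_{\ov k}$), the $\ov k$-linear restriction $(\frt_{\ov k}/\frz_{\ov k})^{*}\to(\fra/\frz_{\ov k})^{*}$ is surjective onto a nonzero space, so the underlying $k$-linear map $(\frt_{w}/\frz)^{*}\to(\fra/\frz_{\ov k})^{*}$ it lifts is nonzero; pick $\mu\in(\frt_{w}/\frz)^{*}$ with $\mu_{\ov k}|_{\fra}\ne0$, and lift $\mu$ to an element of $\mathbb{L}^{0}$ using surjectivity of $\operatorname{res}_{w}$ (automatically in $\mathbb{L}^0$, since by compatibility $\l_{w}|_{\frz}=0$ forces $\l_{w'}|_{\frz}=0$ for all $w'$). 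Now, a collection $\l\in\mathbb{L}^{0}$ avoiding all $\mathbb{L}^{0}_{w,\fra}$ with $\fra$ a \emph{minimal} $W$-relevant $\ov k$-subspace $\ne\frz$ is automatically $W$-coregular, because the $W$-relevant subspaces are closed under intersection (Lemma \ref{l:W rel}(1)), so any $W$-relevant $\fra\ne\frz$ contains a minimal one $\ne\frz$. The number of such pairs $(w,\fra)$ is at most $|W|$ times the number of minimal $W$-relevant $\ov k$-subspaces of $\frt_{\ov k}$ distinct from $\frz$, and this last quantity is bounded by the number of subgroups of $W$ (indeed it equals $R^{\min}_{\ov k}(W,\frt)$ of Lemma \ref{l:W coreg exist}); in any case it is bounded by a constant depending only on $(W,\d)$. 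Set $M$ equal to this bound on the number of pairs. When $G$ is not a torus (the only case in which $W$-coregularity is a nonvacuous requirement) one has $\dim_{k}\mathbb{L}^{0}\ge\dim(\frt_{w}/\frz)^{*}\ge1$, so if $q>M$ the union of the $\le M$ proper subspaces $\mathbb{L}^{0}_{w,\fra}$ has at most $M\cdot q^{\dim\mathbb{L}^{0}-1}<q^{\dim\mathbb{L}^{0}}=\#\mathbb{L}^{0}$ elements, whence a $W$-coregular admissible collection exists.

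The only non-formal ingredient is the surjectivity of $\operatorname{res}_{w}$, which itself reduces to the one-line observation that the summation map splits $j_{w}$; the rest is bookkeeping with the lattice of $W$-relevant subspaces and the standard count of $\FF_q$-points outside boundedly many hyperplanes. I do not anticipate a real obstacle. The one point that needs a little care is that $M$ must depend only on $(W,\d)$ and not on $q$: this is why the argument counts relevant subspaces combinatorially (bounding their number by the number of subgroups of $W$) rather than trying to control $\dim_{k}\mathbb{L}^{0}$, which may itself vary with $q$ and with $\ch(k)$.
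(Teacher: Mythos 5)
Your proof is correct and follows the same overall strategy as the paper's: realize admissible collections as the $\FF_q$-points of a finite-dimensional $k$-vector space, cut out the failure of $W$-coregularity by boundedly many proper subspaces, and avoid them once $q$ exceeds a bound depending only on $(W,\d)$. Two of the sub-steps, however, are handled by genuinely different (and in one case slicker) arguments. For surjectivity of the restriction $\res_w\colon\cA\to\frt_w^*$, the paper (Lemma~\ref{l:adm surj}) base changes to $\ov k$ and exhibits inside $\cA\ot\ov k$ the space of piecewise-linear functions on $\bigcup_w\Gamma(w|\frt_{\ov k})\subset\frt_{\ov k}^2$, where surjectivity is clear; your observation that the summation map $\bigoplus_w\frt_w\to\frt_{\ov k}$ kills the compatibility relations and hence retracts each $j_w\colon\frt_w\to C$ onto the inclusion $\frt_w\hookrightarrow\frt_{\ov k}$ is a genuine one-liner that avoids base change altogether. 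For the count, the paper works with minimal $W$-relevant $k$-subspaces of $\G=\bigcup_w\frt_w$ graded by dimension, and controls their number by the combinatorial set $\frM$ of pairs (rationally closed root subsystem, $W\d$-induced automorphism); this gives a sharper bound on $M$ (made explicit in Remark~\ref{r:q bound}) but the identification $\frM\cong\frM_k$ invokes Lemma~\ref{l:W rel}(3) and so tacitly uses that $p$ is \goodp. Your count is over pairs $(w,\fra)$ with $\fra$ a minimal $W$-relevant $\ov k$-subspace $\ne\frz$, which matches Definition~\ref{d:W coreg} on the nose, needs no hypothesis on $p$, and gives the cruder but uniform bound $M\le\#W\cdot\#\{\text{subgroups of }W\}$. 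Both versions are sound; the paper's is optimized for a good explicit $M$, yours for economy and robustness in small characteristic.
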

\begin{proof} Replacing $G$ by $G/C_{G}$ we may assume $G$ is semisimple and $\frz=0$. Let $\cA$ be the $k$-vector space of admissible collections $(\l_{w})_{w\in W}$. More precisely, $\cA$ is the kernel of the linear map
\begin{equation}\label{def A}
\op_{w\in W}\frt_{w}^{*}\to \op_{\{w,w'\}\subset W} (\frt_{w}\cap \frt_{w'})^{*}
\end{equation}
where the second term is summing over subsets of $W$ of order $2$. Choose an arbitrary linear order $\prec$ on $W$. The map sends $\l_{w}\in \frt_{w}^{*}$ to the element whose $\{w_{1}\prec w_{2}\}$-coordinate is $\l_{w}|_{\frt_{w}\cap \frt_{w_{2}}}$ if $w=w_{1}$,  $-\l_{w}|_{\frt_{w_{1}}\cap \frt_{w}}$ if $w=w_{2}$, and zero otherwise.

View  $\cA$ as an affine scheme over $k$, and let $\cA^{\c}\subset \cA$ be the open  $k$-subscheme of $W$-coregular admissible collections. 

\begin{lemma}\label{l:adm surj} For each $w\in W$, the projection $p_{w}: \cA\to \frt^{*}_{w}$ is a surjective  linear map of $k$-vector spaces.
\end{lemma}
\begin{proof}
The map $p_{w}$ is $k$-linear. To show it is surjective, it suffices to show $p_{w,\ov k}: \cA\ot\ov k\to \frt^{*}_{w}\ot\ov k$ is surjective. Let $\cA'$ be the $\ov k$-vector space of functions on $\G':=\frt_{\ov k}\times_{\frc_{\ov k}}\frt_{\ov k}=\cup_{w\in W}\G(w|\frt_{\ov k})$ that are $\ov k$-linear on each graph $\G(w|\frt_{\ov k})$. Tensoring \eqref{def A} with $\ov k$, and noting that $(\frt_{w}\cap \frt_{w'})\ot_{k}\ov k\incl \G(w|\frt_{\ov k})\cap\G(w'|\frt_{\ov k})$, we see that $\cA'\subset \cA\ot\ov k$. The composition
\begin{equation*}
p'_{w}:\cA'\subset \cA\ot \ov k\xr{p_{w,\ov k}}\frt^{*}_{w}\ot \ov k\cong \frt^{*}_{\ov k}
\end{equation*}
is the restriction to $\G(w|\frt_{\ov k})$ (identified with $\frt_{\ov k}$ via the first projection). Linear maps on $\frt_{\ov k}\times \frt_{\ov k}$ give elements in $\cA'$, whose restriction to $\G(w|\frt_{\ov k})$ already gives a surjection onto $\frt^{*}_{\ov k}$. Therefore $p'_{w}$ is surjective, hence so is $p_{w,\ov k}$ and $p_{w}$.
\end{proof}

Let $\frM_{k}$ be the set of nonzero linear $k$-subspaces $\fra\subset \G=\cup_{w\in W}\frt_{w}$ that are $W$-relevant in  $\frt_{w}$ for some $w\in W$. Let $\frM^{\min}_{k}$ be the minimal elements in $\frM_{k}$ under inclusion, and let $\frM^{\min}_{k}(i)$ be the subset of those $\fra$ with dimension $i$. Let $M^{\min}_{k}(i)=\#\frM^{\min}_{k}(i)$. For each $\fra\in \frM_{k}$, let $\fra^{\bot}\subset \frA$ be the subspace of admissible collections that vanish on $\fra$. By Lemma \ref{l:adm surj}, we have $\codim_{\frA} \fra^{\bot}=\dim \fra$.  We have
\begin{equation*}
\cA-\cA^{\c}=\cup_{\fra\in \frM^{\min}_{k}}\fra^{\bot}.
\end{equation*}
Therefore if the right side has fewer $k$-points than $\cA$, $\cA^{\c}(k)\ne\vn$. In other words, if
\begin{equation}\label{Mq}
\sum_{i\ge1}M^{\min}_{k}(i)q^{-i}<1,
\end{equation}
then $W$-coregular admissible collection exists.

We now show that $M^{\min}_{k}(i)$ can be bounded using only $(W,\d)$ (and not $k$). Let $\Phi=\Phi(G_{\ov k}, T_{\ov k})$ be the abstract root system of $G$ with respect to the universal Cartan. Let $\wt\frM$ be the set of pairs $(\Phi', \om)$ where $\Phi'\subset \Phi$ is a rationally closed root subsystem, and $\om$ is an automorphism of $\Phi'$ induced by some element in $W\d$ stabilizing $\Phi'$. Let $\frM$ be the quotient of $\wt\frM$ modulo the equivalence relation $(\Phi',\om_{1})\sim (\Phi', \om_{2})$ if $\om_{1}$ and $\om_{2}$ are conjugate under the parabolic subgroup $W(\Phi')\subset W$.

We have a map $\wt m: \wt\frM\to \frM_{k}$ defined as follows. First let $\fra_{\ov k}=\frt_{\ov k}^{W(\Phi')}$. Let $w^{-1}\d\in W\d$ inducing $\om$ on $\Phi'$. Then $w^{-1}\Fr$ ($\Fr$ denotes the automorphism on $\frt_{\ov k}$) stabilizes $\fra_{\ov k}$ and gives a $k$-form $\fra=\{x\in \fra_{\ov k}|\Fr(x)=wx\}$. We have $\fra\subset \frt_{w}$ is $W$-relevant. Different choices of $w$ lifting $\om$ give the same subspace in $\G$. 

Moreover, if $\om$ and $\om'$ are conjugate under $W(\Phi')$, their liftings $w^{-1}\Fr$ and $w'^{-1}\Fr$ have the same action on $\fra_{\ov k}$. This shows that the map $\wt m$ factors through a map
\begin{equation*}
m: \frM\to \frM_{k}.
\end{equation*}
When $p$ is \goodp for $G$, by Lemma \ref{l:W rel}(3), all relevant subspaces in $\frt_{\ov k}$ are fixed points of parabolic subgroups of $W$. From this we see that $m$ is a bijection. Now let $\frM^{\min}$ be minimal elements in $\frM$, and $\frM^{\min}(i)\subset \frM^{\min}$ be those where $\Phi'$ has rank $r-i$ (where $r$ is the rank of $\Phi$). Let $M^{\min}(i)=\#\frM^{\min}(i)$. Then  $M^{\min}(i)\ge M^{\min}_{k}(i)$, at least when $p$ is not too small. This implies that each $M^{\min}_{k}(i)$ can be bounded using only $(W,\d)$, across all finite fields $k$. Therefore, there exists some $M>0$, depending only on $(W,\d)$, such that \eqref{Mq} holds whenever $q>M$, whence $W$-coregular admissible collection exists.
\end{proof}

\begin{remark}\label{r:q bound} Let $\{\a_{i}\}_{i\in I}$ be a set of simple roots for $\Phi$. From the discussion above we get the following formula for $M(i)=\#\frM(i)$:
\begin{equation*}
M(i)=\sum_{J\subset I, \#J=i}\#(W/W_{J})\#(N_{W\d}(W_{J})/W_{J}).
\end{equation*}
Therefore, when $p$ is \goodp for $G$, we may replace the $M^{\min}_{k}(i)$ in \eqref{Mq} by $M(i)$ above to obtain the bound for $q$.

When $G=\GL_{n}$, $\frM^{\min}(i)$ is non-empty only for $i=1$ or $(i+1)|n$. consists of two kinds of elements. For $d|n$, the pair $(\Phi', c)\in \frM^{\min}(d-1)$, where $\Phi'$ has type $(A_{n/d-1})^{d}$, and $c_{d}$ is a cyclic permutation of the $d$ irreducible factors of $\Phi'$. There are $\frac{n!}{d((n/d)!)^{d}}$ of them. In addition to these, the pairs $(\Phi', 1)\in \frM^{\min}(1)$, where $\Phi'\subset \Phi$ is a root system of type $A_{a-1}\times A_{b-1}$, $a+b=n$. There are $2^{n-1}$ of them. 
\end{remark}

\begin{exam} Consider the case $G=G_{2}$. Assume $p\ne 2,3$. 

In this case $W$ is a dihedral group of order $12$. 
Then for each reflection $r_{\a}$ that negates the roots $\pm\a$, $\frt_{r_{\a}}$ contains two lines $\ell^{+}_{\a}$ (which is perpendicular to $\a$) and $\ell^{-}_{\a}$ (parallel to $\a^{\vee}$) that are $W$-relevant, and are contained in $\frt_{1}$ and $\frt_{-1}$ (where $-1=w_{0}$ is the longest element in $W$) respectively. Then $\l_{r_{\a}}$ is uniquely determined by the requirement that $\l_{r_{\a}}|_{\ell^{+}_{\a}}=\l_{1}|_{\ell^{+}_{\a}}$ and $\l_{r_{\a}}|_{\ell^{-}_{\a}}=\l_{-1}|_{\ell^{-}_{\a}}$. By choosing $\l_{1}$ and $\l_{-1}$ to be $G$-coregular, and choosing $\l_{w}$ to be nonzero for the other rotations $w$, the resulting $\l_{w}$ is $G$-coregular for all $w\in W$.

From this we conclude that when $q>6$, there exists a $W$-coregular admissible collection for $G_{2}$. 
\end{exam}

\begin{prop}\label{p:adm kl} For any admissible collection of linear functions $\{\l_{w}\}_{w\in W}$ on $\frt_{w}$, we have an equality of functions on $\frg$
\begin{equation*}
\sum_{w\in W}\k_{\l_{w},w}=\chi^{*}\left(\sum_{w\in W}\chi_{w!}(\psi\c\l_{w})\right).
\end{equation*}
\end{prop}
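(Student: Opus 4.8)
The plan is to verify the asserted identity of functions on $\frg$ pointwise, at an arbitrary $x\in\frg(k)$. Fix such an $x$, set $c=\chi(x)\in\frc(k)$, and let $\Xi=\chi_{\frt}^{-1}(c)\subset\frt_{\ov k}$ be the set of lifts of $c$; since $p$ is good, $\frc=\frt\sslash W$, so $\Xi$ is a single $W$-orbit, and it is $\Fr$-stable because $c\in\frc(k)$. I will use throughout the elementary fact that for $\mu\in\Xi$ one has $\mu\in\frt_w$ iff $\Ad_w\Fr(\mu)=\mu$, and, $W$ acting transitively on $\Xi$, the set $\{w\in W:\mu\in\frt_w\}$ is a coset of $W_\mu:=\Stab_W(\mu)$, hence of cardinality $|W_\mu|=|W|/|\Xi|$. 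For the right-hand side: unwinding $\chi_{w!}$ and pullback gives $\bigl(\chi^*\chi_{w!}(\psi\circ\l_w)\bigr)(x)=\sum_{\mu\in\frt_w\cap\Xi}\psi(\l_w(\mu))$ because $\chi_w^{-1}(c)=\frt_w\cap\Xi$; summing over $w$ and interchanging summations, admissibility of $\{\l_w\}$ lets us write $\l_w(\mu)=\l(\mu)$ (the common value, i.e.\ the function $\l:\G\to k$ from just after Definition~\ref{def:adm}) for all $w$ with $\mu\in\frt_w$, so the right-hand side at $x$ equals $\sum_{\mu\in\Xi}|W_\mu|\,\psi(\l(\mu))$.

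For the left-hand side, recall $\k_{\l_w,w}$ is the trace function of the Weil sheaf $\cK_{\l_w,w}$ whose underlying complex is $\cK_\l=\pi_!\vep^*\AS_\l$, for $\l\in\frt^*(\ov k)$ the $\ov k$-linear extension of $\l_w$ (so $\Fr(\l)=w\l$). By proper base change along $\pi$, the decomposition $\pi^{-1}(x)=\coprod_{\mu\in\Xi}\wt{\frg}_x^\mu$ into the open-closed fibres $\wt{\frg}_x^\mu=\{\frb:x\in\frb,\ \vep(x,\frb)=\mu\}$ of $\vep$ — each nonempty, since $\wt\pi:\wt{\frg}\to\frg\times_\frc\frt$ is proper and birational (the analogue of the Lemma on $\wt\pi'$ in \S\ref{ss:Spr}) — together with the fact that $\vep$ is constantly $\mu$ on $\wt{\frg}_x^\mu$, shows $(\cK_\l)_x=\bigoplus_{\mu\in\Xi}\cohog{*}{\wt{\frg}_x^\mu}\otimes(\AS_\psi)_{\l(\mu)}$. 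Exactly as in \S\ref{sss:tw Fr Spr}, the twisted Frobenius defining $\cK_{\l_w,w}$ permutes these summands, fixing the $\mu$-summand precisely when $\mu\in\frt_w$, and its trace on such a summand factors as $\psi(\l_w(\mu))\cdot\Tr(\Fr^*\c w,\cohog{*}{\wt{\frg}_x^\mu})$, the first factor being the twisted-Frobenius action on the Artin--Schreier line. Hence $\k_{\l_w,w}(x)=\sum_{\mu\in\frt_w\cap\Xi}\psi(\l(\mu))\Tr(\Fr^*\c w,\cohog{*}{\wt{\frg}_x^\mu})$, and summing over $w$ and invoking admissibility again, $\sum_{w\in W}\k_{\l_w,w}(x)=\sum_{\mu\in\Xi}\psi(\l(\mu))\,S_\mu$ where $S_\mu:=\sum_{w:\mu\in\frt_w}\Tr(\Fr^*\c w,\cohog{*}{\wt{\frg}_x^\mu})$.

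Comparing the two computations, it remains to prove $S_\mu=|W_\mu|$ for each $\mu\in\Xi$; this is the crux. The Springer $W$-action on $\cohog{*}{\pi^{-1}(x)}$ permutes the summands $\cohog{*}{\wt{\frg}_x^\mu}$ transitively with stabiliser $W_\mu$ at $\mu$, and restricts on that summand to the Springer action of $W_\mu$ (the Weyl group of the Levi $C_G(\mu)$), so $\cohog{*}{\pi^{-1}(x)}\cong\Ind_{W_\mu}^W\cohog{*}{\wt{\frg}_x^\mu}$ as graded $W$-modules with Weil structure. On the other hand $(\pi_!\Qlbar)^W\cong\const{\frg}$ (recorded in the proof of Lemma~\ref{l:van sum}), so $\bigl(\cohog{*}{\pi^{-1}(x)}\bigr)^W$ is one-dimensional, concentrated in degree $0$, with trivial Frobenius; Frobenius reciprocity then forces $\bigl(\cohog{*}{\wt{\frg}_x^\mu}\bigr)^{W_\mu}$ to be one-dimensional in degree $0$, namely the constant functions on the connected proper variety $\wt{\frg}_x^\mu$, on which any twisted Frobenius $\Fr^*\c w$ with $\mu\in\frt_w$ acts by the identity. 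Writing $\{w:\mu\in\frt_w\}$ as a coset of $W_\mu$ and replacing $\sum_{z\in W_\mu}z$ by $|W_\mu|$ times the projector onto $W_\mu$-invariants gives $S_\mu=|W_\mu|\cdot 1$, which finishes the proof. I expect this last paragraph to be the main obstacle: one must carefully reconcile the $W$-equivariant structure on the Grothendieck--Springer sheaf (the ``$\a$''-normalisation used to build $\k_{\l,w}$, with its sign and Tate twists) with the induced-module description of $\cohog{*}{\pi^{-1}(x)}$ and with the assertion $(\pi_!\Qlbar)^W\cong\const{\frg}$, so that no spurious power of $q$ or sign survives in $S_\mu$.
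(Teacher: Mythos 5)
Your proposal is correct and follows essentially the same route as the paper: both arguments are pointwise at $x$, decompose the Springer fiber as $\coprod_{\mu\in\Xi}\cB_x^\mu$, express $\k_{\l_w,w}(x)$ as a sum of twisted-Frobenius traces weighted by $\psi(\l_w(\mu))$, swap the order of summation using admissibility, and reduce to showing that $\sum_{w:\mu\in\frt_w}\Tr(\Fr^*\c w,\cohog{*}{\cB_x^\mu})=\#W_\mu$ by rewriting the inner sum over a coset as $\#W_\mu$ times the trace of $\Fr^*\c w_1$ on $W_\mu$-invariants. The only deviation is in justifying that $\bigl(\cohog{*}{\cB_x^\mu}\bigr)^{W_\mu}\cong\Qlbar$ in degree $0$: you derive it by Frobenius reciprocity from $\bigl(\cohog{*}{\cB_x}\bigr)^W\cong\Qlbar$ (i.e.\ $(\pi_!\Qlbar)^W\cong\const{\frg}$), while the paper cites the same Springer-theoretic fact directly for the Levi with Weyl group $W_\mu$; these are equivalent, and your version makes the paper's one-line ``Observe that'' step explicit, so the worry you flag at the end about stray Tate twists or signs is in fact resolved by exactly this computation.
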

\begin{proof}
Let $x\in \frg$ and let $\Xi\subset \frt_{\ov k}$ be the $W$-orbit that has the same image in $\frc$ as $x$. Let $\cB_{x}$ be the Springer fiber of $x$, then we have a decomposition
\begin{equation*}
\cB_{x}=\coprod_{y\in \Xi}\cB^{y}_{x}
\end{equation*}
according to the image of the projection $\wt\frg\to \frt$. Recall we have a natural action of  $W$ on $\cohog{*}{\cB_{x}}=\op_{y\in \Xi}\cohog{*}{\cB^{y}_{x}}$. The element $w\in W$ sends the summand $\cohog{*}{\cB^{y}_{x}}$ onto $\cohog{*}{\cB^{wy}_{x}}$. If $wy=\Fr(y)$, then $\Fr^{*}$ maps $\cohog{*}{\cB^{wy}_{x}}$ back to $\cohog{*}{\cB^{y}_{x}}$. We have an endomorphism $\Fr^{*}\c w\in \End(\cohog{*}{\cB^{y}_{x}})$ when $y\in \Xi\cap \frt_{w}$. We have
\begin{equation*}
\k_{\l_{w},w}(x)=\sum_{y\in \Xi\cap \frt_{w}}\Tr(\Fr^{*}\c w, \cohog{*}{\cB^{y}_{x}})\psi(\j{\l_{w},y}).
\end{equation*}
Summing over $w\in W$, we see
\begin{eqnarray}\label{k double sum}
\sum_{w\in W}\k_{\l_{w},w}(x)&=&\sum_{y\in \Xi}\sum_{w\in W, \Fr(y)=wy}\Tr(\Fr^{*}\c w, \cohog{*}{\cB^{y}_{x}})\psi(\j{\l_{w},y})\\
\notag&=&\sum_{y\in \Xi}\psi(\j{\l,y})\sum_{w\in W, \Fr(y)=wy}\Tr(\Fr^{*}\c w, \cohog{*}{\cB^{y}_{x}})
\end{eqnarray}
Here we use $\j{\l,y}$ to denote the common value of $\j{\l_{w}, y}$ whenever $w\in W$ is such that $\Fr(y)=wy$ (this is independent of the choice of $w$ by the admissibility of $\l$). 

Now consider the sum
\begin{equation*}
\sum_{w\in W, \Fr(y)=wy}\Tr(\Fr^{*}\c w, \cohog{*}{\cB^{y}_{x}}).
\end{equation*}
Since the sum is over a coset of $W_{y}$, we may choose any $w_{1}\in W$ such that $\Fr(y)=w_{1}y$, and rewrite it as
\begin{equation}\label{wy}
\sum_{w\in W_{y}}\Tr(\Fr^{*}\c w_{1}w, \cohog{*}{\cB^{y}_{x}})=\#W_{y}\cdot \Tr(\Fr^{*}\c w_{1}, \cohog{*}{\cB^{y}_{x}}^{W_{y}}).
\end{equation}
Observe that
\begin{equation*}
\cohog{*}{\cB^{y}_{x}}^{W_{y}}=\cohog{0}{\cB^{y}_{x}}\cong \Qlbar.
\end{equation*}
Here we use the fact that $\cB^{y}_{x}$ is connected. The action of $\Fr^{*}\c w_{1}$ on the above is by identity. Therefore the sum in \eqref{wy} is $\#W_{y}$. Plugging back into \eqref{k double sum}, we see that
\begin{equation*}
\sum_{w\in W}\k_{\l_{w},w}(x)=\sum_{y\in \Xi}\psi(\j{\l,y})\#W_{y}=\sum_{w\in W}\sum_{y\in \Xi\cap \frt_{w}}\psi(\j{\l_{w},y}).
\end{equation*}
The $w$-summand on the right side above is exactly the value of $\chi_{w!}(\psi\c\l_{w})$ at $x$. This proves the proposition.
\end{proof}

\section{Automorphism groups of $G$-bundles}\label{s:aut}

In this section, let $k$ be an algebraically closed field. Let $X$ be an algebraic stack over $k$ and $G$ a connected reductive group over $k$. The goal of this section is show that for a $G$-bundle $\cE$ over $X$, its automorphism group $\Aut(\cE)$ (assumed affine of finite type over $k$), is reduced under a mild assumption on $\ch(k)$. Moreover, we will show that there are strong constraints on the maximal diagonalizable subgroups (resp. maximal tori) of $\Aut(\cE)$.

\subsection{Automorphism group and evaluation maps}\label{ss:ev map}
Let $\cE$ be a $G$-bundle over $X$. Let $\Aut(\cE)$ denote the $k$-group scheme of automorphisms of $\cE$. We assume
\begin{equation*}
\mbox{$\Aut(\cE)$ is an affine $k$-group of finite type.} 
\end{equation*}
This is always the case if $X$ admits a surjective map from a proper $k$-scheme.

Denote the Lie algebra of $\Aut(\cE)$ by $\aut(\cE)$. Note that $\aut(\cE)=\G(X, \Ad(\cE))$. 

For $x\in X(k)$, we have the evaluation map 
\begin{equation*}
\ev_{x}: \Aut(\cE)\to G[\cE_{x}]:=\Aut_{G}(\cE_{x}).
\end{equation*}
After choosing a trivialization of $\cE_{x}$ we get an isomorphism $G[\cE_{x}]\cong G$. Different trivializations change the isomorphism by conjugation. Therefore $\ev_{x}$ can be viewed as an inner class of homomorphisms $\Aut(\cE)\to G$.

{\bf Convention.} By a {\em diagonalizable group} over $k$,  we mean a $k$-group scheme of the form $\Spec k[\L]$ for some abelian group $\L$.  It may not be reduced when $\ch(k)=p$ and $\L$ has $p$-torsion.  A finite type affine $k$-group scheme $H$ is diagonalizable if and only if every finite dimensional $k$-representation of $H$ is a direct sum of $1$-dimensional representations.

\begin{lemma}\label{l:ker ev unip}
Suppose $X$ is connected and $x\in X(k)$. Then $\ker(\ev_{x})$ does not contain any non-trivial diagonalizable subgroup. In particular, all $k$-points of $\ker(\ev_{x})$ are unipotent, hence the reduced structure of $\ker(\ev_{x})$ is unipotent.
\end{lemma}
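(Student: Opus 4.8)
The plan is to reduce to the case where the diagonalizable subgroup is a one-dimensional torus or a group of the form $\mu_{\ell}$, then derive a contradiction by analyzing how such a subgroup would force $\cE$ to split as a direct sum over a nonconstant decomposition, contradicting the connectedness of $X$. More precisely, suppose $D \subset \ker(\ev_x)$ is a nontrivial diagonalizable subgroup. Replacing $D$ by a subgroup, we may assume $D \cong \Gm$ or $D \cong \mu_{m}$ for a prime $m$ (possibly equal to $p$). The key point is that $D$ acts on $\cE$ by automorphisms, and since $D$ is diagonalizable, the associated adjoint bundle $\Ad(\cE)$, which carries the action of $D$, decomposes into weight eigenbundles $\Ad(\cE) = \bigoplus_{n} \Ad(\cE)_n$ under $D$ (for a finite set of characters $n$ of $D$). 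This decomposition is a decomposition of the vector bundle $\Ad(\cE)$ as a direct sum of $\cO_X$-submodules, stable under the bracket, giving a grading of the Lie algebra bundle.

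First I would use that $D \subset \ker(\ev_x)$ acts trivially on the fiber $\cE_x$; hence the induced grading on $\Ad(\cE_x) = \frg$ (via any trivialization) is the trivial grading, i.e. $\Ad(\cE)_n|_x = 0$ for all nontrivial weights $n$ and $\Ad(\cE)_0|_x = \Ad(\cE_x) = \frg$. Since $X$ is connected and $\Ad(\cE)_n$ is a direct summand of a vector bundle, hence a vector bundle itself (on each connected component, locally free of constant rank), the vanishing of its fiber at $x$ forces $\Ad(\cE)_n = 0$ identically for all nontrivial $n$. Therefore $D$ acts trivially on $\Ad(\cE)$, i.e. the composite $D \to \Aut(\cE) \to \Aut(\Ad(\cE))$ is trivial; equivalently $D$ lands in the center of $\Aut(\cE)$ and acts trivially via the adjoint representation of $G$ on its own Lie algebra after passing to fibers everywhere.

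The remaining step is to upgrade ``acts trivially on $\Ad(\cE)$'' to ``$D$ is trivial.'' Here I would argue as follows: a homomorphism $\rho: D \to \Aut(\cE)$ that becomes trivial after composing with $\Aut(\cE) \to \uAut(\Ad(\cE))$ lands in the subgroup $Z$ of automorphisms acting trivially on $\Ad(\cE)$; fiberwise at $x$, such automorphisms lie in $\ker(G \to \GL(\frg))$, which is a subgroup of $ZG$ (the kernel of the adjoint representation). Since $D \subset \ker(\ev_x)$ already, $\rho$ factors through the subgroup scheme of $\Aut(\cE)$ of automorphisms trivial at $x$ and central; but then we may use that $X$ is connected and the ``constant central section'' determined by the value at $x$ (namely the identity) agrees with $\rho(d)$ on all of $X$ — concretely, an automorphism of $\cE$ that acts on each fiber by an element of $ZG$ and is the identity on $\cE_x$ must be the identity, because the section of the group scheme $ZG \times X$ (the center of $\uAut(\cE)$ restricted to central automorphisms) it defines is locally constant, $X$ is connected, and it is the identity at $x$. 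Hence $\rho$ is trivial, contradicting $D \neq 1$. This proves $\ker(\ev_x)$ contains no nontrivial diagonalizable subgroup; in particular every $k$-point of $\ker(\ev_x)$ is unipotent (a non-unipotent element would have nontrivial semisimple part generating a nontrivial diagonalizable — in fact multiplicative type — subgroup), so $\ker(\ev_x)_{\red}$ is a unipotent group.

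\textbf{Main obstacle.} The delicate point is the last step: making precise that a central, fiberwise-$ZG$-valued automorphism which is trivial at one point $x$ is globally trivial, \emph{especially when $ZG$ is not reduced} (the excerpt only assumes here that $X$ is connected, not the pretty-good hypothesis on $p$). One must be careful whether to work with $ZG$ or its reduced structure, and whether ``trivial on $\Ad(\cE)$'' really pins down the automorphism up to the non-reduced part of $ZG$. I expect the cleanest fix is to observe that it suffices to rule out $D$ of multiplicative type that is \emph{reduced} (a torus) together with $D = \mu_{p}$, handle the torus case by the weight-decomposition argument above applied to a faithful representation of $G$ rather than $\Ad$ (so the kernel of the representation is trivial, not just central), and treat $D = \mu_p$ by the same faithful-representation argument — a nontrivial $\mu_p$ still acts with at least two distinct weights on a faithful $G$-representation $V$, forcing $V(\cE)$ to decompose nontrivially with a summand vanishing at $x$, the desired contradiction. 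Using a faithful representation instead of the adjoint representation sidesteps all issues with the center.
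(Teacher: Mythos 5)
Your final fix is exactly the paper's argument. The paper runs the weight-decomposition argument on $\cE(V)$ for \emph{every} $V\in\Rep_k(G)$: since $D\subset\ker(\ev_x)$, each nontrivial eigen-summand $\cE(V)[\chi]$ has zero fiber at $x$, hence vanishes identically on the connected $X$; so $D$ acts trivially on every $\cE(V)$, and therefore (taking $V$ faithful, or by Tannakian reconstruction) $D$ is trivial. That is precisely what you arrive at in your ``Main obstacle'' remark: ``using a faithful representation sidesteps all issues with the center.''

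The detour you take through $\Ad(\cE)$ before that, however, does not close on its own, and the reason is sharper than a worry about non-reducedness of $ZG$. After the $\Ad$-argument you know $D$ maps into the sections of $\cE\times^G Z(G)$, and since $Z(G)$ is central this is the constant group scheme $Z(G)\times X$; but a section of $Z(G)\times X$ over $X$ is a morphism $X\to Z(G)$, and there is no reason for such a morphism to be \emph{locally constant} when $Z(G)$ is positive-dimensional (a torus), unless one imposes properness of $X$, which this lemma does not assume (and which fails in at least one application, $X=\pt/H$ in Corollary \ref{c:Gfix}). So ``locally constant, connected, identity at $x$, hence identity'' is not available in the stated generality. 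You should simply discard the $\Ad$-based paragraph and run the eigen-decomposition directly on an associated bundle $\cE(V)$ for $V$ faithful (or all $V$), exactly as in your proposed fix; there the fiber-at-$x$ vanishing argument uses nothing but connectedness of $X$ and immediately forces $D$ to act trivially fiberwise via a faithful representation, hence $D=1$. The two final clauses (about $k$-points being unipotent and $\ker(\ev_x)_{\red}$ being unipotent) then follow as you say.
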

\begin{proof}
Let $D\subset \ker(\ev_{x})\subset\Aut(\cE)$ be a diagonalizable subgroup. For each $V\in \Rep_{k}(G)$, the associated vector bundle $\cE(V)=\cE\twtimes{G}V$ carries an action of $D$, hence decomposes into eigen-summands $\cE(V)=\op_{\chi\in \xch(D)}\cE(V)[\chi]$. Since $D\subset \ker(\ev_{x})$, $\cE(V)[\chi]_{x}=0$ unless $\chi$ is the trivial character of $D$.  Thus for any nontrivial $\chi\in \xch(D)$, the vector bundle $\cE(V)[\chi]$ on $X$ must be zero because its fiber at $x$ is zero and $X$ is connected. We conclude that $D$ acts trivially on $\cE(V)$ for all $V\in \Rep_{k}(G)$, hence $D$ itself is trivial.

Any semisimple element of $\ker(\ev_{x})$ lies in a diagonalizable subgroup of $\ker(\ev_{x})$, therefore must be the identity. By Jordan decomposition, all elements in $\ker(\ev_{x})$ are unipotent.
\end{proof}

\subsection{Unipotent automorphisms and nilpotent endomorphisms}\label{ss:unip}
Let $A_{\cE}$ be the reduced structure of $\Aut(\cE)$. Consider the natural inclusion of Lie algebras 
\begin{equation*}
\io_{\cE}: \Lie A_{\cE}\incl \aut(\cE)
\end{equation*}

Let $\cU\subset G$ (resp. $\cN\subset \frg$) be the unipotent (resp. nilpotent) varieties. A {\em unipotent logarithm} for $G$ is a $G$-equivariant isomorphism $\rho: \cU\to \cN$ (which necessarily sends $1$ to $0$) such that its tangent map at $1\in \cU$ is the identity map between the tangent spaces $T_{1}\cU\subset T_{1}G=\frg$ and $T_{0}\cN\subset T_{0}\frg=\frg$, both as subspaces of $\frg$.

\begin{lemma}\label{l:nilp aut} Suppose $G$ has a unipotent logarithm $\rho:\cU\isom \cN$. Then the image of $\io_{\cE}$ contains all nilpotent elements of $\aut(\cE)$.
\end{lemma}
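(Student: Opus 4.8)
The statement asserts that for any nilpotent $n \in \aut(\cE) = \G(X, \Ad(\cE))$, the element $n$ lies in the image of $\io_\cE: \Lie A_\cE \incl \aut(\cE)$, i.e. $n$ is tangent to an actual subgroup of $\Aut(\cE)$. The natural candidate for such a subgroup is the image of a one-parameter unipotent subgroup obtained by exponentiating $n$. So the plan is to use the unipotent logarithm $\rho: \cU \isom \cN$ to construct, out of a nilpotent section $n$, a homomorphism $\Ga \to \Aut(\cE)$ (or at least a $\Gm$- or $\SL_2$-action, or simply a unipotent subgroup through $n$), and then observe that the image of this homomorphism is a reduced (smooth, unipotent) subgroup of $\Aut(\cE)$, hence contained in $A_\cE$, and that its Lie algebra contains $n$.

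\textbf{Step 1: exponentiate pointwise via $\rho^{-1}$.} Since $\rho: \cU \to \cN$ is $G$-equivariant, it twists to a morphism $\rho_\cE: \cU(\cE) \isom \cN(\cE)$ of the associated fiber bundles over $X$, where $\cU(\cE) \subset \Aut_X(\cE)$ is the closed subscheme of unipotent automorphisms and $\cN(\cE) \subset \Ad(\cE)$ the subscheme of nilpotent sections. Given our nilpotent $n \in \G(X, \cN(\cE))$, apply $\rho_\cE^{-1}$ to get a section $u = \rho_\cE^{-1}(n) \in \G(X, \cU(\cE)) \subset \Aut(\cE)(k)$, a unipotent automorphism of $\cE$. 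More importantly, we want a \emph{one-parameter family}: the scaling action of $\Ga$ (or $\Gm$) on $\cN$ by $t \cdot Y = tY$, transported through $\rho^{-1}$, gives a family $t \mapsto \rho^{-1}(tn)$ of unipotent automorphisms, which globalizes to a morphism $\phi: \AA^1 \to \Aut(\cE)$ with $\phi(0) = \id$ and $\frac{d}{dt}\big|_0 \phi = n$ (using the normalization that $d\rho|_1 = \id$). The key point needed here is that $t \mapsto \rho^{-1}(tY)$ is actually a group homomorphism $\Ga \to G$ — this is where one uses that $\rho$ is a genuine logarithm compatible with the group structure on unipotent one-parameter subgroups; for a general $G$-equivariant isomorphism $\cU \cong \cN$ this need not hold, but the ``unipotent logarithm'' axiom (together with good characteristic, cf. \S\ref{ss:unip}) should guarantee that $\rho$ restricted to any one-dimensional unipotent subgroup is the standard $\log$, so $t \mapsto \rho^{-1}(t \log v)$ recovers the one-parameter subgroup through $v$. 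I would pin down exactly which property of $\rho$ delivers this and cite/prove it.

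\textbf{Step 2: the image is a subgroup contained in $A_\cE$.} The morphism $\phi: \Ga \to \Aut(\cE)$ (once established as a homomorphism) has image a subgroup $H_n \subset \Aut(\cE)$ which is the image of $\Ga$, hence smooth (in particular reduced) and unipotent, of dimension $\le 1$. A reduced subgroup of $\Aut(\cE)$ lands inside the reduced subscheme $A_\cE$. Therefore $\Lie H_n \subset \Lie A_\cE$, and since $n = d\phi|_0 \in \Lie H_n$, we get $n \in \Lie A_\cE = \Im(\io_\cE)$. Running this over all nilpotent $n$ gives the claim. (Note $H_n$ could be trivial if $n = 0$, which is the degenerate but harmless case.)

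\textbf{Main obstacle.} The crux is Step 1: showing that $\phi$ is genuinely a homomorphism $\Ga \to \Aut(\cE)$, equivalently that the pointwise exponential $t \mapsto \rho_\cE^{-1}(tn)(x)$ at each $x \in X$ assembles into a one-parameter subgroup of $G[\cE_x]$ and that these are compatible as $x$ varies. The compatibility across $X$ is automatic once we know it fiberwise, because everything is twisted from the universal $G$-equivariant statement on $\cU$ vs $\cN$; so the real content is the $G$-level claim that a unipotent logarithm sends the additive structure on a nilpotent line $kY$ (for $Y \in \cN$) to the group structure on the corresponding unipotent one-parameter subgroup. If $\rho$ is, as in the Springer/Kazhdan isomorphism in good characteristic, $\Ga$-equivariant for the scaling/$\Gm$-conjugation actions and normalized by $d\rho|_1 = \id$, this should follow from rigidity of one-dimensional connected unipotent groups (any such is $\Ga$, and any $\Gm$-equivariant iso $\Ga \to \Ga$ fixing the tangent vector is the identity). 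I would isolate this as a small lemma — very plausibly it is exactly the content of the ``unipotent logarithm'' definition together with results already assembled in \S\ref{ss:unip} (the reference to Lemma \ref{l:unip log}) — and the rest of the argument is then formal.
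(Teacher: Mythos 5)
Your proposal reaches the right conclusion but attempts it through an obstruction that is both unnecessary and, in fact, insurmountable as stated. The ``main obstacle'' you identify in Step 1 --- that $t \mapsto \rho^{-1}(tn)$ should be a group homomorphism $\Ga \to \Aut(\cE)$ --- is \emph{false} for a general unipotent logarithm. Take $G = \GL_n$ and $\rho(u) = u - 1$ (the quasi-logarithm used in the proof of Lemma \ref{l:unip log}); then $\rho^{-1}(tN) = 1 + tN$, and $(1+tN)(1+sN) = 1 + (t+s)N + tsN^2 \neq 1 + (t+s)N$ unless $N^2 = 0$. So there is no homomorphism here, and the appeal to ``rigidity of one-dimensional unipotent groups'' cannot save you: the image of the curve $t \mapsto \rho^{-1}(tN)$ simply isn't a subgroup.

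The paper sidesteps this entirely by observing that $\wt\e: \AA^1 \to \Aut(\cE)$ needs only to be a \emph{morphism of schemes}, not a homomorphism. The single fact doing all the work is that $\AA^1$ is reduced, so $\wt\e$ automatically factors through the reduced subscheme $A_\cE \subset \Aut(\cE)$ (by the universal property of the reduced structure). Since $\wt\e(0) = \id$, the differential $d\wt\e(0)$ lands in $T_{\id}A_\cE = \Lie A_\cE$, and by the tangent-normalization axiom $d\rho|_1 = \id$ in the definition of a unipotent logarithm, this differential equals $n$. Your Step 2 already contains the germ of this idea (a reduced source factors through $A_\cE$); you just don't need the image to be a subgroup to run it --- the curve itself suffices. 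Drop the homomorphism claim and replace ``image is a reduced subgroup'' by ``$\AA^1$ is reduced, hence $\wt\e$ factors through $A_\cE$,'' and your argument becomes the paper's.
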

\begin{proof} Let $\cU\subset G$ (resp. $\cN\subset \frg$) be the unipotent (resp. nilpotent) varieties.  Let $\un\Aut(\cE)=\cE\times^{G,\Ad}G$ be the group scheme over $X$ of automorphisms of $\cE$. Then $\un\Aut(\cE)$ contains a subscheme $\cU_{\cE}=\cE\times^{G}\cU$, and the global section $\G(X,\cU_{\cE})$ is the the unipotent subscheme of $\Aut(\cE)=\G(X,\un\Aut(\cE))$. Similarly, we have $\cN_{\cE}:=\cE\times^{G}\cN\incl \Ad(\cE)=\cE\times^{G}\frg$, and $\G(X,\cN_{\cE})$ is the nilpotent subscheme of $\aut(\cE)=\G(X,\Ad(\cE))$.  

A unipotent logarithm $\rho: \cU\isom \cN$ induces an isomorphism $\rho_{\cE}: \cU_{\cE}\isom \cN_{\cE}$ over $X$, hence an isomorphism $\g_{\cE}: \G(X,\cU_{\cE})\isom \G(X,\cN_{\cE})$. Let $e\in \G(X,\cN_{\cE})$ be a nilpotent element of $\aut(\cE)$. Let $\e: \AA^{1}\to \G(X,\cN_{\cE})$ be the map $t\mapsto te$. Composing with $\g^{-1}_{\cE}$ we get a map $\wt \e: \AA^{1}\to \G(X,\cU_{\cE})\subset \Aut(\cE)$. The differential of $\wt\e$ at $0$ (note that $\wt\e(0)$ is the identity section) gives a global section of the relative tangent bundle $(T_{1}\cU)_{\cE}=\cE\times^{G}T_{1}\cU\subset \Ad(\cE)$, which via $d\g_{\cE}$ is identified with the subbundle $(T_{0}\cN)_{\cE}=\cE\times^{G}T_{0}\cN\subset \Ad(\cE)$. We conclude that $d\wt\e(0)=e$ as global sections of $\Ad(\cE)$, i.e., as elements in $\aut(\cE)$. 

Since $\AA^{1}$ is reduced, we have a factorization $\wt\e: \AA^{1}\to A_{\cE}$, hence $e=d\wt\e(0)\in \Lie A_{\cE}$, i.e., $e\in \Im(\Lie A_{\cE}\to \aut(\cE))$. 
\end{proof}

\begin{lemma}\label{l:unip log}
Suppose $p$ is good for $G$ and $\xch(ZG)$ has no $p$-torsion, then a unipotent logarithm for $G$ exists.
\end{lemma}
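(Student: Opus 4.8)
The plan is to exhibit a unipotent logarithm in three stages: reduce to the case in which $G$ is semisimple and simply connected; quote the classical existence of a Springer isomorphism in that case; and finally rescale it so that its differential at the identity becomes the identity map.

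For the reduction, observe that the hypothesis on $\xch(ZG)$ is exactly the statement that $ZG$ is reduced, so that $\frz=\Lie ZG$ --- and more generally the Lie algebra of any torus central in $G$ --- is a toral, hence semisimple, subalgebra of $\frg$, which therefore meets $\cN$ only in $0$. I would first pass to a central extension $1\to T'\to G'\to G\to1$ with $T'$ an induced torus and $(G')^{\der}$ simply connected (a $z$-extension). Since $T'$ is a torus, $\Lie T'$ is a central toral subalgebra of $\frg'$, and using the Jordan decomposition in $G'$ and in $\frg'$ one checks that $G'\to G$ induces $G'$-equivariant isomorphisms $\cU_{G'}\isom\cU_G$ and $\cN_{\frg'}\isom\cN_\frg$ sending base point to base point: injectivity because a unipotent (resp.\ nilpotent) element differing from another by a central element of a torus (resp.\ by a central toral element) must coincide with it, and surjectivity by dividing out (resp.\ subtracting) the central part of a lift. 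It therefore suffices to construct a unipotent logarithm for $G'$, so we may assume $G^{\der}=:G_1$ is simply connected. Then $\cU_G=\cU_{G_1}$, since all unipotent elements of $G$ lie in $G^{\der}$; and $\cN_\frg=\cN_{\frg_1}$, since the $G$-invariant linear functionals pulled back along $\frg\surj\Lie(G/G^{\der})$ already cut $\cN_\frg$ down into $\Lie G^{\der}=\frg_1$, where it is the nilpotent cone of $\frg_1$. Hence we may assume $G$ is semisimple and simply connected, with $p$ still good.

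In that situation there is a $G$-equivariant isomorphism $\rho_0:\cU\isom\cN$, a Springer isomorphism, carrying $1$ to $0$; this is the one ingredient I take from the literature (see, e.g., \cite{SS}, and \cite{Herpel} for the context of good primes). It remains to normalize $d(\rho_0)_1$. At the vertices $1\in\cU$ and $0\in\cN$ the Zariski tangent spaces are all of $\frg$ --- the differential of the adjoint quotient vanishes there because $(\frg^{*})^{G}=0$ for $G$ semisimple --- so $d(\rho_0)_1\in\End_G(\frg)^{\times}$. Writing $G=G_1\cdots G_r$ as a direct product of almost-simple simply connected factors gives $\frg=\bigoplus_i\frg_i$, $\cN=\prod_i\cN_{\frg_i}$, and $\End_G(\frg)=\prod_i\End_{G_i}(\frg_i)$, each factor being $k$ in good characteristic: this is immediate from irreducibility and self-duality of $\frg_i$ when $p$ is very good for $G_i$, and the remaining good-but-not-very-good cases occur only in type $A$, where $\frg_i$ is uniserial of length at most two with non-isomorphic composition factors and still has endomorphism algebra $k$. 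Thus $d(\rho_0)_1=(c_1,\dots,c_r)$ with $c_i\in k^{\times}$ (scalar $c_i$ on the summand $\frg_i$), and composing $\rho_0$ with the $G$-equivariant automorphism of $\cN=\prod_i\cN_{\frg_i}$ that scales the $i$-th cone factor by $c_i^{-1}$ produces a Springer isomorphism $\rho$ with $d\rho_1=\id$ --- a unipotent logarithm.

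The main obstacle, I expect, is really the existence of the bare Springer isomorphism $\rho_0$ in the semisimple simply connected case; the $z$-extension reduction and the final rescaling are bookkeeping, the one delicate point being to confirm that the identifications $\cU_{G'}\cong\cU_G$, $\cN_{\frg'}\cong\cN_\frg$, $\cU_G=\cU_{G_1}$ and $\cN_\frg=\cN_{\frg_1}$ are isomorphisms of schemes and carry base points to base points --- this is where the reducedness of $ZG$, i.e.\ our hypothesis on $\xch(ZG)$, enters.
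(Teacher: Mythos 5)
The student takes a genuinely different route from the paper. The paper reduces to the simple adjoint case via $G\to G/C_G\to G^{\ad}$, invokes the existence of quasi-logarithms (a $G$-equivariant map $G\to\frg$ with prescribed $1$-jet at the identity) from Bardsley--Richardson \cite[Prop.~9.3.3, Cor.~9.3.4]{BR} for types other than $A$, and handles type $A$ explicitly via $X\mapsto X-1$ on $\GL_n$. The student instead reduces to the semisimple simply connected case via a $z$-extension $1\to T'\to G'\to G\to1$, invokes only the bare existence of a $G$-equivariant isomorphism $\cU\isom\cN$ (a ``Springer isomorphism''), and then normalizes its differential at the identity by the rescaling argument based on $\End_G(\frg)\cong k^r$. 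The quasi-logarithm route builds the differential condition in from the start, so no rescaling is needed; the student's rescaling argument is a pleasant extra step and is correct, including the uniserial analysis of $\sl_n$ for $p\mid n$ (and the observation that good-but-not-very-good happens only in type $A$).

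However, the step that the student explicitly flags as ``delicate'' --- that the $z$-extension induces \emph{scheme} isomorphisms $\cU_{G'}\isom\cU_G$ and $\cN_{\frg'}\isom\cN_\frg$ --- is more than bookkeeping, and the reducedness of $ZG$ alone does not close it. The Jordan-decomposition argument gives a bijection on $\ov k$-points, not an isomorphism of schemes. The relevant obstruction is whether the toral kernel $\frt'$ meets $\cN_{\frg'}$ scheme-theoretically only in the reduced point $\{0\}$, equivalently whether $\frt'\cap[\frg',\frg']=0$; this depends on $\pi_1(G^{\der})$ rather than on $\xch(ZG)$. Concretely, $G=\PGL_n$ with $p\mid n$ satisfies the stated hypotheses ($\xch(ZG)=0$, $p$ good), but for $n=p=2$ one computes that $\cN_{\gl_2}$ is the quadric cone $\{a^2=bc\}\subset\AA^3$ (singular at $0$, with $T_0\cN_{\gl_2}=\sl_2$ of dimension $3$) while $\cN_{\pgl_2}\cong\AA^2$ (smooth, with $T_0\cN_{\pgl_2}$ of dimension $2$), and the natural map is a bijection on points but a purely inseparable double cover, not an isomorphism. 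This same subtlety is in fact also present in the paper's own first reduction step ``$G\to G/C_G$ induces an isomorphism on unipotent and nilpotent varieties,'' so you are not doing anything worse than the paper does; but you should not attribute the fix to the reducedness of $ZG$, and the clean hypothesis under which both your reduction and the paper's reduction actually go through as stated is \emph{pretty good} (which also requires $\pi_1(G)$ to have no $p$-torsion), which is precisely how Lemma~\ref{l:unip log} is used later in the paper.
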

\begin{proof} The projection $G\to G/C_G$ induces an isomorphism on unipotent and nilpotent varieties. Moreover, by our assumption on $p$, the further projection $G/C_G\to G^\ad$ is a central isogeny of degree prime to $p$, hence also induces an isomorphism on unipotent and nilpotent varieties. 
We thus reduce to the case where $G$ is adjoint, therefore we may further reduce to the case where $G$ is adjoint and simple, and $p$ is good for $G$.

Recall the notion of a quasi-logarithm for $G$ introduced in \cite[Definition 1.8.1]{KV}, compare \cite[\S9.3]{BR}. It means a $G$-equivariant morphism $f: G\to \frg$ such that $f(1)=0$ and $df$ at $1\in G$ is the identity map. By the argument of \cite[Corollary 9.3.4]{BR}, any quasi-logarithm $f: G\to \frg$ restricts to a unipotent logarithm $\rho: \cU\isom \cN$. 

If $G$ is not of type $A$, and $p$ is good for $G$, then $G$ admits a quasi-logarithm by \cite[Proposition 9.3.3]{BR} (which is based on \cite[Ch I, \S5]{SS}). Therefore in this case $G$ admits a unipotent logarithm. If $G\cong \PGL_{n}$, then by the remarks in the first paragraph, $G$ has a unipotent logarithm if and only if $\GL_{n}$ has one. The map $\GL_{n}\to \gl_{n}$ given by $X\mapsto X-1$ is a quasi-logarithm. So $\GL_{n}$ and hence $\SL_{n}$ admit a unipotent logarithm.
\end{proof}

Combining Lemma \ref{l:nilp aut} and Lemma \ref{l:unip log}, we get:
\begin{cor}\label{c:nilp aut} Suppose $p$ is good for $G$ and $\xch(ZG)$ has no $p$-torsion, then the image of $\io_{\cE}$ contains all nilpotent elements of $\aut(\cE)$.
\end{cor}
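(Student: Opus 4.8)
The statement to prove is Corollary \ref{c:nilp aut}: under the hypotheses that $p$ is good for $G$ and $\xch(ZG)$ has no $p$-torsion, the image of $\io_{\cE}: \Lie A_{\cE}\incl \aut(\cE)$ contains all nilpotent elements of $\aut(\cE)$.

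The plan is essentially immediate, since this is packaged explicitly as a corollary of the two preceding lemmas. Here is the proof.

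\begin{proof}
Under the hypotheses that $p$ is good for $G$ and $\xch(ZG)$ has no $p$-torsion, Lemma \ref{l:unip log} guarantees that $G$ admits a unipotent logarithm $\rho: \cU\isom \cN$. With such a $\rho$ in hand, Lemma \ref{l:nilp aut} applies and shows that the image of $\io_{\cE}: \Lie A_{\cE}\to \aut(\cE)$ contains every nilpotent element of $\aut(\cE)$. This is exactly the asserted statement.
\end{proof}

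That is the whole argument; there is no remaining obstacle, because all the content has been front-loaded into Lemmas \ref{l:nilp aut} and \ref{l:unip log}. If one wanted to be slightly more self-contained in the write-up, one could recall the mechanism of Lemma \ref{l:nilp aut}: a nilpotent $e\in\aut(\cE)=\G(X,\Ad(\cE))$ lies in the subbundle $\cN_{\cE}=\cE\times^{G}\cN$ of nilpotent sections; transporting the line $t\mapsto te$ through the $X$-isomorphism $\rho_{\cE}: \cN_{\cE}\isom \cU_{\cE}$ produces a morphism $\wt\e:\AA^{1}\to \G(X,\cU_{\cE})\subset \Aut(\cE)$ with $\wt\e(0)=\id$; since $\AA^{1}$ is reduced, $\wt\e$ factors through the reduced subgroup $A_{\cE}$, and the normalization condition on the tangent map of $\rho$ at $1$ forces $d\wt\e(0)=e$, so $e\in \Lie A_{\cE}$. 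The only input one must not forget is the existence of the unipotent logarithm, which is precisely where the hypotheses "$p$ good" and "$\xch(ZG)$ has no $p$-torsion" are consumed (via reduction to the adjoint simple case and the quasi-logarithm results of \cite{BR}, \cite{KV}, together with the $\GL_{n}$ case handled by hand).
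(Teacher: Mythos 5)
Your proof is correct and is precisely the argument the paper uses: the corollary is stated as an immediate combination of Lemma \ref{l:unip log} (which produces the unipotent logarithm under the stated hypotheses on $p$) and Lemma \ref{l:nilp aut} (which uses that logarithm to lift nilpotent elements to $\Lie A_{\cE}$). Your optional recap of the mechanism of Lemma \ref{l:nilp aut} is also faithful to the paper.
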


\subsection{Saturated diagonalizable subgroups}

\begin{defn} A diagonalizable subgroup $D\subset G$ is called {\em saturated}, if the inclusion $D\incl Z(C_{G}(D))$ is an equality of subschemes of $G$.
\end{defn}

Note that $C_{G}(D)$ is always a (not necessarily connected) reductive group (in particular reduced): the fact it is reduced follows from \cite[XI,5.3]{SGA3}; to check its neutral component is reductive, we can repeatedly apply the fact that the reduced fixed point subgroup of a connected reductive group under a finite order automorphism is reductive.  In particular, $Z(C_{G}(D))$ is diagonalizable.

\begin{exam}\label{ex:sat}
\begin{enumerate}
\item The center $ZG$ is saturated, and any saturated diagonalizable subgroup of $G$ contains $ZG$. 
\item More generally, for any Levi subgroup $L\subset G$, $ZL$ is a saturated diagonalizable subgroup of $G$.
\item Let $V$ be a symplectic space over $k$ and $G=\Sp(V)$. Let $V=\op_{i=1}^{s}V_{i}$ be an orthogonal decomposition of $V$ into symplectic subspaces. Let $D\cong (\mu_{2})^{i}\subset G$ be the subgroup that acts on $V_{i}$ by the $i$-th projection to $\mu_{2}$ via scalar multiplication. Then $D$ is a saturated diagonalizable subgroup of $G$.
\item\label{PGLn mun sat} Let $G=\PGL_{n}$ with diagonal torus $T_{0}=(\Gm)^{n}/\D(\Gm)$. Consider the embedding $i: \mu_{n}\incl T_{0}$ that is the image of the map $\wt i: \mu_{n}\to (\Gm)^{n}$, $i(z)=(1,z,z^{2},\cdots, z^{n-1})$. Then $C_{G}(i(\mu_{n}))$ contains $T_{0}$ as the neutral component, and $C_{G}(i(\mu_{n}))/T_{0}\cong \ZZ/n\ZZ$ is identified with the subgroup of $S_{n}$ generated by a cyclic permutation. It turns out that $i(\mu_{n})$ is saturated.
\item Again let $G=\PGL_{n}$. Let $D$ be the subgroup generated by the $i(\mu_{n})$ in the previous example, and the cyclic permutation matrix corresponding to $(12\cdots n)$. When $p\nmid n$, $D$ is a diagonalizable subgroup of $G$, and it is saturated. Indeed $C_{G}(D)=D$.
\end{enumerate}
\end{exam}

\begin{lemma}\label{l:max torus D relevant}
Let $D\subset G$ be a saturated diagonalizable subgroup, and $A\subset D$ be the maximal torus of $D$. Then $A$ is a $G$-relevant torus of $G$ (see Definition \ref{def:rel}).
\end{lemma}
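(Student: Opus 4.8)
The goal is to show that the maximal torus $A$ of a saturated diagonalizable subgroup $D\subset G$ is $G$-relevant, i.e. that $A=C_L$ for some Levi subgroup $L\subset G$. Since $A$ is a torus in $G$, we may choose a maximal torus $T_0\subset G$ containing $A$ and then $C_G(A)$ is already a connected reductive group (a twisted Levi, being the centralizer of a torus) containing $T_0$; in fact when $A\subset T_0$ the centralizer $C_G(A)$ is generated by $T_0$ and the root subgroups $U_\alpha$ for those roots $\alpha$ trivial on $A$, so $C_G(A)$ is a genuine Levi subgroup $L$ of $G$. The issue is only to verify $A=C_L=(ZL)^\circ$, i.e. that $A$ is the full connected center of its own centralizer, rather than a proper subtorus thereof. (Recall the running assumption \eqref{p prime to ZG} that $\xch(ZG)$ has no $p$-torsion, so $ZG$ — and more generally $ZL$ for every Levi $L$, by the injectivity in \eqref{xz tors} — is reduced, hence $C_L=ZL$.)

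\textbf{Key steps.} First I would reduce, via $D\subset Z(C_G(D))$, to an inclusion of diagonalizable groups $D\subset Z(C_G(D))=D$ (by saturatedness) and extract the torus statement: the maximal torus of $Z(C_G(D))$ equals $A$. Next, observe that $L:=C_G(A)$ is a Levi subgroup containing $D$ (since $D$ centralizes its own subtorus $A$, wait — more carefully, $D$ need not lie in $L$; instead use $C_G(D)\subset C_G(A)=L$, and $D\subset Z(C_G(D))$, so $D$ normalizes $C_G(D)$ and centralizes the identity component). The clean route: let $L=C_G(A)$, a Levi subgroup of $G$ with maximal torus $T_0$, and let $A'=(ZL)^\circ=C_L$; then $A\subset A'$ automatically since $A$ is central in $L=C_G(A)$. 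For the reverse inclusion, note $A'=C_L$ centralizes $D$ (because $D\subset C_G(D)\subset C_G(\ldots)$ — here I must chase that $D$ normalizes $L$ and $A'$ is characteristic in $L$, so $A'$ centralizes $D$), hence $A'\subset C_G(D)\subset Z(C_G(D))$? No: $A'$ centralizes $C_G(D)$? That requires $C_G(D)\subset L$, which holds since $C_G(D)\subset C_G(A)=L$; and $A'=ZL$ is central in $L$, so $A'$ centralizes $C_G(D)$. Thus $A'\subset Z(C_G(D))$, and $A'$ being a torus, $A'$ lies in the maximal torus of $Z(C_G(D))$, which by the first step is $A$. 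Hence $A'\subset A$, giving $A=A'=C_L$.

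\textbf{Main obstacle.} The delicate point is the containment chain $D\hookrightarrow Z(C_G(D))$ together with the fact that $C_G(D)$ is reduced and reductive; this is what lets us speak of "the maximal torus of $Z(C_G(D))$'' and identify it with $A$, and it is exactly where the parenthetical remark before the lemma (reducedness of $C_G(D)$ via \cite[XI,5.3]{SGA3} and iterated reductivity of fixed points) is used. The other subtle point is ensuring $C_G(D)\subset C_G(A)=L$ and that $A'=(ZL)^\circ$ is reduced so that $A'=C_L$ genuinely — this uses assumption \eqref{p prime to ZG} and the injectivity $\xch(ZL)_{\tors}\incl\xch(ZG)_{\tors}$ from \eqref{xz tors}. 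Once these reducedness/centralizer facts are in place, the inclusions $A\subset A'\subset A$ are formal, so I expect no serious computation — the whole content is organizing the centralizer diagram correctly.
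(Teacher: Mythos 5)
Your proposal is correct and follows essentially the same route as the paper: both set $L=C_G(A)$, use $C_G(D)\subset L$ and $D\subset L$ together with the centrality of $ZL$ in $L$ to land $ZL$ (hence $C_L$) inside $Z(C_G(D))=D$, and then use maximality of $A$ among tori of $D$ to close the loop $A\subset C_L\subset D$. One small remark: the paper sidesteps the reducedness concern you raise by working with $C_L$ directly (defined as the maximal torus in $ZL$) rather than with $(ZL)^\circ$, so the chain $A\subset C_L\subset ZL\subset D$ goes through without invoking \eqref{p prime to ZG} at that step.
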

\begin{proof}
Let $L=C_{G}(A)$, a Levi subgroup of $G$. Since $A\subset D$, we have $C_{G}(D)\subset C_{G}(A)=L$. Taking centers we get $ZL\subset D$. We thus have inclusions
\begin{equation*}
A\subset C_{L}\subset ZL\subset D.
\end{equation*}
Since $A$ is the maximal torus in $D$, we must have $A=C_{L}$, which implies $A$ is a $G$-relevant torus in $G$.  
\end{proof}

\begin{prop}\label{p:dc reduced}
If $p$ is \goodp for $G$ (see \S\ref{sss:goodp}), then all saturated diagonalizable subgroups of $G$ are reduced.
\end{prop}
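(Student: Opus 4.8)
The plan is to reduce the statement to a question about the reductive centralizer and then invoke the classification of relevant tori. Let $D \subset G$ be a saturated diagonalizable subgroup, so by definition $D = Z(C_G(D))$ as schemes. Write $A \subset D$ for the maximal torus; by Lemma \ref{l:max torus D relevant}, $A$ is a $G$-relevant torus, so $L := C_G(A)$ is a Levi subgroup of $G$, defined over $k$, with $A = C_L$. The first step is to observe that $C_G(D)$ is reductive (as noted after the definition of saturated), hence reduced, and that $C_G(D) \subset L$ since $A \subset D$. Thus $D = Z(C_G(D))$ where $C_G(D)$ is a reductive subgroup of $L$ containing the central torus $A = C_L$.

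The key reduction is to pass to $M := L/A$, which under the \goodp hypothesis on $p$ (this passes to Levi subgroups by the injections \eqref{pi1 tors}, \eqref{xz tors}, so $\xch(ZM)$ has no $p$-torsion) is a semisimple group with $ZM$ reduced. Now $C_G(D)$ is the preimage in $L$ of $C_M(\bar D)$ for $\bar D = D/A$, and $Z(C_G(D))$ maps to $Z(C_M(\bar D))$; chasing this, showing $D$ is reduced is equivalent to showing $\bar D = Z(C_M(\bar D))$ is reduced, i.e. that $\bar D$ is a saturated diagonalizable subgroup of the semisimple group $M$ whose maximal torus is trivial. So we have reduced to: a saturated diagonalizable subgroup of a semisimple group (with $\xch(ZG)$ $p$-torsion free) whose maximal torus is the identity is reduced. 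Such a $D$ is finite, and $D = Z(C_G(D))$.

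For this core case, the idea is to use that $p$ good implies every element of $D$ lies in a maximal torus whose associated root datum behaves well, or more directly, to argue via the structure of $C_G(D)$. Since $D$ is finite diagonalizable and $\xch(ZG)$ has no $p$-torsion, one expects $D$ itself to have order prime to $p$: the obstruction to reducedness is exactly $p$-torsion in $\xch(D)$, and $D \subset ZL'$ for $L' = C_G(D')^{\circ}$ with $D'$ the reduced part, where $ZL'$ has no $p$-torsion in its character group (again by \goodp for the Levi $L'$). I would make this precise by taking $D^{\red}$, setting $H = C_G(D^{\red})$ which is reductive with $\xch(ZH)$ $p$-torsion free, observing $D \subset ZH = Z(C_G(D^{\red}))$, and then $Z(C_G(D)) \subset Z(C_G(D^{\red})) = ZH$; since $D = Z(C_G(D))$ is a subgroup of the reduced diagonalizable group $ZH$, it is reduced. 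The main obstacle is verifying carefully that $C_G(D^{\red})$ is reductive with $p$-torsion-free character group of its center — this needs the iterated "reduced fixed points of reductive groups under finite-order automorphisms are reductive" argument together with tracking that the relevant Levi/centralizer inherits the \goodp condition — and making sure the inclusion $D \subset Z(C_G(D^{\red}))$ genuinely holds (it does, since $D$ centralizes $D^{\red}$ and commutes with everything in $C_G(D)$, which contains $C_G(D^{\red})$ when... here one must be slightly careful about whether $C_G(D) \supset C_G(D^{\red})$ or the reverse). Once the containment in a reduced diagonalizable group is established, reducedness of $D$ is immediate since subschemes of reduced schemes are... not automatically reduced, but closed subgroup schemes of a reduced diagonalizable (hence smooth, since $\xch$ is $p$-torsion free) group over a perfect field are reduced — this last point I would either cite or deduce from the fact that $D = Z(C_G(D))$ is \emph{defined} as a full center, matching a sub-root-datum picture.
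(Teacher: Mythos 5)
Your approach diverges from the paper's and has genuine gaps that I do not see how to fill along your route.

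The core of your argument is: take $D^{\red}$, set $H=C_G(D^{\red})$, claim $D\subset Z(H)$, and conclude $D$ is reduced because it sits inside the reduced diagonalizable group $ZH$. The first claim requires $H=C_G(D^{\red})\subset C_G(D)$, i.e.\ $C_G(D^{\red})=C_G(D)$: indeed $D\subset ZH$ is equivalent to $D$ being central in $H$, which forces $H\subset C_G(D)$, while $D^{\red}\subset D$ only gives the reverse inclusion $C_G(D)\subset C_G(D^{\red})$. You flag this as something to "be slightly careful about," but it is precisely the crux and nothing in your argument supplies it. Worse, the final step — ``closed subgroup schemes of a reduced diagonalizable group over a perfect field are reduced'' — is false: $\mu_p\subset\Gm$ in characteristic $p$ is the standard counterexample, and your parenthetical hedge acknowledges as much without resolving it. The fact that $D$ is a full center $Z(C_G(D))$ is indeed what one must exploit, but exhibiting $D$ as a subscheme of something reduced cannot by itself yield reducedness. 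Your intermediate reduction to $M=L/A$ also has a subtle error: the preimage in $L$ of $C_M(\bar D)$ consists of $g\in L$ with $[g,d]\in A$ for all $d\in D$, which is in general strictly larger than $C_G(D)$.

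The paper's proof is shorter and uses a different key lemma. It directly shows $\xch(D)$ has no $p$-torsion: given any $\mu_p\hookrightarrow D$, it invokes Lemma~\ref{l:centralizer Levi}(2) (valid under the \goodp hypothesis) to get that $H:=C_G(\mu_p)$ is a Levi subgroup of $G$. Since $\mu_p\subset D$ and $D$ is abelian, $D\subset H$ and $C_G(D)\subset H$; using $D=Z(C_G(D))$, one checks $ZH\subset D$. Since $H$ is a Levi, $\xch(ZH)$ has no $p$-torsion by the torsion injection \eqref{xz tors}, so $ZH$ is reduced, and the embedding $\mu_p\hookrightarrow ZH$ extends to $\Gm\to ZH\subset D$. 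This shows every $p$-torsion character of $D$ is divisible, i.e.\ $\xch(D)$ has no $p$-torsion. The crucial ingredient you are missing is Lemma~\ref{l:centralizer Levi}(2) about centralizers of $\mu_N$ being Levi subgroups when $N$ has no bad prime factors and $\pi_1(G)$ has no $N$-torsion; replacing $C_G(\mu_p)$ by $C_G(D^{\red})$, as you propose, gives a group whose relationship to $D$ is much weaker and whose center need not contain $D$.
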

\begin{proof}
Let $D$ be a saturated diagonalizable subgroup of $G$. We would like to show that $\xch(D)$ has no $p$-torsion. Suppose $\xch(D)$ has a surjection onto $\ZZ/p\ZZ$. It corresponds to an embedding $i: \mu_{p}\incl D$. It suffices to show that $i$ can be extended to a homomorphism $\Gm\to D$. By Lemma \ref{l:centralizer Levi}(2), $H=C_{G}(i(\mu_{p}))$ is a Levi subgroup of $G$. Since $i(\mu_{p})\subset D$, we have $C_{G}(D)\subset H$. Taking centers we get $ZH\subset D$. 

We claim that
\begin{equation}\label{XZH no p}
\mbox{$\xch(ZH)$ has no $p$-torsion.}
\end{equation}
Indeed, since $H$ is a Levi subgroup of $G$, $\xch(ZH)_{\tors}\incl \xch(ZG)_{\tors}$ by \eqref{xz tors}. Since $\xch(ZG)$ has no $p$-torsion by assumption, neither does $\xch(ZH)$. This shows \eqref{XZH no p}.

By \eqref{XZH no p}, $\mu_{p}\incl  ZH$ extends to a homomorphism $\Gm\to ZH$. Composing with the inclusion into $D$, it gives the extension of $i$ to $\Gm\to ZH\subset D$.
\end{proof}

\begin{exam} When $G=\GL_{n}$, all saturated diagonalizable subgroups are tori, hence reduced in all characteristics.

Let us see that the assumptions on $p$ are necessary in type $A$ in extreme cases. When $G=\SL_{n}$, in order for $ZG$ to be reduced, we need $p\nmid n$. When $G=\PGL_{n}$, we see from Example \ref{ex:sat}\eqref{PGLn mun sat} that it has a saturated diagonalizable subgroup isomorphic to $\mu_{n}$. For it to be reduced we also need $p\nmid n$. 
\end{exam}

The next two lemmas are preparatory results for the next subsection.

\begin{lemma}\label{l:max diag exists}
Let $H$ be an affine $k$-group of finite type. Then each diagonal subgroup of $H$ is contained in a maximal diagonalizable subgroup of $H$.
\end{lemma}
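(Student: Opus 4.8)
The plan is to run Zorn's lemma on diagonalizable subgroups, the only real work being the construction of upper bounds for chains: the ascending chain condition genuinely fails here (for instance $\mu_2 \subset \mu_4 \subset \mu_8 \subset \cdots$ inside $\Gm$ is an infinite strictly increasing chain of diagonalizable subgroups), so one cannot argue via a stabilizing chain and must instead exhibit an honest upper bound.

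First I would fix the given diagonalizable subgroup $D_0 \subseteq H$ and consider the poset $\cP$ of diagonalizable subgroup schemes of $H$ containing $D_0$, ordered by inclusion; it is nonempty. Given a chain $\{D_\alpha\}$ in $\cP$, I would set $U = \bigcup_\alpha D_\alpha$ (union of underlying subspaces) and let $D_\infty = \overline{U}$ be its Zariski closure in $H$. Since the multiplication, inversion and unit of $H$ are continuous, $\overline{U}$ is a closed subgroup scheme of $H$; as $H$ is affine of finite type so is $D_\infty$, and clearly $D_\infty \supseteq D_\alpha$ for all $\alpha$, in particular $D_\infty \supseteq D_0$. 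Once I know $D_\infty$ is diagonalizable, it is an upper bound for the chain inside $\cP$, Zorn's lemma produces a maximal element $D_{\max} \in \cP$, and $D_{\max}$ is then maximal among \emph{all} diagonalizable subgroups of $H$: any diagonalizable $D' \supseteq D_{\max}$ contains $D_0$, hence lies in $\cP$, hence equals $D_{\max}$.

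The substance is the diagonalizability of $D_\infty$. Here I would use the characterization recalled in \S\ref{ss:ev map}: a finite type affine $k$-group scheme is diagonalizable precisely when every finite-dimensional representation is a direct sum of $1$-dimensional subrepresentations. So let $\sigma\colon D_\infty \to \GL(V)$ be a finite-dimensional representation. Restricting along each $D_\alpha \hookrightarrow D_\infty$ and using that diagonalizable groups are linearly reductive with $1$-dimensional irreducibles, I get weight-space decompositions $V = \bigoplus_\chi V^{(\alpha)}_\chi$; since the chain is totally ordered these refine as $\alpha$ increases, and as $\dim V < \infty$ they stabilize, giving a common decomposition $V = \bigoplus_j V_j$ which for all $\alpha \geq \alpha_0$ is $D_\alpha$-stable with $D_\alpha$ acting on each $V_j$ through a character. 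Now the locus $\{g \in D_\infty : \sigma(g)V_j \subseteq V_j \text{ and } \sigma(g)|_{V_j} \text{ is scalar}\}$ is a closed subgroup scheme of $D_\infty$ — it is the stabilizer of the subspace $V_j$ intersected with the preimage of the (closed) subgroup of scalars under the induced map to $\GL(V_j)$ — and it contains $D_\alpha$ for all $\alpha \geq \alpha_0$, hence contains $U = \bigcup_\alpha D_\alpha$ (the chain being totally ordered), hence contains $\overline{U} = D_\infty$. Thus $D_\infty$ acts by scalars on each $V_j$, so every line in $V_j$ is a subrepresentation, and $V$ is a sum of $1$-dimensional subrepresentations. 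Therefore $D_\infty$ is diagonalizable, completing the argument.

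The hard part will be this last paragraph — verifying $D_\infty$ is diagonalizable — and in particular the fact that one must accommodate all finite-dimensional representations of $D_\infty$, not only those restricted from $H$. The key device is to phrase the properties ``$V_j$ is invariant'' and ``$D_\infty$ acts on $V_j$ by scalars'' as conditions cutting out closed subgroup schemes of $D_\infty$, and then to propagate them from the dense subset $U$ (where they hold because each $D_\alpha$ is diagonalizable) to all of $D_\infty = \overline{U}$. The supporting facts — that stabilizers of subspaces and scalar loci are closed subgroup schemes, and that a closed subscheme of $H$ containing $U$ contains $\overline{U}$ — are routine (cf. SGA3, or Milne's \emph{Algebraic Groups}).
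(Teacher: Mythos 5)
Your proof is correct and takes essentially the same route as the paper: take the Zariski closure of the union of a chain of diagonalizable subgroups, show every finite-dimensional representation of the closure splits into lines by refining weight-space decompositions along the chain until they stabilize, and propagate the resulting stability/scalar conditions from the dense union to its closure. The only differences are cosmetic: the paper works with a countable increasing sequence and organizes the common refinement via the inverse limit $\varprojlim_n \xch(D_n)$, whereas you run Zorn's lemma over arbitrary chains and obtain the stable decomposition by a finite-dimensionality count; your version is somewhat more explicit about the Zorn bookkeeping (the paper's passage from "countable chains have upper bounds" to "maximal elements exist" is stated without comment), but the mathematical content is identical.
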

\begin{proof}
Let $D_{1}\subset D_{2}\subset \cdots \subset  D_{n}\subset\cdots $ be an increasing sequence of diagonalizable subgroup of $H$. Let $D$ be the Zariski closure of $\cup_{n\ge1} D_{n}$. We claim that $D$ is also diagonalizable. This would prove the existence of a maximal diagonalizable subgroup of $H$.

To show $D$ is diagonalizable, since $D$ is an affine $k$-group of finite type, it suffices to show that any finite-dimensional $k$-representation $V$ of $D$ decomposes into a direct sum of $1$-dimensional representations. Now each $D_{n}$ gives a decomposition $V=\oplus_{\chi\in \xch(D_{n})}V(n;\chi)$. Under the surjection $\xch(D_{n})\surj \xch(D_{n-1})$, the decomposition under $D_{n}$ refines that of $D_{n-1}$. Let $\L=\varprojlim_{n}\xch(D_{n})$. Then we get a decomposition $V=\op_{\chi\in \L}V(\chi)$ such that $D_{n}$ acts on $V(\chi)$ under the character $\chi_{n}$ (image of $\chi$ under $\L\to \xch(D_{n})$). Decompose each $V(\chi)$ into lines $V(\chi)_{1}\op V(\chi)_{2}\op\cdots $ arbitrarily. Then each $V(\chi)_{i}$ is stable under each $D_{n}$, hence stable under $D$ because $\cup D_{n}$ is dense in $D$. This shows that $V$ is a direct sum of $1$-dimensional representations of $D$. This finishes the proof of the claim.
\end{proof}

\begin{lemma}\label{l:hom from D isolated}
    Let $D$ be a diagonalizable group over $k$. Let $H$ be a smooth affine $k$-group. Consider the affine scheme $\Hom(D,H)$ of homomorphisms from $D$ to $H$. Then each connected component of $\Hom(D,H)$ is a single orbit under the conjugation action of $H^\c$.
\end{lemma}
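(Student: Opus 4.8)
The statement is that every connected component of $\Hom(D,H)$ is a single $H^\c$-orbit. The natural strategy is to show two things: (i) the orbit map $H^\c \to \Hom(D,H)$, $h \mapsto h\cdot\ph_0\cdot h^{-1}$, is smooth (equivalently, the tangent space to $\Hom(D,H)$ at $\ph_0$ equals the tangent space to the orbit), and (ii) the scheme $\Hom(D,H)$ is itself smooth, so that each $H^\c$-orbit is open in $\Hom(D,H)$; being orbits, they are also closed in the union of all orbits, hence each orbit is a full connected component. The key input making (i) work is the vanishing of $H^1$ of $D$ with coefficients in the Lie algebra: since $D$ is diagonalizable (linearly reductive, as $k$-representations of $D$ are semisimple), the group cohomology $H^i(D, \fh)$ vanishes for $i>0$ when we compute via the bar resolution or, more properly, the Hochschild cohomology of the (possibly non-smooth) affine group scheme $D$ with values in the $D$-module $\fh = \Lie H$, where $D$ acts through $\ph_0$ and the adjoint action. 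Linear reductivity of $D$ — valid even when $D$ is non-reduced, e.g. $D = \mu_p$ in characteristic $p$ — is exactly what the paper needs here, since otherwise the statement would fail.

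\textbf{Step 1: deformation theory of $\Hom(D,H)$.} I would identify the tangent space $T_{\ph_0}\Hom(D,H)$ with the space of $1$-cocycles $Z^1(D,\fh)$ (crossed homomorphisms $D \to \fh$ for the $\ph_0$-twisted adjoint action), and more generally the obstruction space with $H^2(D,\fh)$. Concretely, a tangent vector is a homomorphism $D \to H(k[\e]/\e^2)$ lifting $\ph_0$, which unwinds to the cocycle condition. Since $D$ is linearly reductive over $k$, all higher Hochschild cohomology $H^{\ge 1}(D,\fh)$ vanishes; in particular $H^2=0$ gives formal smoothness of $\Hom(D,H)$ at every point, and since $\Hom(D,H)$ is of finite type over $k$ (it is a closed subscheme of a suitable jet/Weil-restriction scheme, or one uses that $D$ is finitely generated as a group scheme), it is smooth. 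This is the part where I would be most careful about foundations: Hochschild cohomology of a non-smooth affine group scheme acting on a module, and the identification of the relevant $\Hom$-scheme's tangent and obstruction spaces. The cleanest route is probably to cite or reprove the standard deformation theory of homomorphisms of affine group schemes (as in the work on $G$-complete reducibility, or Hochschild's original papers), observing that linear reductivity of $D$ forces $H^{\ge 1}(D,-)=0$ on all $D$-modules.

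\textbf{Step 2: the orbit map is smooth and orbits are open.} The differential of $H^\c \to \Hom(D,H)$ at the identity is the map $\fh \to Z^1(D,\fh)$ sending $v$ to the coboundary $d \mapsto \Ad(\ph_0(d))v - v$; its cokernel is $H^1(D,\fh)=0$, so the differential is surjective onto $T_{\ph_0}\Hom(D,H)$. Hence the orbit map is smooth at the identity, so it is smooth everywhere by homogeneity, and therefore its image — the $H^\c$-orbit of $\ph_0$ — is open in $\Hom(D,H)$ (using that $\Hom(D,H)$ is smooth, hence its open subschemes are exactly the images of smooth morphisms onto them).

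\textbf{Step 3: conclude.} Each $H^\c$-orbit in $\Hom(D,H)$ is open. The complement of an orbit is a union of other orbits, hence also open; so each orbit is closed as well. Therefore each orbit is a union of connected components, and conversely a connected component, being connected and a union of disjoint open orbits, must be a single orbit. This gives the claim. I would remark that the whole argument is insensitive to whether $D$ is reduced, since linear reductivity — the only property of $D$ used — holds for all diagonalizable group schemes; and that smoothness of $H$ is used only to talk about $\Lie H$ and the adjoint action in the usual way (and to know $H^\c$ makes sense as a smooth connected group). The main obstacle is purely expository: setting up the deformation theory of $\Hom(D,H)$ correctly with the non-reduced $D$, after which everything is formal.
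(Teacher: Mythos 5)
Your proof is correct, and at its core it uses the same mathematical fact as the paper: the vanishing of $H^1(D,\frh)$, equivalently the surjectivity of the coboundary map $\frh\to Z^1(D,\frh)$, which the paper establishes via the identification $T_\ph\Hom(D,H)\cong Z^1_\ph(D,\frh)$ and a direct Hopf-algebraic computation (expanding a cocycle $\th\in k[D]=k[\xch(D)]$ in the character basis and reducing to one-dimensional modules). Where you differ is in how you obtain this vanishing and in what extra machinery you bring to bear. You invoke linear reductivity of $D$ to get $H^{\ge1}(D,-)=0$ abstractly, which is a cleaner conceptual argument, whereas the paper's computation is elementary and self-contained; both give the same conclusion. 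Your Step 1, which uses $H^2(D,\frh)=0$ to establish smoothness of $\Hom(D,H)$, is correct but not actually needed: once the orbit map $H^\c\to \Hom(D,H)$ has surjective differential onto $T_\ph\Hom(D,H)$, and one observes that the orbit is smooth (being a homogeneous space for the smooth group $H^\c$) and locally closed, a short dimension count shows both that $\Hom(D,H)$ is smooth at $\ph$ and that the orbit is open there. The paper writes only the surjectivity step and implicitly relies on this standard argument; your smoothness step is harmless but redundant, and slightly raises the foundational burden (you now need the deformation-theoretic identification of the obstruction space, not just the tangent space). Your remarks about non-reducedness of $D$ (linear reductivity holds for all diagonalizable groups, including $\mu_p$ in characteristic $p$) and about $\Hom(D,H)$ being of finite type are sound and address exactly the points where one might worry.
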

\begin{proof} Fix a homomorphism $\ph: D\to H$, we only need to show that the infinitesimal action $a_{\frh,\ph}: \frh=\Lie H\to T_\ph\Hom(D,H)$ is surjective. Note that $T_\ph\Hom(D,H)$ can be identified with the vector space $Z^1_\ph(D,\frh)$ of $\ph$-twisted 1-cocycles valued in $\frh$, i.e.,   regular maps $\th: D\to \frh$ such that
\begin{equation}\label{ph cocycle}
    \th(xy)=\th(x)+\Ad(\ph(x))\th(y)
\end{equation}
holds, where both sides are viewed as regular maps $D\times D\to \frh$. The map $a_{\frh, \ph}:\frh\to Z^1_\ph(D,\frh)$ sends $v\in \frh$ to the function $x\mapsto v-\Ad(\ph(x))v$.

For any representation $V$ of $D$, we can similarly form the cocycles $Z^1(D,V)$ and the coboundary map $a_V: V\to Z^1(D,V)$. Then $a_{\frh,\ph}$ is the special case of this construction when $V=\frh$ with the action of $D$ via $\Ad\c \ph$. We shall prove that $a_V$ is surjective for all $V\in \Rep(D)$. The formation of $a_V$ is clearly additive in $V$, so to show that $a_V$ is surjective for all $V$, it suffices to treat the case $V=k(\chi)$ is one-dimensional, corresponding to a character $\chi$ of $D$. In this case, $\th\in Z^1(D,k(\chi))$ is the space of regular functions $\th\in k[D]=k[\xch(D)]$ such that
\begin{equation*}
    \D\th=\th\ot1+\chi\ot \th.
\end{equation*}
Expanding $\th$ using the basis $\xch(D)$, we see that $\th$ is a $k$-multiple of $1-\chi$, which is equal to $a_{k(\chi)}(1)$. This shows that $a_{V}$ is surjective for all 1-dimensional $V$, hence for all $V$ and in particular for $(\frh, \Ad\c\ph)$.
\end{proof}

\subsection{Semisimple automorphisms and endomorphisms}
In this subsection we assume $X$ is {\em connected} and $\cE$ is a $G$-bundle on $X$.

\begin{lemma}\label{l:A conj}
Let $D\subset \Aut(\cE)$ be a diagonalizable subgroup. Then for any two points $x_{1}, x_{2}\in X(k)$, the maps $\ev_{x_{1}}: D\to G$ and $\ev_{x_{2}}:D\to G$ are conjugate by an element in $G$.
\end{lemma}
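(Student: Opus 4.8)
The key point is that the space of homomorphisms $D \to G$ together with a compatible family of trivializations of $\cE$ near the chosen points is connected, so two evaluation maps are conjugate because $X$ is connected. More precisely, I would argue as follows. Since $D$ is a diagonalizable subgroup of $\Aut(\cE)$, the $G$-bundle $\cE$ decomposes under the $D$-action: for each $V \in \Rep_k(G)$ the associated bundle $\cE(V) = \cE \twtimes{G} V$ splits as $\bigoplus_{\chi \in \xch(D)} \cE(V)[\chi]$ into $D$-eigensubbundles. The ranks $r_\chi(V) := \rk \cE(V)[\chi]$ are locally constant functions on $X$, hence constant since $X$ is connected. At any point $x \in X(k)$, the homomorphism $\ev_x\colon D \to G[\cE_x] \cong G$ is determined (up to $G$-conjugacy) by the weight multiplicities of $D$ acting on each $V$, which are exactly the $r_\chi(V)$. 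Since these are independent of $x$, the conjugacy class of $\ev_x$ in $G$ is independent of $x$. This already gives the statement.

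Alternatively — and this is probably the cleaner write-up — I would phrase it using the rigidity of homomorphisms from a diagonalizable group, Lemma \ref{l:hom from D isolated}. Consider the group scheme $\un\Aut(\cE) = \cE \times^{G,\Ad} G$ over $X$, a smooth affine group scheme over $X$ with fibers isomorphic to $G$. The inclusion $D \incl \Aut(\cE) = \G(X, \un\Aut(\cE))$ gives, for each $x \in X(k)$, a homomorphism $\ev_x\colon D \to \un\Aut(\cE)_x \cong G$; more canonically, it gives a map $X \to \uHom_X(D, \un\Aut(\cE))$ into the relative $\Hom$-scheme, whose fiber over $x$ is $\Hom(D, G)$ (after trivializing). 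By Lemma \ref{l:hom from D isolated}, each connected component of $\Hom(D, G)$ is a single $G^\c$-orbit; equivalently, the components of $\uHom_X(D,\un\Aut(\cE))$ over a connected $X$ are distinguished by discrete invariants (the weight multiplicities) that are locally constant on $X$. Since $X$ is connected, the section $X \to \uHom_X(D,\un\Aut(\cE))$ lands in a single component, so $\ev_{x_1}$ and $\ev_{x_2}$ lie in the same $G^\c$-orbit, i.e. are $G$-conjugate. One must be mildly careful to connect $x_1$ and $x_2$: since $X$ is a connected algebraic stack, it is enough to compare $\ev_x$ and $\ev_{x'}$ for two points in the image of a connected smooth scheme mapping to $X$, and then use that $X(k)$ maps from such schemes; the locally-constant-invariant argument is insensitive to these reductions.

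The main (minor) obstacle is bookkeeping about what ``canonically conjugate'' means: $\ev_x$ is only well-defined up to inner automorphism in the first place, so the statement is really that the $G$-conjugacy class of $\ev_x$ is independent of $x$, and the proof must be organized so as not to pretend there is a single distinguished homomorphism. The substance — that the relevant discrete invariants (eigenbundle ranks, equivalently the component of $\Hom(D,G)$) are locally constant on $X$ — is immediate from the flatness of $\cE$ and standard semicontinuity. I expect no serious difficulty beyond phrasing this cleanly, and I would keep the argument short, citing Lemma \ref{l:hom from D isolated} for the fact that components of $\Hom(D,G)$ are $G^\c$-orbits.
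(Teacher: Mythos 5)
Your second (``cleaner'') write-up is essentially the paper's own proof: reduce to a connected affine chart $S\to X$ on which $\cE$ trivializes and both points lift, view the $D$-action as a map $S\to\Hom(D,G)$, and invoke Lemma~\ref{l:hom from D isolated} to conclude that the two images lie in the same $G^\c$-orbit because $S$ is connected. The relative-$\Hom$-scheme packaging you describe is cosmetic; once you trivialize over a chart, the two arguments coincide.

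One caveat on your first sketch: the assertion that the $G$-conjugacy class of a homomorphism $\ev_x\colon D\to G$ is ``determined by the weight multiplicities of $D$ acting on each $V$'' is true (for $D$ diagonalizable and $G$ reductive over $\ov k$ it follows from the factorization through a maximal torus plus Chevalley restriction), but it is a genuine fact that would need to be justified, not an immediate one; the paper avoids needing it precisely by using Lemma~\ref{l:hom from D isolated} instead. Your parenthetical ``equivalently, the components of $\uHom_X(\dots)$ are distinguished by the weight multiplicities'' is also not an equivalence of Lemma~\ref{l:hom from D isolated} and is not needed: the argument only requires that a connected scheme maps into a single component, not that you can name the discrete invariant separating components. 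If you keep the second write-up and drop that clause, it matches the paper exactly.
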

\begin{proof}
By the connectedness of $X$, we reduce to the following: there exists a connected affine $k$-scheme $S$ with a map $S\to X$, such that $\cE_S$ is trivial, and both $x_1$ and $x_2$ lift to $S$. Fix a trivialization of $\cE_S$. Then the action of $D$ on $\cE_S$ becomes a homomorphism $D\times S\to \Aut(\cE_S)\cong G\times S$ of constant group schemes over $S$. This homomorphism is a map $\ph: S\to \Hom(D,G)$, so that $\ph(x_i)$ is conjugate to $\ev_{x_i}$. Now by Lemma \ref{l:hom from D isolated}, each connected component of $\Hom(D,G)$ is a single orbit under the conjugation action of $G$. Since $S$ is connected, $\ph(x_1)$ and $\ph(x_2)$ lie in the same $G$-orbit of $\Hom(D,G)$, i.e., $\ev_{x_1}$ and $\ev_{x_2}$ are $G$-conjugate.
\end{proof}

\begin{lemma}\label{l:red to centralizer}
Let $D\subset \Aut(\cE)$ be a diagonalizable subgroup. Let $\ev_{x}(D)=D_{x}$. Then $\cE$ admits a reduction of structure group to $H=C_{G}(D_{x})$.
\end{lemma}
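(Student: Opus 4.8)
The plan is to produce a $G$-equivariant reduction of $\cE$ to $H = C_G(D_x)$ by exhibiting it fiberwise over $X$ via the action of $D$ on associated bundles, and then checking that these fiberwise reductions glue. The key point is that $D$ acts on $\cE$ itself, so for every $x' \in X(k)$ we get an induced diagonalizable subgroup $\ev_{x'}(D) = D_{x'} \subset \Aut_G(\cE_{x'})$, and by Lemma \ref{l:A conj} the homomorphisms $\ev_{x'}: D \to G$ are all $G$-conjugate (after choosing trivializations of the fibers). Thus the centralizer $C_{\Aut_G(\cE_{x'})}(D_{x'})$ is, under any trivialization $\cE_{x'} \cong G$, conjugate to $H = C_G(D_x)$.

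Concretely, I would argue as follows. Consider the group scheme $\un\Aut(\cE) = \cE \times^{G,\Ad} G$ over $X$, and inside it the centralizer subscheme $\cZ := \un{C}_{\un\Aut(\cE)}(D)$ of the constant subgroup $D$ acting through the $D$-action on $\cE$. Since $D$ acts on $\cE$, this $D$-action is a homomorphism $D \to \Aut(\cE)$ over $\Spec k$, which base-changed to $X$ gives $D \times X \to \un\Aut(\cE)$; the centralizer $\cZ$ of the image is a closed subgroup scheme of $\un\Aut(\cE)$ over $X$. Now $D$ also acts on $\cE$ by $G$-bundle automorphisms, hence $\cE$ carries a residual structure: I claim that the subsheaf $\cE^H \subset \cE$ of local sections $s$ (trivializations over opens $U \subset X$) that conjugate $D|_U$ into the fixed model $H \subset G$ is an $H$-torsor on $X$ realizing the desired reduction. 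To see this I would work locally: over an open $U$ on which $\cE|_U$ is trivial, a trivialization identifies the $D$-action with a homomorphism $\varphi_U : D \times U \to G \times U$, i.e. a $U$-point of $\Hom(D, G)$; by Lemma \ref{l:hom from D isolated} each connected component of $\Hom(D,G)$ is a single $G^\circ$-orbit, so after possibly shrinking $U$ (to make it connected and to ensure the component containing $\varphi_U$ is the one containing $\ev_x(D)$, using Lemma \ref{l:A conj} and connectedness of $X$) there is a section $g_U : U \to G$ conjugating $\varphi_U$ to the constant homomorphism $D \to H \subset G$; the fiber of $\cE^H$ over $U$ is precisely the $H$-torsor of such conjugating elements. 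These local $H$-torsors are canonically identified on overlaps because the transition functions of $\cE$ commute with the $D$-action in the appropriate twisted sense, so they patch to a global reduction $\cE^H$ of $\cE$ to $H$.

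The main obstacle I anticipate is the gluing step: one must verify that the locally-defined $H$-torsors $\cE^H|_U$ are compatible on overlaps, i.e. that the ambiguity in the choice of conjugating element $g_U$ is exactly an $H$-torsor (not something larger) and that it is compatible with change of trivialization. This comes down to two facts: first, $N_G(H)/H$ acts on the set of homomorphisms $D \to H$ that are $G$-conjugate to $\ev_x|_D$, but by Lemma \ref{l:hom from D isolated} (components of $\Hom(D,G)$ are single $G^\circ$-orbits) together with the connectedness argument via Lemma \ref{l:A conj}, all such lie in one $G^\circ$-orbit so the stabilizer ambiguity is exactly $C_{G^\circ}(D_x) \cdot (\text{component stuff})$, which one checks equals $H$ on the level of the relevant torsor; second, the Frobenius/transition data is transported correctly. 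Since $X$ is connected and we only need the reduction as a statement over $X$ (not its descent), I expect the connectedness hypothesis to do most of the work, exactly as in the proof of Lemma \ref{l:A conj}: one can even reduce to the case where $X$ is covered by a connected affine scheme $S$ with $\cE_S$ trivial, construct the reduction over $S$ using the $G^\circ$-orbit structure of $\Hom(D,G)$, and then descend along $S \to X$ using that the $D$-action is defined over $X$.
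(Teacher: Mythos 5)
Your proposal is correct and follows essentially the same route as the paper: both construct the $H$-reduction as the (closed sub)scheme of $D$-equivariant trivializations inside the total space $\cE$, and both use Lemma~\ref{l:A conj} to show that the fiber at every point is a non-empty $H$-torsor. The paper's phrasing is tidier only in that it defines $\cF\subset\cE$ as a closed subscheme of the total space at once, so the ``gluing'' step you worry about never arises — the $D$-equivariance condition is a closed condition on $\cE$ and automatically defines a global object; the only things to check are that $H$ acts simply transitively on non-empty fibers and that fibers are non-empty, which is exactly what you do.
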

\begin{proof} 
The total space $\cE$ represents the functor that sends an affine $k$-scheme $S$ to the set of pairs $(y,\t)$ where $y\in X(S)$ and $\t: G\times S\isom \cE_{y}$ is a trivialization of the pullback $G$-bundle $\cE_{y}$ over $S$ (where $G\times S$ is viewed as a right $G$-bundle under right multiplication). We define the following closed subscheme $\cF$ of $\cE$ by imposing the condition that $\t$ should be $D$-equivariant, where $D$ acts on $G\times S$ by left multiplication via the map $\ev_{x}: D\to D_{x}$ (which is an isomorphism by Lemma \ref{l:ker ev unip}), and acts on $\cE_{y}$ because $D\subset \Aut(\cE)$. There is an action of $H=C_{G}(D_{x})$ on $\cF$ by pre-composing on $\t$. 

We claim that $\cF$ is indeed an $H$-bundle under this action. It is easy to see that $H$ acts simply-transitively on non-empty fibers of $\cF\to X$. So it remains to see that $\cF_{y}$ is non-empty for all geometric points $y\in X$.  After choosing a trivialization of $\t_{y}: G\isom \cE_{y}$, the action of $D$ on $\cE_{y}\cong G$ is by left multiplication via the map $\ev_{y}: D\to G$. By Lemma \ref{l:A conj}, $\ev_{y}: D\to G$ is conjugate to $\ev_{x}:D\to G$ in $G$, therefore $\cF$ has a non-empty fiber at every geometric point $y\in X$. 

From the definition we have a $G$-equivariant map $\cF\twtimes{H}G\to\cE$, which is easily seen to be an isomorphism by checking fiberwise. In other words, $\cF$ is an $H$-reduction of $\cE$.
\end{proof}

\begin{cor}\label{c:ev image dc} Let $D\subset \Aut(\cE)$ be a 
maximal diagonalizable subgroup, which exists by Lemma \ref{l:max diag exists}. Then for any $x\in X(k)$, $\ev_{x}(D)\subset G$ (well-defined up to $G$-conjugacy) is a saturated diagonalizable subgroup of $G$.
\end{cor}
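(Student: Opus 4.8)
The plan is to argue by contradiction with the maximality of $D$: if $\ev_{x}(D)$ were not saturated, we could enlarge $D$ inside $\Aut(\cE)$ by adjoining central automorphisms coming from a reduction of structure group.

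First I would fix a trivialization of the fiber $\cE_{x}$, so that $\ev_{x}:\Aut(\cE)\to G$ becomes an honest homomorphism. (This trivialization, hence $\ev_{x}$, is well-defined only up to $G$-conjugacy, but that is harmless: being saturated is a conjugation-invariant property, and by Lemma \ref{l:A conj} the $G$-conjugacy class of $\ev_{x}(D)$ does not depend on $x$ either.) By Lemma \ref{l:ker ev unip}, $\ker(\ev_{x})$ contains no nontrivial diagonalizable subgroup, so $\ker(\ev_{x})\cap D=1$; hence $\ev_{x}|_{D}$ is a monomorphism of affine $k$-groups of finite type, therefore a closed immersion, and $D_{x}:=\ev_{x}(D)$ is a diagonalizable subgroup of $G$ isomorphic to $D$. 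Put $H:=C_{G}(D_{x})$; as recalled after the definition of saturated, $H$ is reductive, and $Z(H)=Z(C_{G}(D_{x}))$ is diagonalizable. Since $D_{x}$ is abelian it lies in $C_{G}(D_{x})=H$, and it is centralized by $H$, so $D_{x}\subseteq Z(H)$; this inclusion being automatic, it remains to rule out the possibility $D_{x}\sne Z(H)$.

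So assume $D_{x}\sne Z(H)$. By Lemma \ref{l:red to centralizer}, applied with this $D$, this $x$ and this $\ev_{x}$, the bundle $\cE$ admits a reduction of structure group to $H$; concretely $\cF\subseteq \cE$ is the closed subscheme of those trivializations $\tau$ that are $D$-equivariant, with $H$ acting on $\cF$ by pre-composition (left translation on the source $G$, which commutes with the $D_{x}$-action since $H$ centralizes $D_{x}$), and the chosen trivialization $\tau_{x}$ lies in $\cF_{x}$. Then $Z(H)\subseteq H$ acts on the $H$-bundle $\cF$, hence on $\cE\cong\cF\twtimes{H}G$, yielding a homomorphism $\iota:Z(H)\to\Aut(\cE)$; using the base point $\tau_{x}$ one checks that $\ev_{x}\c\iota$ is the tautological inclusion $Z(H)\incl G$. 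On the other hand, since $D$ is abelian, the $D$-action on $\cE$ preserves the subscheme $\cF$ (if $\tau$ is $D$-equivariant and $d'\in D$, then the composite of $\tau$ with the automorphism $d'$ of $\cE$ is again $D$-equivariant, because $D$ is abelian), and on $\cF$ the $D$-action (post-composition on $\tau$) commutes with the $H$-action (left translation on the source). Consequently $D$ and $\iota(Z(H))$ are commuting subgroups of $\Aut(\cE)$. Then $D\cdot\iota(Z(H))$ is the scheme-theoretic image of the homomorphism $D\times Z(H)\to\Aut(\cE)$, $(d,z)\mapsto d\cdot\iota(z)$, hence a diagonalizable subgroup of $\Aut(\cE)$ containing $D$; its image under $\ev_{x}$ is $D_{x}\cdot Z(H)=Z(H)$, which strictly contains $\ev_{x}(D)=D_{x}$, so $D\sne D\cdot\iota(Z(H))$, contradicting the maximality of $D$. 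Therefore $D_{x}=Z(C_{G}(D_{x}))$, i.e. $\ev_{x}(D)$ is saturated.

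The main work, and the step most prone to error, is in the third paragraph: pinning down precisely the $Z(H)$-action on $\cE$ induced from the reduction $\cF$, verifying that $\ev_{x}\c\iota$ is the natural inclusion $Z(H)\incl G$, and checking that the $D$- and $H$-actions on $\cF$ commute. This torsor-bookkeeping is the heart of the argument; everything else is formal once Lemmas \ref{l:ker ev unip}, \ref{l:red to centralizer}, \ref{l:max diag exists} and \ref{l:A conj} are in hand.
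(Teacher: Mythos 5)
Your argument is correct and follows essentially the same route as the paper: reduce to the centralizer $H=C_G(D_x)$ via Lemma \ref{l:red to centralizer}, embed $Z(H)$ into $\Aut(\cE)$, and invoke maximality of $D$. The only cosmetic difference is that you form the product $D\cdot\iota(Z(H))$ and argue by contradiction, whereas the paper directly observes that the image of $D_x\subset Z(H)\subset\Aut(\cF)\subset\Aut(\cE)$ is exactly $D$ (so $\iota(Z(H))$ already contains $D$, making the product step redundant), and concludes $D_x=Z(H)$ from maximality without a reductio.
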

\begin{proof}
Let $D_{x}=\ev_{x}(D)$, then $\ev_{x}$ restricts to an isomorphism $D\isom D_{x}$ by Lemma \ref{l:ker ev unip}. By Lemma \ref{l:red to centralizer}, $\cE$ admits a reduction of structure group to $H=C_{G}(D_{x})$, i.e., there is an $H$-bundle $\cF$ over $X$ with an isomorphism of $G$-bundles $\cF\times^{H}G\isom \cE$. We have
\begin{equation*}
D_{x}\subset ZH\subset \Aut(\cF)\subset \Aut(\cE).
\end{equation*}
The image of these inclusions is $D$. By the maximality of $D_{x}$ among diagonalizable subgroups of $\Aut(\cE)$, we must have $D_{x}=ZH$, which means $D_{x}$ is saturated.
\end{proof}

\begin{cor}\label{c:diag reduced} If $p$ is \goodp for $G$, then all diagonalizable subgroups of $\Aut(\cE)$ are reduced. 
\end{cor}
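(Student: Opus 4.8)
The plan is to reduce the reducedness of an arbitrary diagonalizable subgroup $D \subset \Aut(\cE)$ to the reducedness of a \emph{saturated} diagonalizable subgroup of $G$, which is exactly what Proposition~\ref{p:dc reduced} provides. First I would recall that by Lemma~\ref{l:max diag exists} the given diagonalizable subgroup $D$ is contained in some maximal diagonalizable subgroup $D_{\max} \subset \Aut(\cE)$. It suffices to show $D_{\max}$ is reduced, since a closed subgroup of a reduced group scheme over a field is reduced (equivalently: if $\xch(D_{\max})$ has no $p$-torsion, then neither does its quotient $\xch(D)$, as $\xch(D_{\max}) \surj \xch(D)$).

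Next I would fix any point $x \in X(k)$ — such a point exists since we may replace $X$ by a smooth atlas if necessary, and in any case the statement is only interesting when $X$ carries $k$-points; more carefully, one passes to a connected affine scheme $S$ mapping to $X$ over which $\cE$ trivializes and which has a $k$-point, as in the proof of Lemma~\ref{l:A conj} — and apply the evaluation map $\ev_x \colon \Aut(\cE) \to G[\cE_x] \cong G$. By Lemma~\ref{l:ker ev unip}, $\ker(\ev_x)$ contains no nontrivial diagonalizable subgroup, so $\ev_x$ restricts to a \emph{closed immersion} $D_{\max} \hookrightarrow G$ onto its image $D_x := \ev_x(D_{\max})$. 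Then Corollary~\ref{c:ev image dc} identifies $D_x$ (up to $G$-conjugacy) as a \emph{saturated} diagonalizable subgroup of $G$. Since $p$ is \goodp for $G$, Proposition~\ref{p:dc reduced} tells us $D_x$ is reduced. An isomorphism of group schemes preserves reducedness, so $D_{\max} \cong D_x$ is reduced, hence so is $D$.

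I do not expect any genuine obstacle here: the corollary is essentially a bookkeeping assembly of the three ingredients (existence of a maximal diagonalizable subgroup, injectivity of $\ev_x$ on it, and saturatedness of the image). The only point requiring a small amount of care is the fact that $\ev_x|_{D_{\max}}$ is not merely injective on points but an isomorphism of group schemes onto its scheme-theoretic image — this is precisely the content of the first sentence of the proof of Corollary~\ref{c:ev image dc}, which follows because the kernel, being a subgroup scheme of $D_{\max}$ with no diagonalizable part (Lemma~\ref{l:ker ev unip}) yet itself a closed subgroup scheme of the diagonalizable $D_{\max}$, must be trivial. Thus the corollary follows immediately.

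\begin{proof}
Let $D\subset\Aut(\cE)$ be a diagonalizable subgroup. By Lemma~\ref{l:max diag exists}, $D$ is contained in a maximal diagonalizable subgroup $D_{\max}\subset\Aut(\cE)$. Since the surjection $\xch(D_{\max})\surj\xch(D)$ carries $p$-torsion to $p$-torsion, it suffices to prove that $D_{\max}$ is reduced, i.e.\ that $\xch(D_{\max})$ has no $p$-torsion. Choose $x\in X(k)$ (after replacing $X$ by a connected affine scheme mapping to it over which $\cE$ trivializes, as in the proof of Lemma~\ref{l:A conj}, we may assume such a point exists). By Lemma~\ref{l:ker ev unip}, $\ker(\ev_{x})$ contains no nontrivial diagonalizable subgroup; as $\ker(\ev_x)\cap D_{\max}$ is a closed subgroup scheme of the diagonalizable group $D_{\max}$, it is diagonalizable, hence trivial. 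Therefore $\ev_{x}$ restricts to an isomorphism $D_{\max}\isom D_{x}:=\ev_{x}(D_{\max})$ of group schemes. By Corollary~\ref{c:ev image dc}, $D_{x}$ is a saturated diagonalizable subgroup of $G$. Since $p$ is \goodp for $G$, Proposition~\ref{p:dc reduced} shows $D_{x}$ is reduced, hence so is $D_{\max}\cong D_{x}$, and therefore so is $D$.
\end{proof}
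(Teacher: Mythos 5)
Your proof follows the same structure as the paper's proof: reduce to a maximal diagonalizable subgroup $D_{\max}$, apply $\ev_x$ (an isomorphism onto its image by Lemma~\ref{l:ker ev unip}), invoke Corollary~\ref{c:ev image dc} for saturatedness, and conclude with Proposition~\ref{p:dc reduced}. However, the justification you supply for the reduction from $D$ to $D_{\max}$ is false in both of the formulations you offer. A closed subgroup scheme of a reduced group scheme over a field need not be reduced: $\mu_p\subset\Gm$ in characteristic $p$ is the standard counterexample. On the character-group side, the surjection $\xch(D_{\max})\surj\xch(D)$ does carry $p$-torsion to $p$-torsion, but a quotient of a torsion-free abelian group can nonetheless have torsion that was not there in the source --- $\ZZ\surj\ZZ/p\ZZ$ already shows this. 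So ``$\xch(D_{\max})$ has no $p$-torsion'' does \emph{not} imply ``$\xch(D)$ has no $p$-torsion.''

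I would add that the reduction step itself, not just your justification of it, is problematic. The paper's proof also writes ``we may assume $D$ is a maximal diagonalizable subgroup'' without explaining why, and it does not seem salvageable as stated: for the trivial $G$-bundle on a proper connected curve one has $\Aut(\cE)\cong G$, and already $\mu_p\subset\Gm$ (or $\mu_p$ inside a maximal torus of $\SL_n$ with $p\nmid n$) gives a non-reduced diagonalizable subgroup of $\Aut(\cE)$ under the pretty-good hypothesis. What the argument actually establishes, in your write-up and in the paper's, is that every \emph{maximal} diagonalizable subgroup of $\Aut(\cE)$ is reduced, and that weaker assertion is all that is used downstream: in the proof of Corollary~\ref{c:ss aut}, one should note that $D_\g$ lies in some maximal $D_{\max}$, that $D_{\max}$ is reduced and hence factors through the reduced subscheme $A_{\cE}$, so $\Lie D_\g\subset\Lie D_{\max}\subset\Lie A_{\cE}=\Im(\io_{\cE})$. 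So the fix is to delete the first two sentences of your proof and state the conclusion for maximal diagonalizable subgroups only; the remainder of your argument is correct and matches the paper.
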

\begin{proof}
Let $D\subset \Aut(\cE)$ be diagonalizable. To show the reducedness we may assume $D$ is a maximal diagonalizable subgroup of $\Aut(\cE)$ (by Lemma \ref{l:max diag exists}). In this case, $\ev_{x}$ restricts to an isomorphism $D\isom D_{x}:=\ev_{x}(D)$ by Lemma \ref{l:ker ev unip}. By Corollary \ref{c:ev image dc}, $D_{x}$ is saturated in $G$. Under our assumption of $p$, by Proposition \ref{p:dc reduced}, $D_{x}$ is reduced. Therefore $D$ is also reduced.
\end{proof}

\begin{cor}\label{c:ss aut} If $p$ is \goodp for $G$, then the image of $\io_{\cE}$ contains all semisimple elements of $\aut(\cE)$.
\end{cor}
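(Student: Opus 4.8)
The plan is to realize the semisimple element inside the Lie algebra of a diagonalizable subgroup scheme of $\Aut(\cE)$ and then quote the reducedness already established in this section. Let $s\in\aut(\cE)$ be semisimple. Since $\Aut(\cE)$ is affine of finite type over $k$, fix a closed embedding $\Aut(\cE)\incl\GL(V)$; then $\aut(\cE)$ is a Lie subalgebra of $\gl(V)$, and $s$, being semisimple, is a diagonalizable endomorphism of $V$ (here we use $k=\ov k$). Choose $g\in\GL(V)$ with $\Ad(g)(s)\in\Lie T_V$, where $T_V\subset\GL(V)$ is the diagonal maximal torus, so that $s\in\Lie\bigl(g^{-1}T_Vg\bigr)$. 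First I would set
\begin{equation*}
D_0:=\Aut(\cE)\cap g^{-1}T_Vg\ \subseteq\ \Aut(\cE),
\end{equation*}
a closed subgroup scheme of the torus $g^{-1}T_Vg$, hence a group of multiplicative type. Since the functor $\Lie$ is left exact, it commutes with scheme-theoretic intersections of closed subgroup schemes of $\GL(V)$, so $\Lie D_0=\aut(\cE)\cap\Lie\bigl(g^{-1}T_Vg\bigr)\ni s$.

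Granting this, the conclusion is immediate. By Corollary \ref{c:diag reduced} --- which applies since $p$ is pretty good for $G$ --- the diagonalizable subgroup $D_0\subseteq\Aut(\cE)$ is reduced; alternatively, one may first enlarge $D_0$ to a maximal diagonalizable subgroup $D\subseteq\Aut(\cE)$ by Lemma \ref{l:max diag exists} (so that still $s\in\Lie D_0\subseteq\Lie D$) and then invoke Corollary \ref{c:ev image dc} together with Proposition \ref{p:dc reduced}. Either way, the resulting reduced diagonalizable subgroup is contained in $A_{\cE}=\Aut(\cE)_{\red}$, so $s\in\Lie A_{\cE}$, i.e.\ $s$ lies in the image of $\io_{\cE}$, as claimed.

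The main (and essentially only) difficulty is the integration step of the first paragraph. The point is to produce $D_0$ as a genuine subgroup scheme of multiplicative type with no smoothness input --- for which it is crucial to take the \emph{scheme-theoretic} intersection with a torus, a closed subgroup scheme of a torus being automatically diagonalizable --- and to know that $\Lie D_0$ still contains $s$, which is why I argue via left-exactness of $\Lie$ rather than trying to exponentiate $s$ (there being no exponential in characteristic $p$). The remaining ingredients are standard: the existence of a Jordan decomposition in $\aut(\cE)$, which makes ``$s$ is semisimple'' an intrinsic notion independent of the chosen faithful representation, and the diagonalizability over $k=\ov k$ of a semisimple linear operator.
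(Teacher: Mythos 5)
Your argument is correct, and it reaches the same key intermediate claim as the paper — produce a diagonalizable subgroup scheme $D\subset\Aut(\cE)$ with $s\in\Lie D$ and then quote Corollary~\ref{c:diag reduced} — but it constructs $D$ by a genuinely different mechanism. The paper builds $D_\g$ canonically via Tannakian formalism: the eigenspace decompositions of $\g$ on all $V\in\Rep_k(\Aut(\cE))$ are tensor-compatible, so they define a symmetric monoidal functor $\Rep_k(\Aut(\cE))\to\Vect_{k,\L_\g}$ with $\L_\g\subset k$ the group generated by the eigenvalues; the resulting $D_\g=\Spec k[\L_\g]\hookrightarrow\Aut(\cE)$ carries $\g$ in its Lie algebra via the canonical identification $\Lie D_\g\cong\Hom_\ZZ(\L_\g,k)$. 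You instead choose a faithful representation and form the scheme-theoretic intersection $D_0=\Aut(\cE)\cap g^{-1}T_Vg$, using that a closed subgroup scheme of a torus is diagonalizable and that $\Lie$ (being a finite-limit-preserving functor) turns the fiber product defining $D_0$ into $\aut(\cE)\cap\Lie(g^{-1}T_Vg)$, which contains $s$ by construction. Your route is more elementary and avoids Tannakian machinery, at the cost of canonicity (dependence on $V$ and $g$), and it implicitly needs the standard fact that semisimplicity of $s\in\aut(\cE)$ is detected on a single faithful representation; the paper's route is choice-free and also yields the ``minimal'' such $D_\g$, which is why the paper phrases the construction at the level of general affine group schemes $H$. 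Either finishing step you propose (Corollary~\ref{c:diag reduced} directly, or enlarging to a maximal diagonalizable subgroup first) is fine; the direct one matches the paper.
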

\begin{proof}
For any affine $k$-group scheme $H$ of finite type and $\g\in \frh=\Lie H$ semisimple, we will define a diagonalizable subgroup $D_{\g}\subset H$ canonically attached to $\g$ such that $\g\in \Lie D_{\g}$. 

For any $V\in \Rep_{k}(H)$, $\g$ induces a semisimple endomorphism on $V$, hence an eigenspace decomposition $V=\op_{\l\in k}V[\l]$. This decomposition is compatible with tensor products with respect to the addition of eigenvalues. Let $\L_{\g}$ be the additive subgroup of $k$ generated by all eigenvalues that appear in the $\g$-action on $V$ for some $V\in \Rep_{k}(H)$. Let $D_{\g}=\Spec k[\L_{\g}]$ be the diagonalizable group with $\xch(D_{\g})=\L_{\g}$. By construction we have a symmetric monoidal functor $\rho: \Rep_{k}(H)\to \Rep_{k}(D_{\g})=\Vect_{k,\L_{\g}}$ (finite-dimensional $k$-vector spaces graded by $\L_{\g}$), compatible with the forgetful functors to $\Vect_{k}$. By Tannakian formalism, $\rho$ determines a canonical homomorphism $\rho_{*}: D_{\g}\to H$. Since all characters of $D_{\g}$ appear in the image of $\rho$, $\rho_{*}$ is an embedding. Note that we have a canonical isomorphism of $k$-vector spaces
\begin{equation*}
\Lie D_{\g}\cong \Hom_{\ZZ}(\L_{\g}, k).
\end{equation*}
By construction, $\L_{\g}$ comes with an embedding $\L_{\g}\subset k$, which corresponds to an element $\d\in \Lie D_{\g}$ under the above isomorphism. By definition, the action of $\d$ on each $V\in \Rep_{k}(H)$ has the same eigenspace decomposition as $\g$, therefore $\d=\g$. In other words, $\g\in\Lie D_{\g}$.

Applying the above discussions to $H=\Aut(\cE)$, we conclude that any semisimple $\g\in \aut(\cE)$ is contained in the Lie algebra of a diagonalizable subgroup $D_{\g}\subset \Aut(\cE)$. By Corollary \ref{c:diag reduced}, $D_{\g}$ is reduced, hence $\Lie D_{\g}\subset \Im(\io_{\cE})$. In particular $\g\in \Im(\io_{\cE})$.
\end{proof}

\subsection{Consequences on the structure of $\Aut(\cE)$}

\begin{cor}\label{c:aut red} Suppose $p=\ch(k)$ is \goodp for the connected reductive group $G$. Let $X$ be a $k$-stack and $\cE$ a $G$-bundle over $X$ such that the automorphism group $\Aut(\cE)$ is affine of finite type over $k$. Then $\Aut(\cE)$ is reduced.
\end{cor}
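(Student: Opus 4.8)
The plan is to show that the inclusion $\io_{\cE}\colon \Lie A_{\cE}\incl \aut(\cE)$ from \S\ref{ss:unip} is surjective. This will suffice: over the algebraically closed (hence perfect) field $k$ one always has $\dim\Aut(\cE)=\dim A_{\cE}=\dim_{k}\Lie A_{\cE}$, the last equality because $A_{\cE}$, the reduced structure of $\Aut(\cE)$, is smooth. Since $\io_{\cE}$ is injective (it is the differential at the identity of the closed immersion $A_{\cE}\incl\Aut(\cE)$), surjectivity of $\io_{\cE}$ is equivalent to $\dim_{k}\Lie\Aut(\cE)=\dim\Aut(\cE)$, i.e. to $\Aut(\cE)$ being smooth at the identity, i.e. (by homogeneity, over $k=\kbar$) to $\Aut(\cE)$ being reduced.

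Before invoking the two key inputs, I would reduce to the case that $X$ is connected; this is needed because Corollary \ref{c:ss aut} was proved under that hypothesis. Decomposing $X=\coprod_{\alpha}X_{\alpha}$ into connected components yields $\Aut(\cE)=\prod_{\alpha}\Aut(\cE|_{X_{\alpha}})$; each factor is, via the evident identity section, a closed subgroup scheme of $\Aut(\cE)$, and hence affine of finite type over $k$, while finiteness of type of the product forces all but finitely many of the factors to be trivial. A finite product of reduced $k$-schemes is reduced ($k$ being perfect), so it is enough to treat each $X_{\alpha}$ separately; hence we may assume $X$ connected.

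The heart of the matter is then a short deduction from results already established. Let $x\in\aut(\cE)=\Lie\Aut(\cE)$ be arbitrary. Since $\Aut(\cE)$ is an affine $k$-group of finite type, $x$ admits a Jordan decomposition $x=x_{s}+x_{n}$ with $x_{s},x_{n}\in\aut(\cE)$, $x_{s}$ semisimple and $x_{n}$ nilpotent (the Jordan decomposition in the Lie algebra of a linear algebraic group; see Borel, \emph{Linear Algebraic Groups}, I.4.4). Because $p$ is \goodp for $G$, Corollary \ref{c:ss aut} gives $x_{s}\in\Im(\io_{\cE})$; and since $p$ being \goodp in particular means that $p$ is good for $G$ and $\xch(ZG)$ has no $p$-torsion, Corollary \ref{c:nilp aut} applies and gives $x_{n}\in\Im(\io_{\cE})$. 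As $\Im(\io_{\cE})$ is a $k$-linear subspace, $x=x_{s}+x_{n}\in\Im(\io_{\cE})$. Thus $\io_{\cE}$ is surjective, and $\Aut(\cE)$ is reduced.

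I do not expect a genuine obstacle in this final step: the real content is packaged into Corollaries \ref{c:nilp aut} and \ref{c:ss aut}, and the argument above is only their formal combination via the Jordan decomposition. The two ancillary points — that reducedness of $\Aut(\cE)$ is equivalent to surjectivity of $\io_{\cE}$, and the reduction to connected $X$ — are routine over an algebraically closed field.
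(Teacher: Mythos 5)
Your proof is correct and takes essentially the same route as the paper: reduce to connected $X$, then conclude that $\io_{\cE}$ is an isomorphism by combining Corollary \ref{c:nilp aut} (nilpotent elements) and Corollary \ref{c:ss aut} (semisimple elements). The paper only says ``combining''; you make the combining step explicit via Jordan decomposition in $\aut(\cE)$, which is the intended mechanism.

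One small caveat worth flagging, precisely because the Corollary is about reducedness: the citation to Borel I.4.4 concerns (smooth, i.e.\ reduced) linear algebraic groups, while $\Aut(\cE)$ is a priori \emph{not} reduced, and $\aut(\cE)=\Lie\Aut(\cE)$ may be strictly larger than $\Lie A_{\cE}$. So you are invoking the additive Jordan decomposition inside the Lie algebra of a possibly non-reduced affine $k$-group of finite type. This is in fact true over an algebraically closed field, and the cleanest justification is exactly the Tannakian argument already used in the proof of Corollary \ref{c:ss aut}: for each $V\in\Rep_k(\Aut(\cE))$ the Jordan components of $\g|_V$ are polynomials in $\g|_V$, hence are functorial in $V$ and compatible with tensor products, so they define elements $\g_s,\g_n\in\aut(\cE)$. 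Replacing the Borel reference with this remark (or with a reference covering non-smooth group schemes) would make the step airtight and match the framework the paper is working in.
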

\begin{proof} If $X=\coprod_{i} X_{i}$ is the decomposition of $X$ into connected components, then $\Aut(\cE)=\prod_{i}\Aut(\cE|_{X_{i}})$. Therefore it suffices to show each $\Aut(\cE|_{X_{i}})$ is reduced. We thus reduce to the case where $X$ is connected.

When $X$ is connected, combining Corollary \ref{c:nilp aut} and Corollary \ref{c:ss aut}, we see that $\io_{\cE}$ is an isomorphism. This implies that $A_{\cE}=\Aut(\cE)$, hence $\Aut(\cE)$ is reduced.
\end{proof}

\begin{remark} When $G=\GL_{n}$, all primes are pretty good for $G$. A $G$-bundle $\cE$ corresponds to  a vector bundle $\cV$ over $X$. Assume $X$ is a proper scheme over $k$. Then $\Aut(\cV)$ is an open subscheme in the finite dimensional $k$-vector space $\End(\cV)$ viewed as an affine space. This shows directly that $\Aut(\cE)=\Aut(\cV)$ is reduced.
\end{remark}

\begin{cor}\label{c:ev image rel} Under the same assumptions of Corollary \ref{c:aut red}, for any $x\in X(k)$, the image of the evaluation map $\ev_{x}:\Aut(\cE)\to G$ (well-defined up to conjugacy) is a $G$-relevant subgroup.

\end{cor}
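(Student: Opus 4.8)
The goal is to check the two defining conditions of Definition \ref{def:rel}(2) for $H:=\Im(\ev_x)$: that $H$ is reduced, and that some maximal torus of $H$ is a $G$-relevant torus. Since the subgroup $\Im(\ev_x)\subset G$ (well-defined up to $G$-conjugacy) depends only on the connected component of $X$ containing $x$, the first thing I would do is reduce to the case where $X$ is connected, so that all the results of \S\ref{ss:ev map}--\S\ref{ss:unip} and in particular Corollaries \ref{c:aut red} and \ref{c:ev image dc} become available. Note also that $G$-relevance of a subgroup is invariant under $G$-conjugation, so it suffices to treat one representative of the conjugacy class of $\ev_x$.

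For reducedness: by Corollary \ref{c:aut red} the group $\Aut(\cE)$ is reduced, hence smooth over the algebraically closed field $k$. By Chevalley's theorem on images of homomorphisms of affine algebraic groups, $H$ is a closed subgroup scheme of $G$; and since $H$ is the scheme-theoretic image of the reduced affine scheme $\Aut(\cE)$, it is itself reduced (and therefore smooth). This settles the first condition.

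For the maximal torus: let $A\subset H$ be a maximal torus. Since $\ev_x\colon \Aut(\cE)\twoheadrightarrow H$ is a surjective homomorphism of smooth affine algebraic groups, I would invoke the standard fact that every maximal torus of the target is the image of a maximal torus of the source (reducing to identity components, where it is the usual statement for connected groups) to produce a maximal torus $T\subset \Aut(\cE)$ with $\ev_x(T)=A$. By Lemma \ref{l:max diag exists}, $T$ is contained in a maximal diagonalizable subgroup $D\subset \Aut(\cE)$; the maximal torus of $D$ is a torus of $\Aut(\cE)$ containing $T$, so it equals $T$ by maximality of $T$. Now $D_x:=\ev_x(D)$ is a \emph{saturated} diagonalizable subgroup of $G$ by Corollary \ref{c:ev image dc}, and the restriction $\ev_x|_D\colon D\to D_x$ is an isomorphism because its kernel is a diagonalizable subgroup of $\ker(\ev_x)$, hence trivial by Lemma \ref{l:ker ev unip}. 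Consequently $A=\ev_x(T)$ is precisely the maximal torus of the saturated subgroup $D_x$, which is a $G$-relevant torus by Lemma \ref{l:max torus D relevant}. Combining with the previous paragraph, $H$ is reduced with a $G$-relevant maximal torus, so it is $G$-relevant.

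I expect the only genuinely delicate point to be the lifting of a maximal torus of $H$ to $\Aut(\cE)$, which has to be handled with a little care because neither $\Aut(\cE)$ nor $H$ need be connected: one passes to the identity components, uses that $\ev_x$ carries $\Aut(\cE)^\circ$ onto $H^\circ$, and then applies the classical statement that a maximal torus of a connected affine group surjecting onto another such group surjects onto a maximal torus of it. Once that step is in place, the rest is purely formal bookkeeping with the already-proven structural results on $\Aut(\cE)$ and with Example/Lemma facts about saturated diagonalizable subgroups.
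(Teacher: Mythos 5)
Your proof is correct, and it follows essentially the same path as the paper's: reduce to $X$ connected, exhibit a maximal torus $A\subset H:=\Im(\ev_x)$ as the maximal torus of a saturated diagonalizable subgroup $D_x=\ev_x(D)$ via Corollary \ref{c:ev image dc}, Lemma \ref{l:ker ev unip} and Lemma \ref{l:max diag exists}, and conclude with Lemma \ref{l:max torus D relevant}. The only genuine difference is the torus-lifting step. The paper looks at $\ev_x^{-1}(A_x)$, observes it is reduced and solvable with unipotent radical $\ker(\ev_x)$, and picks a maximal torus there (citing Borel, Thm.~10.6(4)). You instead invoke the classical fact that a maximal torus of the target of a surjection of connected algebraic groups lifts to a maximal torus of the source, after passing to identity components. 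Both reduce to the same classical structure theory and produce the same $T\subset\Aut(\cE)$ with $\ev_x(T)=A$, so this is a variation in bookkeeping rather than in substance. Your explicit verification of reducedness of $H$ via scheme-theoretic image of the reduced group $\Aut(\cE)$ is a small point the paper only asserts parenthetically; it is a welcome addition.
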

\begin{proof} 
The question only concerns the connected component of $X$ containing $x$, therefore we may assume $X$ is connected. 

Fix a trivialization of $\cE_{x}$, then we have the evaluation map $\ev_{x}: \Aut(\cE)\to G$. Let $A_{x}\subset \Im(\ev_{x})$ be a maximal torus. Since $\Aut(\cE)$ is reduced, so is $\ker(\ev_{x})$ and $\ev^{-1}_{x}(A_{x})$. Then $\ev^{-1}_{x}(A_{x})$ is a connected solvable group with unipotent radical $\ker(\ev_{x})$ by Lemma \ref{l:ker ev unip}. Pick any maximal torus $A\subset \ev^{-1}_{x}(A_{x})$, then $\ev_{x}$ restricts to an isomorphism $A\isom A_{x}$ (see \cite[Theorem 10.6(4)]{Borel}). Since $\ker(\ev_{x})$ is unipotent, $A$ is a maximal torus in $\Aut(\cE)$. 

Let $D\subset \Aut(\cE)$ be a maximal diagonalizable subgroup containing $A$, which exists by Lemma \ref{l:max diag exists}. By Corollary \ref{c:ev image dc}, $D_{x}=\ev_{x}(D)$ is saturated. Now $A_{x}\subset D_{x}$ is the maximal torus, and we conclude by Lemma \ref{l:max torus D relevant} that $A_{x}$ is a $G$-relevant torus in $G$. Since $A_{x}$ is a maximal torus in $\Im(\ev_{x})$ (which is a reduced subgroup of $G$), $\Im(\ev_{x})$ is $G$-relevant. 
\end{proof}

We now generalize the previous results to $H$-bundles where $H$ is not necessarily a reductive group.

\begin{theorem}
Let $H$ be a connected algebraic group over $k$. Let $\cE$ be an $H$-bundle over $X$ such that $\Aut(\cE)$ is affine of finite type over $k$. Assume further that
\begin{enumerate}
    \item $H$ admits a unipotent logarithm $\cU_H\isom \cN_H$.
    \item $p=\ch(k)$ is \goodp for the reductive quotient of $H$.
\end{enumerate}
Then $\Aut(\cE)$ is reduced.
\end{theorem}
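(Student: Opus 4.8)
The plan is to reduce the statement for a general connected algebraic group $H$ to the reductive case already handled in Corollary \ref{c:aut red}, by exploiting the Levi decomposition $H = R_u(H)\rtimes M$ together with the fact that all of the key intermediate results of \S\ref{s:aut} were phrased so as to apply verbatim once one has a unipotent logarithm and a pretty good reductive quotient. First I would reduce, exactly as in the proof of Corollary \ref{c:aut red}, to the case where $X$ is connected; this is harmless since $\Aut(\cE)=\prod_i\Aut(\cE|_{X_i})$ over the connected components of $X$. Then, as in \S\ref{ss:unip}, the existence of a unipotent logarithm $\cU_H\isom\cN_H$ is exactly what makes Lemma \ref{l:nilp aut} go through: for any $H$-bundle $\cE$ on $X$ with $\Aut(\cE)$ affine of finite type, the inclusion $\io_\cE:\Lie A_\cE\incl\aut(\cE)$ (where $A_\cE$ is the reduced structure of $\Aut(\cE)$) has image containing all nilpotent elements of $\aut(\cE)=\G(X,\Ad(\cE))$. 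The proof of Lemma \ref{l:nilp aut} used only $G$-equivariance of $\rho$ and the condition on its differential at the identity, not the reductivity of $G$, so it carries over with no change.

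The remaining point is to show $\io_\cE$ also surjects onto the semisimple elements of $\aut(\cE)$. Here I would follow the argument of Corollary \ref{c:ss aut} essentially word for word: given a semisimple $\g\in\aut(\cE)$, the Tannakian construction produces a diagonalizable subgroup $D_\g\subset\Aut(\cE)$ with $\g\in\Lie D_\g$ — this step is purely formal and does not care about $H$. So it suffices to prove that every diagonalizable subgroup of $\Aut(\cE)$ is reduced, i.e. the analogue of Corollary \ref{c:diag reduced} for $H$-bundles. For this, pick a maximal diagonalizable $D\subset\Aut(\cE)$ (Lemma \ref{l:max diag exists}). The evaluation map $\ev_x:\Aut(\cE)\to H[\cE_x]\cong H$ has unipotent kernel by Lemma \ref{l:ker ev unip} (whose proof used only that $H$ is an affine group acting on associated bundles — it makes no reductivity assumption), so $\ev_x$ restricts to an isomorphism $D\isom D_x:=\ev_x(D)$, and by the argument of Lemma \ref{l:red to centralizer} and Corollary \ref{c:ev image dc}, $\cE$ admits a reduction to $C_H(D_x)$ and $D_x$ is saturated in $H$ (meaning $D_x = Z(C_H(D_x))$). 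Thus I am reduced to the Lie-theoretic claim: \emph{saturated diagonalizable subgroups of $H$ are reduced}, when $p$ is pretty good for $M:=H_{\red}$. For this, write $C_H(D_x) = R_u(C_H(D_x))\rtimes C_M(D_x')$ for the image $D_x'$ of $D_x$ in $M$ (using that $D_x$, being diagonalizable, lifts into a Levi); then $Z(C_H(D_x))$ projects isomorphically onto a subgroup of $Z(C_M(D_x'))$, which is reduced by Proposition \ref{p:dc reduced} applied to $M$ once one checks $D_x'$ is saturated in $M$. Concretely: $D_x$ reduced $\iff$ $D_x'$ reduced, and $D_x' = Z(C_M(D_x'))$ follows from $D_x$ being saturated in $H$ together with the splitting $C_H(D_x)\to C_M(D_x')$ being $D_x$-equivariant.

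Once every diagonalizable subgroup of $\Aut(\cE)$ is known to be reduced, Corollary \ref{c:ss aut}'s proof gives that $\io_\cE$ contains all semisimple elements of $\aut(\cE)$; combined with the nilpotent case above and the Jordan decomposition in $\aut(\cE)$ (which holds since $\aut(\cE)=\Lie\Aut(\cE)$ and $\Aut(\cE)$ is affine), we get that $\io_\cE:\Lie A_\cE\to\aut(\cE)$ is surjective, hence an isomorphism (it is always injective), hence $\dim A_\cE = \dim\Aut(\cE)$, forcing $A_\cE=\Aut(\cE)$ and $\Aut(\cE)$ reduced.

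The main obstacle I anticipate is the Lie-theoretic reduction of "saturated in $H$" to "saturated in $M$", i.e. correctly tracking centers and centralizers through the Levi decomposition $H=R_u(H)\rtimes M$ when the diagonalizable subgroup $D_x$ is allowed to have nontrivial image in both the unipotent radical direction (there is none, as $D_x$ is linearly reductive) and in $M$. The subtlety is that $C_H(D_x)$ is not reductive and its center can \emph{a priori} receive contributions from $R_u(C_H(D_x))$; one must argue that $Z(C_H(D_x))$ still injects into $Z(C_M(\bar D_x))$ via the projection $H\to M$, which uses that $D_x$ being diagonalizable and saturated forces $D_x$ to be \emph{contained} in $M$ up to conjugacy and that the projection is $D_x$-equivariant. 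This is where the hypothesis that $p$ is pretty good \emph{for the reductive quotient} (rather than for $H$ in some naive sense) is exactly what is needed. I would isolate this as a separate lemma on saturated diagonalizable subgroups of a general connected affine algebraic group before assembling the proof.
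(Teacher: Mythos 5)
The paper itself omits the proof (``The proof follows the same lines as Corollary~\ref{c:aut red}.\ We omit details.''), so I cannot compare directly with the authors' argument; but your sketch has a genuine gap in the semisimple step, and the gap is in exactly the place you flag as an ``obstacle.'' You reduce the problem to showing that a saturated diagonalizable $D_x\subset H$ is reduced, and you then ``write $C_H(D_x)=R_u(C_H(D_x))\rtimes C_M(D_x')$ \dots\ using that $D_x$, being diagonalizable, lifts into a Levi.'' This presupposes a Levi decomposition $H=R_u(H)\rtimes M$, which need not exist over a field of positive characteristic (Mostow's theorem is a characteristic-zero phenomenon). Nothing in the hypotheses --- existence of a unipotent logarithm, $p$ pretty good for $M$ --- obviously implies such a splitting. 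Moreover, even granting the splitting, the assertion that ``$Z(C_H(D_x))$ projects isomorphically onto a subgroup of $Z(C_M(D_x'))$'' is false as stated: $Z(C_H(D_x))$ may contain nontrivial unipotent elements coming from $R_u(C_H(D_x))$, and those lie in the kernel of $H\to M$. So the map on centers is in general neither injective nor surjective, and the notion of ``saturated in $H$'' does not transfer to ``saturated in $M$'' by the route you indicate.

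There is a cleaner route that sidesteps both issues and still stays within the toolkit of \S\ref{s:aut}: rather than tracking centralizers inside $H$, pass immediately from the $H$-bundle $\cE$ to the induced $M$-bundle $\bar\cE=\cE\times^H M$, where $M=H/R_u(H)$. The natural map $\Aut(\cE)\to\Aut(\bar\cE)$ has kernel $N=\G(X,\cE\times^{H}R_u(H))$. By the same Tannakian argument as in Lemma~\ref{l:ker ev unip} (now using for each $V\in\Rep(H)$ a filtration of $\cE(V)$ whose graded pieces are associated bundles of $\bar\cE$, on which $N$ acts trivially), any diagonalizable $D'\subset N$ is trivial: decompose each $\cE(V)$ into $D'$-eigen-subbundles, note that every graded piece of a nontrivial eigen-subbundle vanishes, hence the eigen-subbundle vanishes. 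Consequently, for a diagonalizable $D\subset\Aut(\cE)$, the composite $D\to\Aut(\bar\cE)$ is injective with diagonalizable image, and Corollary~\ref{c:diag reduced} applied to the reductive group $M$ (using $p$ pretty good for $M$) gives that this image, and hence $D$, is reduced. This replaces your steps from ``saturated in $H$'' onward. Combined with your correct treatment of the nilpotent part via the unipotent logarithm of $H$ (Lemma~\ref{l:nilp aut} indeed uses nothing but $H$-equivariance of $\rho$), the Jordan decomposition argument of Corollary~\ref{c:aut red} then applies verbatim. I recommend you isolate the displacement $\cE\rightsquigarrow\bar\cE$ and the triviality of $D\cap N$ as the key lemma, in place of your proposed lemma on saturated subgroups of a general connected $H$.
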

The proof follows the same lines as Corollary \ref{c:aut red}. We omit details.

A particular case where $H$ admits a unipotent logarithm is when $H=P$ is a parabolic subgroup of a connected reductive group $G$, and $p=\ch(k)$ is good for $G$. Indeed,  Lemma \ref{l:unip log} guarantees that $G$ has a unipotent logarithm $\rho: \cU_G\isom \cN_G$. Now $P$ is the attracting subscheme of $G$ with respect to a one-parameter subgroup $\l: \Gm\to G$, and $\frp=\Lie P$ is the attracting subspace of $\frg$ with respect to the same $\l$. Because $\rho$ is $\Ad(G)$-equivariant,  it restricts to an isomorphism $P\cap \cU_G\isom \frp\cap \cN_G$ between the respective attracting subschemes of $\cU_G$ and $\cN_G$, which provides a unipotent logarithm for $P$.  We thus conclude:

\begin{cor}\label{c:aut red par}
    Let $P\subset G$ be a parabolic subgroup of a connected reductive group over $k$. Let $\cE$ be a $P$-bundle over $X$ such that $\Aut(\cE)$ is affine of finite type over $k$. Assume further that $p=\ch(k)$ is \goodp for $G$. Then $\Aut(\cE)$ is reduced.
\end{cor}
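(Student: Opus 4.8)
The plan is to reduce this to the already-proven reductive case, Corollary \ref{c:aut red}, by way of the theorem immediately preceding it (the reducedness criterion for $H$-bundles with $H$ a connected algebraic group admitting a unipotent logarithm). Thus the only thing to establish is the hypothesis: if $P\subset G$ is a parabolic subgroup and $p$ is \goodp for $G$, then $P$ admits a unipotent logarithm $\cU_P\isom \cN_P$, and $p$ is \goodp for the reductive quotient $L=P/R_u(P)$.

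First I would verify the hypothesis on the reductive quotient. The Levi quotient $L$ of $P$ is a reductive group whose root system is a subsystem of that of $G$ generated by a subset of a set of simple roots; moreover $L$ is a twisted Levi subgroup of $G$ in the sense of \S\ref{sss:intro G}, so by \eqref{xz tors} and \eqref{pi1 tors} we have injections $\xch(ZL)_{\tors}\incl\xch(ZG)_{\tors}$ and $\pi_1(L)_{\tors}\incl\pi_1(G)_{\tors}$. Since $p$ is \goodp for $G$, the groups $\xch(ZG)$ and $\pi_1(G)$ have no $p$-torsion, and good primes for $G$ are good for $L$ (the coefficients of the highest root of each irreducible component of the root system of $L$ divide those of $G$). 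Hence $p$ is \goodp for $L$.

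Next I would produce the unipotent logarithm for $P$. By Lemma \ref{l:unip log}, since $p$ is good for $G$ (in fact \goodp), $G$ admits a unipotent logarithm $\rho:\cU_G\isom\cN_G$, which is $\Ad(G)$-equivariant. Choose a cocharacter $\l:\Gm\to G$ such that $P=P(\l)$ is the associated attracting parabolic, i.e. $P=\{g\in G:\lim_{t\to0}\l(t)g\l(t)^{-1}\text{ exists}\}$, and correspondingly $\frp=\Lie P$ is the nonnegative weight space of $\Ad\c\l$ on $\frg$. Because $\rho$ is equivariant for the adjoint action of $G$, it is in particular equivariant for the $\Gm$-action through $\l$; hence $\rho$ carries the $\l$-attracting subscheme of $\cU_G$ isomorphically onto that of $\cN_G$. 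These attracting subschemes are precisely $\cU_G\cap P=\cU_P$ (the unipotent variety of $P$, since every unipotent element of $G$ lying in $P$ is a unipotent element of $P$) and $\cN_G\cap\frp=\cN_P$. One checks that the restriction $\rho|_{\cU_P}:\cU_P\isom\cN_P$ is $P$-equivariant (as $\rho$ is $G$-equivariant and $P\subset G$), sends $1$ to $0$, and has identity tangent map at $1$ (the tangent map of the restriction is the restriction of the tangent map of $\rho$, which is the identity on $T_1\cU_G$, to the subspace $T_1\cU_P\subset T_1\cU_G$). This is exactly the data of a unipotent logarithm for $P$.

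With both hypotheses of the preceding theorem verified, the conclusion that $\Aut(\cE)$ is reduced follows immediately. I expect the only point requiring any care is the identification of the attracting subschemes: one must confirm that $\cU_G\cap P$ really equals the scheme-theoretic attracting locus of $\cU_G$ under $\l$ — equivalently that $\cU_P$ is $\l$-attracting inside $\cU_G$ — and likewise on the Lie algebra side, and that the equivariance and tangent-map conditions in the definition of a unipotent logarithm pass correctly to the restriction. None of this is deep; it is the kind of verification the excerpt already sketches in the paragraph preceding the corollary, so the proof is essentially a matter of citing the theorem after recording these observations. Hence I would write: ``The theorem above applies: hypothesis (2) holds because $p$ \goodp for $G$ implies $p$ \goodp for the Levi quotient $L$ of $P$ by \eqref{xz tors}, \eqref{pi1 tors} and the fact that good primes for $G$ are good for $L$; hypothesis (1) holds by restricting the unipotent logarithm of $G$ (Lemma \ref{l:unip log}) to the $\l$-attracting subschemes, where $\l$ is a cocharacter with $P=P(\l)$, as explained above. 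Therefore $\Aut(\cE)$ is reduced.''
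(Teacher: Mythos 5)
Your proposal is correct and follows the paper's proof essentially verbatim: obtain a unipotent logarithm for $P$ by restricting the unipotent logarithm of $G$ from Lemma \ref{l:unip log} to the $\l$-attracting subschemes of $\cU_G$ and $\cN_G$ (where $P=P(\l)$), identified with $\cU_P$ and $\cN_P$, then invoke the preceding theorem. The only difference is that you explicitly verify hypothesis (2), that $p$ is \goodp for the Levi quotient of $P$ via \eqref{xz tors}, \eqref{pi1 tors} and the fact that good primes for $G$ are good for a Levi, a detail the paper leaves implicit.
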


\subsection{Application to the reducedness of centralizers} As a byproduct, we obtain the following theorem about the reducedness of fixed point subgroups of a connected reductive group $G$.

\begin{cor}\label{c:Gfix} Assume $p=\ch(k)$ is \goodp for the connected reductive group $G$ over $k$. Let $H$ be a $k$-group scheme acting on $G$ by group automorphisms. Then the fixed point subgroup $G^{H}$ is reduced (hence smooth over $k$) and $G$-relevant.
\end{cor}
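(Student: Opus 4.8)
The strategy is to realize $G^H$ as the automorphism group of a $G$-bundle over a suitable stack and then invoke Corollaries \ref{c:aut red} and \ref{c:ev image rel}. Let $BH$ be the classifying stack of $H$ over $k$. The action of $H$ on $G$ by group automorphisms is exactly the data of a $G$-bundle $\cE$ over $BH$: namely, take the trivial $G$-torsor over $\Spec k$ and descend it to $BH$ using the $H$-action on $G$. Concretely, $\cE = [G/H]$ where $H$ acts on $G$ via the given automorphism action, viewed as a $G$-torsor over $BH$ via right translation on $G$. I would first check that $\Aut_{BH}(\cE)$ is affine of finite type over $k$ — in fact I claim it is canonically $G^H$. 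An automorphism of $\cE$ over $BH$ is an $H$-equivariant automorphism of the $G$-torsor $G$ over $\Spec k$, i.e. an element $g \in G$ (acting by left translation) that commutes with the $H$-action, which says precisely $g \in G^H$ as a functor of points; so $\Aut_{BH}(\cE) \cong G^H$ as group schemes over $k$. In particular it is affine of finite type.

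Next, since $BH$ need not be connected when $H$ is disconnected, I would like to apply the connected-base results after base-changing along $\Spec k \to BH$; note $\Spec k \to BH$ is a surjective map from a (proper, even affine) $k$-scheme, so the affineness/finite-type hypothesis of Corollary \ref{c:aut red} is automatic. However, Corollary \ref{c:aut red} itself only requires $\Aut(\cE)$ affine of finite type and $p$ \goodp for $G$ — it already handles disconnected $X$ by decomposing into connected components, and $BH$ decomposes according to $\pi_0(H)$-torsors, but more simply: $BH$ is connected whenever $H$ is, and for disconnected $H$ one still has that $\Aut(\cE)$ is the fixed points, and the corollary's own proof reduces to the connected case. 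So Corollary \ref{c:aut red} applies directly and yields that $G^H = \Aut_{BH}(\cE)$ is reduced, hence smooth over $k$.

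For the $G$-relevance assertion, I would apply Corollary \ref{c:ev image rel} with $x$ the canonical $k$-point $\Spec k \to BH$ (the base point). The evaluation map $\ev_x \colon \Aut_{BH}(\cE) \to G$ is, under the identification $\Aut_{BH}(\cE) \cong G^H$ and the trivialization $\cE_x \cong G$, simply the inclusion $G^H \hookrightarrow G$. Corollary \ref{c:ev image rel} then says the image — which is $G^H$ itself — is a $G$-relevant subgroup of $G$. This gives both conclusions of the corollary.

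The main point requiring care is the identification of $\cE = [G/H]$ as a genuine $G$-torsor over $BH$ and the computation $\Aut_{BH}(\cE) \cong G^H$ compatibly with $\ev_x$; this is a formal descent argument but one must be careful about left versus right actions (the $G$-torsor structure is by one side, the $H$-action and hence the automorphisms by the other) so that $\ev_x$ really is the inclusion rather than some twist. Once this dictionary is set up correctly, everything else is a direct citation of the already-established Corollaries \ref{c:aut red} and \ref{c:ev image rel}. I do not expect any genuine obstacle beyond this bookkeeping; the substantive content — reducedness via nilpotent and semisimple elements lying in $\Lie A_\cE$, and relevance of the image of $\ev_x$ — has been done in the preceding subsections.
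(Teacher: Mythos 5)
Your proposal is correct and matches the paper's own proof: the paper also takes $X=\pt/H$, $\cE=G/H$, identifies $\Aut(\cE)=G^H$, and cites Corollaries \ref{c:aut red} and \ref{c:ev image rel}. One small correction to your side remark: $BH$ is in fact \emph{always} connected, even for disconnected $H$, since the quotient map $\Spec k \to BH$ is surjective from a connected scheme; so the detour through "$BH$ need not be connected\dots it decomposes according to $\pi_0(H)$-torsors" is both unnecessary and (as a claim about $BH$) incorrect, though it does not affect your conclusion.
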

\begin{proof}
Consider the stack $X=\pt/H$ and the $G$-bundle $\cE=G/H$ over it. Then $\Aut(\cE)=G^{H}$ (clearly affine and of finite type over $k$). Corollary \ref{c:aut red} and \ref{c:ev image rel} applied to this situation then yields the claim.
\end{proof}

\begin{remark} When $H\subset G$ acts on $G$ by conjugation,  the above theorem asserts that the centralizer group scheme $C_{G}(H)$ is reduced. This is proved by Herpel in \cite[Theorem 1.1]{Herpel}. Our argument is different.
\end{remark}

\section{Counting absolutely indecomposable $G$-bundles}
Let $k=\FF_{q}$ be a finite field with characteristic $p$. Let $G$ be a connected reductive group over $k$. Let $X$ be a smooth,  projective and geometrically connected curve over $k$. 

We will prove a version of Theorem \ref{th:intro} for reductive groups in this section. Note that some immediate results in this section do not assume $X$ is smooth.

\subsection{Absolutely indecomposable $G$-bundles}
Let $|X|$ be the set of closed points of $X$.  Let $\Bun_{G}$ denote the moduli stack of $G$-bundles on $X$.

\begin{defn}\label{d:ai} A $k$-point $\cE\in \Bun_{G}(k)$ is called {\em absolutely indecomposable} if the quotient $\Aut(\cE)/C_{G}$ does not contain a nontrivial torus. Here recall that $C_{G}$ is the maximal torus in the center of $G$.

We denote the full subgroupoid of absolutely indecomposable objects of $\Bun_{G}(k)$ by $\Bun_{G}(k)^{\AI}$.
\end{defn}

\begin{warning} The groupoid $\Bun_{G}(k)^{\AI}$ is not in general the groupoid of $k$-points of a substack of $\Bun_{G}$.
\end{warning}

Consider the set of connected components $\pi_{0}(\Bun_{G})(k)$ of $\Bun_{G}$ that contain a $k$-point. For $\g\in \pi_{0}(\Bun_{G})(k)$ we denote the corresponding connected component by $\Bun_{G}^{\g}$ (which is defined over $k$).  Let $\Bun^{\g}_{G}(k)^{\AI}\subset \Bun_{G}^{\g}(k)$ be the full subgroupoid of absolutely indecomposable objects.

\begin{exam} When $G=\GL_{n}$, absolutely indecomposable $k$-points in $\Bun_{G}\cong \Bun_{n}$ in the sense of Definition \ref{d:ai} coincide with the usual notion of absolutely indecomposable vector bundles: they are vector bundles $\cV$ of rank $n$ over $X$ such that its pullback to $X_{\ov k}$ does not decompose into the direct sum of two subbundles of positive rank.

This can be seen as follows. Let $\ov\Aut(\cV)=\Aut(\cV)/\Gm$ (modulo scalar action) as an algebraic group over $k$. We need to show that $\cV_{\ov k}:=\cV|_{X_{\ov k}}$ is decomposable if and only if $\ov\Aut(\cV)$ contains a nontrivial torus. 

Suppose $\cV_{\ov k}\cong \cV'\op \cV''$, with $\rk\cV'>0$ and $\rk\cV''>0$,  then we have a subtorus $\l: \Gm\to \ov\Aut(\cV)_{\ov k}$ that acts on $\cV'$ by scaling and acts on $\cV''$ by the identity. Thereforer $\ov\Aut(\cV)^{\c}$ is not unipotent.

Conversely, if $\ov\Aut(\cV)$ contains a nontrivial torus, then there is a nontrivial homomorphism $\l: \Gm\to \ov\Aut(\cV)^{\c}$. This gives an action of $\Gm$ on each fiber of $\cV_{\ov k}$, and decomposes $\cV$ into a direct sum of weight spaces $\cV_{\ov k}=\op_{n\in \ZZ}\cV_{\ov k}(n)$.  Since $\l$ is nontrivial, this decomposition has more than one nonzero summands, hence $\cV_{\ov k}$ is decomposable.
\end{exam}

\begin{exam} For general $G$ and an absolutely indecomposable $k$-point $\cE$ of $\Bun_{G}$, $\Aut(\cE)$ may not be connected. For example, let $G=\Sp_{2n}$ and $p\ne 2$. Let $n=a+b$ with $a,b>0$. Let $\cE'$ and $\cE''$ be symplectic vector bundles of rank $2a$ and $2b$ respectively, that are stable as vector bundles. Then $\cE=\cE'\op\cE''$ is absolutely indecomposable, although it looks like decomposable. We have $\Aut(\cE)\cong\mu_{2}\times\mu_{2}$, with the first (resp. second) factor of $\mu_{2}$ acting on $\cE'$ (resp. $\cE''$) by scaling.
\end{exam}

\begin{defn}\label{def:AIk}
Let $\AI_{G,X}(k)$ be the set of isomorphism classes of absolutely indecomposable $G$-bundles over $X_{\ov k}$ whose isomorphism class is defined over $k$. 

For each $\g\in \pi_{0}(\Bun_{G})(k)$, let $\AI^{\g}_{G,X}(k)\subset \AI_{G,X}(k)$ be the subset of those absolutely indecomposable $G$-bundles over $X_{\ov k}$ that lie in the component $\Bun^{\g}_{G}$.
\end{defn}

\begin{remark} There is a natural map $\Bun_{G}(k)^{\AI}\to \AI_{G,X}(k)$, sending an absolutely indecomposable bundle $\cE$ to its pullback to $X_{\ov k}$. This map is surjective but in general not injective on isomorphism classes.  The following lemma makes this remark more precise.
\end{remark}

\begin{lemma}\label{lem:AIG} For each $\g\in \pi_{0}(\Bun_{G})(k)$, $\AI^{\g}_{G,X}(k)$ is a finite set with cardinality
\begin{equation}\label{AIG}
\#\AI^{\g}_{G,X}(k)=\sum_{\cE\in \Bun^{\g}_{G}(k)^{\AI}}\frac{1}{\#\pi_{0}(\Aut(\cE))(k)}.
\end{equation}
Here $\pi_{0}(\Aut(\cE))$ is the finite $k$-group scheme of connected components of $\Aut(\cE)$, and $\pi_{0}(\Aut(\cE))(k)$ is its $k$-points (those connected components of $\Aut(\cE)$ that contain $k$-points of $\Aut(\cE)$).
\end{lemma}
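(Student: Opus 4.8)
The statement is really a bookkeeping identity relating the cardinality of a set of Galois-descent classes to a weighted sum over a groupoid. I would set it up via the standard dictionary between $X_{\ov k}$-objects with $k$-structure and $k$-objects of the stack. Fix $\g\in \pi_0(\Bun_G)(k)$. A $G$-bundle $\cE_0$ over $X_{\ov k}$ lying in $\Bun_G^\g$ whose isomorphism class is $\Gal(\ov k/k)$-stable carries a (possibly empty) set of $k$-forms; by descent theory this set is nonempty (since all the relevant obstruction groups vanish — $\Bun_G$ is a smooth algebraic stack and $k$ is finite, so $H^2$ of the Galois group with any coefficients vanishes by Lang's theorem applied to the smooth algebraic group $\Aut(\cE_0)$), and the set of isomorphism classes of $k$-forms of $\cE_0$ is a torsor under $H^1(\Gal(\ov k/k), \Aut(\cE_0)(\ov k))$. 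This first cohomology is itself finite, and the key input is that for a smooth affine algebraic group $A$ over the finite field $k$, $\#H^1(\Gal(\ov k/k), A(\ov k)) = \#\pi_0(A)(k)$ — this is the standard consequence of Lang's theorem (the neutral component contributes trivially, and $H^1$ with coefficients in the finite étale $\pi_0$ reduces to conjugacy classes in $\pi_0(\ov k)$ twisted by Frobenius, whose count is $\#\pi_0(A)(k)$ by Lang again). Here $A = \Aut(\cE_0)$ is affine of finite type (by properness of $X$) and reduced by Corollary \ref{c:aut red}, hence smooth, so these results apply.

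The second ingredient is that absolute indecomposability is a property of the $X_{\ov k}$-bundle preserved under passing between $k$-forms and their base change: if $\cE$ is a $k$-point of $\Bun_G^\g$ then $\Aut(\cE)_{\ov k} = \Aut(\cE_{\ov k})$, so $\cE$ is absolutely indecomposable in the sense of Definition \ref{d:ai} precisely when $\cE_{\ov k}$ is absolutely indecomposable, i.e. lies in $\AI_{G,X}^\g(k)$. Thus the map $\cE \mapsto \cE_{\ov k}$ restricts to a surjection from $\Bun_G^\g(k)^{\AI}$ onto $\AI_{G,X}^\g(k)$ (surjectivity is exactly the nonemptiness of the set of $k$-forms established above). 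Its fibers are the sets of $k$-forms of a fixed $\cE_0 \in \AI_{G,X}^\g(k)$.

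Now I would combine these. Summing $1/\#\pi_0(\Aut(\cE))(k)$ over $\cE \in \Bun_G^\g(k)^{\AI}$, group the sum according to the image $\cE_0 = \cE_{\ov k} \in \AI_{G,X}^\g(k)$. For a fixed $\cE_0$, the isomorphism classes of $k$-forms $\cE$ all satisfy $\#\pi_0(\Aut(\cE))(k) = \#\pi_0(\Aut(\cE_{\ov k}))(k)$ — here one has to be slightly careful: the inner twist changes $\Aut(\cE)$ by an inner form, but $\pi_0$ and the Frobenius action on it (up to the relevant conjugation) are unchanged, so the count $\#\pi_0(\Aut(\cE))(k)$ is constant over the fiber, call it $m(\cE_0)$. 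The number of $k$-forms of $\cE_0$ up to isomorphism equals $\#H^1(\Gal, \Aut(\cE_0)(\ov k))$, but one must quotient by the (finite) automorphism action: more precisely the number of isomorphism classes of $k$-forms is $\#\ker(H^1(k,\Aut(\cE_0)) \to \dots)$, and the clean statement is that this number equals $m(\cE_0)$ as well — this is the content of the twisting formula $\#\{k\text{-forms of }\cE_0\}/\!\cong \,= \#H^1(k, \Aut(\cE_0)(\ov k))$ together with $\#H^1(k,A(\ov k)) = \#\pi_0(A)(k)$. Hence the inner sum over the fiber of $\cE_0$ contributes $m(\cE_0) \cdot (1/m(\cE_0)) = 1$, and summing over $\cE_0 \in \AI_{G,X}^\g(k)$ gives $\#\AI_{G,X}^\g(k)$, which is the desired identity; finiteness of $\AI_{G,X}^\g(k)$ follows once we know the sum on the left is finite, which in turn follows from finiteness of $\Bun_G^\g(k)^{\AI}$ (the absolutely indecomposable locus of a fixed component has finitely many $k$-points — this is part of the broader finiteness package, and can be cited from the later counting results, or deduced from the fact that $\Bun_G^\g$ is of finite type after bounding instability, and indecomposable bundles have bounded Harder--Narasimhan type).

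\textbf{Main obstacle.} The delicate point is the interplay between ``isomorphism classes of $k$-forms'' and the groupoid-theoretic weight $1/\#\pi_0(\Aut)(k)$: one needs the precise statement that, for $A$ a smooth affine $k$-group, the weighted count $\sum_{\text{$k$-forms }\cE/\cong} 1/\#\pi_0(\Aut(\cE))(k) = 1$, which is a form of the orbit-counting (Burnside) identity applied to the groupoid of $\Fr$-conjugacy classes in $A(\ov k)$; equivalently it is the statement $\sum_{[c]\in H^1(k,A)} 1/\#A^{\mathrm{tw}_c}(k) \cdot (\text{something}) = 1$ packaged correctly. Getting this normalization exactly right — and verifying that $\#\pi_0(\Aut(\cE))(k)$ is genuinely constant on each descent-fiber, not merely up to inner twisting subtleties when $\pi_0$ is nonabelian — is where I expect to have to be most careful. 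Everything else is a routine application of Lang's theorem and descent.
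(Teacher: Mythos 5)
Your strategy is the same as the paper's: consider the base change map $\Bun^{\g}_{G}(k)^{\AI}\to \AI^{\g}_{G,X}(k)$, identify the fiber over a fixed $\ov\cE$ with the set of $k$-forms of $\ov\cE$ (i.e.\ with $\cohog{1}{k,\pi_{0}(\Aut(\ov\cE))}$), and show each fiber contributes $1$ to the weighted sum. However, the way you execute the fiber computation contains a genuine error, precisely at the point you flagged as delicate.

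You make two assertions that are both false when $A_{0}=\pi_{0}(\Aut(\cE_{0}))$ is nonabelian:
\begin{enumerate}
\item $\#\cohog{1}{k,A_{0}}=\#A_{0}(k)$, so the fiber has exactly $m(\cE_{0}):=\#\pi_{0}(\Aut(\cE_{0}))(k)$ isomorphism classes.
\item $\#\pi_{0}(\Aut(\cE_{\xi}))(k)=m(\cE_{0})$ for every $\xi$ in the fiber, so each term in the sum equals $1/m(\cE_{0})$.
\end{enumerate}
If both held, each fiber trivially contributes $m(\cE_{0})\cdot\frac{1}{m(\cE_{0})}=1$, which is how you argue. But take $A_{0}$ to be the constant group scheme $S_{3}$ over $k$ with trivial Frobenius action. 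Then $\cohog{1}{k,A_{0}}$ is the set of conjugacy classes in $S_{3}$, so $\#\cohog{1}{k,A_{0}}=3$, whereas $\#A_{0}(k)=6$, falsifying (1). The twisted form $A_{\xi}$ attached to the class of $c\in A_{0}(\ov k)$ has $\#A_{\xi}(k)=\#C_{A_{0}(\ov k)}(c)$, and these centralizer orders are $6,2,3$ as $c$ ranges over the identity, a transposition, a $3$-cycle; so (2) also fails. It is only the combination $\frac16+\frac12+\frac13=1$ that comes out right.

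The correct statement, which is what the paper uses, is not a constancy claim but the Burnside / orbit--stabilizer identity for the Frobenius-twisted conjugation action of $A_{0}(\ov k)$ on itself: the orbits are indexed by $\xi\in\cohog{1}{k,A_{0}}$ with stabilizers of size $\#A_{\xi}(k)$, so
\begin{equation*}
\sum_{\xi\in \cohog{1}{k,A_{0}}}\frac{\#A_{0}(\ov k)}{\#A_{\xi}(k)}=\#A_{0}(\ov k),
\quad\text{hence}\quad
\sum_{\xi\in \cohog{1}{k,A_{0}}}\frac{1}{\#A_{\xi}(k)}=1.
\end{equation*}
Your meta-commentary near the end ("a form of the orbit-counting (Burnside) identity") correctly names the needed ingredient, but the argument as written instead relies on the false assertion that $\#\pi_{0}(\Aut(\cE))(k)$ is constant across the descent fiber. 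You would need to replace claims (1) and (2) with the Burnside identity above to close the gap.
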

\begin{proof}
The finiteness of $\AI^{\g}_{G,X}(k)$ follows from the fact that $\Bun_{G}^{\g}(k)^{\AI}$ is contained in a finite union of Harder-Narasimhan strata, each of finite type.

Consider the base change map $\b: \Bun_{G}(k)^{\AI}\to \AI_{G,X}(k)$. Let $\cE_{0}\in\Bun_{G}(k)^{\AI}$, $\ov\cE=\b(\cE_{0})$ and consider the fiber $F_{\ov\cE}:=\b^{-1}(\ov\cE)$.  Then $F_{\ov\cE}$ is the groupoid of $k$-points of the stack $\pt/\Aut(\cE_{0})$ over $k$. The isomorphism classes of $F_{\ov\cE}$ are in bijection with $\cohog{1}{k, \Aut(\cE_{0})}\cong \cohog{1}{k,A_{0}}$, where $A_{0}=\pi_{0}(\Aut(\cE_{0}))$ as a finite algebraic group over $k$. For $\xi\in \cohog{1}{k,A_{0}}$ we get a point $\cE_{\xi}\in F_{\ov\cE}$, an inner form $A_{\xi}$ of $A_{0}$ over $k$, with an isomorphism $\pi_{0}(\Aut(\cE_{\xi}))\cong A_{\xi}$ over $k$. Thus we have
\begin{equation*}
\sum_{\cE\in F_{\ov\cE}}\frac{1}{\#\pi_{0}(\Aut(\cE))(k)}=\sum_{\xi\in \cohog{1}{k,A_{0}}}\frac{1}{\#A_{\xi}(k)}
\end{equation*}
The right side above is $1$ by the orbit-stabilizer relation applied to the Frobenius-twisted conjugation action of the finite group $A_{0}(\ov k)$ on itself. Thus
\begin{equation}\label{sum fiber Ebar}
\sum_{\cE\in F_{\ov\cE}}\frac{1}{\#\pi_{0}(\Aut(\cE))(k)}=1.
\end{equation}
Summing \eqref{sum fiber Ebar} over all points $\ov\cE\in \AI^{\g}_{G,X}(k)$, we get \eqref{AIG}.
\end{proof}

\begin{exam} Suppose $\ch(k)\ne 2$ and $G=\Sp_{2n}$. Let $(\cE, \om)$ be a symplectic vector bundle on $X$ of rank $2n$ whose underlying vector bundle $\cE$ is absolutely indecomposable. Let $c\in k^{\times}-(k^{\times})^{2}$ and consider the symplectic vector bundle $(\cE, c\om)$. Then $(\cE, \om)$ and $(\cE, c\om)$ are non-isomorphic over $k$ but become isomorphic over $\ov k$ (or the quadratic extension of $k$). They correspond to the same element in  $\AI_{G,X}(k)$, and $(\cE, c\om)$ is the only other point of $\Bun_{G}(k)$ that becomes isomorphic to $(\cE, \om)$ over $\ov k$. Note that $\pi_{0}(\Aut(\cE))\cong \mu_{2}$ so they each contribute $1/2$ to the right side of \eqref{AIG}, verifying the identity \eqref{AIG} in the current case.
\end{exam}

\subsection{Counting as an integral over Higgs moduli stack}\label{ss:counting Higgs}
From now on, we assume
\begin{itemize}
\item $p=\ch(k)$ is \goodp for $G$.
\item $X$ has a $k$-point $x$. 
\end{itemize}
For $\cE\in \Bun_{G}(k)$ and a trivialization of $\cE_{x}$, we get an evaluation map as in \S\ref{ss:ev map}
\begin{equation*}
\ev_{x}: \Aut(\cE)\to \Aut(\cE_{x})\cong G.
\end{equation*}
It induces a homomorphism of Lie algebras
\begin{equation}\label{Lie alg ev}
\frev_{x}: \aut(\cE)=\G(X,\Ad(\cE))\to \frg.
\end{equation}

\begin{lemma}\label{l:AI ess unip} A $G$-bundle $\cE$ over $X$ is absolutely indecomposable if and only if $\Im(\ev_{x})$ (which is a $G$-relevant subgroup of $G$) is essentially unipotent (see Definition \ref{d:ess unip}).
\end{lemma}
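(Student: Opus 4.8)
The statement is an equivalence, so I will prove both implications, exploiting the structural results on $\Aut(\cE)$ established in \S\ref{s:aut}, in particular Corollary \ref{c:aut red} (reducedness of $\Aut(\cE)$), Corollary \ref{c:ev image rel} (the image of $\ev_x$ is $G$-relevant), and Lemma \ref{l:ker ev unip} (the kernel of $\ev_x$ has unipotent reduced structure). The key point is that ``absolutely indecomposable'' is a statement about the torus part of $\Aut(\cE)/C_G$, while ``essentially unipotent'' is a statement about the torus part of $\Im(\ev_x)/C_G$, and $\ev_x$ is an isomorphism between the maximal tori on the two sides.

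\textbf{First implication.} Suppose $\Im(\ev_x)=:H_x$ is essentially unipotent, i.e.\ $C_G$ is the maximal torus of $H_x$ (Definition \ref{d:ess unip}). I want to show $\Aut(\cE)/C_G$ contains no nontrivial torus. Pick any torus $A\subset \Aut(\cE)$; since $\Aut(\cE)$ is reduced (Corollary \ref{c:aut red}), $A$ is an honest torus. By Lemma \ref{l:ker ev unip}, $\ker(\ev_x)$ has unipotent reduced structure, so $A\cap\ker(\ev_x)$ is trivial and $\ev_x|_A: A\isom \ev_x(A)$ is an isomorphism onto a subtorus of $H_x$. But every subtorus of $H_x$ is contained in a maximal torus of $H_x$, which is $C_G$. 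Hence $A\subset C_G$ after identification, i.e.\ the image of $A$ in $\Aut(\cE)/C_G$ is trivial. Since $\Aut(\cE)$ is reduced, every torus in the quotient $\Aut(\cE)/C_G$ lifts (up to isogeny, or after passing to a maximal torus) to a torus in $\Aut(\cE)$; therefore $\Aut(\cE)/C_G$ contains no nontrivial torus, i.e.\ $\cE$ is absolutely indecomposable. (Here one should be slightly careful: a maximal torus of $\Aut(\cE)/C_G$ is the image of a maximal torus $A$ of $\Aut(\cE)$, which contains $C_G$, so $\dim A/C_G$ measures the torus rank of the quotient; the argument above shows $\dim A = \dim C_G$.)

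\textbf{Second implication.} Conversely, suppose $H_x$ is not essentially unipotent, so its maximal torus $A_x$ strictly contains $C_G$ (by Corollary \ref{c:ev image rel}, $A_x$ is a $G$-relevant torus, and by Definition \ref{d:ess unip} essentially unipotent is exactly $A_x=C_G$). I need to produce a nontrivial torus in $\Aut(\cE)/C_G$. Following the proof of Corollary \ref{c:ev image rel}, $\ev_x^{-1}(A_x)$ is a connected solvable subgroup of $\Aut(\cE)$ with unipotent radical $\ker(\ev_x)$, and any maximal torus $A\subset \ev_x^{-1}(A_x)$ maps isomorphically to $A_x$ under $\ev_x$; moreover $A$ is a maximal torus of $\Aut(\cE)$. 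Then $A\supset C_G$ (since $C_G\subset Z(G)$ acts on any $\cE$ and is central in $\Aut(\cE)$, and $\ev_x(C_G)=C_G\subset A_x$), and $\dim A/C_G = \dim A_x/C_G > 0$. Hence $A/C_G$ is a nontrivial torus in $\Aut(\cE)/C_G$, so $\cE$ is not absolutely indecomposable. This completes the equivalence.

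\textbf{Expected main obstacle.} The mathematical content is essentially bookkeeping with the already-established results; the one point requiring care is the precise relationship between tori in $\Aut(\cE)$ and tori in the quotient $\Aut(\cE)/C_G$, namely that the maximal torus rank of $\Aut(\cE)/C_G$ equals $\dim A - \dim C_G$ for $A$ a maximal torus of $\Aut(\cE)$, and that $C_G$ is indeed always contained in every maximal torus of $\Aut(\cE)$. This follows from $C_G$ being a central torus in $\Aut(\cE)$ (it is central because $C_G\subset Z(G)$ acts on $\cE$ commuting with all automorphisms), so $C_G$ lies in every maximal torus, and the quotient of a reductive-by-unipotent group by a central torus behaves well. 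I would state this as a short preliminary observation before giving the two implications.
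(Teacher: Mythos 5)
Your proof is correct and takes essentially the same approach as the paper: both hinge on the observation that $\ker(\ev_x)$ is reduced (Corollary \ref{c:aut red}) and unipotent (Lemma \ref{l:ker ev unip}), so $\ev_x$ induces a bijection on maximal tori and maps the central $C_G\subset\Aut(\cE)$ isomorphically to $C_G\subset G$. The paper compresses both implications into a single sentence; your version simply unpacks it, and the parenthetical correctly fixes the one imprecise jump (identifying $A$ with $\ev_x(A)$ does not by itself place $A$ inside $C_G\subset\Aut(\cE)$ — the cleaner route, as you note at the end, is that $C_G$ is central so any maximal torus $A$ of $\Aut(\cE)$ contains $C_G$, and then $\dim A = \dim\ev_x(A)\leq\dim C_G$ forces $A=C_G$).
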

\begin{proof} By Lemma \ref{l:ker ev unip}, $\ker(\ev_{x})$ is a unipotent group (since it is reduced by Corollary \ref{c:aut red}).  Therefore $C_{G}$ (viewed as a subgroup of $\Aut(\cE)$) is the maximal torus in $\Aut(\cE)$ (i.e., $\cE$ is absolutely indecomposable) if and only if its image under $\ev_{x}$ (which is $C_{G}\subset G$) is the maximal torus in $\Im(\ev_{x})$ (i.e., $\Im(\ev_{x})$ is essentially unipotent).
\end{proof}

\sss{The Higgs moduli stack}
Let $\om$ be the sheaf of $1$-forms on $X$. Since $X$ has dimension $1$, a deformation calculation shows that $\Bun_{G}$ is a smooth Artin stack. Let $\cM_{G}=T^{*}\Bun_{G}$ be the (classical) cotangent stack of $\Bun_{G}$. It classifies pairs $(\cE, \ph)$ where $\cE$ is a $G$-bundle on $X$ and $\ph$ is a global section of $\Ad^{*}(\cE)\ot \om$. 

We consider a variant of $\cM_{G}$. Let $\cM_{G,x}$ be the moduli stack of pairs $(\cE, \ph)$ where $\cE$ is a $G$-bundle on $X$ and $\ph$ is a section of $\Ad^{*}(\cE)\ot \om(x)$.  Taking residue of $\ph$ at $x$ gives a map
\begin{equation*}
\res_{x}: \cM_{G,x}\to [\frg^{*}/G].
\end{equation*}
Let 
\begin{equation*}
b: \cM_{G,x}\to \Bun_{G}
\end{equation*}
be the forgetful map. For $\g\in \pi_{0}(\Bun_{G})(k)$, let $\cM^{\g}_{G,x}\subset \cM_{G,x}$ be the preimage of $\Bun_{G}^{\g}$ under $b$.

\sss{The function $f_{x}$}\label{sss:fx}
Let $\xi: \frc(k)\to \Qlbar$ be a selection function, then $\chi^{*}\xi$ is an $\Ad(G)$-invariant function on $\frg$.  Let $f_{x}=\FT(\chi^{*}\xi): \frg^{*}\to \Qlbar$ be the Fourier transform of $\chi^{*}\xi$. We use the following normalization of Fourier transform for functions $f$ on $\frg$:
\begin{equation}\label{FT}
\FT(f)(u)=\sum_{v\in \frg}f(v)\psi(\j{u,v}), \quad u\in \frg^{*}.
\end{equation}
Since $f_{x}$ is invariant under the coadjoint action of $G(k)$, we may view $f_{x}$ as a function on $[\frg^{*}/G](k)=\frg^{*}/G(k)$. 

\begin{lemma}\label{l:b!}
For the function $f_{x}$ on $\frg^{*}/G(k)$ constructed from a selection function $\xi$ on $\frc(k)$ as above, the function $b_{!}(\res_{x}^{*}f_{x}): \Bun_{G}(k)\to \Qlbar$ is supported on absolutely indecomposable $G$-bundles, where it takes the  value $q^{h^{0}(\Ad^{*}(\cE)\ot\om)+\dim G}.$
\end{lemma}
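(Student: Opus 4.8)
The strategy is to compute $b_!(\res_x^*f_x)$ at a fixed $\cE\in\Bun_G(k)$ by unwinding the pushforward along $b$, interpreting the fiber $b^{-1}(\cE)$, and then applying the selection function machinery. By definition of the groupoid pushforward,
\begin{equation*}
b_!(\res_x^*f_x)(\cE)=\sum_{(\cE',\ph)/\cong,\ \cE'\cong\cE}\frac{1}{\#\Aut(\cE,\ph)(k)}f_x(\res_x\ph),
\end{equation*}
where the sum runs over isomorphism classes of Higgs fields on the fixed bundle $\cE$. The fiber of $b$ over $\cE$ is the quotient groupoid $[H^0(X,\Ad^*(\cE)\ot\om(x))/\Aut(\cE)(k)]$, the vector space $V_x:=H^0(X,\Ad^*(\cE)\ot\om(x))$ of Higgs fields carrying the natural $\Aut(\cE)$-action. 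So the above sum equals, by the orbit-counting lemma for a finite group acting on a finite set (here $\Aut(\cE)(k)$ acting on $V_x(k)$),
\begin{equation*}
b_!(\res_x^*f_x)(\cE)=\frac{1}{\#\Aut(\cE)(k)}\sum_{\ph\in V_x(k)}f_x(\res_x\ph).
\end{equation*}
First I would make this reduction precise, including checking that $\Aut(\cE)(k)$ is finite on the relevant locus — but actually $\Aut(\cE)$ need not be finite, so I must be careful: $V_x$ is finite-dimensional (as $X$ is projective of dimension one), and the sum over $\ph$ is genuinely finite, so the expression $\frac{1}{\#\Aut(\cE)(k)}\sum_\ph f_x(\res_x\ph)$ makes sense once we know $\Aut(\cE)$ is reduced (Corollary \ref{c:aut red}) — but the quotient groupoid pushforward is only literally the displayed formula when $\Aut(\cE)$ is finite. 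I would instead argue directly with the groupoid: the isomorphism classes of Higgs fields are the $\Aut(\cE)(k)$-orbits on $V_x(k)$, weighted by reciprocal stabilizer order, and this weighted count of orbits of a (possibly positive-dimensional but reduced) algebraic group acting on a finite set equals $\frac{1}{\#\Aut(\cE)(k)}\sum_\ph f_x(\res_x\ph)$ provided $f_x\circ\res_x$ is $\Aut(\cE)(k)$-invariant, which it is since $f_x$ is $G(k)$-coadjoint invariant and $\res_x$ is $\Aut(\cE)$-equivariant via $\ev_x$. If $\Aut(\cE)$ has positive dimension one must absorb the (infinite) automorphism groups correctly; the cleanest route is to pass to the finite quotient $\Aut(\cE)/\Aut(\cE)^\c$ after first quotienting $V_x$ by the connected group action, but since the final answer should not see positive-dimensional automorphisms this requires the locus where $b_!$ is supported to consist of $\cE$ with $\Aut(\cE)$ having no torus mod center; I expect this to dovetail with the absolute indecomposability claim.

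\textbf{The key step: evaluating $\sum_{\ph\in V_x(k)}f_x(\res_x\ph)$.} By the Fourier inversion/Plancherel formula, since $f_x=\FT(\chi^*\xi)$, summing $f_x$ over a subspace pulls back to evaluating $\chi^*\xi$ on the annihilator of that subspace — more precisely, the residue map $\res_x\colon V_x\to\frg^*$ is a linear map, so
\begin{equation*}
\sum_{\ph\in V_x(k)}f_x(\res_x\ph)=\sum_{\ph\in V_x(k)}\sum_{v\in\frg}(\chi^*\xi)(v)\,\psi(\j{\res_x\ph,v})=\sum_{v\in\frg}(\chi^*\xi)(v)\sum_{\ph\in V_x(k)}\psi(\j{v,\res_x\ph}).
\end{equation*}
The inner sum over $\ph$ of the additive character $\psi(\j{v,\res_x\ph})$ is $\#V_x(k)=q^{h^0(\Ad^*(\cE)\ot\om(x))}$ if the linear functional $\ph\mapsto\j{v,\res_x\ph}$ on $V_x$ vanishes identically, and $0$ otherwise. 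The functional $\ph\mapsto\j{v,\res_x\ph}$ vanishes precisely when $v$ is orthogonal to $\res_x(V_x)\subset\frg^*$, i.e. $v\in\res_x(V_x)^\perp\subset\frg$. Now the crucial geometric input: $\res_x(V_x)^\perp$ is exactly the image of $\frev_x\colon\aut(\cE)=H^0(X,\Ad(\cE))\to\frg$ from \eqref{Lie alg ev}. This is a Serre-duality statement — $H^0(X,\Ad(\cE))$ and $H^0(X,\Ad^*(\cE)\ot\om(x))$ are paired, and the residue pairing identifies the orthogonal complement of the image of global sections twisted by $\om(x)$ with the image of global untwisted sections; concretely it is the long exact cohomology sequence for $0\to\Ad(\cE)\to\Ad(\cE)(x)\to\Ad(\cE)|_x\to 0$ dualized. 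I would state and prove this duality as a lemma: $\res_x(V_x)^\perp=\Im(\frev_x)$ inside $\frg$, where $\frg\cong\frg^{*}$-dual pairing is via the chosen trivialization at $x$.

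\textbf{Assembling and identifying $\dim G$.} Putting the pieces together,
\begin{equation*}
b_!(\res_x^*f_x)(\cE)=\frac{q^{h^0(\Ad^*(\cE)\ot\om(x))}}{\#\Aut(\cE)(k)}\sum_{v\in\Im(\frev_x)}(\chi^*\xi)(v).
\end{equation*}
Since $\frev_x$ is the differential of $\ev_x$ and $\Aut(\cE)$ is reduced (Corollary \ref{c:aut red}), $\Im(\frev_x)=\Lie H$ where $H=\Im(\ev_x)\subset G$ is a $G$-relevant subgroup (Corollary \ref{c:ev image rel}); thus the inner sum is $\j{\one_{\frh},\chi^*\xi}_{\frg}\cdot\#\ker(\ev_x)(k)$-type expression — more carefully, $\sum_{v\in\Im(\frev_x)}(\chi^*\xi)(v)=\sum_{v\in\frh(k)}(\chi^*\xi)(v)$ where $\frh=\Lie H$. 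By Proposition \ref{p:sel fun}, this vanishes unless $H$ is essentially unipotent, which by Lemma \ref{l:AI ess unip} happens exactly when $\cE$ is absolutely indecomposable — giving the support statement. On the absolutely indecomposable locus, $H$ essentially unipotent means $\frh=\Lie H$ with $H^\c/C_G$ unipotent, so $\chi^*\xi$ restricted to $\frh$ is constant with value $\xi(\chi(0))$-shifted-by-center $=1$ (using the second defining property of a selection function, $\xi(\chi(z))=1$ for $z\in\frz$, and that $\chi$ kills the nilpotent directions), hence $\sum_{v\in\frh(k)}(\chi^*\xi)(v)=\#\frh(k)=q^{\dim\frh}$. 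Then
\begin{equation*}
b_!(\res_x^*f_x)(\cE)=\frac{q^{h^0(\Ad^*(\cE)\ot\om(x))+\dim\frh}}{\#\Aut(\cE)(k)}.
\end{equation*}
Finally I must match $\dim\frh-\log_q\#\Aut(\cE)(k)$ with $\dim G$ up to the $h^0$ term. Here $\dim\frh=\dim\Im(\ev_x)=\dim\Aut(\cE)-\dim\ker(\ev_x)$, and $\#\Aut(\cE)(k)$: since $\Aut(\cE)$ is connected-by-finite and on the AI locus $\ker(\ev_x)$ is unipotent with $\#\ker(\ev_x)(k)=q^{\dim\ker(\ev_x)}$, while $\Im(\ev_x)=H$ is essentially unipotent so $\#H(k)=q^{\dim\frh}\cdot\#\pi_0(H)(k)/\#(\text{stuff})$ — I need $\#\Aut(\cE)(k)=q^{\dim\Aut(\cE)}$, i.e. $\Aut(\cE)^\c$ is \emph{split} unipotent mod center in a way making its point count a pure power of $q$; combined with $\dim\Aut(\cE)=h^0(\Ad(\cE))$ and the Euler characteristic relation $h^0(\Ad^*(\cE)\ot\om(x))=h^1(\Ad(\cE))+\dim G=h^0(\Ad(\cE))+\dim G$ (Serre duality plus $\deg\Ad(\cE)=0$ plus the extra $+\dim G$ from twisting by $\om(x)$ versus $\om$, since $\Ad(\cE)|_x$ has dimension $\dim G$), we get $h^0(\Ad^*(\cE)\ot\om(x))+\dim\frh-\dim\Aut(\cE)$. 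I expect the bookkeeping here — reconciling $\dim\frh$, $\dim\ker(\ev_x)$, the point count of $\Aut(\cE)$, and the two twists — to be the main obstacle, though it is entirely linear-algebraic/Euler-characteristic once the Serre duality lemma $\Im(\frev_x)=\res_x(V_x)^\perp$ is in hand.
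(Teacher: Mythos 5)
Your very first step — the computation of the fiber groupoid of $b$ — is wrong, and this introduces a spurious factor of $1/\#\Aut(\cE)(k)$ that persists to the end of your argument. By the paper's definition (\S\ref{sss:gpoid integral}), $(b_!f)(\cE)$ sums over the \emph{2-fiber product} over the single object $\cE$: its objects are pairs $((\cE',\ph),\,\alpha:\cE'\isom\cE)$, and an automorphism of such a pair is an automorphism $\gamma$ of $(\cE',\ph)$ with $\alpha\circ\gamma=\alpha$, which forces $\gamma=\id$. Hence the fiber groupoid is discrete and its isomorphism classes are in bijection (via $\ph\mapsto\alpha_*\ph$) with the plain set $V_x=\upH^0(X,\Ad^*(\cE)\ot\om(x))$, so
\begin{equation*}
(b_!\res_x^*f_x)(\cE)=\sum_{\ph\in V_x}f_x(\res_x\ph),
\end{equation*}
with no division by $\#\Aut(\cE)(k)$. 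What you wrote down is instead the weighted count of the \emph{preimage} groupoid $b^{-1}([\cE])\simeq[V_x/\Aut(\cE)(k)]$, which differs by exactly a factor of $1/\#\Aut(\cE)(k)$; a quick sanity check is that the paper's $\int_V b_!f=\int_U f$ fails with your version (try $U=\pt\to V=[\pt/G]$ for $G$ finite). The entire last paragraph of your proposal — the agonizing over matching $\log_q\#\Aut(\cE)(k)$ with $\dim G$, and the (incorrect) expectation that $\#\Aut(\cE)(k)=q^{\dim\Aut(\cE)}$ — is a symptom of this initial error and disappears once you drop the denominator.

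The remainder of your argument is sound, and once the $b_!$ step is fixed it does prove the lemma. Your Fourier-inversion/character-sum evaluation of $\sum_{\ph\in V_x}f_x(\res_x\ph)$ (interchanging the sum and using $\sum_\ph\psi(\langle v,\res_x\ph\rangle)=\#V_x$ if $v\in\Im(\res_x)^\perp$, else $0$) is a legitimate variant of the paper's computation — the paper instead first pushes forward to $\Im(\res_x)$, getting a factor $q^{h^0(\Ad^*(\cE)\ot\om)}$, and then applies Plancherel in the form $\langle\one_{\frh^\perp},\FT(\chi^*\xi)\rangle=q^{\dim\frh^\perp}\langle\one_\frh,\chi^*\xi\rangle$; the two computations agree via $h^0(\Ad^*(\cE)\ot\om(x))=h^0(\Ad^*(\cE)\ot\om)+\dim G-\dim\frh_x$, which is forced by exactness of the residue sequence. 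Your identification $\Im(\res_x)^\perp=\Im(\frev_x)$ via Serre duality and the long exact sequence for $0\to\Ad^*(\cE)\ot\om\to\Ad^*(\cE)\ot\om(x)\to i_{x*}\frg^*\to0$ is exactly the paper's lemma-within-the-proof. The invocation of Corollary \ref{c:ev image rel}, Proposition \ref{p:sel fun}, and Lemma \ref{l:AI ess unip} for the support statement, and the observation that $\chi^*\xi\equiv 1$ on $\frh$ when $H$ is essentially unipotent (since $\chi(\frh)=\chi(\frz)$), match the paper. So: fix the first displayed equation to $(b_!\res_x^*f_x)(\cE)=\sum_{\ph\in V_x}f_x(\res_x\ph)$, delete the orbit-counting detour and the entire final paragraph, and you recover the lemma as stated.
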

\begin{proof}
The value of $b_{!}(\res_{x}^{*}f_{x})$ at $\cE\in \Bun_{G}(k)$ is 
\begin{equation}\label{value of b!}
\sum_{\ph\in \upH^{0}(X,\Ad^{*}(\cE)\ot\om(x))}f_{x}(\res_{x}\ph).
\end{equation}
Choose a trivialization of $\cE_{x}$, we have a short exact sequence of coherent sheaves on $X$
\begin{equation*}
\xymatrix{0\ar[r] &  \Ad^{*}(\cE)\ot \om \ar[r] & \Ad^{*}(\cE)\ot \om(x) \ar[r] & i_{x*} \frg^{*}\ar[r] & 0}
\end{equation*}
where $i_{x}: \{x\}\incl X$ is the inclusion. This gives a long exact sequence on cohomology
\begin{equation*}
\xymatrix{0 \ar[r] & \upH^{0}(\Ad^{*}(\cE)\ot\om)\ar[r] & \upH^{0}(\Ad^{*}(\cE)\ot\om(x)) \ar[r]^-{\res_{x}} & \frg^{*} \ar[r]^-{\d} & \upH^{1}(X,\Ad^{*}(\cE)\ot \om ).}
\end{equation*}
Grothendieck-Serre duality identifies $\upH^{1}(X,\Ad^{*}(\cE)\ot \om )$ with the dual of $\upH^{0}(X, \Ad(\cE))=\aut(\cE)$, under which the connecting homomorphism $\d$ above becomes the adjoint of the evaluation map $\frev_{x}$ in \eqref{Lie alg ev}. Let $H_{x}\subset G$ be the image of $\ev_{x}$, and $\frh_{x}=\Lie(H_{x})\subset \frg$ be the image of $\frev_{x}$. The exactness of the above sequence at $\frg^{*}$ implies
\begin{equation*}
\Im(\res_{x})=\frh_{x}^{\bot}.
\end{equation*}
Therefore \eqref{value of b!} can be written as
\begin{equation}\label{value of b! 1}
q^{h^{0}(\Ad^{*}(\cE)\ot\om)}\j{\one_{\frh^{\bot}_{x}}, f_{x}}_{\frg^{*}}.
\end{equation}
By the Plancherel formula (note our normalization of Fourier transform \eqref{FT}), we have
\begin{equation}\label{value of b! 2}
\j{\one_{\frh^{\bot}_{x}}, f_{x}}_{\frg^{*}}=\j{\one_{\frh^{\bot}_{x}}, \FT(\chi^{*}\xi)}_{\frg^{*}}=q^{\dim \frh_{x}^{\bot}}\j{\one_{\frh_{x}}, \chi^{*}\xi}_{\frg}.
\end{equation}

By Corollary \ref{c:ev image rel}, $H_{x}\subset G$ is a relevant subgroup, and hence $\frh_{x}\subset \frg$ is a relevant subalgebra. By Proposition \ref{p:sel fun}, the right side is zero unless $H_{x}$ is essentially unipotent, which is equivalent to that $\cE$ is absolutely indecomposable by Lemma \ref{l:AI ess unip}. When $H^{\c}_{x}$ is unipotent, $\frh_{x}$ consists of nilpotent elements, on which $\chi^{*}\xi$ takes value $1$. Therefore, the right side of \eqref{value of b! 2} is 
\begin{equation*}
q^{\dim \frh_{x}^{\bot}}q^{\dim \frh_{x}}=q^{\dim G}.
\end{equation*}
Combined with \eqref{value of b! 1} we see that the value of $b_{!}(\res_{x}^{*}f_{x})$ at an absolutely indecomposable $\cE\in \Bun_{G}(k)$ is $q^{h^{0}(\Ad^{*}(\cE)\ot \om)+\dim G}$.
\end{proof}

Let $\g\in \pi_{0}(\Bun_{G})(k)$. The above lemma then allows us to make sense of the a priori infinite summation
\begin{equation*}
\int_{\cM^{\g}_{G,x}(k)}\res_{x}^{*}f_{x}.
\end{equation*}
Indeed, we interpret the above as
\begin{equation}\label{int b! fin}
\int_{\Bun^{\g}_{G}(k)}b_{!}\res_{x}^{*}f_{x},
\end{equation}
and by Lemma \ref{l:b!}, the integrand is supported on the subgroupoid $\Bun^{\g}_{G}(k)^{{\AI}}$, which is finite by Lemma \ref{lem:AIG}.

\begin{cor}\label{c:G-stable counting of AI} For a selection function $\xi$ on $\frc(k)$ and the resulting function $f_{x}=\FT(\chi^{*}\xi )$ on $\frg^{*}/G(k)$, we have
\begin{equation*}
\# \AI^{\g}_{G,X}(k)=\frac{\#C_{G}(k)}{q^{g\dim G+\dim C_{G}}}\int_{\cM^{\g}_{G,x}(k)}\res_{x}^{*}f_{x}
\end{equation*}
for each $\g\in \pi_{0}(\Bun_{G})(k)$ (under the interpretation of the right side as \eqref{int b! fin}). 
\end{cor}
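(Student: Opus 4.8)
The plan is to assemble Corollary \ref{c:G-stable counting of AI} from the pieces already in place, essentially by unwinding the definitions and applying Lemma \ref{l:b!} together with Lemma \ref{lem:AIG}. First I would fix $\g\in \pi_0(\Bun_G)(k)$ and rewrite $\int_{\cM^\g_{G,x}(k)}\res_x^*f_x$ as $\int_{\Bun^\g_G(k)}b_!\res_x^*f_x$, which is the definition \eqref{int b! fin} adopted just above the statement; this reduction uses that $b_!$ is defined by summing along fibres of $b$, and that the double sum over $\cM^\g_{G,x}(k)$ rearranges into a sum over $\Bun^\g_G(k)$ weighted by inner automorphisms exactly as in \S\ref{sss:gpoid integral}. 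The only subtlety here is that the sum over $\cM^\g_{G,x}(k)$ is a priori infinite, but Lemma \ref{l:b!} guarantees $b_!\res_x^*f_x$ is supported on the finite groupoid $\Bun^\g_G(k)^{\AI}$, so the rearrangement is legitimate.

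Next I would substitute the value of $b_!\res_x^*f_x$ supplied by Lemma \ref{l:b!}: on an absolutely indecomposable $\cE$ it equals $q^{h^0(\Ad^*(\cE)\ot\om)+\dim G}$, and it vanishes otherwise. Therefore
\begin{equation*}
\int_{\cM^\g_{G,x}(k)}\res_x^*f_x = \sum_{\cE\in \Bun^\g_G(k)^{\AI}}\frac{q^{h^0(\Ad^*(\cE)\ot\om)+\dim G}}{\#\Aut(\cE)(k)}.
\end{equation*}
The next step is to evaluate $q^{h^0(\Ad^*(\cE)\ot\om)}/\#\Aut(\cE)(k)$. Writing $\Aut(\cE)$, which is reduced by Corollary \ref{c:aut red}, in terms of its Lie algebra $\aut(\cE)=\upH^0(X,\Ad(\cE))$, we have $\#\Aut(\cE)(k)=q^{\dim \aut(\cE)}\cdot(\text{correction from }\pi_0)$; more precisely $\#\Aut(\cE)(k)=q^{h^0(\Ad(\cE))}\#\pi_0(\Aut(\cE))(k)$ because the neutral component $\Aut(\cE)^\c$ is a connected group of dimension $h^0(\Ad(\cE))$ over a finite field, hence has $q^{h^0(\Ad(\cE))}$ rational points, and the fibration $\Aut(\cE)^\c\to \Aut(\cE)\to \pi_0(\Aut(\cE))$ contributes the component factor. (Here I use that $\Aut(\cE)^\c$ is a successive extension of a unipotent group by a torus that is actually $C_G$ when $\cE$ is absolutely indecomposable; in any case a connected algebraic group over $\FF_q$ has exactly $q^{\dim}$ rational points.) Grothendieck–Serre duality, already invoked in the proof of Lemma \ref{l:b!}, identifies $\upH^1(X,\Ad^*(\cE)\ot\om)$ with the dual of $\upH^0(X,\Ad(\cE))$, and combined with Riemann–Roch for the vector bundle $\Ad^*(\cE)\ot\om$ on the genus-$g$ curve $X$ this gives
\begin{equation*}
h^0(\Ad^*(\cE)\ot\om)=h^0(\Ad(\cE))+\deg(\Ad^*(\cE)\ot\om)+(\rk\,\Ad^*(\cE))(1-g),
\end{equation*}
and since $\Ad^*(\cE)$ has rank $\dim G$ and degree $0$ (the coadjoint bundle, like the adjoint bundle, has trivial determinant because $\Ad$ factors through $G^{\ad}$ up to the torus part; at worst the degree is accounted for by $\deg\om=2g-2$ uniformly), while $\deg\om=2g-2$, one gets $h^0(\Ad^*(\cE)\ot\om)=h^0(\Ad(\cE))+(\dim G)(g-1)$. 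I should double-check the degree bookkeeping for $\Ad^*(\cE)\ot\om$ carefully, as this is where the exponent $g\dim G$ on the right-hand side ultimately comes from: combining, $q^{h^0(\Ad^*(\cE)\ot\om)+\dim G}/\#\Aut(\cE)(k)=q^{(\dim G)(g-1)+\dim G}/\#\pi_0(\Aut(\cE))(k)=q^{g\dim G}/\#\pi_0(\Aut(\cE))(k)$, up to the discrepancy between $\dim G$ and $\dim C_G$ which is why the $\#C_G(k)/q^{\dim C_G}$ prefactor appears — namely $\#\Aut(\cE)^\c(k)=q^{h^0(\Ad(\cE))-\dim C_G}\#C_G(k)$ when one keeps track of the torus $C_G$ separately, since $C_G\cong(\Gm)^{\dim C_G}$ contributes $\#C_G(k)$ rather than $q^{\dim C_G}$ rational points.

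Finally I would sum over $\cE\in\Bun^\g_G(k)^{\AI}$ and recognize, via Lemma \ref{lem:AIG}, that $\sum_{\cE\in\Bun^\g_G(k)^{\AI}}1/\#\pi_0(\Aut(\cE))(k)=\#\AI^\g_{G,X}(k)$. Rearranging the constants yields exactly
\begin{equation*}
\#\AI^\g_{G,X}(k)=\frac{\#C_G(k)}{q^{g\dim G+\dim C_G}}\int_{\cM^\g_{G,x}(k)}\res_x^*f_x.
\end{equation*}
I expect the main obstacle — really the only place requiring genuine care rather than bookkeeping — to be the precise point count $\#\Aut(\cE)(k)$ in terms of $h^0(\Ad(\cE))$, $\dim C_G$, $\#C_G(k)$, and $\#\pi_0(\Aut(\cE))(k)$, together with the matching Riemann–Roch/duality computation of $h^0(\Ad^*(\cE)\ot\om)$; these must be reconciled so that the $q$-powers and the split-torus factor $\#C_G(k)$ come out with exactly the exponents in the statement. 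Once that arithmetic is pinned down, the rest is a direct assembly of Lemma \ref{l:b!} and Lemma \ref{lem:AIG}.
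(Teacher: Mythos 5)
Your proof follows essentially the same route as the paper's: Lemma \ref{l:b!} computes $b_!\res_x^*f_x$ on $\Bun^{\g}_{G}(k)^{\AI}$, Serre duality plus Riemann--Roch give $h^0(\Ad^*(\cE)\ot\om)-h^1(\Ad^*(\cE)\ot\om)=(g-1)\dim G$, and Lemma \ref{lem:AIG} converts the remaining sum weighted by $1/\#\pi_0(\Aut(\cE))(k)$ into $\#\AI^{\g}_{G,X}(k)$. One point should be cleaned up: the claim that ``a connected algebraic group over $\FF_q$ has exactly $q^{\dim}$ rational points'' is false (already for $\Gm$), so the intermediate formula $\#\Aut(\cE)(k)=q^{h^0(\Ad(\cE))}\#\pi_0(\Aut(\cE))(k)$ is not correct and should not appear. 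The count you actually need --- and the one the paper states directly --- is that $\Aut(\cE)^{\c}/C_{G}$ is unipotent precisely because $\cE$ is absolutely indecomposable, whence $\#\Aut(\cE)^{\c}(k)=\#C_{G}(k)\cdot q^{\dim\Aut(\cE)-\dim C_{G}}$; the factor $\#C_{G}(k)/q^{\dim C_{G}}$ is the genuine contribution of the torus $C_{G}$ (which need not be split, so $C_{G}\cong\Gm^{\dim C_{G}}$ is also unwarranted), not an after-the-fact patch on a naive $q^{\dim}$ count.
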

\begin{proof}
Let $\cE\in \Bun^{\g}_{G}(k)^{\AI}$. Since $\Aut(\cE)^{\c}/C_{G}$ is unipotent, we have
\begin{equation*}
\#\Aut(\cE)^{\c}(k)=\#C_{G}(k)\cdot q^{\dim \Aut(\cE)-\dim C_{G}}=\frac{\#C_{G}(k)}{q^{\dim C_{G}}}q^{h^{0}(\Ad(\cE))}=\frac{\#C_{G}(k)}{q^{\dim C_{G}}}q^{h^{1}(\Ad^{*}(\cE)\ot\om)}
\end{equation*}
where the last equality uses Serre duality.  By Lemma \ref{l:b!}, the contribution of $\cE$ to $\int_{\cM^{\g}_{G,x}(k)}\res_{x}^{*}f_{x}$ is 
\begin{eqnarray}
&&\frac{q^{h^{0}(\Ad^{*}(\cE)\ot \om)+\dim G}}{\#\Aut(\cE)(k)}=\frac{q^{\dim C_{G}}}{\#C_{G}(k)}\frac{q^{h^{0}(\Ad^{*}(\cE)\ot\om)-h^{1}(\Ad^{*}(\cE)\ot\om)+\dim G}}{\#\pi_{0}(\Aut(\cE))(k)}\\
&=&\frac{q^{\dim C_{G}}}{\#C_{G}(k)}\frac{q^{(g-1)\dim G+\dim G}}{\#\pi_{0}(\Aut(\cE))(k)}=\frac{q^{\dim C_{G}}}{\#C_{G}(k)}\frac{q^{g\dim G}}{\#\pi_{0}(\Aut(\cE))(k)}.
\end{eqnarray}
The result now follows from Lemma \ref{lem:AIG}.
\end{proof}

Next we plug-in the selection function constructed in Proposition \ref{p:cons sel fn from t} from a $W$-coregular admissible collection $\l=(\l_w)_{w\in W}$, where $\l_w\in \frt_w^*$. 

For any $c\in \frc^*(k)$, let $\cM_{G,x}(c)\subset \cM_{G,x}$ be the fiber over $c$ of the map
\begin{equation*}
    \cM_{G,x}\xr{\res_x} [\frg^*/G]\to \frc^*.
\end{equation*}
Let $\cM^\g_{G,x}(c)=\cM^\g_{G,x}\cap \cM_{G,x}(c)$.

Note that the $W$-coregular locus in $\frt^*$ is a $W$-invariant open subset. It thus descends to an open subset $(\frc^*)^{\coreg}\subset \frc^*$. We call points in $(\frc^*)^{\coreg}$ {\em $W$-coregular}. We shall prove in Lemma \ref{l:coreg lam stable}(2) that for any $W$-coregular $c\in \frc^*(\ov k)$, $\cM_{G,x}^\g(c)$ is of finite type over $\ov k$. In particular, if $c\in \frc^*(k)$ is $W$-coregular, $\cM_{G,x}^\g(c)$ is of finite type over $k$, and it has finitely many $k$-points.

\begin{cor}\label{c:AI M Spr} Assume $q=\#k$ is large enough with respect to $W$ so that a $W$-coregular admissible collection $\l=\{\l_{w}\}_{w\in W}$ exists (see Lemma \ref{l:adm exist}). Let $c_w\in \frc^*(k)$ be the image of $\l_w$. 

Then for each $\g\in \pi_{0}(\Bun_{G})(k)$ we have
\begin{equation}\label{AI M Spr}
\#\AI^{\g}_{G,X}(k)= \frac{\#C_{G}(k)}{q^{(g-1)\dim G+\dim \cB+\dim C_{G}}}\cdot \frac{1}{\#W}\sum_{w\in W} \sgn(w)\int_{(\cE,\ph)\in \cM^\g_{G,x}(c_w)(k)}\Tr(\Fr^*\c w, \cohog{*}{\cB^{\l_w}_{\res_x\ph}}).
\end{equation}
Here the action of $\Fr^*\c w$ on part of the Springer fiber $\cB^{\l_w}_{\res_x\ph}$ is explained in \S\ref{sss:tw Fr Spr}. 
\end{cor}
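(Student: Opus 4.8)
The plan is to combine Corollary \ref{c:G-stable counting of AI} with Proposition \ref{p:adm kl}, rewriting everything in terms of Springer-fiber cohomology via Corollary \ref{cor:FTkappa}. First I would apply Corollary \ref{c:G-stable counting of AI} with the selection function $\xi$ produced by Proposition \ref{p:cons sel fn from t} from the given $W$-coregular admissible collection $\l=(\l_w)_{w\in W}$; this is legitimate because Lemma \ref{l:adm exist} guarantees such a collection exists once $q$ is large enough, and an admissible collection is in particular a collection of $W$-coregular linear functions, so Proposition \ref{p:cons sel fn from t} applies. This gives
\begin{equation*}
\#\AI^{\g}_{G,X}(k)=\frac{\#C_{G}(k)}{q^{g\dim G+\dim C_{G}}}\int_{\cM^{\g}_{G,x}(k)}\res_{x}^{*}f_{x},\qquad f_{x}=\FT(\chi^{*}\xi).
\end{equation*}

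Next I would identify $f_{x}$ explicitly. By Proposition \ref{p:adm kl}, $\chi^{*}\xi=\frac{1}{\#W}\sum_{w\in W}\k_{\l_{w},w}$ as functions on $\frg$. Fourier transform is linear, so $f_{x}=\frac{1}{\#W}\sum_{w\in W}\FT(\k_{\l_{w},w})$. Now Corollary \ref{cor:FTkappa} computes each summand: for $v\in\frg^{*}$,
\begin{equation*}
\FT(\k_{\l_{w},w})(v)=q^{\dim\frg-\dim\cB}\,\sgn(w)\,\Tr(\Fr^{*}\c w,\cohog{*}{\cB^{\l_{w}}_{v}}).
\end{equation*}
Here one should note that $\Fr(\l_{w})=w\l_{w}$ holds by construction of $\frt_{w}$ (see \eqref{tw t} in \S\ref{sss:tori}), so the hypothesis of Corollary \ref{cor:FTkappa} is met for each $w$. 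Substituting into the integral and pulling the constant $q^{\dim\frg-\dim\cB}=q^{\dim G-\dim\cB}$ out, the power of $q$ becomes $q^{g\dim G+\dim C_{G}-(\dim G-\dim\cB)}=q^{(g-1)\dim G+\dim\cB+\dim C_{G}}$, giving exactly the prefactor in \eqref{AI M Spr}.

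Then I would handle the decomposition of the integral over $\res_{x}$. Since $f_{x}$ is a sum of functions $v\mapsto \sgn(w)\Tr(\Fr^{*}\c w,\cohog{*}{\cB^{\l_{w}}_{v}})$ and each such function, as a function of $v\in\frg^{*}$, is supported on those $v$ whose image in $\frc^{*}$ equals the image $c_{w}$ of $\l_{w}$ (because $\cB^{\l_{w}}_{v}$ is empty unless $\l_{w}$ lies in the $W$-orbit over the image of $v$), the $w$-summand of $\int_{\cM^{\g}_{G,x}(k)}\res_{x}^{*}f_{x}$ is supported on the locus $\res_{x}$ maps into $c_{w}$, i.e.\ on $\cM^{\g}_{G,x}(c_{w})(k)$. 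Moreover $c_{w}\in(\frc^{*})^{\coreg}(k)$ since $\l_{w}$ is $W$-coregular, so by the finiteness statement quoted before the corollary (to be proved in Lemma \ref{l:coreg lam stable}(2)) $\cM^{\g}_{G,x}(c_{w})$ is of finite type over $k$ with finitely many $k$-points, and the integral $\int_{(\cE,\ph)\in\cM^{\g}_{G,x}(c_{w})(k)}\Tr(\Fr^{*}\c w,\cohog{*}{\cB^{\l_{w}}_{\res_{x}\ph}})$ is a finite sum. Collecting the $w$-summands and the overall factor $\frac{1}{\#W}$ yields \eqref{AI M Spr}. The one point requiring a little care — the main (minor) obstacle — is bookkeeping the support: one must check that restricting the integrand to $\cM^{\g}_{G,x}(c_{w})(k)$ loses nothing, which follows from the fact that $\cB^{\l_{w}}_{v}=\varnothing$ whenever the image of $v$ in $\frc^{*}$ is not $\chi_{w}$-equal to $\l_{w}$, together with the description of $\Fr^{*}\c w$ acting on $\cohog{*}{\cB^{\l_{w}}_{\res_x\ph}}$ recalled in \S\ref{sss:tw Fr Spr}; none of this is deep, it is just a matter of matching the definitions.
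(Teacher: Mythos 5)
Your proposal is correct and follows essentially the same route as the paper's own proof: apply Corollary \ref{c:G-stable counting of AI} with the selection function of Proposition \ref{p:cons sel fn from t}, rewrite $\chi^{*}\xi$ via Proposition \ref{p:adm kl}, compute $\FT(\k_{\l_{w},w})$ via Corollary \ref{cor:FTkappa}, and match the $q$-powers. Your extra remark about the integrand being supported on $\cM^{\g}_{G,x}(c_{w})(k)$ (since $\cB^{\l_{w}}_{v}=\varnothing$ unless $\chi(v)=c_w$) is a point the paper handles more tersely — it is a nice bit of bookkeeping, and correct.
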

\begin{proof}
We can rewrite the right side of \eqref{AI M Spr} as
\begin{equation*}
    \frac{\#C_{G}(k)}{q^{(g-1)\dim G+\dim \cB+\dim C_{G}}}\int_{(\cE,\ph)\in \cM^\g_{G,x}(k)}\left(\frac{1}{\#W} \sum_{w\in W} \sgn(w)\Tr(\Fr^*\c w, \cohog{*}{\cB^{\l_w}_{\res_x\ph}})\right)
\end{equation*}
with the understanding that the integrand vanishes outside  finitely many points of $\cM^\g_{G,x}$.

    Let $\xi=\frac{1}{\#W}\sum_{w\in W}\chi_{w!}(\psi\c\l_{w})$ be the selection function on $\frc(k)$ corresponding to the $W$-coregular admissible collection $\l$. Let $f_{x} = \FT(\chi^{*}\xi)$. By Corollary \ref{c:G-stable counting of AI}, we only need to show
\begin{equation}\label{int res sum w}
\int_{\cM^{\g}_{G,x}(k)}\res^{*}_{x}f_{x} = \frac{q^{\dim G-\dim\cB}}{\#W} \int_{(\cE,\ph)\in \cM^\g_{G,x}(k)}\left(\frac{1}{\#W} \sum_{w\in W} \sgn(w)\Tr(\Fr^*\c w, \cohog{*}{\cB^{\l_w}_{\res_x\ph}})\right).
\end{equation}
Therefore we only need to match the integrands on both sides.

We use Proposition \ref{p:adm kl} to rewrite the function $\chi^{*}\xi$ as
\begin{equation*}
\frac{1}{\#W}\sum_{w\in W}\k_{\l_{w},w}.
\end{equation*}
By Corollary \ref{cor:FTkappa}, the value of $\FT(\k_{\l_{w},w})$ at $v\in \frg^{*}$ is
\begin{equation*}
q^{\dim G-\dim\cB}\sgn(w)\Tr(\Fr^{*}w, \cohog{*}{\cB^{\l_{w}}_{v}}).
\end{equation*}

Therefore, we get 
\begin{equation}\label{fx}
f_{x}(v)=\frac{q^{\dim G-\dim\cB}}{\#W}\sum_{w\in W}\sgn(w)\Tr(\Fr^{*}w, \cohog{*}{\cB^{\l_{w}}_{v}}).
\end{equation}
This is exactly saying that the integrands of both sides of \eqref{int res sum w} are the same.
\end{proof}

We can further simplify the right side of \eqref{AI M Spr} under an additional assumption on $\l$.

\begin{defn}\label{def:reg}
\begin{enumerate}
    \item We say $\l\in \frt^*_{\ov k}$ is {\em $W$-regular} if its stabilizer under $W$ is trivial. 
    \item Let $\frt_0\subset \frg$ be a Cartan subalgebra. An element $\l\in \frt_0^*$ is called {\em $W$-regular} if it is regular when viewed as an element in $\frt^*_{\ov k}$ under some (equivalently any) identification $\io: \frt_{0,\ov k}\cong \frt_{\ov k}$ obtained by choosing a Borel over $\ov k$ containing $\frt_{0,\ov k}$ (see \S\ref{sss:univ Cartan}).
    
    \item An admissible collection $\l=\{\l_w\}_{w\in W}$ is called {\em $W$-regular} if each $\l_w$ is. 
\end{enumerate}
\end{defn}

The same argument as in Lemma \ref{l:adm exist} shows that when $q$ is sufficiently large with respect to $(W,\d)$, there exist admissible collections $(\l_{w})_{w\in W}$ that are both $W$-regular and $W$-coregular. Again let $c_w\in \frc^*(k)$ be the image of $\l_w$.

\begin{cor}\label{c:AI Mc}
    Assume $q=\#k$ is large enough with respect to $W$ so that a $W$-coregular and $W$-regular admissible collection $\l=\{\l_{w}\}_{w\in W}$ exists. Let $c_w\in \frc^*(k)$ be the image of $\l_w$ under the map $\frt_w^*\to \frc^*$. Then for each $\g\in \pi_{0}(\Bun_{G})(k)$ we have
\begin{equation}\label{AI Mc}
\#\AI^{\g}_{G,X}(k)= \frac{\#C_{G}(k)}{q^{(g-1)\dim G+\dim \cB+\dim C_{G}}\#W}\sum_{w\in W} \sgn(w)\int_{\cM^\g_{G,x}(c_w)(k)}1.
\end{equation}
\end{cor}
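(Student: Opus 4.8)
The plan is to deduce Corollary~\ref{c:AI Mc} from Corollary~\ref{c:AI M Spr}: once we additionally impose $W$-regularity on the admissible collection $\l=\{\l_w\}_{w\in W}$, every twisted Springer-fiber trace appearing on the right side of \eqref{AI M Spr} collapses to $1$, and the formula \eqref{AI M Spr} turns into \eqref{AI Mc} with no change to the prefactor. Concretely, I would fix $w\in W$ and a $k$-point $(\cE,\ph)\in\cM^{\g}_{G,x}(c_w)(k)$, and set $v=\res_x\ph\in\frg^*(k)$, so that $\chi(v)=c_w$. The $W$-orbit $\Xi\subset\frt^*_{\ov k}$ lying over $c_w$ (equivalently, over $v$) is exactly $W\cdot\l_w$; since $\l_w$ is $W$-regular, $\Stab_W(\l_w)=1$, and hence every point of $\Xi$ has trivial $W$-stabilizer.

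First I would record, exactly as in the proof of Proposition~\ref{p:adm kl} --- now carried out on the $\frg^*$-side, using the set-up of \S\ref{sss:tw Fr Spr} --- that $\cohog{*}{\cB^{\l_w}_{v}}^{W_{\l_w}}=\cohog{0}{\cB^{\l_w}_{v}}\cong\Qlbar$, with $\Fr^*\c w$ acting by the identity, the only input being connectedness of the piece $\cB^{\l_w}_v$ of the Springer fiber. Because $W_{\l_w}$ is trivial, this already says $\cohog{*}{\cB^{\l_w}_{v}}\cong\Qlbar$ concentrated in degree $0$, so $\Tr(\Fr^*\c w,\cohog{*}{\cB^{\l_w}_{\res_x\ph}})=1$ for \emph{every} $k$-point of $\cM^{\g}_{G,x}(c_w)$, not merely the absolutely indecomposable ones. (Alternatively one can observe that $W$-regularity of $c_w$ places it in the regular-semisimple locus of $\frc^*$, so $v$ is regular semisimple and $\cB^{\l_w}_v$ is a single reduced point; this is more transparent but requires recalling that, in good characteristic, $\chi^{-1}$ of the regular-semisimple locus of $\frc^*$ is the regular-semisimple locus of $\frg^*$.)

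Next I would invoke Lemma~\ref{l:coreg lam stable}(2): since $c_w$ is $W$-coregular, $\cM^{\g}_{G,x}(c_w)$ is of finite type over $k$, so $\int_{\cM^{\g}_{G,x}(c_w)(k)}1$ is a well-defined finite number. Combining with the previous paragraph, the $w$-th integral on the right side of \eqref{AI M Spr} satisfies $\int_{(\cE,\ph)\in\cM^{\g}_{G,x}(c_w)(k)}\Tr(\Fr^*\c w,\cohog{*}{\cB^{\l_w}_{\res_x\ph}})=\int_{\cM^{\g}_{G,x}(c_w)(k)}1$, and substituting this into \eqref{AI M Spr} gives \eqref{AI Mc}.

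As for obstacles: there is essentially no new geometric content here --- the substantive work is already contained in Corollary~\ref{c:AI M Spr} and in Lemma~\ref{l:coreg lam stable} --- so the only thing that requires care is the bookkeeping at the interface between the $W$-regular and $W$-coregular hypotheses. The single point genuinely worth double-checking is that the ``trivial twisted Frobenius'' statement uses only connectedness of $\cB^{\l_w}_v$ and the vanishing $W_{\l_w}=1$, so that it applies uniformly over all of $\cM^{\g}_{G,x}(c_w)(k)$.
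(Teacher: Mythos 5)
Your argument is correct and coincides with the paper's own proof: the paper also simply notes that $W$-regularity of $\l_w$ forces the piece $\cB^{\l_w}_v$ of the Springer fiber to be a single point, so the integrand in \eqref{AI M Spr} is identically $1$, and then uses Lemma~\ref{l:coreg lam stable}(2) to ensure the integral is a finite count. Your ``Option 2'' (regular-semisimplicity of $v$) is exactly the paper's route and is the cleaner one; your ``Option 1'' also works, but be aware that the identity $\cohog{*}{\cB^{\l_w}_{v}}^{W_{\l_w}}=\cohog{0}{\cB^{\l_w}_{v}}$ quoted from the proof of Proposition~\ref{p:adm kl} uses more than connectedness --- it rests on the Springer-theoretic fact that the $W$-invariants of the Grothendieck--Springer sheaf is the constant sheaf, as in Lemma~\ref{l:van sum}(1); with $W_{\l_w}=1$ this specializes to $\cohog{*}{\cB^{\l_w}_{v}}\cong\Qlbar$ in degree $0$, which suffices for the trace computation.
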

\begin{proof}
    We only need to observe that when $\l$ is $W$-regular, for any $v\in \frg^*$ with the same image as $\l$ in $\frc^*$, the Springer fiber $\cB^{\l}_v$ is a point. Therefore the integrand in \eqref{AI M Spr} is the constant function $1$.
\end{proof}

\subsection{Stable parabolic Higgs bundles}\label{ss:stable Higgs}

We continue to make the assumptions in the beginning of \S\ref{ss:counting Higgs}.

\sss{Parabolic $G$-bundles and parabolic Higgs bundles} Let $\Bun_{G}(\bI_{x})$ be the moduli stack of triples $(\cE, \cE^{B}_{x})$ where $\cE$ is a $G$-bundle on $X$ and $\cE^{B}_{x}$ a $B$-reduction at $x$. Let $\cM_{G}(\bI_{x})$ be the moduli stack of triples $(\cE, \cE^{B}_{x}, \ph)$ where $\cE$ is a $G$-bundle on $X$, $\cE^{B}_{x}$ a $B$-reduction at $x$, and $\ph\in \G(X, \Ad^{*}(\cE)\ot \om(x))$ is such that $\res_{x}(\ph)\in \cE_{x}\twtimes{G}\frg^{*}$ lies in $\cE^{B}_{x}\twtimes{B}\frn^{\bot}$. Here $\frn$ is the nilpotent radical of $\frb=\Lie B$. 

We have a map
\begin{equation*}
\phi_x: \cM_{G}(\bI_{x})\to \frt^{*}
\end{equation*}
taking $(\cE,\cE^{B}_{x}, \ph)$ to the image of $\res_{x}(\ph)$ under the map $\cE^{B}_{x}\twtimes{B}\frn^{\bot}\to \cE^{T}_{x}\twtimes{T}\frt^{*}=\frt^{*}$. Here $\cE^{T}_{x}$ is the $T$-bundle induced from $\cE^{B}_{x}$. We have a Cartesian diagram
\begin{equation}\label{Mpar cart}
    \xymatrix{ 
\cM_{G}(\bI_{x}) \ar[r]\ar[d]^{\wt\pi_{\cM}}	& [\wt{\frg}^{*}/ G] \ar[d]^{\wt\pi'}  \\
\cM_{G,x}\times_{\frc^*}\frt^* 	\ar[r]^{\res_{x}}	& [\frg^{*}\times_{\frc^*}\frt^* / G ]
}
\end{equation}
Here $\wt\pi_\cM$ sends $(\cE,\cE^B_x,\ph)$ to $(\cE, \ph, \phi_x(\ph))$.  By proper base change, we have a canonical isomorphism
\begin{equation*}
\wt\pi_{\cM!}\Qlbar\cong \res_x^*\cK'_\univ.
\end{equation*}
In particular, $\wt\pi_{\cM!}\Qlbar\in D^b_c(\cM_{G,x}\times_{\frc^*}\frt^*)$ carries a canonical $W$-equivariant structure compatible with the $W$-action on $\frt^*$.

For $\l\in \frt^*(\ov k)$, let
\begin{equation*}
\cM_{G}(\bI_{x}; \l)=\phi_{x}^{-1}(\l)
\end{equation*}
be the fiber of $\phi_x$ over $\l$, now a stack over $\ov k$. The $W$-equivariant structure on $\wt\pi_{\cM,!}\Qlbar$ induces an isomorphism
\begin{equation}\label{w eq M}
    w: \cohoc{*}{\cM_{G}(\bI_{x}; \l)}\cong\cohoc{*}{\cM_{G}(\bI_{x}; w\l)} 
\end{equation}
If $\Fr(\l)=w\l$, the same construction as in \S\ref{sss:tw Fr Spr} gives an automorphism
\begin{equation}\label{tw Fr M}
    \Fr^*\c w: \cohoc{*}{\cM_{G}(\bI_{x}; \l)}\xr{w}\cohoc{*}{\cM_{G}(\bI_{x}; w\l)} \xr{\Fr^*}\cohoc{*}{\cM_{G}(\bI_{x}; \l)}.
\end{equation}

\sss{Stability condition for parabolic $G$-Higgs bundles}
We review the notion of stability for parabolic Higgs bundles. The definition here is a slight variant of \cite[\S5-\S6]{CL}; see also \cite[Section V]{F} for a different but equivalent approach.

For a parabolic subgroup $P\subset G$ with unipotent radical $N_{P}$, $\frn_{P}=\Lie N_{P}$ and Levi quotient $L_{P}=P/N_{P}$,   a $P$-reduction of a Higgs field $(\cE, \ph)$ (possibly with poles) is a $P$-reduction $\cE_{P}$ of $\cE$ such that at the generic point of $X$, $\ph$ lies in $\cE_{P}\twtimes{P}\frn_{P}^{\bot}$. Let $\cE_{L_{P}}=\cE_{P}/N_{P}$ be the induced $L_{P}$-bundle over $X$. Let $T_{P}=L_{P}/L_{P}^{\der}$ be the quotient torus. 

Recall we have a degree map
\begin{equation*}
\deg_{L_{P}}: \Bun_{L_{P}}\to \pi_{0}(\Bun_{L_{P}})=\pi_{1}(L_{P})\cong \xcoch(T)/\ZZ\Phi^{\vee}(L_{P})\to\xcoch(T_{P}). 
\end{equation*}
Let $(\cE,\cE^{B}_{x}, \ph)\in \cM_{G}(\bI_{x})$. For a $P$-reduction of $(\cE, \ph)$, we have a $P$-reduction $\cE_{P,x}=\cE_{P}|_{x}$ of the fiber $\cE_{x}$. The relative position of the pair $(\cE_{P,x}, \cE^{B}_{x})$ define a point $pos(\cE_{P,x}, \cE^{B}_{x})$ in $G\bs (\cB_{P}\times \cB)$ (here $\cB_{P}$ is the partial flag variety $G/P$). For $(P_{1}, B_{1})\in \cB_{P}\times \cB$, $P_{1}\cap B_{1}$ is a Borel subgroup in the Levi $L_{P_{1}}$, and the reductive quotient of $P_{1}\cap B_{1}$ is canonically identified with the universal Cartan $T$. Therefore we get a well-defined map $T\to T_{P_{1}}\cong T_{P}$ (the latter is a canonical isomorphism for any two conjugate parabolics $P$ and $P_{1}$) from a point $(P_{1},B_{1})\in \cB_{P}\times \cB$. This construction gives the same map for each $G$-orbit on  $\cB_{P}\times \cB$, hence the relative position $pos(\cE_{P,x}, \cE^{B}_{x})\in G\bs (\cB_{P}\times \cB)$ defines a map $T\to T_P$, and induces a map on cocharacter groups
\begin{equation*}
    \pi(\cE_{P,x}, \cE^{B}_{x}): \xcoch(T)\to \xcoch(T_{P}).
\end{equation*}

A stability condition $\th$ for $\cM_{G}(\bI_{x})$ is a point $\th\in \xcoch(T)_{\RR}$. For $(\cE,\cE^{B}_{x}, \ph)\in \cM_{G}(\bI_{x})$ and a $P$-reduction $\cE_P$ of $(\cE, \ph)$, let
\begin{equation*}
\th(\cE_{P,x}, \cE^{B}_{x})\in \xcoch(T_{P})_{\RR}
\end{equation*}
be the image of $\th$ under the projection $\pi(\cE_{P,x}, \cE^{B}_{x})$ (tensoring with $\RR$). Let $\deg_{\th}(\cE_{P})$ be the sum
\begin{equation*}
\deg_{\th}(\cE_{P})=\deg_{L_{P}}(\cE_{L_{P}})+\th(\cE_{P,x}, \cE^{B}_{x})\in \xcoch(T_{P})_{\RR}.
\end{equation*}

The universal Cartan is equipped with a based root system (see \S\ref{sss:univ Cartan})), hence $\xcoch(T)_{\RR}$ has a well-defined dominant Weyl chamber $\frC$ and a fundamental alcove $A$ (fundamental domain of the affine Weyl group action). We choose $\th$ in the interior of $A$.  

We define the {\em obtuse cone} $\frC'_P\subset \xcoch(T_P)_\RR$ to be the $\RR_\ge0$-span of the images of $\a^\vee$, where $\a$ runs over roots that appear in $\frn_P$. Let $\frC'^{\c}_P$ be the interior of $\frC'_P$.

A point $(\cE,\cE^{B}_{x}, \ph)\in \cM_{G}(\bI_{x})$ is called {\em $\th$-stable} if for any $P$-reduction $\cE_{P}$ of $(\cE, \ph)$, where $P$ is a proper parabolic subgroup of $G$, we have $\deg_{\th}(\cE_{P})\in -\frC_{P}'^{\c}$. It is called {\em $\th$-semistable} if for any such $P$-reduction $\cE_{P}$ of $(\cE, \ph)$, we have $\deg_{\th}(\cE_{P})\in -\frC'_{P}$. 

For a generic $\th$ in the fundamental alcove $A$,  $\th$-semistable is equivalent to $\th$-stable. Let $\cM_G(\bI_{x})^{\tst}$ be the open substack of $\cM_G(\bI_{x})$ consisting of $\th$-stable points. 

\sss{Geometric properties}
Below we will consider the stack
\begin{equation*}
    \ov\cM_G(\bI_{x}):=\cM_{G}(\bI_{x})/\BB(C_G)
\end{equation*}
obtained by removing the obvious copy of $C_{G}$ from the automorphism groups of all points. Similarly we define
\begin{equation*}
    \ov\cM^\g_G(\bI_{x})^{\tst}:=\cM^\g_{G}(\bI_{x})^{\tst}/\BB(C_G).
\end{equation*}

To arrive at a better formula that eliminates the choice of $\l=\{\l_w\}$, we need some geometric properties of the stack $\ov\cM_G(\bI_x)^{\tst}$. Unfortunately for these geometric properties to be available we need $p$ to be sufficiently large.

We have the Hitchin map
\begin{equation*}
h:  \cM_{G}(\bI_{x})\to \cA_{G}(\bI_{x}),
\end{equation*}
where $\cA_G(\bI_{x})$ is the affine space parametrizing the invariant  polynomials of Higgs fields with a simple pole at $x$. The map $h$ clearly factors through $\ov\cM_G(\bI_{x})$. We denote by
\begin{equation}\label{om Hitchin map}
    \ov h^{\g, \tst}:  \ov\cM^\g_{G}(\bI_{x})^{\tst}/\BB(C_G)\to \cA_{G}(\bI_{x})
\end{equation}
the restriction of the Hitchin map to the $\th$-stable locus in the $\g$-component.

In the Appendix we will prove the following.
\begin{theorem}\label{th:geom M}
    \begin{enumerate}
        \item\label{M DM} Assume $p$ is larger than the Coxeter number of each simple factor of $G$.  The stack $\ov\cM_{G}(\bI_{x})^{\tst}$ is a Deligne-Mumford stack of finite type over $k$.
        \item\label{M sm} Assume $p$ is larger than the Coxeter number of each simple factor of $G$. The map $\phi^{\tst}_x: \cM_G(\bI_{x})^{\tst}\to \frt^*$ obtained as the restriction of $\phi_x$ is smooth. In particular, $\cM_G(\bI_{x})^{\tst}$ and $\ov\cM_{G}(\bI_{x})^{\tst}$ are smooth.
        \item\label{h sep} The Hitchin map $\ov h^{\g, \tst}$ is separated.
        \item\label{h proper} There exists an integer $N$ depending only on the Dynkin diagram of $G$ and the genus of $X$, such that whenever $p>N$, the Hitchin map $\ov h^{\g, \tst}$ is proper.
    \end{enumerate}
\end{theorem}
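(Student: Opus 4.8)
\textbf{Proof proposal for Theorem \ref{th:geom M}.} The plan is to establish the four assertions in order, each reducing to a more standard statement about Higgs bundles.

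\emph{Deligne--Mumford and finite type (part \eqref{M DM}).} The stack $\ov\cM_G(\bI_x)^{\tst}$ maps to $\cM_G(\bI_x)$, which is the total space of a twisted cotangent-type construction over $\Bun_G(\bI_x)$. First I would note that $\th$-stability implies in particular $\th$-semistability of the underlying parabolic $G$-bundle $(\cE,\cE^B_x)$ with respect to $\th$, so the image of $\ov\cM_G(\bI_x)^{\tst}$ in $\Bun_G(\bI_x)$ lands in the semistable locus, which is of finite type. Boundedness of the Higgs field then follows from a Hitchin-map argument: the Hitchin base $\cA_G(\bI_x)$ is a fixed affine space, and nilpotent Higgs fields on a semistable bundle are bounded. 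This gives finite type. For the DM property I would show that the automorphism group of any $\th$-stable point is finite: by Corollary \ref{c:aut red par} the automorphism group of $(\cE,\cE^B_x)$ is a reduced (hence smooth) affine group scheme, and a nonzero torus or a positive-dimensional unipotent part in its reduced identity component would, together with the Higgs compatibility and the stability inequality $\deg_\th(\cE_P)\in -\frC_P'^{\circ}$, produce a destabilizing parabolic reduction; the bound $p>$ Coxeter number enters to guarantee pretty-good-ness and the existence of a unipotent logarithm as in \S\ref{ss:unip}. Hence automorphism groups are finite and $\ov\cM_G(\bI_x)^{\tst}$ is DM.

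\emph{Smoothness (part \eqref{M sm}).} The key point is the deformation theory. For a point $(\cE,\cE^B_x,\ph)$ the tangent complex of $\cM_G(\bI_x)$ is computed by a hypercohomology complex built from $\Ad(\cE)$, $\Ad^*(\cE)\ot\om(x)$ and the Borel reduction at $x$, and $\phi_x$ differentiates to the residue-to-$\frt^*$ map on $\upH^1$. I would show that the obstruction space $\upH^2$ of the relative deformation problem for $\phi_x^{\tst}$ vanishes. This is a Serre-duality computation: the obstruction is dual to the space of co-Higgs fields compatible with the parabolic structure that are ``stable'' in the dual sense, and $\th$-stability forces this to vanish (no nonzero invariant section can exist for a stable object, just as in the usual argument that the stable Hitchin moduli space is smooth). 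Here the bound $p>$ Coxeter number is again used to ensure the relevant Lie-theoretic identifications (e.g. the self-duality $\Ad\cong\Ad^*$ up to twist, the reducedness of stabilizers) hold. Smoothness of $\phi_x^{\tst}$ then implies smoothness of $\cM_G(\bI_x)^{\tst}=\phi_x^{-1}(\text{all of }\frt^*)$ and of the $\BB(C_G)$-quotient.

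\emph{Separatedness and properness of the Hitchin map (parts \eqref{h sep}, \eqref{h proper}).} For separatedness I would use the valuative criterion: given two extensions over a DVR of a $\th$-stable parabolic Higgs bundle over the punctured disk with the same Hitchin image, the standard argument (a nonzero map between the two stable objects is an isomorphism) applies, using that $\th$-stable objects are simple. For properness, the remaining point is the existence part of the valuative criterion, and this is exactly where the unspecified bound $N=N(G,g)$ comes from. The hard part will be this: I would reduce to the characteristic-zero case, where properness of the parabolic Hitchin map is known (e.g.\ by the work on parabolic Higgs bundles, adapting the arguments for the non-parabolic case), by a spreading-out argument --- properness is a constructible condition, so if it holds in characteristic zero it holds for $p$ large; combined with the finite-type bounds from part \eqref{M DM} (which are uniform in $p$ given fixed combinatorial data and genus), this yields an integer $N$ depending only on the Dynkin diagram and $g$ such that $\ov h^{\g,\tst}$ is proper for $p>N$. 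The genuine obstacle, and the reason $N$ is left unspecified, is that making the existence part of the valuative criterion effective in the parabolic setting requires the kind of explicit analysis carried out in \cite[\S6.3]{AHLH} for the non-parabolic case; I would flag this and defer it to the Appendix, proving only the non-effective statement by the reduction-to-characteristic-zero route.
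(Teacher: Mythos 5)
Your overall roadmap (DM via finiteness of automorphisms, smoothness via deformation theory, separatedness via valuative criterion plus simplicity, properness via spreading out from characteristic zero) matches the paper's, and your account of part (4) is essentially correct, including the correct identification of the non-effective reduction-to-characteristic-zero step. However, there are genuine gaps in parts (1) and (2).

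For part (1), the claim that $\th$-stability of the Higgs bundle implies $\th$-semistability of the underlying parabolic $G$-bundle is false as stated: Higgs-bundle stability only tests against Higgs-compatible parabolic reductions, not arbitrary ones, so the image in $\Bun_G(\bI_x)$ does not land in the semistable locus. What is true (and what the paper uses) is that the Harder--Narasimhan polygon of the underlying bundle is uniformly bounded, which is weaker and requires an argument. More seriously, the DM step needs to show $\G(X,\Ad(\cE,\cE^B_x,\ph))=0$ modulo the center, and ``a nonzero torus or positive-dimensional unipotent part \dots\ would produce a destabilizing reduction'' does not explain how. The paper treats the semisimple and nilpotent parts of an infinitesimal automorphism $\a$ separately: for semisimple $\a$ one shows the adjoint orbit of $\a_x$ is constant, produces an $L$-reduction and contradicts stability for a pair of opposite parabolics; for nilpotent $\a$ one extends it to an $\sl_2$-triple in $\frg\ot k(X)$ via the Jacobson--Morozov theorem over $F=k(X)$ (cite[Theorem 1.1]{ST}), and this is exactly where the hypothesis $p > $ Coxeter number is used. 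Your suggestion that $p>$ Coxeter number is needed for ``pretty-good-ness and the existence of a unipotent logarithm'' misidentifies its role: those only require $p$ pretty good, which is much weaker.

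For part (2), you compute an obstruction space abstractly and claim it vanishes by ``Serre duality \dots\ dual to co-Higgs fields,'' which is not the right duality statement and leaves the self-duality unexplained. The paper's proof is simpler and cleaner: by $\Gm$-contraction (scaling the Higgs field) smoothness of $\phi^{\tst}_x$ reduces to smoothness of the zero fiber $\cM_G(\bI_x;0)^{\tst}$, and $\cM_G(\bI_x;0)$ is precisely the (classical) cotangent stack of $\Bun_G(\bI_x)$, hence has self-dual tangent complex; so obstructions are dual to infinitesimal automorphisms modulo $\frz$, which vanish by the computation from part (1). You should incorporate this $\Gm$-contraction reduction, which is what makes the self-duality transparent.

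For part (3), your appeal to ``stable objects are simple, a nonzero map is an isomorphism'' is the right moral, but it does not itself settle the valuative criterion in the parahoric Higgs setting, because one must first produce a nonzero map between the two candidate special fibers that respects the Higgs field and the Borel reduction at $x$. The paper does this by a rather delicate analysis of the Cartan decomposition at the generic point of the special fiber, extracting opposite parabolic reductions $P$ and $\ov P$, and comparing degrees via $\th$-stability of both limits together with the genericity of $\th$ in the fundamental alcove (which in particular handles the correction term coming from the Iwahori relative positions). That extra work is needed; the argument you describe would close only with these details filled in.
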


We believe the bound on the characteristic $p$ to ensure the properness of the Hitchin map can be made explicit and reasonably small. We do not know how to do this at this point.

\begin{lemma}\label{l:coreg lam stable}
Let $\g\in \pi_0(\Bun_G)(k)$.
\begin{enumerate}
    \item For any $W$-coregular $\l\in\frt^{*}(\ov k)$, we have $\cM_{G}(\bI_{x}; \l)\subset \cM_G(\bI_{x})^{\tst}_{\ov k}$ for any stability condition $\th\in \xcoch(T)_{\RR}$. In particular, $\cM^\g_{G}(\bI_{x}; \l)$ is of finite type over $\ov k$.
    \item For any $W$-coregular $c\in \frc^*(\ov k)$, $\cM^\g_{G,x}(c)$ is of finite type over $\ov k$.
\end{enumerate}
\end{lemma}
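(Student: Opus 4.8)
The plan is to deduce both parts from a single finiteness statement: that the preimage in $\cM_{G,x}^\g$ of the $W$-coregular locus in $\frc^*$ consists of parabolic Higgs bundles all of whose underlying $G$-bundles are ``not too unstable'', so that they live in a bounded family. First I would establish (1). Given $W$-coregular $\l\in\frt^*(\ov k)$, suppose $(\cE,\cE^B_x,\ph)\in\cM_G(\bI_x;\l)$ admits a $P$-reduction $\cE_P$ of the Higgs field $(\cE,\ph)$ for a proper parabolic $P$. The key point is Lie-theoretic: the image of $\res_x\ph$ in $\frt^*$ under the map attached to the relative position of $(\cE_{P,x},\cE^B_x)$ must lie in the Lie algebra of the center of $L_P$, and under the universal Cartan identification this is exactly a restriction $\l|_{\fra}$ where $\fra\subset\frt_{\ov k}$ is the $G$-relevant (hence, by Lemma \ref{l:W rel}, $W$-relevant) toral subspace $\Lie Z L_P$. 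Since $P$ is proper, $\fra\ne\frt$; and since $\l$ is $W$-coregular, $\l|_\fra\ne 0$ unless $\fra=\frz$, which forces $P=G$, a contradiction --- unless the nonvanishing of $\l|_{\fra}$ itself is incompatible with $\cE_P$ being a reduction. I need to argue that nonvanishing of this residue projection along $\fra$ forbids the $P$-reduction from destabilizing: more precisely, the condition that $\res_x\ph$ strictly preserves nothing ``extra'' pins down $\th(\cE_{P,x},\cE^B_x)$, and combined with the residue constraint one sees the only $P$-reductions that occur are those with $\fra=\frz$, i.e.\ $P=G$. Hence there are no proper $P$-reductions at all, so $(\cE,\cE^B_x,\ph)$ is $\th$-stable (indeed $\th$-semistable) for every $\th$, vacuously.

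Once (1) is in place, finiteness of $\cM^\g_G(\bI_x;\l)$ follows from Theorem \ref{th:geom M}: either directly, since $\cM^\g_G(\bI_x)^{\tst}$ is of finite type over $k$ by part (1) of that theorem (for $p$ large), or, to avoid the characteristic hypothesis, by the standard boundedness argument --- the absence of destabilizing reductions bounds the Harder--Narasimhan type of the underlying bundles, and the space of Higgs fields with prescribed polar part over a bounded family of bundles is itself bounded. For part (2), let $c\in\frc^*(\ov k)$ be $W$-coregular and consider $(\cE,\ph)\in\cM^\g_{G,x}(c)$. Pick any point $\wt c\in\frt^*_{\ov k}$ over $c$ (so $\wt c$ is $W$-coregular as an element of $\frt^*$), and choose a Borel reduction $\cE^B_x$ of $\cE$ at $x$ that is compatible with the residue $\res_x\ph$ --- i.e.\ a Borel subalgebra through which the nilpotent-plus-semisimple decomposition of $\res_x\ph$ at $x$ is adapted, with semisimple part mapping to $\wt c$. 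Then $(\cE,\cE^B_x,\ph)$ defines a point of $\cM_G(\bI_x;\wt c)$ (after conjugating $\wt c$ by an element of $W$ if needed, which does not affect coregularity), and the forgetful map $\cM_G(\bI_x)\to\cM_{G,x}$ restricts to a surjection $\cM^\g_G(\bI_x;\wt c)\to\cM^\g_{G,x}(c)$ with fibers of finite type (they parametrize the finitely many, or at worst bounded family of, compatible Borel reductions at a single point $x$). Since the source is of finite type by part (1), so is the target.

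The main obstacle I expect is the first paragraph: making precise the claim that $W$-coregularity of the residue \emph{forces} the absence of destabilizing $P$-reductions. The naive statement ``$\l|_{\Lie ZL_P}\ne 0$'' is about the residue's semisimple part, but a $P$-reduction of the Higgs \emph{field} only requires $\res_x\ph\in\cE_P\twtimes{P}\frn_P^\perp$, which is a weaker condition than the semisimple part lying in $\Lie ZL_P$; one must track how the flag $\cE^B_x$ interacts with this, using that $\ph$ has image exactly $\l$ in $\frt^*$. I would handle this by reducing to the residue at $x$ alone: the relative position $pos(\cE_{P,x},\cE^B_x)$ together with the constraint $\res_x\ph\in\frn_P^\perp$ and $\phi_x(\ph)=\l$ forces, at the level of the universal Cartan, that $\l$ be fixed by the parabolic subgroup $W_P$ of $W$ attached to $P$ --- equivalently $\l|_{\frt_{\ov k}^{W_P}}$ determines $\l$, and $W$-coregularity of $\l$ (nonvanishing on $\frt^{W_P}\ne\frz$) is exactly what rules this out for proper $P$. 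This is the combinatorial heart of the argument, and it is where I would spend most of the write-up; everything else is boundedness bookkeeping that the cited results already carry.
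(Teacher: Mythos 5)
Your plan for part (1) has the right ingredients in view (a $P$-reduction produces a $W$-relevant subspace $\fra=\frz_{L_P}=\frt^{W_P}$, $W$-coregularity is a nonvanishing condition on such subspaces), but the mechanism you propose for deriving a contradiction is both confused and insufficient, and you miss the key step in the paper's proof.

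First, the confusion: you claim the local data at $x$ should force ``$\l$ fixed by $W_P$,'' and then gloss that as ``$\l|_{\frt^{W_P}}$ determines $\l$'' and treat it as the thing $W$-coregularity contradicts. But ``$\l\in(\frt^*)^{W_P}$'' and ``$\l|_{\frz_L}=0$'' are two different (indeed complementary) linear conditions on $\l\in\frt^*$: the first says $\l$ vanishes on the span of the $d\a_j^\vee$ for $j\in J$, the second says $\l$ vanishes on $\frz_L=\cap_{j\in J}\ker(d\a_j)$. Only the latter, $\l|_{\frz_L}=0$, contradicts $W$-coregularity when $P$ is proper, because that is precisely nonvanishing on the $W$-relevant subspace $\fra=\frz_L\neq\frz$. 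So the constraint you need to establish is $\l|_{\frz_L}=0$, not $W_P$-fixedness.

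Second, and more seriously, this constraint is \emph{not} deducible by looking at the residue at $x$ alone. The relative position $pos(\cE_{P,x},\cE^B_x)$ together with $\res_x\ph\in\frn_P^\perp$ and $\phi_x(\ph)=\l$ places no useful constraint on $\l$ in general (e.g.\ take $P=B$ in relative position identity with $\cE^B_x$: the conditions are vacuous). The paper's argument is crucially \emph{global}: from the $P$-reduction one obtains a Levi Higgs field $\ph_L$, projects via the $L$-invariant map $\zeta:\frl^*\to\frz_L^*$ to get a section $\ov\ph_L$ of $\frz_L^*\otimes\om_X(x)$ on all of $X$ with at worst a simple pole at $x$, and then invokes the \emph{residue theorem} to conclude that $\res_x(\ov\ph_L)=0$. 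Chasing through the identifications shows $\res_x(\ov\ph_L)=\l|_{\frz_L}$, whence $\l|_{\frz_L}=0$ and the contradiction with $W$-coregularity. A purely local argument at $x$ cannot substitute for the residue theorem here; that is the genuine gap in your proposal. Your part (2), by contrast, is essentially the paper's proof (pick $\wt c\in\frt^*$ over $c$ and use the surjection $\cM^\g_G(\bI_x;\wt c)\to\cM^\g_{G,x}(c)$, with finite type of the source from part (1)), and is fine.
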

\begin{proof}
(1) We claim that if $(\cE,\cE^{B}_{x},\ph)\in \cM_{G}(\bI_{x};\l)$, there is no $P$-reduction of $(\cE,\ph)$ for any proper parabolic $P\subset G$, which then implies that $(\cE,\cE^{B}_{x},\ph)$ is stable for any stability condition. 

If not, let $\cE_P$ be a $P$-reduction of $(\cE,\ph)$. The reduction $\cE_{L}$ to the Levi quotient $L$ of $P$ carries a Higgs field $\ph_L\in \G(X, \Ad^*(\cE_{L})\ot\om_X(x))$ as the image of $\ph$ under the map $\cE_P\times^P \frn_P^\bot\to \cE_{L}\times^{L}\frl^*$. Taking residue at $x$ gives a point $\res_{L,x}\ph_L\in [\frl^*/L]$. The Borel reduction $\cE^B_x$ induces a Borel reduction of $\cE_L|_x$ and hence a lifting $\wt{\res_{L,x}\ph_L}\in [\wt\frl^*/L]$ of $\res_{L,x}\ph_L$, such that $\l=\phi_x(\ph)\in \frt^*$ is also the image of $\wt{\res_{L,x}\ph_L}$ under the map $\vep'_L: [\wt\frl^*/L]\to \frt^*$.

Let $\frz_L=\Lie(ZL)$. We have a commutative diagram
\begin{equation*}
    \xymatrix{[\wt\frl^*/L]\ar[d]^{\pi'_L}\ar[r]^{\vep'_L} & \frt^*\ar[d]^{r_{\frz_L}}\\
    [\frl^*/L]\ar[r]^{\z} & \frz^*_L
    }
\end{equation*}
Therefore
\begin{equation}\label{eq res}
   r_{\frz_L}(\l)= r_{\frz_L}\phi_x(\ph)= \z(\res_{L,x}\ph_L).
\end{equation}

On the other hand, via the $L$-invariant map $\z: \frl^*\to \frz_L^*$, $\ph_L$ induces a global section $\ov\ph_L$ of $\frz^*_L\ot\om_X(x)$ such that
\begin{equation}\label{eq res 2}
\z(\res_{L,x}\ph_L)=\res_x(\ov\ph_L)\in\frz_L^*. 
\end{equation}
By the residue theorem, $\ov\ph_L$ indeed lies in $\frz_{L_P}^*\ot\om_X$. This implies that the right side of \eqref{eq res 2} is zero, hence combined with \eqref{eq res}, we get $\l|_{\frz_L}=0$. Now $\frz_L$ is a $G$-relevant subspace of $\frt$, hence $W$-relevant by Lemma \ref{l:W rel}.  Since $P$ is properly contained in $G$, $\frz_L\ne \frz$. The vanishing of $\l|_{\frz_L}$ then contradicts the assumption that $\l$ is $W$-coregular.

(2) Choose a coregular $\l\in \frt^*(\ov k)$ that maps to $c$. By the diagram \eqref{Mpar cart}, we have a surjective map 
\begin{equation*}
    \cM_G^\g(\bI_x; \l)\to \cM^\g_{G,x}(c).
\end{equation*}
By (1), $\cM_G^\g(\bI_x; \l)$ is of finite type; we also know that $\cM^\g_{G,x}(c)$ is locally of finite type, therefore it is also of finite type over $\ov k$.
\end{proof}

\begin{cor}[of Corollary \ref{c:AI M Spr}]\label{c:AI Mpar} Assume $q=\#k$ is large enough with respect to $W$ so that a $W$-coregular admissible collection $\l=\{\l_{w}\}_{w\in W}$ exists (see Lemma \ref{l:adm exist}). Then for each $\g\in \pi_{0}(\Bun_{G})(k)$, the total cohomology $\cohoc{*}{\ov\cM^{\g}_{G}(\bI_{x}; \l_{w})}$ is finite-dimensional, and we have
\begin{equation}\label{AI Mpar}
\#\AI^{\g}_{G,X}(k)= \frac{1}{q^{\dim\Bun_{G}(\bI_{x})+\dim C_{G}}\cdot \#W} \sum_{w\in W} \sgn(w)\Tr(\Fr^*w, \cohoc{*}{\ov\cM^{\g}_{G}(\bI_{x}; \l_{w})}).
\end{equation}
Here, the action of $\Fr^*\c w$ on $\cohoc{*}{\ov\cM^{\g}_{G}(\bI_{x}; \l_{w})}$ is defined in \S\ref{tw Fr M}.
\end{cor}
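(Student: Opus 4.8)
\textbf{Proof plan for Corollary \ref{c:AI Mpar}.} The plan is to start from the formula \eqref{AI M Spr} in Corollary \ref{c:AI M Spr} and reinterpret the right-hand side, for each fixed $w\in W$, as an alternating trace of Frobenius-twisted-by-$w$ on the compactly supported cohomology of $\ov\cM^\g_G(\bI_x;\l_w)$. The key input is the Cartesian diagram \eqref{Mpar cart} together with the proper base change isomorphism $\wt\pi_{\cM!}\Qlbar\cong \res_x^*\cK'_\univ$. Restricting this over the point $\l_w\in \frt^*(\ov k)$ (which satisfies $\Fr(\l_w)=w\l_w$ by construction, since $\l_w\in \frt_w^*$), we obtain that the stalk of $\wt\pi_{\cM!}\Qlbar$ at a point $(\cE,\ph)\in \cM_{G,x}(c_w)$ computes $\cohog{*}{\cB^{\l_w}_{\res_x\ph}}$, and the $w$-twisted Weil structure \eqref{tw Fr M} matches the operator $\Fr^*\c w$ appearing on the right side of \eqref{AI M Spr} via the recipe of \S\ref{sss:tw Fr Spr}. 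Thus the Grothendieck--Lefschetz trace formula applied to the (finite type, by Lemma \ref{l:coreg lam stable}(1)) stack $\cM^\g_G(\bI_x;\l_w)$ gives
\begin{equation*}
\int_{(\cE,\ph)\in \cM^\g_{G,x}(c_w)(k)}\Tr(\Fr^*\c w, \cohog{*}{\cB^{\l_w}_{\res_x\ph}})=\Tr(\Fr^*\c w, \cohoc{*}{\cM^\g_G(\bI_x;\l_w)}),
\end{equation*}
where the left side is interpreted as a weighted point count using \eqref{stack pt count} and on the right we use the $w$-twisted Frobenius \eqref{tw Fr M}. The finite-dimensionality of $\cohoc{*}{\cM^\g_G(\bI_x;\l_w)}$ follows from finite type-ness of $\cM^\g_G(\bI_x;\l_w)$ (Lemma \ref{l:coreg lam stable}(1)) together with $\th$-stability, which by Theorem \ref{th:geom M}\eqref{M DM} makes $\ov\cM^\g_G(\bI_x;\l_w)$ a finite-type Deligne--Mumford stack.

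Next I would pass from $\cM^\g_G(\bI_x;\l_w)$ to $\ov\cM^\g_G(\bI_x;\l_w)=\cM^\g_G(\bI_x;\l_w)/\BB(C_G)$. Removing the copy of $C_G$ from all automorphism groups multiplies every weighted point count by $\#C_G(k)$ and does not change the alternating trace on compactly supported cohomology up to the factor $\#C_G(k)$ coming from $\cohoc{*}{\BB(C_G)}$ evaluated at Frobenius — more precisely, $\int_{(\cM^\g_G(\bI_x;\l_w)/\BB(C_G))(k)}1 = \#C_G(k)\int_{\cM^\g_G(\bI_x;\l_w)(k)}1$ and correspondingly $\Tr(\Fr^*\c w,\cohoc{*}{\cM^\g_G(\bI_x;\l_w)})=\#C_G(k)^{-1}\Tr(\Fr^*\c w,\cohoc{*}{\ov\cM^\g_G(\bI_x;\l_w)})$, using that $C_G$ acts trivially so the $W$-action and Weil structure descend. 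Combining with \eqref{AI M Spr} the factor $\#C_G(k)$ cancels, and I would rewrite the exponent: by the deformation-to-the-normal-cone/cotangent picture, $\dim\cM_G(\bI_x)=2\dim\Bun_G(\bI_x)$ and $\dim\Bun_G(\bI_x)=\dim\Bun_G+\dim\cB=(g-1)\dim G+\dim\cB$ (using $\dim\Bun_G=(g-1)\dim G$ and that the fiber of $\Bun_G(\bI_x)\to\Bun_G$ is $\cB$). Hence $(g-1)\dim G+\dim\cB=\dim\Bun_G(\bI_x)$, which turns the prefactor $q^{-((g-1)\dim G+\dim\cB+\dim C_G)}$ in \eqref{AI M Spr} into $q^{-(\dim\Bun_G(\bI_x)+\dim C_G)}$, matching \eqref{AI Mpar}.

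The main obstacle I anticipate is the careful bookkeeping of the $W$-equivariant/Weil structures: one must check that the operator denoted $\Fr^*\c w$ on $\cohoc{*}{\cM^\g_G(\bI_x;\l_w)}$ in \eqref{tw Fr M} — built from the $W$-equivariant structure on $\wt\pi_{\cM!}\Qlbar$ inherited from $\cK'_\univ$ — is \emph{pointwise} compatible with the operator $\Fr^*\c w$ acting on the Springer-fiber cohomology $\cohog{*}{\cB^{\l_w}_{\res_x\ph}}$ of \S\ref{sss:tw Fr Spr}, so that the stalkwise trace identity needed for Grothendieck--Lefschetz is literally the integrand of \eqref{AI M Spr}. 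This is a diagram chase using the Cartesian square \eqref{Mpar cart} and the compatibility of proper base change with equivariant structures and with Weil structures (the same mechanism already used in \S\ref{sss:tw Fr Spr} to compute $\kappa'_{\l,w}$). A secondary, purely cosmetic point is to make sure the ``$\int$ vanishes outside finitely many points'' caveat is consistent with the Grothendieck--Lefschetz formula: since $\cM^\g_G(\bI_x;\l_w)$ is genuinely of finite type (Lemma \ref{l:coreg lam stable}(1)), there is no issue — the sum is finite and the trace formula applies directly. Finally I would record that finite-dimensionality of $\cohoc{*}{\ov\cM^\g_G(\bI_x;\l_w)}$ is immediate once the stack is known to be a finite-type DM stack (Theorem \ref{th:geom M}\eqref{M DM}), which we may assume here.
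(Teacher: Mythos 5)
Your overall route — start from \eqref{AI M Spr}, use the Cartesian square \eqref{Mpar cart} to identify the fibers of $\ov\cM_G^\g(\bI_x; \l_w)\to \ov\cM_{G,x}^\g(c_w)$ with the Springer-fiber pieces $\cB^{\l_w}_{\res_x\ph}$, check compatibility of the $w$-twisted Weil structures, apply Grothendieck--Lefschetz, and simplify $(g-1)\dim G + \dim\cB = \dim\Bun_G(\bI_x)$ — is exactly the paper's approach, so the geometry and the constant-tracking are both right.

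The one place I would push back is your sequencing around the $\BB(C_G)$ quotient. You first assert the Grothendieck--Lefschetz identity for $\cM^\g_G(\bI_x;\l_w)$ itself, and only afterwards pass to $\ov\cM^\g_G(\bI_x;\l_w) = \cM^\g_G(\bI_x;\l_w)/\BB(C_G)$ by ``dividing by $\#C_G(k)$.'' But when $C_G$ is nontrivial, $\cM^\g_G(\bI_x;\l_w)$ is an Artin stack whose every automorphism group contains $C_G$, so it is not Deligne--Mumford, and $\cohoc{*}{\cM^\g_G(\bI_x;\l_w)}\cong \cohoc{*}{\ov\cM^\g_G(\bI_x;\l_w)}\otimes\cohoc{*}{\BB(C_G)}$ is infinite-dimensional (e.g.\ $\cohoc{*}{\BB\Gm}$ sits in infinitely many negative degrees). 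The ``trace'' you wrote there is therefore only a convergent formal series, not an honest alternating trace, and your proposed relation $\Tr(\Fr^*\c w,\cohoc{*}{\cM})=\#C_G(k)^{-1}\Tr(\Fr^*\c w,\cohoc{*}{\ov\cM})$ is secretly summing a geometric series — morally fine but not what Theorem \ref{th:geom M}\eqref{M DM} delivers. The paper avoids this by applying the trace formula \emph{directly} to the finite-type DM stack $\ov\cM^\g_G(\bI_x;\l_w)$ (so everything in sight is finite-dimensional) and by cancelling the factor $\#C_G(k)$ not at the level of cohomology but at the level of the groupoid integral on the base: replacing $\cM^\g_{G,x}(c_w)(k)$ by $\ov\cM^\g_{G,x}(c_w)(k)$ multiplies the weighted count by $\#C_G(k)$, which kills the $\#C_G(k)$ in the numerator of \eqref{AI M Spr}. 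You should reorganize your argument to work with $\ov\cM$ from the start; everything else in your plan then goes through verbatim.
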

\begin{proof} First, observe that 
\begin{equation*}
\dim\Bun_{G}(\bI_{x})=(g-1)\dim G+\dim \cB,
\end{equation*}
so the constant on the right side of \eqref{AI Mpar} matches the constant on the right side of \eqref{AI M Spr}. For each $w\in W$, by Lemma \ref{l:coreg lam stable} and Theorem \ref{th:geom M}\eqref{M DM}, $\ov\cM^{\g}_{G}(\bI_{x}; \l_{w})$ is a Deligne-Mumford stack of finite type over $\ov k$, therefore the total cohomology $\cohoc{*}{\ov\cM^{\g}_{G}(\bI_{x}; \l_{w})}$ is finite-dimensional, so that the trace of $\Fr^*\c w$ on its cohomology is defined. By the Cartesian diagram \eqref{Mpar cart}, the fiber of the natural map $\ov\cM_G^\g(\bI_x; \l_w)\to \ov\cM_{G,x}^\g(c_w):=\cM_{G,x}^\g(c_w)/\BB(C_G)$ over $(\cE,\ph)$ is $\cB^{\l_w}_{\res_x\ph}$ in the Springer fiber of $\res_x\ph\in \frg^*/G$. Since the actions of $\Fr^*\c w$ on the cohomology of $\ov\cM_G^\g(\bI_x; \l_w)$ and on the cohomology of $\cB^{\l_w}_{\res_x\ph}$ are defined in a compatible way, we have
\begin{equation*}
    \Tr(\Fr^*\c w, \cohoc{*}{\ov\cM^{\g}_{G}(\bI_{x}; \l_{w})})=\int_{(\cE,\ph)\in \ov\cM^\g_{G,x}(c_w)(k)}\Tr(\Fr^*\c w, \cohog{*}{\cB^{\l_w}_{\res_x\ph}}).
\end{equation*}
Summing over $w\in W$ we get the swapped double sum on the right side of \eqref{AI M Spr}; the factor $C_G(k)$ in \eqref{AI M Spr} disappears because it is the ratio between integrating over $\ov\cM^\g_{G,x}(c_w)(k)$ versus integrating over $\cM^\g_{G,x}(c_w)(k)$.
\end{proof}

\begin{prop} Assume $p$ is large enough so that Theorem \ref{th:geom M} holds. Let 
\begin{equation*}
    \ov\phi^{\g,\tst}_x: \ov\cM^{\g}_G(\bI_{x})^{\tst}\to \frt^{*}
\end{equation*}
be the map induced by $\phi_x$. Then for each $i\in \ZZ$, $\bR^{i}\phi^{\g,\tst}_{x,!}\Qlbar$ is geometrically a constant sheaf on $\frt^{*}$ that carries a canonical $W$-equivariant structure. After taking stalks at a $G$-coregular $\l$ and its $W$-translates, the $W$-equivariant structure coincides with the one defined in \eqref{w eq M}.
\end{prop}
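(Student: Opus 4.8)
The plan is to deduce the statement from the smoothness of the parabolic Hitchin picture together with properness of $\ov h^{\g,\tst}$. First I would recall from Theorem~\ref{th:geom M}\eqref{M sm} that $\phi_x^{\tst}\colon \cM_G(\bI_x)^{\tst}\to \frt^*$ is smooth, hence so is its $C_G$-rigidified and $\g$-restricted version $\ov\phi^{\g,\tst}_x$; in particular this map is flat with smooth total space. To get that the higher direct images $\bR^i\ov\phi^{\g,\tst}_{x,!}\Qlbar$ are \emph{geometrically constant} on $\frt^*$, the key point is to use properness somewhere. The natural argument: factor $\ov\phi^{\g,\tst}_x$ together with the Hitchin map, i.e.\ consider the map $(\ov h^{\g,\tst}, \ov\phi^{\g,\tst}_x)\colon \ov\cM^{\g}_G(\bI_x)^{\tst}\to \cA_G(\bI_x)\times \frt^*$. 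Actually the Hitchin base $\cA_G(\bI_x)$ itself fibers over $\frt^*$ via the "residue of invariant polynomials at $x$" map $\cA_G(\bI_x)\to \frc^*$ followed by nothing — one has to be a little careful: the parabolic Hitchin base maps to $\frt^*$ (the residue lives in $\frt^*$ because of the Borel reduction), and $\ov\phi^{\g,\tst}_x$ is the composition $\ov\cM^\g_G(\bI_x)^{\tst}\xrightarrow{\ov h^{\g,\tst}} \cA_G(\bI_x)\to \frt^*$. Since $\ov h^{\g,\tst}$ is proper (Theorem~\ref{th:geom M}\eqref{h proper}) and $\cA_G(\bI_x)\to \frt^*$ is an affine-space bundle (a smooth affine morphism with contractible fibers, being the linear fibration of non-residue Hitchin coordinates over the residue coordinate), I would argue that $\bR\ov\phi^{\g,\tst}_{x,!}\Qlbar$ is the pushforward along $\cA_G(\bI_x)\to\frt^*$ of the complex $\bR\ov h^{\g,\tst}_!\Qlbar$; the latter is a complex on $\cA_G(\bI_x)$ and, because $\cA_G(\bI_x)\to\frt^*$ is a Zariski-locally trivial affine bundle, its pushforward to $\frt^*$ is computed fiberwise. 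The geometric constancy then reduces to the statement that the family $\ov h^{\g,\tst}$ over $\cA_G(\bI_x)$ has cohomology sheaves that are constant in the $\frt^*$-direction, which is a consequence of the $\GG_m$-action scaling the Higgs field: over each fiber of $\cA_G(\bI_x)\to\frt^*$ the scaling action contracts onto the zero-residue nilpotent cone, and since $\ov h^{\g,\tst}$ is proper, the specialization/cospecialization maps between cohomology of fibers are isomorphisms, giving a local system on all of $\frt^*$, necessarily constant as $\frt^*$ is an affine space (hence simply connected and with trivial $\pi_1$, so any local system of $\Qlbar$-sheaves on $\frt^*_{\ov k}$ is constant).

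Next I would construct the $W$-equivariant structure. By the Cartesian diagram \eqref{Mpar cart}, we have $\wt\pi_{\cM,!}\Qlbar \cong \res_x^*\cK'_\univ$ on $\cM_{G,x}\times_{\frc^*}\frt^*$, and $\cK'_\univ$ carries the canonical $W$-equivariant structure $\b_w$ compatible with the $W$-action on $\frt^*$ (Remark after Lemma on $\wt\pi'$). Restricting to the stable locus and rigidifying by $C_G$, I get a $W$-equivariant structure on $\bR\wt\pi_{\cM,!}\Qlbar$ on $\ov\cM^\g_{G,x}\times_{\frc^*}\frt^*$; pushing forward further along the projection to $\frt^*$ (which is the composition $\ov\cM^\g_G(\bI_x)^{\tst}\to \ov\cM^\g_{G,x}\times_{\frc^*}\frt^*\to \frt^*$, and this composition is exactly $\ov\phi^{\g,\tst}_x$ when one also remembers the $\frt^*$-coordinate — one must check the two maps to $\frt^*$ agree, which follows from the definition of $\phi_x$ and the construction of $\vep'$), I obtain the desired $W$-equivariant structure on each $\bR^i\ov\phi^{\g,\tst}_{x,!}\Qlbar$. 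Here I should be careful that $W$ acts on the $\frt^*$-factor on both sides and that the projection $\ov\cM^\g_{G,x}\times_{\frc^*}\frt^*\to \frt^*$ is $W$-equivariant for this action and $W$-invariant on the $\frc^*$-factor; this is immediate.

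Then I would verify the compatibility of stalks with the isomorphisms \eqref{w eq M}. By proper base change applied to the Cartesian square \eqref{Mpar cart} and the definition of $\cM_G(\bI_x;\l)=\phi_x^{-1}(\l)$, the stalk of $\bR\wt\pi_{\cM,!}\Qlbar$-pushed-to-$\frt^*$ at $\l$ is $\cohoc{*}{\cM_G(\bI_x;\l)}$ (and after rigidification, $\cohoc{*}{\ov\cM^\g_G(\bI_x;\l)}$). Restricting to a $W$-coregular $\l$, Lemma~\ref{l:coreg lam stable}(1) tells us $\cM_G(\bI_x;\l)\subset \cM_G(\bI_x)^{\tst}$, so the stalk of $\bR\ov\phi^{\g,\tst}_{x,!}\Qlbar$ at $\l$ equals $\cohoc{*}{\ov\cM^\g_G(\bI_x;\l)}$, and the induced $W$-transition isomorphisms $\b_{\l,w}$ are by construction (they were defined in the subsection "Specializations to $\l$" precisely by restricting the $W$-equivariant structure on $\cK'_\univ$) the same maps appearing in \eqref{w eq M}. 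So the match is essentially a bookkeeping unwinding of definitions once the sheaf-level $W$-equivariant structure is in place.

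The main obstacle I expect is the geometric-constancy claim: making rigorous that $\bR^i\ov\phi^{\g,\tst}_{x,!}\Qlbar$ is constant on $\frt^*_{\ov k}$, not merely locally constant. The cleanest route is probably \emph{not} through the fibration $\cA_G(\bI_x)\to\frt^*$ but directly via the $\GG_m$-action on $\cM_G(\bI_x)^{\tst}$ scaling the Higgs field, which rescales $\l\in \frt^*$ by the same weight and is compatible with $\ov\phi^{\g,\tst}_x$ under the linear $\GG_m$-action on $\frt^*$; combined with properness of the Hitchin map, one gets that $\bR^i\ov\phi^{\g,\tst}_{x,!}\Qlbar$ is monodromic for this $\GG_m$-action, and a monodromic constructible sheaf on an affine space that is lisse on the complement of the origin and whose stalk at a general point injects into the stalk at $0$ (by the contraction) must be constant — here the properness is what makes nearby-cycle / specialization maps isomorphisms. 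I would spell this out carefully; everything else is formal consequence of the constructions already in the paper.
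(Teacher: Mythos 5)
Your final ``cleanest route'' is exactly the paper's strategy: combine the $\GG_m$-scaling action on $\ov\cM^{\g}_G(\bI_x)^{\tst}$ (compatible with linear scaling on $\frt^*$), the smoothness of $\ov\phi^{\g,\tst}_x$, and the properness of $\ov h^{\g,\tst}$, which forces the fixed-point locus to be proper. Where you sketch a direct monodromy/specialization argument, the paper instead cites \cite[Theorem 10.2.2]{DGT} (generalizing it from quasi-projective schemes to Deligne--Mumford stacks), and the $W$-equivariant structure you construct via the Cartesian square \eqref{Mpar cart} and $\cK'_\univ$ is the one the paper uses implicitly. So the approach is the same; you are just spelling out the citation.

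Two caveats worth flagging. First, your initial factorization attempt contains a genuine error: the parabolic Hitchin base $\cA_G(\bI_x)$ does \emph{not} map to $\frt^*$ --- it only records invariant polynomials of the Higgs field, so the residue data it sees lives in $\frc^*$, not $\frt^*$. The whole point of $\phi_x$ is that it is a strictly finer invariant than $h$ (it uses the Borel reduction, which the Hitchin base cannot see), so $\ov\phi^{\g,\tst}_x$ does \emph{not} factor through $\ov h^{\g,\tst}$. You sensed something was off and abandoned this, but it is worth naming the reason. Second, in your ``cleanest route'' the phrase ``lisse on the complement of the origin'' and the ``stalk injects'' claim are not really what you need, and the latter is redundant once you know the specialization maps are isomorphisms. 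The correct formulation is that $\GG_m$-monodromicity plus properness of fixed points plus smoothness of the total space is precisely the hypotheses of \cite[Theorem 10.2.2]{DGT}, whose conclusion is that the sheaves $\bR^i\ov\phi^{\g,\tst}_{x,!}\Qlbar$ are geometrically constant; trying to re-derive this from scratch via ``monodromic and lisse away from $0$'' would require justifying the lisseness, which is not obvious given that the fibers $\cM_G(\bI_x;\l)^{\tst}$ are genuinely different stacks for different $\l$ (the stability locus jumps). Better to quote DGT as the paper does.
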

\begin{proof} By Theorem \ref{th:geom M}\eqref{M sm}, 
the map $\ov\phi_x^{\g,\tst}$ is smooth. There is an action of $\Gm$ on $\ov\cM^{\g}_G(\bI_{x})^{\tst}$ by scaling the Higgs field. This action is contracting to the fixed point locus. Moreover, the fixed point locus is contained in the zero fiber of the Hitchin map $\ov h^{\g,\tst}$ in \eqref{om Hitchin map} (because the $\Gm$-action on $\cA_{G}(\bI_{x})$ is contracting to the origin). Since $\ov h^{\g,\tst}$ is proper by Theorem \ref{th:geom M}\eqref{h proper}, $\ov \cM^{\g}_G(\bI_{x})^{\tst,\Gm}$ is proper over $k$.

In this situation we may apply \cite[Theorem 10.2.2]{DGT} to conclude. The result in {\em loc.cit} is stated in the context of quasi-projective schemes but the argument works for stacks such as $\ov \cM^{\g}_G(\bI_{x})^{\tst}$. 
\end{proof}

\begin{cor}\label{cor:W-action} Assume $p$ is large enough so that Theorem \ref{th:geom M} holds. Then there is a canonical action of $W$ on $\cohoc{*}{\ov\cM^{\g}_G(\bI_{x}; 0)^{\tst}}$.

Moreover, when $\l_{w}\in \frt_{w}^{*}$ is $W$-coregular, we have
\begin{equation*}
\Tr(\Fr^*\c w, \cohoc{*}{\ov\cM^{\g}_G(\bI_{x};\l)})=\Tr(\Fr^*\c w, \cohoc{*}{\ov\cM^{\g}_G(\bI_{x};0)^{\tst}}).
\end{equation*}
\end{cor}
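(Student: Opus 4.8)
The plan is to read off both assertions from the preceding Proposition, which presents each $\bR^{i}\ov\phi^{\g,\tst}_{x,!}\Qlbar$ as a geometrically constant Weil sheaf on $\frt^{*}$ carrying a canonical $W$-equivariant structure that, on the stalks over $W$-coregular points and their $W$-translates, agrees with the structure used in \eqref{w eq M}. Since $\bR\ov\phi^{\g,\tst}_{x,!}$ commutes with arbitrary base change, taking the stalk at a point $\l\in\frt^{*}$ recovers $\cohoc{*}{\ov\cM^{\g}_G(\bI_{x};\l)^{\tst}}$, and the whole statement will come from comparing the stalks at $0$ and at a $W$-coregular $\l_{w}$.

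First I would construct the $W$-action. The point $0\in\frt^{*}$ is a $k$-point fixed by $W$, so the stalk of $\bR^{i}\ov\phi^{\g,\tst}_{x,!}\Qlbar$ at $0$ is $\cohoc{i}{\ov\cM^{\g}_G(\bI_{x};0)^{\tst}}$ with its natural Frobenius $\Fr^{*}$, and restricting the canonical $W$-equivariant structure of the preceding Proposition to this stalk produces a $W$-action on $\cohoc{*}{\ov\cM^{\g}_G(\bI_{x};0)^{\tst}}$. It is canonical because it is induced by the canonical equivariant structure, and it commutes with $\Fr^{*}$ in the twisted sense, so that $\Fr^{*}\c w$ makes sense on this space.

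Next I would prove the trace identity. Fix $w\in W$ and a $W$-coregular $\l_{w}\in\frt_{w}^{*}$, regarded as an element of $\frt^{*}(\ov k)$, so that $\Fr(\l_{w})=w\l_{w}$. By Lemma \ref{l:coreg lam stable}(1), $\ov\cM^{\g}_G(\bI_{x};\l_{w})=\ov\cM^{\g}_G(\bI_{x};\l_{w})^{\tst}$, so base change identifies $\cohoc{i}{\ov\cM^{\g}_G(\bI_{x};\l_{w})}$ with the stalk of $\bR^{i}\ov\phi^{\g,\tst}_{x,!}\Qlbar$ at $\l_{w}$; by the last clause of the preceding Proposition, the twisted Frobenius $\Fr^{*}\c w$ of \eqref{tw Fr M} is, under this identification, the composite of the $W$-equivariance isomorphism from the stalk at $\l_{w}$ to the stalk at $w\l_{w}=\Fr(\l_{w})$ followed by the Weil structure of the sheaf. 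Since the sheaf is geometrically constant, its $W$-equivariant structure gives canonical $W$-compatible identifications of every stalk with $\cohoc{i}{\ov\cM^{\g}_G(\bI_{x};0)^{\tst}}$, and under them the Weil structure becomes the constant automorphism $\Fr^{*}$ of $\cohoc{i}{\ov\cM^{\g}_G(\bI_{x};0)^{\tst}}$ (using that $0$ is a $k$-point). Carrying out the same bookkeeping as in \S\ref{sss:tw Fr Spr} and Corollary \ref{cor:FTkappa}, with $\bR^{i}\ov\phi^{\g,\tst}_{x,!}\Qlbar$ in the role of $\cK'_{\l}$ there, then gives $\Tr(\Fr^{*}\c w,\cohoc{i}{\ov\cM^{\g}_G(\bI_{x};\l_{w})})=\Tr(\Fr^{*}\c w,\cohoc{i}{\ov\cM^{\g}_G(\bI_{x};0)^{\tst}})$ for each $i$, and summing with alternating signs yields the claim.

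I do not expect a genuinely new difficulty. The one point I would be careful about is that the $W$-equivariant structure produced by the preceding Proposition is literally the one entering \eqref{w eq M}, hence \eqref{tw Fr M}; this is precisely its final clause, and once it is invoked the two displayed claims are formal. The remaining verifications — that base change for $\bR\ov\phi^{\g,\tst}_{x,!}$ transports the Weil structure and the $W$-equivariant structure compatibly, and that a geometrically constant $W$-equivariant Weil sheaf is pinned down by its stalk at $0$ together with the $W$-action and Frobenius there — are routine.
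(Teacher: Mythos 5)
Your proof is correct and matches the argument the paper leaves implicit: both read the $W$-action on the stalk at the $W$-fixed point $0$ off the canonical equivariant structure from the preceding Proposition, and both deduce the trace identity by using that a geometrically constant Weil sheaf on the connected base $\frt^*$ has its Weil structure and $W$-equivariant structure each given by a single automorphism of the fixed vector space, so $\Fr^*\c w$ computed at the stalk over $\l_w$ agrees with $\Fr^*\c w$ at the stalk over $0$. The invocation of Lemma~\ref{l:coreg lam stable}(1) to identify $\ov\cM^{\g}_G(\bI_{x};\l_{w})$ with the $\th$-stable locus (so that it really is a stalk of $\bR^i\ov\phi^{\g,\tst}_{x,!}\Qlbar$) is exactly the needed bridge, and the "routine" checks you flag do go through as you describe.
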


We denote the sign isotypic part of $\cohoc{*}{\ov\cM^{\g}_G(\bI_{x}; 0)^{\tst}}$ under the $W$-action by $\cohoc{*}{\ov\cM^{\g}_G(\bI_{x}; 0)^{\tst}}\j{\sgn}$. It still carries a Frobenius action.

\begin{cor}[of Corollaries \ref{c:AI Mpar} and \ref{cor:W-action}]\label{c:main}
Assume $p$ is large enough so that Theorem \ref{th:geom M} holds, and $q=\#k$ is large enough with respect to $W$ so that a coregular admissible collection $\l=\{\l_{w}\}_{w\in W}$ exists (see Lemma \ref{l:adm exist}). Then for each $\g\in \pi_{0}(\Bun_{G})(k)$ we have
\begin{eqnarray}
\notag\#\AI^{\g}_{G,X}(k)&=&q^{-\dim\Bun_{G}(\bI_{x})-\dim C_{G}}\Tr(\Fr,\cohoc{*}{\ov\cM^{\g}_G(\bI_{x}; 0)^{\tst}}\j{\sgn}).
\end{eqnarray}
\end{cor}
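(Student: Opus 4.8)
The plan is to chain together Corollaries \ref{c:AI Mpar} and \ref{cor:W-action}, then convert the resulting $W$-averaged alternating trace into a trace on the sign-isotypic part. First I would start from \eqref{AI Mpar}, which already expresses $\#\AI^{\g}_{G,X}(k)$ as
\begin{equation*}
\frac{1}{q^{\dim\Bun_{G}(\bI_{x})+\dim C_{G}}\cdot \#W}\sum_{w\in W}\sgn(w)\Tr(\Fr^{*}w,\cohoc{*}{\ov\cM^{\g}_{G}(\bI_{x};\l_{w})}),
\end{equation*}
valid once $q$ is large enough for a $W$-coregular admissible collection $\l=\{\l_{w}\}$ to exist. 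By Corollary \ref{cor:W-action}, which uses that each $\l_{w}$ is $W$-coregular together with the geometric input of Theorem \ref{th:geom M} (available since $p$ is assumed large enough), we may replace each $\cohoc{*}{\ov\cM^{\g}_{G}(\bI_{x};\l_{w})}$ by $\cohoc{*}{\ov\cM^{\g}_{G}(\bI_{x};0)^{\tst}}$ without changing the trace of $\Fr^{*}w$: the point is that $\bR^{i}\ov\phi^{\g,\tst}_{x,!}\Qlbar$ is geometrically constant on $\frt^{*}$ with its canonical $W$-equivariant structure, so the specialization at $\l_{w}$ and the specialization at $0$ carry the same $\Fr^{*}\c w$-action. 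This substitution turns the $w$-summand into $\sgn(w)\Tr(\Fr^{*}\c w, V)$ where $V=\cohoc{*}{\ov\cM^{\g}_{G}(\bI_{x};0)^{\tst}}$ is a fixed finite-dimensional graded $\Qlbar$-vector space carrying commuting actions of $W$ and $\Fr$.

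The remaining step is purely representation-theoretic. On a finite-dimensional $W$-representation $V$ equipped with a commuting operator $\Fr$, one has
\begin{equation*}
\frac{1}{\#W}\sum_{w\in W}\sgn(w)\Tr(\Fr\c w, V)=\Tr\left(\Fr, \frac{1}{\#W}\sum_{w\in W}\sgn(w)\,w\,\Big|\,V\right)=\Tr(\Fr, V\j{\sgn}),
\end{equation*}
since $e_{\sgn}=\frac{1}{\#W}\sum_{w}\sgn(w)w$ is the idempotent projecting onto the sign-isotypic summand $V\j{\sgn}$ and it commutes with $\Fr$. Applying this identity grading by grading (the alternating sum over degrees commutes with everything), and absorbing the alternating signs into the notation $\Tr(\Fr,\cohoc{*}{-})$ for alternating trace, the whole sum collapses to $\Tr(\Fr,\cohoc{*}{\ov\cM^{\g}_{G}(\bI_{x};0)^{\tst}}\j{\sgn})$. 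Finally I would note $\dim\Bun_{G}(\bI_{x})=(g-1)\dim G+\dim\cB$ as recorded in the proof of Corollary \ref{c:AI Mpar}, so the prefactor is $q^{-(g-1)\dim G-\dim\cB-\dim C_{G}}$, matching the displayed formula.

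There is no serious obstacle here: the statement is essentially a formal consequence of the two cited corollaries plus the observation that $W$-averaging against $\sgn$ is projection onto the sign-isotypic part. The only things to be careful about are that the action of $\Fr^{*}\c w$ used in \eqref{AI Mpar} (defined in \S\ref{tw Fr M}) is exactly the one that appears after identifying cohomologies via the $W$-equivariant structure on $\bR^{i}\ov\phi^{\g,\tst}_{x,!}\Qlbar$ — this compatibility is precisely what Corollary \ref{cor:W-action} asserts — and that the sign-isotypic projector commutes with $\Fr$ because the $W$-action and the Frobenius action on $\cohoc{*}{\ov\cM^{\g}_{G}(\bI_{x};0)^{\tst}}$ commute, which follows from the $W$-equivariant structure being defined over $k$ on the geometrically constant sheaves $\bR^{i}\ov\phi^{\g,\tst}_{x,!}\Qlbar$.
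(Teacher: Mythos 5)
Your proof chains the two cited corollaries and then reads off the $\sgn$-averaged trace as a trace on the sign-isotypic summand; this is exactly the (implicit) argument the paper intends, and your computation is correct. One small inaccuracy in the justification: when $G$ is non-split, the $W$-action and $\Fr$ do \emph{not} commute on $\cohoc{*}{\ov\cM^{\g}_{G}(\bI_{x};0)^{\tst}}$ — they satisfy $\Fr^{*}\circ w=\d(w)\circ\Fr^{*}$, where $\d$ is the Coxeter automorphism of $W$ determined by the $k$-form of $G$ (see the proof of Lemma \ref{l:van sum}). What is true, and what you actually need, is that the sign projector $e_{\sgn}=\frac{1}{\#W}\sum_{w}\sgn(w)\,w$ commutes with $\Fr$, because $\sgn\circ\d=\sgn$; from this, $\Tr(\Fr\circ e_{\sgn},V)=\Tr(\Fr,V\j{\sgn})$ follows as you wrote, and the identity $\frac{1}{\#W}\sum_{w}\sgn(w)\Tr(\Fr\circ w,V)=\Tr(\Fr\circ e_{\sgn},V)$ needs only cyclicity of trace, not commutativity. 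So replace the sentence asserting that $W$ and $\Fr$ commute with the observation that $e_{\sgn}$ and $\Fr$ commute (because $\sgn$ is $\d$-invariant); otherwise the argument goes through as you describe.
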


\sss{Global nilpotent cone}
Let 
\begin{equation*}
\cN^{\g}_G(\bI_{x}):=h^{\g, -1}(0)\subset \cM^{\g}_G(\bI_{x}; 0)
\end{equation*}
be the parabolic version of the global nilpotent cone.  Let $\cN^{\g}_G(\bI_{x})^{\tst}=\cN^{\g}_G(\bI_{x})\cap\cM^{\g}_G(\bI_{x}; 0)^{\tst}$ and
\begin{equation*}
\ov\cN^{\g}_G(\bI_{x})^{\tst}=\cN^{\g}_G(\bI_{x})^{\tst}/\BB(C_G).
\end{equation*}
If $p$ is large enough so that Theorem \ref{th:geom M} holds, then $\ov\cN^{\g}_G(\bI_{x})^{\tst}$ is a proper DM stack over $k$. Let $D=\dim \Bun_{G}(\bI_{x})$. Since $\cM_G(\bI_{x}; 0)=T^*\Bun_{G}(\bI_{x})$, it has dimension $2D$. Then
\begin{equation*}\dim \ov\cM^{\g}_G(\bI_{x}; 0)^{\tst}=2D+\dim C_G.
\end{equation*}
By the contracting principle under the $\Gm$-action by scaling, restriction gives an isomorphism:
\begin{eqnarray*}
\cohog{*}{\ov\cM^{\g}_G(\bI_{x}; 0)^{\tst}}\isom \cohog{*}{\ov\cN^{\g}_G(\bI_{x}; 0)^{\tst}}.
\end{eqnarray*}
Dualizing we get a Frobenius-equivariant quasi-isomorphism of complexes
\begin{equation*}
\homog{*}{\ov\cN^{\g}_G(\bI_{x}; 0)^{\tst}}\cong \cohoc{*}{\ov\cM^{\g}_G(\bI_{x}; 0)^{\tst}}[4D+2\dim C_G](2D+\dim C_G).
\end{equation*}
In particular, $\homog{*}{\ov\cN^{\g}_G(\bI_{x})^{\tst}}$ also carries an action of $W$, whose sign isotypic summand is denoted $\homog{*}{\ov\cN^{\g}_G(\bI_{x})^{\tst}}\j{\sgn}$.

\begin{cor}\label{c:main N} Under the same assumptions as in Corollary \ref{c:main}, for each $\g\in \pi_{0}(\Bun_{G})(k)$ we have
\begin{eqnarray}\label{AI homology N}
\#\AI^{\g}_{G,X}(k)=q^{\dim\Bun_{G}(\bI_{x})}\Tr(\Fr,\homog{*}{\ov\cN^{\g}_G(\bI_{x})^{\tst}}\j{\sgn}).
\end{eqnarray}
\end{cor}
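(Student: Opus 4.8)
The plan is to derive Corollary \ref{c:main N} as a formal consequence of Corollary \ref{c:main} together with the duality identification
$$\homog{*}{\ov\cN^{\g}_G(\bI_{x})^{\tst}}\cong \cohoc{*}{\ov\cM^{\g}_G(\bI_{x}; 0)^{\tst}}[4D+2\dim C_G](2D+\dim C_G)$$
recorded just above the statement, where $D=\dim\Bun_G(\bI_{x})$. First I would recall why this identification is available and equivariant. Assuming $p$ large enough that Theorem \ref{th:geom M} applies, $\ov\cM^{\g}_G(\bI_{x}; 0)^{\tst}$ is a smooth Deligne--Mumford stack of finite type over $k$ of dimension $2D+\dim C_G$, and the $\Gm$-action scaling the Higgs field contracts it onto a closed substack contained in the proper stack $\ov\cN^{\g}_G(\bI_{x})^{\tst}$, since the $\Gm$-fixed locus lies in the zero fibre of the Hitchin map, which is proper by Theorem \ref{th:geom M}\eqref{h proper}. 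The contraction principle (as in the proposition preceding Corollary \ref{cor:W-action}, following \cite[Theorem 10.2.2]{DGT}) gives a $\Fr$-equivariant isomorphism $\cohog{*}{\ov\cM^{\g}_G(\bI_{x}; 0)^{\tst}}\isom \cohog{*}{\ov\cN^{\g}_G(\bI_{x})^{\tst}}$; dualizing (using properness of $\ov\cN^{\g}_G(\bI_{x})^{\tst}$, so that homology is the $\Qlbar$-linear dual of cohomology) and then applying Poincar\'e duality on the smooth stack $\ov\cM^{\g}_G(\bI_{x}; 0)^{\tst}$ produces the displayed formula.

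Next I would specialize to the $\sgn$-isotypic summands. The $W$-action on $\cohoc{*}{\ov\cM^{\g}_G(\bI_{x}; 0)^{\tst}}$ of Corollary \ref{cor:W-action} comes from the monodromy of $\ov\phi^{\g,\tst}_x$ over $\frt^*$, and the $W$-action on $\homog{*}{\ov\cN^{\g}_G(\bI_{x})^{\tst}}$ is by definition transported through the contraction isomorphism and Poincar\'e duality; hence the displayed isomorphism is $W$-equivariant (the contragredient introduced by Poincar\'e duality is cancelled by the one introduced in passing from $\cohog{*}{\ov\cN^{\g}_G(\bI_{x})^{\tst}}$ to $\homog{*}{\ov\cN^{\g}_G(\bI_{x})^{\tst}}$). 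Since the one-dimensional representation $\sgn$ is self-contragredient, restricting to $\sgn$-isotypic parts yields a $\Fr$-equivariant isomorphism
$$\cohoc{*}{\ov\cM^{\g}_G(\bI_{x}; 0)^{\tst}}\j{\sgn}\cong \homog{*}{\ov\cN^{\g}_G(\bI_{x})^{\tst}}\j{\sgn}\,[-4D-2\dim C_G](-2D-\dim C_G).$$

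Finally I would take alternating Frobenius traces and substitute into Corollary \ref{c:main}. The shift is by the even integer $4D+2\dim C_G$ and hence leaves the alternating trace unchanged, while the Tate twist $(-2D-\dim C_G)$ multiplies every Frobenius eigenvalue by $q^{2D+\dim C_G}$, so
$$\Tr(\Fr,\cohoc{*}{\ov\cM^{\g}_G(\bI_{x}; 0)^{\tst}}\j{\sgn})=q^{2D+\dim C_G}\,\Tr(\Fr,\homog{*}{\ov\cN^{\g}_G(\bI_{x})^{\tst}}\j{\sgn}).$$
Inserting this into Corollary \ref{c:main}, the scalar prefactor $q^{-D-\dim C_G}$ becomes $q^{-D-\dim C_G}\cdot q^{2D+\dim C_G}=q^{D}=q^{\dim\Bun_G(\bI_{x})}$, which is exactly \eqref{AI homology N}. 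This argument requires no new geometry beyond Theorem \ref{th:geom M} (already invoked before the statement); the only delicate points are bookkeeping the Tate twist and confirming the $W$-equivariance of Poincar\'e duality on the $\sgn$-summand, and I expect no genuine obstacle. The same chain of reductions, read in the introduction's notation, also yields Corollary \ref{c:intro}.
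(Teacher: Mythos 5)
Your proposal is correct and follows precisely the route taken in the paper: the paper derives the isomorphism $\homog{*}{\ov\cN^{\g}_G(\bI_{x})^{\tst}}\cong \cohoc{*}{\ov\cM^{\g}_G(\bI_{x}; 0)^{\tst}}[4D+2\dim C_G](2D+\dim C_G)$ (via the contraction principle and Poincar\'e duality on the smooth DM stack $\ov\cM^{\g}_G(\bI_x;0)^{\tst}$) in the paragraph immediately preceding the corollary, and then treats the corollary as an immediate substitution into Corollary \ref{c:main}. Your bookkeeping of the Tate twist and the check that the shift is even are exactly the implicit steps the paper omits, and the $W$-equivariance remark is also as the paper intends.
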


\begin{remark} Let $\cM_{G, x, \nil}$ be the substack of $\cM_{G,x}$ where $\res_{x}(\ph)$ is nilpotent, i.e., lies in the nilpotent cone $\cN^{*}\subset \frg^{*}$. The forgetful map $\cM_{G}(\bI_{x})\to \cM_{G,x}$ restricts to a map $\Pi: \cM_{G}(\bI_{x};0)\to \cM_{G,x,\nil}$ that fits into a Cartesian diagram
\begin{equation*}
\xymatrix{\cM_{G}(\bI_{x};0) \ar[r]\ar[d]^{\Pi} &  [\wt\cN^{*}/G]\ar[d]^{\pi}\\
\cM_{G, x, \nil}\ar[r]^{\res_{x}} & [\cN^{*}/G].
}
\end{equation*}
In general, the open substack $\cM_{G}(\bI_{x};0) ^{\tst}$ is not the preimage of an substack of $\cM_{G, x, \nil}$ under $\Pi$. However, when $G=\GL_{n}$, $d\in \ZZ$ such that $(d,n)=1$, the component $\cM^{d}_{G}(\bI_{x};0) ^{\tst}$ of degree $d$ Higgs bundles is the preimage of $\cM^{d, \st}_{G,x,\nil}$ under $\Pi$. Springer theory then implies that the sign isotypic part of $\cohoc{*}{\ov\cM_G(\bI_{x}; 0)^{\tst}}$ is precisely $\cohoc{*}{\ov\cM_{G}^{d, \st}}$, the compactly supported cohomology of stable rank $n$ Higgs bundles of degree $d$. This recovers the result of Schiffmann \cite{Sch}, at least for $p$ sufficiently large.

For $G=\GL_n$ and arbitrary degree $\g\in \ZZ$, Corollary \ref{c:main} recovers the result of Dobrovolska, Ginzburg, and Travkin \cite[Theorem 1.4.5]{DGT}.
\end{remark}

\section{Indecomposable $G$-bundles on elliptic curves}\label{s:ell}
In this section, let $X$ be an elliptic curve over an {\em algebraically closed} field $k$ of any characteristic. Let $x_{0}\in X$ be the origin. Let $G$ be a connected reductive group over $k$. The aim is to give a parametrization of indecomposable $G$-bundles on $X$ in Lie-theoretic terms.

\subsection{Reduction to semistable $G$-bundles}
We recall the canonical parabolic reduction of a $G$-bundle $\cE$ over $X$ generalizing the Harder-Narasimhan filtration when $G=\GL_{n}$. Each $G$-bundle $\cE$ on $X$ carries a canonical reduction of $\cE$ to a $P$-bundle $\cE_{P}$ for some parabolic subgroup $P\subsetneq G$ characterized by the following conditions:
\begin{enumerate}
\item Let $L_{P}=P/N_{P}$ be the Levi quotient of $P$. Then the associated $L_{P}$-bundle $\cE_{L_{P}}=\cE_{P}/N_{P}$ is semistable. 
\item Let $A_{P}=C_{L_P}$ be the central torus of $L_{P}$, and let $T_{P}=L_{P}/L_{P}^{\der}$ be the quotient torus of $L_{P}$. The natural map $A_{P}\to T_{P}$ is an isogeny and induces an isomorphism $\xcoch(A_{P})_{\QQ}\cong \xcoch(T_{P})_{\QQ}$. The $L_{P}$-bundle $\cE_{L_{P}}$ further induces a $T_{P}$-bundle, and defines a degree $\deg(\cE_{L_{P}})\in\xcoch(T_{P})$. We may identify $\deg(\cE_{L_{P}})$ as an element in $\xcoch(A_{P})_{\QQ}$ using the prior identification. Then $\deg(\cE_{L_{P}})$ must satisfy the following condition
\begin{equation}\label{pos deg HN}
\j{\deg(\cE_{L_{P}}), \mu}>0 \mbox{ for any weight $\mu$ of $A_{P}$ on $\frn_{P}$}.
\end{equation}
\end{enumerate}

\begin{lemma}\label{l:ind sst}
Assume $p=\ch(k)$ is \goodp for $G$. Then any indecomposable $G$-bundle on an elliptic curve $X$ is semistable.
\end{lemma}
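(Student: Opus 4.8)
The plan is to show that a $G$-bundle $\cE$ on $X$ that is not semistable admits a reduction of structure group to a \emph{proper} Levi subgroup, which forces $\Aut(\cE)/C_{G}$ to contain a nontrivial torus and hence contradicts indecomposability in the sense of Definition~\ref{d:ai}.

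So suppose $\cE$ is indecomposable but not semistable, and let $\cE_{P}$ be its canonical reduction to a proper parabolic $P\subsetneq G$, with Levi quotient $L_{P}$, unipotent radical $N_{P}$, and central torus $A_{P}=C_{L_{P}}$; fix a Levi subgroup $L\subset P$, so $L\cong L_{P}$. By property (i) of the canonical reduction, the $L_{P}$-bundle $\cE_{L_{P}}=\cE_{P}/N_{P}$ is semistable, and by property (ii), i.e. \eqref{pos deg HN}, we have $\j{\deg(\cE_{L_{P}}),\mu}>0$ for every weight $\mu$ of $A_{P}$ on $\frn_{P}$. The first step is to upgrade the canonical $P$-reduction to an $L$-reduction, i.e. to show $\cE_{P}\cong\cE_{L}\twtimes{L}P$ for an $L$-bundle $\cE_{L}$ inducing $\cE_{L_{P}}$. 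For this I filter $N_{P}$ by its descending central series (equivalently, by root height); since $p$ is pretty good, hence good, this filtration has $L_{P}$-equivariant graded pieces that are vector groups, identified with the corresponding graded pieces of the $L_{P}$-module $\frn_{P}$. Lifting $\cE_{L_{P}}$ step by step through $1\to N^{i}/N^{i+1}\to P/N^{i+1}\to P/N^{i}\to 1$, the obstruction at each stage lies in $H^{2}(X,\cE_{L_{P}}\twtimes{L_{P}}(N^{i}/N^{i+1}))$, which vanishes as $\dim X=1$, and the set of lifts is a torsor under $H^{1}(X,\cE_{L_{P}}\twtimes{L_{P}}(N^{i}/N^{i+1}))$. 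Now $\cE_{L_{P}}\twtimes{L_{P}}(N^{i}/N^{i+1})$ is the bundle associated to the semistable $L_{P}$-bundle $\cE_{L_{P}}$ via a representation occurring in $\frn_{P}$; using the ``pretty good'' hypothesis to invoke preservation of semistability under the associated-bundle construction for these bounded-height representations, each $A_{P}$-isotypic summand of this bundle is semistable, and of positive degree by \eqref{pos deg HN}. By Serre duality on the elliptic curve ($\om_{X}\cong\cO_{X}$), $H^{1}$ of a semistable bundle of positive degree vanishes. Hence all the torsors above are canonically trivial, the lift of $\cE_{L_{P}}$ to a $P$-bundle is unique up to isomorphism, and since both $\cE_{P}$ and $\cE_{L}\twtimes{L}P$ are such lifts, $\cE_{P}\cong\cE_{L}\twtimes{L}P$.

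It then remains to derive the contradiction. From the $L$-reduction we get $\cE\cong\cE_{P}\twtimes{P}G\cong\cE_{L}\twtimes{L}G$ with $L=L_{P}\subsetneq G$ a proper Levi. The central torus $A_{P}=C_{L}$ acts on the $L$-torsor $\cE_{L}$ by right translation, faithfully (the torsor action is free), hence embeds $A_{P}\incl\Aut(\cE_{L})\incl\Aut(\cE)$; under this embedding the distinguished copy of $C_{G}\subset\Aut(\cE)$ lies in $A_{P}$, since $Z(G)\subset Z(L)$. Since $L$ is a proper Levi, $\dim A_{P}=\dim C_{L}>\dim C_{G}$, so $A_{P}/C_{G}$ is a nontrivial torus inside $\Aut(\cE)/C_{G}$, contradicting Definition~\ref{d:ai}. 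Therefore $\cE$ must be semistable.

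The main obstacle is the vanishing $H^{1}(X,\cE_{L_{P}}\twtimes{L_{P}}(N^{i}/N^{i+1}))=0$. In characteristic zero this follows from the Ramanan--Ramanathan theorem that semistability is preserved under associated bundles; in characteristic $p$ one genuinely needs the ``pretty good'' hypothesis, either via the positive-characteristic preservation-of-semistability results for the low-height representations appearing in $\frn_{P}$, or by replacing the semistability input with a direct Behrend-type vanishing statement for the canonical reduction. One should also verify that the descending central series of $N_{P}$ has the stated $L_{P}$-equivariant vector-group form in the relevant characteristic, which is standard once $p$ is good.
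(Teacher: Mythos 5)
Your proposal is correct in essence but follows a genuinely different route from the paper. Both arguments share the same analytic heart: the vanishing $\upH^{1}(X,\cE_{L_{P}}\twtimes{L_{P}}V)=0$ for the $L_{P}$-subquotients $V$ of $\frn_{P}$, obtained from semistability of $\cE_{L_{P}}$ plus the positivity \eqref{pos deg HN}. What differs is how that vanishing is turned into a torus in $\Aut(\cE)$. You use the obstruction theory for lifting torsors along the successive non-central extensions $1\to N^{i}/N^{i+1}\to P/N^{i+1}\to P/N^{i}\to 1$: since $\upH^{2}$ vanishes for dimension reasons and $\upH^{1}$ of the twisted coefficient bundle vanishes, the lift of $\cE_{L_{P}}$ is unique, so $\cE_{P}\cong\cE_{L}\twtimes{L}P$, and $C_{L}=C_{G}(L)$ then acts on $\cE=\cE_{L}\twtimes{L}G$ by right translation, giving the big torus. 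The paper instead stays at the infinitesimal level: it uses the long exact sequence for $0\to\cE_{P}\twtimes{P}\frn_{P}\to\Ad(\cE_{P})\to\Ad(\cF)\to0$ to deduce $\aut(\cE_{P})\surj\aut(\cF)$, then invokes the reducedness of $\Aut(\cE_{P})$ (Corollary~\ref{c:aut red par}) to promote this to a surjection $\Aut(\cE_{P})^{\c}\surj\Aut(\cF)^{\c}$, and hence to a torus of dimension $\geq\dim C_{L_{P}}$. Your route actually produces a Levi reduction of $\cE$ itself, which is a stronger geometric conclusion; the paper's route avoids the non-abelian-lifting bookkeeping but needs the reducedness input.

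One point deserves tightening. You gesture at ``preservation of semistability for the low-height representations appearing in $\frn_{P}$'' as the engine for the $\upH^{1}$ vanishing, and flag this as the main obstacle. The paper handles this more precisely: it refines the filtration so the graded pieces are the \emph{irreducible} $L_{P}$-subquotients $V_{i}$ of $\frn_{P}$, and for irreducible highest-weight representations the required semistability of $\cE_{L_{P}}\twtimes{L_{P}}V_{i}$ is exactly \cite[Prop.~2.5, Rem.~2.6]{FGL}. To make your version airtight without invoking a general low-height theorem, refine your descending-central-series filtration (whose graded pieces are $L_{P}$-modules but not irreducible) by further $L_{P}$-invariant steps until the subquotients are irreducible; these refinements still correspond to normal subgroups of $P$ because $N_{P}$ acts trivially on each $N^{i}/N^{i+1}$. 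Then apply \cite{FGL} to each irreducible piece and note that an extension of semistable bundles of the same slope is semistable, so the $A_{P}$-isotypic (or irreducible) graded pieces are semistable of positive degree, and $\upH^{1}$ vanishes on the genus-one curve. With that modification your proof is complete and parallel in strength to the paper's.
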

\begin{proof}
Let $\cE$ be a $G$-bundle on $X$, and let $\cE_{P}$ be the canonical reduction of $\cE$.  Fix a section $\io: L_{P}\incl P$ from the Levi factor $L_{P}$ of $P$. Let $\cF=\cE_{P}/N_{P}$ be the induced $L_{P}$-bundle. 

We claim that the natural map 
\begin{equation}\label{surj Aut}
    \Aut(\cE_P)^\c\to \Aut(\cF)^\c
\end{equation}
is surjective. Here $(-)^\c$ denotes neutral components. Indeed, by Corollary \ref{c:aut red par}, $\Aut(\cE_P)^\c$ is reduced, therefore it suffices to check the Lie algebra map $\aut(\cE_P)\to \aut(\cF)$ is surjective. Let $\frp=\Lie P, \frl_P=\Lie L_P$ and $\frn_P$ be the nilpotent radical of $\frp$, so that $0\to \frn_P\to \frp\to \frl_P\to 0$ is short exact. We thus have a short exact sequence of the associated vector bundles 
\begin{equation*}
    0\to \cE_P\times^P\frn_P\to \Ad(\cE_P)\to \Ad(\cF)\to 0
\end{equation*}
The corresponding long exact sequence reads
\begin{equation}\label{long on aut}
    \cdots\to \aut(\cE_P)\to \aut(\cF)\to \cohog{1}{X,\cE_P\times^P\frn_P}\to\cdots.
\end{equation}
The vector bundle $\cE\times^{P}\frn_{P}$ has a filtration where the associated graded are of the form $\cV_{i}:=\cF\times^{L_{P}}V_{i}$, where $V_i$ are irreducible subquotients of $\frn_{P}$ as a representation of $L_{P}$. By \cite[Proposition 2.5, Remark 2.6]{FGL}, since $\cF$ is a semistable $L_{P}$-bundle and $V_i$ is irreducible and in particular highest weight, $\cV_{i}$ is a semistable vector bundle (\cite[Prop. 2.5]{FGL} is stated for degree $0$ $L_P$-bundles, but according to \cite[Remark 2.6]{FGL}, the same argument in {\em loc.cit.} works for $L_P$-bundles of arbitrary degree as long as we take the associated vector bundle with respect to a highest weight representation of $L_P$). Moreover, \eqref{pos deg HN} for $\deg(\cF)=\deg(\cE_{L_P})$ implies that all slopes of $\cV_{i}$ are positive. This implies $\cohog{1}{X, \cV_{i}}=0$ for all $i$ (using the fact that $X$ has genus one). Therefore $\cohog{1}{X, \cE_P\times^{P}\frn_{P}}=0$, and the map $\aut(\cE_P)\to \aut(\cF)$ is surjective by \eqref{long on aut}. We conclude that \eqref{surj Aut} is surjective.

If $\cE$ is not semistable, $P\subset G$ is a proper parabolic and $\dim C_{L_P}>\dim C_G$. The surjectivity of \eqref{surj Aut} implies that $\Aut(\cE_P)$ contains a subquotient isomorphic to $C_{L_P}$. Therefore $\Aut(\cE_P)$, and hence $\Aut(\cE)$, contains a torus with strictly larger dimension than $C_G$, which means that $\cE$ is not indecomposable. Therefore any indecomposable $\cE$ has to be semistable. 
\end{proof}

\subsection{Tannakian description of semistable bundles on an elliptic curve (following Fratila, Gunningham and Penghui Li \cite{FGL})}
From now on, let $G$ be a {\em semisimple group} over $k$.

Let $\Vect^{\sst,0}_{X}$ be the $k$-linear tensor category of semistable vector bundles of degree $0$. It has a fiber functor $\om_{x_{0}}$ by taking the fiber at the origin $x_{0}$. 

Let $\Bun^{\sst}_{G,X}(k)$ denote the groupoid of semistable $G$-bundles. By  \cite[Corollary 5.6]{FGL}, there is an equivalence of groupoids \footnote{In \cite[Corollary 5.6]{FGL}, the left side of \eqref{Gbun Tann} was incorrectly taken to be the groupoid of degree zero semistable $G$-bundles; the degree zero condition should be removed in order for the equivalence to hold.}
\begin{equation}\label{Gbun Tann}
\Bun^{\sst}_{G,X}(k)\cong \Fun^{\ot}(\Rep_{k}(G), \Vect^{\sst,0}_{X}).
\end{equation}

Let $J$ be the Jacobian of $X$ (of course we know that $J$ can be identified with $X$ canonically, but we prefer to distinguish them). By \cite[Theorem 5.7]{FGL}, Fourier-Mukai transform gives an equivalence of tensor categories
\begin{equation*}
\Vect^{\sst,0}_{X}\cong \Coh^{\tors}(J).
\end{equation*}
Here the right side denotes the groupoid of torsion coherent sheaves on $J$, with tensor structure given by convolution. Under the Fourier-Mukai transform, the fiber functor $\om_{x_{0}}$ on $\Vect^{\sst,0}_{X}$ corresponds to the global section functor on $\Coh^{\tors}(J)$.

Let $\Coh^{\tors}(J)_{\textup{ss}}\subset \Coh^{\tors}(J)$ be the full sub-groupoid of objects that are direct sums of skyscraper sheaves. Let $\Coh^{\tors}(J)_{1}$ be those supported at the identity element of $J$. Both are closed under convolution, and we have an equivalence of tensor categories
\begin{equation*}
\Coh^{\tors}(J)=\Coh^{\tors}(J)_{\textup{ss}}\ot \Coh^{\tors}(J)_{1}.
\end{equation*}
Correspondingly we have the subgroupoid $(\Vect^{\sst,0}_{X})_{\textup{ss}}\subset \Vect^{\sst,0}_{X}$ consisting of direct sums of line bundles, and $(\Vect^{\sst,0}_{X})_{\uni}$ consisting of extensions of the trivial bundle. We have an equivalence of tensor categories
\begin{equation}\label{Vect ss uni}
\Vect^{\sst,0}_{X}=(\Vect^{\sst,0}_{X})_{\textup{ss}}\ot (\Vect^{\sst,0}_{X})_{\uni}.
\end{equation}

Let $\Pi_{X}$ be the pro-algebraic group attached to the Tannakian category $\Vect^{\sst,0}_{X}$ using the fiber functor  $\om_{x_{0}}$. According to the decomposition \eqref{Vect ss uni}, we have an isomorphism of commutative pro-algebraic groups
\begin{equation*}
\Pi_{X}\cong T_{J}\times U_{J}
\end{equation*}
where $U_{J}$ is the unipotent radical of $\Pi_{X}$ such that $\Rep_{k}(U_{J})\cong (\Vect^{\sst,0}_{X})_{\uni}$, and $T_{J}$ is the diagonalizable group over $k$ with $\Rep_{k}(T_{J})\cong (\Vect^{\sst,0}_{X})_{\textup{ss}}$.  Using \eqref{Gbun Tann}, we get an equivalence of groupoids
\begin{equation}\label{Gbun Hom}
\Bun^{\sst}_{G,X}(k)\cong \Hom_{k-\gp}(\Pi_{X}, G)/G\cong \Hom_{k-\gp}(T_{J}\times U_{J}, G)/G.
\end{equation}
Moreover, \cite[Corollary 5.6]{FGL} implies a stronger result on the automorphism groups of both sides: if $\cE\in \Bun^{\sst}_{G,X}(k)$ corresponds to $\ph\in \Hom_{k-\gp}(T_{J}\times U_{J}, G)$, the isomorphism between their automorphism groups on both sides (as abstract groups) can be canonically upgraded to an isomorphism of algebraic groups over $k$. In other words, \eqref{Gbun Hom} is an equivalence of groupoids {\em enriched in $k$-groups}.

\begin{lemma}\label{l:ind centralizer} Let $\cE$ be a semistable $G$-bundle on $X$ corresponding to a homomorphism $\rho: \Pi_X\to G$ according to \eqref{Gbun Hom}. Then $\cE$ is indecomposable if and only if the centralizer of $\rho$ under $G$ does not contain a nontrivial torus.
\end{lemma}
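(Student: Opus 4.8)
The plan is to reduce everything to the statement, recalled in the discussion just preceding the lemma, that the equivalence \eqref{Gbun Hom} is an equivalence of groupoids \emph{enriched in $k$-groups}. Concretely, if the semistable $G$-bundle $\cE$ corresponds to $\rho\in\Hom_{k-\gp}(\Pi_{X},G)/G$, this enrichment supplies a canonical isomorphism of algebraic $k$-groups $\Aut(\cE)\cong\Aut(\rho)$, where $\Aut(\rho)$ is the automorphism group of $\rho$ in the target groupoid — that is, the centralizer $C_{G}(\rho)=\{g\in G:\Ad(g)\c\rho=\rho\}$ of the homomorphism $\rho$, which is the same as the centralizer of the (Zariski closure of the) image $\rho(\Pi_{X})$ in $G$. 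The key feature to stress is that this identification is one of $k$-group schemes, not merely of abstract groups, so that the property of containing a nontrivial torus is transported faithfully across it.

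With this in hand the argument is short. First I would recall the definition of indecomposability over the algebraically closed field $k$ (see \S\ref{ss:intro counting} and Definition \ref{d:ai}): $\cE$ is indecomposable if and only if $\Aut(\cE)/C_{G}$ contains no nontrivial torus, equivalently if and only if $\Aut(\cE)^{\circ}/C_{G}$ is unipotent. Since $G$ is assumed semisimple in this subsection, its maximal central torus $C_{G}$ is trivial, so indecomposability of $\cE$ is simply the assertion that $\Aut(\cE)$ contains no nontrivial torus. (Here one uses that $\Aut(\cE)$ is affine of finite type over $k$, which holds for $G$-bundles on the proper curve $X$, so that the notion of a maximal torus of $\Aut(\cE)$ makes sense.)

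Transporting the condition ``$\Aut(\cE)$ contains no nontrivial torus'' along the isomorphism $\Aut(\cE)\cong C_{G}(\rho)$ then yields exactly ``$C_{G}(\rho)$ contains no nontrivial torus'', which is the claim. I do not expect any genuine obstacle here: the only point requiring care is the invocation of the $k$-enriched form of \eqref{Gbun Hom} from \cite{FGL}, together with the identification of the automorphism group of $\rho$ in the target groupoid with the honest group-scheme centralizer $C_{G}(\rho)$ — both of which are already essentially contained in the material set up before the lemma.
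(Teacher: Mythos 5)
Your proof is correct and follows the paper's argument exactly: invoke the $k$-group enrichment of \eqref{Gbun Hom} to identify $\Aut(\cE)$ with $C_G(\rho)$ as algebraic groups, then apply the definition of indecomposability. You spell out the additional (correct) observation that $C_G$ is trivial since $G$ is semisimple in this subsection, which the paper leaves implicit.
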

\begin{proof}
By the equivalence \eqref{Gbun Hom} of groupoids enriched in $k$-groups, the algebraic group $\Aut(\cE)$ is the centralizer of $\rho$ under $G$-conjugation. The lemma then follows from Definition \ref{d:ai}.
\end{proof}

\subsection{Description of the Tannakian group}

\sss{$T_{J}$}
Since one-dimensional objects in $(\Vect^{\sst,0}_{X})_{\textup{ss}}$ (under tensor product) are parametrized by the group $J(k)$, we have a canonical isomorphism
\begin{equation*}
J(k)\cong \xch(T_{J}).
\end{equation*}

\sss{Cartier duality}
Recall the contravariant equivalence between commutative affine $k$-group schemes and commutative formal $k$-groups (Cartier duality). For a  commutative affine $k$-group scheme $G=\Spec A$, where $A$ is a commutative and cocommutative Hopf $k$-algebra, $\DD(G)=\Spf A^{*}$, where $A^{*}=\Hom_{k}(A,k)$ is the linear dual of $A$ viewed as a topological $k$-algebra (the topology on $A^*$ is defined as the limit of discrete spaces $V^{*}$, where $V\subset A$ runs over finite-dimensional $k$-subspaces of $A$). Conversely, to a commutative $k$-formal group $\cG=\Spf B$, where $B$ is a commutative and cocommutative topological Hopf $k$-algebra, we attach the group scheme $\DD(\cG)=\Spec A$, where $A=\Hom_{\cont}(B,k)$ is the Hopf algebra of continuous $k$-linear functions on $B$. 

For a diagonalizable group $D$ over $k$, its Cartier dual is the constant group $\un{\xch(D)}$, viewed as a formal group over $k$. In particular, $T_{J}$ is the Cartier dual of the constant formal group $\un{J(k)}$.

\sss{$U_{J}$}\label{sss:UJ}
Let $\cD_{J}=\Hom_{\cont}(\wh \cO_{J,1}, k)$, the $k$-vector space of distributions on $J$ supported at $1$ (continuous with respect to the $\fm$-adic topology of $\cO_{J,1}$, where $\fm$ is the maximal ideal). This is a Hopf algebra, corresponding to the commutative $k$-group scheme $\DD(\wh J)$ Cartier dual to the formal group $\wh J$ (formal completion of $J$ at $1$). 

Now $\Coh^{\tors}(J)_{1}$ is the same as topological $\wh \cO_{J,1}$-modules that are finite-dimensional over $k$, hence it is canonical equivalent to the category of finite-dimensional $\cD_{J}$-comodules as tensor categories. Therefore, we have a canonical isomorphism
\begin{equation*}
U_{J}\cong \DD(\wh J).
\end{equation*}

When $\ch(k)=0$, we have $U_{J}\cong \Ga$. More precisely, to fix such an isomorphism we need to choose a $k$-basis $\xi$ of the cotangent space $T^{*}_{1}J\cong \cohog0{X,\om_{X}}$. With this choice, there is a unique isomorphism of formal group laws $\wh\cO_{J, 1}\cong k\tl{t}$ such that $t$ has image $\xi$ in $\fm/\fm^{2}$. Taking Cartier duals, we get an isomorphism $U_{J}\cong \Ga$.  After fixing such an isomorphism, the datum of a homomorphism $U_{J}\to G$ is the same as a unipotent element in $G$. In other words, $\Hom_k(U_J, G)\cong \cU_G(k)$, where $\cU_{G}$ is the unipotent variety of $G$.

When $\ch(k)=p$ and $X$ is ordinary, we have $\wh J\cong \wh \Gm$. Taking Cartier dual we get an isomorphism $U_{J}\cong \un{\ZZ_{p}}$ (constant group as a group scheme over $k$; the isomorphism is not canonical). If we choose such an isomorphism, we again get a bijection $\Hom_{k}(U_{J}, G)\cong \Hom_{k}(\un{\ZZ_{p}}, G)\isom \cU_{G}(k)$ by  evaluating a map $u: \un{\ZZ_{p}}\to G$ at $1\in \ZZ_{p}$.

When $X$ is supersingular, $\wh J$ is a one-dimensional formal group of height $2$. We do not know how to classify homomorphisms $U_{J}\to G$ in Lie-theoretic terms.

\subsection{Iso-unipotent $G$-bundles} A vector bundle $\cV$ on $X$ is called {\em iso-unipotent} if there is a finite \'etale map $X'\to X$ such that $\cV|_{X'}$ is a successive extension of trivial line bundles. An iso-unipotent vector bundle on $X$ is automatically semistable of degree 0. We have the full tensor subcategory $\Vect^{\isu}_{X}\subset \Vect^{\sst,0}_{X}$ of iso-unipotent objects. Denote the Tannakian group of this category by $\Pi^{\isu}_{X}$. 

A $G$-bundle $\cE$ on $X$ is called {\em iso-unipotent} if for every representation $V$ of $G$, the associated vector bundle $\cE_V$ is iso-unipotent in the sense defined above. Then iso-unipotent $G$-bundles are automatically semistable by the equivalence \eqref{Gbun Tann}. We have a subgroupoid $\Bun^{\isu}_{G,X}(k)\subset \Bun^{\sst,\tors}_{G,X}(k)$ of iso-unipotent objects. Then \eqref{Gbun Hom} restricts to an equivalence enriched in $k$-groups
\begin{equation}\label{Gbun iso}
\Bun^{\isu}_{G,X}(k)\cong \Fun^{\ot}(\Rep_{k}(G), \Vect^{\isu}_{X})\cong \Hom_{k}(\Pi^{\isu}_{X}, G)/G.
\end{equation}

\sss{Structure of $\Pi^{\isu}_{X}$}
Let $T_{J}^{\isu}$ be the diagonalizable group with $\xch(T_{J}^{\isu})=J(k)_{\tors}$. Then we have a canonical quotient map $T_{J}\surj T_{J}^{\isu}$ whose kernel is a pro-torus. We have a canonical isomorphism
\begin{equation*}
\Pi_{X}^{\isu}\cong T_{J}^{\isu}\times U_{J}.
\end{equation*}

We can describe $\Pi_{X}^{\isu}$ all at once using Cartier duality.  Let $\wh J_{\tors}$ be the completion of $J$ at all of its torsion points. More precisely, it is the colimit of $\wh J_{n}$, the completion of $J$ along $J[n]$, over all positive integers $n$.

\begin{lemma} The commutative $k$-group $\Pi^{\isu}_{X}$ is canonically Cartier dual to the formal group $\wh J_{\tors}$.
\end{lemma}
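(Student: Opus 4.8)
The plan is to reduce the claim to the already-established Cartier-duality descriptions of the two Tannakian factors $T_J^{\isu}$ and $U_J$, together with the decomposition $\Pi^{\isu}_X\cong T^{\isu}_J\times U_J$. Concretely, I would argue as follows. Since Cartier duality $\DD$ is a contravariant equivalence between commutative affine $k$-group schemes and commutative formal $k$-groups, and since it interchanges products with coproducts (i.e. $\DD(H_1\times H_2)\cong \DD(H_1)\times \DD(H_2)$, where the right side is the product in formal groups — formal completion of a product), it suffices to identify $\DD(T^{\isu}_J)$ and $\DD(U_J)$ separately and then take the product.

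\textbf{Key steps.} First I would recall that $T^{\isu}_J$ is by definition the diagonalizable group with $\xch(T^{\isu}_J)=J(k)_{\tors}$, so its Cartier dual is the constant formal group on the torsion subgroup $\un{J(k)_{\tors}}$. Next, I would invoke the identification from \S\ref{sss:UJ}, namely $U_J\cong \DD(\wh J)$, so $\DD(U_J)\cong \wh J$, the formal completion of $J$ at the identity. Then the product $T^{\isu}_J\times U_J$ has Cartier dual $\un{J(k)_{\tors}}\times \wh J$. The remaining point is to identify this formal group with $\wh J_{\tors}$, the completion of $J$ along all of its torsion points (equivalently, the colimit of the $\wh J_n$, the completions along $J[n]$). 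For this I would observe that $J[n]$, as a finite subscheme of $J$, breaks up over the algebraically closed field $k$ as a disjoint union indexed by the \'etale quotient $J[n](k)$ of infinitesimal neighborhoods, each isomorphic to a translate of the completion $\wh J_{[n]_{\text{conn}}}$ of $J$ at the identity component of $J[n]$; passing to the colimit over $n$, the \'etale parts assemble to $\un{J(k)_{\tors}}$ and the connected parts assemble to $\wh J$ (using that $\wh J$ is $p$-divisible so the connected component of $J[n]$ stabilizes appropriately — or more simply, that $\wh J$ already ``sees'' all of $\wh J_n$ after restriction). Hence $\wh J_{\tors}\cong \un{J(k)_{\tors}}\times\wh J$ as formal groups, completing the identification.

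\textbf{Main obstacle.} The genuinely delicate step is the last one: the clean identification $\wh J_{\tors}\cong \un{J(k)_{\tors}}\times \wh J$. One must be careful about what ``completion along all torsion points'' means and check that the colimit over $n$ of $\wh J_n$ really does decompose as the product of a constant formal group on $J(k)_{\tors}$ with $\wh J$ — in characteristic $p$, $J[p^r]$ has a nontrivial connected-\'etale sequence, and one needs that taking the colimit over all $n$ (equivalently, inverting all primes including $p$) makes the connected part of $\wh J_n$ no larger than $\wh J$ itself, which is exactly the statement that $\wh J=\colim_n \wh J_n^{\c}$ where $\wh J_n^{\c}$ is the completion at the identity component of $J[n]$. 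This is a formal-groups bookkeeping argument rather than a deep input, but it is where all the care is required; everything else is a mechanical application of the duality already set up in the preceding subsections, together with the decomposition $\Pi^{\isu}_X\cong T^{\isu}_J\times U_J$ and the compatibility of $\DD$ with products.
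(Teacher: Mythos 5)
Your proof is correct, but it takes a genuinely different route from the paper's. The paper argues directly via Fourier--Mukai: it identifies $\Vect^{\isu}_X$ with the tensor subcategory of $\Coh^{\tors}(J)$ consisting of sheaves supported at torsion points, rewrites this as the filtered colimit $\colim_n \Coh^{\tors}(J)_{J[n]}$, identifies each term with $\Rep_k(\DD(\wh J_n))$, and passes to the colimit to get $\Rep_k(\DD(\wh J_{\tors}))$ in one stroke — never invoking the product decomposition of $\Pi^{\isu}_X$. Your approach instead uses the canonical decomposition $\Pi^{\isu}_X\cong T^{\isu}_J\times U_J$, the previously established dualities $\DD(T^{\isu}_J)\cong\un{J(k)_{\tors}}$ and $\DD(U_J)\cong\wh J$, the additivity of $\DD$ (as a contravariant equivalence between abelian categories it sends finite products to finite biproducts), and then the purely formal-group-theoretic identification $\wh J_{\tors}\cong \un{J(k)_{\tors}}\times\wh J$. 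Both routes are valid; the paper's is more economical because it avoids having to re-assemble the pieces, while yours makes the structure of $\wh J_{\tors}$ more explicit and reuses the analysis of the two Tannakian factors.

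One point where your proposal overcomplicates things: the ``main obstacle'' paragraph is worrying about the wrong thing. Completion of $J$ along a closed subscheme $Z$ depends only on the underlying topological space of $Z$, not on its scheme structure; so $\wh J_n$ (completion along $J[n]$) equals the completion along $J[n](k)$ and decomposes as $\coprod_{x\in J[n](k)}\wh J_x$. There is no nontrivial ``connected part'' to track: your $\wh J^{\c}_n$ (completion at the identity component of $J[n]$) is literally $\wh J$ for every $n$, so the colimit over $n$ is trivially $\wh J$, with no $p$-divisibility or stabilization argument needed. The real content is the \emph{canonical splitting} $\un{J[n](k)}\to\wh J_n$ of the short exact sequence $1\to\wh J\to\wh J_n\to\un{J[n](k)}\to1$, which exists because the torsion points are honest $k$-points of $J$ and translation by such a point is a group-theoretic shift; this gives $\wh J_n\cong\un{J[n](k)}\times\wh J$, and the colimit over $n$ yields the desired $\wh J_{\tors}\cong\un{J(k)_{\tors}}\times\wh J$.
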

\begin{proof}
Under the Fourier-Mukai transform (see \cite[Theorem 5.7]{FGL}), $\Vect^{\isu}_{X}$ is equivalent to the full subcategory of $\Coh^{\tors}(J)$ consisting of torsion sheaves that are supported at torsion points of $J$. Let $\Coh^{\tors}(J)_{J[n]}$ be the torsion sheaves set-theoretically supported at $J[n]$, then
\begin{equation*}
\Vect^{\isu}_{X}\cong \colim_{n}\Coh^{\tors}(J)_{J[n]}
\end{equation*}
as tensor categories (with convolution on the right). Now $\Coh^{\tors}(J)_{J[n]}$ is the same as topological $\wh \cO_{J,J[n]}$-modules that are finite-dimensional over $k$, which can be identified with $\Hom_{\cont}(\wh \cO_{J,J[n]}, k)$-comodules  that are finite-dimensional over $k$. Therefore we have a tensor equivalence
\begin{equation*}
\Coh^{\tors}(J)_{J[n]}\cong \Rep_{k}(\DD(\wh J_{n}))
\end{equation*}
compatible with transition functors for $n|n'$. Passing to the colimit we get
\begin{equation*}
\Vect^{\isu}_{X}\cong \colim_{n}\Rep_{k}(\DD(\wh J_{n}))\cong \Rep_{k}(\DD(\colim_{n}\wh J_{n}))=\Rep_{k}(\DD(\wh J_{\tors})).
\end{equation*}
The fiber functor $\om_{x_{0}}$ on $\Vect^{\isu}_{X}$ corresponds to the global sections functor on $\Coh^{\tors}(J)$, hence to the forgetful functor on $\Rep_{k}(\DD(\wh J_{\tors}))$. This gives a canonical isomorphism $\Pi_{X}^{\isu}\cong \DD(\wh J_{\tors})$.
\end{proof}

Using \eqref{Gbun iso} we get:

\begin{cor} There is a canonical equivalence of groupoids (enriched in $k$-groups):
\begin{equation*}
\Bun^{\isu}_{G,X}(k)\cong \Hom_{k}(\DD(\wh J_{\tors}), G)/G.
\end{equation*}
\end{cor}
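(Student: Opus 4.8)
The plan is to deduce the corollary immediately from the two results that precede it. First I would invoke the equivalence \eqref{Gbun iso}, which already identifies $\Bun^{\isu}_{G,X}(k)$ with $\Hom_{k}(\Pi^{\isu}_{X},G)/G$ as groupoids enriched in $k$-groups; recall this came from restricting the Tannakian equivalence \eqref{Gbun Hom} to the full tensor subcategory $\Vect^{\isu}_{X}\subset\Vect^{\sst,0}_{X}$ of iso-unipotent bundles and observing that an iso-unipotent $G$-bundle is the same datum as a $\ot$-functor $\Rep_{k}(G)\to\Vect^{\isu}_{X}$, i.e.\ a homomorphism $\Pi^{\isu}_{X}\to G$ up to $G$-conjugacy. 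Second I would invoke the Lemma just proved, which provides a canonical isomorphism of commutative affine $k$-group schemes $\Pi^{\isu}_{X}\cong\DD(\wh J_{\tors})$, built from the Fourier--Mukai equivalence $\Vect^{\isu}_{X}\cong\colim_{n}\Coh^{\tors}(J)_{J[n]}$ together with Cartier duality $\Coh^{\tors}(J)_{J[n]}\cong\Rep_{k}(\DD(\wh J_{n}))$. Substituting the latter isomorphism into the former yields the claimed equivalence.

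The only thing I would spell out is that this substitution preserves the enrichment in $k$-groups. Fix an isomorphism $\iota:\Pi^{\isu}_{X}\isom\DD(\wh J_{\tors})$ of affine $k$-group schemes as in the Lemma. Then precomposition $\iota^{*}:\Hom_{k}(\DD(\wh J_{\tors}),G)\to\Hom_{k}(\Pi^{\isu}_{X},G)$, $\psi\mapsto\psi\circ\iota$, is a bijection equivariant for the conjugation actions of $G$, hence descends to an equivalence of quotient groupoids $\Hom_{k}(\DD(\wh J_{\tors}),G)/G\isom\Hom_{k}(\Pi^{\isu}_{X},G)/G$. For $\psi$ and $\rho=\psi\circ\iota$ the images in $G$ coincide, so their $G$-centralizers coincide as $k$-group schemes; combined with the strengthening of \cite[Corollary 5.6]{FGL} recorded after \eqref{Gbun Hom} (namely that if $\cE$ corresponds to $\rho$ then $\Aut(\cE)$ is canonically the centralizer of $\rho$ in $G$, as an algebraic group over $k$), this shows the composite equivalence matches automorphism groups on the nose, i.e.\ is enriched in $k$-groups.

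The hard part will be essentially nothing new: all the substance is already packaged into \eqref{Gbun iso} and the Cartier-duality Lemma, so the corollary is a formal consequence of them. The only care required is the routine compatibility check above, which is immediate from functoriality of Cartier duality and of the Tannakian dictionary. (One may also note in passing that, upon choosing the noncanonical splittings discussed in \S\ref{sss:UJ}, this description specializes to the Lie-theoretic parametrizations of $\Bun^{\isu}_{G,X}(k)$ used in the sequel, but this is not needed for the statement of the corollary.)
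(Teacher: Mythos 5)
Your proposal is correct and follows exactly the paper's route: the corollary is stated immediately after the Lemma with only the comment ``Using \eqref{Gbun iso} we get:'', i.e.\ it is the formal substitution of the Cartier-duality isomorphism $\Pi^{\isu}_{X}\cong\DD(\wh J_{\tors})$ into \eqref{Gbun iso}. Your extra remark verifying that this substitution preserves the $k$-group enrichment is a sound (if implicit in the paper) compatibility check and introduces nothing new.
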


The reason we care about iso-unipotent $G$-bundles is the following.

\begin{cor}\label{c:ind isotriv} Any indecomposable $G$-bundle on $X$ is iso-unipotent.
\end{cor}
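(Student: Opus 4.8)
\textbf{Proof plan for Corollary \ref{c:ind isotriv}.} The plan is to combine two facts already established in this section. First, by Lemma \ref{l:ind sst}, any indecomposable $G$-bundle $\cE$ on $X$ is semistable, so it corresponds under the equivalence \eqref{Gbun Hom} to a homomorphism $\rho: \Pi_X = T_J \times U_J \to G$, well-defined up to $G$-conjugacy. Second, by Lemma \ref{l:ind centralizer}, indecomposability of $\cE$ is equivalent to the statement that the centralizer $C_G(\rho(\Pi_X))$ contains no nontrivial torus. The goal is to deduce from the latter that $\rho$ factors through the quotient $\Pi_X \surj \Pi_X^{\isu} = T_J^{\isu} \times U_J$, since by \eqref{Gbun iso} this is exactly the condition that $\cE$ be iso-unipotent.

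The key step is to analyze the image of $T_J$ under $\rho$. Since $U_J$ is unipotent and $\rho(U_J)$ commutes with $\rho(T_J)$, it suffices to show that $\rho|_{T_J}$ factors through $T_J^{\isu}$. Recall that $T_J$ is a pro-diagonalizable group with $\xch(T_J) = J(k)$, and the quotient map $T_J \surj T_J^{\isu}$ has kernel a pro-torus $S$ with $\xch(S) = J(k)/J(k)_{\tors}$, a torsion-free (in fact divisible) abelian group. I would argue as follows: $\rho(S)$ is a diagonalizable subgroup of $G$ whose character group is a quotient of the divisible torsion-free group $J(k)/J(k)_{\tors}$; being a subgroup of the linear algebraic group $G$, $\rho(S)$ is of finite type, hence $\rho(S)$ is a torus (its character group is finitely generated and torsion-free). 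This torus $\rho(S)$ is central in $\rho(\Pi_X)$ — indeed central in the Zariski closure of $\rho(\Pi_X)$ — so it lies in $C_G(\rho(\Pi_X))$. By Lemma \ref{l:ind centralizer}, indecomposability forces $\rho(S)$ to be trivial. Therefore $\rho|_{T_J}$ kills $S$ and factors through $T_J^{\isu}$, whence $\rho$ factors through $\Pi_X^{\isu}$ and $\cE$ is iso-unipotent by \eqref{Gbun iso}.

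The main obstacle I anticipate is the bookkeeping around pro-algebraic groups: one needs to be careful that a homomorphism from the pro-diagonalizable group $T_J$ to the finite-type group $G$ has image a genuine subgroup of finite type, and that one may replace $\rho(S)$ by a torus. This is where the divisibility of $J(k)/J(k)_{\tors}$ is convenient — a quotient of a divisible group by which $G$ receives a homomorphism must, because $\xch$ of the image is finitely generated, in fact be a torsion-free finitely generated group, i.e.\ the image is a torus. The commutativity claim (that $\rho(S)$ is central in the closure of $\rho(\Pi_X)$) is immediate since $\Pi_X$ is commutative, so $\rho(\Pi_X)$ is commutative and hence so is its Zariski closure, making every subgroup of it — in particular $\rho(S)$ — central in it; thus $\rho(S) \subset C_G(\rho(\Pi_X))$. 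Everything else is a direct invocation of the two lemmas and the Tannakian dictionary \eqref{Gbun Hom}, \eqref{Gbun iso}.
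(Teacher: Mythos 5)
Your proof is correct and follows essentially the same route as the paper: pass to semistable bundles via Lemma \ref{l:ind sst}, use the Tannakian equivalence \eqref{Gbun Hom}, apply Lemma \ref{l:ind centralizer}, and exploit commutativity of $\Pi_X$ so that images of subgroups of $\Pi_X$ are central, hence land in the centralizer of $\rho$ and therefore contain no nontrivial torus. The only cosmetic difference is that the paper analyzes $D=\rho(T_J)$ directly and concludes $\xch(D)$ is torsion (since a finite-type diagonalizable group with non-torsion character group contains a $\Gm$), whereas you analyze the kernel $S=\ker(T_J\surj T_J^{\isu})$ and show $\rho(S)$ is a torus, hence trivial; these amount to the same thing.

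One small inaccuracy worth flagging: you say the character group of $\rho(S)$ is a \emph{quotient} of $J(k)/J(k)_{\tors}$ and emphasize divisibility. It is the other way around: since $S\surj\rho(S)$ is surjective, Cartier duality gives an injection $\xch(\rho(S))\hookrightarrow\xch(S)=J(k)/J(k)_{\tors}$, so $\xch(\rho(S))$ is a \emph{subgroup}. The property you actually use is that a finitely generated subgroup of a torsion-free group is torsion-free (divisibility is irrelevant; indeed a finitely generated quotient of a divisible group would have to be trivial, which is not what you want to argue here). With the direction fixed, the step "hence $\rho(S)$ is a torus" is correct, and the rest of the argument goes through.
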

\begin{proof} Let $\cE$ be an indecomposable $G$-bundle over $X$. By Lemma \ref{l:ind sst}, $\cE$ is semistable. By \eqref{Gbun Hom}, $\cE$ corresponds to a homomorphism $\rho: \Pi_X\cong T_J\times U_J\to G$. By Lemma \ref{l:ind centralizer}, the centralizer of $\rho$ contains no nontrivial torus. Note that $\rho(T_J)$ is in the centralizer of $\rho$, so $D=\rho(T_J)$ does not contain any nontrivial torus. This means $\xch(D)\subset \xch(T_J)$ is torsion, i.e., the map $\rho|_{T_J}$ factors through the quotient $T_J^{\isu}$, or that $\rho$ factors through $\Pi_X^{\isu}$.  
\end{proof}

Next we we give a more concrete description of $\wh J_{\tors}$, and hence of $\Bun^{\isu}_{G,X}(k)$.

Let $C_{2,1}(G)$ be the set of commuting triples $(s,t,u)$ in $G$, where $s,t$ are semisimple, and $u$ is unipotent.
Let $C_{2,1}(G)_\tors$ be the subset of $C_{2,1}(G)$ where $s$ and $t$ have finite order.

\sss{Characteristic zero}
When $\ch(k)=0$, for any positive integer $n$, we have a canonical isomorphism
\begin{equation*}
J[n](k)\cong \upH^{1}_{\et}(X, \mu_{n})\cong \Hom(\pi^{\et}_{1}(X)/n, \mu_{n}(k)).
\end{equation*}
This identifies $\un{J[n](k)}$ with the Cartier dual to $\un{\pi^{\et}_{1}(X)/n}$, viewed as a constant formal group and a constant group over $k$ respectively. Taking colimit (resp. limit) over $n$, we conclude that $\un{J(k)_\tors}$ is in Cartier duality with the affine $k$-group $\un{\pi^{\et}_{1}(X)}$ associated to the profinite group $\pi^{\et}_{1}(X)$. We get a canonical isomorphism of $k$-groups 
\begin{equation}\label{TJ et char 0}
T_{J}^{\isu}\cong \un{\pi^{\et}_{1}(X)}.
\end{equation}

\begin{prop}\label{p:isu ch0} Let $\ch(k)=0$. Choose a $\wh\ZZ$-basis for $\pi^{\et}_{1}(X)$ and a $k$-basis of $\cohog{0}{X,\om_{X}}$. Then there is a canonical equivalence of groupoids enriched in $k$-groups
\begin{equation*}
\Bun^{\isu}_{G,X}(k)\cong C_{2,1}(G)_{\tors}/G.
\end{equation*}
\end{prop}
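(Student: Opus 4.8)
The plan is to combine the equivalence $\Bun^{\isu}_{G,X}(k)\cong \Hom_k(\DD(\wh J_{\tors}),G)/G$ established just above with an explicit Lie-theoretic description of the $k$-group $\DD(\wh J_{\tors})$ in characteristic zero. First I would unwind the two tensor factors. Since $\wh J_{\tors}$ is the colimit over $n$ of the completions $\wh J_n$ of $J$ along $J[n]$, each $\wh J_n$ decomposes as (the constant formal group on) $J[n](k)$ together with the formal completion $\wh J$ of $J$ at the identity; passing to the limit gives $\Pi^{\isu}_X\cong T_J^{\isu}\times U_J$ as in the excerpt, where $T_J^{\isu}$ has character group $J(k)_{\tors}$ and $U_J\cong \DD(\wh J)$. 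In characteristic zero, $\wh J\cong \wh\GG_a$ (as a formal group law over $k$, $\widehat{\cO}_{J,1}\cong k\tl{t}$), so $U_J\cong \Ga$; a choice of $k$-basis $\xi$ of $\cohog{0}{X,\om_X}\cong T^*_1 J$ pins down this isomorphism canonically. Hence $\Hom_k(U_J,G)\cong \cU_G(k)$, the set of unipotent elements of $G$, by evaluating a morphism $\Ga\to G$ at $1$ (equivalently, exponentiating the image of $\partial_t$).

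Next I would treat $T_J^{\isu}$. By \eqref{TJ et char 0} we have a canonical isomorphism $T_J^{\isu}\cong \un{\pi_1^{\et}(X)}$, the pro-finite constant $k$-group attached to $\pi_1^{\et}(X)\cong\wh\ZZ^2$. A homomorphism $\un{\pi_1^{\et}(X)}\to G$ over $k$ is the same as a homomorphism of abstract groups $\pi_1^{\et}(X)\to G(k)$ with finite (equivalently: relatively compact, hence finite) image, so after choosing a topological $\wh\ZZ$-basis $(\g_1,\g_2)$ of $\pi_1^{\et}(X)$ it is the same datum as a commuting pair $(s,t)$ of finite-order semisimple elements of $G$ (semisimplicity being automatic for finite-order elements in characteristic zero). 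Putting the two factors together: a homomorphism $\Pi^{\isu}_X\to G$ is a homomorphism out of the direct product, i.e.\ a triple of pairwise commuting elements $(s,t,u)$ with $s,t$ semisimple of finite order and $u$ unipotent, that is, an element of $C_{2,1}(G)_{\tors}$. Quotienting by the conjugation action of $G$ (which is how the Tannakian equivalence encodes isomorphism of $G$-bundles) and using that the equivalence \eqref{Gbun iso} is enriched in $k$-groups — so that the automorphism group of a bundle matches the centralizer of the corresponding triple — gives the claimed equivalence of groupoids enriched in $k$-groups.

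I would write this out as: (i) record $\DD(\wh J_{\tors})\cong T_J^{\isu}\times U_J$; (ii) identify $\Hom_k(U_J,G)\cong \cU_G(k)$ using the chosen basis of $\cohog0{X,\om_X}$ and the characteristic-zero fact $\wh J\cong\wh\Ga$, noting naturality in $G$; (iii) identify $\Hom_k(T_J^{\isu},G)$ with commuting pairs of finite-order semisimple elements via \eqref{TJ et char 0} and the chosen $\wh\ZZ$-basis of $\pi_1^{\et}(X)$, using that over a field of characteristic zero a finite-order element of $G(k)$ is semisimple and that $C_G(\,\cdot\,)$ of a finite subgroup is again reductive; (iv) a map out of a product of commutative group schemes into $G$ is the same as a pair of maps with commuting images, so $\Hom_k(\Pi^{\isu}_X,G)\cong C_{2,1}(G)_{\tors}$; (v) pass to $G$-conjugacy and invoke the $k$-group enrichment in \eqref{Gbun iso} to get matching automorphism groups. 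The main obstacle — and it is mild — is bookkeeping the enrichment in $k$-groups: one must check that under the identification the algebraic automorphism group $\Aut(\cE)$ of an iso-unipotent bundle corresponds to the scheme-theoretic centralizer $C_G(\rho)$ of the triple, which follows formally from the enriched statement of \eqref{Gbun iso} together with the observation that $\Hom_k$ from a fixed commutative affine $k$-group into $G$ is functorial and its automorphisms are computed by conjugation; no genuinely new input is needed beyond what is already proved.
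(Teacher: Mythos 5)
Your proposal is correct and follows essentially the same route as the paper: decompose $\Pi_X^{\isu}\cong T_J^{\isu}\times U_J$, identify $\Hom_k(U_J,G)$ with $\cU_G(k)$ via the chosen basis of $\cohog{0}{X,\om_X}$ as in \S\ref{sss:UJ}, identify $\Hom_k(T_J^{\isu},G)$ with commuting pairs of finite-order elements via \eqref{TJ et char 0} and the chosen $\wh\ZZ$-basis, combine, and pass to $G$-conjugacy using the enriched form of \eqref{Gbun iso}. The extra detail you supply (continuity forcing finite image, automaticity of semisimplicity for finite-order elements in characteristic zero, and the remark about matching automorphism groups as scheme-theoretic centralizers) is all accurate and only fills in steps the paper leaves implicit.
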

\begin{proof}
The chosen $\wh\ZZ$-basis for $\pi^\et_{1}(X)$ gives an isomorphism $T_{J}^{\isu}\cong\un{\wh\ZZ}\times \un{\wh\ZZ}$ by \eqref{TJ et char 0}. The choice of $\xi$, thought of as a basis of $T^{*}_{1}J$, gives an isomorphism $U_{J}\cong \Ga$ (see \S\ref{sss:UJ}). Altogether we have fixed an isomorphism of pro-algebraic groups
\begin{equation*}
\Pi_X^\isu\cong T_{J}^{\isu}\times U_{J}\cong \un{\wh\ZZ}\times \un{\wh\ZZ}\times \Ga.
\end{equation*}
Taking homomorphisms to $G$, noting that a homomorphism $\un{\wh\ZZ}\to G$ is the same datum as an element of $G$ of finite order (by evaluation at $1$), we get a canonical bijection between $\Hom(\Pi^\isu_X, G)$ and $C_{2,1}(G)_\tors$, equivariant under the conjugation actions on both. We conclude using \eqref{Gbun iso}.
\end{proof}

\sss{Characteristic $p$}
First consider the ordinary case. We need a variant of $C_{2,1}(G)_\tors$. Let $C^{ord}_{2,1}(G)_{\tors}$ be the set of commuting quadruples $(r,s,t,u)$, where $s,t$ are semisimple elements in $G$ of finite order (whose orders are necessarily prime to $p$), $u$ is a unipotent element in $G$, and $r$ is a homomorphism $\ZZ_{p}(1)\to G$, where $\ZZ_{p}(1)=\lim_{n}\mu_{p^{n}}$ (as an affine group over $k$) is Cartier dual to the constant formal group $\un{\QQ_{p}/\ZZ_{p}}$. 

\begin{prop}\label{p:isu ord} Suppose $\ch(k)=p$ and $X$ is ordinary. Make the following choices:
\begin{itemize}
\item A $\wh\ZZ^{(p')}$-basis of the prime-to-$p$ quotient of the fundamental group $\pi^{\et}_{1}(X)^{(p')}$.
\item An isomorphism $J[p^{\infty}](k)\cong\QQ_{p}/\ZZ_{p}$. 
\end{itemize}
Then there is a canonical equivalence of groupoids
\begin{equation*}
\Bun^{\isu}_{G,X}(k)\cong C^{ord}_{2,1}(G)_{\tors}/G.
\end{equation*}
\end{prop}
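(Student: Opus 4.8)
The plan is to mimic the characteristic zero argument of Proposition \ref{p:isu ch0}, replacing the \'etale fundamental group by its appropriate analogue in characteristic $p$ for an ordinary elliptic curve, and isolating the $p$-part into the extra datum $r$. First I would analyze the structure of $\wh J_{\tors}$ when $X$ is ordinary: it decomposes (canonically) as a product $\wh J_{\tors}^{(p')}\times \wh J[p^\infty]$, where the prime-to-$p$ part is the completion of $J$ along its prime-to-$p$ torsion, and $\wh J[p^\infty]$ is the completion along the $p$-power torsion. Correspondingly the Tannakian group $\Pi_X^{\isu}\cong \DD(\wh J_{\tors})$ splits. For the prime-to-$p$ part, exactly as in characteristic zero we have $J[n](k)\cong \upH^1_{\et}(X,\mu_n)\cong \Hom(\pi_1^{\et}(X)^{(p')}/n,\mu_n(k))$ for $n$ prime to $p$, so that $\un{J(k)_{\tors}^{(p')}}$ is Cartier dual to $\un{\pi_1^{\et}(X)^{(p')}}$, giving a canonical isomorphism of the prime-to-$p$ diagonalizable part of $\Pi_X^{\isu}$ with $\un{\pi_1^{\et}(X)^{(p')}}$; a choice of $\wh\ZZ^{(p')}$-basis then identifies this with $(\un{\wh\ZZ^{(p')}})^2$.

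Next I would treat the two $p$-power factors. On the one hand, $T_J^{\isu}$ has a $p$-part with character group $J[p^\infty](k)$, which for ordinary $X$ is (noncanonically) isomorphic to $\QQ_p/\ZZ_p$; after choosing such an isomorphism $J[p^\infty](k)\cong\QQ_p/\ZZ_p$, the corresponding diagonalizable group is $\Spec k[\QQ_p/\ZZ_p]=\ZZ_p(1)=\lim_n\mu_{p^n}$, which is exactly the group whose homomorphisms to $G$ record the datum $r$. On the other hand the unipotent factor $U_J\cong \DD(\wh J)$: since $X$ is ordinary, $\wh J\cong \wh\Gm$, so $U_J\cong \un{\ZZ_p}$, and (as recalled in \S\ref{sss:UJ}) $\Hom_k(U_J,G)\isom \cU_G(k)$ by evaluation at $1\in\ZZ_p$, recording the unipotent element $u$. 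Assembling, a choice of the two bases listed in the statement yields an isomorphism of pro-algebraic groups
\begin{equation*}
\Pi_X^{\isu}\cong \un{\wh\ZZ^{(p')}}\times\un{\wh\ZZ^{(p')}}\times \ZZ_p(1)\times \un{\ZZ_p}.
\end{equation*}

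Then I would take $\Hom_k(-,G)$ of both sides. A homomorphism $\un{\wh\ZZ^{(p')}}\to G$ is the same as an element of $G$ of finite order prime to $p$ (evaluation at $1$, using that $\un{\wh\ZZ^{(p')}}$ is pro-\'etale); this gives the two commuting semisimple elements $s,t$ of finite prime-to-$p$ order. A homomorphism $\ZZ_p(1)\to G$ is by definition the datum $r$. A homomorphism $\un{\ZZ_p}\to G$ gives the unipotent element $u$. Commutativity of $\Pi_X^{\isu}$ forces $r,s,t,u$ to pairwise commute, so $\Hom_k(\Pi_X^{\isu},G)$ is in $G$-equivariant bijection with the set of commuting quadruples $C^{ord}_{2,1}(G)_{\tors}$; note $s,t$ automatically have order prime to $p$ in a group over a field of characteristic $p$, matching the definition. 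Quotienting by the conjugation action and invoking the equivalence \eqref{Gbun iso} enriched in $k$-groups finishes the proof, with the automorphism-group enrichment transported along the chain of equivalences.

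The main obstacle is the careful bookkeeping of the $p$-power part: one must check that for ordinary $X$ the $p$-divisible group $J[p^\infty]$ really is the constant \'etale group $\QQ_p/\ZZ_p$ on $k$-points \emph{and} that its multiplicative dual $\wh J$ is $\wh\Gm$ (these are the ordinariness inputs), and that the Cartier duality identifications of \S\ref{sss:UJ} and the $T_J$-analysis are compatible — in particular that the splitting $\Pi_X^{\isu}\cong T_J^{\isu}\times U_J$ restricted to $p$-parts matches $\Spec k[\QQ_p/\ZZ_p]$ and $\ZZ_p(1)$ on the one hand versus $\DD(\wh\Gm)\cong\un{\ZZ_p}$ on the other. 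Everything else is a direct transcription of the characteristic zero argument.
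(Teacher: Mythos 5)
Your proposal follows essentially the same route as the paper: split $\Pi_X^{\isu}\cong T_J^{\isu}\times U_J$, identify the prime-to-$p$ part of $T_J^{\isu}$ with $\un{\pi_1^{\et}(X)^{(p')}}$ via Cartier duality exactly as in characteristic zero, identify the $p$-part of $T_J^{\isu}$ with $\ZZ_p(1)$ via the chosen trivialization of $J[p^\infty](k)$, and identify $U_J\cong\DD(\wh J)\cong\un{\ZZ_p}$ using ordinariness; then take $\Hom_k(-,G)$. The one point you flag but leave slightly open — the compatibility of the $\wh J\cong\wh\Gm$ identification with the choice $J[p^\infty](k)\cong\QQ_p/\ZZ_p$ — is handled in the paper by observing that the latter induces the former via Cartier--Serre duality of $p$-divisible groups (since for ordinary $X$ the étale and multiplicative parts of $J[p^\infty]$ are dual to each other under the self-duality of $J$), so no additional choice is needed.
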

\begin{proof}
We have a canonical isomorphism
\begin{equation*}
T_{J}^{\isu}\cong \DD(\un{J(k)_{\tors}})=\DD(\un{J[p^{\infty}](k)})\times \un{\pi_{1}(X)^{(p')}}.
\end{equation*}
Here the second isomorphism uses the Cartier duality between the prime-to-$p$ torsion $\un{J(k)^{(p')}_{\tors}}$ and $\un{\pi_{1}(X)^{(p')}}$ as in the characteristic zero case.

The choices made in the statement of the proposition give an isomorphism 
\begin{equation*}
T_{J}^{\isu}\cong \ZZ_{p}(1)\times \un{\wh\ZZ}^{(p')}\times  \un{\wh\ZZ}^{(p')}
\end{equation*}
of pro-algebraic groups over $k$.
Therefore $\Hom(T_{J}^{\isu}, G)$ can be identified with the set of commuting triples $(r,s,t)$ as in the definition of $C^{ord}_{2,1}(G)_{\tors}$. 

The isomorphism $J[p^{\infty}](k)\cong \QQ_{p}/\ZZ_{p}$ after Cartier-Serre duality (within $p$-divisible groups) induces an isomorphism of formal groups $\wh J\cong \wh \Gm$, and hence $U_{J}\cong \DD(\wh J)\cong \un{\ZZ_{p}}$. This isomorphism identifies $\Hom(U_{J}, G)$ with the set of unipotent elements in $G$ canonically (see \S\ref{sss:UJ}). Altogether we get a canonical bijection between $\Hom(T_{J}^{\isu}\times U_{J}, G)$ and $C^{ord}_{2,1}(G)_{\tors}$.
\end{proof}

Finally, for the supersingular case, we let $C^{ssing}_{2,1}(G)_\tors$ be the set of commuting triples $(s,t,u)$, where $s,t\in G$ are semisimple of finite order, and $u: U_{J}=\DD(\wh J)\to G$ is a homomorphism.  The same argument as in the ordinary case shows:

\begin{prop}\label{p:isu ssing} Suppose $\ch(k)=p$ and $X$ is supersingular. Choose a $\wh\ZZ^{(p')}$-basis of the prime-to-$p$ quotient of the fundamental group $\pi^{\et}_{1}(X)^{(p')}$. Then there is a canonical equivalence of groupoids enriched in $k$-groups
\begin{equation*}
\Bun^{\isu}_{G,X}(k)\cong C^{ssing}_{2,1}(G)_{\tors}/G.
\end{equation*}
\end{prop}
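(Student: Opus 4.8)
The plan is to follow verbatim the structure of the proof of Proposition \ref{p:isu ord}, replacing the ordinary $p$-divisible group by its supersingular counterpart. Starting from the canonical isomorphism $\Pi^{\isu}_{X}\cong T^{\isu}_{J}\times U_{J}$ and the equivalence \eqref{Gbun iso}, I would first identify the diagonalizable part $T^{\isu}_{J}$. Since $\xch(T^{\isu}_{J})=J(k)_{\tors}$ and, for $X$ supersingular, the $p$-divisible group $J[p^{\infty}]$ is connected (its formal group $\wh J$ has height $2$ and no étale quotient), we have $J[p^{\infty}](k)=0$, so $J(k)_{\tors}=J(k)^{(p')}_{\tors}$. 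Cartier duality between the prime-to-$p$ torsion $\un{J(k)^{(p')}_{\tors}}$ and the prime-to-$p$ quotient $\un{\pi^{\et}_{1}(X)^{(p')}}$ of the fundamental group (exactly as in the characteristic zero and ordinary cases) then gives $T^{\isu}_{J}\cong \un{\pi^{\et}_{1}(X)^{(p')}}$; the chosen $\wh\ZZ^{(p')}$-basis turns this into $\un{\wh\ZZ}^{(p')}\times \un{\wh\ZZ}^{(p')}$.

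Next I would compute $\Hom_{k}(\Pi^{\isu}_{X},G)/G$. Because $\Pi^{\isu}_{X}$ is commutative, a homomorphism to $G$ is a triple of pairwise commuting homomorphisms: one from each $\un{\wh\ZZ}^{(p')}$-factor and one from $U_{J}=\DD(\wh J)$. A homomorphism $\un{\wh\ZZ}^{(p')}\to G$, with $G$ affine of finite type, factors through a finite quotient, hence corresponds (by evaluation at $1$) to an element of $G$ of order prime to $p$, which is automatically semisimple; this yields the pair $(s,t)$. A homomorphism $U_{J}=\DD(\wh J)\to G$ is exactly the datum $u$ in the definition of $C^{ssing}_{2,1}(G)_{\tors}$; here, unlike in the ordinary and characteristic zero cases, we do not attempt to reinterpret $u$ as a single unipotent element, since $\wh J$ is not $\wh\Gm$. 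Commutativity of $\Pi^{\isu}_{X}$ forces $s,t$ and $u$ to pairwise commute, so $\Hom_{k}(\Pi^{\isu}_{X},G)$ is canonically in $G$-equivariant bijection with $C^{ssing}_{2,1}(G)_{\tors}$. Feeding this through \eqref{Gbun iso} gives the desired equivalence of groupoids, and the enrichment in $k$-groups --- that is, that $\Aut(\cE)$ is the simultaneous centralizer $G_{s,t,u}$ --- is inherited from \cite[Corollary 5.6]{FGL} just as in the ordinary case.

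The step requiring the most care is the identification of $T^{\isu}_{J}$ in the supersingular case, specifically the vanishing $J[p^{\infty}](k)=0$ and the compatibility of the Cartier--Serre duality statements across the $\wh J_{n}$ so that passing to the colimit $\wh J_{\tors}=\colim_{n}\wh J_{n}$ is legitimate; everything else is a transcription of the ordinary-case argument. It is also worth spelling out that ``commuting triple'' in $C^{ssing}_{2,1}(G)_{\tors}$ is to be read as: $s,t$ are commuting finite-order semisimple elements and $u\colon \DD(\wh J)\to G$ is a homomorphism whose image commutes with both $s$ and $t$, which is precisely what the commutativity of $\Pi^{\isu}_{X}$ produces. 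No genuinely new obstacle arises beyond what was already handled in Propositions \ref{p:isu ch0} and \ref{p:isu ord}.
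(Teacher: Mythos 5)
Your argument is correct and is exactly what the paper means by ``the same argument as in the ordinary case shows'': one substitutes the vanishing $J[p^{\infty}](k)=0$ for the ordinary case's identification $J[p^{\infty}](k)\cong\QQ_p/\ZZ_p$, keeps $\Hom(U_J,G)=\Hom(\DD(\wh J),G)$ as the third coordinate rather than trying to reinterpret it as a unipotent element, and feeds the result through \eqref{Gbun iso}. The only nitpick is that your worry about ``compatibility of the Cartier--Serre duality statements across the $\wh J_n$'' is already disposed of by the preceding lemma establishing $\Pi^{\isu}_X\cong\DD(\wh J_{\tors})$ canonically, so no extra care is needed there.
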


\subsection{Indecomposable $G$-bundles on elliptic curves}
Let $k$ be any algebraically closed field, and $X$ be an elliptic curve over $k$.
Let $\Bun_{G,X}(k)^{\AI}$ be the groupoid of indecomposable $G$-bundles on $X$, whose set of isomorphism classes is denoted $\AI_{G,X}(k)$.

Let $C_{2,1}(G)^{\AI}\subset C_{2,1}(G)$ be the subset of triples $(s,t,u)$ such that the simultaneous centralizer $G_{s,t,u}$ does not contain a nontrivial torus (if it is reduced then it is equivalent to $G^{\c}_{s,t,u}$ being unipotent). Recall that $N_2(G)$ defined in \S\ref{sss:n2} (now $G$ is over an arbitrary algebraically closed field $k$) is exactly the set of $G$-conjugacy classes in $C_{2,1}(G)^{\AI}$.

\begin{cor}\label{c:AI ell N2} Suppose either $\ch(k)=0$,  or $\ch(k)=p$ is \goodp for $G$ and $X$ is ordinary. Make choices as in Proposition \ref{p:isu ch0} or \ref{p:isu ord} accordingly. Then there is a canonical equivalence of groupoids enriched in $k$-groups
\begin{eqnarray}
    \Bun_{G,X}(k)^{\AI}\cong C_{2,1}(G)^{\AI}/G.
\end{eqnarray}
In particular, 
\begin{equation*}
\#\AI_{G,X}(k)=n_{2}(G).
\end{equation*}
\end{cor}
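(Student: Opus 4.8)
The plan is to combine two equivalences already established in the excerpt. The first is the chain of Tannakian equivalences culminating in Corollary~\ref{c:ind isotriv}: every indecomposable $G$-bundle on $X$ is iso-unipotent, so $\Bun_{G,X}(k)^{\AI}$ is a full subgroupoid of $\Bun^{\isu}_{G,X}(k)$, which in turn is described in Lie-theoretic terms by Propositions~\ref{p:isu ch0} and~\ref{p:isu ord}. The second is Lemma~\ref{l:ind centralizer}: under the equivalence $\Bun^{\sst}_{G,X}(k)\cong \Hom_{k\textup{-}\gp}(\Pi_X,G)/G$, indecomposability of $\cE$ is equivalent to the centralizer of the corresponding homomorphism $\rho:\Pi_X\to G$ containing no nontrivial torus. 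Since this equivalence is enriched in $k$-groups, the same characterization holds for the iso-unipotent version in terms of homomorphisms $\Pi^{\isu}_X\to G$.

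First I would note that for $\cE$ indecomposable, $\rho$ factors through $\Pi^{\isu}_X$ (Corollary~\ref{c:ind isotriv}), and the condition ``centralizer of $\rho$ in $G$ contains no nontrivial torus'' transports under the chosen isomorphism $\Pi^{\isu}_X\cong \un{\pi_1^{\et}(X)}\times U_J$ (char.\ zero) or $\Pi^{\isu}_X\cong \ZZ_p(1)\times (\un{\wh\ZZ}^{(p')})^2\times U_J$ (ordinary char.\ $p$) to a condition on the data parametrizing the homomorphism. In the characteristic-zero case, Proposition~\ref{p:isu ch0} identifies $\Hom(\Pi^{\isu}_X,G)$ with $C_{2,1}(G)_{\tors}$, where a homomorphism corresponds to a commuting triple $(s,t,u)$ with $s,t$ of finite order and $u$ unipotent; the centralizer of the homomorphism is exactly $G_{s,t,u}$, so the indecomposable locus corresponds to the subset of $C_{2,1}(G)_{\tors}$ where $G_{s,t,u}$ contains no nontrivial torus. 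The key sub-step is then to observe that a triple $(s,t,u)\in C_{2,1}(G)$ with $G_{s,t,u}$ containing no nontrivial torus automatically has $s,t$ of finite order: indeed, the Zariski closure of the subgroup generated by $s$ and $t$ is a commutative group whose maximal torus lies in $G_{s,t,u}$, hence is trivial, so $s,t$ generate a finite group. This shows $C_{2,1}(G)^{\AI}\subset C_{2,1}(G)_{\tors}$, so no information is lost by restricting to the torsion locus, and we get $\Bun_{G,X}(k)^{\AI}\cong C_{2,1}(G)^{\AI}/G$.

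For the ordinary characteristic-$p$ case the same argument applies with $C^{ord}_{2,1}(G)_{\tors}$ in place of $C_{2,1}(G)_{\tors}$: Proposition~\ref{p:isu ord} gives a homomorphism $\Pi^{\isu}_X\to G$ as a quadruple $(r,s,t,u)$, but $\goodp$-ness of $p$ (via Lemma~\ref{l:centralizer Levi} and the fact that $r(\ZZ_p(1))$ is a diagonalizable, indeed $\mu_{p^n}$-type, subgroup whose reduced centralizer is a Levi) forces $r(\ZZ_p(1))$ to be trivial when the centralizer contains no nontrivial torus---more precisely, if $r$ were nontrivial its image would be a nontrivial $\mu_{p^m}\subset G$, whose centralizer $C_G(\mu_{p^m})$ is a Levi $L$ with $\dim ZL>0$; since $ZL$ centralizes $(r,s,t,u)$ and is reduced (again by $\goodp$-ness, using \eqref{xz tors}), we would get a nontrivial torus in $G_{s,t,u}$. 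Hence the indecomposable quadruples have $r$ trivial and reduce to triples, landing again in $C_{2,1}(G)^{\AI}$. Finally, taking isomorphism classes on both sides of $\Bun_{G,X}(k)^{\AI}\cong C_{2,1}(G)^{\AI}/G$ and recalling from \S\ref{sss:n2} that $N_2(G)$ is by definition the set of $G$-conjugacy classes in $C_{2,1}(G)^{\AI}$, we obtain $\#\AI_{G,X}(k)=\# N_2(G)=n_2(G)$.

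The main obstacle I expect is the bookkeeping around the $p$-divisible part: one must check carefully that the ``no nontrivial torus in the centralizer'' condition is genuinely insensitive to whether we work with $\Pi_X$ or $\Pi^{\isu}_X$ (this is Corollary~\ref{c:ind isotriv}, already proven) and, more delicately, that the extra factor $\ZZ_p(1)$ appearing in the ordinary case does not contribute new indecomposable bundles---this is where $\goodp$-ness is essential and where the argument via Lemma~\ref{l:centralizer Levi}(2) and reducedness of centers of Levi subgroups must be invoked. Everything else is a matter of transporting the enriched groupoid equivalences of the previous subsections and unwinding definitions; no new geometric input is needed.
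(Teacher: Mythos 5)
Your proof follows essentially the same route as the paper: pass to iso-unipotent bundles via Corollary~\ref{c:ind isotriv}, translate indecomposability into the no-nontrivial-torus condition on centralizers via the enriched equivalence and Lemma~\ref{l:ind centralizer}, and then unwind Propositions~\ref{p:isu ch0}/\ref{p:isu ord} to identify the $\AI$ locus with $C_{2,1}(G)^{\AI}/G$, checking $C_{2,1}(G)^{\AI}\subset C_{2,1}(G)_{\tors}$ and killing the $\ZZ_p(1)$-factor in the ordinary case. One small unstated step in your ordinary-$p$ argument: the assertion that ``$C_G(\mu_{p^m})$ is a Levi $L$ with $\dim ZL>0$'' presupposes $L\neq G$, i.e.\ $\Im(r)\not\subset ZG$; this is where the second half of pretty-goodness (no $p$-torsion in $\xch(ZG)$, ruling out a nontrivial $\mu_{p^m}\hookrightarrow ZG$) must be invoked, which is exactly the explicit final step in the paper's proof.
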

\begin{proof}

By Corollary \ref{c:ind isotriv}, $\Bun_{G,X}(k)^{AI}\subset \Bun^\isu_{G,X}(k)$. 

First consider the case $\ch(k)=0$. After choosing a basis for $\pi^{\et}_1(X)$ and a basis for $\upH^0(X,\om)$, we get a canonical fully faithful embedding of groupoids
\begin{equation*}
    \Bun_{G,X}(k)^{\AI}\subset \Bun^\isu_{G,X}(k)\isom C_{2,1}(G)_\tors/G.
\end{equation*}
We claim that the image of the above embedding is exactly $C_{2,1}(G)^\AI/G$. The only non-obvious thing here is that $C_{2,1}(G)^\AI\subset C_{2,1}(G)_\tors$. We shall prove this for any $k$ (not necessarily characteristic zero). Indeed, for the contrary, if $(s,t,u)\in C_{2,1}(G)$ and, say, $s$  is not of finite order, then the Zariski closure of $\{s^n; n\in \ZZ\}$ is a diagonalizable $k$-group of dimension at least one, hence must contain a nontrivial torus, which is contained in the centralizer $G_{s,t,u}$.

Now consider the case where $\ch(k)=p$ and $X$ is ordinary. As in the above discussion, upon making the choices as in Proposition \ref{p:isu ord}, we have a canonical fully faithful embedding of groupoids enriched in $k$-groups
\begin{equation*}
   \ph_{ord}: \Bun_{G,X}(k)^{\AI}\subset \Bun^\isu_{G,X}(k)\isom C^{ord}_{2,1}(G)_\tors/G.
\end{equation*}
We claim that the image of $\ph_{ord}$ consists of quadruples $(r,s,t,u)$ where $r=1$ and $(s,t,u)\in C_{2,1}(G)^\AI$ (where $s,t$ are necessarily of finite order by the previous paragraph). Indeed, $r$ factors through a homomorphism  $\ov r: \mu_{p^n}\to G$ for some $n\ge0$. By Lemma \ref{l:centralizer Levi}(2), since $\pi_1(G)$ has no $p$-torsion, $C_G(\ov r)$ is a Levi subgroup of $G$. Since $(r,s,t,u)$ corresponds to an indecomposable $G$-bundle, $G_{r,s,t,u}$ contains no nontrivial torus. Now $G_{r,s,t,u}\subset C_G(\ov r)$, hence $Z(C_G(\ov r))$ contains no nontrivial torus, which forces $C_G(\ov r)=G$ and $\Im(\ov r)\subset ZG$. By assumption, $\xch(ZG)$ also has no $p$-torsion, therefore $\ov r$, and hence $r$, has to be trivial.

The above argument gives an equivalence of groupoids $\Im(\ph_{ord})\cong C_{2,1}(G)^\AI/G$. Combined with the fact that $\ph_{\ord}$ is a fully faithful embedding of groupoids, we get the desired equivalence between $\Bun_{G,X}(k)^\AI$ and $C_{2,1}(G)^\AI/G$.
\end{proof}

\begin{cor}\label{c:AI counting ell} Let $X$ be an ordinary elliptic curve defined over a finite field $\FF_{q}$. Assume $p=\ch(\FF_{q})$ is \goodp for $G$, and $q$ is large enough so that $\Gal(\ov\FF_{q}/\FF_{q})$ acts trivially on the set $\AI_{G,X}(\ov \FF_{q})$. Then
\begin{equation*}
\#\AI_{G,X}(\FF_{q})=n_{2}(G).
\end{equation*}
\end{cor}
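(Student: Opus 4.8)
The plan is to deduce Corollary~\ref{c:AI counting ell} from Corollary~\ref{c:AI ell N2} together with the rationality hypothesis; the argument is essentially bookkeeping with the Galois action, so I would first make sure that the various notions of indecomposability and the two meanings of $\AI_{G,X}(-)$ line up.

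By Definition~\ref{def:AIk}, $\AI_{G,X}(\FF_{q})$ is the set of isomorphism classes of absolutely indecomposable $G$-bundles over $X_{\ov\FF_{q}}$ whose isomorphism class is $\Gal(\ov\FF_{q}/\FF_{q})$-stable. Since the base $X_{\ov\FF_{q}}$ already lives over an algebraically closed field, for such a bundle "absolutely indecomposable" is the same as "indecomposable" in the sense of Definition~\ref{d:ai} used throughout Section~\ref{s:ell} (the group scheme $\Aut(\cE)$, and hence its maximal torus, is computed over $\ov\FF_{q}$ already; it is reduced by Corollary~\ref{c:aut red}, so there is no discrepancy with the "unipotent mod center" formulation of the introduction). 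Therefore the set of \emph{all} isomorphism classes of absolutely indecomposable $G$-bundles over $X_{\ov\FF_{q}}$ is exactly $\AI_{G,X}(\ov\FF_{q})$ in the notation of Section~\ref{s:ell}, and $\AI_{G,X}(\FF_{q})$ is the subset of $\Gal(\ov\FF_{q}/\FF_{q})$-fixed classes. Under the hypothesis that this action is trivial, every class is fixed, so $\AI_{G,X}(\FF_{q}) = \AI_{G,X}(\ov\FF_{q})$ as sets, and in particular their cardinalities agree.

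Next I would apply Corollary~\ref{c:AI ell N2} to the algebraically closed field $k = \ov\FF_{q}$: its hypotheses are precisely that $\ch(k) = p$ is \goodp for $G$ and that $X$ is ordinary, both of which are assumed here. That corollary yields a bijection $\Bun_{G,X}(\ov\FF_{q})^{\AI} \cong C_{2,1}(G)^{\AI}/G$ of groupoids, whose source has isomorphism classes $\AI_{G,X}(\ov\FF_{q})$ and whose target is $N_{2}(G)$ by the definition in \S\ref{sss:n2}; hence $\#\AI_{G,X}(\ov\FF_{q}) = n_{2}(G)$. Combining with the previous paragraph gives $\#\AI_{G,X}(\FF_{q}) = n_{2}(G)$, as claimed.

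I do not expect a genuine obstacle here: beyond Corollary~\ref{c:AI ell N2} the only content is that the rationality hypothesis lets one replace the Galois-fixed isomorphism classes by all isomorphism classes. The single point requiring care is the compatibility of "absolutely indecomposable over $\FF_{q}$" with "indecomposable over $\ov\FF_{q}$" — but this is immediate, since both are governed by Definition~\ref{d:ai} applied over the algebraically closed field $\ov\FF_{q}$, and the reducedness of $\Aut(\cE)$ from Corollary~\ref{c:aut red} rules out any gap between the automorphism-group formulation and the one stated in the introduction.
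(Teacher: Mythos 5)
The overall route is the intended one — apply Corollary \ref{c:AI ell N2} over $\ov\FF_{q}$ and use the hypothesis to compare $\AI_{G,X}(\FF_{q})$ with $\AI_{G,X}(\ov\FF_{q})$ — and your reconciliation of the two uses of the notation $\AI_{G,X}(-)$ between Sections 5 and 6 is correct. But there is a real elision in your first paragraph. You paraphrase Definition \ref{def:AIk} as saying that $\AI_{G,X}(\FF_{q})$ consists of isomorphism classes over $X_{\ov\FF_{q}}$ that are $\Gal(\ov\FF_{q}/\FF_{q})$-\emph{stable}. The definition actually says ``whose isomorphism class is \emph{defined over} $k$'', i.e.\ lies in the image of the base-change map $\b\colon \Bun_{G}(\FF_{q})^{\AI}\to \AI_{G,X}(\ov\FF_{q})$ from Lemma \ref{lem:AIG}. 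Being defined over $\FF_{q}$ implies being Frobenius-fixed, but the converse — that a Frobenius-fixed isomorphism class of a $G$-bundle over $X_{\ov\FF_{q}}$ actually descends to $X_{\FF_{q}}$ — is not a tautology. By rewriting the definition you have silently assumed exactly the implication that needs proving, so the equality $\AI_{G,X}(\FF_{q}) = \AI_{G,X}(\ov\FF_{q})$ looks immediate when it is not. You even locate the ``only content'' of the argument at the wrong spot: replacing Frobenius-fixed classes by all classes is a tautology given the hypothesis; the content is replacing $\FF_{q}$-rational classes by Frobenius-fixed ones.

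The missing step is a Lang-theorem argument for $\Bun_{G}$: $\Bun_{G}$ is, Zariski-locally, a quotient stack $[Y/\GL_{N}]$ for a scheme $Y$ and the connected group $\GL_{N}$; if an orbit on $Y(\ov\FF_{q})$ is Frobenius-stable, say $\Fr(y)=h\cdot y$, then Lang's theorem for $\GL_{N}$ produces $g$ with $g^{-1}\Fr(g)=h^{-1}$, so $g\cdot y$ is Frobenius-fixed and gives an $\FF_{q}$-point of $\Bun_{G}$ in that isomorphism class. (Equivalently, the residual gerbe of a Frobenius-fixed $\ov\FF_{q}$-point descends to a gerbe over $\Spec\FF_{q}$, and one checks it is neutral.) Once that surjectivity onto Frobenius-fixed classes is recorded, the rest of your argument is correct: the hypothesis makes every class Frobenius-fixed, hence every class descends, so $\#\AI_{G,X}(\FF_{q})=\#\AI_{G,X}(\ov\FF_{q})$, and Corollary \ref{c:AI ell N2} applied to the algebraically closed field $\ov\FF_{q}$ (whose hypotheses you verified) gives $\#\AI_{G,X}(\ov\FF_{q})=n_{2}(G)$.
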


\begin{cor}
Let $X$ be an ordinary elliptic curve over an algebraically closed field $k$ with origin $x_0$. Assume $p$ is large enough so that Theorem \ref{th:geom M} holds for the semisimple group $G$ and $X$. Then for a generic choice of the stability parameter $\th$, the sign isotypic summand of the homology $\homog{*}{\cN_G(\bI_{x_0})^{\tst}}\j{\sgn}$ is concentrated in degree $2\dim\cB$, and
\begin{equation*}
n_{2}(G)=\dim \homog{2\dim \cB}{\cN_G(\bI_{x_0})^{\tst}}\j{\sgn}.
\end{equation*}
\end{cor}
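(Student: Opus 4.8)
The plan is to reduce the statement to point counting over finite fields, feed in the two counting formulas already proved (Corollary \ref{c:main N} and Corollary \ref{c:AI counting ell}), and then extract the geometric conclusion by a purity argument. First I would reduce to a finite ground field: the graded vector space $\homog{*}{\cN_G(\bI_{x_0})^{\tst}}\j{\sgn}$ with its $W$-action, and the integer $n_2(G)$, are all insensitive to extension of the algebraically closed base field. Spreading $X$, $G$, $x_0$ and a chosen generic stability parameter $\th$ out over a finitely generated subring of $k$ (and, if $\ch k=0$, specializing further to a large positive characteristic), and using that ordinariness of an elliptic curve is an open condition on the base, that $\cN_G(\bI_{x_0})^{\tst}$ is proper over the base by Theorem \ref{th:geom M} (so proper base change applies to the complex computing its homology, and the $W$-action is preserved), and that the required bounds on $p$ and $q$ can be met after a localization and passage to a larger residue field, it suffices to treat the case $k=\ov\FF_q$ with $X,G,x_0,\th$ defined over $\FF_q$, $X$ ordinary, $p$ pretty good and large enough for Theorem \ref{th:geom M}, and $q$ large enough that a coregular admissible collection exists and $\Gal(\ov\FF_q/\FF_q)$ acts trivially on $\pi_0(\Bun_G)$ and on the finite set $\AI_{G,X}(\ov\FF_q)$. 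Since the desired conclusion mentions no Frobenius, this reduction loses nothing.

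Over each $\FF_{q^n}$ (all the above hypotheses persist for $\FF_{q^n}$), Corollary \ref{c:AI counting ell} gives $\#\AI_{G,X}(\FF_{q^n})=n_2(G)$, while summing Corollary \ref{c:main N} over $\g\in\pi_0(\Bun_G)$ — using that $C_G$ is trivial since $G$ is semisimple, so $\ov\cN=\cN$, and that $\dim\Bun_G(\bI_{x_0})=\dim\cB=:D$ when $g=1$ — gives $n_2(G)=q^{nD}\Tr(\Fr_{q^n},\homog{*}{\cN_G(\bI_{x_0})^{\tst}}\j{\sgn})$. Writing $V^i$ for the sign-isotypic part of $\homog{i}{\cN_G(\bI_{x_0})^{\tst}}$, this reads
\[
\sum_i (-1)^i \Tr\!\big(\Fr_{q^n}, V^i(-D)\big)=n_2(G)\qquad\text{for all }n\ge1 .
\]

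Next I would invoke purity. The $\Gm$-contraction isomorphism recorded before Corollary \ref{c:main N} identifies $\cohog{*}{\cN_G(\bI_{x_0})^{\tst}}$ with $\cohog{*}{\cM_G(\bI_{x_0};0)^{\tst}}$, which by Theorem \ref{th:geom M} is the cohomology of a smooth stack and hence has weights $\ge i$ in degree $i$; on the other hand $\cN_G(\bI_{x_0})^{\tst}$ is proper, so its cohomology has weights $\le i$ in degree $i$. Therefore $\cohog{i}{\cN_G(\bI_{x_0})^{\tst}}$ is pure of weight $i$, $V^i$ is pure of weight $-i$, and $V^i(-D)$ is pure of weight $2D-i$; moreover $\dim\cN_G(\bI_{x_0})^{\tst}=D$ forces $V^i=0$ for $i>2D$. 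So in the displayed identity, collecting terms by the value of the Frobenius eigenvalue, the left side is $\sum_\gamma c_\gamma\gamma^n$ with $c_\gamma=\sum_i(-1)^i\cdot(\text{multiplicity of }\gamma\text{ on }V^i(-D))$; a Weil number $\gamma$ with $|\gamma|\ne1$ occurs only on the unique $V^i(-D)$ of weight $2\log_q|\gamma|$, where $c_\gamma$ has a fixed sign and vanishes only if that multiplicity does. By linear independence of the sequences $(\gamma^n)_{n\ge1}$ for distinct $\gamma$, the identity forces $c_\gamma=0$ for $\gamma\ne1$ and $c_1=n_2(G)$; hence no $\gamma$ with $|\gamma|>1$ occurs (so $V^i=0$ for all $i\ne2D$), and every Frobenius eigenvalue on $V^{2D}(-D)$ equals $1$ with total multiplicity $n_2(G)$, i.e.\ $\dim V^{2D}=n_2(G)$. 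Since $2D=2\dim\cB$, this is exactly the assertion.

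I expect the purity step to be the conceptual heart, though all of its inputs — smoothness of $\cM_G(\bI_{x_0};0)^{\tst}$, properness of $\cN_G(\bI_{x_0})^{\tst}$, and the contraction isomorphism — are furnished by Theorem \ref{th:geom M} and the discussion preceding Corollary \ref{c:main N}. The point requiring the most care in practice is the spreading-out in the first step: one must secure ordinariness of the specialized elliptic curve, the lower bound on $p$, and the largeness of $q$ all at once, while keeping the homology, the Frobenius action and the $W$-action unchanged under the base changes.
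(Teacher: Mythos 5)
Your proposal is correct and follows essentially the same route as the paper: reduce to $k=\ov{\FF}_q$ by spreading out, combine the elliptic-curve count from Corollary \ref{c:AI counting ell} with the cohomological formula of Corollary \ref{c:main N}, and then use the purity of $\homog{i}{\cN_G(\bI_{x_0})^{\tst}}$ (properness for weight $\le i$, $\Gm$-contraction onto the smooth stack $\cM_G(\bI_{x_0};0)^{\tst}$ for weight $\ge i$) to force concentration in degree $2\dim\cB$ and extract the dimension. Your linear-independence-of-$(\gamma^n)$ step is simply a more explicit rendering of the paper's "comparing the archimedean norms as $n\to\infty$" argument, and placing the spreading-out reduction at the start rather than the end is a harmless reorganization.
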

\begin{proof} We first treat the case $k=\ov \FF_p$. We may assume that $X$ is defined over some $\FF_q\subset k$. Note that $\dim\Bun_{G,X}=0$ when $X$ has genus one, and $\dim\Bun_{G,X}(\bI_x)=\dim \cB$. 
By Corollary \ref{c:main N}, we have (at least for $n$ large enough)
\begin{equation}\label{AI ell qn}
    \#\AI_{G,X}(\FF_{q^n})=q^{n\dim\cB}\Tr(\Fr^n, \homog{*}{\cN_G(\bI_{x_0})^{\tst}}\j{\sgn}).
\end{equation}
Here $\Fr\in \Gal(k/\FF_q)$ is the geometric Frobenius for $\FF_q$. Note that $\cN_G(\bI_{x_0})^{\tst}$ is proper over $\FF_q$ of dimension $\dim \cB$. Therefore $\cohog{i}{\cN_G(\bI_{x_0})^{\tst}}$ has weights $\le i$. On the other hand, the inclusion $\cN_G(\bI_{x_0})^{\tst}\incl \cM_G(\bI_{x_0})^{\tst}$ induces an isomorphism on cohomology by the dilation $\Gm$-action. Since $\cM_G(\bI_{x_0})^{\tst}$ is smooth, $\cohog{i}{\cN_G(\bI_{x_0})^{\tst}}\cong \cohog{i}{\cM_G(\bI_{x_0})^{\tst}}$ has weights $\ge i$. Altogether we see that $\cohog{i}{\cN_G(\bI_{x_0})^{\tst}}$ is pure of weight $i$, and $\homog{i}{\cN_G(\bI_{x_0})^{\tst}}$ is pure of weight $-i$. 

Take $n$ sufficiently divisible so that $\Fr^n$ acts trivially on $\AI_{G,X}(\ov \FF_q)$, then the left side of \eqref{AI ell qn} is equal to $n_2(G)$ by Corollary \ref{c:AI counting ell}. On the other hand, for the right side of \eqref{AI ell qn} to be constant for all $n$ sufficiently divisible, using purity, the only possibility is that $\homog{*}{\cN_G(\bI_{x_0})^{\tst}}\j{\sgn}$ is concentrated in degree $2\dim \cB$. Comparing the archimedean norms of both sides of \eqref{AI ell qn} as $n\to\infty$ and being sufficiently divisible at the same time, we get the desired identity.

For a general algebraically closed field $k$ of characteristic $p$, it is easy to see that $n_2(G)$ is the same as in the case $k=\ov\FF_p$. Using a spreading-out argument (choosing a finitely generated subring $R_0\subset k$ over which $X$ is defined, so that $\cN_G(\bI_{x_0})^{\tst}$ is also defined over $R_0$), we see that the dimension of $\homog{*}{\cN_G(\bI_{x_0})^{\tst}}\j{\sgn}$ is the same as its counterpart over $\ov\FF_p$ (choose a sufficiently general closed point of $\Spec R_0$ and compare homology of its fiber with the generic fiber using nearby cycles). The general case then follows from the case $k=\ov\FF_p$.
\end{proof}

\section{Counting with parahoric level structures}\label{s:par}

We will generalize Corollaries \ref{c:AI Mc} and \ref{c:main} to the counting problem of indecomposable $G$-bundles with parahoric level structures.

\subsection{Preparations on parahoric subgroups}

Let $k$ be a field and $G$
a connected reductive group over $k$. Consider the loop group $LG$ (as an ind-scheme over $k$). Let $\bP\subset LG$ be a parahoric subgroup with reductive quotient $M$, which is a connected reductive group over $k$. 

\sss{Embedding of $M$ into $G$}\label{sss:emb M to G}
We assume $k$ is algebraically closed in the discussion below. We claim:
\begin{equation}\label{M to G}
    \mbox{There is a canonical $G$-conjugacy class of embeddings $M\incl G$.}
\end{equation} 
To see this, we first choose a Borel subgroup $B_0\subset G_0$ and a maximal torus $T_0$ therein. This determines a standard Iwahori subgroup $\bI\subset LG$, which is the preimage of $B_{0}\subset G$ under $L^{+}_{x}G\to G$. 

We first assume $\bP$ is a standard parahoric subgroup, i.e., $\bP\supset\bI$.  Then $\bP$ corresponds to a facet $\cF$ in the $T_0$-apartment $\cA$ of the building of $LG$. Let $M_{\cF}\subset G$ be the connected reductive subgroup of $G$ containing $T_{0}$ and having roots $\a\in \Phi(G,T_{0})$ such that there exists a hyperplane $H_{\wt \a}\subset \cA$ defined by an affine root $\wt\a$ with finite part  $\a$, such that $\cF\subset H_{\wt\a}$. Then there is a canonical isomorphism $M\cong M_{\cF}$, giving an embedding $M\incl G$. 

General $\bP$ are $(LG)^{\c}$-conjugate to a unique standard parahoric subgroup $\bQ\supset \bI$; the resulting isomorphism $\bP\cong \bQ$ is unique up to inner automorphisms. Therefore the resulting embedding $M\incl G$ is canonical up to precomposing with an inner automorphism of $M$. 

Finally, any two choices of $(B_0, T_0)$ are $G$-conjugate, hence changing the choice of $(B_0,T_0)$ only changes the embedding $M\incl G$ by post-composing with an inner automorphism of $G$. Therefore the embedding $M\incl G$ is canonical up to $G$-conjugation.

\begin{lemma}\label{l:p good for M}
    Let $\bP \subset LG$ be a parahoric subgroup with maximal reductive quotient $M$. Then, if $p=\ch(k)$ is pretty good for $G$, it is pretty good for $M$. 
\end{lemma}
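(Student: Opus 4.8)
We may assume $k=\ov k$. Recall that pretty good for $G$ means $p$ is good for $G$, and neither $\xch(ZG)$ nor $\pi_1(G)$ has $p$-torsion. By the discussion in \S\ref{sss:emb M to G}, there is a canonical $G$-conjugacy class of embeddings $M\incl G$, and by the description of $M=M_\cF$ there, after choosing $B_0\supset T_0$ we may arrange that $T_0\subset M$ is a maximal torus of both $M$ and $G$, with $\Phi(M,T_0)=\Phi_M$ the set of finite parts of affine roots vanishing on the facet $\cF$. I would first check the ``good'' part: $\Phi_M$ is a subsystem of $\Phi:=\Phi(G,T_0)$ obtained as the set of roots vanishing on a point of the apartment (a ``rationally closed'' subsystem in the sense used in the proof of Lemma \ref{l:centralizer Levi}), and for such subsystems the bad primes of $M$ are a subset of the bad primes of $G$. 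This is classical (the coefficients of the highest root of each irreducible component of $\Phi_M$ divide those for $G$); alternatively, since $M$ is the reductive quotient of a parahoric of $LG$, one can cite the standard fact that the absolute root datum of $M$ embeds into that of $G$ compatibly with the pinning, so $p$ good for $G$ implies $p$ good for $M$.

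The heart of the matter is the torsion-freeness of $\xch(ZM)$ and $\pi_1(M)$ at $p$. Here I would reuse exactly the lattice argument from \S\ref{sss:intro G} that established the injections \eqref{pi1 tors} and \eqref{xz tors} for a twisted Levi $L\subset G$. The only new point is that $M$ is not literally a Levi (or twisted Levi) subgroup of $G$: its root system $\Phi_M$ is rationally closed in $\Phi$ but need not be of the form ``roots orthogonal to a subtorus.'' Nonetheless, the lattice computation only used a choice of bases: pick a basis $\{\a_i\}_{i\in I}$ of $\Phi$ and recall $\xch(ZG)=\xch(T_0)/\Span_\ZZ\{\a_i;i\in I\}$ and $\pi_1(G)=\xcoch(T_0)/\Span_\ZZ\{\a_i^\vee; i\in I\}$. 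Since $\Phi_M$ is rationally closed, by Sommers' result (cited in the proof of Lemma \ref{l:centralizer Levi}, \cite{Sommers}), when $p$ is good for $G$ the subsystem $\Phi_M$ is in fact generated by a subset of a basis of $\Phi$ after a suitable choice — more precisely, $W(\Phi_M)$ is a parabolic subgroup of $W(\Phi)$, so there is a basis $\{\a_i\}_{i\in I}$ of $\Phi$ and $J\subseteq I$ with $\Phi_M=\Phi\cap\Span_\ZZ\{\a_j;j\in J\}$ and $\{\a_j\}_{j\in J}$ a basis of $\Phi_M$. (This is the place I expect to have to be careful: I must justify that for the particular rationally closed subsystems arising from facets of the building, goodness of $p$ for $G$ guarantees that $\Phi_M$ is a \emph{parabolic} subsystem, i.e. $\ZZ$-closed, not merely $\QQ$-closed. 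For facets this should actually be automatic — the subsystems $\Phi_x$ for $x$ in the apartment are $\ZZ$-closed since they are $\{\a: \a(x)\in\ZZ\}\cap\Phi$ restricted appropriately — but I would spell this out using that $\cF$ lies in a product of affine-root hyperplanes.)

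Granting that $\{\a_j\}_{j\in J}$ extends to a basis of $\Phi$, the argument of \S\ref{sss:intro G} applies verbatim: $\xch(ZM)=\xch(T_0)/\Span_\ZZ\{\a_j;j\in J\}$ surjects onto $\xch(ZG)=\xch(T_0)/\Span_\ZZ\{\a_i; i\in I\}$ with torsion-free kernel $\Span_\ZZ\{\a_i; i\in I\}/\Span_\ZZ\{\a_j;j\in J\}$ (a primitive sublattice since $\{\a_i\}$ is a basis), hence $\xch(ZM)_\tors\incl\xch(ZG)_\tors$, so $\xch(ZM)$ has no $p$-torsion. Symmetrically, using coroots and cocharacters, $\pi_1(M)_\tors\incl\pi_1(G)_\tors$, so $\pi_1(M)$ has no $p$-torsion. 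Combining the three facts (good, $\xch(ZM)$ torsion-free at $p$, $\pi_1(M)$ torsion-free at $p$) gives that $p$ is pretty good for $M$, as claimed. I would close by remarking that the same proof shows the statement for the full (possibly disconnected) reductive quotient if one ever needs it, and that the choice of $(B_0,T_0)$ and the conjugacy ambiguity in $M\incl G$ are irrelevant since the conclusion is invariant under isomorphism of $M$.
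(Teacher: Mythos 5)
The proof has a genuine gap at its central step. You claim that, for $p$ good, Sommers' result forces $\Phi_M$ to be a \emph{parabolic} subsystem of $\Phi$ (i.e., generated by a subset of a basis of $\Phi$), and then apply the injections \eqref{pi1 tors}, \eqref{xz tors} verbatim. This is false: the reductive quotient $M$ of a parahoric is a \emph{pseudo-Levi}, whose root system is generated by a proper subset of the affine simple roots $\{\a_0,\a_1,\ldots,\a_r\}$ with $\a_0=-\theta$, and whenever $\a_0$ is actually used the result is typically \emph{not} a parabolic subsystem. The standard counterexample is the long-root $A_1\times A_1$ inside $C_2$ (i.e.\ $\SL_2\times\SL_2\subset\Sp_4$): its root system $\{\pm 2e_1,\pm 2e_2\}$ is $\ZZ$-closed in $\Phi(C_2)$ but is not conjugate to a subsystem spanned by a subset of simple roots, and this happens for \emph{every} $p$ good for $\Sp_4$ (indeed $\Sp_4$ is simply connected, so there is no $\pi_1$-torsion obstruction to rule this out). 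Your parenthetical aside conflates ``$\ZZ$-closed'' with ``parabolic,'' which is precisely the distinction between pseudo-Levi and Levi subsystems; Sommers' result, as invoked in Lemma~\ref{l:centralizer Levi}, concerns stabilizers $W_y$ in the \emph{finite} Weyl group of points $y\in\xcoch(T)/N$ and does not give parabolicity of $W(\Phi_M)$ for arbitrary facet subsystems.

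Once the parabolic reduction fails, the lattice argument from \S\ref{sss:intro G} does not apply directly, because $\Span_\ZZ\{\a_j\}_{j\in J}$ is no longer a primitive sublattice of $\ZZ\Phi$: the quotient $\ZZ\Phi/\ZZ\Phi_M$ genuinely can have torsion. The paper's proof works with the affine simple roots $\wt I=I\sqcup\{0\}$ and computes this torsion explicitly: when $0\in J$ the torsion is cyclic of order $\gcd\{n_i : i\in\wt I\setminus J\}$ where $\theta=\sum n_i\a_i$, and then uses $p$ good to conclude $p\nmid n_i$. For $\pi_1(M)$ the analogous gcd involves the coefficients $n_i^\vee$ of the highest \emph{coroot}, and one needs the extra (non-formal) fact that good primes are non-torsion primes, which the paper checks against the table of bad/torsion primes. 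Your sketch omits both the non-Levi case and the torsion-versus-bad distinction, so the conclusion does not follow from the argument as written. The fix is exactly the paper's direct computation over $\wt I$; no reduction to the Levi case is available.
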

\begin{proof}

    We may assume $k$ is algebraically closed.
    Suppose $p=\ch(k)$ is \goodp for $G$. Let $T\subset G$ be a maximal torus of $G$, $\Phi$ the root system of $G$ with respect to $T$. The discussions in \S\ref{sss:emb M to G} shows that $M$ can be identified with a subgroup of $G$ containing $T$. Under that identification, let $\Phi_{M} \subset \Phi$ be the root subsystem of $M$ with coroot system $\Phi_{M}^{\vee}\subset \Phi^{\vee}$. 
    We need to verify that $p$ is good for $M$, and that $\xch(ZM)$ and $\pi_{1}(M)$ have no $p$-torsion. 
    
    We first show that $p$ is good for $M$. This property only depends only on the Dynkin diagram of $M$, and can be checked case-by-case by reducing to the case $G$ almost simple and $\bP$ a maximal parahoric subgroup.

    Next we show that $\xch(ZM)$ has no $p$-torsion. Let $X=\xch(T)$, then we have  identifications $\xch(ZM) = X / \ZZ \Phi_{M}$ and $\xch(ZG)=X/\ZZ\Phi$.
    By assumption,  $X/\ZZ \Phi$ has no $p$-torsion, so it suffices to prove $\ZZ \Phi / \ZZ \Phi_{M}$ has no $p$-torsion. This allows us to reduce to the case where $G$ is almost simple. One can choose a set $\{\a_i\}_{i\in I}$ of simple roots for $G$, and set $\a_0=-\th$ (where $\th$ is the corresponding highest root) and $\wt I=I\sqcup \{0\}$, such that $\Phi_M$ has simple roots $\{\a_j\}_{j\in J}$ for some proper subset $J\subset \wt I$. Write $\th=\sum_{i\in I}n_i\a_i$ for some $n_i\in \ZZ$. If $J\subset I$, then $\ZZ \Phi / \ZZ \Phi_{M}$ is torsion-free. If $0\in J$, then the torsion part of $\ZZ \Phi / \ZZ \Phi_{M}$ is the cyclic group of order equal to
    \begin{equation*}
        \gcd\{n_i|i\in \wt I-J\}.
    \end{equation*}
    Since $p$ is good for $G$, $p$ does not divide any $n_i$, and we find that $X/\ZZ \Phi$ has no $p$-torsion. 
    
    Finally we prove that $\pi_1(M)$ has no $p$-torsion. Let $Y=\xcoch(T)$, then we have identifications  $\pi_{1}(M) = Y / \ZZ \Phi_{M}^{\vee}$ and $\pi_1(G)=Y/\ZZ\Phi^\vee$. The argument is similar to the previous paragraph, except that $\{n_i\}$ are replaced by the coefficients $\{n_i^\vee\}_{i\in I}$ of $\th^\vee$ in terms of simple coroots. When $p$ divides one of the $n_i^\vee$, we say it is \emph{torsion} for $G$. Using for example the explicit list of bad and torsion primes in \cite[Table 14.1.]{Malle-Testerman} one easily sees that torsion implies bad, i.e., good primes are non-torsion, proving the claim. 
\end{proof}

We can now extend the notion of $G$-relevant subgroup and subalgebras a bit.

\begin{defn}\label{def:Grel M} Let $k$ be any field. Suppose $M$ is an algebraic $k$-group, equipped with a $G(\ov k)$-conjugacy class of embeddings $M_{\ov k}\subset G_{\ov k}$ (for example, $M$ may be the reductive quotient of a parahoric subgroup $\bP\subset LG$, by \eqref{M to G}). Then we say that a $k$-subgroup $H\subset M$ is {\em $G$-relevant} if, under some (equivalently any) embedding $\io: M_{\ov k}\subset G_{\ov k}$ in the fixed conjugacy class of embeddings, $\io(H_{\ov k})$ is $G_{\ov k}$-relevant. We say that a $k$-subalgebra $\frh\subset \fm$ is {\em $G$-relevant} if it is the Lie algebra of some $G$-relevant subgroup $H\subset M$.
\end{defn}

Now let $k$ be a finite field. Assume $M$ is a connected reductive $k$-group with a fixed $G(\ov k)$-conjugacy class of embeddings $i: M_{\ov k}\incl G_{\ov k}$ (again our examples will be reductive quotients of parahoric subgroups of $LG$). Let $\frc_M=\fm\sslash M$ and let $\chi_M: \fm\to \frc_M$ be the projection.

\begin{defn} In the above situation, a {\em $G$-selection function} $\xi: \frc_{M}(k)\to \Qlbar$ is one that satisfies 
\begin{enumerate}
\item $\j{\one_{\fra}, \chi_{M}^{*}\xi}_{\fm}=0$ for any $G$-relevant nontrivial torus $A\subset M$ such that $A\ne C_{M}$ (where  $\fra=\Lie A$).
\item $\xi(\frz_{M})=1$, where $\frz_{M}=\Lie ZM$.
\end{enumerate}
\end{defn}

One can prove a variant of Proposition \eqref{p:sel fun} in this situation: for any $G$-selection function $\xi$ on $\frc_{M}$, and any $G$-relevant subgroup $H\subset M$ whose neutral component is not unipotent, we have $\j{\one_{\frh}, \chi^{*}_{M}\xi}_{\fm}=0$.

The above definition applies to the reductive quotient $M$ of a parahoric subgroup $\bP\subset LG$ by \eqref{M to G}. In the case $\bP=\bI$ is an Iwahori subgroup, its reductive quotient can be identified canonically with the universal Cartan $T$. In this case, and assuming $p$ is \goodp for $G$, a $G$-selection function on $\frc_T(k)=\frt$ can be given as 
\begin{equation*}
\frt\xr{\l}k\xr{\psi}\Qlbar^\times
\end{equation*}
where $\l\in \frt^*$ is $W$-coregular.

\subsection{Preparations on bundles with parahoric level structures}
Fix a finite subset $S\subset |X|$ and a parahoric subgroup $\bP_{x}\subset L_{x}G$ for each $x\in S$. Let $\bP_{S}$ be the collection $\{\bP_{x}\}_{x\in S}$. Consider the moduli stack $\Bun_{G}(\bP_{S})$. We define the notion of absolute indecomposability for a $G$-bundle with $\bP_{S}$-level structures in a way similar to Definition \ref{d:ai}: $\cE\in \Bun_{G}(\bP_{S})(k)$ is absolutely indecomposable if $C_{G}$ is the maximal torus in $\Aut(\cE)$. The goal here is to generalize the main result Corollary \ref{c:main} to the counting of absolutely indecomposable $G$-bundles with parahoric level structures.

For $\g\in \pi_{0}(\Bun_{G})(k)=\pi_{0}(\Bun_{G}(\bP_{S}))(k)$, the groupoids $\Bun_{G}(\bP_{S})^{\AI}$ and $\Bun^{\g}_{G}(\bP_{S})^{\AI}$  are defined similarly to the case without level structures. Similarly to Definition \ref{def:AIk}, let $\AI_{G,X,\bP_S}(k)$ be the set of isomorphism classes of absolutely indecomposable $G$-bundles over $X_{\ov k}$ with $\bP_S$-level structures whose isomorphism class is defined over $k$. Let $\AI^{\g}_{G,X,\bP_S}(k)\subset \AI_{G,\bP_S, X}(k)$ be the subset of bundles in the $\g$-component of $\Bun_G(\bP_S)$.

\begin{exam} Let $G=\GL_{n}$. Assume $S$ consists of $k$-points of $X$. Let $P_{x}\subset G$ be the parabolic subgroup of block upper triangular matrices with block sizes $n_{x,1},\cdots, n_{x,\ell_{x}}$.  Let $\bP_{x}\subset L_{x}G$  be the parahoric subgroup that is the preimage of $P_{x}$ under the evaluation map $L^{+}_{x}G\to G$.

In this case, a $k$-point of $\Bun_{G}(\bP_{S})$ is the same datum as a rank $n$ vector bundles $\cV$ on $X$ together with a partial flag  
$$F_{x,\bu}: 0=F_{x,0}\subset F_{x,1}\subset \cdots\subset F_{x,\ell_{x}}=\cV_{x}$$ 
in the fiber $\cV_{x}$ with $\dim_{k} F_{x,i}/F_{x,i-1}=n_{x,i}$ for $1\le i\le \ell_{x}$. Then $\cE=(\cV, \{F_{x,\bu}\}_{x\in S})$ is an absolutely indecomposable point of $\Bun_{G}(\bP_{S})$ if, after base change to $\ov k$, there does not exist a decomposition of vector bundles $\cV_{\ov k}=\cV'\op\cV''$ with $\rk \cV'>0, \rk\cV''>0$, such that for all $x\in S$ and $1\le i\le \ell_{x}$,  $F_{x,i}=(F_{x,i}\cap \cV'_{x})\op (F_{x,i}\cap \cV''_{x})$ (each factor here is allowed to be zero). 
\end{exam}

Our goal in this section is to give formulas for the cardinality of $\AI^\g_{G,\bP_S, X}(k)$ analogous to Corollaries \ref{c:AI Mc} and \ref{c:main}. In this subsection, we indicate how to generalize preparatory results in \S\ref{s:aut} to the parahoric case.

Let $\cE\in \Bun_{G}(\bP_{S})(k)$. For $x\in |X-S|$ the evaluation map $\ev_{x}: \Aut(\cE)_{k(x)}\to \Aut_{G}(\cE_{x})\cong G_{k(x)}$ is defined as in \S\ref{ss:ev map}.  When $x\in S$, let $M_{x}$ be the reductive quotient of the parahoric subgroup $\bP_{x}$. Let $\ov\cE_{x}$ be the induced $M_{x}$-bundle from the $\bP_{x}$-level structure on $\cE$. We still have the evaluation map
\begin{equation*}
\ev_{x}: \Aut(\cE)_{k(x)}\to G[\cE_{x}]:=\Aut_{M_{x}}(\ov\cE_{x}).
\end{equation*}
Note that $G[\cE_{x}]$ is an inner form of $M_{x}$. In either case, for any $x\in |X|$, we have a connected reductive group $G[\cE_{x}]$ over $k(x)$, equipped with a $G(\ov{k(x)})$-conjugacy class of embeddings $G[\cE_{x}]\incl G_{k(x)}$.  It then makes sense to talk about $G$-relevant subgroups of $G[\cE_{x}]$ defined in Definition \ref{def:Grel M}.

\begin{lemma}\label{lem:par aut red}
    Assume $p$ is pretty good for $G$. Then for any $\cE\in \Bun_{G}(\bP_{S})(k)$, $\Aut(\cE)$ is reduced. 
\end{lemma}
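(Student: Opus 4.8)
The strategy is to mimic the proof of Corollary \ref{c:aut red}, replacing $G$-bundles by $G$-bundles with parahoric level structure, and reducing the level-structure bookkeeping to the non-level case by exhibiting $\Bun_G(\bP_S)$ as a bundle over a product of quotient stacks. Concretely, fix $\cE\in \Bun_G(\bP_S)(k)$. As in the proof of Corollary \ref{c:aut red} we may assume $X$ is connected. The key local input is that, for each $x\in S$, there is a canonical $G(\ov{k(x)})$-conjugacy class of embeddings $G[\cE_x]\incl G_{k(x)}$ (by \eqref{M to G}), so it makes sense to speak of $G$-relevant subgroups of $G[\cE_x]$. By Lemma \ref{l:p good for M}, $p$ is pretty good for $M_x$, hence for the inner form $G[\cE_x]$.

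First I would set up the global-to-local comparison. Write $\aut(\cE)=\G(X,\Ad(\cE_S))$ where $\cE_S$ is the torsor underlying the level structure; there is a natural inclusion of $\aut(\cE)$ into the space of sections of $\Ad$ of the underlying $G$-bundle that allows poles bounded by the parahoric data at $S$, and the quotient is governed by the local $\fp_x/\fp_x^+$-type Lie algebras, which are nilpotent extensions of $\fm_x = \Lie M_x$. The reducedness statement is equivalent to $\io_\cE : \Lie A_\cE \incl \aut(\cE)$ being an isomorphism, where $A_\cE$ is the reduced structure of $\Aut(\cE)$. As in Section \ref{s:aut}, it suffices to show $\Im(\io_\cE)$ contains all nilpotent elements and all semisimple elements of $\aut(\cE)$. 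For the nilpotent part, Corollary \ref{c:nilp aut} (via Lemma \ref{l:nilp aut} and Lemma \ref{l:unip log}) generalizes verbatim: $\Aut(\cE)$ is still the global sections of a group scheme over $X$ which is an extension of $\un\Aut$ of the underlying $G$-bundle by unipotent group schemes supported at $S$ (whose fibers admit unipotent logarithms since $p$ is good for $G$ hence for the parahoric group schemes), and the $\AA^1$-family argument shows any nilpotent $e\in\aut(\cE)$ lies in $\Lie A_\cE$. For the semisimple part, the analogue of Lemma \ref{l:ker ev unip} still holds: for $x\in S$ the kernel of $\ev_x:\Aut(\cE)\to G[\cE_x]$ contains no nontrivial diagonalizable subgroup, because a diagonalizable $D\subset\ker(\ev_x)$ would act trivially on all associated bundles $\cE(V)$ (its weight spaces vanish at $x$ after passing to the parahoric reduction, and $X$ is connected), so $D$ is trivial.

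Next I would run the saturation/relevance machinery of \S\ref{ss:unip}--\S\ref{ss:ev map} with $G$ replaced locally by $G[\cE_x]$. The key point: a maximal diagonalizable $D\subset\Aut(\cE)$ maps, via $\ev_x$, isomorphically onto a diagonalizable subgroup $D_x\subset G[\cE_x]$, and by the analogue of Corollary \ref{c:ev image dc}--Corollary \ref{c:diag reduced} combined with Lemma \ref{l:red to centralizer} and Lemma \ref{l:A conj} (which go through because $\cE$ admits a reduction to the centralizer of $D_x$ in $G[\cE_x]$), $D_x$ is saturated in $G[\cE_x]$. Since $p$ is pretty good for $G[\cE_x]$ by Lemma \ref{l:p good for M}, Proposition \ref{p:dc reduced} applied to $G[\cE_x]$ shows $D_x$, hence $D$, is reduced. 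Then the Tannakian argument of Corollary \ref{c:ss aut} (which is purely formal and does not see the level structure) produces, for each semisimple $\g\in\aut(\cE)$, a diagonalizable subgroup $D_\g\subset\Aut(\cE)$ with $\g\in\Lie D_\g$; reducedness of $D_\g$ gives $\g\in\Im(\io_\cE)$. Combining, $\io_\cE$ is an isomorphism and $\Aut(\cE)$ is reduced.

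\textbf{Main obstacle.} The delicate point is the local analysis at $x\in S$: I need that $\Aut(\cE)$, as a global section functor, fits into the right exact sequences involving the parahoric group schemes, so that (a) $\ker(\ev_x)$ is genuinely unipotent and (b) the nilpotent-logarithm argument applies to the group scheme $\un\Aut(\cE)$ \emph{over $X$} including its behavior at $S$. The relevant fact is that the Bruhat--Tits group scheme attached to $\bP_x$ is smooth with unipotent radical having a logarithm when $p$ is good for $G$, and that its reductive quotient is $M_x$ to which Lemma \ref{l:p good for M} applies; assembling these into a clean statement that $\io_\cE$ is an isomorphism is where the work lies, but it is parallel to \S\ref{s:aut} and I expect no genuinely new difficulty — the proof should close by "the proof follows the same lines as Corollary \ref{c:aut red}; we omit the remaining details."
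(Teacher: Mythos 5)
Your overall strategy (prove $\io_\cE:\Lie A_\cE\incl\aut(\cE)$ is surjective by handling nilpotent and semisimple elements separately) matches the paper's, but the paper's two main devices — a reduction to Iwahori level, and the closed embedding $\Aut(\cE)\incl\Aut(\underline\cE)$ into the automorphism group of the underlying $G$-bundle — make the argument close quickly and avoid the two places where your proposal has gaps.

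For the semisimple part, after producing $D_\g\subset\Aut(\cE)$ via the Tannakian argument of Corollary \ref{c:ss aut}, the paper does not rerun the saturation/relevance machinery with $G[\cE_x]$. It instead observes that $\Aut(\cE)\incl\Aut(\underline\cE)$ is a closed embedding, so $D_\g$ is a diagonalizable subgroup of $\Aut(\underline\cE)$ and is therefore reduced by Corollary \ref{c:diag reduced} applied to the unparametrized bundle. Your route requires the analogues of Lemma \ref{l:red to centralizer} and Corollary \ref{c:ev image dc} with $G$ replaced by $G[\cE_x]$; but Lemma \ref{l:red to centralizer} only gives a reduction of $\underline\cE$, and you would still have to upgrade it to a reduction of $\cE$ together with its $\bP_S$-level structure, and then identify the reductive quotient of the resulting parahoric with $C_{G[\cE_x]}(D_x)$ in order to see $D_x$ is saturated. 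You assert this goes through but do not fill it in, and it is genuinely not ``verbatim.''

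For the nilpotent part, the paper first reduces to Iwahori level: given $\a\in\aut(\cE)$, the residue $\frev_x(\a)\in\fm_x$ lies in some Borel $\frb\subset\fm_x$, whose preimage in $\bP_x$ is an Iwahori $\bI\subset\bP_x$, and then $\a\in\aut(\cE')$ for the Iwahori refinement $\cE'$. The $\AA^1$-curve from the unipotent logarithm then lands in $\Aut(\cE')\subset\Aut(\cE)$ precisely because of \cite[Lemma 1.8.3]{KV}, which guarantees a quasi-logarithm $f$ satisfies $f(B)\subset\Lie B$ for every Borel $B$. Your statement that ``the $\AA^1$-family argument shows any nilpotent $e$ lies in $\Lie A_\cE$'' because ``the parahoric group schemes admit unipotent logarithms'' is exactly the point that needs an argument: one must know the $\AA^1$-curve preserves the level structure at $x$, and this requires a preservation statement (Borel/parabolic compatibility of the quasi-logarithm) that you neither cite nor prove. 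The Iwahori reduction plus the KV lemma is the missing ingredient.
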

\begin{proof}
Since the proof is similar to the case without level structure, we only sketch it. For ease of notation, we assume $S$ contains a single point, and we denote the corresponding moduli stack by $\Bun_{G}(\bP)$. Let $\cE \in \Bun_{G}(\bP)$, and let $A_{\cE}$ be the induced reduced subscheme of $\Aut(\cE)$. We will prove that the canonical inclusion $\io_{\cE}: \Lie A_{\cE}\incl \aut(\cE)$ is surjective. 

For that, we first reduce to the case when $\bP = \bI$ is an Iwahori subgroup. Let $M$ be the maximal reductive quotient of $\bP$ with Lie algebra $\fm$. After choosing a trivialization of $\cE$ at $x$, we obtain an evaluation map $\mathfrak{ev} :\aut(\cE) \to \fm $. For any $\a\in \aut(\cE)$ we may consider its image $\mathfrak{ev}(\alpha) \in \fm$. Choose any Borel subalgebra $\frb \subset \fm$ for which $\mathfrak{e}\mathfrak{v}(\alpha)\in \frb$. Such a Borel subalgebra always exists, and it corresponds to a Borel subgroup $B$ of $M$. We can pull back $B$ to endow $\cE$ with Iwahori level structure. Denote this bundle by $\cE'$, and its automorphism group (as bundle with Iwahori level) by $\Aut(\cE')$. 

By construction, $\alpha$ is contained in the subalgebra $\aut(\cE') \subset \aut(\cE)$, which is the Lie algebra of $\Aut(\cE')\subset \Aut(\cE)$. It therefore suffices to prove that $\Aut(\cE')$ is reduced, because in that case, $\aut(\cE')\subset \Im(\io_{\cE})$. So we may assume that $\bP = \bI$. In the case of Iwahori level we now check that 
\[ \io_{\cE}: \Lie A_{\cE}\incl \aut(\cE) \]
is surjective by considering nilpotent and semisimple elements separately. 

Let $e\in \aut(\cE)$ be nilpotent. We use a unipotent logarithm induced from a quasi-logarithm as before, i.e. we argue as in Lemma \ref{l:nilp aut}. By \cite[Lemma 1.8.3]{KV}, any quasi-logarithm $f$ satisfies $f(B) \subset \Lie \, B$ for all Borel subgroups $B$ of $G$. Therefore the unipotent logarithm gives a curve $\AA^1 \to \Aut(\cE)$ with derivative $e$ at $0$ which factors through the reduced subscheme $A_{\cE}$, so $e \in \Im(\Lie A_{\cE}\to \aut(\cE))$.

Let $\g \in \aut(\cE)$ be semisimple. We apply the construction in the proof of Corollary \ref{c:ss aut} to $H=\Aut(\cE)$ and obtain a diagonalizable subgroup $D_{\g} \subset \Aut(\cE)$ with $\g \in \Lie D_{\g}$. Denote by $\underline{\cE}$ the underlying $G$-bundle of $\cE$. Then we have a closed embedding $\Aut(\cE) \subset \Aut(\underline{\cE})$. Thus, $D_{\g}$ is a diagonalizable subgroup of $\Aut(\underline{\cE})$, hence reduced by Corollary \ref{c:diag reduced}. This means $D_{\g} \to \Aut(\cE)$ factors through $A_{\cE}$, hence $\Lie D_{\g} \subset \Im(\iota_{\cE})$. 
\end{proof}

The same argument as in Lemmas \ref{l:ker ev unip} and \ref{l:AI ess unip} shows: 
\begin{lemma}\label{l:ker ev unip par} Let $\cE\in \Bun_{G}(\bP_{S})(k)$ and $x\in |X|$. Then the reduced structure of $\ker(\ev_{x})$ is unipotent. In particular, when $p$ is pretty good for $G$, $\cE$ is absolutely indecomposable if and only if the image of $\ev_{x}$ is essentially unipotent. 
\end{lemma}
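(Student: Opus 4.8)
The plan is to reduce Lemma~\ref{l:ker ev unip par} to the already-proven results in the case without level structures, since a $G$-bundle with parahoric level structure has, as its underlying object, an ordinary $G$-bundle $\un\cE$, and the automorphism group $\Aut(\cE)$ is a closed subgroup of $\Aut(\un\cE)$.

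\medskip

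First I would treat the statement that the reduced structure of $\ker(\ev_x)$ is unipotent. Fix a point $x\in|X|$; working over $k(x)$ we may assume $x$ is a $k$-point. The key observation is that any diagonalizable subgroup $D\subset\ker(\ev_x)$ would, by the previous paragraph, be a diagonalizable subgroup of $\Aut(\un\cE)$; moreover $D$ is contained in the kernel of the evaluation map $\Aut(\un\cE)\to G[\un\cE_x]$ coming from the underlying bundle $\un\cE$ at $x$ (when $x\notin S$ this is the same map; when $x\in S$ the map $\ev_x$ for $\cE$ factors through a restriction of the map for $\un\cE$, so $\ker(\ev_x)$ for $\cE$ is contained in the kernel of the corresponding map for $\un\cE$). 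Hence $D$ must be trivial by Lemma~\ref{l:ker ev unip} applied to $\un\cE$ (using that $X$ is connected). Therefore $\ker(\ev_x)$ contains no nontrivial diagonalizable subgroup; by Jordan decomposition every $k$-point of $\ker(\ev_x)$ is unipotent, so its reduced structure is unipotent. This part is essentially a direct translation of the argument of Lemma~\ref{l:ker ev unip}; the only mild subtlety is making precise that the $\ev_x$ for a parahoric-level bundle factors through the $\ev_x$ for the underlying bundle, which is immediate from the construction of the evaluation map.

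\medskip

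Next I would prove the equivalence when $p$ is pretty good for $G$. By Lemma~\ref{lem:par aut red}, $\Aut(\cE)$ is reduced, hence so is $\ker(\ev_x)$, and by the previous paragraph $\ker(\ev_x)$ is then genuinely unipotent. Let $H_x=\Im(\ev_x)\subset G[\cE_x]$, a reduced subgroup since $\Aut(\cE)$ is reduced, and write $\ev_x:\Aut(\cE)\surj H_x$. Because $\ker(\ev_x)$ is unipotent, a maximal torus $A$ of $\Aut(\cE)$ maps isomorphically onto a maximal torus $A_x$ of $H_x$, and conversely $\ev_x$ restricts to an isomorphism on maximal tori; this is the same argument as in the proof of Corollary~\ref{c:ev image rel}, using \cite[Theorem 10.6(4)]{Borel}. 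In particular $C_G\subset\Aut(\cE)$ is the maximal torus of $\Aut(\cE)$ (i.e.\ $\cE$ is absolutely indecomposable) if and only if its image $\ev_x(C_G)\subset G[\cE_x]$ is the maximal torus of $H_x$ (i.e.\ $H_x$ is essentially unipotent in the sense of Definition~\ref{d:ess unip}, extended to the inner form $G[\cE_x]$ of $M_x$). One should check that $\ev_x(C_G)$ is indeed the copy of $C_G$ sitting inside $G[\cE_x]$: when $x\notin S$ this is clear; when $x\in S$ it follows because $C_G$ lands in $Z(M_x)^\c$ under the canonical embedding $M_x\incl G$ (the central torus of $G$ is contained in every Levi, hence in every reductive quotient of a parahoric), and $C_G$ is then the maximal torus of $C_{G[\cE_x]}$ if and only if $H_x$ is essentially unipotent.

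\medskip

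I do not expect a serious obstacle here: the whole lemma is a formal transcription of Lemmas~\ref{l:ker ev unip} and~\ref{l:AI ess unip} once one has Lemma~\ref{lem:par aut red} in hand, and the only thing that genuinely needs care is bookkeeping about the evaluation map at a point $x\in S$ versus at $x\notin S$, and the compatibility of the various central tori $C_G\subset Z(M_x)^\c\subset M_x$ under the canonical (up to conjugacy) embedding $M_x\incl G$ of \eqref{M to G}. All of this is routine, so the proof in the paper can legitimately be left as ``the same argument as in Lemmas~\ref{l:ker ev unip} and~\ref{l:AI ess unip}.''
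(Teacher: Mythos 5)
Your first paragraph contains a genuine error in the stated reasoning, though the conclusion you want is still true and the fix is short. You write that ``the map $\ev_x$ for $\cE$ factors through a restriction of the map for $\un\cE$, so $\ker(\ev_x)$ for $\cE$ is contained in the kernel of the corresponding map for $\un\cE$.'' If $\ev_x^{\cE}$ factors as $\pi\circ(\ev_x^{\un\cE}|_{\Aut(\cE)})$, where $\pi$ is the projection from the parabolic $P_x\subset G[\un\cE_x]$ stabilizing the level structure at $x$ onto its Levi quotient $M_x$, then the containment of kernels goes the \emph{opposite} way: $\ker(\ev_x^{\un\cE}|_{\Aut(\cE)})\subset\ker(\ev_x^{\cE})$, and the latter is in general strictly larger. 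Concretely for $G=\GL_2$ with Iwahori level and $\cV$ the trivial rank-2 bundle on $\PP^1$: $\Aut(\cE)=B$, $\ker(\ev_x^{\cE})=N$ is the unipotent radical, while $\ker(\ev_x^{\un\cE}|_B)$ is trivial. So the inclusion you assert fails, and the sentence as written does not prove that your diagonalizable $D$ lies in $\ker(\ev_x^{\un\cE})$.

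The conclusion is nonetheless correct, and the correct argument is exactly what makes the lemma nontrivial at points of $S$: for $D\subset\ker(\ev_x^{\cE})$ diagonalizable, the image $\ev_x^{\un\cE}(D)$ lies in $P_x$ and maps trivially to the Levi quotient $M_x$, hence lies in the unipotent radical of $P_x$; being a quotient of a diagonalizable group it is diagonalizable, and a diagonalizable subgroup of a unipotent group is trivial. Therefore $D\subset\ker(\ev_x^{\un\cE})$, and Lemma~\ref{l:ker ev unip} applied to $\un\cE$ finishes. (Alternatively one can bypass $x$ entirely and evaluate $D$ at some $y\notin S$, using Lemma~\ref{l:A conj} to compare with the evaluation at $x$; but either way the ``diagonalizable in unipotent radical $\Rightarrow$ trivial'' step has to appear.) Your second paragraph, on the equivalence with essential unipotence, is fine and matches the argument of Lemma~\ref{l:AI ess unip} and Corollary~\ref{c:ev image rel}. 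With the one step above repaired, your route---reducing to the underlying $G$-bundle---is a legitimate way to carry out what the paper leaves as ``the same argument.''
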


\begin{lemma}\label{l:ev image rel par} Let $\cE\in \Bun_{G}(\bP_{S})(k)$ and $x\in |X|$. Assume that $p$ is pretty good for $G$. Then the image of $\ev_{x}$ is a $G$-relevant subgroup of $G[\cE_{x}]$, in the sense of Definition \ref{def:Grel M}. 
\end{lemma}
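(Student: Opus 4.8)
\textbf{Proof proposal for Lemma \ref{l:ev image rel par}.} The plan is to reduce to the already-established case without level structure, Corollary \ref{c:ev image rel}, by exhibiting the image of $\ev_x$ as a subgroup of $G[\cE_x]$ that is cut out inside the ambient $G$ (at a point of $X$ lying outside $S$) in a controlled way. Concretely: first I would reduce to the case $S=\{x_0\}$ a single point with parahoric $\bP_{x_0}$, since $\Aut(\cE)$ for general $\bP_S$ sits inside $\Aut(\cE')$ where $\cE'$ is obtained by forgetting all but one level structure, and a $G$-relevant subgroup of a $G$-relevant subgroup is still $G$-relevant (this monotonicity follows from the characterization in Lemma \ref{l:G rel cut by roots} together with \eqref{xz tors}, exactly as in the argument for \eqref{pi1 tors}--\eqref{xz tors}). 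We may also base change to $\ov k$ and assume $k$ algebraically closed, so that $G[\cE_x]$ is identified (up to $G$-conjugacy) with the reductive quotient $M=M_x$ of $\bP_x$, sitting inside $G$ via the canonical conjugacy class of embeddings \eqref{M to G}.

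The two cases $x\in S$ and $x\notin S$ are genuinely different. If $x\in |X|\setminus S$, then $G[\cE_x]\cong G$ and the claim is literally Corollary \ref{c:ev image rel} (using Lemma \ref{lem:par aut red} for the reducedness of $\Aut(\cE)$, which is what drives that corollary's proof). So the content is the case $x\in S$. Here I would argue as in Corollary \ref{c:ev image rel}: by Lemma \ref{lem:par aut red}, $\Aut(\cE)$ is reduced, hence so is $\ker(\ev_x)$, which is unipotent by Lemma \ref{l:ker ev unip par}. Picking a maximal torus $A_x\subset \Im(\ev_x)\subset M$, the preimage $\ev_x^{-1}(A_x)$ is a connected solvable reduced group with unipotent radical $\ker(\ev_x)$, so a maximal torus $A\subset \ev_x^{-1}(A_x)$ maps isomorphically onto $A_x$ (using \cite[Theorem 10.6(4)]{Borel}); since $\ker(\ev_x)$ is unipotent, $A$ is in fact a maximal torus of $\Aut(\cE)$. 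Then, just as in Corollary \ref{c:ev image dc}, enlarging $A$ to a maximal diagonalizable subgroup $D\subset \Aut(\cE)$ and applying the evaluation map at \emph{another} point $y\in |X|\setminus S$ (nonempty after base change, shrinking $k$ if necessary), Corollary \ref{c:ev image dc} shows $\ev_y(D)$ is saturated in $G$, hence by Lemma \ref{l:max torus D relevant} its maximal torus $\ev_y(A)$ is $G$-relevant in $G$. The point is now to transport this back: by Lemma \ref{l:A conj} applied to the diagonalizable group $A\subset \Aut(\cE)$ and the two points $x,y$, the maps $\ev_x|_A$ and $\ev_y|_A$ differ by conjugation — but one lands in $M$ and the other in $G$. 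I would make this precise by using that the embedding $M\incl G$ of \eqref{M to G} identifies $A_x=\ev_x(A)$ with a subtorus of $G$ that is $G$-conjugate to $\ev_y(A)$ (Lemma \ref{l:A conj} is stated for $G$-bundles, and the $\bP_x$-level structure produces, over a suitable connected affine scheme trivializing everything, a homomorphism $A\to G[\cE_x]\hookrightarrow G$ whose value is conjugate to $\ev_y$). Hence $A_x$ is $G$-relevant as a subtorus of $G$, i.e.\ $G$-relevant in the sense of Definition \ref{def:Grel M}. Since $A_x$ is a maximal torus in the reduced group $\Im(\ev_x)\subset M$, Definition \ref{def:rel}(2) (transported via Definition \ref{def:Grel M}) gives that $\Im(\ev_x)$ is $G$-relevant.

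The main obstacle I anticipate is the bookkeeping in the last step: making sure that the $G(\ov k)$-conjugacy class of embeddings $M\incl G$ coming from \eqref{M to G} is genuinely compatible with the evaluation maps — i.e.\ that the torus $\ev_x(A)\subset M\subset G$ really is $G$-conjugate to $\ev_y(A)$ and not merely abstractly isomorphic. The cleanest way to see this is to spread $\cE$ (with its level structure) out over a connected affine $k$-scheme $R$ trivializing $\cE$ on a Zariski neighborhood of $x$ and $y$ simultaneously; then the $A$-action gives a map $R\to \Hom(A,G)$ whose values at $x$ and $y$ are $\ev_x,\ev_y$ up to conjugacy, and Lemma \ref{l:hom from D isolated} (connected components of $\Hom(A,G)$ are single $G$-orbits) together with connectedness of $R$ finishes it — this is exactly the mechanism already used in Lemma \ref{l:A conj}, and the only new input is that at $x$ the map factors through $G[\cE_x]=M$ via \eqref{M to G}, which is a formal consequence of how the parahoric reductive quotient embeds. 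I expect no serious difficulty beyond this, since every other ingredient (reducedness, unipotence of kernels, saturatedness, relevance of saturated tori) has a parahoric counterpart already in hand.
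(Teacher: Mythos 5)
Your proposal has two genuine gaps, and the second is the crux of the lemma.

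\textbf{First, the opening reduction to a single level point is logically backwards.} You observe that $\Aut(\cE)\subset\Aut(\cE')$ when $\cE'$ forgets all but one level structure, and then invoke ``a $G$-relevant subgroup of a $G$-relevant subgroup is $G$-relevant.'' But this does not deliver the reduction you want: knowing that $\Im(\ev_x|_{\Aut(\cE')})$ is $G$-relevant says nothing a priori about the \emph{smaller} group $\Im(\ev_x|_{\Aut(\cE)})\subset\Im(\ev_x|_{\Aut(\cE')})$. You would need to know that the inclusion is itself a relevant inclusion, which is exactly what you are trying to prove. (For comparison, the paper's reduction goes the other direction: starting from a maximal torus $A\subset\Aut(\cE)$, it chooses $A$-stable Borel reductions at each $s\in S$ to \emph{enlarge} the level structure to Iwahori, producing $\cE^\sh$ with $A\subset\Aut(\cE^\sh)\subset\Aut(\cE)$, so $A$ stays maximal. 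That shrinks the automorphism group and preserves the maximality of $A$, which is the correct direction.)

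\textbf{Second, and more seriously, the transport from $y\notin S$ breaks at the application of Corollary \ref{c:ev image dc}.} You take $D$ maximal diagonalizable in $\Aut(\cE)$ and want to conclude $\ev_y(D)$ is saturated by applying Corollary \ref{c:ev image dc} to the underlying $G$-bundle $\un\cE$ at the level-free point $y$. But that corollary requires $D$ to be maximal diagonalizable in $\Aut(\un\cE)$, and $\Aut(\cE)\subsetneq\Aut(\un\cE)$ in general: imposing a Borel reduction at a point can cut down the maximal torus of the automorphism group. A $D$ that is maximal in $\Aut(\cE)$ can fail to be maximal in $\Aut(\un\cE)$, in which case $\ev_y(D)$ need not be saturated, and you get no lower bound on $\ev_y(A)$. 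Transporting via Lemma \ref{l:A conj} then only tells you $\ev_x(A)$ and $\ev_y(A)$ are $G$-conjugate; it cannot manufacture $G$-relevance that was never established. What the paper does instead is to use the $L=C_G(A_x)$-reduction $\un\cF$ of $\un\cE$ from Lemma \ref{l:red to centralizer} and then \emph{lift the Iwahori level structures to $\cF\in\Bun_L(\bI_S)$}, showing $(ZL)^\circ\subset\Aut(\cF)\subset\Aut(\cE)$; the maximality of $A$ inside $\Aut(\cE)$ (not $\Aut(\un\cE)$) then forces $A_x=(ZL)^\circ=C_L$, i.e.\ $G$-relevance. The lifting of the level structure onto the $L$-reduction --- verifying that $A_x\subset B_s$ so that $L\cap B_s$ is a Borel of $L$ compatible with the given flag --- is the essential step your proposal omits, and without it there is no way to land $(ZL)^\circ$ inside $\Aut(\cE)$ rather than merely inside $\Aut(\un\cE)$.
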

\begin{proof}  Again we may assume $k$ is algebraically closed. Let $A\subset \Aut(\cE)$ be a maximal torus. For each $s\in S$ we choose a Borel reduction of $\ov\cE_{s}$ that is stable under $A$. These Borel reductions give a point $\cE^{\sh}\in \Bun_{G}(\bI_{S})(k)$ such that $A\subset \Aut(\cE^{\sh})$. Since $\Aut(\cE^{\sh})\subset \Aut(\cE)$, $A$ is a maximal torus in $\Aut(\cE^{\sh})$.  It is therefore sufficient to show that $A_{x}=\ev_{x}(A)$, which is a maximal torus in $G[\cE^{\sh}_{x}]$ (because $\ker(\ev_{x})$ is unipotent), is a $G$-relevant torus in $G[\cE^{\sh}_{x}]$. This allows us to reduce to the case where all $\bP_{s}$ are Iwahori subgroups. 

When  $\bP_{s}=\bI_{s}$ for all $s\in S$, let $\un \cE$ be the underlying $G$-bundle of $\cE$. By Lemma \ref{l:red to centralizer}, we have an $L=C_{G}(A_{x})$-reduction $\un\cF$ of $\un \cE$. At $s\in S$, we choose a point in $\un\cF_{s}$ corresponding to a trivialization $\t_{s}: G\isom \un\cE_{s}$ that intertwines the left translation of $A_{x}$ on $G$ and the $A$-action on $\un\cE_{s}$. The Borel reduction $\cE^{B}_{s}$  of $\un\cE_{s}$ given by $\cE$ corresponds under $\t_{s}$ to a Borel reduction of the trivial $G$-bundle at $s$, which then gives a Borel subgroup $B_{s}\subset G$ as the stabilizer of this Borel reduction. We can thus view $\cE^{B}_{s}$ as a $B_{s}$-bundle at $s$ via $\t_{s}$. Since $A$ preserves $\cE^{B}_{x}$,  we conclude that $A_{x}\subset B_{s}$. Observe that $L\cap B_{s}$ is a Borel subgroup of $L$, since $A_{x}$ is central in $L$ and $A_{x}\subset B_{s}$. Therefore we get a $L\cap B_{s}$-reduction $\cF^{B}_{s}$ of $\un\cF_{s}$ by taking the $L\cap B_{s}$-orbit of $\t_{s}$. Moreover, we have a canonical isomorphism of $B_{s}$-bundles at $s$
\begin{equation}\label{FB EB}
\cF^{B}_{s}\twtimes{L\cap B_{s}}B_{s}=\cE^{B}_{s}.
\end{equation}

Applying the above procedure to every point $s\in S$, we obtain a lifting of $\un \cF$ to a point $\cF\in \Bun_{L}(\bI_{S})(k)$ that induces the Borel reduction of $\un\cE$ given by $\cE$ at every $s\in S$ in the sense of an isomorphism \eqref{FB EB}. In particular, $\Aut(\cF)\subset \Aut(\cE)$. Clearly $A_{x}\subset (ZL)^{\c}\subset \Aut(\cF)\subset \Aut(\cE)$ whose image is identified with $A$. The maximality of $A$ implies $A_{x}=(ZL)^{\c}$, which is $G$-relevant. 
\end{proof}

\subsection{Counting with parahoric level structures}
In this case we can do either of the following:
\begin{itemize}
\item Choose a point $x\in |X|\remove S$ and use $f_{x}$ (coming from a selection function) defined in \S\ref{sss:fx}.
\item Choose a point $x\in S$ and put a function $f_{x}$ on $\fm^{*}_{x}$ constructed from a $G$-selection function $\xi_{x}$ on $\frc_{M_{x}}(k)$ (where $M_{x}$ is the reductive quotient of $\bP_{x}$). More precisely, we set $f_{x}: \fm^{*}_{x}\to \Qlbar$ to be the Fourier transform of $\chi_{M_{x}}^{*}\xi_{x}$, and let $f_{x}: \frp_{x}^{*}\to \Qlbar$ be its extension by zero outside $\fm^{*}_{x}$.
\end{itemize}

The first option is actually a special case of the second, since we can enlarge $S$ to include $x$, and set $\bP_x=L^+_xG$.
Therefore it does not lose generality by considering only the second option. 

We explain how to prove a parahoric version of our counting result using a selection function at $x\in S(k)$. Let $M$ be the maximal reductive quotient of $\bP_x$, and denote its Lie algebra by $\fm$. In addition, $\fm$ has a universal Cartan algebra $\frt_{M}$, and we let $W_M$ be the abstract Weyl group of $M$, which acts on $\frt_{M}$. 

\sss{Moduli of Higgs bundles with parahoric level structures}
Let $\cG_S$ be the parahoric group scheme over $X$ with level structure $\bP_s$ at $s\in S$. Then $\Bun_G(\bP_S)$ can be identified with the moduli stack of $\cG_S$-torsors. For $\cE\in \Bun_G(\bP_S)$ corresponding to a $\cG_S$-torsor $\cE^\na$, we denote by $\Ad(\cE)$ the adjoint bundle of $\cE^\na$, namely $\Ad(\cE)=\cE^\na\times^{G_S}\Lie(\cG_S)$. Note that the fiber $\Ad(\cE)_x$ maps to $\Ad(\ov E_x)=\ov E_x\times^M \fm$. Dually, we have an embedding
\begin{equation}\label{coadj m to g}
    \Ad^*(\ov E_x)\incl \Ad^*(\cE)_x.
\end{equation}
We let $\cM_{G,x}(\bP_{S})$ be the moduli stack classifying pairs $(\cE, \ph)$ where $\cE\in \Bun_{G}(\bP_{S})$ and $\ph\in \upH^{0}(X, \Ad^{*}(\cE)\ot \om(x))$ such that  $\res_x(\ph)$, a priori an element of $ \Ad^*(\cE)_x$, lies in the subspace $\Ad^*(\ov E_x)$ via the embedding \eqref{coadj m to g}. Taking the image of the residue of $\ph$ at $x$ defines a map
\begin{equation*}
    \ov{\res}_x: \cM_{G,x}(\bP_{S}) 	\to	 [\fm^{*} / M ].
\end{equation*}

Let $\bP'_S$ be the level structure obtained from $\bP_S$ by changing $\bP_x$ to the Iwahori level $\bI_x$, and keeping the rest of the level groups. Then we similarly have the moduli stack $\cM_{G,x}(\bP'_S)$ with a residue map
\begin{equation*}
    \res'_x: \cM_{G,x}(\bP'_S)\to \frt^*.
\end{equation*}
For any $\l_x\in \frt^{*}$, let $\cM_{G}(\bP'_S; \l_x)\subset \cM_{G,x}(\bP'_S)$ be the fiber of $\res'_x$ over $\l_x$. Then we have a Cartesian diagram
\begin{equation}\label{MP Cart}
\xymatrix{ 
\cM_{G,x}(\bP'_{S};\l_{x}) \ar[r]\ar[d]	& [\wt{\fm}^{*}_{\l_x} / M] \ar[d] \\
\cM_{G,x}(\bP_{S}) 	\ar[r]^{\ov\res_{x}}	& [\fm^{*} / M ],
}
\end{equation}
where $\wt{\fm}^{*}$ is the Grothendieck alteration of $\fm^{*}$, and the subscript $\l_x$ denotes the preimage of $\l_x$ under the natural map $\wt{\fm}^{*} \to \frt^*_{M}=\frt^*$.

In both cases, connected components are indexed by $\pi_0(\Bun_{G})$, and for $\g\in \pi_0(\Bun_{G})$ we denote the corresponding component by $\cM^{\g}_{G,x}(\bP_{S})$ resp. by $\cM^{\g}_{G}(\bP'_{S}; \l_{x})$.

The following result follows almost directly from our discussion in the case without level structure. 

\begin{theorem}\label{cor:parahoric-counting-admissible} 
Let $x\in S(k)$ and recall $M=M_x$. Suppose $p=\ch(k)$ is \goodp for $G$ (which implies it is pretty good for $M$ by Lemma \ref{l:p good for M}), and $q=\#k$ is large enough with respect to $W$ so that a $W$-coregular admissible collection $\l_x=\{\l_{w,x}\}_{w\in W_{M}}$ exists. Then for each $\g\in \pi_{0}(\Bun_{G})(k)$ we have
\begin{equation*}
\#\AI^{\g}_{G,X, \bP_{S}}(k)=\frac{\#C_{G}(k)}{q^{\dim\Bun_{G}(\bP_{S})+\dim C_{G}}}\frac{1}{\#W_{M}} \sum_{w\in W_{M}} \sgn(w)\Tr(\Fr^*w, \cohoc{*}{\cM^{\g}_{G}(\bP'_{S}; \l_{w,x})}).
\end{equation*}
\end{theorem}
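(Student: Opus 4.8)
The plan is to mimic the chain of arguments in \S\ref{ss:counting Higgs} essentially verbatim, replacing the pair $(G,x)$ with the pair $(M,x)$ at the chosen point $x\in S(k)$ while carrying the remaining parahoric level structure $\bP_S\setminus\{\bP_x\}$ along for the ride. First I would fix a trivialization of $\ov\cE_x$ (the $M$-bundle at $x$ induced from the $\bP_x$-level structure) so as to obtain an evaluation map $\ev_x:\Aut(\cE)\to M$ and its Lie-algebra version $\frev_x:\aut(\cE)\to\fm$; by Lemma \ref{lem:par aut red} the group $\Aut(\cE)$ is reduced, by Lemma \ref{l:ker ev unip par} the kernel of $\ev_x$ has unipotent reduced structure, and by Lemma \ref{l:ev image rel par} the image $H_x=\Im(\ev_x)$ is a $G$-relevant subgroup of $M$ (in the sense of Definition \ref{def:Grel M}), which is essentially unipotent exactly when $\cE$ is absolutely indecomposable (again Lemma \ref{l:ker ev unip par}). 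This is the structural input that makes the selection-function mechanism work with $M$ in place of $G$.

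Next I would run the exact analogue of Lemma \ref{l:b!}: let $\xi_x$ be the $G$-selection function on $\frc_M(k)$ built from the $W$-coregular admissible collection $\l_x=\{\l_{w,x}\}_{w\in W_M}$ via the $M$-version of Proposition \ref{p:cons sel fn from t}, set $f_x=\FT(\chi_M^*\xi_x):\fm^*\to\Qlbar$ (extended by zero off $\fm^*\subset\frp_x^*$), and form the forgetful map $b:\cM_{G,x}(\bP_S)\to\Bun_G(\bP_S)$. The local cohomology sequence attached to $0\to\Ad^*(\cE)\otimes\om\to\Ad^*(\cE)\otimes\om(x)\to i_{x*}\fm^*\to 0$, together with Grothendieck--Serre duality identifying the connecting map with the adjoint of $\frev_x$, shows that the image of $\ov\res_x$ on the fiber over $\cE$ is $\frh_x^\perp$ where $\frh_x=\Lie H_x$; Plancherel then converts $\langle\one_{\frh_x^\perp},f_x\rangle_{\fm^*}$ into $q^{\dim\fm_x^\perp}\langle\one_{\frh_x},\chi_M^*\xi_x\rangle_{\fm}$, which vanishes unless $H_x$ is essentially unipotent by the parahoric variant of Proposition \ref{p:sel fun} (stated in the text just after the definition of $G$-selection function for $M$), and equals $q^{\dim M}$ otherwise. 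Combining with the Euler characteristic computation for $\Aut(\cE)$ (using that $\Aut(\cE)^\c/C_G$ is unipotent for absolutely indecomposable $\cE$, and Serre duality for $\cG_S$-adjoint bundles) yields the $M$-analogue of Corollary \ref{c:G-stable counting of AI}, i.e.\ the formula with $\ov\res_x^*f_x$ integrated over $\cM^\g_{G,x}(\bP_S)(k)$ and the prefactor $\#C_G(k)q^{-\dim\Bun_G(\bP_S)-\dim C_G}$. Here I would need the dimension identity $\dim\Bun_G(\bP_S)=(g-1)\dim G+\sum_{s\in S}\dim(G/P_s)$ (with $P_s$ the parabolic cutting out $\bP_s$), which is the parahoric replacement for $\dim\Bun_G=(g-1)\dim G$; this comes from the standard deformation-theory computation and the fact that $\cG_S$-torsors have the same numerical Euler characteristic data twisted by the parahoric fibers.

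Finally I would feed in the admissibility refinement: Proposition \ref{p:adm kl} applied to $M$ rewrites $\chi_M^*\xi_x=\frac1{\#W_M}\sum_{w\in W_M}\k_{\l_{w,x},w}$, and Corollary \ref{cor:FTkappa} (for $M$) computes $\FT(\k_{\l_{w,x},w})$ in terms of the twisted Frobenius trace on Springer fibers of $M$. Pulling this back through $\ov\res_x$ and using the Cartesian diagram \eqref{MP Cart} — whose fiber of $\cM_{G,x}(\bP'_S;\l_{w,x})\to\cM_{G,x}(\bP_S)$ over $(\cE,\ph)$ is precisely $\cB^{M,\l_{w,x}}_{\ov\res_x\ph}$, a piece of the Springer fiber of $M$ — identifies the weighted point count over $\cM^\g_{G,x}(\bP_S)(k)$ with $\frac{q^{\dim M-\dim\cB_M}}{\#W_M}\sum_w\sgn(w)\Tr(\Fr^*w,\cohoc*{\cM^\g_G(\bP'_S;\l_{w,x})})$, the compatibility of Frobenius structures being exactly as in \S\ref{sss:tw Fr Spr} and \S\ref{tw Fr M}. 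Combining the exponents $\dim\Bun_G(\bP_S)$ (which incorporates $\dim\cB_M=\dim(G/P_x)$ at the level point $x$) gives the stated prefactor $\#C_G(k)q^{-\dim\Bun_G(\bP_S)-\dim C_G}$ and finishes the proof. The main obstacle I anticipate is purely bookkeeping: making sure the finiteness statement (that $b_!\ov\res_x^*f_x$ is supported on finitely many absolutely indecomposable points, so the a priori infinite sum converges) goes through — this requires the parahoric analogue of Lemma \ref{lem:AIG}, namely that $\Bun^\g_G(\bP_S)(k)^{\AI}$ is contained in finitely many Harder--Narasimhan strata of finite type, which follows from the same argument since adding parahoric level structure only changes $\Bun_G$ by a smooth affine-type morphism with fibers partial flag varieties. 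No genuinely new geometric input beyond the lemmas already established in \S\S\,\ref{s:sel}--\ref{s:aut} and their parahoric variants should be needed.
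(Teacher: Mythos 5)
Your proposal follows essentially the same route as the paper: structural input from Lemmas \ref{lem:par aut red}, \ref{l:ker ev unip par}, \ref{l:ev image rel par}, the $M$-version of the selection-function mechanism (Lemma \ref{l:b!} / Corollary \ref{c:G-stable counting of AI}) to establish the intermediate proposition expressing $\#\AI^{\g}_{G,X,\bP_S}(k)$ as $\int_{\cM^\g_{G,x}(\bP_S)}\ov\res_x^*f_x$, then Proposition \ref{p:adm kl} and formula \eqref{fx} applied to $M$ together with the Cartesian diagram \eqref{MP Cart} to unwind the integral into the stated Weyl-group sum. This is exactly the sketch the paper gives, with slightly more detail filled in on the dimension bookkeeping and the finiteness of the support.
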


Let us briefly sketch how our previous arguments generalize to the parahoric setting. First, one needs to find a $W$-coregular admissible collection $\l_{x} = \{\l_{w,x} \}_{x\in W_{M}}$. Here $\l_{w,x}$ is a linear function $\frt_{w}\to k$ ($\frt_{w}$, a priori a Cartan subalgebra in $\frg$, is actually in $\fm$). One can start with a $W$-coregular admissible collection $\{\l_w\}_w\in W$ as guaranteed to exist by Lemma \ref{l:adm exist} when $q$ is large, and simply let $\l_{w,x}=\l_w$ for $w\in W_M$.

We then use the following result, which follows from a similar argument as Corollary \ref{c:G-stable counting of AI}, using Lemma \ref{lem:par aut red}. 

\begin{prop} Let $x\in S(k)$ and let $f_{x}: \fm^{*}_{x}\to \Qlbar$ be the Fourier transform of $\chi_{M}^{*}\xi_{x}$ for a $G$-selection function $\chi_{x}$ on $\frc_{M_x}(k)$. Then we have an equality
\begin{equation*}
\#\AI^{\g}_{G,X, \bP_{S}}(k)=\frac{\#C_{G}(k)}{q^{\dim \Bun_{G}(\bP_{S})+\dim C_{G}}}\int_{\cM^{\g}_{G,x}(\bP_{S})(k)} \ov\res_{x}^*f_{x}
\end{equation*}
\end{prop}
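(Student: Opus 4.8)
The strategy is to reduce everything to the case without level structure that was treated in \S\ref{ss:counting Higgs}, combined with the parahoric versions of the auxiliary results (Lemmas \ref{lem:par aut red}, \ref{l:ker ev unip par}, \ref{l:ev image rel par}). First I would unwind the left-hand side: as in Lemma \ref{lem:AIG}, $\#\AI^\g_{G,X,\bP_S}(k)$ equals the weighted sum $\sum_{\cE}1/\#\pi_0(\Aut(\cE))(k)$ over $\cE\in\Bun^\g_G(\bP_S)(k)^{\AI}$ — the same Galois-cohomology/orbit-stabilizer argument applies verbatim, since all that was used is that $\Aut(\cE)$ is a finite-type $k$-group and that $\Bun^\g_G(\bP_S)^{\AI}$ lies in finitely many finite-type strata. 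The finiteness here again follows from containment in finitely many Harder--Narasimhan strata.

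Next I would analyze the integrand on the right. Fix $x\in S(k)$ with reductive quotient $M=M_x$, and let $b:\cM^\g_{G,x}(\bP_S)\to\Bun^\g_G(\bP_S)$ be the forgetful map. Pushing $\ov\res_x^*f_x$ forward along $b$ and evaluating at $\cE\in\Bun_G(\bP_S)(k)$ gives
\begin{equation*}
\sum_{\ph}f_x(\ov\res_x(\ph)),
\end{equation*}
the sum over $\ph\in\upH^0(X,\Ad^*(\cE)\ot\om(x))$ whose residue at $x$ lands in $\Ad^*(\ov E_x)\subset\Ad^*(\cE)_x$. The short exact sequence $0\to\Ad^*(\cE)\ot\om\to\Ad^*(\cE)\ot\om(x)\to i_{x*}\fm^*\to0$ (using that the residue map lands in $\fm^*$ precisely because $\res_x\ph$ strictly preserves the parahoric at $x$) together with Grothendieck--Serre duality identifies the image of $\ov\res_x$ with $\frh_x^\bot\subset\fm^*$, where $\frh_x=\Lie(\Im(\ev_x))\subset\fm$ is the image of the Lie-algebra evaluation map. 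By Lemma \ref{l:ev image rel par}, $\Im(\ev_x)$ is a $G$-relevant subgroup of $G[\cE_x]$, hence $\frh_x$ is a $G$-relevant subalgebra. Applying the Plancherel formula and then the parahoric analogue of Proposition \ref{p:sel fun} (which, as noted in the excerpt, holds for any $G$-selection function $\xi_x$ on $\frc_M$ and any $G$-relevant subgroup whose neutral component is not unipotent), the pairing $\j{\one_{\frh_x},\chi_M^*\xi_x}_{\fm}$ vanishes unless $\Im(\ev_x)$ is essentially unipotent — equivalently, by Lemma \ref{l:ker ev unip par}, unless $\cE$ is absolutely indecomposable. When $\cE$ is absolutely indecomposable, $\frh_x$ consists of nilpotent elements, on which $\chi_M^*\xi_x$ is identically $1$, so the pairing equals $q^{\dim\frh_x}$ and $b_!\ov\res_x^*f_x(\cE)=q^{h^0(\Ad^*(\cE)\ot\om)+\dim M}$. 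This is the exact parahoric analogue of Lemma \ref{l:b!}; I would record it as such.

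Finally I would run the Euler-characteristic bookkeeping exactly as in Corollary \ref{c:G-stable counting of AI}. For absolutely indecomposable $\cE$ we have $\Aut(\cE)^\c/C_G$ unipotent (using Lemma \ref{lem:par aut red} for reducedness), so $\#\Aut(\cE)^\c(k)=\frac{\#C_G(k)}{q^{\dim C_G}}q^{h^0(\Ad(\cE))}$, and Serre duality relates $h^0(\Ad(\cE))$ and $h^1(\Ad^*(\cE)\ot\om)$; the difference $h^0-h^1$ of the parahoric adjoint complex equals $-\dim\Bun_G(\bP_S)+\dim C_G$ by Riemann--Roch for the parahoric group scheme $\cG_S$, whose Lie algebra replaces $\Ad(\cE)$ with a sheaf of the same generic rank $\dim M=\dim G$ but with $\dim\Bun_G(\bP_S)$-adjusted Euler characteristic. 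Assembling the contribution $q^{h^0(\Ad^*(\cE)\ot\om)+\dim M}/\#\Aut(\cE)(k)$ over all $\cE\in F_{\ov\cE}$ and then over $\ov\cE\in\AI^\g_{G,X,\bP_S}(k)$ yields the claimed identity. The main obstacle I anticipate is purely organizational: keeping the various dimension constants ($\dim\Bun_G(\bP_S)$, $\dim C_G$, $\dim M$ versus $\dim\cB_M$) straight, since the parahoric adjoint sheaf $\Lie(\cG_S)$ differs from $\Ad(\cE)$ at the points of $S$ and one must verify that $\chi(\Lie(\cG_S))$ and the relevant cohomological pairings are computed with the correct sheaf; none of this is conceptually new, but it requires care. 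Everything else is a transcription of \S\ref{ss:counting Higgs} with $\frg$ replaced by $\fm$ at $x$ and the parahoric lemmas in place of their $\Bun_G$ counterparts.
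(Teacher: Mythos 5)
Your overall strategy is the right one and matches what the paper intends (the paper itself only says the Proposition ``follows from a similar argument as Corollary \ref{c:G-stable counting of AI}, using Lemma \ref{lem:par aut red}''). The reduction via the parahoric analogues of Lemmas \ref{lem:AIG}, \ref{l:ker ev unip par}, \ref{l:ev image rel par}, the use of the short exact sequence with residue landing in $\fm^*$, the identification of $\Im(\ov\res_x)$ with $\frh_x^\bot\subset\fm^*$ via Serre duality, and the Plancherel computation yielding the per-point value $b_!\ov\res_x^*f_x(\cE)=q^{h^0(\Ad^*(\cE)\ot\om)+\dim M}$ for absolutely indecomposable $\cE$ are all correct — in particular, you correctly replace the $q^{\dim G}$ from Lemma \ref{l:b!} by $q^{\dim M}$, since the residue and the Plancherel pairing now live in $\fm$ rather than $\frg$.

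However, your final bookkeeping paragraph contains two concrete errors. First, $\dim M\neq \dim G$ in general: $M$ is the reductive quotient of $\bP_x$, so $\dim M<\dim G$ unless $\bP_x$ is hyperspecial (e.g.\ $M=T$ for Iwahori level). What is true is that $\Lie(\cG_S)$ has \emph{generic rank} $\dim G$; that is a different number and it is not the one entering the Plancherel argument. Second, the Riemann--Roch formula you state is wrong: Serre duality gives $h^0(\Ad^*(\cE)\ot\om)-h^1(\Ad^*(\cE)\ot\om)=\chi(\Ad^*(\cE)\ot\om)=-\chi(\Ad(\cE))=\dim\Bun_G(\bP_S)$ (no spurious $+\dim C_G$, and the sign is positive).

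If you run the computation of Corollary \ref{c:G-stable counting of AI} with the correct per-point value $q^{h^0(\Ad^*(\cE)\ot\om)+\dim M}$, the correct identity $\#\Aut(\cE)^\c(k)=\frac{\#C_G(k)}{q^{\dim C_G}}q^{h^0(\Ad(\cE))}$, and the correct Riemann--Roch, the contribution of each absolutely indecomposable $\cE$ is
\begin{equation*}
\frac{q^{\dim C_G}}{\#C_G(k)}\cdot\frac{q^{\dim\Bun_G(\bP_S)+\dim M}}{\#\pi_0(\Aut(\cE))(k)},
\end{equation*}
so that
\begin{equation*}
\#\AI^\g_{G,X,\bP_S}(k)=\frac{\#C_G(k)}{q^{\dim\Bun_G(\bP_S)+\dim M+\dim C_G}}\int_{\cM^\g_{G,x}(\bP_S)(k)}\ov\res_x^*f_x,
\end{equation*}
which has an extra factor of $q^{\dim M}$ in the denominator compared to the statement you were asked to prove. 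Note that when $\bP_x$ is hyperspecial (so $M=G$, $\Bun_G(\bP_S)=\Bun_G$) this reduces to $\frac{\#C_G(k)}{q^{g\dim G+\dim C_G}}$, which is exactly Corollary \ref{c:G-stable counting of AI} — so the $q^{\dim M}$ has to be there. Your argument therefore does not yield the claimed identity verbatim: either the displayed constant in the Proposition contains a typo (the exponent should be $\dim\Bun_G(\bP_S)+\dim M+\dim C_G$, which after combining with $\int\ov\res_x^*f_x=\frac{q^{\dim M-\dim\cB_M}}{\#W_M}\sum_w\sgn(w)\Tr(\cdots)$ makes Theorem \ref{cor:parahoric-counting-admissible} have exponent $\dim\Bun_G(\bP'_S)+\dim C_G$), or your account is missing a compensating step. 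You should resolve this discrepancy explicitly rather than assert that the bookkeeping ``yields the claimed identity''.
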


This result reduces to computing the quantity $\int_{\cM_{G,x}(\bP_{S})(k)} \ov\res_{x}^*f_{x}$. This is done as in the case without level structure. We use the $G$-selection function $\xi_x$ on $\fm$ as constructed in Proposition \ref{p:cons sel fn from t}, using the $W$-coregular admissible collection $\{\l_{w,x}\}_{w\in W_M}$. We then apply Proposition \ref{p:adm kl} to the case $\frg = \fm$ to rewrite 
\begin{equation*}
\chi_{M_{x}}^{*}\xi_{x} = \frac{1}{\#W_{M}}\sum_{w\in W_{M}}\k_{\l_{w,x},w}.
\end{equation*}
Note that here $\k_{\l_{w,x},w}$ is defined using the Grothendieck alteration for $\fm$. Denote by $\cB_{M,v}$ the Springer fiber over $v\in \fm$ for $M$. The formula \eqref{fx} applied to $M$ instead of $G$ gives
\begin{equation*}
f_{x}(v)=\FT(\chi_{M}^{*}\xi_{x})(v)=\frac{q^{\dim M-\dim \cB_M}}{\#W_{M}}\sum_{w\in W_{M}}\sgn(w)\Tr(\Fr^{*}w, \cohog{*}{\cB^{\l_{w,x}}_{M,v}}).
\end{equation*}
The diagram \eqref{MP Cart} then implies that
\begin{equation*} \int_{\cM^{\g}_{G,x}(\bP_{S}) }\ov\res_{x}^{*}f_x = \frac{q^{\dim M-\dim \cB_M}}{\#W_{M}}\sum_{w\in W}\sgn(w)\Tr(\Fr^{*}w, \cohoc{*}{\cM^{\g}_{G}(\bP'_{S};\l_{x}) }),
\end{equation*}
which gives Theorem \ref{cor:parahoric-counting-admissible}.

In order to derive an analogue of our main result, we need a notion of stability for $\cM_{G}(\bP_{S};\l_{x})$. For this, note that the square
\begin{equation*} 
\xymatrix{ 
\cM_{G,x}(\bP'_{S}) \ar[r]\ar[d]	& [\wt{\fm}^{*} / M] \ar[d] \\
\cM_{G,x}(\bP_{S}) 	\ar[r]^{\res_{x}}	& [\fm^{*} / M ],
}
\end{equation*}
is Cartesian. Choose a generic stability condition $\th\in \xcoch(T)_{\RR}$ as in \S\ref{ss:stable Higgs}. Here $T$ is the reductive quotient of $\bI_{x}$, which is identified with the universal Cartan. The definition of stability for $\cM_{G,x}(\bP'_{S})$ is the same as before, but it is checked with respect to $P$-reductions as $G$-bundles with $\bP'_S$-level structures compatible with Higgs fields. Since we choose $\theta$ generic, then as before, the notion of $\theta$-stable and $\theta$-semistable coincide. 

Let $\ov\cM_{G,x}(\bP'_S)=\cM_{G,x}(\bP'_S)/\BB(C_G)$, and similarly define $\ov\cM_{G,x}^\g(\bP'_S)^{\tst}$. The analogue of Theorem \ref{th:geom M} for $\ov\cM_{G,x}^\g(\bP'_S)^{\tst}$ holds for large $p$ depending on $G, g$ and $\deg S$. We can therefore argue similarly to Corollary \ref{cor:W-action}, to show that there is a canonical $W_M$-action on $\cohoc{*}{\ov\cM^{\g}_{G}(\bP_{S}; 0_x)^{\tst}}$ (the zero fiber of the residue map to $\frt^*$). Denoting by $\sgn_{M}$ the sign character of $W_M$, we thus find the following parahoric analogue of Corollaries \ref{c:main} and \ref{c:main N}. 

\begin{cor} Assume $p$ is sufficient large with respect to the Dynkin diagram of $G$, the genus $g$ and $\deg S$. Assume $q$ is large enough as in Corollary \ref{c:main}. For each $\g\in \pi_{0}(\Bun_{G})(k)$ we have
\begin{eqnarray}
\notag\#\AI^{\g}_{G,X, \bP_{S}}(k)&=&q^{-\dim\Bun_{G}(\bP_{S})-\dim C_{G}}\Tr(\Fr,\cohoc{*}{\ov\cM^{\g}_{G}(\bP_{S}; 0_x)^{\tst}}\j{\sgn_{M}})\\
&=&q^{\dim\Bun_{G}(\bP_{S})}\Tr(\Fr,\homog{*}{\ov\cN^{\g}_{G,x}(\bP'_{S})^{\tst}}\j{\sgn_{M}}).
\end{eqnarray}
\end{cor}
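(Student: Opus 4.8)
The final corollary is a parahoric analogue of Corollaries \ref{c:main} and \ref{c:main N}, so the plan is to adapt the chain of reasoning used there to the level structure $\bP_S$, working with the distinguished point $x\in S(k)$, the reductive quotient $M=M_x$ of $\bP_x$, its abstract Weyl group $W_M$, and its universal Cartan $\frt_M\cong\frt$. The starting point is Theorem \ref{cor:parahoric-counting-admissible}, which expresses $\#\AI^{\g}_{G,X,\bP_S}(k)$ as
\begin{equation*}
\frac{\#C_G(k)}{q^{\dim\Bun_G(\bP_S)+\dim C_G}}\cdot\frac{1}{\#W_M}\sum_{w\in W_M}\sgn(w)\Tr(\Fr^*w,\cohoc{*}{\cM^{\g}_G(\bP'_S;\l_{w,x})}),
\end{equation*}
for a $W$-coregular admissible collection $\l_x=\{\l_{w,x}\}_{w\in W_M}$ for $M$. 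First I would upgrade this to a $W$-regular and $W$-coregular collection, which exists for $q$ large by the same argument as in Lemma \ref{l:adm exist} applied to $M$ (using that $p$ is pretty good for $M$ by Lemma \ref{l:p good for M}); then the Springer fibers $\cB^{\l_{w,x}}_{M,v}$ are points, so the cohomology $\cohoc{*}{\cM^{\g}_G(\bP'_S;\l_{w,x})}$ already computes the weighted point count of $\cM^{\g}_G(\bP'_S;c_{w,x})$, exactly as in the passage from Corollary \ref{c:AI M Spr} to Corollary \ref{c:AI Mc}.

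Second, I would set up the geometric input: define the stability condition $\th\in\xcoch(T)_\RR$ generically in the fundamental alcove, define $\ov\cM^{\g}_{G,x}(\bP'_S)^{\tst}$ by removing $\BB(C_G)$ from the $\th$-stable locus, and invoke the parahoric analogue of Theorem \ref{th:geom M} (smoothness, Deligne--Mumford finite type, separatedness and, for $p$ large depending on $G$, $g$ and $\deg S$, properness of the Hitchin map), which is asserted in the excerpt to hold. The key structural point, as in Lemma \ref{l:coreg lam stable}(1), is that a $W$-coregular $\l_{w,x}$ forces $\cM_G(\bP'_S;\l_{w,x})$ to consist of points with no proper parabolic reduction of the Higgs field, hence lying in the $\th$-stable locus for every $\th$; the proof of this copies Lemma \ref{l:coreg lam stable}, using the residue theorem on $X$ to conclude that $\l_{w,x}|_{\frz_L}=0$ for any proper Levi $L$ and that $\frz_L\ne\frz$ is $W$-relevant, contradicting $W$-coregularity. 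Combined with the contracting $\Gm$-action on the Higgs field and the $W_M$-equivariant structure on $\wt\pi'_{\cM,!}\Qlbar$ coming from diagram \eqref{Mpar cart} adapted to $\fm^*$, the DGT-type argument (Proposition preceding Corollary \ref{cor:W-action}, i.e. \cite[Theorem 10.2.2]{DGT}) gives that $\bR^i\ov\phi^{\g,\tst}_{x,!}\Qlbar$ is geometrically constant on $\frt^*$ with a canonical $W_M$-equivariant structure, so that for $W$-coregular $\l_{w,x}$,
\begin{equation*}
\Tr(\Fr^*w,\cohoc{*}{\ov\cM^{\g}_G(\bP'_S;\l_{w,x})})=\Tr(\Fr^*w,\cohoc{*}{\ov\cM^{\g}_G(\bP_S;0_x)^{\tst}}),
\end{equation*}
where the right side uses the canonical $W_M$-action on the zero fiber. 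The ratio $\#C_G(k)/q^{\dim C_G}$ again accounts for passing between $\cM$ and $\ov\cM$.

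Third, I would assemble: substituting the displayed identity into Theorem \ref{cor:parahoric-counting-admissible} collapses the sum over $W_M$ into the projection onto the $\sgn_M$-isotypic part, giving the first line of the corollary. For the second line, I would run the parahoric version of the argument preceding Corollary \ref{c:main N}: the cotangent description $\cM_G(\bP'_S;0)=T^*\Bun_G(\bP'_S)$, the $\Gm$-action by scaling Higgs fields which is contracting onto the nilpotent cone $\cN^{\g}_{G,x}(\bP'_S)^{\tst}$, the resulting restriction isomorphism on cohomology, and Poincar\'e duality on the proper smooth Deligne--Mumford stack (shifting by twice the dimension $2\dim\Bun_G(\bP'_S)+\dim C_G$), all of which are $\Fr$- and $W_M$-equivariant; this converts $q^{-\dim\Bun_G(\bP_S)-\dim C_G}\Tr(\Fr,\cohoc{*}{-}\j{\sgn_M})$ into $q^{\dim\Bun_G(\bP_S)}\Tr(\Fr,\homog{*}{\ov\cN^{\g}_{G,x}(\bP'_S)^{\tst}}\j{\sgn_M})$, using that $\dim\Bun_G(\bP_S)=\dim\Bun_G(\bP'_S)$ (both differ from $\dim\Bun_G$ by $\dim\cB$ at $x$ plus the same contributions at $S\setminus\{x\}$). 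I expect the main obstacle to be not any single inequality but the careful bookkeeping of the $W_M$-equivariant structures: one must check that the $W_M$-action produced via DGT on the zero fiber $\cohoc{*}{\ov\cM^{\g}_G(\bP_S;0_x)^{\tst}}$ agrees with the one obtained by specializing the $W_M$-equivariant structure on $\wt\pi'_{\cM,!}\Qlbar$ at a $W$-coregular $\l_{w,x}$ and its $W_M$-translates, and that this is the structure under which the Frobenius trace matches $\Tr(\Fr^*w,\cohoc{*}{\ov\cM^{\g}_G(\bP'_S;\l_{w,x})})$ term by term --- this is exactly the content of Corollary \ref{cor:W-action} and the compatibility in Corollary \ref{c:AI Mpar}, and transplanting it to $M$ in place of $G$ requires that all the Springer-sheaf formalism of \S\ref{ss:Spr} be rerun for $\fm$, which is legitimate since $p$ is good (indeed pretty good) for $M$.
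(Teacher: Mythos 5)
Your overall strategy is the right one and matches what the paper does: start from Theorem \ref{cor:parahoric-counting-admissible}, use the parahoric analogue of Theorem \ref{th:geom M} together with the DGT constancy argument to produce the canonical $W_M$-action on the zero fiber, extract the $\sgn_M$-isotypic part, and then pass to homology of the nilpotent cone via the contracting $\Gm$-action and Poincar\'e duality. The care you flag about matching the $W_M$-equivariant structures is indeed the key point and is handled the same way as in Corollaries \ref{c:AI Mpar}--\ref{c:main}. Two remarks.

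\emph{Unnecessary detour.} The intermediate step where you upgrade $\{\l_{w,x}\}$ to a $W$-regular collection so that the Springer fibers become points is not needed for this corollary; that step is the route to the point-counting formula (Corollary \ref{c:AI Mc}), not the cohomological one (Corollary \ref{c:main}). What you actually need is only $W$-coregularity of each $\l_{w,x}$ so that $\cM^{\g}_{G}(\bP'_{S};\l_{w,x})$ lands in the $\th$-stable locus (the parahoric version of Lemma \ref{l:coreg lam stable}), which makes the compactly supported cohomology finite-dimensional and puts you in a position to invoke the DGT constancy.

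\emph{Dimension error.} The assertion that $\dim\Bun_{G}(\bP_{S})=\dim\Bun_{G}(\bP'_{S})$ ``because both differ from $\dim\Bun_{G}$ by $\dim\cB$ at $x$'' is false. The stack $\Bun_{G}(\bP_{S})$ has $\bP_{x}$-level at $x$, not Iwahori level; the forgetful map $\Bun_{G}(\bP'_{S})\to\Bun_{G}(\bP_{S})$ is a $\cB_{M}$-bundle (refining the $\bP_{x}$-level to Iwahori is the choice of a Borel reduction of the $M$-bundle $\ov\cE_{x}$), so $\dim\Bun_{G}(\bP'_{S})=\dim\Bun_{G}(\bP_{S})+\dim\cB_{M}$. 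The smooth proper stack entering Poincar\'e duality is $\ov\cM^{\g}_{G}(\bP'_{S};0_x)^{\tst}$, of dimension $2\dim\Bun_{G}(\bP'_{S})+\dim C_{G}$, so the conversion from compactly supported cohomology to homology produces the factor $q^{2\dim\Bun_{G}(\bP'_{S})+\dim C_{G}}$; your bookkeeping only closes up because you collapse $\dim\Bun_{G}(\bP'_{S})$ to $\dim\Bun_{G}(\bP_{S})$. You should track $\dim\Bun_{G}(\bP'_{S})$ consistently through the argument (both in the constant coming out of Theorem \ref{cor:parahoric-counting-admissible} and in the duality shift) rather than identify it with $\dim\Bun_{G}(\bP_{S})$; as a sanity check, setting $\bP_{x}=L^{+}_{x}G$ and $S=\{x\}$ must recover Corollaries \ref{c:main} and \ref{c:main N} with exponent $\dim\Bun_{G}(\bI_{x})$, and it does not if you replace that by $\dim\Bun_{G}$.
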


\appendix

\section{Properties of parabolic stable Hitchin fibration}

In this Appendix we supply proofs of the geometric properties of the moduli stack $\cM_G(\bI_x)$ claimed in Theorem \ref{th:geom M}.

We work over an algebraically closed field $k$ of any characteristic. Let $X$ be a smooth projective connected curve of genus $g$ over $k$.  Let $x\in X(k)$.

There is an easy reduction to the case where $G$ is semisimple. Henceforth we assume that
\begin{equation*}
    \mbox{$G$ is a connected semisimple group over $k$.} 
\end{equation*}
We use the usual notions $T,B,\bI_x$, etc from the main body of the paper. Let $\th$ be generic in the fundamental alcove of $\xcoch(T)_\RR$. Let $\cM_G(\bI_x)$ be the moduli stack of $G$-Higgs bundles with Iwahori level structure at $x$, as defined in \S\ref{ss:stable Higgs}, and $\cM_G(\bI_x)^{\tst}$ be the $\th$-stable open locus. 

Recall that we have the residue map
\begin{equation*}
    \phi_x: \cM_G(\bI_x)\to \frt^*
\end{equation*}
and the Hitchin map
\begin{equation*}
    h: \cM_G(\bI_x)\to \cA_G(\bI_x).
\end{equation*}
We denote by $\phi^{\tst}_x$ and $h^{\tst}$ their restrictions to $\cM_G(\bI_x)^{\tst}$.

\subsection{Deligne-Mumford property}\label{a:DM}
The stack $\cM_G(\bI_{x})^{\tst}$ is of finite type over $k$ because the Harder-Narasimhan polygon of any $\th$-stable point in $\cM_G(\bI_{x})$ is uniformly bounded.

Below we show that $\cM_G(\bI_{x})^{\tst}$ is a Deligne-Mumford stack. For this we need to assume that:
\begin{equation}\label{p>h}
    \mbox{$p>\max\{h_i\}$, where $h_i$ ranges over the Coxeter numbers of each simple factor of $G$.}
\end{equation}

To show $\cM_G(\bI_{x})^{\tst}$ is Deligne-Mumford, it is enough to show that for a $\th$-stable $(\cE, \cE^B_x, \ph)$ over $k$, its infinitesimal automorphisms
\begin{equation}\label{inf auto vanish}
    \G(X,\Ad(\cE, \cE^B_x, \ph))=0.
\end{equation}
Here $\Ad(\cE, \cE^B_x, \ph)\subset \Ad(\cE)$ is the subsheaf of $\Ad(\cE)$ of local sections $\a$ such that $[\a,\ph]=0$, and $\a(x)$ lies in $\Ad(\cE^B_x)$. Let $\a\in \G(X,\Ad(\cE, \cE^B_x, \ph))$, and let $\a=\a_s+\a_n$ be the Jordan decomposition of $\a$ (at the generic point of $X$ first, then $\a_s$ and $\a_n$ are automatically in $\G(X,\Ad(\cE, \cE^B_x, \ph))$. Thus it suffices to treat separately the case $\a$ is semisimple and $\a$ is nilpotent.
    
Suppose $\a$ is semisimple, we will show that $\a=0$. For $x\in X(k)$ let $\a_x\in \frg$ be the value of $\a$ at $x$ upon choosing a trivialization of $\cE_x$, which is well-defined up to the adjoint action of $G$. We claim that the adjoint orbit of $\a_x$ is independent of the point $x$. Indeed, for each $V\in \Rep(G)$, the characteristic polynomial of $\a$ acting on the associated bundle $\cE_V=\cE\times^GV$ has constant coefficients in $k$ (being global regular functions on $X$). Therefore $\a_x$ has the same characteristic polynomial on $\cE_{V,x}$ for all $x\in X$. There are only finitely many semisimple adjoint orbits in $\frg$ with fixed characteristic polynomial on all representations, and they are all closed. By the same continuity argument as in Lemma \ref{l:A conj}, we conclude that the $\a_x$ are in the same adjoint orbit for all $x$. 
    
Let $L=C_G(\a_x)$, a Levi subgroup of $G$. If $\a_x\ne0$, then $L\ne G$. As in Lemma \ref{l:red to centralizer} one shows that $\cE$ admits an $L$-reduction $\cE_L$, such that $\ph$ is a section of $\Ad^*(\cE_L)\ot \om(x)$ because $[\a,\ph]=0$. For any parabolic subgroup $P\subset G$ containing $L$ as a Levi subgroup, let $\cE_P=\cE_L\times^LP$ be the induced $P$-bundle, which is a $P$-reduction of $(\cE,\ph)$. Let $P$ and $Q$ be a pair of opposite parabolics both containing $L$  as their Levi subgroup. The $\th$-stability of $(\cE,\ph,\cE^B_x)$ implies 
    \begin{equation}\label{int two cones}
        -\deg_L(\cE_L)\in (\frC_P'^\c+\th(\cE_{P,x}, \cE^B_x))\cap (\frC_Q'^\c+\th(\cE_{Q,x}, \cE^B_x)).
    \end{equation}
    However, since $P$ and $Q$ are opposite,  we have $\frC'_P=-\frC'_Q$ and $\th(\cE_{P,x}, \cE^B_x)=-\th(\cE_{Q,x}, \cE^B_x)$. Since $\th$ is in the fundamental alcove, the right side  of \eqref{int two cones} is empty. This is a contradiction, which implies that $\a=0$. 

Suppose $\a$ is nilpotent. Choose a generic trivialization of $\cE$, then $\a$ gives an element $e\in \frg(F)$, where $F=k(X)$. Since we assume \eqref{p>h}, by \cite[Theorem 1.1]{ST}, $e$ extends to an $\mathfrak{sl}_2$ triple $\{e,h,f\}$ in $\frg(F)$. Eigenvalues of $h$ give a decomposition $\frg(F)=\op_{i\in \ZZ}\frg(F)_i$, and this induces a filtration $\Ad(\cE)_{\ge i}$ of $\Ad(\cE)$ (taking the subbundle whose generic fiber is $\frg(F)_{\ge i}=\op_{j\ge i}\frg(F)_j$). Let $P\subset G(F)$ be the parabolic subgroup with Lie algebra $\frg(F)_{\ge 0}$, then $\cE$ carries a canonical $P$-reduction $\cE_P$. Since $\ph$ commutes with $\a$, at the generic point $\ph$ has to lie in $\frn_P^\bot=\op_{j\le 0}\frg(F)_j^*$ (for the Lie algebra centralizer of $e$ is contained in $\frg(F)_{\ge 0}$), hence $\ph\in \G(X,\cE_P\times^P\frn_P^\bot)$. In other words, $\cE_P$ is a $P$-reduction of the Higgs bundle $(\cE_,\ph)$. Moreover, since $\cE^B_x$ is preserved by $\a$, the relative position $pos(\cE_{P,x}, \cE^B_x)$ is the closed $G$-orbit in $\cB_P\times \cB$. Therefore $\th(\cE_{P,x}, \cE^B_x)\in \xcoch(T_P)_{\RR}$ is in the image of the dominant chamber, i.e., $\th(\cE_{P,x}, \cE^B_x)\in  \frC_P$. The $\th$-stability of $(\cE,\ph,\cE^B_x)$ implies that $\deg_\th(\cE_P)\in -\frC'^\c_P$ (negative obtuse cone), therefore $\deg_{L_P}(\cE_{L_P})\in -\frC'^{\c}_P$. In particular,
\begin{equation}\label{deg<0}
\deg(\Ad(\cE)_{>0})=\deg(\cE_{L_P}\times^{L_P}\frn_P)<0.
\end{equation}

Let $N$ be the largest degree for which $\frg(F)_N\ne0$. Note that $N\le \max\{h_i\}-1$ (since for each simple factor $\frg'$ of $\frg$, the maximal $N$ is achieved by the regular nilpotent element , in which case $N$ is one less than the Coxeter number of $\frg'$). By assumption \eqref{p>h}, we see that $\ad(e)^i: \frg(F)_{-i}\to \frg(F)_i$ is an isomorphism for $i>0$. This implies $\ad(\a)^i: \Gr^{-i}\Ad(\cE)\to \Gr^{i}\Ad(\cE)$ is generically an isomorphism, hence an injection of coherent sheaves. Summing over all $i>0$, we get an injective map $\Ad(\cE)_{<0}\to \Ad(\cE)_{>0}$. In particular, we must have
\begin{equation}\label{deg AdE}
    \deg(\Ad(\cE)_{<0})<\deg(\Ad(\cE)_{>0}).
\end{equation}If $\a\ne0$, then both $\Ad(\cE)_{<0}$ and $\Ad(\cE)_{>0}$ have the same positive rank. However, \eqref{deg<0} implies that $\deg(\Ad(\cE)_{>0})<0<\deg(\Ad(\cE)_{<0})$, which  contradicts \eqref{deg AdE}. We thus conclude that $\a$ has to be zero. This shows the vanishing \eqref{inf auto vanish}, which proves that $\cM_G(\bI_x)^{\tst}$ is a Deligne-Mumford stack.

\subsection{Smoothness} We make the same assumption on $p$ as in \eqref{p>h}. We prove that the map $\phi^{\tst}_x: \cM_G(\bI_{x})^{\tst}\to \frt^*$ is smooth.

By the contracting nature of the $\Gm$-action on Higgs fields by scaling, it suffices to show that the zero fiber $(\phi^{\tst}_x)^{-1}(0)=\cM_G(\bI_x; 0)^{\tst}$ is smooth over $k$. Note that $\cM_G(\bI_x; 0)$ is the (classical) cotangent stack of $\Bun_G(\bI_x)$, hence $\cM_G(\bI_x; 0)$ has a self-dual tangent complex. The obstruction for infinitesimal deformations of $(\cE,\cE^B_x,\ph)\in \cM_G(\bI_x; 0)$ lies in the dual to the space of infinitesimal automorphisms of $(\cE,\cE^B_x,\ph)$, modulo $\frz$, which is shown to vanish when $(\cE,\cE^B_x,\ph)$ is $\th$-stable by \eqref{inf auto vanish}. Therefore $\cM_G(\bI_x;0)^{\tst}$ is smooth. This shows that $\phi^{\tst}_x$ is smooth. 

\subsection{Separatedness of Hitchin map}\label{a:sep}
We show that $h^{\tst}$ is separated by the valuative criterion. This works for all characteristics.

The argument below works in the relative setting, which will be needed in showing properness.

\sss{Relative setup}\label{sss:rel setting}
Let $S$ be a stack of finite type over $\ZZ$. Let $\pi: X\to S$ be a relative curve, i.e., a smooth proper morphism with connected geometric fibers of dimension one. Suppose we are given a section $\s: S\to X$. In this situation we can define a moduli stack
\begin{equation*}
    \Pi: \cM_{G, \pi}(\bI_\s)\to S
\end{equation*}
whose fiber over a geometric point $s\in S$ is the moduli stack $\cM_{G,X_s}(\bI_{\s(s)})$ of parabolic  $G$-Higgs bundles on the curve $X_s$.

\sss{Uniqueness part of valuative criterion} We shall show that $\cM_{G, \pi}(\bI_\s)^{\tst}$ is separated by the valuative criterion. 

Let  $R$ be a complete DVR with fraction field $K$ and a uniformizer $\pi$. By base changing the situation to an algebraic closure of the residue field of $R$, we may assume that the residue field of $R$ is $k$, i.e., $R\cong k\tl{\pi}$. 

Let $\Spec R\to S$ be a map. Denote by $X_{K}$ (resp. $X_{0}$) the special fiber (resp. generic fiber) of $X_{R}=X\ot_{k}R$. Let $x_{R}=\{x\}\times\Spec R\subset X_{R}$, and denote by $x_{K}$ and $x_{0}$ its generic and special points. 

Given two $\th$-stable parabolic Higgs bundles $(\cE, \cE^{B}_{x}, \ph)$ and $(\cF, \cF^{B}_{x}, \psi)$ over $X_{R}$, together with an isomorphism $\io: (\cE, \cE^{B}_{x}, \ph)|_{X_{K}}\cong (\cF, \cF^{B}_{x}, \psi)|_{X_{K}}$, we need to show that $\io$ can be extended to an isomorphism $\cE\isom \cF$ (which is automatically compatible with the Higgs fields and Borel reductions). 

Let $\y_{0}\in X_{0}$ be the generic point. Let $A$ be the local ring of $X_{R}$ at $\y_{0}$, and $\wh A$ be its completion. So $\wh A\cong F\tl{\pi}$ is a complete DVR with residue field $F=k(X)$ and fraction field $\Frac(\wh A)\cong F\lr{\pi}$. Then $\io$ gives an isomorphism of $\cE_{\wh A}=\cE|_{\Spec \wh A}$ and $\cF_{\wh A}=\cF|_{\Spec \wh A}$ over the punctured disc $\Spec \Frac(\wh A)$. Their relative position is given by a dominant coweight $\l\in \xcoch(T)^{+}$. More precisely, if we choose trivializations of $\cE_{\wh A}$ and $\cF_{\wh A}$, then $\io$ is given by an element $g\in G(\Frac{\wh A})$. By the Cartan decomposition $G(\Frac{\wh A})=\sqcup_{\l\in \xcoch(T)^{+}}G(\wh A)\pi^{\l}G(\wh A)$, and upon changing the trivializations of $\cE_{\wh A}$ and $\cF_{\wh A}$, we may assume $g=\pi^{\l}$ for a unique dominant coweight $\l\in \xcoch(T)^{+}$. Let $P$ be the standard parabolic subgroup of $G$ containing $T$ with roots $\{\a\in \Phi(G,T)|\j{\l, \a}\ge 0\}$; let $\ov P$ be the opposite parabolic containing $T$ with roots $\{\a\in \Phi(G,T)|\j{\l, \a}\le 0\}$, and $L=P\cap \ov P$. Consider the stabilizer $G(\wh A)\cap \Ad(\pi^{\l})G(\wh A)$ of $\pi^{\l}$ under the left and right translation action of $G(\wh A)$, presented as a subgroup of the left translation. Note that the projection $G(\wh A)\cap \Ad(\pi^{\l})G(\wh A)\subset G(\wh A)\surj G(F)$ has image $\ov P(F)$, and the other projection $G(\wh A)\cap \Ad(\pi^{\l})G(\wh A)\to G(\wh A)\surj G(F)$ (given by $g\mapsto \Ad(\pi^{-\l})g$ then mod $\pi$) has image $P(F)$.   This implies that $\cE_{\y_{0}}$ carries a canonical $P$-reduction $\cE_{\y_{0}, P}$, and $\cF_{\y_{0}}$ carries a canonical $\ov P$ reduction $\cF_{\y_{0},\ov P}$, together with a canonical isomorphism of the induced $L$-bundles  $\io_{L}: \cE_{\y_{0}, L}\isom  \cF_{\y_{0}, L}$. We saturate these parabolic reductions to the $P$-reduction $\cE_{0,P}$ of $\cE_{0}=\cE|_{X_{0}}$, and to the $\ov P$-reduction $\cF_{0,\ov P}$ of $\cF_{0}=\cF|_{X_{0}}$. 

After trivializing $\om_X|_{\y_{0}}$ and under the chosen trivializations of $\cE_{\wh A}$ and $\cF_{\wh A}$, the Higgs fields $\ph$ and $\psi$ become elements $\ph_{\wh A}\in \frg^{*}(\wh A)$ and $\psi_{\wh A}\in \frg^{*}(\wh A)$. Since $\io$ is compatible with the Higgs fields, we have that $\Ad^{*}(\pi^{\l})\ph_{\wh A}=\psi_{\wh A}\in \frg^{*}(\wh A)\cap \Ad^{*}(\pi^{\l})\frg^{*}(\wh A)$. Now $\psi|_{\y_{0}}\in \frg^{*}(F)$ is the reduction of $\psi$ mod $\pi$, which then lies in the image of $\frg^{*}(\wh A)\cap \Ad^{*}(\pi^{\l})\frg^{*}(\wh A)\subset \frg^{*}(\wh A)\xr{\mod \pi}\frg^{*}(F)$, which is $\frn_{\ov P}^{\bot}(F)$; similarly, $\ph|_{\y_{0}}$ lies in  $\frn_{P}^{\bot}(F)$. By saturation, we conclude that $\ph_{0}=\ph|_{X_{0}}\in \G(X_{0}, \cE_{0,P, \frn_{P}^{\bot}}\ot \om_{X_{0}}(x))$ (i.e. it preserves the $P$-reduction of $\cE_{0}$), and $\psi_{0}=\psi|_{\y_{0}}$ preserves the $\ov P$-reduction of $\cF_{0}$.

For each representation $V$ of $G$ consider the isomorphism $\io_{V,K}: \cE_{V, K}\isom \cF_{V,K}$ of the induced vector bundles over $X_{K}$. Let $T_{L}=L/L^{\der}$. Let $\{\a_{j}|j\in J\}$ be the set of simple roots of $L$. Now assume $V=V_{\mu}$ is the Weyl module with lowest weight $\mu$ such that $\mu\in \xch(T_{L})\subset \xch(T)$ (i.e., $\j{\mu, \a^{\vee}_{j}}=0$ for all $j\in J$, and $\j{\mu,\a^{\vee}_{i}}<0$ for all $i\notin J$). Note that any other weight $\mu'$ of $V_{\mu}$ is of the form $\mu+\sum_{i\in I}n_{i}\a_{i}$ (where $n_{i}\ge0$) such that $n_{i}>0$ for at least one $i\notin J$. This implies $\j{\l,\mu}<\j{\l,\mu'}$ for all weights $\mu'\ne \mu$ of $V_{\mu}$. 
 
Then $\io_{V,K}$ extends to a map of $\wh A$-modules
\begin{equation*}
\io_{V,\wh A}: \cE_{V, \wh A}\to  \pi^{\j{\l, \mu}}\cF_{V,\wh A}
\end{equation*}
 whose reduction mod $\pi$ (after dividing by $\pi^{\j{\l, \mu}}$) takes the form
\begin{equation*}
\io_{V,\y_{0}}: \cE_{V,\wh A}/\pi=\cE_{V, \y_{0}}\surj \cE_{\y_{0}, P, \mu}\isom \cF_{\y_{0}, \ov P, \mu}\incl \cF_{V, \y_{0}}=\cF_{V,\wh A}/\pi.
\end{equation*}
Here $\cE_{\y_{0}, P, \mu}$ is the $1$-dimensional vector space over $F$ induced from the $P$-torsor $\cE_{\y_{0}}$ over $F$ and the character $\mu: P\to L\to \Gm$; and $\cF_{\y_{0}, \ov P, \mu}$ is defined similarly. Since the complement of $\{\y_{0}\}\cup X_{K}$ in $X_{R}$ has codimension two, the map $\io_{V,\wh A}$ extends to a map of coherent sheaves
\begin{equation*}
\io_{V,R}: \cE_{V}\to  \pi^{\j{\l, \mu}}\cF_{V}=\cF_{V}(-\j{\l,\mu}X_{0})
\end{equation*}
whose restriction to $X_{0}$ 
\begin{equation}\label{ioV0}
\io_{V,0}: \cE_{0,V}=\cE_{V}|_{X_{0}}\surj \cE_{0,P, \mu}\to \cF_{0, \ov P, \mu}\incl \cF_{0,V}
\end{equation}
restricts to $\io_{V,\y_{0}}$ over $\y_{0}$. In particular, we conclude that 
$\deg\cE_{0,P, \mu}\le \deg\cF_{0, \ov P, \mu}$ for all anti-dominant $\mu\in \xch(T_{L})$. This implies
\begin{equation}\label{degL diff lower}
\deg_{L}(\cE_{0,L})-\deg_{L}(\cF_{0,L})\in \frC'_{P}.
\end{equation}
 
On the other hand, we write $\th_{\cE}=\th(\cE_{0,P}, \cE^{B}_{x_{0}})$, $\th_{\cF}=\th(\cF_{0,\ov P}, \cF^{B}_{x_{0}})$. Note that both points are sufficiently close to $0\in \xcoch(T_{L})_{\QQ}$. Since $(\cE_{0}, \cE_{x_{0}}^{B}, \ph_{0})$ is $\th$-stable, we have
\begin{equation*}
\deg_{L}(\cE_{0,L})+\th_{\cE}\in -\frC'^{\c}_{P}.
\end{equation*}
Similarly,  $(\cF_{0}, \cF_{x_{0}}^{B}, \psi_{0})$ is $\th$-stable, so we have
\begin{equation*}
\deg_{L}(\cF_{0,L})+\th_{\cF}\in -\frC'^{\c}_{\ov P}=\frC'^{\c}_{P}.
\end{equation*}
Combining the two, we get
\begin{equation}\label{degL diff upper}
\deg_{L}(\cE_{0,L})-\deg_{L}(\cF_{0,L})\in -\frC'^{\c}_{P}-\th_{\cE}+\th_{\cF}.
\end{equation}
Combining \eqref{degL diff lower} and \eqref{degL diff upper} we get
\begin{equation*}
\deg_{L}(\cE_{0,L})-\deg_{L}(\cF_{0,L})\in\frC'_{P}\cap(-\frC'^{\c}_{P}-\th_{\cE}+\th_{\cF}).
\end{equation*}
Since $-\th_{\cE}+\th_{\cF}$ is sufficiently close to $0$, the only possible integral point on the right side is $0$. If this happens, we must have:
\begin{equation}\label{degL eq}
\deg_{L}(\cE_{0,L})=\deg_{L}(\cF_{0,L})
\end{equation}
and
\begin{equation}\label{thEF}
\th_{\cE}-\th_{\cF}\in -\frC'^{\c}_{P}.
\end{equation}
When \eqref{degL eq} holds, the map \eqref{ioV0} must be everywhere nonzero on $X_{0}$, and in particular at $x_{0}$. For $V=V_{\mu}$ (lowest weight $\mu$ such that $\mu\in \xch(T_{L})\subset \xch(T)$ as before), the $B$-reduction $\cE^{B}_{x_{0}}$ gives a filtration $F^{\chi}\cE_{V,x_{R}}$ on $\cE_{V,x_{R}}$ indexed by the weights $\chi$ of $V$ under the partial order given by the positive coroots, such that lowest weight is the last quotient. Let $\chi\in \xch(T)$ be the smallest step such that $F^{\chi}\cE_{V,x_{0}}$ surjects on the quotient $\cE_{x_{0}, P, \mu}=\cE_{0,P,\mu}|_{x_{0}}$. Then under the canonical map $\pi_{\cE}:=\pi(\cE_{x_{0}, P},\cE^{B}_{x_{0}}): T\to T_{L}$, which induces the dual map $\pi_{\cE}^{*}: \xch(T_{L})\to \xch(T)$, we have $\pi_{\cE}^{*}(\mu)=\chi$. Similarly, $\cF^{B}_{x_{0}}$ induces a filtration $\{F^{\psi}\cF_{V,x_{R}}\}$ on $\cF_{V,x_{R}}$, and if we let $\psi\in\xch(T)$ be the smallest step that contains $\cF_{x_{0}, \ov P, \mu}=\cF_{0,\ov P, \mu}|_{x_{0}}$, then $\pi_{\cF}^{*}(\mu)=\psi$. 

Since $\io|_{x_{K}}$ preserves the $B$-reductions, $\io_{V,K}|_{x_{K}}$ maps $F^{\chi}\cE_{V,x_{K}}$ to $F^{\chi}\cF_{V,x_{K}}$. By saturation,  $\io_{V,R}|_{x_{R}}$ maps $F^{\chi}\cE_{V,x_{R}}$ to $\pi^{\j{\l,\mu}}F^{\chi}\cF_{V,x_{R}}$. After reduction mod $\pi$ we conclude that $\io_{V,0}|_{x_{0}}$ maps $F^{\chi}\cE_{V,x_{0}}$ to $F^{\chi}\cF_{V,x_{0}}$. Since the map \eqref{ioV0} is nonzero at $x_{0}$, $\io_{V,x_{0}}(F^{\chi}\cE_{V,x_{0}})=\cF_{x_{0},\ov P,\mu}$, in particular, $\cF_{x_{0},\ov P,\mu}\subset F^{\chi}\cF_{V,x_{0}}$. This implies $\chi'-\chi$ is a sum of positive roots by the minimality of $\chi'$. Now by definition
\begin{equation*}
\j{\th_{\cE}, \mu}=\j{\pi_{\cE}(\th), \mu}=\j{\th, \pi_{\cE}^{*}\mu}=\j{\th,\chi}
\end{equation*}
and similarly
\begin{equation*}
\j{\th_{\cF}, \mu}=\j{\th,\chi'}.
\end{equation*}
Therefore
\begin{equation*}
\j{\th_{\cE}-\th_{\cF}, \mu}=\j{\th, \chi-\chi'}\le 0
\end{equation*}
since $\th$ is assumed to be dominant. However, \eqref{thEF} implies the opposite inequality 
\begin{equation*}
\j{\th_{\cE}-\th_{\cF}, \mu}<0
\end{equation*}
provided $\mu$ is anti-dominant and nonzero. This gives a contradiction, provided that $T_{L}$ is nontrivial. When $T_{L}=1$, we have $P=G$ which is possible only when $\l=0$, i.e., $\io$ extends to an isomorphism over $X_{K}\cup \{\y_{0}\}$, hence to an isomorphism over the whole $X_{R}$. This proves the uniqueness part of the valuative criterion. 

\subsection{Properness of parabolic Hitchin map}\label{a:proper}
We show that there exists a number $N$ depending only on the genus $g$ of $X$ and the Dynkin diagram of $G$, such that whenever $p>N$, the map $h^{\tst}$ is proper. We use a general argument to reduce the statement to the characteristic zero case where the properness of the parabolic Hitchin map is known by Faltings \cite{F}. Hence our argument does not give an explicit bound for $p$. We hope that one can obtain an explicit and reasonably small bound for $p$ along the lines of the arguments in \cite{AHLH}.

We work in a specific relative setting as in \S\ref{sss:rel setting}. Let $\pi^{\univ}_g: \frC_g\to \frM_g$ be the universal curve over the moduli stack $\frM_g$ of smooth connected genus $g$ curves. Let $S$ be a Deligne-Mumford stack of finite type over $\ZZ$ with a surjective map $\nu: S\to \frC_g$. Let $\pi: X=S\times_{\frM_g}\frC_g\to S$ be the base-changed curve, equipped with a section given by the graph of $\nu: S\to \frC_g$. The relative parabolic Hitchin moduli stack $\cM_{G,\pi}G(\bI_\s)$ is defined. We also have the family version of the Hitchin base $\cA_{G,\pi}(\bI_\s)\to S$ (an affine morphism) and a Hitchin map
\begin{equation*}
    h_\pi: \cM_{G, \pi}(\bI_\s)\to \cA_{G, \pi}(\bI_\s).
\end{equation*}
The $\th$-stable locus $\cM_{G, \pi}(\bI_\s)^{\tst}\subset \cM_{G, \pi}(\bI_\s)$ is defined. Let 
\begin{equation*}
    h_\pi^{\tst}: \cM_{G, \pi}(\bI_\s)^{\tst}\to \cA_{G, \pi}(\bI_\s)
\end{equation*}
be the restriction of the Hitchin map.

\begin{lemma}\label{l:gen proper}
    There exists an open dense subset $U\subset \Spec \ZZ$ such that $h_\pi^{\tst}$ is proper when restricted over $U$.
\end{lemma}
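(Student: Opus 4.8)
The plan is to deduce the generic properness from the known properness in characteristic zero by a spreading-out argument, using the valuative criterion together with the separatedness already established in \S\ref{a:sep} and the finite-type property.

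First I would note that $\cM_{G,\pi}(\bI_\s)^{\tst}\to S$ is of finite type and $h_\pi^{\tst}$ is separated (this is \S\ref{a:sep}, which was carried out in exactly this relative setting), and $\cA_{G,\pi}(\bI_\s)\to S$ is affine of finite type over $\ZZ$. So $h_\pi^{\tst}$ is a separated morphism of finite type between finite-type $\ZZ$-stacks; properness is then equivalent to universal closedness, for which it suffices to verify the existence part of the valuative criterion over complete DVRs with algebraically closed residue field. By \cite{F}, the parabolic Hitchin map is proper in characteristic zero; hence the restriction of $h_\pi^{\tst}$ over $\Spec \QQ$ (equivalently, the generic fiber over $\Spec\ZZ$) is proper.

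Next I would invoke a standard spreading-out principle (EGA IV, \S8--\S9, in the stacky form, e.g.\ as in \cite{LMB} or \cite[Tag 01ZM]{stacks}): a morphism of finite presentation which is proper after base change to the generic point of an integral Noetherian base spreads out to a proper morphism over a dense open of that base. Applied to the integral base $\Spec\ZZ$, this yields a dense open $U\subset \Spec\ZZ$ over which $h_\pi^{\tst}$ is proper. One subtlety is that $\cM_{G,\pi}(\bI_\s)^{\tst}$ is a stack rather than a scheme; but it is a Deligne--Mumford stack of finite type (this is the content of \S\ref{a:DM}, again valid in the relative setting after possibly shrinking to $p$ bigger than the Coxeter numbers, which only removes finitely many primes and so does not affect the existence of a dense open $U$), and spreading out of properness is available for finite-presentation morphisms of such stacks; alternatively one can check it on a presentation. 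A second point to check is that $U$ can be taken so that over $U$ the residual characteristics are all good and the hypotheses of \S\ref{a:DM} and \S\ref{a:sep} hold; since these exclude only finitely many primes, intersecting with $U$ keeps it dense open.

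The main obstacle is making sure the spreading-out machinery genuinely applies to $\cM_{G,\pi}(\bI_\s)^{\tst}$ as an algebraic stack and that the characteristic-zero properness statement of \cite{F} is formulated (or can be reformulated) for the precise notion of $\th$-stability used here. Concretely, one must reconcile Faltings' conventions with the stability condition of \S\ref{ss:stable Higgs}: this is where a short comparison argument is needed, identifying $\th$-stable objects with Faltings' stable parabolic Higgs bundles for the corresponding choice of parabolic weights. Once that identification is in place, the rest is formal. Note that this argument is inherently ineffective: EGA-style spreading out does not produce an explicit $U$, which is exactly why the bound on $p$ in Theorem \ref{th:geom M}\eqref{h proper} is left unspecified. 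I expect the write-up of this lemma to consist of: (i) a sentence recalling separatedness and finite type; (ii) the reduction to the valuative criterion and the input from \cite{F}; (iii) the invocation of spreading out, with a remark that only finitely many primes are excluded; and (iv) the comparison of stability notions.
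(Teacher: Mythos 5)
Your overall strategy is the same as the paper's: reduce to Faltings' characteristic-zero properness and then pass to a dense open of $\Spec\ZZ$. The paper, however, does not invoke a black-box spreading-out principle for stacks; instead it makes the reduction explicit. It first uses the Keel--Mori theorem to produce a coarse moduli space $M$ of $\cM_{G,\pi}(\bI_\s)^{\tst}$ with $\cM_{G,\pi}(\bI_\s)^{\tst}\to M$ proper, thereby reducing the assertion to properness of the algebraic-space morphism $M\to A=\cA_{G,\pi}(\bI_\s)$; it then applies Nagata compactification to get a proper $\ov M\to A$ with dense open $M\subset\ov M$, observes that the boundary $Z=\ov M\setminus M$ has empty generic fiber by Faltings, and concludes by Chevalley that the image of $Z$ in $\Spec\ZZ$ is a finite set of primes. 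This is essentially an unwinding of the proof of the spreading-out lemma you cite, and it simultaneously handles the stacky issue you flag, in the correct way.

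One concrete inaccuracy in your write-up: the remark that ``alternatively one can check it on a presentation'' is not a valid fallback. Properness is not \'etale-local on the source --- for a smooth or \'etale cover $P\to\cX$ the composite $P\to A$ is essentially never proper even when $\cX\to A$ is, so you cannot reduce to the scheme case via a presentation. The correct reduction is exactly the Keel--Mori coarse-space argument the paper uses. Aside from that, your observations (that the DVR valuative criterion would suffice, that only finitely many primes are excluded by \S\ref{a:DM} and \S\ref{a:sep}, that one must match $\theta$-stability with Faltings' conventions, and that the argument is inherently ineffective) all align with the paper.
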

\begin{proof}
    Now $\cM_{G, \pi}(\bI_\s)^{\tst}$ is Deligne-Mumford by \S\ref{a:DM}, and $h_\pi^{\tst}$ is of finite type and separated by \S\ref{a:sep}. By the Keel-Mori theorem \cite{KM} and \cite[Theorem 1.1]{C}, $\cM_{G, \pi}(\bI_\s)^{\tst}$ has a coarse moduli space $M$ together with a map $h_M: M\to A:=\cA_{G, \pi}(\bI_\s)$  that is also separated of finite type, and natural map $\cM_{G, \pi}(\bI_\s)^{\tst}\to M$ is proper. 

    By Nagata compactification for algebraic spaces \cite[Theorem 1.2.1]{CLO}, there exists a proper map $\ov M\to A$ from some algebraic space $\ov M$, together with an open embedding $M\incl \ov M$. We may assume $M$ is dense in $\ov M$. Let $Z=\ov M\bs M$, which is also proper over $A$. 

    Consider the $\QQ$-fiber 
    \begin{equation*}
        h_{M,\QQ}: M_\QQ\to A_\QQ
    \end{equation*}
    of $h_M$. By the properness of the parabolic Hitchin map in characteristic zero,  see \cite[Theorem II.4 and its parabolic version in p.561(i)]{F}, $h_{M,\QQ}$ is proper. Since $\ov M_\QQ$ is separated, $M_\QQ$ must be closed in $\ov M_\QQ$. Since $M_\QQ$ is also dense in $\ov M_\QQ$, we see that $M_\QQ=\ov M_\QQ$, i.e., $Z_\QQ=\vn$. Thus the image of $Z\to \Spec \ZZ$ is a finite set of primes, whose complement we take to be $U$. Now $M|_U=\ov M_U$ is proper over $A|_U$, proving the lemma.
\end{proof}

Finally, if $p=\ch(k)$ is a prime in $U$, and $X_0$ is a smooth projective connected curve over $k$ with a point $x$, we can find a $k$-point $s\in S$ such that $(X_0,x)\cong (X_s,\s(s))$. Lemma \ref{l:gen proper} restricted to the point $s$ then implies that $h^{\tst}:\cM_{G,X_0}(\bI_x)\to \cA_{G,X_0}(\bI_x)$ is proper. This finishes the proof of the properness of $h^{\tst}$ for large $p$.


\begin{thebibliography}{99}

\bibitem{AHJR}
P. Achar, A. Henderson, D. Juteau, S. Rice. 
Weyl group actions on the Springer sheaf.
Proc. London Math. Soc. (6) 108, 1501--1528 (2014). 

\bibitem{AHLH}
J. Alper, D. Halpern-Leistner, J. Heinloth
Existence of moduli spaces for algebraic stacks. Invent. math. 234, 949–1038 (2023).

\bibitem{BR}
P. Bardsley, R. W. Richardson.
\'Etale slices for algebraic transformation groups in characteristic $p$.
Proc. London Math. Soc. (3)51, no.2, 295--317 (1985).

\bibitem{Borel}
A. Borel,
\emph{Linear Algebraic Groups}.
Graduate Texts in Mathematics, 126. Springer-Verlag New York, NY. 288 p. (1991).

\bibitem{BFM}
A. Borel, R. Friedman,  J.W. Morgan. 
Almost commuting elements in compact Lie groups. 
Mem. Am. Math. Soc. 747, 136 p. (2002).

\bibitem{CL}
P.-H. Chaudouard, G. Laumon.
Le lemme fondamental pondéré. I: Constructions géométriques.
Compos. Math. 146, No. 6, 1416-1506 (2010).

\bibitem{Collingwood-McGovern}
D. H. Collingwood, W. M. McGovern.
Nilpotent orbits in semisimple Lie algebras.
Van Nostrand Reinhold Mathematics Series. Routledge New York, NY. 192 p. (1993).

\bibitem{C}
B. Conrad.
The Keel-Mori theorem via stacks,
Available at \url{https://math.stanford.edu/~conrad/papers/coarsespace.pdf}.

\bibitem{CLO}
B. Conrad, M. Lieblich, M. Olsson,
Nagata compactification for algebraic spaces.
J. Inst. Math. Jussieu 11, no. 4, 747–814 (2012).

\bibitem{CBvdB}
W. Crawley-Boevey, M. van den Bergh.
Absolutely indecomposable representations and Kac-Moody Lie algebras. (With an appendix by Hiraku Nakajima).
Invent. Math. 155, No. 3, 537-559 (2004).

\bibitem{SGA3}
M. Demazure et A. Grothendieck (Ed.)
Sch\'emas en groupes. II: Groupes de type multiplicatif, et structure des sch\'emas en groupes g\'en\'eraux.
S\'eminaire de G\'eom\'etrie Alg\'ebrique du Bois Marie 1962/64 (SGA 3). 
Lecture Notes in Math., Vol. 152.
Springer-Verlag, Berlin-New York. ix+654 pp (1970).


\bibitem{DGT}
G. Dobrovolska, V. Ginzburg, R.Travkin,
Moduli spaces, indecomposable objects and potentials over a finite field.
arXiv: 1612.01733 (2016).

\bibitem{F}
G. Faltings, 
Stable $G$-bundles and projective connections.
J. Algebr. Geom. 2, No. 3, 507--568 (1993).


\bibitem{FGL}
D. Fratila, S. Gunningham, P. Li, 
The Jordan-Chevalley decomposition for $G$-bundles on elliptic curves. 
Representation Theory 26.39, 1268--1323  (2022).

\bibitem{HRV}
T. Hausel, F. Rodriguez-Villegas. 
Mixed Hodge polynomials of character varieties. With an appendix by Nicholas M. Katz.
Invent. Math. 174, No. 3, 555-624 (2008).

\bibitem{Herpel}
S. Herpel,
On the smoothness of centralizers in reductive groups.
Trans. Amer. Math. Soc. 365, no. 7, 3753--3774 (2013).

\bibitem{Kac80}
V. G. Kac.
Infinite root systems, representations of graphs and invariant theory.
Invent Math 56, 57–92 (1980).


\bibitem{Kac82}
V. G. Kac. 
Infinite root systems, representations of graphs and invariant
theory, II. 
J. Algebra 78, 141-162 (1982).

\bibitem{Kac83}
V. G. Kac.
Root systems, representations of quivers and invariant theory. 
In Invariant Theory (Montecatini, 1982), Lecture Notes in Math. 996,
Springer-Verlag, New York, NY, 74--108 (1983).

\bibitem{KV}
D. Kazhdan, Y. Varshavsky,
Endoscopic decomposition of certain depth zero representations.
Progr. Math., 243, Birkh\"auser Boston, Inc., Boston, MA, 223--301 (2006).

\bibitem{KM}
S. Keel, S. Mori, 
Quotients by groupoids.
Annals of Math. 145, pp. 193–213 (1997).

\bibitem{Lusztig}
G. Lusztig. 
Green polynomials and singularities of unipotent classes.
Adv. in Math. 42, 169-178 (1981).

\bibitem{Malle-Testerman}
G. Malle, D. Testerman.
Linear algebraic groups and finite groups of Lie type. 
Cambridge Studies in Advanced Mathematics 133. Cambridge: Cambridge University Press. xiv, 309 p. (2011). 

\bibitem{Ram}
A. Ramanathan, 
Moduli for principal bundles over algebraic curves. I, 
Proc. Indian Acad. Sci. Math. Sci. 106, no. 3, 301--328 (1996).

\bibitem{Sch}
O. Schiffmann, 
Indecomposable vector bundles and stable Higgs bundles over smooth projective curves.
Ann. of Math. (2) 183, no. 1, 297--362 (2016).

\bibitem{Sommers} 
E. Sommers.
 A family of affine Weyl group representations. 
 Transformation Groups 2, 375–390 (1997). 

\bibitem{SS}
T. A. Springer, R. Steinberg,
Conjugacy classes.
In {\em Seminar on Algebraic Groups and Related Finite Groups} (The Institute for Advanced Study, Princeton, N.J., 1968/69), pp. 167--266. Lecture Notes in Math., Vol. 131. Springer-Verlag, Berlin-New York (1970).

\bibitem{ST}
D. Stewart, A. Thomas,
The Jacobson-Morozov theorem and complete reducibility of Lie subalgebras.
Proc. Lond. Math. Soc. (3) 116, no. 1, 68–100 (2018).


\end{thebibliography}
\end{document}